\theoremstyle{plain}
\newtheorem{thm}{Theorem}[section]
\newtheorem{lem}[thm]{Lemma}
\newtheorem{prop}[thm]{Proposition}
\newtheorem{cor}[thm]{Corollary}
\newtheorem*{thm*}{Theorem}
\newtheorem*{prop*}{Proposition}
\newtheorem*{cor*}{Corollary}
\newtheorem{thmintro}{Theorem}
\newtheorem{corintro}[thmintro]{Corollary}
\theoremstyle{definition}
\newtheorem{defn}[thm]{Definition}
\newtheorem{ex}[thm]{Example}
\newtheorem{rmk}[thm]{Remark}
\newtheorem{ass}[thm]{Assumption}
\newtheorem*{rmk*}{Remark}
\newtheorem*{conj*}{Conjecture}
\newtheorem*{quest*}{Question}
\newtheorem*{defn*}{Definition}
\newcommand{\acts}{\curvearrowright}
\newcommand{\ra}{\rightarrow}
\newcommand{\Ra}{\Rightarrow}
\newcommand{\cu}{\subseteq}
\newcommand{\wt}{\widetilde}
\newcommand{\x}{\times}
\renewcommand{\o}{\circ}
\newcommand{\id}{\mathrm{id}}
\newcommand{\mc}{\mathcal}
\newcommand{\mf}{\mathfrak}
\newcommand{\mscr}{\mathscr}
\newcommand{\R}{\mathbb{R}}
\newcommand{\Z}{\mathbb{Z}}
\newcommand{\N}{\mathbb{N}}
\newcommand{\s}{\sigma}
\newcommand{\eps}{\epsilon}
\newcommand{\Om}{\Omega}
\newcommand{\om}{\omega}
\renewcommand{\L}{\Lambda}
\newcommand{\g}{\gamma}
\newcommand{\G}{\Gamma}
\newcommand{\X}{\mc{X}}
\newcommand{\A}{\mc{A}}
\newcommand{\T}{\mc{T}}
\newcommand{\CAT}{{\rm CAT(0)}}
\DeclareMathOperator{\Fix}{Fix}
\DeclareMathOperator{\lk}{lk}
\DeclareMathOperator{\St}{st}
\DeclareMathOperator{\aut}{Aut}
\DeclareMathOperator{\out}{Out}
\DeclareMathOperator{\rk}{rk}
\newcommand{\Min}{\mathrm{Min}}
\begin{document}

\title{On automorphisms and splittings of special groups} 
\author[E. Fioravanti]{Elia Fioravanti}\address{Max Planck Institute for Mathematics, Vivatsgasse 7, 53111 Bonn, Germany}\email{fioravanti@mpim-bonn.mpg.de} 
\keywords{Special group, Dehn twist, coarse-median preserving, outer automorphism}
\subjclass{20F65, 20F67, 20F28, 20E06, 20E08, 57M07}

\begin{abstract}
We initiate the study of outer automorphism groups of special groups $G$, in the Haglund--Wise sense. We show that $\out(G)$ is infinite if and only if $G$ splits over a co-abelian subgroup of a centraliser and there exists an infinite-order ``generalised Dehn twist''. Similarly, the coarse-median preserving subgroup $\out_{\rm cmp}(G)$ is infinite if and only if $G$ splits over an actual centraliser and there exists an infinite-order coarse-median-preserving generalised Dehn twist.

The proof is based on constructing and analysing non-small, stable $G$--actions on $\R$--trees whose arc-stabilisers are centralisers or closely related subgroups. Interestingly, tripod-stabilisers can be arbitrary centralisers, and thus are large subgroups of $G$.

As a result of independent interest, we determine when generalised Dehn twists associated to splittings of $G$ preserve the coarse median structure.
\end{abstract}

\maketitle


\section{Introduction.}

It was first shown by Dehn in 1922 
that mapping class groups of closed surfaces are generated by finitely many Dehn twists around simple closed curves \cite{Dehn}. Many decades later, one of the successes of Rips--Sela theory was the extension of this result to outer automorphism groups of all Gromov-hyperbolic groups \cite{Rips-Sela}. 

More precisely, whenever a group $G$ splits as an amalgamated product $G=A\ast_CB$, we can construct an automorphism $\varphi\in\aut(G)$ by defining $\varphi|_A$ as the identity and $\varphi|_B$ as the conjugation by an element of the centre of $C$. A similar construction can be applied to HNN splittings $G=A\ast_C$. We refer to group automorphisms obtained in this way as \emph{algebraic Dehn twists}. Indeed, when $G=\pi_1\Sigma$ for a closed surface $\Sigma$ and $C\simeq\Z$, algebraic Dehn twists are precisely the action on $\pi_1\Sigma$ of the usual homeomorphisms of $\Sigma$ known as Dehn twists.

When $G$ is a one-ended Gromov-hyperbolic group (without torsion), Rips and Sela showed that a finite index subgroup of $\out(G)$ is generated by finitely many algebraic Dehn twists arising from \emph{cyclic} splittings of $G$ \cite{Rips-Sela}. An analogous result was obtained by Groves for toral relatively hyperbolic groups $G$, where one must consider more generally all \emph{abelian} splittings of $G$ \cite{Groves-AGT09}.

Both results are proved by first constructing an isometric $G$--action on an $\R$--tree and then applying the Rips machine. However, the trees involved have a very specific structure --- they are \emph{superstable} and \emph{small} (i.e.\ with abelian arc-stabilisers) --- and thus they do not require the full power of Rips' techniques, which can handle \emph{stable} $G$--trees with arbitrary arc-stabilisers \cite{BF-stable}.

For this reason, it is natural to expect that the class of groups $G$ for which $\out(G)$ can be understood through Rips--Sela theory should be broader. The difficulty to overcome is that, when $G$ lacks strong hyperbolic features (mostly Gromov-hyperbolicity or relative hyperbolicity), it is generally hard to construct $G$--trees that simultaneously capture many significant features of the geometry of $G$. Nevertheless, in certain contexts, acylindrical hyperbolicity has been shown to suffice when addressing related questions, such as equational Noetherianity \cite{Groves-Hull,Groves-Hull-Liang} and the existence of (higher-rank) Makanin--Razborov diagrams \cite{Sela-new}.

It is worth remarking that the above results are no exception and, in fact, all classical applications of the Rips machine only require its most ``basic'' form for small superstable $G$--trees: from acylindrical accessibility \cite{Sela-acyl} and JSJ decompositions for finitely presented groups \cite{RS-Ann}, to the Hopf property \cite{Sela-Top} and the isomorphism problem for hyperbolic groups \cite{Sela-Ann,Dahmani-Guirardel-GAFA},
 to the elementary theory of free groups \cite{SelaI,SelaVI}.

\medskip
In this paper, we seek to obtain an analogous relationship between the structure of $\out(G)$ and the splittings of $G$ when $G$ is not relatively hyperbolic. 

We choose to focus on \emph{special groups} $G$, in the sense of Haglund and Wise \cite{Haglund-Wise-GAFA}. This is the remarkably broad class of  subgroups of right-angled Artin groups that are quasi-convex in the standard word metric. Little seems to be known on $\out(G)$ in this context, other than the fact that it is always a residually finite group \cite{AMS}.

Special groups are best known for their Gromov-hyperbolic examples --- hyperbolic $3$--manifold groups \cite{Kahn-Markovic,Bergeron-Wise,Agol-Doc}, hyperbolic free-by-cyclic groups \cite{Hagen-Wise-Duke,Hagen-Wise-GAFA}, finitely presented small cancellation groups \cite{Wise04} among many others --- and their role in Agol and Wise's resolution of Thurston's virtual fibering and virtual Haken conjectures \cite{Agol-ICM,Wise-ICM}.

However, special groups also admit many non-relatively-hyperbolic examples: for instance, finite-index subgroups of right-angled Artin and Coxeter groups, most non-geometric $3$--manifold groups \cite{Przytycki-Wise14,Hagen-Przytycki15,Przytycki-Wise18}, graph braid groups \cite{Crisp-Wiest}, cocompact diagram groups \cite{Guba-Sapir,Gen-diag1,Gen-diag2}, and the examples from \cite{Kropholler-Vigolo}. Various other non-hyperbolic groups are expected to be special (for instance, among free-by-cyclic groups), but for the moment this runs into the general difficulty of cocompactly cubulating groups without relying on Sageev's criterion \cite{Sag97,Sag95}.

The case when $G$ is a right-angled Artin group $\A_{\G}$ already demonstrates that $\out(G)$ can well be infinite even when $G$ does not split over an abelian subgroup \cite{Groves-Hull-ab}, suggesting that we will have to deal with non-small $G$--trees and automorphisms of $G$ that are more general than the algebraic Dehn twists defined above.

A particularly simple generating set for $\out(\A_{\G})$ was given by Laurence and Servatius \cite{Laurence,Servatius}. With this in mind, it is natural to consider the following generalisation of algebraic Dehn twists which provides a unified perspective on automorphisms of hyperbolic groups and right-angled Artin groups. If $G$ is a group, $H\leq G$ is a subgroup and $K\cu G$ is a subset, we denote by $Z_H(K)$ the \emph{centraliser} of $K$ in $H$.

\begin{defn*}[DLS automorphisms]
Let $G$ be a group. A \emph{Dehn--Laurence--Servatius automorphism} of $G$ is any of the following two kinds of automorphisms of $G$.
\begin{itemize}
\item Suppose that $G$ splits as an amalgamated product $A\ast_CB$. Each element $z\in Z_A(C)$ defines an automorphism $\s\in\aut(G)$ with $\s(a)=a$ for all $a\in A$ and $\s(b)=zbz^{-1}$ for all $b\in B$. We refer to $\s$ as a \emph{partial conjugation}.
\item Suppose that $G$ splits as an HNN extension $A\ast_C=\langle A,t \mid t^{-1}ct=\alpha(c),\ \forall c\in C\rangle$. Each $z\in Z_A(C)$ defines an automorphism $\tau\in\aut(G)$ with $\tau(a)=a$ for all $a\in A$ and $\tau(t)=zt$. We refer to $\tau$ as a \emph{transvection}.
\end{itemize}
\end{defn*}

DLS automorphisms generate a finite index subgroup of $\out(G)$ both when $G$ is hyperbolic (or toral relatively hyperbolic) and when $G$ is a right-angled Artin or Coxeter group. 

DLS automorphisms were previously introduced by Levitt \cite{Levitt-hyp} and they appear in even earlier work of Bass and Jiang \cite{Bass-Jiang}. DLS automorphisms are often simply called ``twists'' in the literature, but this terminology would be rather confusing in the present paper since, in the context of right-angled Artin groups, the word ``twist'' has come to refer only to a very specific type of DLS auto\-mor\-phism \cite{CSV}. We will stick to the latter convention and reserve the term ``twist'' for transvections induced by elements of the centre of $C$ (see below).

When $G$ is a special group with a fixed embedding in a right-angled Artin group $G\hookrightarrow\A_{\G}$, we can endow $G$ with a natural \emph{coarse median} structure $[\mu]$ \cite{Bow-cm}. This provides us with a notion of \emph{quasi-convexity} for subgroups $H\leq G$. In fact, a subgroup $H\leq G$ is quasi-convex with respect to $[\mu]$ if and only if its action on the universal cover of the Salvetti complex of $\A_{\G}$ is \emph{convex-cocompact}: $H$ stabilises a convex subcomplex, acting cocompactly on it. For this reason, we will speak interchangeably of ``quasi-convex'' and ``convex-cocompact'' subgroups of $G$.

The coarse median structure on $G$ also gives us a notion of \emph{orthogonality} between subgroups (denoted $\perp$, see Definition~\ref{pf defn}). Because of this, it is convenient to differentiate between two types of transvections that always display quite different behaviours. This distinction was first introduced for automorphisms of right-angled Artin groups in \cite{CSV}.

\begin{defn*}[Twists and folds]
Let $(G,[\mu])$ be a coarse median group. Suppose that $G$ splits as an HNN extension $A\ast_C$, where $C$ is quasi-convex with respect to $[\mu]$. Let $\tau\in\aut(G)$ be the transvection determined by an element $z\in Z_A(C)$.
\begin{itemize}
\item If $z$ lies in the centre of $C$, we say that $\tau$ is a \emph{twist}.
\item If instead $\langle z\rangle\perp C$, we say that $\tau$ is a \emph{fold}.
\end{itemize}
\end{defn*}

Fixing an embedding of $G$ in $\A_{\G}$ and the corresponding coarse median structure $[\mu]$, it is also interesting to study the subgroup $\out_{\rm cmp}(G)\leq\out(G)$ of coarse-median preserving automorphisms. This was introduced in previous work of the author \cite{Fio10a} and often makes up a significant portion of the whole automorphism group. For instance, $\out_{\rm cmp}(G)=\out(G)$ when $G$ is either Gromov-hyperbolic or a right-angled Coxeter group, while $\out_{\rm cmp}(G)$ is the group of \emph{untwisted} automorphisms when $G$ is a right-angled Artin group (which was studied e.g.\ in \cite{CSV,Hensel-Kielak}).

The results of \cite{Fio10a} show that, in various respects, $\out_{\rm cmp}(G)$ displays a much closer similarity to automorphisms of hyperbolic groups than the whole $\out(G)$. This pattern is confirmed in the present paper (compare Theorems~\ref{main cmp} and~\ref{main general}).

\medskip
We are now ready to state our two main theorems. Previous results of this type for hyperbolic and relatively hyperbolic groups include \cite{Paulin-arboreal,Levitt-hyp,Drutu-Sapir,Guirardel-Levitt-GGD15}, among many others. The correct extension to general special groups seems to require replacing \emph{abelian} subgroups with \emph{centralisers}.

\begin{thmintro}\label{main cmp}
Let $G$ be a special group. Then $\out_{\rm cmp}(G)$ is infinite if and only if $\out_{\rm cmp}(G)$ contains an infinite-order DLS automorphism $\varphi$ of one of the following forms:
\begin{enumerate}
\item $G$ splits as $A\ast_CB$ or $A\ast_C$, where $C$ is the centraliser of a finite subset of $G$, and $\varphi$ is a partial conjugation or fold associated to this splitting;
\item $G$ splits as $A\ast_C$, where $C=Z_G(g)$ for an element $g\in G$ such that $\langle g\rangle$ is convex-cocompact, and $\varphi$ is the twist determined by this splitting and the element $g$. 
\end{enumerate}
\end{thmintro}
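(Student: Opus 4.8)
The backward implication is immediate: an infinite-order element generates an infinite subgroup. So the content is the forward implication, which I would prove by a Bestvina--Paulin-type degeneration, followed by a Rips-machine analysis of the resulting $\R$--tree, carried out throughout in the coarse-median setting. \emph{Step 1 (producing an $\R$--tree).} Fix a finite generating set $S$ of $G$ and the cocompact action $G\acts X$ on a convex subcomplex $X$ of the universal cover of the Salvetti complex of $\A_{\G}$, together with the coarse median it induces. Assuming $\out_{\rm cmp}(G)$ is infinite, choose pairwise distinct $\varphi_n\in\out_{\rm cmp}(G)$; after composing each with a suitable inner automorphism (still coarse-median preserving, and still distinct in $\out$) we may assume the minimal $S$--displacement $\lambda_n$ of the $\varphi_n$--twisted action on $X$ is attained in a fixed compact fundamental domain. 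Properness of $G\acts X$ then forces $\lambda_n\to\infty$: otherwise $\varphi_n|_S$ would take values in a fixed finite set, contradicting distinctness. Rescaling $X$ by $\lambda_n^{-1}$ and passing to a (pointed/ultra-)limit yields a non-trivial minimal $G$--action on a complete $\R$--tree $T$, and here the coarse-median-preserving hypothesis is essential --- it is what forces the degenerate limit to be an honest tree rather than merely a median space, and it is what lets one control point-stabilisers. A Paulin--Sela style analysis of $T$ (arcs are limits of geodesics in $X$; stabilisers of arcs stabilise ``walls at infinity'') shows the action is stable with arc-stabilisers (and tripod-stabilisers) that are centralisers of finite subsets of $G$, so the action is emphatically non-small.

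\emph{Step 2 (Rips machine).} Apply the Rips machine for stable actions (Bestvina--Feighn) to $G\acts T$, obtaining a transverse covering / graph-of-groups decomposition into simplicial, ``surface'' (interval-exchange) and ``axial'' components. The main work is to rule out, or convert, the exotic pieces: because arc-stabilisers are full centralisers --- convex-cocompact and large --- a surface or axial component would force subgroups or splittings incompatible either with the coarse-median-preserving hypothesis or with the convex-cocompactness of the pieces involved; here one exploits the internal product structure of centralisers in special groups (a centraliser is, up to finite index, a direct product of a convex-cocompact core and an abelian factor) together with orthogonality. One is left with a \emph{simplicial} splitting of $G$ over a subgroup commensurable to a centraliser of a finite set; refining it and tracking which subgroups can actually occur as edge groups yields a splitting $A\ast_CB$ or $A\ast_C$ as in (1), or an HNN splitting over $Z_G(g)$ with $\langle g\rangle$ convex-cocompact feeding into (2).

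\emph{Step 3 (from the splitting to an infinite-order DLS automorphism).} Given the splitting, the group of DLS automorphisms it supports is governed by $Z_A(C)$ modulo the part acting trivially; I would argue this group is infinite, since a bound on it would bound the displacements $\lambda_n$ along the very sequence that produced $T$, a contradiction. It then remains to pick inside this group an infinite-order element that is coarse-median preserving of the required type. Convex-cocompactness of $C$ is automatic (centralisers of finite subsets are convex-cocompact; so is $Z_G(g)$ when $\langle g\rangle$ is convex-cocompact), and by the paper's characterisation of which DLS automorphisms preserve the coarse median (the result advertised as being of independent interest), the coarse-median-preserving DLS automorphisms of the splitting are precisely the partial conjugations and folds --- case (1) --- and the twists --- case (2). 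Combining this with the infiniteness of the twist group produces the desired $\varphi$.

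\emph{Main obstacle.} The crux is Step 2: running the Rips machine when arc-stabilisers, indeed tripod-stabilisers, are full centralisers rather than small subgroups. Classical Rips--Sela arguments use smallness to annihilate the exotic (surface and axial) components; here one must instead leverage the interplay between the coarse median structure, orthogonality, and the product structure of centralisers in special groups to discard these components or turn them into splittings of the two listed forms. A secondary, pervasive difficulty is bookkeeping in Step 1: never leaving the class $\out_{\rm cmp}(G)$, and ensuring the degenerate limit is literally an $\R$--tree and that its arc- and tripod-stabilisers are exactly (conjugates of) centralisers.
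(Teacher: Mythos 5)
Your overall architecture (Bestvina--Paulin degeneration, Rips machine, Theorem~\ref{CMP GDT} to certify coarse-median preservation) matches the paper's, but two steps as you describe them would fail. First, in Step~1 you rescale the cube complex $X$ itself and assert that the coarse-median-preserving hypothesis ``forces the degenerate limit to be an honest tree rather than merely a median space.'' It does not: for a non-hyperbolic special group the ultralimit of $\lambda_n^{-1}X$ is a higher-rank median space regardless of any hypothesis on the $\varphi_n$ (already for $G=\Z^2$). The paper instead uses the equivariant embedding $\X_{\G}\hookrightarrow\prod_{v\in\G}\T_v$ into a finite product of simplicial trees and takes the $\om$--limit of a \emph{single} twisted, rescaled tree factor $\T_v^{\varphi_n}$ for a $v$ where the limit action is non-elliptic; the limit is a tree for the trivial reason that each $\T_v$ is. Second, and more seriously, your parenthetical ``stabilisers of arcs stabilise walls at infinity, hence are centralisers of finite subsets of $G$'' hides the paper's main technical innovation and is false as stated: arc-stabilisers of $\T_v$ are $G$--parabolic (intersections of $G$ with centralisers of subsets of $\A_{\G}$), not centralisers of subsets of $G$, and the automorphisms $\varphi_n$ do not preserve this class, so one loses all control of arc-stabilisers of the twisted trees. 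The entire Section~\ref{perturbation sect} (Theorem~\ref{main perturbation}) exists to fix this: sufficiently long arcs of $\T_v$ can be slightly shrunk so that their stabilisers \emph{and almost-stabilisers} become genuine centralisers in $G$, which automorphisms do carry to centralisers and which are closed under $\om$--intersection. Without this perturbation step the identification of arc-stabilisers of $T_{\om}$ does not go through.

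Two further points of mechanism. Your Step~2 proposes to ``rule out'' the surface and axial components; in the paper the surface components are not ruled out but are exactly the source of the Type~(2) twists (the curve $\g$ on the surface lifts to a splitting over $C=Z_G(k)$ with $k$ label-irreducible, Proposition~\ref{exotic/surface prop}), and the exotic components yield the Type~(1) folds and partial conjugations; only lines acted on non-discretely are excluded in the cmp case (via Proposition~\ref{important addenda new}(c1)). And your Step~3 argument for infinite order (``a bound on the twist group would bound $\lambda_n$'') is the global shortening argument that the paper explicitly cannot carry out here, because shortening one tree $T_{\om}(v)$ may lengthen another $T_{\om}(w)$; instead the paper certifies infinite order case by case, via Lemmas~\ref{non-inner from HNN} and~\ref{non-inner from cc}, the mapping class group in the surface case, and explicit shortening or lengthening computations in a single fixed tree $T_n$ in Proposition~\ref{simplicial prop}.
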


\begin{thmintro}\label{main general}
Let $G$ be a special group. Then $\out(G)$ is infinite if and only if $\out(G)$ contains an infinite-order DLS automorphism $\varphi$ either as in Types~(1) and~(2) of Theorem~\ref{main cmp}, or of the form:
\begin{enumerate}
\setcounter{enumi}{2}
\item $\varphi$ is a twist associated to an HNN splitting $G=A\ast_C$, where $C$ is the kernel of a nontrivial homomorphism $Z_G(x)\ra\Z$ for some $x\in G$. In addition, the stable letter of the HNN splitting can be chosen within $Z_G(x)$.
\end{enumerate}
\end{thmintro}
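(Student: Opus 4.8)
The plan is to reduce the ``$\out(G)$ infinite $\Rightarrow$ splitting exists'' direction to a statement about $G$--actions on $\R$--trees, exactly as in Rips--Sela theory, and then to analyse the arising trees using the coarse median structure. The converse direction is easy: each listed automorphism has infinite order by construction (the stable letter gets multiplied by $z$, which has infinite order modulo $C$), so it suffices to check it is genuinely of infinite order in $\out(G)$ rather than $\aut(G)$, which follows because the relevant power cannot be realised by an inner automorphism --- here one uses that $G$ is residually finite (hence Hopfian) and that special groups have a well-understood centraliser structure. So the real content is the forward direction.

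First I would take an infinite sequence of pairwise non-conjugate automorphisms $\varphi_n\in\aut(G)$ and feed it into the Bestvina--Paulin machine: acting on the Cayley graph (or better, the Salvetti complex of $\A_\G$, to keep track of the coarse median structure) and rescaling, one extracts a nontrivial minimal isometric $G$--action on an $\R$--tree $T$ with no global fixed point. The next --- and I expect hardest --- step is to control the stabilisers in $T$. Since $G$ is not hyperbolic, the classical small-cancellation/acylindricity shortcuts are unavailable; instead one must argue, using convex-cocompactness of the coarse median structure and the fact that the approximating automorphisms are ``DLS-like'' obstructions, that arc stabilisers are (essentially) centralisers or co-abelian subgroups of centralisers, that the action is stable, and crucially that tripod-stabilisers --- although possibly large (arbitrary centralisers, as the abstract warns) --- are still controlled enough to run a version of the Rips machine for stable trees \cite{BF-stable}. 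This is where the bulk of the paper's technical machinery (the non-small stable $G$--trees with centraliser arc-stabilisers) gets invoked.

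Then I would apply the Rips machine to the stable $G$--tree $T$, obtaining a decomposition into pieces of surface, axial, and simplicial (Levitt/exotic excluded by stability) type. In each case one reads off a splitting of $G$ over an arc stabiliser, hence over a centraliser or a co-abelian subgroup thereof. To pin down which of the three types~(1),~(2),~(3) occurs, one analyses the automorphism supported on each piece: a non-exotic surface or axial component, or a simplicial edge with a vertex-preserving automorphism, yields a partial conjugation or a twist/fold, according to whether the twisting element lies in the centre of the edge group, is orthogonal to it (fold, Type~(1)), or generates a $\Z$--quotient of a centraliser (Type~(3)); convex-cocompactness of $\langle g\rangle$ in Type~(2) is exactly the statement that the relevant edge group $Z_G(g)$ is quasi-convex, which is inherited from the quasi-convexity of arc stabilisers in $T$. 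Finally, in Type~(3) the extra assertion that the stable letter can be taken inside $Z_G(x)$ follows by observing that the transvection by $z$ with $\langle z\rangle$ giving the $\Z$--factor already has $z\in Z_G(x)=Z_A(C)\cdot\langle t\rangle$-conjugate form, so after composing with an inner automorphism the stable letter lands in $Z_G(x)$. The main obstacle, to reiterate, is the stabiliser analysis in the second step: establishing stability and the centraliser description of arc-stabilisers for these non-small trees, without hyperbolicity, is precisely what makes the coarse-median-preserving hypothesis drop out as Type~(3) rather than one of the cleaner types.
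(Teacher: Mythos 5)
Your outline correctly identifies the Bestvina--Paulin plus Rips-machine strategy, but it stops short of the two ideas that actually make the argument work, and it contains one assertion that would derail the proof. First, the missing idea: you say arc-stabilisers of the limit tree are ``essentially centralisers'' because of convex-cocompactness, but this is exactly the point where the naive argument fails. The limit tree is built from the restriction quotients $\T_v$ of $\X_{\G}$ twisted by $\varphi_n$, and arc-stabilisers of $\T_v$ are only $G$--parabolic, i.e.\ intersections of $G$ with centralisers of subsets of $\A_{\G}$ --- not centralisers of subsets of $G$. An automorphism of $G$ need not preserve $G$--parabolicity (nor even convex-cocompactness), so one has no control over stabilisers in the twisted trees $\T_v^{\varphi_n}$. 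The paper's central technical result (Theorem~\ref{main perturbation}) is that every sufficiently long arc of $\T_v$ can be \emph{perturbed} so that its stabiliser, and even its almost-stabiliser, becomes an honest centraliser $Z_GZ_G(\cdot)$ in $G$; only then do $\om$--intersections of twisted stabilisers remain centralisers (Proposition~\ref{om-centralisers in G cor}) and the limit tree become stable with analysable arc-stabilisers. Without an argument of this kind your second step does not go through.

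Second, two points in your Rips-machine step are off. Exotic components are \emph{not} excluded by stability --- they genuinely occur and are handled via Guirardel's approximation of exotic indecomposable actions by simplicial trees with trivial edge stabilisers (Case~(a) of Proposition~\ref{exotic/surface prop}); asserting they are excluded would leave a hole in the case analysis. And the mechanism producing Type~(3) is not, as you suggest, a residue of the coarse-median-preserving analysis: it arises precisely in the cases that coarse-median preservation would \emph{rule out}, namely when some line of $T$ is acted on non-discretely by its stabiliser $Z=Z_G(x)$, or an arc-stabiliser fails to be $G$--semi-parabolic, or $T$ is itself a line (Proposition~\ref{important addenda new}(c)--(e)). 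In those cases one shows $Z$ is a vertex group with incident edge groups in $\ker\rho$ (Proposition~\ref{Z vertex group prop}, Option~(2)), chooses a discretisation $\overline\rho\colon Z\to\Z$, and refines the splitting via Lemma~\ref{refining splittings}; the stable letter lies in $Z_G(x)$ because the HNN splitting of $G$ is inherited from the HNN splitting of $Z$ itself, not by a post-hoc conjugation as you propose, and the twisting element is a nontrivial central element of $\ker\overline\rho$ whose existence requires Proposition~\ref{important addenda new}(d), Remark~\ref{kernel centre rmk} and Lemma~\ref{line centre lem}, with non-innerness supplied by Lemma~\ref{non-inner from HNN}. None of these steps is routine, and your proposal does not indicate how any of them would be carried out.
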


Thinking of right-angled Artin groups $\A_{\G}$, most of the Laurence--Servatius generators for $\out(\A_{\G})$ fall into Types~(1) and~(3) of the previous theorems (for the experts, we are only excluding \emph{graph automorphisms} and \emph{inversions}, which have finite order). Automorphisms of Type~(2) never occur for $\A_{\G}$, but many algebraic Dehn twists of hyperbolic groups are of this form.

We emphasise that, in many cases, a general DLS automorphism can turn out to be an inner automorphism of $G$, or to have an inner power. By contrast, Theorems~\ref{main cmp} and~\ref{main general} do provide DLS automorphisms with infinite order in $\out(G)$. In particular, this shows that $\out(G)$ and $\out_{\rm cmp}(G)$ can never be infinite torsion groups.

An immediate consequence of the above two theorems is the following.

\begin{corintro}\label{splitting cor}
Let $G$ be a special group.
\begin{enumerate}
\item If $\out_{\rm cmp}(G)$ is infinite, then $G$ splits over the centraliser of a finite subset of $G$.
\item If $\out(G)$ is infinite, then $G$ splits over the centraliser of a finite subset of $G$, or over the kernel of a homomorphism $Z_G(x)\ra\Z$ for some $x\in G$. 
\end{enumerate}
\end{corintro}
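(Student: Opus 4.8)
The plan is to derive Corollary~\ref{splitting cor} as a direct consequence of Theorems~\ref{main cmp} and~\ref{main general}, so the work is purely bookkeeping: one reads off the existence of a splitting from the existence of an infinite-order DLS automorphism of the prescribed type. First I would prove part~(1). Assume $\out_{\rm cmp}(G)$ is infinite. By Theorem~\ref{main cmp}, there is an infinite-order DLS automorphism $\varphi$ of Type~(1) or Type~(2). In Type~(1), $G$ splits (as an amalgam or HNN extension) over $C$, where $C$ is by hypothesis the centraliser of a finite subset of $G$, which is exactly the conclusion. In Type~(2), $G$ splits as $A\ast_C$ with $C=Z_G(g)$; since a cyclic subgroup is finite, $Z_G(g)=Z_G(\{g\})$ is the centraliser of a finite (indeed singleton) subset, so again $G$ splits over the centraliser of a finite subset. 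This exhausts the two cases and proves~(1).

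Next I would prove part~(2). Assume $\out(G)$ is infinite. By Theorem~\ref{main general}, there is an infinite-order DLS automorphism $\varphi$ of Type~(1), (2), or~(3). Types~(1) and~(2) are handled exactly as above and yield a splitting over the centraliser of a finite subset of $G$. In the remaining Type~(3), $G$ splits as an HNN extension $A\ast_C$ where $C=\Ker(Z_G(x)\ra\Z)$ for some $x\in G$ and some nontrivial homomorphism to $\Z$; this is precisely the second alternative in the statement of~(2). Combining the three cases, $G$ splits over the centraliser of a finite subset of $G$ or over the kernel of a homomorphism $Z_G(x)\ra\Z$, as claimed.

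There is essentially no obstacle here: the corollary is a transparent repackaging of the two main theorems, with the only minor point being the observation that the subgroup $C$ over which $G$ splits appears explicitly in the statements of the relevant DLS automorphism types, and that a centraliser $Z_G(g)$ of a single element is a special case of the centraliser of a finite subset. I would keep the proof to a few lines, simply citing Theorems~\ref{main cmp} and~\ref{main general} and matching up the cases. If anything warrants a word of care, it is only to note that in each type the automorphism being a \emph{DLS automorphism} carries with it, by definition, the datum of the splitting of $G$, so no separate construction of a splitting is needed.
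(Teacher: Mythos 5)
Your proposal is correct and is exactly the argument the paper intends: Corollary~\ref{splitting cor} is stated as an immediate consequence of Theorems~\ref{main cmp} and~\ref{main general}, and the case-matching you perform (with $Z_G(g)=Z_G(\{g\})$ being the centraliser of the finite set $\{g\}$ in Type~(2)) is all that is needed. One wording slip: the justification ``since a cyclic subgroup is finite'' is false (special groups are torsion-free, so $\langle g\rangle$ is infinite) and irrelevant --- the correct and sufficient point, which your parenthetical already makes, is simply that the subset $\{g\}$ is finite.
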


Note that all special groups split over ``some'' convex-cocompact subgroup, simply because they act properly on Salvetti complexes, hence on the associated products of trees. Such a splitting does not tell us anything about $\out(G)$ in general, so it is important that the splittings provided by Corollary~\ref{splitting cor} are over \emph{centralisers}, or subgroups thereof.

It seems that Corollary~\ref{splitting cor}(2) and part of Theorem~\ref{main general} can also be deduced from the work of Casals-Ruiz and Kazachkov \cite{CRK1,CRK2}. Indeed, if $\out(G)$ is infinite, the Bestvina--Paulin construction yields a nice $G$--action on an asymptotic cone of a right-angled Artin group. By \cite[Theorem~9.33]{CRK2}, it follows that $G$ can be embedded in a \emph{graph tower}, in the sense of \cite[Section~5]{CRK2}. Every graph tower $T$ admits particular splittings over centralisers of subsets of $T$. With significant additional work, these splittings can be translated into splittings of $G$ over the required subgroups of centralisers of subsets of $G$ (the main difficulty is passing from centralisers of subsets of $T$ to centralisers of subsets of $G$).

We emphasise that Corollary~\ref{splitting cor}(1) can fail if $\out(G)$ is infinite, but $\out_{\rm cmp}(G)$ is finite. The simplest example is provided by the right-angled Artin group $\A_{\G}$ with $\G$ as in Figure~\ref{intro fig}. Note that $G=\A_{\G}$ does not split over any centraliser of a subset, but it does split as an HNN extension over the subgroup $\langle b,c,f\rangle$, which is the kernel of a homomorphism $Z_G(a)\ra\Z$.

\begin{figure} 
\begin{tikzpicture}
\draw[fill] (-1,-0.5) -- (1,-0.5);
\draw[fill] (-1,0.5) -- (1,0.5);
\draw[fill] (-1,-0.5) -- (-1,0.5);
\draw[fill] (1,-0.5) -- (1,0.5);
\draw[fill] (-1,-0.5) -- (0,0.5);
\draw[fill] (-1,0.5) -- (0,-0.5);
\draw[fill] (-1,-0.5) circle [radius=0.04cm];
\draw[fill] (-1,0.5) circle [radius=0.04cm];
\draw[fill] (1,-0.5) circle [radius=0.04cm];
\draw[fill] (1,0.5) circle [radius=0.04cm];
\draw[fill] (0,-0.5) circle [radius=0.04cm];
\draw[fill] (0,0.5) circle [radius=0.04cm];
\node[below left] at (-1,-0.5) {$a$};
\node[above left] at (-1,0.5) {$b$};
\node[below right] at (1,-0.5) {$e$};
\node[above right] at (1,0.5) {$d$};
\node[below] at (0,-0.5) {$f$};
\node[above] at (0,0.5)  {$c$};
\end{tikzpicture}
\caption{}
\label{intro fig} 
\end{figure}

With a bit more work, it is also possible to deduce from Theorems~\ref{main cmp} and~\ref{main general} the following result, which I found rather unexpected.

\begin{corintro}\label{infinite out finite cmp}
Let $G$ be a special group. Suppose that $\out(G)$ is infinite, but $\out_{\rm cmp}(G)$ is finite. Then there exists $x\in G$ such that the $G$--conjugacy class of the subgroup $Z_G(x)$ is preserved by a finite-index subgroup of $\out(G)$.
\end{corintro}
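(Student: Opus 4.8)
The plan is to combine Theorems~\ref{main cmp} and~\ref{main general} with an $\out(G)$--equivariance argument. First I would invoke Theorem~\ref{main general}: since $\out(G)$ is infinite, it contains an infinite-order DLS automorphism $\varphi$ of Type~(1), (2) or~(3). By the characterisation of coarse-median-preserving generalised Dehn twists established in this paper (the ``result of independent interest'' from the introduction), a partial conjugation or fold associated to a splitting over the centraliser of a finite subset, and a twist of the form in Type~(2), are all coarse-median preserving --- the convex-cocompactness hypotheses needed being exactly those built into Theorem~\ref{main cmp}. Hence, if $\varphi$ were of Type~(1) or~(2), the cyclic group $\langle\varphi\rangle$ would be an infinite subgroup of $\out_{\rm cmp}(G)$, against our hypothesis. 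So $\varphi$ is of Type~(3): there are an element $x\in G$, a nontrivial homomorphism $\psi\colon Z_G(x)\to\Z$, an HNN splitting $G=A\ast_C$ with $C=\ker\psi$ and stable letter $t\in Z_G(x)$, and $z\in Z(C)$, such that $\varphi|_A=\id$, $\varphi(t)=zt$, and $\varphi$ has infinite order in $\out(G)$.

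Next I would produce an $\out(G)$--invariant family of centralisers. Let $\mathcal X\cu G$ be the set of $y\in G$ for which $\out(G)$ contains an infinite-order Type~(3) twist ``based at $y$'' (i.e.\ one as above, with $x$ replaced by $y$). If $\Phi\in\aut(G)$ and $y\in\mathcal X$ is witnessed by data $(A',C',t',z',\psi',\varphi')$, then $\Phi(y)\in\mathcal X$, witnessed by $(\Phi A',\Phi C',\Phi t',\Phi z',\psi'\o\Phi^{-1},\Phi\varphi'\Phi^{-1})$: one checks that $\psi'\o\Phi^{-1}\colon Z_G(\Phi y)\to\Z$ has kernel $\Phi C'$, that $\Phi z'\in Z(\Phi C')$, and that $\Phi\varphi'\Phi^{-1}$ has the same order as $\varphi'$ in $\out(G)$. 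Thus $\Phi(\mathcal X)=\mathcal X$, so $\out(G)$ acts on the set $\mathcal F$ of $G$--conjugacy classes $[Z_G(y)]$ with $y\in\mathcal X$, and $\mathcal F\ni[Z_G(x)]$. If $\mathcal F$ turns out to be finite, then the $\out(G)$--stabiliser of $[Z_G(x)]$ has finite index, which is exactly the assertion of the Corollary; so everything reduces to showing that $\mathcal F$ is finite.

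I expect this finiteness to be the main obstacle. For $y\in\mathcal X$, the subgroup $Z_G(y)$ is convex-cocompact (centralisers in special groups are convex-cocompact, being intersections of convex-cocompact subgroups of the ambient right-angled Artin group), and $G$ splits over the co-$\Z$ subgroup $C_y=\ker(\psi_y\colon Z_G(y)\to\Z)$, with $Z(C_y)$ infinite and $Z_G(y)=\langle C_y,t_y\rangle$. The difficulty is to bound the number of $G$--conjugacy classes of such $Z_G(y)$: as the abstract stresses, centralisers --- hence also the edge groups $C_y$ --- in special groups can be arbitrarily large, being neither slender nor even amenable in general, so the classical accessibility theorems for splittings over slender or small subgroups do not apply here. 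I would instead extract the bound from the $\R$--tree machinery of the present paper: the stable $G$--trees constructed here have arc-stabilisers that are precisely centralisers and such co-$\Z$ centralisers, and running the Rips machine on them yields a finite graph of actions, which limits the complexity of the relevant splittings and hence the number of conjugacy classes of the $C_y$, and of the $Z_G(y)$. Equivalently, one could package this as a canonical JSJ-type decomposition of $G$ over the family of convex-cocompact co-$\Z$ centralisers, automatically preserved up to finite index by $\out(G)$, from which $[Z_G(y)]$ is read off as an edge group together with its associated twisting direction. Granting the finiteness of $\mathcal F$, the previous step completes the proof.
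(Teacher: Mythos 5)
Your reduction to a Type~(3) twist and the $\aut(G)$--equivariance of the set $\mathcal X$ are both fine, and you have correctly located where the real work lies. But the finiteness of $\mathcal F$ --- the set of conjugacy classes $[Z_G(y)]$ with $y\in\mathcal X$ --- is the entire content of the statement, and your proposal does not prove it. The set of $G$--conjugacy classes of centralisers in a special group is infinite in general (already in $F_2\times F_2$ there are infinitely many classes of subgroups $Z_G((a,b))=\langle a\rangle\times\langle b\rangle$), and Corollary~\ref{fin many conj parabolics} only gives finiteness for $G$--\emph{parabolic} subgroups, not for $G$--semi-parabolic ones or centralisers. The route you gesture at --- an accessibility or JSJ-type decomposition over convex-cocompact co-$\Z$ centralisers from which the classes $[Z_G(y)]$ could be read off --- is precisely what the paper says is unavailable: these edge groups form a class not closed under taking subgroups, and the limit trees here are not small or superstable, so no off-the-shelf accessibility applies. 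As written, the argument is circular at its crux: you need the $\out(G)$--orbit of $[Z_G(x)]$ to be finite, which is the conclusion.

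The paper avoids this finiteness question altogether by arguing contrapositively. Assume that for \emph{every} $x\in G$ the class $[Z_G(x)]$ has infinite $\out(G)$--orbit. Since the collection of such classes is countable, Lemma~\ref{Neumann's lemma} (a consequence of B.~H.~Neumann's theorem on covering a group by cosets) produces a \emph{single} sequence $\phi_n\in\out(G)$ such that, for every $x$, the classes $\phi_n([Z_G(x)])$ are eventually pairwise distinct. Running the Bestvina--Paulin construction and the proof of Theorem~\ref{main cmp} with this sequence, the only use of coarse-median preservation was hypothesis~(c1) of Proposition~\ref{important addenda new}, which is now replaced by hypothesis~(c2): there is no $x$ with $\varphi_n(Z_G(x))$ in a single conjugacy class for $\om$--all $n$. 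The argument then outputs an infinite-order coarse-median preserving DLS automorphism, contradicting the finiteness of $\out_{\rm cmp}(G)$. If you want to salvage your direct approach, you would need an independent proof that the $\out(G)$--orbit of $[Z_G(x)]$ is finite, which is not in the paper and appears substantially harder than the corollary itself.
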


Theorems~\ref{main cmp} and~\ref{main general} (and their proof) provide significant evidence for the following conjecture, which has so far resisted all our attempts at a proof. We briefly illustrate the issue at the end of the introduction: it involves \emph{shortening} the $G$--action on an $\R$--tree (which we were able to do), while \emph{not lengthening} finitely many other $G$--trees (which appears to be quite delicate). 

\begin{conj*}
Let $G$ be a special group.
\begin{enumerate}
\item The DLS automorphisms appearing in Theorem~\ref{main cmp} generate a finite-index subgroup of $\out_{\rm cmp}(G)$. Moreover, finitely many such automorphisms suffice.
\item The DLS automorphisms appearing in Theorems~\ref{main cmp} and~\ref{main general} generate a finite-index subgroup of $\out(G)$. Moreover, finitely many such automorphisms suffice.
\end{enumerate}
In particular, $\out(G)$ and $\out_{\rm cmp}(G)$ are finitely generated.
\end{conj*}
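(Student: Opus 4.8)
The plan is to run the Bestvina--Paulin--Rips--Sela scheme with the $\R$--trees produced in the body of this paper as input to the Rips machine, and then to implement a shortening argument. Suppose $\out(G)$ is infinite (the argument for $\out_{\rm cmp}(G)$ is parallel, one only has to carry the coarse-median-preservation hypothesis through every step). Fix a finite generating set $S$ of $G$ and let $\Phi\leq\out(G)$ be the subgroup generated by all DLS automorphisms of the three types appearing in Theorems~\ref{main cmp} and~\ref{main general}. By the classical reduction (as in \cite{Rips-Sela,Levitt-hyp,Guirardel-Levitt-GGD15}), to prove that $\Phi$ has finite index it suffices to show: for every sequence $(\varphi_n)$ of pairwise distinct classes in $\out(G)$, after replacing each $\varphi_n$ by $\psi_n\varphi_n$ with $\psi_n\in\Phi$, the word lengths $\max_{s\in S}|\varphi_n(s)|$ stay bounded --- which, by residual finiteness of $G$, forces the sequence to be eventually constant, a contradiction. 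Finite generation of $\Phi$, hence of $\out(G)$, would then follow as usual from an accessibility bound on the number of splittings involved.

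Concretely, I would feed the $\varphi_n$ into the Bestvina--Paulin construction applied to the action of $G$ on the universal cover of the Salvetti complex of $\A_{\G}$: rescaling the metric by $\max_{s\in S}|\varphi_n(s)|\to\infty$ and passing to an ultralimit yields a nontrivial minimal isometric $G$--action on an $\R$--tree $T$. By the main technical results of the paper, $T$ is stable, its arc-stabilisers are centralisers or co-abelian subgroups of centralisers, and its tripod-stabilisers are centralisers, so $T$ lies within the scope of the Rips machine for stable actions \cite{BF-stable}. Running it produces a $G$--invariant graph-of-actions decomposition of $T$ whose vertex actions are simplicial, axial, or of surface type, each contributing a splitting of $G$ over one of the centraliser-type subgroups of Corollary~\ref{splitting cor}; from such a piece one reads off a generalised Dehn twist of Type~(1), (2) or~(3), and the Rips--Sela shortening argument (in the form used by Levitt and Guirardel--Levitt) shows that precomposing with a suitable such twist strictly decreases displacement on $T$, hence shortens $\varphi_n$ for $n$ large. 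This half --- shortening a single limiting tree --- is what the author reports having carried out.

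The crux, and the reason the statement remains a conjecture, is the \emph{uniformity} of the shortening. The move above is guaranteed to shorten only the particular tree $T$ extracted along one subsequence; but controlling $[\out(G):\Phi]$ requires a single sequence of DLS moves that shortens $\varphi_n$ with respect to \emph{all} the finitely many actions that can arise as limits along different subsequences, i.e.\ one must shorten one $G$--tree while provably not lengthening the others. In the hyperbolic and toral relatively hyperbolic cases this works because acylindrical accessibility bounds the number of pieces and the JSJ / Makanin--Razborov structure of the space of actions is understood; for special groups one would need the analogous accessibility statement for splittings over centralisers and over kernels of homomorphisms $Z_G(x)\ra\Z$, together with the assertion that the DLS moves of Types~(1)--(3) form a complete family of shortening moves that no infinite sequence can evade. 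A plausible route is to combine the embedding of $G$ into a graph tower of Casals-Ruiz--Kazachkov \cite{CRK2} with a careful dictionary between the centraliser splittings of the tower and those of $G$; but making the two inputs --- shortening and non-lengthening --- interact is exactly the delicate point flagged in the introduction, and is where I expect essentially all of the difficulty to lie.
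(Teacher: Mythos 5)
What you have written is not a proof: the statement you are addressing is stated in the paper as a \emph{Conjecture}, and the paper itself offers no proof of it. Your proposal reproduces the standard Rips--Sela shortening strategy and then, in your final paragraph, explicitly concedes the step that would actually close the argument --- namely, that a single sequence of DLS moves must shorten the displacement on \emph{every} limiting tree simultaneously, or at least shorten one of the trees $T_{\om}(v)$ arising from the factors of $\prod_{v\in\G}\T_v$ without lengthening the others. This is precisely the obstruction the introduction identifies as the reason the Conjecture remains open (the surface-group example with a pseudo-Anosov, where shortening the stable tree $T_+$ necessarily lengthens the unstable tree $T_-$, shows the difficulty is not merely technical). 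Since everything before that point is conditional on resolving it, the proposal establishes nothing beyond what Theorems~\ref{main cmp} and~\ref{main general} already give.

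Two smaller points. First, your reduction invokes ``residual finiteness of $G$'' to conclude that boundedness of $\max_{s\in S}|\varphi_n(s)|$ forces the sequence to be eventually constant; the correct ingredient is simply that a finitely generated group has finitely many elements of bounded word length, so only finitely many outer classes can have uniformly bounded displacement (after conjugating to minimise). Second, the appeal to ``acylindrical accessibility'' and to a JSJ structure for the relevant splittings is itself problematic here: as the paper notes, the class of edge groups (centralisers and kernels of maps $Z_G(x)\ra\Z$) is not closed under taking subgroups, so JSJ techniques do not obviously apply, and the accessibility bound you would need (Lemma~\ref{UCC for centraliser kernels} controls chains of such subgroups, not numbers of splittings) is not established. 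If you want to make progress, the concrete target is a compatibility statement among the finitely many trees $T_{\om}(v)$ --- for instance via the cospecial/median-space actions suggested at the end of the introduction --- rather than a refinement of the single-tree shortening you describe.
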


For one-ended hyperbolic groups $G$, one can give a much more precise description of $\out(G)$: up to passing to finite index, it has a free abelian normal subgroup whose quotient is a finite product of mapping class groups \cite{Sela-GAFAII,Levitt-hyp}. A similar description also holds in the toral relatively hyperbolic case \cite{Guirardel-Levitt-GGD15}. 

The usual approach to these results relies heavily on the existence of JSJ decompositions. For general special groups $G$, it appears that one would need to consider splittings over a class of groups that is not closed under taking subgroups, so JSJ techniques seem rather hard to apply.

\medskip
Finally, we would like to highlight the following result, which is required in the proof of Theorem~\ref{main cmp}. It characterises which DLS automorphisms of a special group preserve the coarse median structure, provided that they originate from splittings over convex-cocompact subgroups. For a more general statement on (possibly non-special) cocompactly cubulated groups, see Theorem~\ref{CMP GDT 2}.

\begin{thmintro}\label{CMP GDT}
Let $G$ be a special group with a splitting $G=A\ast_CB$ or $G=A\ast_C$.
Suppose that $C$ is convex-cocompact in $G$. Then:
\begin{enumerate}
\item all partial conjugations and folds determined by this splitting are coarse-median preserving;
\item if $G=A\ast_C$ and $z\in Z_C(C)$ is such that $\langle z\rangle$ is convex-cocompact in $G$ and $Z_G(z)$ is contained in a conjugate of $A$, then the twist determined by $z$ is coarse-median preserving;
\item more generally, if for every $c\in Z_C(C)\setminus\{1\}$ the centraliser $Z_G(c)$ is contained in a conjugate of $A$, then all transvections determined by $G=A\ast_C$ are coarse-median preserving.
\end{enumerate}
\end{thmintro}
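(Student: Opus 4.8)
The plan is to verify that each of the relevant DLS automorphisms $\varphi$ satisfies the criterion for coarse-median preservation in terms of Bass--Serre trees. Let $T$ be the Bass--Serre tree of the given splitting $G=A\ast_CB$ or $G=A\ast_C$, and let $X$ be the universal cover of the Salvetti complex in which $G\hookrightarrow\A_\G$ is convex-cocompact, with its median structure. Since $C$ is convex-cocompact, each edge-stabiliser acts convex-cocompactly on $X$, and the action $G\acts T$ is \emph{acylindrical} on the relevant ``coarsely invariant'' part; more precisely, one should think of $X$ together with the $G$--tree $T$ as packaged into a single cube complex (a ``tree of cube complexes'') on which $G$ acts, and $\varphi$ acts on the level of $\pi_1$ by a homeomorphism that does not disturb the median structure except by a bounded amount along the edge-spaces. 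The first step is therefore to set up this ambient object carefully and reduce coarse-median preservation of $\varphi$ to checking that $\varphi$ quasi-preserves the median on each vertex-space and each edge-space, which is the content of the more technical Theorem~\ref{CMP GDT 2} alluded to in the statement; I would deduce Theorem~\ref{CMP GDT} from that general statement.

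For part (1), the key point is that a partial conjugation fixes $A$ pointwise and conjugates $B$ by $z\in Z_A(C)$, so on the Bass--Serre tree $T$ the automorphism $\varphi$ is realised by a twist supported near a single orbit of edges, fixing one side and applying an isometry (the action of $z$) to the other side. Because $z$ centralises $C$, the two identifications agree on the edge-space, so $\varphi$ extends to a genuine isometry of a space built from $X$ and $T$; hence it is coarse-median preserving essentially by construction. The case of a fold is the same, except that now $\langle z\rangle\perp C$, so instead of $z$ acting trivially on the edge-space $C$ one uses that the orthogonality $\langle z\rangle\perp C$ implies $z$ and $C$ generate a subgroup splitting as a direct product with convex-cocompact factors, so translating by $z$ still preserves the median structure on the (coarsely) product edge-space. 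In both cases the heart of the argument is that the ``discrepancy'' of $\varphi$ is concentrated along edge-spaces over which it acts by a median isometry.

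For parts (2) and (3), $\varphi$ is a transvection: it fixes $A$ and sends the stable letter $t$ to $zt$. Now $\varphi$ is no longer realised by an isometry of a fixed space, because translating $t$ genuinely changes the gluing data of the HNN extension. The strategy is to show $\varphi$ quasi-preserves the median anyway by exploiting that the ``new'' and ``old'' HNN gluings differ by the element $z$, whose powers all lie in $Z_C(C)$ (part (2): $z\in Z_C(C)$ with $\langle z\rangle$ convex-cocompact; part (3): every nontrivial $c\in Z_C(C)$ has $Z_G(c)$ inside a conjugate of $A$). The relevant estimate is that, because $Z_G(z)$ (resp.\ $Z_G(c)$) sits inside a conjugate of $A$, the element $z$ does not ``spread'' across infinitely many vertex-spaces of $T$: the fixed set of $z$ in $X$, or the subcomplex on which the product $\langle z\rangle\times C$ acts cocompactly, meets only boundedly many translates of the edge-space. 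This boundedness is precisely what is needed to see that iterating $\varphi$ changes medians of triples by a uniformly bounded amount. Concretely I would write each $\varphi^n$ as a composition of a genuine isometry with a map supported on a uniformly-sized neighbourhood of one edge-orbit, and conclude using the fellow-travelling of the two HNN presentations along that neighbourhood.

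The main obstacle I anticipate is part (2)/(3): controlling the transvection. For partial conjugations and folds one has an honest isometry and the argument is soft, but for transvections one must quantify how the alteration $t\mapsto zt$ propagates through the Bass--Serre tree and show it stays bounded — this is exactly where the hypothesis ``$Z_G(z)$ (or $Z_G(c)$) is contained in a conjugate of $A$'' is doing the real work, preventing the perturbation from cascading. Making the fellow-travelling estimate uniform in $n$, rather than merely for a single application of $\varphi$, is the delicate point, and it is presumably why the general statement is isolated as Theorem~\ref{CMP GDT 2}; I would prove that general cubical statement first and then specialise, checking that convex-cocompactness of $C$ (and of $\langle z\rangle$ in part (2)) supplies the hypotheses of the general theorem.
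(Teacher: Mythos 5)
Your high-level plan matches the paper's: Theorem~\ref{CMP GDT} is indeed deduced from the general cubical statement (Theorem~\ref{CMP GDT 2}), applied to a cospecial cubulation, and the ambient object you gesture at (``$X$ together with the $G$--tree $T$ packaged into a single cube complex'') is realised in the paper as a cubical Guirardel core of $X\times T$, which inflates the convex-cocompact subgroup $C$ to a genuine hyperplane. Your reading of the hypotheses in (2) and (3) --- that ellipticity of $Z_G(z)$ (resp.\ $Z_G(c)$) in $T$ is what stops the perturbation $t\mapsto zt$ from cascading through the Bass--Serre tree --- is also the right intuition and corresponds to Lemma~\ref{uniform constant lem}(2). (One small misconception: you worry about uniformity in the power $\varphi^n$, but coarse-median preservation need only be checked for $\varphi$ and $\varphi^{-1}$; CMP automorphisms form a group, so powers come for free with $n$--dependent constants.)

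The genuine gap is in your treatment of part (1), and it propagates to the rest. You claim that because $z$ centralises $C$, the partial conjugation ``extends to a genuine isometry of a space built from $X$ and $T$'' and is therefore coarse-median preserving ``essentially by construction'', the argument being ``soft''. This is false: the map realising $\varphi$ (the paper's earthquake map, identity on one side of the hyperplane $\mf{w}$ and $x\mapsto zx$ on the other, extended $\varphi$--equivariantly) is only bi-Lipschitz, not an isometry --- note that $z$ need not even preserve $\mf{w}$. The real difficulty, which your sketch never engages with, is that writing $g$ as a normal form produces a chain of translates $\mf{w}=\mf{w}_0,\mf{w}_1,\dots,\mf{w}_{n+1}=g\mf{w}$ of the edge-hyperplane, and the shear introduces a discrepancy of size roughly $D$ at each step; a priori these discrepancies accumulate linearly in $n$ when projected back to $\mf{w}$, which would destroy coarse-median preservation. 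The paper kills this accumulation using the UCP condition (uniformly cocompact projections between translates of $\mf{w}$, Definition~\ref{UCP defn}, verified for special groups in Lemma~\ref{special UCP}), which via Lemma~\ref{descending projections} bounds the length of any strictly descending chain of projections $\pi_{\mf{w}}(g_i\mf{w})$, so that all but boundedly many steps of the chain contribute nothing to the projection to $\mf{w}$ (Proposition~\ref{projection distance prop}). Nothing in your proposal plays the role of this quantitative input, and without it the argument does not close --- indeed, without some such hypothesis the conclusion can fail, as the twist of $\Z^2$ shows that commuting with $C$ alone is not enough.
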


For instance, all DLS automorphisms determined by \emph{acylindrical} splittings of $G$ (over convex-cocompact subgroups) are coarse-median preserving. 
By contrast, the reader can easily check that the twist of $\Z^2=\langle a,b\rangle$ fixing $a$ and mapping $b\mapsto ab$ is not coarse-median preserving (here $A=C=\langle a\rangle$).

Theorem~\ref{CMP GDT} greatly expands the class of automorphisms to which the techniques of \cite{Fio10a} can be applied. In particular, if $\varphi$ is a product of DLS automorphisms as in Theorem~\ref{CMP GDT}, then the subgroup $\Fix\varphi\leq G$ is finitely generated, undistorted, and cocompactly cubulated (see \cite[Theorem~B]{Fio10a}).

\medskip
{\bf On the proof of Theorems~\ref{main cmp} and~\ref{main general}.} Let $G$ be a special group with an infinite sequence of automorphisms $\phi_n\in\out(G)$. The core of the proof lies in the construction of a non-elliptic, stable $G$--tree $T_{\om}$ with ``nice'' arc-stabilisers. From there, the conclusion is technical, but relatively straightforward given the work of Bestvina--Feighn \cite{BF-stable} and Guirardel \cite{Gui-CMH,Gui-Fou}.

Arc-stabilisers of $T_{\om}$ will be centralisers when the $\phi_n$ are coarse-median preserving, and kernels of homomorphisms from centralisers to $\R$ in the general case. Here the expression ``centraliser'' always refers to centralisers of finite subsets of $G$. These properties ensure that $T_{\om}$ is stable, even though arc-stabilisers can be infinitely generated in general. In addition, $T_{\om}$ will not normally be superstable, and tripod-stabilisers can always be nontrivial centralisers.

The mere construction of the tree $T_{\om}$ is straightforward (for instance, it already appears in \cite{Gen-Rtrees}). What requires new ideas is analysing its arc-stabilisers, as we now discuss.

Fixing a convex-cocompact embedding in a right-angled Artin group $G\hookrightarrow\A_{\G}$, we obtain a $G$--action on the universal cover $\X_{\G}$ of the Salvetti complex. It is known that $\X_{\G}$ equivariantly embeds in a finite product of simplicial trees $\prod_{v\in\G}\T_v$, so we obtain a proper action of $G$ on this product. It follows that there exists $v\in\G$ such that the twisted trees $\T_v^{\phi_n}$ diverge, and we can define $T_{\om}$ as an ultralimit of these trees, suitably rescaled.

Arc-stabilisers for the actions $G\acts\T_v$ are quite nice: they are convex-cocompact in $G$, and they are the intersection between $G$ and the centraliser of a subset of $\A_{\G}$. However, they are usually not centralisers \emph{of subsets of $G$}. As a consequence, the moment we start twisting by automorphisms of $G$, we lose all control over their images, which can for instance stop being convex-cocompact in $G$. This compromises the study of arc-stabilisers of $T_{\om}$, since we cannot control those of the trees $\T_v^{\phi_n}$.

The key observation (Theorem~\ref{main perturbation}) is that sufficiently long arcs of $\T_v$ can be \emph{perturbed} so that their $G$--stabiliser (and even their ``almost-stabiliser'') becomes a centraliser in $G$. This rescues us, as automorphisms of $G$ will take centralisers to centralisers, and $\om$--intersections of centralisers are again centralisers.

The main steps of the proof are taken in Sections~\ref{perturbation sect}--\ref{Rips sect}. Section~\ref{perturbation sect} proves Theorem~\ref{main perturbation} on perturbations of arcs of $\T_v$. Section~\ref{limit sect} (and in particular Subsection~\ref{limit subsec}) uses this to obtain all necessary information on arc-stabilisers of $T_{\om}$. Finally, Section~\ref{Rips sect} considers geometric trees approximating $T_{\om}$, applies the Rips machine (blackboxed), and draws the required conclusions.

\medskip
{\bf On the Conjecture.} The classical Rips--Sela argument for hyperbolic groups is based on a well-known \emph{shortening argument} \cite{Rips-Sela,Reinfeldt-Weidmann}. We would like to emphasise that the tree $T_{\om}$ mentioned above can indeed always be shortened by a DLS automorphism of the form described in the statement of the two theorems. This requires a significant amount of work, which we have chosen to omit from this article, as it can be circumvented for a more direct proof of our main results.

The reason why the Conjecture remains unproven is that shortening a \emph{single} tree no longer suffices in this context. Recall that $G$ acts properly on the finite product of trees $\prod_{v\in\G}\T_v$. Each $\T_v$ gives rise to a (possibly elliptic) tree $T_{\om}(v)$ as above. When we shorten some $T_{\om}(v)$ by a DLS automorphism, it is possible that some other $T_{\om}(w)$ will ``get longer'', which deals a serious blow to this kind of approach. 

Excluding this eventuality would require some compatibility conditions between the trees $T_{\om}(v)$. For instance, if $G$ is a surface group and the $\phi_n$ are powers of a pseudo-Anosov with stable and unstable trees $T_{\pm}$, it seems that we cannot hope to shorten $T_+$ without lengthening $T_-$.

Perhaps a more successful strategy would be based on a theory of cospecial actions on median spaces, the first promising steps of which were taken in \cite[Subsection~9.4]{CRK2}.

\medskip
{\bf Structure of the paper.} Section~\ref{prelim sect} contains basic information on $\CAT$ cube complexes, coarse median groups and ultralimits. Within it, Subsection~\ref{cc subsect} proves a few new results on convex-cocompactness in cube complexes, though these will certainly not surprise experts. Section~\ref{special sect} studies centralisers in special groups and the kernels of their homomorphisms to abelian groups.

As discussed above, the proof of Theorems~\ref{main cmp} and~\ref{main general} is spread out over Sections~\ref{perturbation sect}--\ref{Rips sect}. The final argument and the proof of Corollary~\ref{infinite out finite cmp} are given at the end of Subsection~\ref{main argument sect}.

Theorem~\ref{CMP GDT} is proved in Section~\ref{CMP GDT sect}. For the latter, an ingredient we find of particular interest is the discussion of Guirardel cores of products of cube complexes in Subsection~\ref{Guirardel core sect}.

\medskip
{\bf Acknowledgements.} I warmly thank Daniel Groves for the many stimulating conversations on the content of this paper and the Conjecture. I am grateful to Anthony Genevois for pointing me to \cite{Crisp-Wiest}, to Vincent Guirardel for dispelling some of my misconceptions on geometric trees, and to Michah Sageev for suggesting the first paragraph of the proof of Proposition~\ref{splitting euclidean factor}. I also thank Montserrat Casals-Ruiz, Michael Hull, Gilbert Levitt, Ashot Minasyan and Zlil Sela for useful comments and interesting discussions related to this work. Finally, I am particularly grateful to the referee for their many helpful suggestions.

I thank the Max Planck Institute for Mathematics in Bonn, the University of Bonn and Ursula Hamenst\"adt for their hospitality and financial support while this work was being completed.

\tableofcontents

\section{Preliminaries.}\label{prelim sect}

\subsection{$\CAT$ cube complexes.}\label{CCC sect}

We refer the reader to \cite{CS,Sageev-notes,CFI,Fio1} for basic facts on $\CAT$ cube complexes. Here we simply fix terminology and notation, and recall a few standard results. Some of these are relevant also in the $1$--dimensional case of simplicial trees.

Let $X$ be a $\CAT$ cube complex.

\subsubsection{Halfspaces and hyperplanes.}

We denote by $\mscr{W}(X)$ and $\mscr{H}(X)$, respectively, the set of hyperplanes and halfspaces of $X$. If $\mf{h}$ is a halfspace, $\mf{h}^*$ denotes its complement. Two hyperplanes are \emph{transverse} if they are distinct and meet. Halfspaces $\mf{h},\mf{k}$ are transverse if they are bounded by transverse hyperplanes; equivalently, all four intersections $\mf{h}\cap\mf{k}$, $\mf{h}^*\cap\mf{k}$, $\mf{h}\cap\mf{k}^*$, $\mf{h}^*\cap\mf{k}^*$ are nonempty. We also say that a hyperplane is transverse to a halfspace if it is transverse to the hyperplane bounding it. Subsets $\mc{U},\mc{V}\cu\mscr{W}(X)$ are transverse if every element of $\mc{U}$ is transverse to every element of $\mc{V}$. 

If $A$ and $B$ are sets of vertices, $\mscr{H}(A|B)\cu\mscr{H}(X)$ is the subset of halfspaces $\mf{h}$ such that $A\cu\mf{h}^*$ and $B\cu\mf{h}$. Similarly, $\mscr{W}(A|B)\cu\mscr{W}(X)$ is the set of hyperplanes bounding the elements of $\mscr{H}(A|B)$. We say that the elements of $\mscr{W}(A|B)$ \emph{separate} $A$ and $B$.

\subsubsection{Metrics and geodesics.}

We always endow $X$ with its \emph{$\ell^1$ metric} (denoted by $d$), rather than the $\CAT$ metric. We will only be interested in distances between vertices of $X$ (possibly after passing to its cubical subdivision), in which case the $\ell^1$ metric coincides with the intrinsic path metric of the $1$--skeleton. The latter is also known as the \emph{combinatorial metric}. If $x$ and $y$ are vertices of $X$, we have $d(x,y)=\#\mscr{W}(x|y)$.

All geodesics in $X$ are implicitly assumed to be combinatorial geodesics contained in the $1$--skeleton and with their endpoints at vertices. For such a geodesic $\alpha$, we denote by $\mscr{W}(\alpha)\cu\mscr{W}(X)$ the set of hyperplanes dual to the edges of $\alpha$. We say that these are the hyperplanes \emph{crossed} by $\alpha$. We write $\ell(\alpha)$ for the \emph{length} of $\alpha$, which coincides with the cardinality of $\mscr{W}(\alpha)$.

\subsubsection{Convexity.}

If $Y\cu X$ is a convex subcomplex, we do not distinguish between hyperplanes of $Y$ and hyperplanes of $X$ separating vertices of $Y$. The set of such hyperplanes is denoted $\mscr{W}(Y)$. If $Y\cu X$ is a convex subcomplex, we denote its \emph{gate-projection} by $\pi_Y\colon X\ra Y$. For every vertex $x\in X$, the image $\pi_Y(x)$ is a vertex of $Y$ and it is the unique point of $Y$ that is closest to $x$. Gate-projections are $1$--Lipschitz and satisfy $\mscr{W}(x|\pi_Y(x))=\mscr{W}(x|Y)$.

If $Y,Z\cu X$ are convex subcomplexes, we say that $y\in Y$ and $z\in Z$ form a \emph{pair of gates} if $d(y,z)=d(Y,Z)$. Equivalently, $\pi_Y(z)=y$ and $\pi_Z(y)=z$, or again $\mscr{W}(y|z)=\mscr{W}(Y|Z)$. The projections $\pi_Y(Z)$ and $\pi_Z(Y)$ are also convex subcomplexes.

Note that all hyperplanes $\mf{w}\in\mscr{W}(X)$ are convex subcomplexes of the cubical subdivision of $X$. For this reason, they have a cellular structure that makes them into lower-dimension $\CAT$ cube complexes. In addition, we can consider the gate-projections $\pi_{\mf{w}}\colon X\ra\mf{w}$. 

\subsubsection{Isometries and actions.}

We denote by $\aut(X)$ the group of \emph{automorphisms} of $X$, i.e.\ isometries that take vertices to vertices. All actions on $X$ are assumed to be by automorphisms without explicit mention.

If $g\in\aut(X)$, we denote its translation length by $\ell_X(g)=\inf_{x\in X}d(x,gx)$. The \emph{minimal set} of $g$ is the subset $\Min(g,X)\cu X$ (or just $\Min(g)$) where $\ell_X(g)$ is realised. We write $\Fix(g,X)\cu X$ for the set of fixed points of $g$.

An action $G\acts X$ is \emph{without inversions} if there do not exist $g\in G$ and $\mf{h}\in\mscr{H}(X)$ with $g\mf{h}=\mf{h}^*$. Note that $\aut(X)$ acts on the cubical subdivision of $X$ without inversions. Given an action without inversions $G\acts X$, every $g\in G$ contains at least one vertex of $X$ in its minimal set. In particular, if $g$ does not have fixed vertices, then it admits an \emph{axis}: a $\langle g\rangle$--invariant geodesic along which $g$ translates nontrivially \cite{Haglund}.

A hyperplane $\mf{w}\in\mscr{W}(X)$ is \emph{skewered} by $g\in\aut(X)$ if it bounds a halfspace $\mf{h}$ with $g\mf{h}\subsetneq\mf{h}$. Given an action $G\acts X$, we keep the notation from \cite{Fio10b} and write:
\begin{align*}
\mc{W}_1(G,X)&:=\{\mf{w}\in\mscr{W}(X) \mid \text{$\exists g\in G$ skewering $\mf{w}$}\}, \\
\overline{\mc{W}}_0(G,X)&:=\{\mf{w}\in\mscr{W}(X) \mid \text{$\forall g\in G$, either $g\mf{w}=\mf{w}$, or $g\mf{w}$ is transverse to $\mf{w}$}\}.
\end{align*}
We write $\mc{W}_1(G)$ and $\overline{\mc{W}}_0(G)$ when the ambient cube complex is clear, or $\mc{W}_1(g)$ and $\overline{\mc{W}}_0(g)$ if $G=\langle g\rangle$. Note that a hyperplane in $\mc{W}_1(g)$ might only be skewered by a \emph{power} of $g$.

Consider an action $G\acts X$. We say that $X$ is \emph{$G$--essential} if $\mc{W}_1(G)=\mscr{W}(X)$. We say that $X$ is simply \emph{essential} if no halfspace of $X$ is at finite Hausdorff distance from the hyperplane bounding it. When $G$ acts cocompactly on $X$, these two notions of essentiality coincide.

We say that $X$ is \emph{$G$--hyperplane-essential} if every hyperplane $\mf{w}\in\mscr{W}(X)$ is $G_{\mf{w}}$--essential with its induced cubical structure. Here $G_{\mf{w}}$ denotes the subgroup of $G$ leaving $\mf{w}$ invariant. Again, we say that $X$ is simply \emph{hyperplane-essential} if every hyperplane of $X$ is an essential cube complex with its induced cubical structure. As before, $X$ is $G$--hyperplane-essential if and only if $X$ is hyperplane-essential, provided that $G$ acts cocompactly on $X$ (see e.g.\ \cite[Lemma~2.3]{FH}).

Given an action $G\acts X$ and a hyperplane $\mf{w}$ lying neither in $\mc{W}_1(G)$ nor in $\overline{\mc{W}}_0(G)$, exactly one of the two halfspaces bounded by $\mf{w}$ contains an entire $G$--orbit in $X$. Taking the intersection of all such halfspaces, we obtain a $G$--invariant convex subcomplex of $X$, which is nonempty as soon as $G$ satisfies weak assumptions. As a consequence, we obtain the following result (see Remark~3.16, Theorem~3.17, Proposition~3.23(2) and Corollary~4.6 in \cite{Fio10b} for more details).

\begin{prop}\label{convex core prop}
If $G\leq\aut(X)$ is finitely generated and acts on $X$ without inversions, then there exists a nonempty, $G$--invariant, convex subcomplex $\overline{\mc{C}}(G,X)\cu X$ such that:
\begin{enumerate}
\item there is a $G$--invariant splitting $\overline{\mc{C}}(G,X)=\overline{\mc{C}}_0(G,X)\x\mc{C}_1(G,X)$, where the sets of hy\-per\-planes dual to the two factors are precisely $\overline{\mc{W}}_0(G,X)$ and $\mc{W}_1(G,X)$;
\item the action $G\acts\overline{\mc{C}}_0(G,X)$ has fixed vertices, whereas $\mc{C}_1(G,X)$ is $G$--essential;
\item if $h\in\aut(X)$ normalises $G$, then $h$ preserves $\overline{\mc{C}}(G,X)$ and leaves invariant its two factors.
\end{enumerate}
\end{prop}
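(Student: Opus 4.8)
The plan is to realise $\overline{\mc{C}}(G,X)$ as a ``canonical convex core'' obtained by deleting a $G$--invariant family of halfspaces, in the spirit of Caprace--Sageev. Write $\mscr{B}\cu\mscr{W}(X)$ for the set of hyperplanes lying in neither $\mc{W}_1(G,X)$ nor $\overline{\mc{W}}_0(G,X)$. The first step is the dichotomy flagged just before the statement: if $\mf{w}\in\mscr{B}$ then no element of $G$ skewers $\mf{w}$, yet some $g\in G$ sends $\mf{w}$ to a distinct, non-transverse hyperplane, so exactly one of the four quadrants cut out by $\mf{w}$ and $g\mf{w}$ is empty. Reading off which one, and using the absence of skewering and of inversions, one singles out a \emph{unique} halfspace $\mf{h}_{\mf{w}}$ bounded by $\mf{w}$ for which $g\mf{h}_{\mf{w}}^*\subsetneq\mf{h}_{\mf{w}}$ for some $g\in G$: uniqueness holds since, were also $g'\mf{h}_{\mf{w}}\subsetneq\mf{h}_{\mf{w}}^*$ for some $g'\in G$, then $gg'\mf{h}_{\mf{w}}\subsetneq g\mf{h}_{\mf{w}}^*\subsetneq\mf{h}_{\mf{w}}$ would exhibit $gg'$ skewering $\mf{w}$. (No element of $G$ maps $\mf{h}_{\mf{w}}$ or $\mf{h}_{\mf{w}}^*$ strictly inside itself, again because $\mf{w}$ is not skewered.) This uniqueness yields the equivariance $k\mf{h}_{\mf{w}}=\mf{h}_{k\mf{w}}$ for every $k\in\aut(X)$ normalising $G$ — both $\mscr{B}$ and the condition defining $\mf{h}_{\mf{w}}$ depend on $G$ only up to conjugacy — and in particular for $k\in G$.

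Next I would set $\overline{\mc{C}}(G,X)$ to be the subcomplex spanned by $\bigcap_{\mf{w}\in\mscr{B}}\mf{h}_{\mf{w}}$. It is convex as an intersection of convex subcomplexes, and the equivariance above makes it invariant under $G$ and, more generally, under the normaliser of $G$ in $\aut(X)$; since such an automorphism also preserves $\mc{W}_1(G,X)$ and $\overline{\mc{W}}_0(G,X)$ separately, it will respect the product splitting constructed below, giving claim~(3). The substantive point is nonemptiness, and this is the only place finite generation is used. The heuristic: if $\mf{w}\in\mscr{B}$ and a fixed vertex $x_0$ lies in $\mf{h}_{\mf{w}}^*$, then $Gx_0\not\cu\mf{h}_{\mf{w}}^*$ (apply to a point of the orbit the element realising $g\mf{h}_{\mf{w}}^*\subsetneq\mf{h}_{\mf{w}}$), so $\mf{w}$ separates $x_0$ from the $G$--orbit of a finite generating set, and only finitely many hyperplanes do that; one then slides $x_0$ across these finitely many halfspaces by a gate-projection onto their common intersection (nonempty by the Helly property), checking that no other $\mf{h}_{\mf{w}}$ is exited. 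Making this precise — in particular, establishing that the halfspaces $\mf{h}_{\mf{w}}$ form a mutually consistent family — is the delicate part, and is exactly what the results of \cite{Fio10b} referenced above supply.

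For the product decomposition, the key claims are that the hyperplanes crossing $\overline{\mc{C}}:=\overline{\mc{C}}(G,X)$ are precisely those in $\mc{W}_1(G,X)\sqcup\overline{\mc{W}}_0(G,X)$ — these two sets are disjoint, since a skewered hyperplane has a $G$--translate distinct from and nested with it and so is not in $\overline{\mc{W}}_0(G,X)$ — and that every $\overline{\mc{W}}_0(G,X)$--hyperplane crossing $\overline{\mc{C}}$ is transverse, inside $\overline{\mc{C}}$, to every $\mc{W}_1(G,X)$--hyperplane crossing it; the standard product-recognition criterion for convex subcomplexes then splits $\overline{\mc{C}}$ as $\overline{\mc{C}}_0(G,X)\x\mc{C}_1(G,X)$ with the two factors dual to $\overline{\mc{W}}_0(G,X)$ and $\mc{W}_1(G,X)$. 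A skewered $\mf{w}$ crosses $\overline{\mc{C}}$ because an element skewering it has an axis crossing $\mf{w}$ and lying in the nonempty $G$--invariant convex subcomplex $\overline{\mc{C}}$ (both halfspaces of $\mf{w}$ are ``deep'' and meet every such subcomplex); an $\overline{\mc{W}}_0(G,X)$--hyperplane $\mf{w}$ is transverse to a skewered $\mf{v}$ because otherwise, feeding a high power of the element skewering $\mf{v}$ into the defining property of $\overline{\mc{W}}_0(G,X)$ (every $G$--translate of $\mf{w}$ equals $\mf{w}$ or is transverse to it), $\mf{w}$ would itself be skewered, contradicting $\mc{W}_1(G,X)\cap\overline{\mc{W}}_0(G,X)=\emptyset$; and the one remaining point — that no $\overline{\mc{W}}_0(G,X)$--hyperplane has $\overline{\mc{C}}$ entirely on one side — is part of the bookkeeping carried out as in \cite{Fio10b}. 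Finally, $\mc{C}_1(G,X)$ is $G$--essential because its hyperplanes are exactly $\mc{W}_1(G,X)$, and $G$ fixes a vertex of $\overline{\mc{C}}_0(G,X)$ because there $\mc{W}_1$ is empty and every hyperplane is equal-or-transverse to all its $G$--translates, which forces a global fixed vertex for the finitely generated group $G$.

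The step I expect to be the main obstacle is the nonemptiness of $\bigcap_{\mf{w}\in\mscr{B}}\mf{h}_{\mf{w}}$ together with the accompanying bookkeeping about which hyperplanes ultimately survive in $\overline{\mc{C}}(G,X)$: this is precisely where an intersection-of-halfspaces construction breaks down for a group that is not finitely generated, and where the argument must genuinely exploit the hypothesis on $G$. For this part I would follow the treatment in \cite{Fio10b}; everything else is bookkeeping with the standard machinery recalled in this section.
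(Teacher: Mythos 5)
Your proposal follows essentially the same route as the paper: for each hyperplane lying in neither $\mc{W}_1(G,X)$ nor $\overline{\mc{W}}_0(G,X)$ you single out the distinguished halfspace (your skewering-free characterisation $g\mf{h}_{\mf{w}}^*\subsetneq\mf{h}_{\mf{w}}$ picks out the same halfspace as the paper's ``contains an entire $G$--orbit''), intersect these, and defer nonemptiness, the fixed-vertex statement for $\overline{\mc{C}}_0$ and the remaining bookkeeping to \cite{Fio10b} — which is precisely what the paper does. The argument is correct as far as it goes, with the genuinely hard steps outsourced to the same reference.
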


Again, we simply write $\overline{\mc{C}}(G)$ when the cube complex $X$ is clear, and $\overline{\mc{C}}(g)$ if $G=\langle g\rangle$.

\subsubsection{Median subalgebras and median morphisms.}

A \emph{median algebra} is a set $M$ equipped with a ternary operation $m\colon M^3\ra M$ invariant under permutations and satisfying:
\begin{itemize}
\item $m(a,a,b)=a$ for $a,b\in M$;
\item $m(m(a,x,b),x,c)=m(a,x,m(b,x,c))$ for $a,b,c,x\in M$.
\end{itemize}
A \emph{median subalgebra} is a subset $N\cu M$ with $m(N\x N\x N)\cu N$. A subset $C\cu M$ is \emph{convex} if $m(C\x C\x M)\cu C$. A subset $\mf{h}\cu M$ is a \emph{halfspace} if both $\mf{h}$ and its complement $\mf{h}^*:=M\setminus\mf{h}$ are convex and nonempty. A \emph{wall} of $M$ is an unordered pair $\{\mf{h},\mf{h}^*\}$, where $\mf{h}$ is a halfspace. Let $\mscr{W}(M)$ and $\mscr{H}(M)$ denote, respectively, the sets of walls and halfspaces of $M$.

If $A\cu M$ is a subset, we denote by $\langle A\rangle\cu M$ the median subalgebra generated by $A$. This is the intersection of all median subalgebras of $M$ that contain $A$.

Every $\CAT$ cube complex $X$ has a natural structure of median algebra given by its \emph{median operator} $m\colon X^3\ra X$. If $x,y,z\in X$ are vertices, $m(x,y,z)$ is also a vertex and it is uniquely determined by the following property: a halfspace of $X$ contains $m(x,y,z)$ if and only if it contains at least two among $x,y,z$. The definitions of convexity, halfspaces and hyperplanes/walls coincide for a cube complex $X$ and the median-algebra structure on its $0$--skeleton. In addition, note that the map $m\colon X^3\ra X$ is $1$--Lipschitz.

A map $f\colon M\ra N$ between median algebras is a \emph{median morphism} if, for all $x,y,z\in M$, we have
 $f(m(x,y,z))=m(f(x),f(y),f(z))$. If $f$ is a median morphism, then preimages of convex subsets are again convex. In particular, if $f$ is onto, preimages of halfspaces are halfspaces. When $f$ is onto, it also takes convex subsets to convex subsets. 
 
 If $X$ is a $\CAT$ cube complex and $C\cu X$ is a convex subcomplex, then the gate-projection to $C$ is a median morphism.

A median algebra is \emph{discrete} if any two of its points are separated by only finitely many walls. It was shown by Chepoi \cite{Chepoi} and later by Roller \cite{Roller} that every discrete median algebra is canonically isomorphic to the $0$--skeleton of a unique $\CAT$ cube complex. We will rely on this fact repeatedly in Subsection~\ref{Guirardel core sect}, referring to it as \emph{Chepoi--Roller duality}.

\subsubsection{Restriction quotients.}

Consider an action $G\acts X$ and a $G$--invariant set of hyperplanes $\mc{U}\cu\mscr{W}(X)$. Then there exists a unique action on a $\CAT$ cube complex $G\acts X(\mc{U})$ satisfying the following properties:
\begin{itemize}
\item there is a $G$--equivariant, surjective, median morphism $p\colon X\ra X(\mc{U})$;
\item if $\mf{h}$ is a halfspace of $X(\mc{U})$, then $p^{-1}(\mf{h})$ is a halfspace of $X$ bounded by a hyperplane in $\mc{U}$;
\item this establishes a $G$--equivariant bijection between hyperplanes of $X(\mc{U})$ and elements of $\mc{U}$. 
\end{itemize}
The cube complex $X(\mc{U})$ is known as the \emph{restriction quotient} of $X$ associated to $\mc{U}$. Restriction quotients were introduced by Caprace and Sageev in \cite[p.\ 860]{CS}.

\subsection{Euclidean factors.}

The goal of this subsection is to prove the following result, which will only be required in Subsection~\ref{earthquake cmp sect} in order to prove Theorem~\ref{CMP GDT}.

\begin{prop}\label{splitting euclidean factor}
Consider a product of $\CAT$ cube complexes $X\x L$, where $L$ is a quasi-line. If $G$ acts properly, cocompactly and faithfully on $X\x L$ preserving the splitting, then $G$ has a finite-index subgroup of the form $H\x\Z$, where $H$ acts trivially on $L$ and the $\Z$--factor acts trivially on $X$.
\end{prop}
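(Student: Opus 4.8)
The plan is to exploit the product structure to isolate the $L$-factor and then use that $\aut(L)$ is virtually $\Z$. First I would observe that, since $G$ preserves the splitting $X \times L$, the action gives a homomorphism $G \to \aut(X) \times \aut(L)$, and I compose with the projection to $\aut(L)$ to obtain $\rho\colon G \to \aut(L)$. Because $L$ is a quasi-line, $\aut(L)$ is a finitely generated group acting cocompactly on a quasi-line; it is well known (and can be seen directly via the action on the two ends of $L$) that $\aut(L)$ is virtually $\Z$ — more precisely it has a subgroup of index at most $2$ that acts on $L$ by ``translations'' (preserving each end), and this translation subgroup maps to $\Z$ via translation length with finite kernel (the elements fixing a point, equivalently acting with bounded orbits). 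I would pass to the finite-index subgroup $G_0 \leq G$ consisting of elements whose image under $\rho$ lies in this translation subgroup and has, furthermore, image in the kernel of the translation-length homomorphism or a single coset — concretely, let $N = \Ker(\rho) \cdot$(something); cleaner: let $q\colon \aut(L) \to \Z$ be the translation-length homomorphism on the index-$\leq 2$ translation subgroup, set $G_1 = \rho^{-1}(\text{translation subgroup})$, which has index $\leq 2$ in $G$, and consider $q \circ \rho\colon G_1 \to \Z$.

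\textbf{Key steps.} (1) The kernel $K := \Ker(q\circ\rho)$ acts on $L$ with bounded orbits (it maps to the finite ``elliptic'' part of $\aut(L)$), so by properness of $G \acts X \times L$ the group $K$ acts properly \emph{and cocompactly} on $X$: cocompactness follows because $G_1$ acts cocompactly on $X \times L$, $q\circ\rho(G_1)$ is a finite-index subgroup of $\Z$ hence infinite, and an orbit of $K$ together with finitely many translates along $L$ covers $X \times L$ up to finite distance — I would make this precise by noting $G_1/K \cong n\Z$ acts cocompactly on $L$ with $K$ acting cocompactly on $X$. (2) The quotient $G_1/K$ is infinite cyclic, so the extension $1 \to K \to G_1 \to \Z \to 1$ splits: pick any $t \in G_1$ with $q\rho(t)$ a generator of the image, and let the $\Z$-factor be $\langle t \rangle$; this gives $G_1 = K \rtimes \langle t \rangle$ as an abstract group. (3) Now I must upgrade ``$K$ acts boundedly on $L$'' to ``a finite-index subgroup of $K$ acts \emph{trivially} on $L$'', and similarly arrange the $\Z$-factor to act trivially on $X$. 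For the first: the elliptic part of $\aut(L)$ is finite, so $H := \Ker(\rho) = \Ker(\rho|_K)$ already acts trivially on $L$ and has finite index in $K$, hence finite index in $G_1$ after intersecting with $\langle t \rangle$-conjugates — here I would either replace $K$ by $\bigcap_{i} t^i H t^{-i}$ (finite intersection since $H$ has finite index and $\rho$-images are controlled) to get a $\langle t\rangle$-normalized finite-index subgroup $H'$ of $K$ acting trivially on $L$, so $H' \rtimes \langle t \rangle$ has finite index in $G_1$, hence in $G$. For the $\Z$-factor acting trivially on $X$: after replacing $t$ by $t h$ for a suitable $h \in K$ we cannot in general kill the $X$-component of $t$, BUT the displayed statement only asks for \emph{some} finite-index subgroup of the form $H \times \Z$ with the $\Z$ acting trivially on $X$ — I would instead argue that $t$ has infinite order and its powers, since $t^i \in H'\cdot t^i$ and $H'$ acts trivially on $L$ while $q\rho(t^i) \neq 0$, translate nontrivially along $L$; the subtlety is whether $\langle t \rangle$ can be chosen to centralise $X$. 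Here is the resolution: $H'$ is finitely generated and undistorted, acts properly cocompactly on $X$, and conjugation by $t$ is an automorphism of $H'$; by replacing $t$ with a suitable power $t^N$ I may assume this automorphism is inner, say conjugation by $k \in H'$ — then $t^N k^{-1}$ commutes with $H'$, acts properly on $X\times L$ commuting with the cocompact $H'$-action on $X$, hence acts with bounded orbits on $X$ (its orbits are $H'$-finite), hence, being also infinite order with nontrivial $L$-translation, generates the desired $\Z$; after possibly further finite index it acts trivially on $X$ since $\aut(X)$-elements commuting with a cocompact action and having bounded $X$-orbits lie in a finite group (the action on $X$ is proper). Thus $H' \times \langle t^N k^{-1}\rangle$ has finite index in $G$ of the required form.

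\textbf{Main obstacle.} The genuinely delicate point is Step (3): extracting a \emph{trivial} (not merely bounded) action on each factor. That conjugation by $t$ on $H'$ becomes inner after passing to a power is exactly where one uses that $H'$ is a nice group — for a general cocompactly cubulated group $\out(H')$ need not be finite, so this step must instead be phrased as: the centraliser in $\aut(X\times L)$ of the cocompact subgroup $H'$ is finite-by-($\Z$ acting only on $L$), which follows from properness of the action and the fact that a cocompact group has finite centraliser inside its full isometry group modulo the factor it does not touch. I expect this centraliser argument, rather than any appeal to $\out$, to be the cleanest route and the one requiring the most care; Michah Sageev's suggested first paragraph (acknowledged in the introduction) presumably provides precisely this reduction.
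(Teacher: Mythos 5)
The decisive step of your argument is the assertion that $\aut(L)$ is virtually $\Z$ with \emph{finite} ``elliptic part'' (the kernel of the translation-length homomorphism on the index-$\leq 2$ translation subgroup); you use this to conclude that $\ker\rho$ has finite index in $K$, which is what makes $H'\x\langle t^Nk^{-1}\rangle$ finite-index in $G$. This assertion is false, and it is precisely the pitfall the paper flags in the paragraph following the statement of the proposition: it is the content of \cite[Lemma~2.7]{Nevo-Sageev}, whose failure is witnessed by the quasi-line obtained by stringing countably many squares together diagonally into a chain. There $\aut(L)$ contains a direct product of countably many copies of $\Z/2\Z$ (one flip per square), all of whose elements are elliptic; so the elliptic part of $\aut(L)$ is infinite, and $\aut(L)$ is not even countable, let alone finitely generated as you claim. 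Consequently ``$K$ acts on $L$ with bounded orbits'' cannot be upgraded to ``a finite-index subgroup of $K$ acts trivially on $L$'' by appealing to the structure of $\aut(L)$ alone, and no substitute argument is offered for the specific image $\rho(K)$. This is a genuine gap, not a routine verification.

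For comparison, the paper's proof never analyses $\aut(L)$. One lets $P\cu L$ be the union of all $\CAT$ geodesic lines of $L$; by \cite[Theorem~II.2.14]{BH} this splits $G$--invariantly as $P_0\x\R$ with $P_0$ compact (since $L$ is a quasi-line), so $G$ fixes a point of $P_0$ and hence preserves a genuine geodesic line $L_0\cu L$, on which local finiteness of $L$ forces the $G$--orbits to be discrete. Discreteness of the image in $\isom(L_0)\simeq\isom(\R)$ --- rather than any global structure of $\aut(L)$ --- is what yields the virtually-$\Z$ behaviour, and the direct-product splitting is then extracted by applying Lemma~\ref{discrete factor} twice, first to $X\x L_0$ and then to $X\x L$. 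Your closing normaliser/centraliser argument for making the $\Z$--factor act trivially on $X$ is sound in spirit (it is essentially Lemma~\ref{fi norm}), but it cannot repair the earlier step.
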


This is similar to \cite[Corollary~2.8]{Nevo-Sageev}, whose proof however relies on \cite[Lemma~2.7]{Nevo-Sageev}, which appears to be false (if ``flat'' is to be interpreted in the $\CAT$ sense). For instance, consider the quasi-line obtained by stringing together countably many squares diagonally to form a chain, whose automorphism group is not discrete (it contains a direct product of countably many copies of $\Z/2\Z$). 

For this reason, we give an alternative proof here, based on the following two lemmas. Note that the first lemma can fail if we replace \emph{automorphisms} of $X$ with \emph{isometries} (e.g.\ for $X=\R^2$).

\begin{lem}\label{fi norm}
Let $G\leq\aut(X)$ act properly and cocompactly on the $\CAT$ cube complex $X$. Then $G$ has finite index in its normaliser within $\aut(X)$.
\end{lem}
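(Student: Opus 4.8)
The claim is that if $G\leq\aut(X)$ acts properly and cocompactly on a $\CAT$ cube complex $X$, then $[N_{\aut(X)}(G):G]<\infty$. The natural strategy is a rigidity argument: the normaliser $N:=N_{\aut(X)}(G)$ acts on the same cube complex $X$, and I want to show that this action is still metrically proper and that $N$ acts cocompactly (or at least with a $G$-cocompact fundamental domain), which together force $[N:G]<\infty$. The point of passing to $\aut(X)$ rather than $\isom(X)$ is that automorphisms preserve the vertex set and the combinatorial metric $d$, so translation lengths and coarse geometry are rigid; the cautionary remark about $X=\R^2$ is exactly the warning that for non-cubical isometries the conclusion fails.

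\textbf{Key steps.} First, I would fix a vertex $x_0\in X$ and use cocompactness of $G\acts X$ to pick a finite set of vertices $F$ with $G\cdot F\supseteq X^{(0)}$ and $\diam(F)\leq R$. For $h\in N$, conjugation $g\mapsto hgh^{-1}$ is an automorphism of $G$; I want to bound how far $h$ can move $x_0$. Step two: observe that $h$ normalises $G$, so for every $g\in G$ we have $\ell_X(hgh^{-1})=\ell_X(g)$ (an automorphism of $X$ conjugates translation lengths, since $d$ is $h$-invariant); more usefully, $h\cdot\Min(g,X)=\Min(hgh^{-1},X)$. Pick finitely many $g_1,\dots,g_k\in G$ whose minimal sets (or fixed sets, or axes) have bounded-diameter ``total intersection'' — concretely, since $G$ acts properly and cocompactly, one can choose $g_i$ so that $\bigcap_i \Min(g_i,X)$ is a bounded set around $x_0$ (for instance, take hyperbolic elements whose axes pairwise cross and pin down a bounded region, which exists because a cocompact proper action on an infinite-diameter cube complex contains rank-one-ish or at least enough skewering elements; if $X$ is bounded the statement is trivial since $\aut(X)$ is finite). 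Then $h$ carries this bounded region to $\bigcap_i\Min(h g_i h^{-1},X)=\bigcap_i\Min(g_i',X)$ where $g_i'=hg_ih^{-1}\in G$; since there are only finitely many $G$-orbits of such configurations and $G$ acts properly, the displacement $d(x_0,hx_0)$ is bounded independently of $h\in N$. Step three: a uniform bound $d(x_0,hx_0)\leq D$ for all $h\in N$ means $N\cdot x_0$ lies in a ball of radius $D$, which has finitely many vertices; since $G$ acts properly (so stabilisers of vertices are finite) and $N\cdot x_0$ is contained in finitely many $G$-orbits within this ball, the index $[N:G]$ is finite — explicitly $[N:G]\leq |B(x_0,D)\cap X^{(0)}|\cdot\max_{v}|G_v|$, or more cleanly one runs the orbit–stabiliser / ``proper cocompact implies finite index'' argument directly.

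\textbf{Main obstacle.} The delicate point is Step two: producing, from proper cocompactness alone, a \emph{finite} family $g_1,\dots,g_k\in G$ whose combined minimal/fixed sets trap a bounded region, and doing it in a way that is stable under conjugation (so that $h$-images land in only finitely many $G$-orbits). If $X$ has no unbounded essential part this is vacuous, so after splitting off the maximal bounded factor and applying Proposition~\ref{convex core prop} I may assume $X$ is $G$-essential and $G$-cocompact, hence $G$ contains elements skewering every hyperplane; from there one finds finitely many hyperplanes whose pairwise-crossing pattern isolates a single cube, and elements of $G$ skewering them. The book-keeping that conjugation by $h$ sends this configuration to another configuration of the same combinatorial type (hence, by cocompactness, $G$-equivalent to one of finitely many) is routine but is where the argument has to be written carefully. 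An alternative, perhaps cleaner, route to the same uniform displacement bound: $N$ acts on $X$, the $G$-orbit $G x_0$ is $N$-invariant up to the conjugation action, and $d(x_0, h x_0)$ is controlled because $h$ induces an automorphism of the orbit graph $(Gx_0, d)$ fixing — after bounded adjustment — a basepoint; I would pick whichever of these formulations leads to the shortest clean write-up.
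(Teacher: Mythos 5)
Your argument has a genuine gap, and it sits exactly at the point the lemma is really about. In Step three you conclude from a displacement bound that $[N:G]<\infty$ "since $G$ acts properly (so stabilisers of vertices are finite)". But the stabilisers you need to control are stabilisers of vertices in $N$ (equivalently, the set of $h\in N$ with $d(x_0,hx_0)\leq D$), not in $G$, and properness of the $G$--action says nothing about these: the whole content of the lemma is that the $N$--action is proper, which is not automatic. Indeed, $\aut(X)$ of a locally finite cube complex can have \emph{infinite} vertex stabilisers --- the paper's own counterexample to \cite[Lemma~2.7]{Nevo-Sageev} (a quasi-line of squares strung together diagonally, whose automorphism group contains $\prod_{n}\Z/2\Z$) is precisely of this kind. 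So your final bound $[N:G]\leq |B(x_0,D)\cap X^{(0)}|\cdot\max_v|G_v|$ does not follow; you would need $|N_v|$ there, and that finiteness is what has to be proved. (As a smaller issue, the bound you assert is literally $d(x_0,hx_0)\leq D$ for \emph{all} $h\in N$, which is false since $G\leq N$ and $Gx_0$ is coarsely dense; you mean a bound on coset representatives mod $G$, which is the easy half: $N$ permutes the finitely many $G$--orbits of vertices, so $N=G\cdot N_{x_0}$ up to finite index.)

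Step two is also unsound as a repair: for $G=\Z^n$ acting on the standard cubulation of $\R^n$, one has $\Min(g,X)=X$ for every $g\in G$, so no finite intersection of minimal sets traps a bounded region, and the "rank-one-ish" elements you invoke need not exist. The paper closes the actual gap differently: for $n\in N_p$ (the stabiliser of a vertex $p$ in $N$) one has $d(ngn^{-1}p,p)=d(gp,p)$, so properness forces all conjugation orbits of $N_p$ on $G$ to be finite; since $G$ is finitely generated, a finite-index subgroup of $N_p$ \emph{commutes} with $G$; by local finiteness a further finite-index subgroup $N_0$ fixes pointwise a finite set $F$ of $G$--orbit representatives, and then $n(gf)=(ngn^{-1})(nf)=gf$ shows $N_0$ fixes every vertex, hence is trivial. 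That commuting trick is the missing idea in your write-up; without it (or a substitute), the proposal does not prove the lemma.
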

\begin{proof}
Let $N$ be the normaliser of $G$ in $\aut(X)$. Fix a vertex $p\in X$ and let $N_p$ be the subgroup of $N$ fixing it. Since $N$ permutes the finitely many $G$--orbits of vertices, $N$ has a finite index subgroup of the form $G\cdot N_p$. We will prove the lemma by showing that $N_p$ is finite.

If $g\in G$ and $n\in N_p$, then $d(ngn^{-1}p,p)=d(gp,p)$. Since $G$ acts properly on $X$, all orbits of the conjugation action of $N_p$ on $G$ must be finite. Hence, since $G$ is finitely generated by the Milnor--Schwarz lemma, a finite-index subgroup of $N_p$ commutes with $G$. 

Now, let $F\cu X$ be a finite set of vertices meeting all $G$--orbits. Since $X$ is locally finite and $N_p$ takes vertices to vertices, a finite-index subgroup $N_0\leq N_p$ fixes $F$ pointwise. By the above paragraph, we can choose $N_0$ so that it commutes with $G$. Given $f\in F$, $g\in G$ and $n\in N_0$, we have $n\cdot gf=ngn^{-1}\cdot nf=gf$. This shows that $N_0$ fixes the $0$--skeleton of $X$ pointwise, so it is the trivial group. Since $N_0$ has finite index in $N_p$, this proves that $N_p$ is finite, as required.
\end{proof}

\begin{lem}\label{discrete factor}
Consider a product of $\CAT$ cube complexes $X\x Y$. Let $G$ act properly, cocompactly and faithfully on $X\x Y$ preserving the factors. If the image of $G$ in $\aut(Y)$ is discrete, then $G$ has a finite-index subgroup of the form $H\x K$, where $H$ acts trivially on $Y$ and $K$ acts trivially on $X$.
\end{lem}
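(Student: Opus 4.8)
\emph{Proof proposal.} Write $G_X,G_Y$ for the images of $G$ in $\aut(X)$ and $\aut(Y)$, and set $H:=\Ker(G\to\aut(Y))$ and $K:=\Ker(G\to\aut(X))$. These are normal subgroups of $G$, with $H$ acting trivially on $Y$ and $K$ acting trivially on $X$. Since the action $G\acts X\x Y$ is faithful and preserves the two factors, $G$ embeds in $\aut(X)\x\aut(Y)$, and under this embedding $H$ lands in $\aut(X)\x\{1\}$ while $K$ lands in $\{1\}\x\aut(Y)$. Hence $H$ and $K$ commute in $G$ and $H\cap K=1$, so $H\cdot K=H\x K$ is an internal direct product inside $G$. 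It remains to prove that $H\x K$ has finite index, and the plan is to show that $H$ acts properly and cocompactly on $X$ and then invoke Lemma~\ref{fi norm}.

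Properness of $H\acts X$ is immediate: $H$ fixes $Y$ pointwise, so it preserves the convex subcomplex $X\x\{p_Y\}\cong X$ and acts on it as a restriction of the proper action $G\acts X\x Y$. For cocompactness, I would first record that a proper cocompact action on a $\CAT$ cube complex forces local finiteness (finitely many orbits of cells, finite cell-stabilisers), so $X\x Y$, and hence $Y$, is locally finite. Since $G_Y\leq\aut(Y)$ is discrete and $Y$ is locally finite, $G_Y$ acts metrically properly on $Y$: vertex-stabilisers in $\aut(Y)$ are compact, so those in $G_Y$ are finite, and therefore $\{h\in G_Y:d_Y(hv,v)\leq R\}$ is finite for every vertex $v$ and every $R\geq 0$. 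Now fix a base vertex $p=(p_X,p_Y)$ and $R\geq 0$ with $Gp$ being $R$--dense in $X\x Y$. Given any vertex $x\in X$, choose $g\in G$ with $d(gp,(x,p_Y))\leq R$; then $d_Y(gp_Y,p_Y)\leq R$, so the image of $g$ in $G_Y$ lies in the finite set above, while $d_X(gp_X,x)\leq R$. Picking finitely many $g_1,\dots,g_m\in G$ representing these finitely many elements of $G_Y$, we get $g=g_ik$ for some $i$ and some $k\in H$; using that $H$ is normal in $G$ one deduces that $x$ lies in the $R$--neighbourhood of the $H$--orbit of the vertex $q_i:=g_ip_X$. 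Thus $X$ is the $R$--neighbourhood of the $H$--orbit of the finite set $\{q_1,\dots,q_m\}$, so $H\acts X$ is cocompact.

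With this in hand, Lemma~\ref{fi norm} applied to $H\acts X$ gives $[\,N:H\,]<\infty$, where $N$ is the normaliser of $H$ in $\aut(X)$. The homomorphism $G\to\aut(X)$ is injective on $H$ (its kernel meets $H$ in $H\cap K=1$), so I identify $H$ with its image; since $H\trianglelefteq G$, the image $G_X=G/K$ normalises $H$ inside $\aut(X)$, and hence $[\,G_X:H\,]<\infty$. As $H$ maps onto $HK/K\leq G/K=G_X$, this yields $[\,G:HK\,]=[\,G_X:H\,]<\infty$, and $HK=H\x K$ is the desired finite-index subgroup.

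The genuinely non-formal part of this is the cocompactness of $H\acts X$, and that is precisely where the discreteness of $G_Y$ is used — through the metric properness of $G_Y\acts Y$, itself relying on local finiteness of $Y$. Everything else is bookkeeping with the two kernels together with Lemma~\ref{fi norm}; I would expect the write-up of that bookkeeping (and the short verification that $x$ lands near the $H$--orbit of $q_i$) to be the only place where signs/normality need care.
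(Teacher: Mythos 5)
Your proposal is correct and follows essentially the same route as the paper: identify the two kernels, use local finiteness of $Y$ plus discreteness of the image in $\aut(Y)$ to see that $H=\ker(G\to\aut(Y))$ acts cocompactly (and properly) on $X$, apply Lemma~\ref{fi norm} to get $[G_X:H]<\infty$, and conclude that $HK=H\x K$ has finite index. The only difference is that you spell out the $R$--density bookkeeping for cocompactness, which the paper compresses into the remark that $\ker\rho_Y$ is commensurable to a vertex-stabiliser of $Y$.
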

\begin{proof}
Let $\rho_X\colon G\ra\aut(X)$ and $\rho_Y\colon G\ra\aut(Y)$ be the homomorphisms corresponding to the actions on the two factors. Note that, since $X\x Y$ admits a proper cocompact action, it is locally finite; in particular, $Y$ is locally finite. Thus, since $\rho_Y(G)$ is discrete, it acts on $Y$ with finite vertex-stabilisers. This shows that $\ker\rho_Y$ is commensurable to the $G$--stabiliser of a vertex of $Y$, so $\ker\rho_Y$ acts cocompactly on $X$. By Lemma~\ref{fi norm}, $\rho_X(\ker\rho_Y)$ has finite index in $\rho_X(G)$. This shows that the subgroup $\rho_X^{-1}\rho_X(\ker\rho_Y)=\ker\rho_X\cdot\ker\rho_Y$ has finite index in $G$. Since both kernels are normal in $G$ and they have trivial intersection, this is a direct product.
\end{proof}

The first paragraph of the following proof was suggested to me by Michah Sageev, as it has the advantage of only requiring basic $\CAT$ geometry. Alternatively, one can also use panel collapse \cite[Theorem~A]{Hagen-Touikan} to find a subcomplex of $L$ isomorphic to $\R$.

\begin{proof}[Proof of Proposition~\ref{splitting euclidean factor}]
Let $P\cu L$ be the union of all geodesic lines in $L$ for the $\CAT$ metric. By \cite[Theorem~II.2.14]{BH}, we have a $G$--invariant splitting $P=P_0\x\R$, where $P_0$ is compact. Thus, since $G$ must fix a point of $P_0$, there exists a $G$--invariant $\CAT$--line $L_0\cu L$. Note that $G$ acts on $L_0$ with discrete orbits (e.g.\ because the projection to $L_0$ of the set of vertices of $L$ in a bounded neighbourhood of $L_0$ is $G$--invariant, and $L$ is locally finite). 

Now, since $G$ acts discretely on $L_0\simeq\R$, we can apply Lemma~\ref{discrete factor} to the $G$--action on $X\x L_0$ (modulo its finite kernel). It follows that the image of $G$ in $\aut X$ is discrete and so we can apply Lemma~\ref{discrete factor} again, this time to the whole product $X\x L$. This yields the required conclusion.
%
\end{proof}

\subsection{Convex-cocompactness.}\label{cc subsect}

Fix a proper cocompact action without inversions on a $\CAT$ cube complex $G\acts X$ throughout this subsection.

\begin{defn}\label{cc defn}
A subgroup $H\leq G$ is \emph{convex-cocompact} with respect to the action $G\acts X$ (or just \emph{in $X$}) if there exists an $H$--invariant convex subcomplex $Y\cu X$ on which $H$ acts cocompactly.
\end{defn}

As observed in \cite[Lemma~3.2]{Fio10a}, $H$ is convex-cocompact if and only if the action on $\mc{C}_1(H)$ is cocompact and $H$ is finitely generated. Thus, we can always take $Y$ to be $H$--essential in Definition~\ref{cc defn}, using Proposition~\ref{convex core prop}.

We will often need to quantify convex-cocompactness:

\begin{defn}\label{quant cc defn}
A subgroup $H\leq G$ is \emph{$q$--convex-cocompact} if there exists an $H$--invariant convex subcomplex $Y\cu X$ on which $H$ acts with exactly $q$ orbits of vertices. 
\end{defn}

Since the number of $H$--orbits is minimised by $H$--essential convex subcomplexes, we can always take $Y$ in Definition~\ref{quant cc defn} to be $H$--essential.

\begin{rmk}\label{fi overgroups rmk}
Let $H\leq G$ be $q$--convex-cocompact and let $N$ be the maximum cardinality of the $G$--stabiliser of a vertex of $X$. Then $H$ has index $\leq qN$ in all its finite-index overgroups within $G$.

Indeed, suppose $H$ has finite index $d$ in a subgroup $H'\leq G$. Note that a hyperplane of $X$ is skewered by an element of $H$ if and only if it is skewered by an element of $H'$, so $\mc{C}_1(H)=\mc{C}_1(H')$. Since $\mc{C}_1(H')$ equivariantly embeds in $X$, the $H'$--stabiliser of any vertex of $\mc{C}_1(H')$ has cardinality $\leq N$. Now, if $H'$ acts on $\mc{C}_1(H')$ with $k$ orbits of vertices, then $H$ acts on $\mc{C}_1(H)$ with at least $kd/N$ orbits of vertices, hence $q\geq kd/N\geq d/N$.
\end{rmk}

\begin{lem}\label{cc intersection}
Let $H,K\leq G$ be subgroups that leave invariant convex subcomplexes $Y,Z\cu X$, respectively, and act cocompactly on them. Then $H\cap K$ acts cocompactly on $\pi_Y(Z)$.
\end{lem}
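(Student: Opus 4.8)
The plan is to show that $\pi_Y(Z)$ is a convex subcomplex on which $H\cap K$ acts cocompactly, by comparing $\pi_Y(Z)$ with the $\ell^1$-metric behaviour of the pair $(Y,Z)$. First I would recall (from the preliminaries) that for convex subcomplexes $Y,Z\cu X$ the gate-projection $\pi_Y(Z)$ is again convex, and that a hyperplane $\mf{w}$ lies in $\mscr{W}(\pi_Y(Z))$ if and only if $\mf{w}\in\mscr{W}(Y)$ and $\mf{w}$ separates two vertices of $Z$ after projecting, i.e.\ $\mscr{W}(\pi_Y(Z)) = \mscr{W}(Y)\cap(\text{hyperplanes crossing }Z\text{ or separating }Y\text{ from }Z \text{-translates})$. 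More usefully, a standard fact is that $\mscr{W}(\pi_Y(Z))$ consists of those hyperplanes of $Y$ that are \emph{not} transverse-disjoint from $Z$; concretely $\mf{w}\in\mscr{W}(\pi_Y(Z))$ iff $\mf{w}$ crosses $Y$ and $\pi_Y(\mf{w})\cap\pi_Y(Z)$ is "cut" by $\mf{w}$, which reduces to $\mf{w}$ crossing $Z$ or crossing the bridge between $Y$ and $Z$. The key structural input I would use is the \emph{bridge} decomposition: $\pi_Y(Z)$ and $\pi_Z(Y)$ are isomorphic (via gate-projections, which restrict to inverse median isomorphisms between them), and $\mscr{W}(\pi_Y(Z)) = \mscr{W}(Z)\setminus\{$hyperplanes crossing $Z$ but transverse to all of $\mscr{W}(Y)\}$, up to the hyperplanes of the bridge which are finite in number if $Y,Z$ are at finite distance — but in general $Y,Z$ need not be at finite distance, so the bridge $\mscr{W}(Y|Z)$ could be infinite.

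\textbf{Key steps.} The main steps, in order: (1) Identify $\mscr{W}(\pi_Y(Z))$ precisely. I claim $\mf{w}\in\mscr{W}(\pi_Y(Z))$ iff $\mf{w}\in\mscr{W}(Y)$ and $\mf{w}$ is \emph{not} transverse to every hyperplane of $Z$; equivalently $\mf{w}\in\mscr{W}(Y)$ and either $\mf{w}\in\mscr{W}(Z)$ or $\mf{w}\in\mscr{W}(Y|Z)$ (after translating so the two subcomplexes are positioned generically). This is because $\pi_Y$ collapses exactly the hyperplanes of $Y$ that separate $Y$ from the relevant points, and preserves the rest. (2) Observe that both $\mscr{W}(Z)$ and $\mscr{W}(Y|Z)$ are $H\cap K$-invariant once we note: $\mscr{W}(Y)$ is $H$-invariant, $\mscr{W}(Z)$ is $K$-invariant, hence $\mscr{W}(Y)\cap\mscr{W}(Z)$ is $(H\cap K)$-invariant, and similarly $\mscr{W}(Y|Z)$ is invariant under anything stabilising both $Y$ and $Z$, in particular under $H\cap K$. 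So $\pi_Y(Z)$ is $(H\cap K)$-invariant — which is anyway clear directly since $\pi_Y$ is equivariant with respect to $H\cap K$. (3) Prove cocompactness. Here the idea is: $H$ acts cocompactly on $Y$, so $\mscr{W}(Y)/H$ is finite-ish in the sense that $Y/H$ is compact; similarly $Z/K$ compact. Choose $y_0\in Y$, $z_0\in Z$ realizing (or nearly realizing — there may be no realization if $d(Y,Z)$ is not attained, but in a locally finite cube complex with cocompact actions it is attained up to the action) a pair of gates, i.e.\ $\pi_Y(z_0)=y_0$. Then for any vertex $p\in\pi_Y(Z)$, write $p=\pi_Y(z)$ for some $z\in Z$; by cocompactness of $K\acts Z$ there is $k\in K$ with $d(kz, z_0)\leq D$ for a uniform $D$; then $d(\pi_Y(kz), y_0')\leq D$ where $y_0'=\pi_Y(kz_0)$... but $kz_0$ need not project into $\pi_Y(Z)$ coherently. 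The cleaner route: use that $\pi_Y(Z)=\pi_Y(\pi_{\mathrm{Hull}(Y\cup Z)}(\cdot))$ and reduce to the case $d(Y,Z)<\infty$ by replacing $Z$ with a translate; since $G$ acts cocompactly and $Y$ is $H$-cocompact, up to the $H$-action there are only finitely many "directions" in which $Z$-translates meet $Y$, so finitely many isometry types of the pair $(Y,gZ)$ with $\pi_Y(gZ)$ nonempty and of bounded "width". Then $\pi_Y(Z)$ is built from finitely many $(H\cap K)$-pieces.

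\textbf{Main obstacle.} The hard part will be step (3), specifically handling the case where $Y$ and $Z$ are not at finite Hausdorff distance, so that the "bridge" $\mscr{W}(Y|Z)$ is infinite and $\pi_Y(Z)$ is a genuinely unbounded convex subcomplex. In that situation cocompactness of $H\cap K$ on $\pi_Y(Z)$ is not automatic; one must exploit that the infinite part of $\pi_Y(Z)$ comes from hyperplanes that are \emph{simultaneously} in $\mscr{W}(Y)$ and $\mscr{W}(Z)$ — because a hyperplane in $\mscr{W}(Y|Z)$ is crossed by $Y$ but not by $Z$, hence collapsed when we intersect the two combinatorial structures, so $\mscr{W}(\pi_Y(Z))$ is in fact $\mscr{W}(Y)\cap\mscr{W}(Z)$ up to finitely many ($H$- and $K$-cocompactly many) exceptions. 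Once that refinement of step (1) is in place, $\pi_Y(Z)$ is quasi-isometric to a subcomplex whose hyperplane set is $H$-cocompact (as a subset of $\mscr{W}(Y)$) and $K$-cocompact (as a subset of $\mscr{W}(Z)$); a standard argument (e.g.\ via the fact that $H\acts Y$ and $K\acts Z$ cocompactly forces the common "essential core" to be $(H\cap K)$-cocompact, or by invoking that convex subcomplexes with cocompact stabilisers intersect cocompactly — this is where I would cite or adapt the relevant lemma from \cite{Fio10b} on the core $\mc{C}_1$) finishes the proof. I would expect the write-up to phrase this via the restriction quotient of $Y$ to $\mscr{W}(\pi_Y(Z))$ and check that the induced $(H\cap K)$-action is cocompact because that hyperplane set is "thin" relative to both cocompact actions.
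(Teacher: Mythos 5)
There is a genuine gap: the entire content of the lemma is the cocompactness statement in your step~(3), and that step is never actually carried out. Your first attempt at it you yourself abandon mid-argument, and the ``cleaner route'' and the closing paragraph reduce to invoking ``the fact that convex subcomplexes with cocompact stabilisers intersect cocompactly'' --- which is precisely the statement to be proved, not a fact one may cite. More importantly, your argument nowhere uses that the ambient action $G\acts X$ is \emph{proper} (a standing hypothesis of this subsection), and the lemma is false without it. For example, let $G=F_2\x F_2$ act on the Cayley tree $X$ of $F_2$ through the first projection (cocompact but not proper), let $H=F_2\x\{1\}$ and let $K$ be the diagonal, with $Y=Z=X$: both act cocompactly on $X$, yet $H\cap K$ is trivial and $\pi_Y(Z)=X$ is unbounded. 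Any correct proof must therefore exploit properness, so the purely combinatorial route you sketch --- comparing $\mscr{W}(Y)\cap\mscr{W}(Z)$ with the two cocompact hyperplane actions --- cannot close on its own. (Two smaller points: the correct identification in your step~(1) is simply $\mscr{W}(\pi_Y(Z))=\mscr{W}(Y)\cap\mscr{W}(Z)$; a hyperplane in $\mscr{W}(Y|Z)$ does not cross $Y$ and so never contributes. And $\mscr{W}(Y|Z)$ is always finite, of cardinality $d(Y,Z)$; what may be infinite is the Hausdorff distance, which is irrelevant here.)

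For comparison, the paper argues by local finiteness rather than by hyperplane combinatorics. Properness of $G\acts X$ together with cocompactness of $H\acts Y$ and $K\acts Z$ shows that every ball in $X$ meets only finitely many distinct $G$--translates of $Y$ and of $Z$, hence only finitely many translates of $\pi_Y(Z)$. A standard criterion (here \cite[Lemma~2.3]{Hagen-Susse}) then gives that the full $G$--stabiliser $L$ of $\pi_Y(Z)$ acts cocompactly on it. Finally, since only finitely many translates of $Y$ contain $\pi_Y(Z)$ and only finitely many translates of $Z$ contain it in their $d(Y,Z)$--neighbourhood, a finite-index subgroup of $L$ preserves both $Y$ and $Z$, and $H\cap K$ has finite index in $G_Y\cap G_Z$, hence in $L$. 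If you want to salvage your approach, the step you must supply is exactly this counting argument (or an equivalent use of properness); the median-algebra identification of $\mscr{W}(\pi_Y(Z))$ is true but does not by itself yield cocompactness.
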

\begin{proof}
We split the proof into the following three claims.

\smallskip
{\bf Claim~1:} \emph{for every ball $B\cu X$, only finitely many distinct $G$--translates of $Y$ and $Z$ meet $B$.}

\smallskip\noindent
\emph{Proof of Claim~1.}
Suppose this is not the case for a ball $B\cu X$. Then, since $B$ contains only finitely many vertices, there are infinitely many, pairwise distinct translates $g_nY$ all containing the same vertex $p\in B$. Since $H\acts Y$ is cocompact, there exists a compact subset $Q\cu Y$ and elements $h_n\in H$ with $h_ng_n^{-1}p\in Q$. Since $G\acts X$ is proper, the set $F=\{h_ng_n^{-1}\}$ is finite. Hence $g_n^{-1}\in h_n^{-1}F$ and $g_n\in F^{-1}\cdot H$, contradicting the fact that the set $\{g_nY\}$ is infinite.  
\hfill$\blacksquare$

\smallskip
{\bf Claim~2:} \emph{for every ball $B\cu X$, only finitely many distinct $G$--translates of $\pi_Y(Z)$ meet $B$.}

\smallskip\noindent
\emph{Proof of Claim~2.}
Consider $g\in G$ such that $g\cdot\pi_Y(Z)=\pi_{gY}(gZ)$ intersects $B$. Then $gY$ intersects $B$, while $gZ$ intersects the neighbourhood of $B$ of radius $d(Y,Z)$. By Claim~1, there are only finitely many possibilities for the sets $gY$ and $gZ$. It follows that only finitely many sets of the form $\pi_{gY}(gZ)$ intersect $B$. 
\hfill$\blacksquare$

\smallskip
Let $L\leq G$ be the $G$--stabiliser of $\pi_Y(Z)$. Claim~2 and \cite[Lemma~2.3]{Hagen-Susse} imply that $L\acts\pi_Y(Z)$ is cocompact.

\smallskip
{\bf Claim~3:} \emph{a finite-index subgroup of $L$ leaves $Y$ and $Z$ invariant.}

\smallskip\noindent
\emph{Proof of Claim~3.}
By Claim~1, only finitely many distinct $G$--translates of $Y$ contain $\pi_Y(Z)$. By the same argument, $Z$ is among the finitely many $G$--translates of $Z$ that contain $\pi_Y(Z)$ in their neighbourhood of radius $d(Y,Z)$. Observing that $L$ permutes these translates of $Y$ and $Z$, we conclude that a finite-index subgroup of $L$ must preserve both $Y$ and $Z$.
\hfill$\blacksquare$

\smallskip
Let $G_Y,G_Z\leq G$ denote the $G$--stabilisers of $Y$ and $Z$. Since $H\acts Y$ is cocompact, $H$ has finite index in $G_Y$. Similarly, $K$ has finite index in $G_Z$. It follows that $H\cap K$ has finite index in $G_Y\cap G_Z$, which has finite index in $L$ by Claim~3. We have already observed that $L\acts\pi_Y(Z)$ is cocompact, so this implies that $H\cap K\acts\pi_Y(Z)$ is cocompact.
\end{proof}

Given a subgroup $H\leq G$, we denote by $N_G(H)\leq G$ its normaliser.

\begin{lem}\label{almost normalisers}
If $H,K\leq G$ are convex-cocompact in $X$, there exists a finite subset $F\cu G$ such that:
\[\{g\in G \mid gHg^{-1}\leq K\} = K\cdot F\cdot N_G(H).\]
\end{lem}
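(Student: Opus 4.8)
The statement says that the set $S=\{g\in G\mid gHg^{-1}\leq K\}$ is a union of finitely many double cosets $KgN_G(H)$. I would prove this by combining Lemma~\ref{cc intersection} with a counting/properness argument, exactly in the spirit of Claim~1 of that lemma. First observe that $S$ is genuinely a union of $K$--$N_G(H)$ double cosets: if $gHg^{-1}\leq K$, then for $k\in K$ and $n\in N_G(H)$ we have $(kgn)H(kgn)^{-1}=kg(nHn^{-1})g^{-1}k^{-1}=k(gHg^{-1})k^{-1}\leq K$. So the content is finiteness of the number of double cosets.

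\textbf{Key steps.} Fix $H$--invariant and $K$--invariant convex subcomplexes $Y,Z\cu X$ on which $H$, respectively $K$, act cocompactly; using Proposition~\ref{convex core prop} I may take $Y$ to be $H$--essential, so that $\mscr{W}(Y)=\mc{W}_1(H)$ and in particular $H$ skewers every hyperplane of $Y$. Now suppose $g\in S$, i.e.\ $gHg^{-1}\leq K$. Then $gHg^{-1}$ skewers every hyperplane of $gY$, hence every such hyperplane lies in $\mc{W}_1(K)$, i.e.\ is crossed by $Z$ (here I use that $Z$ is $K$--essential, which I may also assume); therefore $\pi_Z(gY)=gY$ is ``parallel into'' $Z$ in the sense that all its hyperplanes are hyperplanes of $Z$. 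The cleaner way to phrase the consequence: $gY\cap Z$ — or rather the gate-projection $\pi_Z(gY)$ — satisfies $\mscr{W}(\pi_Z(gY))=\mscr{W}(gY)$, so $\pi_Z(gY)$ is isometric to $gY$ via the gate-projection, hence it is a convex subcomplex on which $gHg^{-1}$ acts cocompactly. Since $K\acts Z$ is cocompact and the cardinality of $K$--orbit representatives among vertices of $Z$ realising a given diameter is finite (by properness and local finiteness, as in Claim~1 of Lemma~\ref{cc intersection}), there are only finitely many $K$--orbits of convex subcomplexes of $Z$ isometric to $Y$. So after translating $g$ on the left by a suitable $k\in K$, I may assume $gY$ lies in a fixed finite list of convex subcomplexes $Y=Y_1,\dots,Y_m\cu X$ (the finitely many $G$--translates of $Y$ that are ``contained in $Z$'' up to the $K$--action and meet a fixed compact fundamental domain). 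Thus $g$ lies in finitely many cosets $K\cdot g_i\cdot \mathrm{Stab}_G(Y)$, where $\mathrm{Stab}_G(Y)$ is the $G$--stabiliser of $Y$; since $H\acts Y$ is cocompact, $H$ has finite index in $\mathrm{Stab}_G(Y)$, and $\mathrm{Stab}_G(Y)\leq N_G(H)$ need not hold but $\mathrm{Stab}_G(Y)$ is commensurable with $H$ and normalises $H$ up to finite index — more carefully, $\mathrm{Stab}_G(Y)$ contains $H$ with finite index, so it is contained in the commensurator, and in fact $\mathrm{Stab}_G(Y)\cdot N_G(H)$ breaks into finitely many $N_G(H)$--cosets. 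Absorbing these finitely many extra coset representatives, $S$ becomes a finite union of double cosets $KgN_G(H)$, as desired.

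\textbf{Main obstacle.} The delicate point is the last bookkeeping step: $\mathrm{Stab}_G(Y)$ is an overgroup of $H$ in which $H$ has finite index, but it is not literally a subgroup of $N_G(H)$, so I cannot directly write $g\mathrm{Stab}_G(Y)\cu gN_G(H)$. I need to check that $\mathrm{Stab}_G(Y)$ is contained in $N_G(H)$ up to finite index — equivalently that $H$ is normal in $\mathrm{Stab}_G(Y)$ up to finite index — and then account for the finitely many cosets of $N_G(H)\cap\mathrm{Stab}_G(Y)$ in $\mathrm{Stab}_G(Y)$. Concretely, $H$ has a characteristic finite-index subgroup $H_0\trianglelefteq\mathrm{Stab}_G(Y)$ (e.g.\ the intersection of its finitely many $\mathrm{Stab}_G(Y)$--conjugates), and $N_G(H_0)\supseteq\mathrm{Stab}_G(Y)$; passing between $N_G(H)$ and $N_G(H_0)$ costs only a finite index, which gets absorbed into the finite set $F$. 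The other point requiring care — less an obstacle than a thing to state precisely — is the reduction ``$gHg^{-1}\leq K$ implies $gY$ is, up to the $K$--action and gate-projection, one of finitely many subcomplexes'': this is where $H$--essentiality of $Y$ and the skewering characterisation $\mscr{W}(Y)=\mc{W}_1(H)$ are essential, since a priori $gY$ need not be contained in $Z$ on the nose, only its hyperplane set is forced to consist of hyperplanes crossing $Z$, and one must pass to $\pi_Z(gY)$ and invoke properness (Claim~1 of Lemma~\ref{cc intersection}) to get finiteness.
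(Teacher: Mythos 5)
Your setup is sound up to and including the observation that, for $g$ with $gHg^{-1}\leq K$, one has $\mscr{W}(gY)\cu\mc{W}_1(K)\cu\mscr{W}(Z)$, so that $\pi_Z(gY)$ is a convex subcomplex of $Z$ on which $gHg^{-1}$ acts cocompactly, with the same number $q$ of orbits of vertices as $H\acts Y$. The gap is in the finiteness step and, more seriously, in what you do with it. First, ``there are only finitely many $K$--orbits of convex subcomplexes of $Z$ isometric to $Y$'' is not justified by Claim~1 of Lemma~\ref{cc intersection}: that claim concerns $G$--translates of $Y$, whereas $\pi_Z(gY)$ is not a translate of $Y$, and an arbitrary convex subcomplex isometric to $Y$ carries no cocompact group action to feed into a properness argument. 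Second, even granting finiteness of the $K$--orbits of the \emph{projections} $\pi_Z(gY)$, you then deduce that the \emph{translates} $gY$ themselves fall into finitely many $K$--orbits (your list $Y_1,\dots,Y_m$), and this is false. Take $G=F_2\x F_2$ acting on a product of trees, $H=\langle(a,1)\rangle$ with $Y=\mathrm{axis}(a)\x\{v_0\}$, and $K=\langle a\rangle\x\langle b\rangle$ with $Z=\mathrm{axis}(a)\x\mathrm{axis}(b)$. Here $\{g\mid gHg^{-1}\leq K\}=\langle a\rangle\x F_2=N_G(H)$ is a single double coset, but the relevant translates $gY=\mathrm{axis}(a)\x\{vv_0\}$, $v\in F_2$, form infinitely many $K$--orbits of parallel copies (and most are not contained in $Z$, only their projections are). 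So the step ``$g\in K\cdot g_i\cdot\mathrm{Stab}_G(Y)$ for finitely many $g_i$'' cannot be reached this way: $N_G(H)$ moves $Y$ to parallel copies, so one must not try to pin down $gY$ up to $K$--translation and $\mathrm{Stab}_G(Y)$.

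The repair is to track the subgroup rather than the subcomplex, which is what the paper does. After translating by $k\in K$ so that $\pi_Z(kgY)$ meets a fixed finite set $Z_0$ of $K$--orbit representatives of vertices of $Z$, the group $(kg)H(kg)^{-1}$ acts on this subcomplex with at most $q$ orbits of vertices, hence by \cite[Theorem~I.8.10]{BH} it is generated by elements displacing a point of the $q$--neighbourhood of $Z_0$ by at most $2q+1$; properness and local finiteness then leave only finitely many possibilities for the subgroup $(kg)H(kg)^{-1}$, i.e.\ finitely many $K$--conjugacy classes among the subgroups $gHg^{-1}\leq K$. From $gHg^{-1}=k\left(g_iHg_i^{-1}\right)k^{-1}$ one gets $g\in kg_iN_G(H)$ directly, which also dissolves your ``main obstacle'': there is no need to compare $\mathrm{Stab}_G(Y)$ with $N_G(H)$ at all (and in any case that comparison is immediate, since $\mathrm{Stab}_G(Y)$ is a finite union of cosets $s_jH\cu s_jN_G(H)$, with no need for characteristic subgroups).
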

\begin{proof}
Let $\mscr{C}(q)$ be the collection of $q$--convex-cocompact subgroups of $G$. 

\smallskip
{\bf Claim:} \emph{for each $q\geq 1$, only finitely many $K$--conjugacy classes of subgroups of $K$ lie in $\mscr{C}(q)$.}

\smallskip\noindent
\emph{Proof of claim.}
Let $Y\cu X$ be a $K$--invariant, $K$--cocompact, convex subcomplex. Let $Y_0\cu Y$ be a finite set of vertices meeting every $K$--orbit. 

Consider a subgroup $L\leq K$ lying in $\mscr{C}(q)$. Then there exists an $L$--invariant convex subcomplex $Z\cu X$ on which $L$ acts with $\leq q$ orbits of vertices. Replacing $Z$ with its gate-projection to $Y$, we can assume that $Z\cu Y$. Conjugating $L$ by an element of $K$, we can assume that $Z$ meets $Y_0$.

Now, the $q$--neighbourhood of $Y_0$ in $Y$ contains a set of vertices $Z_0\cu Z$ meeting every $L$--orbit in $Z$. By \cite[Theorem~I.8.10]{BH}, $L$ is generated by the elements $\{g\in L\mid d(gZ_0,Z_0)\leq 1\}$. 

Summing up, every subgroup of $K$ lying in $\mscr{C}(q)$ is $K$--conjugate to a subgroup generated by a subset of the finite set $\{g\in G\mid d(gY_0,Y_0)\leq 2q+1\}$. This proves the claim.
\hfill$\blacksquare$

\smallskip
Choose $q'$ such that $H\in\mscr{C}(q')$. Then, for every $g\in G$, we have $gHg^{-1}\in\mscr{C}(q')$. The claim implies that $K$ contains only finitely many subgroups of this form up to $K$--conjugacy, and the lemma follows.
\end{proof}

\begin{defn}
An action on a $\CAT$ cube complex $H\acts X$ is \emph{non-transverse} if there do not exist a hyperplane $\mf{w}\in\mscr{W}(X)$ and an element $h\in H$ such that $\mf{w}$ and $h\mf{w}$ are transverse.
\end{defn}

Recall from Proposition~\ref{convex core prop} that $N_G(H)$ leaves invariant the convex subcomplex $\overline{\mc{C}}(H)$ and its splitting $\overline{\mc{C}}_0(H)\x\mc{C}_1(H)$.

\begin{lem}\label{C_0 cocompact}
Let $H\leq G$ be convex-cocompact in $X$. Suppose that $H$ acts non-transversely on $X$. Then the action $N_G(H)\acts\overline{\mc{C}}_0(H)$ is cocompact.
\end{lem}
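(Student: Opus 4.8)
The plan is to reduce cocompactness of $N_G(H)\acts\overline{\mc{C}}_0(H)$ to the cocompactness statement of Lemma~\ref{cc intersection}, applied to $H$ against suitable $G$--translates of $\overline{\mc{C}}(H)$. The point of the non-transversality hypothesis is that it forces every hyperplane of $X$ that is dual to the $\overline{\mc{C}}_0$--factor of $\overline{\mc{C}}(H)$ to be, in a precise sense, ``rigid'' under $H$: since $H$ stabilises $\overline{\mc{C}}_0(H)$ and acts on it with a fixed vertex (Proposition~\ref{convex core prop}(2)), and $H$ acts non-transversely, the hyperplanes in $\overline{\mc{W}}_0(H)$ are permuted by $H$ with the property that $h\mf{w}=\mf{w}$ for all $h\in H$ whenever $\mf{w}$ lies suitably deep; more precisely one should show that for each $\mf{w}\in\overline{\mc{W}}_0(H)$ the stabiliser $H_{\mf{w}}$ has finite index in $H$. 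This is where non-transversality is used: a hyperplane in $\overline{\mc{W}}_0(H)$ is never flipped and never moved transversely, so the $H$--orbit of $\mf{w}$ consists of pairwise-nested hyperplanes, and an orbit of pairwise-nested hyperplanes through a bounded region of $\overline{\mc{C}}_0(H)$ must be finite (else $H$ would skewer $\mf{w}$, contradicting $\mf{w}\in\overline{\mc{W}}_0(H)$).

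Granting that, here is the main line of argument. Let $Y\cu X$ be an $H$--invariant convex subcomplex on which $H$ acts cocompactly; we may take $Y=\overline{\mc{C}}(H)$ with its splitting $\overline{\mc{C}}_0(H)\x\mc{C}_1(H)$. I would first show that $\overline{\mc{C}}_0(H)$ is at finite Hausdorff distance from a single $\mc{C}_1(H)$--fibre inside a slightly larger convex subcomplex, so that bounding orbits in $\overline{\mc{C}}_0(H)$ is equivalent to bounding orbits of $N_G(H)$ acting on $\overline{\mc{C}}(H)$ modulo the $\mc{C}_1(H)$--direction. Then, by Lemma~\ref{fi norm} applied to the cocompact action $H\acts\overline{\mc{C}}(H)$ (or rather the variant for non-cocompact ambient complexes, using that $H$ has finite index in its normaliser within $\aut(\overline{\mc{C}}(H))$), combined with the fact that $N_G(H)$ preserves $\overline{\mc{C}}(H)$ and its splitting, one reduces to showing that the image of $N_G(H)$ in $\aut(\overline{\mc{C}}_0(H))$ acts cocompactly. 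Finally, since $H$ acts on $\overline{\mc{C}}_0(H)$ with a bounded orbit (indeed a fixed vertex) and $\overline{\mc{C}}_0(H)$ is locally finite with $N_G(H)$ permuting the finitely many $H$--orbits of vertices after passing to a bounded region, cocompactness follows from an orbit-counting/properness argument of exactly the flavour used in Claims~1 and~2 of Lemma~\ref{cc intersection}: only finitely many $N_G(H)$--translates of the fixed vertex set can fall in any ball, because $G\acts X$ is proper and the relevant translates are governed by finitely many cosets.

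Alternatively — and this may be cleaner to write — I would argue directly: let $v\in\overline{\mc{C}}_0(H)$ be an $H$--fixed vertex (Proposition~\ref{convex core prop}(2)), and consider the orbit $N_G(H)\cdot v$. For $n\in N_G(H)$, the point $nv$ is fixed by $nHn^{-1}=H$, so $nv\in\Fix(H,X)$. Now $\Fix(H,X)$ is a convex subcomplex, and I claim it is at finite Hausdorff distance from $\overline{\mc{C}}_0(H)$ — indeed $\Fix(H,X)\cu\overline{\mc{C}}_0(H)$ since a hyperplane separating two $H$--fixed points cannot be skewered by $H$ nor (by non-transversality) moved transversely, hence lies in $\overline{\mc{W}}_0(H)$; conversely $\overline{\mc{C}}_0(H)$ is within bounded distance of $\Fix(H,X)$ because $H\acts\overline{\mc{C}}_0(H)$ has a fixed point and bounded orbits. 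So it suffices to show $N_G(H)\acts\Fix(H,X)$ is cocompact. But $\Fix(H,X)$ is $N_G(H)$--invariant and locally finite, and $N_G(H)$ acts on it properly (as a subgroup of $G$ acting properly on $X$); then $N_G(H)\acts\Fix(H,X)$ is cocompact because $\Fix(H,X)$ embeds $N_G(H)$--equivariantly and properly in $X$, and one invokes \cite[Lemma~2.3]{Hagen-Susse} after checking that only finitely many $G$--translates of $\Fix(H,X)$ meet any given ball (which in turn follows, as in Claim~1, from properness of $G\acts X$ and the fact that $\Fix(H,X)$ contains a vertex, so a translate meeting a ball is pinned down up to finitely many cosets).

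\textbf{Main obstacle.} The delicate point is establishing that $\Fix(H,X)$ (equivalently $\overline{\mc{C}}_0(H)$, up to bounded Hausdorff distance) is genuinely $N_G(H)$--cocompact and not merely $N_G(H)$--invariant; a priori $\Fix(H,X)$ could be an infinite-diameter complex on which $N_G(H)$ acts with unbounded quotient. Ruling this out is exactly where non-transversality of $H\acts X$ must be exploited in full — without it, $H$ could move a hyperplane transversely, $\Fix(H,X)$ and $\overline{\mc{C}}_0(H)$ could be incomparable or infinitely far apart, and the reduction collapses. I expect the cleanest route is the containment $\Fix(H,X)\cu\overline{\mc{C}}_0(H)$ together with a finiteness/properness count à la Claims~1–2 of Lemma~\ref{cc intersection}, but care is needed to verify that $\Fix(H,X)$ is nonempty and convex in the relevant subdivision and that the local finiteness of $X$ is used correctly.
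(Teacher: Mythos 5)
There are two genuine gaps here, one of which is an outright error. First, your ``cleaner'' route is built on the set $\Fix(H,X)$, but this set is \emph{empty} whenever $H$ is infinite: the action $G\acts X$ is proper, so vertex stabilisers are finite. The fixed vertex supplied by Proposition~\ref{convex core prop}(2) is a vertex of the \emph{factor} $\overline{\mc{C}}_0(H)$ in the splitting $\overline{\mc{C}}(H)=\overline{\mc{C}}_0(H)\x\mc{C}_1(H)$, i.e.\ it corresponds to an $H$--invariant fibre $\{v\}\x\mc{C}_1(H)\cu X$, not to a vertex of $X$ fixed by $H$. (What non-transversality actually gives you is that $H$ fixes every hyperplane of $\overline{\mc{C}}_0(H)$, hence acts trivially on that factor -- this is the first line of the paper's proof -- but your identification $\Fix(H,X)=\overline{\mc{C}}_0(H)$ does not survive the correction: one must work with fibres in $X$, and the $G$--stabiliser of a fibre need not be contained in, or commensurable with, $N_G(H)$.)

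Second, and more fundamentally, the step you describe as ``the relevant translates are governed by finitely many cosets'' is precisely the content of the lemma and is never supplied. The Claim~1/Claim~2 mechanism of Lemma~\ref{cc intersection} requires a \emph{cocompact} action on the subcomplex whose translates are being counted; for $\overline{\mc{C}}_0(H)$ that cocompactness is exactly the conclusion you are after, so the argument as written is circular. The paper breaks the circle differently: it identifies maximal cubes of $\overline{\mc{C}}_0(H)$ with maximal $H$--fixed tuples of pairwise-transverse hyperplanes, notes that the stabiliser $G_{\mf{w}_1}\cap\dots\cap G_{\mf{w}_k}$ of such a tuple is convex-cocompact (acting cocompactly on $\mf{w}_1\cap\dots\cap\mf{w}_k$), and then invokes Lemma~\ref{almost normalisers} to get the decomposition $\{g\in G\mid H\text{ preserves }g\mf{w}_1,\dots,g\mf{w}_k\}=N_G(H)\cdot F\cdot(G_{\mf{w}_1}\cap\dots\cap G_{\mf{w}_k})$ with $F$ finite. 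That coset decomposition is the engine that yields cofiniteness of $N_G(H)$ on $H$--fixed tuples, hence on maximal cubes of $\overline{\mc{C}}_0(H)$. Nothing in your proposal plays the role of Lemma~\ref{almost normalisers}, and without it (or a substitute of comparable strength) neither of your routes closes.
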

\begin{proof}
Let $\mscr{T}(X)$ be the set of tuples $(\mf{w}_1,\dots,\mf{w}_k)$ of pairwise-transverse hyperplanes of $X$. Since $H$ acts non-transversely on $X$, each hyperplane of $\overline{\mc{C}}_0(H)$ is left invariant by $H$. Thus, maximal cubes of $\overline{\mc{C}}_0(H)$ are in one-to-one correspondence with maximal $H$--fixed tuples in $\mscr{T}(X)$.

Let us show that $N_G(H)$ acts cofinitely on the set of fixed points of $H$ in $\mscr{T}(X)$. By the previous paragraph, this implies the lemma.

For every tuple $(\mf{w}_1,\dots,\mf{w}_k)$ in $\mscr{T}(X)$, its stabiliser $G_{\mf{w}_1}\cap\dots\cap G_{\mf{w}_k}$ acts cocompactly on the intersection $\mf{w}_1\cap\dots\cap\mf{w}_k$ (see e.g.\ \cite[Lemma~2.3]{FH}), so it is convex-cocompact in $X$. Lemma~\ref{almost normalisers} implies that there exists a finite set $F\cu G$ such that:
\[\{g\in G\mid H\text{ preserves } g\mf{w}_1,\dots,g\mf{w}_k\}= N_G(H)\cdot F\cdot (G_{\mf{w}_1}\cap\dots\cap G_{\mf{w}_k}).\]
It follows that every $G$--orbit in $\mscr{T}(X)$ contains only finitely many $N_G(H)$--orbits of elements fixed by $H$. Since the action $G\acts\mscr{T}(X)$ is cofinite, this shows that there are only finitely many $N_G(H)$--orbits of fixed points of $H$ in $\mscr{T}(X)$, as required.
\end{proof}

\begin{ex}
Lemma~\ref{C_0 cocompact} (and Corollary~\ref{cc normalisers}) 
can fail if $H$ does not act non-transversely. 

For instance, let $G=\Z^2\rtimes\langle h\rangle$ act on the standard cubulation of $\R^3$, with $\Z^2$ generated by unit translations in the $x$-- and $y$--directions, respectively, and $h(x,y,z)=(y,x,z+1)$. Taking $H=\langle h\rangle$, the space $\overline{\mc{C}}_0(H)$ is naturally identified with the $xy$--plane, but $N_G(H)$ is generated by $h$ and $(x,y,z)\mapsto (x+1,y+1,z)$.
\end{ex}

\begin{cor}\label{cc normalisers}
Let $H\leq G$ be convex-cocompact in $X$. If $H\acts X$ is non-transverse, then:
\begin{enumerate}
\item $N_G(H)$ has a finite-index subgroup of the form $H\cdot K$, where $H$ and $K$ commute and $H\cap K$ is finite (thus, if $G$ is virtually torsion-free, $N_G(H)$ is virtually a product $H\x K$);

\smallskip
\item there exists a point $p\in\overline{\mc{C}}(H)$ such that the fibre $\overline{\mc{C}}_0(H)\x\{\ast\}$ through $p$ is $K$--invariant and $K$--cocompact, while the fibre $\{\ast\}\x\mc{C}_1(H)$ through $p$ is $H$--invariant and $H$--cocompact;

\smallskip
\item the action $N_G(H)\acts\overline{\mc{C}}(H)$ is cocompact, hence $N_G(H)$ is convex-cocompact in $X$.
\end{enumerate}
\end{cor}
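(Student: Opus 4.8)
The plan is to deduce Corollary~\ref{cc normalisers} directly from Lemmas~\ref{C_0 cocompact} and~\ref{almost normalisers} together with Proposition~\ref{convex core prop}. First I would establish part~(3), since it is the cleanest: by Proposition~\ref{convex core prop}(3), $N_G(H)$ preserves $\overline{\mc{C}}(H)$ and its splitting $\overline{\mc{C}}_0(H)\x\mc{C}_1(H)$. Since $H$ is convex-cocompact and $H$--essential convex subcomplexes can be taken for $Y$ in Definition~\ref{cc defn}, one has that $H\acts\mc{C}_1(H)$ is cocompact. Lemma~\ref{C_0 cocompact} gives that $N_G(H)\acts\overline{\mc{C}}_0(H)$ is cocompact. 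Combining the two cocompact actions on the factors yields that $N_G(H)$ acts cocompactly on the product $\overline{\mc{C}}(H)=\overline{\mc{C}}_0(H)\x\mc{C}_1(H)$; hence $N_G(H)$ is convex-cocompact in $X$.

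Next I would prove part~(2). Since $H\acts X$ is non-transverse, every hyperplane in $\overline{\mc{W}}_0(H)$ is actually $H$--\emph{invariant}, so $H$ acts trivially on $\overline{\mc{C}}_0(H)$ (it fixes each wall and has no inversions), i.e.\ $\overline{\mc{C}}_0(H)\cu\Fix(H)$. Thus for any $p$, the fibre $\{\ast\}\x\mc{C}_1(H)$ through $p$ carries an $H$--action that is cocompact by the remark above, and $\overline{\mc{C}}_0(H)\x\{\ast\}$ is $H$--fixed. It remains to choose $p$ so that the other fibre $\overline{\mc{C}}_0(H)\x\{\ast\}$ is invariant and cocompact under a suitable complement $K$. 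For this I would set $K$ to be (a finite-index subgroup of) the $N_G(H)$--stabiliser of a well-chosen fibre $\overline{\mc{C}}_0(H)\x\{\ast\}$: since $N_G(H)$ permutes the fibres and acts cocompactly on $\overline{\mc{C}}_0(H)$ by Lemma~\ref{C_0 cocompact}, a standard argument (or the structure of products) produces such a $K$ acting cocompactly on one fibre while $H$ acts trivially on $\overline{\mc{C}}_0(H)$; then $H$ and $K$ both sit inside $N_G(H)$ with $H$ acting trivially on $\overline{\mc{C}}_0(H)$ and $K$ acting trivially on $\mc{C}_1(H)$ (after adjusting), which forces $[H,K]=1$.

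Finally, part~(1) follows from part~(2) by pushing the two actions through. Concretely, the homomorphisms $\rho_0\colon N_G(H)\ra\aut(\overline{\mc{C}}_0(H))$ and $\rho_1\colon N_G(H)\ra\aut(\mc{C}_1(H))$ have the property that $H\leq\ker\rho_0$ (by non-transversality, as above) and $K\leq\ker\rho_1$. Since $N_G(H)$ acts properly and cocompactly on $\overline{\mc{C}}(H)$, both $\ker\rho_0$ and $\ker\rho_1$ act cocompactly on the respective complementary factors, so both are convex-cocompact; the argument of Lemma~\ref{discrete factor} (or a direct finite-index computation) then shows $\ker\rho_0\cdot\ker\rho_1$ has finite index in $N_G(H)$, and since these two normal subgroups intersect in $\ker\rho_0\cap\ker\rho_1$, which acts trivially on $\overline{\mc{C}}(H)$ hence is finite, we get the claimed finite-index subgroup $H\cdot K$ with $H,K$ commuting and $H\cap K$ finite. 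If $G$ is virtually torsion-free, one passes to a torsion-free finite-index subgroup to kill $H\cap K$ and get an honest product $H\x K$.

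The main obstacle I expect is part~(2): namely, choosing the basepoint $p$ (equivalently, the fibre) so that a single subgroup $K$ simultaneously leaves $\overline{\mc{C}}_0(H)\x\{\ast\}$ invariant and acts cocompactly on it, rather than merely having $N_G(H)$ act cocompactly on $\overline{\mc{C}}_0(H)$ after collapsing the $\mc{C}_1$ direction. This is exactly the point where the example following Lemma~\ref{C_0 cocompact} shows things can go wrong without the non-transversality hypothesis, so the argument must genuinely use that $H$ fixes $\overline{\mc{C}}_0(H)$ pointwise; given that, the fibre-stabiliser subgroup has the right index because $N_G(H)$ can only permute finitely many fibres meeting a fixed compact fundamental domain for $N_G(H)\acts\overline{\mc{C}}_0(H)$, and one picks $p$ in that domain.
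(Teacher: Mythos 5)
Your proposal is essentially the paper's argument in a different order and packaging: the paper proves (1) first by taking $N_1\leq N_G(H)$ to be the stabiliser of a point $p_1\in\mc{C}_1(H)$ and running the argument of Lemma~\ref{fi norm} on the proper cocompact action $H\acts\{p_0\}\x\mc{C}_1(H)$, then deduces (2) from Lemma~\ref{C_0 cocompact} exactly as you do (a $K$--orbit in $\overline{\mc{C}}_0(H)$ coincides with a $K\cdot H$--orbit because $H$ is elliptic there), and (3) follows; your version phrases the same content via the kernels $\ker\rho_0,\ker\rho_1$ and Lemma~\ref{discrete factor}. Two small points. First, your part~(3) as written silently uses that $H$ fixes $\overline{\mc{C}}_0(H)$ pointwise (otherwise correcting the second coordinate by an element of $H$ could ruin the first, exactly as in the example after Lemma~\ref{C_0 cocompact}); you do establish this in part~(2), so you should state it up front rather than after using it. Second, the step ``$H$ acts trivially on $\overline{\mc{C}}_0(H)$ and $K$ acts trivially on $\mc{C}_1(H)$, which forces $[H,K]=1$'' is too strong: it only forces $[H,K]$ into the kernel of the action on $\overline{\mc{C}}(H)$, which is finite by properness, not trivial. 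To upgrade to genuine commutation you need one more finite-index reduction: the conjugation orbits of $K$ on a finite generating set of $H$ are finite, so a finite-index subgroup of $K$ centralises $H$ --- this is precisely the trick from Lemma~\ref{fi norm} that the paper invokes at this point. With that step inserted, your argument is complete.
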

\begin{proof}
Recall that both $\mc{C}(H)\cu X$ and its splitting $\overline{\mc{C}}_0(H)\x\mc{C}_1(H)$ are preserved by $N_G(H)$. The action $H\acts\overline{\mc{C}}_0(H)$ has a fixed point, so we have an $H$--invariant fibre $\{p_0\}\x\mc{C}_1(H)$. The $H$--action on this fibre is cocompact (see e.g.\ \cite[Lemma~3.2(3)]{Fio10a}) and proper, since it equivariantly embeds in $X$. Let $p=(p_0,p_1)$ be any point in this fibre. 

Consider the proper cocompact action $H\acts\mc{C}_1(H)$. A finite index-subgroup $N\leq N_G(H)$ preserves the $H$--orbit of $p_1$. If $N_1\leq N$ is the subgroup fixing $p_1$, then $N=H\cdot N_1$, the intersection $H\cap N_1$ is finite, and a finite-index subgroup $K\leq N_1$ commutes with $H$. This can all be shown exactly as in the proof of Lemma~\ref{fi norm}. Part~(1) follows.

By Lemma~\ref{C_0 cocompact}, $N_G(H)$ acts cocompactly on $\overline{\mc{C}}_0(H)$. The same holds for the finite-index subgroup $K\cdot H$. Since $H$ is elliptic in $\overline{\mc{C}}_0(H)$, a $K$--orbit coincides with a $K\cdot H$--orbit and so it is coarsely dense in $\overline{\mc{C}}_0(H)$. Note that $\overline{\mc{C}}_0(H)$ is locally finite, since it embeds in $X$. Thus, $K$ acts cocompactly on $\overline{\mc{C}}_0(H)$, hence on the fibre $\overline{\mc{C}}_0(H)\x\{p_1\}$. This proves part~(2), and part~(3) follows immediately.
\end{proof}

\subsection{Coarse medians.}

Coarse medians were introduced by Bowditch in \cite{Bow-cm}. We present the following equivalent definition from \cite{NWZ1}. We write ``$x\approx_Cy$'' with the meaning of ``$d(x,y)\leq C$''.

\begin{defn}\label{coarse median space defn}
Let $X$ be a metric space. A \emph{coarse median} on $X$ is a permutation-invariant map $\mu\colon X^3\ra X$ for which there exists a constant $C\geq 0$ such that, for all $a,b,c,x\in X$, we have:
\begin{enumerate}
\item $\mu(a,a,b)=a$;
\item $\mu(\mu(a,x,b),x,c)\approx_C\mu(a,x,\mu(b,x,c))$;
\item $d(\mu(a,b,c),\mu(x,b,c))\leq Cd(a,x)+C$.
\end{enumerate}
\end{defn}

In accordance with \cite[Subsection~2.6]{Fio10a}, we also introduce the following.

\begin{defn}
Two coarse medians $\mu_1,\mu_2$ are at \emph{bounded distance} if $\mu_1(x,y,z)\approx_C\mu_2(x,y,z)$ for some $C\geq 0$ and all $x,y,z\in X$. A \emph{coarse median structure} on $X$ is the equivalence class $[\mu]$ of coarse medians at bounded distance from $\mu$. A \emph{coarse median space} is a metric space with a coarse median structure.
\end{defn}

\begin{defn}\label{cmp defn}
Let $(X,[\mu])$ be a coarse median space. A coarsely Lipschitz map $f\colon X\ra X$ is \emph{coarse-median preserving} if $f(\mu(x,y,z))\approx_C\mu(f(x),f(y),f(z))$ for some $C\geq 0$ and all $x,y,z\in X$. 
\end{defn}

Recall that $\CAT$ cube complexes have a natural structure of median algebra, hence one of coarse median space. The following is a simple observation.

\begin{lem}\label{cmp criterion}
Let $(X,m)$ be a $\CAT$ cube complex. A map $\Phi\colon X^{(0)}\ra X^{(0)}$ is coarse-median preserving if and only if there exists a constant $C\geq 0$ such that, whenever $x,y,p\in X$ are vertices with $p=m(x,y,p)$, the set $\mscr{W}(\Phi(p)|\Phi(x),\Phi(y))$ contains at most $C$ hyperplanes.
\end{lem}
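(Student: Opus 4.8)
\textbf{Proof plan for Lemma~\ref{cmp criterion}.}

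The plan is to translate the coarse-median-preserving condition, which is phrased in terms of distances, into the combinatorial language of separating hyperplanes, using the basic dictionary $d(x,y)=\#\mscr{W}(x|y)$ and the characterisation of the median $m(x,y,z)$ as the unique vertex lying on the $\mscr{W}(x|y)$, $\mscr{W}(y|z)$, $\mscr{W}(x|z)$ side of every hyperplane. First I would record the elementary identity that, for vertices $a,b,c$, the vertex $c$ satisfies $c=m(a,b,c)$ if and only if $\mscr{W}(a|b)\cap\mscr{W}(b|c)=\emptyset$, equivalently $\mscr{W}(a|c)\subseteq\mscr{W}(a|b)$ and $\mscr{W}(b|c)\subseteq\mscr{W}(a|b)$; this is just the statement that $c$ is gate-projection of $a$ (or of $b$) onto the interval, i.e. that $c$ lies ``between'' in the median sense. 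Since $m$ is $1$-Lipschitz and $\Phi$ is a map of the $0$-skeleton, the condition ``$\Phi$ coarse-median preserving'' unwinds to: there is $C_0$ with $d\bigl(\Phi(m(x,y,z)),m(\Phi(x),\Phi(y),\Phi(z))\bigr)\le C_0$ for all vertices $x,y,z$.

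Next I would prove the two implications. For the forward direction, suppose $\Phi$ is coarse-median preserving with constant $C_0$, and take vertices $x,y,p$ with $p=m(x,y,p)$. Then $m(\Phi(x),\Phi(y),\Phi(p))$ is within $C_0$ of $\Phi(p)$, so the number of hyperplanes in $\mscr{W}\bigl(\Phi(p)\mid m(\Phi(x),\Phi(y),\Phi(p))\bigr)$ is at most $C_0$. On the other hand, every hyperplane separating $\Phi(p)$ from both $\Phi(x)$ and $\Phi(y)$ must separate $\Phi(p)$ from the median $m(\Phi(x),\Phi(y),\Phi(p))$, because that median lies on the same side as at least two of $\Phi(x),\Phi(y),\Phi(p)$ — hence on the $\Phi(x),\Phi(y)$ side. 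Therefore $\mscr{W}\bigl(\Phi(p)\mid\Phi(x),\Phi(y)\bigr)\subseteq\mscr{W}\bigl(\Phi(p)\mid m(\Phi(x),\Phi(y),\Phi(p))\bigr)$, which has at most $C_0$ elements; take $C=C_0$.

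For the converse, suppose the hyperplane bound holds with constant $C$, and fix vertices $x,y,z$; set $p=m(x,y,z)$, so that in particular $p=m(x,y,p)$ (indeed $p=m(x,z,p)=m(y,z,p)$ as well). I want to bound $d\bigl(\Phi(p),m(\Phi(x),\Phi(y),\Phi(z))\bigr)$. A hyperplane $\mf w$ lies in $\mscr{W}\bigl(\Phi(p)\mid m(\Phi(x),\Phi(y),\Phi(z))\bigr)$ exactly when it separates $\Phi(p)$ from the median, i.e. when it separates $\Phi(p)$ from at least two of $\Phi(x),\Phi(y),\Phi(z)$; so such an $\mf w$ lies in at least one of $\mscr{W}(\Phi(p)\mid\Phi(x),\Phi(y))$, $\mscr{W}(\Phi(p)\mid\Phi(y),\Phi(z))$, $\mscr{W}(\Phi(p)\mid\Phi(x),\Phi(z))$. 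Applying the hypothesis to each of the three pairs $(x,y)$, $(y,z)$, $(x,z)$ — all legitimate since $p=m(x,y,p)=m(y,z,p)=m(x,z,p)$ — each of these three sets has at most $C$ hyperplanes, so $d\bigl(\Phi(p),m(\Phi(x),\Phi(y),\Phi(z))\bigr)\le 3C$, and $\Phi$ is coarse-median preserving with constant $3C$. (One should also note coarse Lipschitzness of $\Phi$ is automatic here, or rather is part of what must be checked; but since $X$ has a proper cocompact group action it is locally finite with bounded vertex degree, and the hyperplane bound applied with $y=p$ forces $\Phi$ to move neighbours a bounded distance, hence $\Phi$ is coarsely Lipschitz — alternatively one simply includes coarse Lipschitzness in the hypotheses, as the ambient setup provides it.)

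I do not expect a serious obstacle here: the only point requiring care is the bookkeeping in the converse, namely checking that when $p=m(x,y,z)$ the three ``sub-medians'' $p=m(x,y,p)$ etc. genuinely hold, so that the hypothesis is applicable to all three pairs; this follows from the standard fact that $m(x,y,m(x,y,z))=m(x,y,z)$ in any median algebra. The rest is the routine translation between $\ell^1$-distance and counting separating hyperplanes, together with the defining property of the median as the vertex on the majority side of every wall.
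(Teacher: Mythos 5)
Your argument is correct and essentially identical to the paper's proof: the forward direction observes that the hyperplanes in $\mscr{W}(\Phi(p)\,|\,\Phi(x),\Phi(y))$ all separate $\Phi(p)$ from $m(\Phi(p),\Phi(x),\Phi(y))$, and the converse covers $\mscr{W}(\Phi(m')\,|\,m(\Phi(x'),\Phi(y'),\Phi(z')))$ by the three pairwise sets to get the constant $3C$. One small caveat on your parenthetical: applying the hypothesis with $y=p$ gives nothing, since $\mscr{W}(\Phi(p)\,|\,\Phi(x),\Phi(p))$ is always empty (a monotone but superlinear self-map of $\Z$ satisfies the hyperplane bound with $C=0$ without being coarsely Lipschitz), so coarse Lipschitzness really must be carried as a standing hypothesis, exactly as Definition~\ref{cmp defn} builds it into the notion of a coarse-median preserving map.
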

\begin{proof}
Suppose that $\Phi$ is coarse-median preserving and $C$ is the constant in Definition~\ref{cmp defn}. Then, if $p=m(x,y,p)$, we have $\Phi(p)\approx_C m(\Phi(p),\Phi(x),\Phi(y))$. Hyperplanes separating these two points are precisely those in the set $\mscr{W}(\Phi(p)|\Phi(x),\Phi(y))$, which then has cardinality at most $C$.

Conversely, suppose that $\Phi$ is a map satisfying $\#\mscr{W}(\Phi(p)|\Phi(x),\Phi(y))\leq C$ for all $x,y,p\in X$ with $p=m(x,y,p)$. Consider arbitrary points $x',y',z'\in X$ and their median $m'=m(x',y',z')$. Then the set $\mscr{W}(\Phi(m') | m(\Phi(x'),\Phi(y'),\Phi(z')))$ is contained in the union
\[  \mscr{W}(\Phi(m')| \Phi(x'),\Phi(y'))\cup\mscr{W}(\Phi(m')|\Phi(y'),\Phi(z'))\cup\mscr{W}(\Phi(m')|\Phi(z'),\Phi(x')),\]
where each of the three sets has cardinality at most $C$ by our assumption on $\Phi$. It follows that $\Phi(m')\approx_{3C}m(\Phi(x'),\Phi(y'),\Phi(z'))$, showing that $\Phi$ is coarse-median preserving.
\end{proof}

\begin{defn}
A \emph{coarse median group} is a pair $(G,[\mu])$ where $G$ is a finitely generated group and $[\mu]$ is a coarse median structure with respect to the word metrics on $G$. Contrary to \cite{Bow-cm}, we additionally require all left multiplications by elements of $G$ to be coarse-median preserving.
\end{defn}

Let $(G,[\mu])$ be a coarse median group. Note that all automorphisms of $G$ are quasi-isometries with respect to the word metrics on $G$. We denote the set of coarse-median preserving automorphisms by $\aut(G,[\mu])$, or simply $\aut_{\rm cmp}(G)$ when the coarse median structure is clear. 

Note that $\aut_{\rm cmp}(G)\leq\aut(G)$ is a subgroup containing all inner automorphisms, so it descends to a subgroup $\out_{\rm cmp}(G)\leq\out(G)$.

All hyperbolic groups and mapping class groups are coarse median groups \cite{Bow-cm}. However, the main example of interest for this paper is provided by cocompactly cubulated groups, as this provides structures of coarse median group on all special groups.

\begin{ex}\label{cubical coarse medians}
Every proper cocompact action on a $\CAT$ cube complex $G\acts X$ induces a canonical structure of coarse median group on $G$. It suffices to pull back to $G$ the median operator of $X$ via any $G$--equivariant quasi-isometry $G\ra X$. The result is independent of all choices involved.

Note however that \emph{different actions} on $\CAT$ cube complexes can induce different coarse median structures on $G$. This is particularly evident for free abelian groups $\Z^n$ with $n\geq 2$ (corresponding to changes of basis). An exception is provided by hyperbolic groups, as they always admit a unique coarse median structure (see e.g.\ \cite[Theorem~4.2]{NWZ1}).
\end{ex}

\begin{defn}\label{qc defn}
Let $(X,[\mu])$ be a coarse median space. A subset $A\cu X$ is \emph{quasi-convex} if there exists $C\geq 0$ such that $\mu(A\x A\x X)$ is contained in the $C$--neighbourhood of $A$.
\end{defn}

\begin{rmk}\label{cc vs qc}
Let $G\acts X$ be a proper cocompact action on a $\CAT$ cube complex, and let $[\mu_X]$ be the induced coarse median structure on $G$. Then a subgroup $H\leq G$ is quasi-convex with respect to $[\mu_X]$ if and only if it is convex-cocompact in $X$. See for instance \cite[Lemma~3.2]{Fio10a}.
\end{rmk}

\begin{rmk}
Let $(G,[\mu])$ be a coarse median group. If $H\leq G$ is quasi-convex and $\varphi$ is a coarse-median preserving automorphism of $G$, then $\varphi(H)$ is again quasi-convex.
\end{rmk}

In coarse median groups we also have the following notion of orthogonality of subgroups, which was referenced in the definition of twists and folds in the Introduction.

\begin{defn}\label{pf defn}
Let $(G,[\mu])$ be a coarse median group. Two subgroups $H,K\leq G$ are \emph{orthogonal} (written $H\perp K$ or $H\perp_{[\mu]} K$) if the set $\{\mu(1,h,k) \mid h\in H,\ k\in K\}$ is finite.
\end{defn}

\begin{rmk}\label{pf vs qc rmk}
Orthogonal subgroups have finite intersection. The converse holds for quasi-convex subgroups.
\end{rmk}

\begin{lem}\label{pf lemma}
Suppose that $G$ admits a proper cocompact action on a $\CAT$ cube complex $X$. Let $[\mu_X]$ be the induced coarse median structure.\begin{enumerate}
\item If $H,K\leq G$ commute and $H\perp K$, then $\mc{W}_1(H)\cu\overline{\mc{W}}_0(K)$ and $\mc{W}_1(K)\cu\overline{\mc{W}}_0(H)$.
\item If $H,K\leq G$ are as in Corollary~\ref{cc normalisers}, then $H\perp K$.
\end{enumerate}
\end{lem}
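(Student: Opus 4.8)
Fix a base vertex $p\in X$, so that $\iota\colon G\to X$, $g\mapsto gp$, is a $G$--equivariant quasi-isometry and $[\mu_X]$ is represented by $(g_1,g_2,g_3)\mapsto \iota^{-1}m(\iota g_1,\iota g_2,\iota g_3)$. For any vertex $q$ and $a,b\in G$, a hyperplane lies on the $m(q,aq,bq)$--side of itself precisely when it separates $q$ from both $aq$ and $bq$, so $\mscr{W}(q|m(q,aq,bq))=\mscr{W}(q|aq)\cap\mscr{W}(q|bq)$ and $d(q,m(q,aq,bq))=\#\bigl(\mscr{W}(q|aq)\cap\mscr{W}(q|bq)\bigr)$. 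Since changing the base vertex moves medians by a bounded amount, it follows that for subgroups $H,K\le G$ we have $H\perp K$ if and only if, for \emph{some} (equivalently every) vertex $q$, there is $D\ge 0$ with $\#\bigl(\mscr{W}(q|hq)\cap\mscr{W}(q|kq)\bigr)\le D$ for all $h\in H,\ k\in K$. We may also assume throughout that $G\acts X$ is without inversions, after passing to the cubical subdivision (which leaves $[\mu_X]$ unchanged).

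\textbf{Part (2).} Let $H,K$ be as in Corollary~\ref{cc normalisers}, and take the point $p=(p_0,p_1)\in\overline{\mc C}(H)=\overline{\mc C}_0(H)\x\mc C_1(H)$ provided by Corollary~\ref{cc normalisers}(2): here $H$ fixes $p_0$ (it is elliptic in $\overline{\mc C}_0(H)$ by Proposition~\ref{convex core prop}(2)) and $K$ fixes $p_1$ (the fibre $\overline{\mc C}_0(H)\x\{p_1\}$ is $K$--invariant). Then for $h\in H$, $k\in K$ we have $hp=(p_0,hp_1)$ and $kp=(kp_0,p_1)$. A hyperplane separating $p$ from $hp$ lies in $\mscr{W}(\overline{\mc C}(H))=\overline{\mc W}_0(H)\sqcup\mc W_1(H)$ and has equal $\overline{\mc C}_0(H)$--coordinate on its two arguments, hence is dual to the $\mc C_1(H)$--factor; similarly a hyperplane separating $p$ from $kp$ is dual to the $\overline{\mc C}_0(H)$--factor. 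These two sets are disjoint, so $\mscr{W}(p|hp)\cap\mscr{W}(p|kp)=\emptyset$ and $m(p,hp,kp)=p$ for all $h,k$. Thus $\{\mu_X(1,h,k)\}$ lies in a bounded subset of $G$, hence is finite, i.e.\ $H\perp K$.

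\textbf{Part (1).} By symmetry it suffices to prove $\mc W_1(H)\cu\overline{\mc W}_0(K)$. Fix $\mf{w}\in\mc W_1(H)$, an element $h_0\in H$ skewering $\mf{w}$, say $h_0\mf{h}\subsetneq\mf{h}$, and some $k\in K$; assuming $k\mf{w}\neq\mf{w}$ and $k\mf{w}$ disjoint from $\mf{w}$, I will show $\langle h_0\rangle\not\perp\langle k\rangle$, contradicting $H\perp K$. First, $[h_0,k]=1$ gives $h_0(k\mf{h})=k(h_0\mf{h})\subsetneq k\mf{h}$, so $h_0$ also skewers $k\mf{w}$. Now pass to a $\langle h_0\rangle$--invariant convex subcomplex of $X$ carrying a product decomposition $Z\x\R$ on which $h_0$ acts by a nonzero translation of the $\R$--factor (the parallel set of an axis of $h_0$; \cite{Haglund}); since $h_0$ skewers $\mf{w}$, the hyperplane $\mf{w}$ meets this set in a fibre dual to the $\R$--factor. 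As $k$ commutes with $h_0$, it preserves this set and its decomposition, and its action on the $\R$--factor commutes with a nontrivial translation, hence is a translation by some $s\in\Z$. If $s=0$ then $k$ preserves each fibre $Z\x\{t\}$ and so $k\mf{w}=\mf{w}$, contrary to hypothesis; hence $s\ne0$, and in particular $k$ skewers $\mf{w}$ as well. Finally take $q=(z_0,r_0)$ on a line $\{z_0\}\x\R$. For $i,j\ge1$ both $\mscr{W}(q|h_0^iq)$ and $\mscr{W}(q|k^jq)$ contain an initial segment, of length at least $i-O(1)$ and $|s|j-O(1)$ respectively, of the chain of $\R$--factor hyperplanes crossing $\{z_0\}\x\R$; taking $i=|s|j$, the intersection $\mscr{W}(q|h_0^{|s|j}q)\cap\mscr{W}(q|k^jq)$ has size at least $|s|j-O(1)\to\infty$. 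This contradicts $\langle h_0\rangle\perp\langle k\rangle$.

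\textbf{Main obstacle.} The crux of Part (1) is the middle step: using the commutation with the hyperbolic element $h_0$ to force $k$ to act as a translation transverse to $\mf{w}$, thereby excluding the ``direction--reversing'' configurations in which $k\mf{h}$ is nested with $\mf{h}^*$ rather than with $\mf{h}$. This relies on the product structure of the parallel set of an axis of $h_0$ in the combinatorial metric (equivalently, on a flat--torus--type statement for the abelian group $\langle h_0,k\rangle$ acting on $X$); once it is available, the remaining hyperplane count is routine. One should also note that all constants are permitted to depend on $\mf{w}$, since in Part (1) one argues a single hyperplane of $\mc W_1(H)$ at a time.
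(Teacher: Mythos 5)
Your Part~(2) is correct and is exactly what the paper means by ``immediate from Corollary~\ref{cc normalisers}(2)'': the point $p=(p_0,p_1)$ separates the hyperplanes crossed by $H$--orbits from those crossed by $K$--orbits, so $m(p,hp,kp)=p$. Your reduction of $\perp$ to counting $\#\bigl(\mscr{W}(q|hq)\cap\mscr{W}(q|kq)\bigr)$ is also fine.

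Part~(1) has a genuine gap at the step you yourself flag as the crux. There is in general \emph{no} $\langle h_0\rangle$--invariant convex subcomplex of the form $Z\x\R$ on which $h_0$ translates the $\R$--factor and in which $\mf{w}$ is a fibre dual to that factor. The combinatorial convex hull of an axis of $h_0$ (equivalently, the essential core $\mc{C}_1(h_0)$) is a product of finitely many \emph{quasi-lines} on which $h_0$ may act diagonally; already for $h_0=(1,1)$ acting on the standard cubulation of $\R^2$ the essential core is all of $\R^2$ and the hyperplane $\mf{w}=\{x=\tfrac12\}$ is skewered by $h_0$ but is not a fibre of any invariant splitting $Z\x\R$ with $h_0$ translating only the $\R$--factor. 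If you instead invoke the $\CAT$ Flat Torus Theorem, the parallel set $Y\x\R$ exists but $\mf{w}$ need not be a fibre $Y\x\{t\}$ (in the example it is a vertical line, not orthogonal to the diagonal), so the implication ``$s=0\Rightarrow k$ preserves each fibre $\Rightarrow k\mf{w}=\mf{w}$'' fails: e.g.\ $k=(1,-1)$ acts trivially on the diagonal $\R$--factor yet moves $\mf{w}$ to a disjoint hyperplane. (Here $\langle h_0\rangle\not\perp\langle k\rangle$, so the lemma is not contradicted, but your intermediate claim is.) The final hyperplane count also leans on the same nonexistent product structure.

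The paper's argument sidesteps all of this with an orientation observation: since $h_0$ skewers $\mf{w}$, one of its halfspaces, say $\mf{h}$, eventually contains every positive semi-axis of $h_0$; because $k$ commutes with $h_0$ it sends positive semi-axes of $h_0$ to positive semi-axes of $h_0$, so $k\mf{h}$ is the halfspace of $k\mf{w}$ containing the positive end. You correctly showed $h_0$ skewers $k\mf{w}$; hence if $\mf{w}$ and $k\mf{w}$ are disjoint and distinct, the two coherently oriented halfspaces $\mf{h}$ and $k\mf{h}$ are nested, so $k$ or $k^{-1}$ skewers $\mf{w}$ and $\mf{w}\in\mc{W}_1(K)$. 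This rules out exactly the ``direction-reversing'' configuration you were worried about, without any flat-torus input. From there, $\langle k\rangle\cdot\mf{w}$ is an infinite subset of $\mc{W}_1(h_0)\cap\mc{W}_1(k)$ ($\mc{W}_1(h_0)$ is $k$--invariant by commutation), and for suitable signs each of these hyperplanes eventually separates $p$ from both $h_0^{\pm n}p$ and $k^{\pm n}p$, so $\mu(1,h_0^{\pm n},k^{\pm n})$ diverges, contradicting $H\perp K$. I recommend replacing your middle step by this orientation argument; the rest of your write-up then goes through.
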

\begin{proof}
Part~(2) is immediate from Corollary~\ref{cc normalisers}(2) and the definition of orthogonality. Regarding part~(1), it suffices to show that, for every $h\in H$, we have $\mc{W}_1(h)\cu\overline{\mc{W}}_0(K)$. 

Since $h$ and $K$ commute, we have $k\mc{W}_1(h)=\mc{W}_1(h)$ for every $k\in K$. In addition, for every $\mf{w}\in\mc{W}_1(h)$, each $k\in K$ takes the side of $\mf{w}$ containing a positive semi-axis of $h$ to the side of $k\mf{w}$ containing a positive semi-axis of $h$. Thus, either $k\mf{w}$ and $\mf{w}$ intersect,
or $k$ skewers $\mf{w}$.

If no element of $K$ skewers an element of $\mc{W}_1(h)$, this shows that $\mc{W}_1(h)\cu\overline{\mc{W}}_0(K)$, as required. If instead some $k\in K$ skewers a hyperplane $\mf{w}\in\mc{W}_1(h)$, then $\langle k\rangle\cdot\mf{w}\cu\mc{W}_1(h)\cap\mc{W}_1(k)$. In this case, $\mc{W}_1(h)\cap\mc{W}_1(k)$ is infinite, so $\mu(1,h^n,k^n)$ diverges for $n\ra+\infty$, violating the fact that $H\perp K$.
\end{proof}

\subsection{Ultralimits.}\label{ultrafilter sect}

For a detailed treatment of ultrafilters and ultralimits, the reader can consult \cite[Chapter~10]{DK}. Here we briefly recall only one basic construction.

Fix a non-principal ultrafilter $\om$ on $\N$. Consider a sequence $G\acts X_n$ of isometric actions on metric spaces, with a sequence of basepoints $o_n\in X_n$. Let $S\cu G$ be a finite generating set.

We say that the sequence $(G\acts X_n,o_n)$ \emph{$\om$--converges} if, for every generator $s\in S$, we have $\lim_{\om}d(o_n,so_n)<+\infty$. In this case, the \emph{$\om$--limit} is the isometric action $G\acts X_{\om}$ constructed as follows. Points of $X_{\om}$ are sequences $(x_n)$ with $x_n\in X_n$ and $\lim_{\om}d(x_n,o_n)<+\infty$, where we identify sequences $(x_n)$ and $(x_n')$ if $\lim_{\om}d(x_n,x_n')=0$. The $G$--action on $X_{\om}$ is defined by $g(x_n):=(gx_n)$.

If a sequence of actions on $\R$--trees $G\acts T_n$ $\om$--converges to an action $G\acts T_{\om}$ (for some choice of basepoints), then $T_{\om}$ is a complete $\R$--tree. Note that the action $G\acts T_{\om}$ will almost always fail to be minimal, even if all actions $G\acts T_n$ are. 

This construction will play a major role in Subsections~\ref{tame sect} and~\ref{limit subsec}.

\section{Special groups and RAAGs.}\label{special sect}

A group is usually said to be \emph{special} if it is the fundamental group of a compact special cube complex \cite{Haglund-Wise-GAFA,Sageev-notes}. For our purposes, it is more convenient to use the following, entirely equivalent characterisation.

\begin{defn}
A group $G$ is \emph{special} if and only if $G$ is a convex-cocompact subgroup of a right-angled Artin group $\A_{\G}$ with respect to the action on the universal cover of the Salvetti complex.
\end{defn}

Note that special groups are torsion-free.

\subsection{Notation and basic properties.}\label{special sect prelim}

In the rest of the paper, we employ the following notation.
\begin{itemize}
\item We denote right-angled Artin groups by $\A_{\G}$ and universal covers of Salvetti complexes by $\X_{\G}$. As customary, we identify the $0$--skeleton of $\X_{\G}$ with $\A_{\G}$.
\item We have a map $\g\colon\mscr{W}(\X_{\G})\ra\G^{(0)}$ that pairs each hyperplane of $\X_{\G}$ with its label.
\item For each $v\in\G^{(0)}$, we denote by $\pi_v\colon\X_{\G}\ra\T_v$ the restriction quotient associated to the set of hyperplanes $\g^{-1}(v)\cu\mscr{W}(\X_{\G})$. This is a simplicial tree with an $\A_{\G}$--action.
\item If $g\in\A_{\G}$, we denote by $\G(g)\cu\G$ the set of labels appearing on one (equivalently, all) axis of $g$ in $\X_{\G}$.
Equivalently, $\G(g)$ is the set of $v\in\G$ for which $g$ is loxodromic in the tree $\T_v$.
Note that $\G(g)\cu\g(\mscr{W}(1|g))$, though this might not be an equality if $g$ is not cyclically reduced.
\item If $K\leq\A_{\G}$ is a subgroup, we also write $\G(K):=\bigcup_{g\in K}\G(g)$.
\item We do not distinguish between subgraphs $\Delta\cu\G$ and their $0$--skeleton. If $\Delta\cu\G$, we write: 
\begin{align*}
\Delta^{\perp}&=\bigcap_{v\in\Delta}\lk v, & \Delta_{\perp}&=\bigcap_{v\in\Delta}\St v.
\end{align*}
\end{itemize}

\begin{rmk}\label{perp rmk}
\begin{enumerate}
\item[]
\item For every $\Delta\cu\G$, the centraliser of $\A_{\Delta}$ in $\A_{\G}$ is $\A_{\Delta_{\perp}}$. 
\item We have $\Delta_{\perp}=\Delta^{\perp}\sqcup\{c_1,\dots,c_k\}$, where the $c_i$ are those vertices of $\Delta$ such that $\Delta\cu\St c_i$.
\end{enumerate} 
\end{rmk}

We record here a few basic lemmas for later use.

\begin{lem}\label{aligned2}
Consider $a,b\in\A_{\G}$ such that $1,a,ab$ lie on a geodesic of $\X_{\G}$ in this order. Then:
\[\g(\mscr{W}(1|a))\cap\g(\mscr{W}(1|b))\cu\G(a)\cup\G(b)\cup\G(ab).\]
\end{lem}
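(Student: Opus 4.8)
\textbf{Proof plan for Lemma~\ref{aligned2}.}

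The plan is to work entirely with the hyperplane combinatorics of $\X_{\G}$ and exploit the fact that, since $1,a,ab$ lie on a geodesic in this order, the hyperplane sets decompose cleanly: $\mscr{W}(1|ab)=\mscr{W}(1|a)\sqcup\mscr{W}(a|ab)$, and $a\cdot\mscr{W}(1|b)=\mscr{W}(a|ab)$ (translating the geodesic from $1$ to $b$ by $a$). Fix $v\in\g(\mscr{W}(1|a))\cap\g(\mscr{W}(1|b))$, and suppose toward a contradiction that $v\notin\G(a)\cup\G(b)\cup\G(ab)$. The goal is to derive a contradiction from the behaviour of the three elements $a$, $b$, $ab$ in the tree $\T_v=\pi_v(\X_{\G})$.

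First I would pass to the tree $\T_v$ via the restriction quotient $\pi_v$. The condition $v\notin\G(g)$ says precisely that $g$ is elliptic in $\T_v$, i.e.\ $g$ fixes a vertex of $\T_v$; so $a$, $b$, $ab$ are all elliptic in $\T_v$. Meanwhile, the hypothesis $v\in\g(\mscr{W}(1|a))$ means $\pi_v(1)\neq\pi_v(a)$ in $\T_v$, i.e.\ $a$ does not fix $\pi_v(1)$; similarly $v\in\g(\mscr{W}(1|b))$ means $b$ does not fix $\pi_v(1)$, hence (translating by $a$) $ab$ does not fix $\pi_v(a)$, equivalently $a^{-1}$ moves $\pi_v(1)$, so $\pi_v(a)\ne\pi_v(1)$ again. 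Now I would invoke the standard fact about isometries of trees: if $a$ and $b$ are both elliptic but $ab$ is elliptic as well, then $\Fix(a)\cap\Fix(b)\neq\emptyset$ in $\T_v$ (if two elliptic isometries have disjoint fixed-point sets, their product is loxodromic with translation length $2\,d(\Fix a,\Fix b)>0$). Let $p\in\T_v$ be a point fixed by both $a$ and $b$.

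The contradiction then comes from locating $\pi_v(1)$, $\pi_v(a)$, $\pi_v(ab)$ relative to this common fixed point $p$. Since $a$ fixes $p$ and $\pi_v(a)=a\cdot\pi_v(1)$, the geodesic $[\pi_v(1),\pi_v(a)]$ is the union $[\pi_v(1),p]\cup a[\pi_v(1),p]$ folded at $p$ — more precisely, $d(\pi_v(1),p)=d(\pi_v(a),p)$ and the midpoint of $[\pi_v(1),\pi_v(a)]$ is $p$ (or $[\pi_v(1),\pi_v(a)]$ does not pass through $p$, but then one checks $a$ would have to act loxodromically or fix $\pi_v(1)$, both excluded). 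Thus the edge of $\T_v$ adjacent to $\pi_v(1)$ along $[\pi_v(1),\pi_v(a)]$ corresponds (pulling back through $\pi_v$) to a hyperplane $\mf{w}\in\mscr{W}(1|a)$ with $\g(\mf{w})=v$; this is the hyperplane realizing $v\in\g(\mscr{W}(1|a))$. Now apply $b$: since $b$ fixes $p$ but not $\pi_v(1)$, the point $\pi_v(ab)=a b\cdot \pi_v(1)$; I would trace where $b\pi_v(1)$ sits — it is the reflection of $\pi_v(1)$ through $p$ in the appropriate sense — and then apply $a$. The key computation is that $\pi_v(ab)$ lies on the \emph{same} side of $\mf{w}$ as $\pi_v(a)$, so the geodesic $[\pi_v(a),\pi_v(ab)]$ does not cross $\mf{w}$; equivalently $\mf{w}\notin a\cdot\mscr{W}(1|b)$. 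But $v\in\g(\mscr{W}(1|b))$ forces \emph{some} $v$-labelled hyperplane into $\mscr{W}(1|b)$, hence into $a\mscr{W}(1|b)=\mscr{W}(a|ab)$; combined with $v\notin\G(ab)$ (so $ab$ elliptic, $\pi_v(ab)$ at bounded "tree distance") one shows every $v$-labelled hyperplane separating $a$ from $ab$ must also separate $1$ from $a$ — but distinct edges of a geodesic in a tree are crossed by distinct hyperplanes, and in $\T_v$ all hyperplanes have the single label $v$, forcing two distinct hyperplanes of $\mscr{W}(1|ab)$ to coincide, a contradiction. I expect the main obstacle to be this last bookkeeping step: carefully arguing, in the tree $\T_v$, that the simultaneous constraints "$a$ elliptic, $b$ elliptic, $ab$ elliptic, $\pi_v(1)\ne\pi_v(a)$, $\pi_v(1)\ne\pi_v(b)$" are contradictory. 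This is really the statement that in a tree, if $a,b,ab$ are all elliptic then $\Fix(a)$, $\Fix(b)$, $\Fix(ab)$ pairwise intersect and moreover the "turning" of the geodesics at the common fixed region cannot produce a $v$-labelled hyperplane that is in both $\mscr{W}(1|a)$ and $a\mscr{W}(1|b)$ without overflowing $\mscr{W}(1|ab)$. Once the tree picture is set up correctly this is elementary, but getting the fixed-point configuration and the side-of-hyperplane computation exactly right is where the care is needed; everything else (the hyperplane decompositions along the geodesic, the translation-by-$a$ identities, the equivalence between "$v\in\G(g)$" and "$g$ loxodromic in $\T_v$") is routine given the preliminaries in Subsections~\ref{CCC sect} and~\ref{special sect prelim}.
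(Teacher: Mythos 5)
Your overall strategy is genuinely different from the paper's, and it can be made to work, but not quite as you have written it. The paper argues directly in $\X_{\G}$: assuming $v\notin\G(a)\cup\G(b)$, it writes $a=xa'x^{-1}$ and $b=yb'y^{-1}$ with $a',b'$ cyclically reduced, observes that the $v$-labelled hyperplanes of $\mscr{W}(1|a)$ and $\mscr{W}(1|b)$ must then come from the conjugating parts $x$ and $y$, and exhibits a nested chain of $v$-labelled halfspaces $ab\cdot\mf{h}_1\subsetneq\mf{h}_2\subsetneq\mf{h}_1$ along the geodesic spelled by $xa'x^{-1}yb'y^{-1}$; hence $ab$ skewers a $v$-labelled hyperplane and $v\in\G(ab)$. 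Your proof by contradiction in $\T_v$ via Serre's lemma is a legitimate ``soft'' alternative: the configuration ``$a$, $b$, $ab$ all elliptic, $ap_0\neq p_0$, $bp_0\neq p_0$, $ap_0\in[p_0,abp_0]$'' (with $p_0=\pi_v(1)$) really is impossible, and the reductions you list (equivariance and the median-morphism property of $\pi_v$, the decomposition $\mscr{W}(1|ab)=\mscr{W}(1|a)\sqcup a\mscr{W}(1|b)$, the equivalence of $v\in\G(g)$ with $g$ being loxodromic in $\T_v$) are all correct.

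The problem is the decisive step, where there is both an error and a gap. First, the midpoint of $[\pi_v(1),a\pi_v(1)]$ is the projection of $\pi_v(1)$ onto $\Fix(a)$; it has no reason to coincide with your chosen point $p\in\Fix(a)\cap\Fix(b)$, and your parenthetical justification (that otherwise $a$ would be loxodromic or would fix $\pi_v(1)$) is not valid. Second, the contradiction you aim for --- that every $v$-labelled hyperplane in $\mscr{W}(a|ab)$ would also lie in $\mscr{W}(1|a)$ --- is not what the configuration forces, and the reflection-through-$p$ picture does not obviously reach it. The clean way to finish is a distance computation with fixed \emph{subtrees} rather than a single fixed point. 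For elliptic $g$ one has $d(x,\Fix(g))=\tfrac{1}{2}d(x,gx)$, so alignment gives $d(p_0,\Fix(ab))=d(p_0,\Fix(a))+d(p_0,\Fix(b))$. On the other hand, Serre's lemma gives $\Fix(a)\cap\Fix(b)\neq\emptyset$, this intersection is contained in $\Fix(ab)$, and for two intersecting subtrees of a tree one has $d(p_0,\Fix(a)\cap\Fix(b))=\max\{d(p_0,\Fix(a)),d(p_0,\Fix(b))\}$ (the farther of the two projections of $p_0$ already lies in the intersection). Combining, $d(p_0,\Fix(a))+d(p_0,\Fix(b))\leq\max\{d(p_0,\Fix(a)),d(p_0,\Fix(b))\}$, which forces one of the two distances to vanish, contradicting $ap_0\neq p_0$ and $bp_0\neq p_0$. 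With this replacement for your final step, the argument closes.
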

\begin{proof}
Consider $v\in\g(\mscr{W}(1|a))\cap\g(\mscr{W}(1|b))$ and suppose that $v\not\in\G(a)\cup\G(b)$. Write $a=xa'x^{-1}$ and $b=yb'y^{-1}$ as reduced words, with $a',b'$ cyclically reduced. Since $1,a,ab$ lie on a geodesic, the word $xa'x^{-1}yb'y^{-1}$ spells a geodesic in $\X_{\G}$.

Since $v\not\in\G(a)\cup\G(b)$, we must have $v\in\g(\mscr{W}(1|x))\cap\g(\mscr{W}(1|y))$. Thus, there exist halfspaces:
\begin{align*}
\mf{h}_1&\in\mscr{H}(1|x), & \mf{h}_2&\in\mscr{H}(xa'x^{-1}yb'|xa'x^{-1}yb'y^{-1}),
\end{align*}
bounded by hyperplanes labelled by $v$. Since $xa'x^{-1}yb'y^{-1}$ spells a geodesic, we have $\mf{h}_2\subsetneq\mf{h}_1$. Note that $ab\cdot\mf{h}_1$ lies in $\mscr{H}(xa'x^{-1}yb'y^{-1}|xa'x^{-1}yb'y^{-1}x)$. In addition, since $x^{-1}y$ is a sub-path of a geodesic, it is itself a geodesic, hence $y^{-1}x$ also spells a geodesic. This shows that $ab\cdot\mf{h}_1\subsetneq\mf{h}_2\subsetneq\mf{h}_1$. In conclusion, $ab$ skewers a hyperplane labelled by $v$, so $v\in\G(ab)$.
\end{proof}

\begin{lem}\label{commutation criterion}
Consider $g,h\in\A_{\G}$ and $x\in\X_{\G}$. If $h$ fixes $\mscr{W}(x|gx)$ pointwise, then $g$ and $h$ commute.
\end{lem}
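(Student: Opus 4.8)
The statement is: if $h$ fixes $\mscr{W}(x|gx)$ pointwise, then $g$ and $h$ commute. The natural approach is to show that $h$ fixes the hyperplanes separating $x$ from $gx$ not just as a set but identity-wise (it is given pointwise, so each such hyperplane is individually $h$-invariant, with sides preserved), and then exploit the combinatorial geometry of $\X_{\G}$ to deduce $ghg^{-1}=h$. The cleanest route is to translate everything into statements about $\mscr{W}(\X_{\G})$ and the labelling map $\g$, using the tree quotients $\T_v$.

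\medskip

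First I would recall that $\mscr{W}(x|gx)$ is exactly the set of hyperplanes crossed by a geodesic $\alpha$ from $x$ to $gx$; these hyperplanes are pairwise non-transverse along $\alpha$ and their labels, read in order, spell a reduced word representing $g$ (after conjugating by the ``transport'' element from $x$ to $1$ — or one can simply work at the basepoint $1$ by replacing $x$ with an appropriate translate, since the hypothesis and conclusion are invariant under replacing $(g,h,x)$ by $(g', h', 1)$ with $g' = x^{-1}gx$ up to the relevant adjustments; more carefully, one translates by an element taking $x$ to a vertex and reduces to the vertex case). The key point is: $h$ fixes each hyperplane in $\mscr{W}(x|gx)$ together with its orientation. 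Now consider the action on each tree $\T_v$: the image of $\mscr{W}(x|gx)$ under $\pi_v$ consists of the edges of $\T_v$ between $\pi_v(x)$ and $\pi_v(gx)$, i.e. the geodesic $[\pi_v(x),g\pi_v(x)]$ in $\T_v$. Since $h$ fixes each of these edges (with orientation), $h$ fixes this segment pointwise; in particular $h$ fixes both $\pi_v(x)$ and $g\pi_v(x) = \pi_v(gx)$.

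\medskip

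The crucial reduction is then: an element of $\A_{\G}$ is determined by the collection of points it induces in the trees $\{\T_v\}_{v\in\G}$ together with its action on the ambient cube complex's $0$-skeleton — more precisely, $\X_{\G}$ embeds $\A_{\G}$-equivariantly into $\prod_{v\in\G}\T_v$, so a vertex of $\X_{\G}$ is determined by its images in the $\T_v$. Hence it suffices to show $ghg^{-1}$ and $h$ act the same way on every $\T_v$, or rather that $h$ and $g$ commute as follows: for each $v$, $h$ fixes the segment $[\pi_v(x), \pi_v(gx)]$ pointwise, so in particular $h$ fixes $\pi_v(x)$. Now $ghg^{-1}$ fixes $g\pi_v(x) = \pi_v(gx)$, which also lies on the fixed segment of $h$, so both $h$ and $ghg^{-1}$ fix $\pi_v(gx)$. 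To get that $h = ghg^{-1}$ on all of $\T_v$ I would argue that $h$ fixing an edge-path and the endpoint being shared forces, via the structure of the $\A_{\G}$-action on $\T_v$ (edge stabilisers and the Bass--Serre-type structure), that $h$ and $ghg^{-1}$ — both fixing the point $\pi_v(gx)$ — actually coincide. Here is where I'd be more direct: since $h$ fixes each hyperplane $\mf{w}\in\mscr{W}(x|gx)$ and its sides, and these are precisely the hyperplanes $\mf{w}$ with $x\in\mf{w}^*$, $gx\in\mf{w}$, the element $g^{-1}hg$ fixes each hyperplane $g^{-1}\mf{w}$ with $g^{-1}x\in g^{-1}\mf{w}^*$, $x\in g^{-1}\mf{w}$ — i.e. $\mscr{W}(g^{-1}x | x)$. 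Combining: $h$ fixes $\mscr{W}(x|gx)$ and $g^{-1}hg$ fixes $\mscr{W}(g^{-1}x|x)$, and these two hyperplane sets together "straddle" the relation. The cleanest finish uses Lemma~\ref{aligned2}-type reasoning or simply: write the reduced word for $g$ spelled along $[x,gx]$; $h$ fixing all these hyperplanes with orientation means $h$ commutes with each standard generator appearing, by the defining relations of $\A_{\G}$ (a hyperplane labelled $v$ being $h$-fixed with sides forces $h\in Z(\text{that generator})$-type conditions via Lemma~\ref{commutation criterion} itself in lower complexity, or via a direct normal-form argument), hence $h$ commutes with $g$.

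\medskip

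\textbf{Main obstacle.} The delicate step is passing from "$h$ fixes every hyperplane separating $x$ and $gx$, with its sides" to "$h$ commutes with $g$": a priori $h$ could act nontrivially on the hyperplanes themselves (permuting their sub-hyperplanes) while fixing them globally. The resolution is that in $\X_{\G}$ a hyperplane labelled $v$ carries a product structure and $h$ fixing it setwise plus fixing a point on each side forces $h$ to centralise the corresponding generator $v$; running this over all hyperplanes along the geodesic, whose labels generate a subgroup containing $g$, gives $h \in Z_{\A_\G}(g)$. I expect this centralising step — making rigorous that "fixing a hyperplane and both its sides" translates into a commutation relation with the generator labelling it — to be the technical heart, and it is presumably why this lemma is stated separately and used as a black box later. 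I would prove it by induction on $d(x,gx)=\#\mscr{W}(x|gx)$: the base case $d=1$ is a single generator and is essentially the definition of the RAAG relations, and the inductive step peels off the first hyperplane crossed by $[x,gx]$, writes $g = sg'$ with $s$ a generator and $d(x, g'x) < d(x,gx)$ after the obvious adjustment, checks $h$ still fixes the relevant smaller hyperplane set, and concludes $h$ commutes with $s$ and with $g'$ hence with $g$.
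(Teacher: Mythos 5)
Your closing induction on $d(x,gx)$ is correct, and it is a genuinely different route from the paper's. The paper first gate-projects $x$ to $\Min(g)$ (this only shrinks $\mscr{W}(x|gx)$, so the hypothesis survives) and then translates so that $x=1$ and $g$ is cyclically reduced, after which the conclusion is read off from the known structure of hyperplane stabilisers; your induction works directly for an arbitrary reduced word and never needs $\Min(g)$. Two comments on your write-up. First, the ``main obstacle'' you flag is in fact a one-line standard computation, not the technical heart: the edges of $\X_{\G}$ dual to the hyperplane $\mf{w}_v$ through $[1,v]$ are exactly the edges $[u,uv]$ with $u\in\A_{\lk v}$ (propagate square moves), so the $\A_{\G}$--stabiliser of $\mf{w}_v$ is $\A_{\lk v}\leq Z_{\A_{\G}}(v)$ --- note it is $\A_{\lk v}$ and not $\A_{\St v}$, since $v$ itself does not preserve $\mf{w}_v$. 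Granting this, the inductive step is clean: after translating to $x=1$, write $g=sg'$ reduced, so that $\mscr{W}(1|g)=\{\mf{w}_s\}\sqcup s\,\mscr{W}(1|g')$; fixing $\mf{w}_s$ gives $[h,s]=1$, hence $h=s^{-1}hs$ fixes $\mscr{W}(1|g')$ pointwise, and induction gives $[h,g']=1$ and so $[h,g]=1$. Be aware that your alternative suggestion to obtain the centralising step ``via Lemma~\ref{commutation criterion} itself in lower complexity'' is circular unless it means precisely the inductive hypothesis of the induction just described. Second, the detour through the trees $\T_v$ and the embedding into $\prod_v\T_v$ does not contribute: knowing that $h$ and $ghg^{-1}$ both fix $\pi_v(gx)$ is far from knowing they agree on $\T_v$, and you rightly abandon that line; it can simply be deleted.
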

\begin{proof}
Recall that $\Min(g)\cu\X_{\G}$ is convex. Replacing $x$ with its gate-projection to $\Min(g)$ can only shrink the set $\mscr{W}(x|gx)$, so we can assume that $x$ is on an axis of $g$. Conjugating $g$ and $h$ by $x$, we can further assume that $x=1$, i.e.\ that $g$ is cyclically reduced. Now, the conclusion is straightforward.
\end{proof}

\begin{lem}\label{increasing labels}
Consider $g,h\in\A_{\G}$.
\begin{enumerate}
\item There exists $k\in\langle g,h\rangle$ with $\G(g)\cup\G(h)\cu\G(k)$.
\item If $g$ is cyclically reduced and $h\not\in\A_{\G(g)}$, then there exists $k\in\langle g,h\rangle$ with $\G(k)\not\cu\G(g)$.
\end{enumerate}
\end{lem}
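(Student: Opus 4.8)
The plan is to prove both parts by an induction-on-length argument using the tree $\T_v$ for a well-chosen vertex $v$, together with Lemma~\ref{aligned2}. For part~(1), the natural attempt is to find a single word in $\langle g,h\rangle$ whose axis in $\X_{\G}$ crosses hyperplanes of every label in $\G(g)\cup\G(h)$. First I would handle the case where $g$ and $h$ are both cyclically reduced and $\langle g,h\rangle$ is ``as non-abelian as possible''; here one expects that some product like $gh$ or $g h^{N}$ already works, because if $v\in\G(g)$ then $g$ is loxodromic in $\T_v$, and a generic product of a loxodromic with another element of $\langle g,h\rangle$ remains loxodromic in $\T_v$ unless forced into a common fixed point. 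The mechanism is: if $v\in\G(g)$ but $v\notin\G(gh)$, then $gh$ fixes a point of $\T_v$; combined with the fact that $g$ moves points of $\T_v$, one extracts a constraint (via ping-pong in the tree $\T_v$) forcing $h$ itself to have small displacement in $\T_v$, and iterating/replacing $h$ by a large power or by $h g^{-1}$ pushes all of $\G(g)$ into $\G(k)$; symmetrically for $\G(h)$. To combine the two, after conjugating so that an axis of $g$ passes through $1$, one uses Lemma~\ref{aligned2} to control which labels can disappear when one multiplies elements whose product is geodesic.

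For part~(2), the hypothesis $h\notin\A_{\G(g)}$ is exactly what is needed: since $g$ is cyclically reduced with axis-labels $\G(g)$, the subgroup $\A_{\G(g)}$ is convex in $\X_{\G}$ (it is a standard subcomplex direction), and an element lies in it iff it crosses only hyperplanes with labels in $\G(g)$. So $h\notin\A_{\G(g)}$ means $h$ crosses some hyperplane $\mf{w}$ with $\g(\mf{w})=w\notin\G(g)$. I would then aim to produce $k\in\langle g,h\rangle$ loxodromic in $\T_w$: concretely, conjugate so that $g$ is cyclically reduced through $1$, take $N$ large, and look at $k=g^{N}h g^{N}$ or $k = g^N h$. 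The point is that $g$ translates along its axis, which does not cross any $w$-hyperplane, so $g^N$ does not ``cancel'' the $w$-hyperplane crossed by $h$; hence $g^N h$ still crosses a $w$-hyperplane off its own axis, and by taking a further product or power (applying the loxodromic criterion in $\T_w$: an element crossing a hyperplane transverse to none of its own fixed hyperplanes is loxodromic) one gets $w\in\G(k)$, so $\G(k)\not\subseteq\G(g)$.

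The step I expect to be the main obstacle is the ``ping-pong in $\T_v$'' bookkeeping in part~(1): turning the soft statement ``$g$ is loxodromic in $\T_v$, and a random product of it with $h$ should also be loxodromic'' into a clean choice of $k$ that simultaneously captures \emph{all} the labels in $\G(g)\cup\G(h)$, rather than label by label. The difficulty is that the good product for label $v$ may be bad for label $v'$, so one must argue that after conjugating an axis of $g$ through $1$ and using Lemma~\ref{aligned2}, the labels in $\g(\mscr{W}(1|g))\cap\g(\mscr{W}(1|h))$ that could be ``lost'' in forming $gh$ all reappear in $\G(gh)$, while labels in $\G(g)\triangle\G(h)$ are protected because the corresponding element is already loxodromic there and a one-sided interaction cannot kill a loxodromic (one may need to replace $h$ by a high power to guarantee the product is long enough to apply Lemma~\ref{aligned2} cleanly). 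Making this uniform over all labels at once — perhaps by choosing $k = g^N h^N$ for $N\gg 0$ and checking that for each $v\in\G(g)\cup\G(h)$ the element $k$ is loxodromic in $\T_v$ — is where the real care is needed, and I would expect the write-up to proceed first by reducing (via Lemma~\ref{increasing labels}-type conjugations and Remark~\ref{perp rmk}) to the cyclically reduced case and then do the tree-theoretic estimate.
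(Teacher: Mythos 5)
Both parts of your plan have genuine gaps. For part~(1), you correctly identify the crux --- producing a \emph{single} $k\in\langle g,h\rangle$ that is loxodromic in $\T_v$ for every $v\in\G(g)\cup\G(h)$ simultaneously --- but you then defer it (``where the real care is needed'') rather than resolve it. No fixed formula such as $k=g^Nh^N$ can work (take $h=g^{-1}$), and the label-by-label ping-pong you sketch is exactly the statement to be proved, since the good product for one label may be elliptic for another. The paper sidesteps all of this: it observes that $\langle g,h\rangle$ has no global fixed point in any $\T_v$ with $v\in\G(g)\cup\G(h)$ and invokes a general theorem producing an element loxodromic in finitely many trees at once. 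A self-contained version of your route would have to prove that simultaneity statement; as written, the hardest step is absent.

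For part~(2), the gap is in the choice of the label $w$. You take $w$ to be the label of \emph{any} hyperplane crossed by $h$ with $w\notin\G(g)$ and claim that some $g^Nh$ is then loxodromic in $\T_w$. This fails whenever $\Fix(g,\T_w)\cap\Fix(h,\T_w)\neq\emptyset$, for then $\langle g,h\rangle$ fixes a point of $\T_w$ and no element of it has $w$ among its labels. Concretely, let $\G$ be an edge $\{a,b\}$ plus an isolated vertex $c$, let $g=a$ and $h=bcac^{-1}b^{-1}$. Then $h\notin\A_{\{a\}}$ and $h$ crosses a $b$--labelled hyperplane with $b\notin\G(g)$; but $b\in\G(g)^{\perp}$, so $\Fix(a,\T_b)$ is an entire line which meets $\Fix(h,\T_b)$, and indeed every element of $\langle a,h\rangle$ is conjugate (by $b$) into $\langle a,cac^{-1}\rangle$, so $b$ never appears in its labels; the lemma holds only through the label $c$. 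Two related errors feed this: you work with $\A_{\G(g)}$ where the relevant convex set is the vertex set of $\Min(g)$, namely $\A_{\G(g)}\x\A_{\G(g)^{\perp}}$ (the $\G(g)^{\perp}$ factor is exactly what the example exploits), and your ``loxodromic criterion'' conflates \emph{crossing} a $w$--hyperplane with \emph{skewering} one. The paper's proof fixes both points: after disposing of the easy case $\G(h)\not\cu\G(g)$, it shows $\Min(h)$ is disjoint from $\A_{\G(g)}\x\A_{\G(g)^{\perp}}$ and takes $\mf{w}$ to be a separating hyperplane closest to the latter; this simultaneously forces $w=\g(\mf{w})\notin\G(g)$ and forces the fixed sets of $g$ and $h$ in $\T_w$ to be disjoint, whence $gh$ is loxodromic in $\T_w$.
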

\begin{proof}
In order to prove part~(1), note that an element $x\in\A_{\G}$ is loxodromic in the tree $\T_v$ if and only if $v\in\G(x)$. Thus $\langle g,h\rangle$ acts without a global fixed point on all trees $\T_v$ with $v\in\G(g)\cup\G(h)$. It follows (for instance, by \cite[Theorem~5.1]{CU}) that there exists $k\in\langle g,h\rangle$ that is loxodromic in all these trees, that is, $\G(g)\cup\G(h)\cu\G(k)$.

We now prove part~(2). We can assume that $\G(h)\cu\G(g)$, otherwise we can take $k=h$. Since $g$ is cyclically reduced, the vertex set of $\Min(g)\cu\X_{\G}$ is contained in $\A_{\G(g)}\x\A_{\G(g)^{\perp}}$.

Observe that $\Min(h)$ and $\A_{\G(g)}\x\A_{\G(g)^{\perp}}$ are disjoint. Indeed, suppose that a vertex $x\in\X_{\G}$ lies in their intersection. Since $x\in\Min(h)$, we have $x^{-1}hx\in\A_{\G(h)}\leq\A_{\G(g)}$. Hence $h$ lies in $\A_{\G(g)}$, since $x\in\A_{\G(g)}\x\A_{\G(g)^{\perp}}$. This contradicts the assumption that $h\not\in\A_{\G(g)}$.

Now, since $\Min(h)$ and $\A_{\G(g)}\x\A_{\G(g)^{\perp}}$ are disjoint and convex, there exists a hyperplane $\mf{w}$ separating them. Choosing $\mf{w}$ closest to $\A_{\G(g)}\x\A_{\G(g)^{\perp}}$, we can assume that $w:=\g(\mf{w})$ does not lie in $\G(g)$. It follows that, in the tree $\T_w$, the elements $g$ and $h$ are both elliptic, with disjoint sets of fixed points (which are just the projections to $\T_w$ of $\Min(g)$ and $\Min(h)$). Thus, $gh$ is loxodromic in $\T_w$, which implies that $w\in\G(gh)$.
\end{proof}

\subsection{Label-irreducible elements.}\label{LI sect}

The following notion will play a fundamental role in the rest of the paper. We recall here a few observations from \cite[Subsection~3.2]{Fio10a}.

\begin{defn}
An element $g\in\A_{\G}\setminus\{1\}$ is \emph{label-irreducible} if the subgraph $\G(g)\cu\G$ does not split as a nontrivial join.
\end{defn}

Recall that, if $\mc{G}_1$ and $\mc{G}_2$ are graphs, then their \emph{join} $\mc{G}_1\ast\mc{G}_2$ is the graph obtained by adding to the disjoint union $\mc{G}_1\sqcup\mc{G}_2$ edges between every vertex of $\mc{G}_1$ and every vertex of $\mc{G}_2$.

\begin{rmk}\label{LI properties}
The following are straightforward properties of label-irreducibles.
\begin{enumerate}
\item An element $g$ is label-irreducible if and only if the subgroup $\langle g\rangle$ is convex-cocompact in $\X_{\G}$.
\item Every $g\in\A_{\G}$ can be written as $g=g_1\cdot\ldots\cdot g_k$, where $g_1,\dots,g_k$ are pairwise-commuting label-irreducibles with $\langle g_i\rangle\cap\langle g_j\rangle=\{1\}$ for $i\neq j$. This decomposition is unique up to permutation, so we refer to the $g_i$ as the \emph{label-irreducible components} of $g$. Here $\G(g)=\G(g_1)\sqcup\dots\sqcup\G(g_k)$ is precisely the maximal join-decomposition of $\G(g)$. 
\item Elements $g,h\in\A_{\G}$ commute if and only if every label-irreducible component of $g$ commutes with every label-irreducible component of $h$.
\item If two label-irreducible elements $g,h\in\A_{\G}$ commute, then either $\G(g)\cu\G(h)^{\perp}$ or $\langle g,h\rangle\simeq\Z$ (for instance, this follows from Remark~\ref{pf vs qc rmk} and Lemma~\ref{pf lemma}(1)).
\item If $g=g_1\cdot\ldots\cdot g_k$ is the decomposition of $g$ into label-irreducibles, then the centraliser of $g$ in $\A_{\G}$ splits as:
\[C_1\x\dots\x C_k\x P,\]
where $C_i\leq\A_{\G}$ is the maximal cyclic subgroup containing $g_i$, and $P\leq\A_{\G}$ is parabolic. If $g$ is cyclically reduced, then $P=\A_{\G(g)^{\perp}}$.
\item Let $G\leq\A_{\G}$ be convex-cocompact in $\X_{\G}$. Consider an element $g\in G$ and its decomposition into label-irreducibles $g=g_1\cdot\ldots\cdot g_k$, where $g_i\in\A_{\G}$. Then $G\cap\langle g_i\rangle\neq\{1\}$ for each $1\leq i\leq k$ (see for instance \cite[Lemma~3.16]{Fio10a}). In fact, if $G$ is $q$--convex-cocompact, then $G$ contains a power of each $g_i$ with exponent $\leq q$ (see \cite[Remark~3.17]{Fio10a}).
\end{enumerate}
\end{rmk}

\begin{lem}\label{long min intersection}
Consider $g,h\in\A_{\G}$ and $v\in\G$. Suppose that $g$ is loxodromic in $\T_v$ with axis $\alpha$. If $\Min(h,\T_v)$ intersects $\alpha$ in an arc of length $>4\dim\X_{\G}\cdot\max\{\ell_{\T_v}(g),\ell_{\T_v}(h)\}$, then $h\alpha=\alpha$.
\end{lem}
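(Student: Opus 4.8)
The statement is a ``long overlap forces invariance'' lemma of the kind ubiquitous in the theory of group actions on trees, specialised to the trees $\T_v$ coming from restriction quotients of $\X_\G$. The plan is to argue by contradiction and to exploit the interplay between the overlap of the two minimal sets in a single tree $\T_v$ and the hyperplane structure of the ambient cube complex $\X_\G$. First I would recall the basic trichotomy for an isometry $h$ of a simplicial tree $\T_v$ with respect to the axis $\alpha$ of the loxodromic $g$: either $h$ is elliptic (fixes a subtree), or $h$ is loxodromic with its own axis $\beta=\Min(h,\T_v)$. In either case $\Min(h,\T_v)$ is a subtree, and the hypothesis says $\Min(h,\T_v)\cap\alpha$ is an arc $I$ of length exceeding $4\dim\X_\G\cdot\max\{\ell_{\T_v}(g),\ell_{\T_v}(h)\}$. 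The goal is to show $h$ preserves $\alpha$, equivalently $h\beta=\alpha$ in the loxodromic case, or $\alpha\subseteq\Fix(h,\T_v)$ forces $h\alpha=\alpha$ in the elliptic case (the latter is immediate since $h$ would fix $\alpha$ pointwise, or at least setwise once we know $I$ is long relative to $\ell_{\T_v}(g)$, as the $\langle g\rangle$-orbit of $I$ then covers $\alpha$).

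The heart of the matter is the loxodromic case. Here I would look at how $h$ acts on the hyperplanes of $\T_v$ crossed by $I$. Since $\ell(I)>4\dim\X_\G\cdot\ell_{\T_v}(g)$, after translating by a suitable power of $g$ we may assume $I$ contains a fundamental domain for $\langle g\rangle$ on $\alpha$, and likewise, since $\ell(I)>4\dim\X_\G\cdot\ell_{\T_v}(h)$, it contains a fundamental domain for $\langle h\rangle$ on $\beta$. Thus $\langle g\rangle$ and $\langle h\rangle$ both translate ``parallel'' to each other along the long common sub-arc $I$. The standard tree argument then shows that $[g,h]$ (or $[g^m,h^n]$ for small exponents bounded in terms of $\dim\X_\G$) is elliptic in $\T_v$, fixing a subtree containing a large portion of $I$ --- in fact one sees that $gh g^{-1}$ and $h$ have axes sharing an arbitrarily long sub-arc, hence (a power of) the commutator fixes that sub-arc. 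At this point I want to upgrade ``$[g,h]$ fixes a long arc of $\T_v$'' to ``$[g,h]$ actually fixes $\alpha$'' and then to ``$[g,h]$ fixes $\alpha$, so $h$ normalises $\langle g\rangle$-translates in a way that pins $\beta=\alpha$''; the cleanest route is to show directly that the set of hyperplanes of $\T_v$ separated by $h$ from $\alpha$ is empty, which is exactly the statement $h\alpha=\alpha$.

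To make the jump from ``long arc'' to ``all of $\alpha$'', the key input is the bound $4\dim\X_\G$ and the fact that hyperplanes of $\T_v$ lift to (parallel classes of) hyperplanes of $\X_\G$, all carrying the label $v$; in $\X_\G$, a collection of pairwise-disjoint hyperplanes all labelled $v$ can fail to be ``linearly ordered as seen from a geodesic'' only in boundedly many ways, controlled by $\dim\X_\G$ (this is where one invokes, implicitly, that $\X_\G$ is finite-dimensional and that the combinatorics of hyperplanes along a geodesic in the product-of-trees $\prod_w\T_w$ is governed by $\dim\X_\G$; compare Lemma~\ref{aligned2} and Remark~\ref{LI properties}). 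Concretely, if $h\beta\neq\alpha$, then $\beta$ and $\alpha$ diverge, and the divergence happens within distance $2\dim\X_\G\cdot\max\{\ell_{\T_v}(g),\ell_{\T_v}(h)\}$ of either end of $I$ --- otherwise one produces, via the $g$- and $h$-translates of a hyperplane crossing $I$ transversally to the divergence, a configuration of too many pairwise-transverse or badly-nested hyperplanes labelled $v$, exceeding what $\dim\X_\G$ permits. Since $\ell(I)>4\dim\X_\G\cdot\max\{\ell_{\T_v}(g),\ell_{\T_v}(h)\}$, there is no room for $\beta$ to diverge from $\alpha$ at \emph{either} end within $I$, so $\beta$ contains all of $I$ and, being $\langle h\rangle$-invariant with $I$ longer than a fundamental domain, $\beta$ must coincide with the bi-infinite line $\alpha$; hence $h\alpha=\beta=\alpha$.

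\textbf{Main obstacle.} The delicate point is the quantitative ``no divergence within $I$'' step, i.e.\ extracting the precise constant $4\dim\X_\G$. This requires carefully relating translation lengths in the single tree $\T_v$ to the hyperplane combinatorics of $\X_\G$: one must argue that if $\alpha$ and $\beta$ share an arc of length $L$ but then split, the splitting point gives rise to roughly $L/\max\{\ell_{\T_v}(g),\ell_{\T_v}(h)\}$ distinct hyperplanes of $\T_v$ that are ``pulled apart'' by $g$ and $h$ in incompatible ways, and that more than $\dim\X_\G$ such hyperplanes lead to a cube of dimension $>\dim\X_\G$ in $\X_\G$ (or an analogous contradiction with finite-dimensionality via the Helly-type / cube-complex structure). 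Everything else --- the trichotomy, the reduction using fundamental domains, the final identification $\beta=\alpha$ --- is routine tree geometry; the bookkeeping that yields exactly the stated constant is the part that needs care.
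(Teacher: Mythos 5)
The paper's proof is two lines: reduce to the case where $g$ is label-irreducible (its loxodromic-in-$\T_v$ component has the same axis and translation length), and then quote \cite[Corollary~3.14]{Fio10a}, which says that the long overlap forces $g$ and $h$ to commute, whence $h\alpha=\alpha$. You are instead trying to reprove the content of that citation from scratch, and the attempt has a genuine gap precisely at the step you flag as the ``main obstacle''. The mechanism you propose for deriving a contradiction --- ``a configuration of too many pairwise-transverse or badly-nested hyperplanes labelled $v$, exceeding what $\dim\X_\G$ permits'' --- cannot work as stated: two distinct hyperplanes of $\X_\G$ carrying the \emph{same} label $v$ are never transverse (a vertex of $\G$ is not adjacent to itself), which is exactly why their restriction quotient is a tree. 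Any finite-dimensionality obstruction must come from the interaction of the $v$-labelled hyperplanes with hyperplanes of \emph{other} labels, i.e.\ from the convex hull of the axis of a label-irreducible element in $\X_\G$ (a product of boundedly many quasi-lines); you never identify this, and you also skip the reduction to $g$ label-irreducible, without which $\langle g\rangle$ need not be convex-cocompact and the relevant structure theory does not apply.

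A second, smaller gap: you dismiss the elliptic case as ``immediate'', but it is not. If $h$ is elliptic and $\Fix(h,\T_v)$ meets $\alpha$ in a long arc $I$ without containing all of $\alpha$, then in a general tree $h\alpha$ may branch off $\alpha$ at an endpoint of $I$, so $h\alpha\neq\alpha$. The fact that the $\langle g\rangle$-orbit of $I$ covers $\alpha$ does not help, since $h$ fixing $I$ gives no control over $h(g^nI)=g^n(g^{-n}hg^n)I$ unless you already know $g$ and $h$ commute --- which is the whole point. Both cases genuinely need the ambient cube-complex input (in the paper, via the cited corollary); as written, your argument establishes neither.
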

\begin{proof}
Note that exactly one of the label-irreducible components of $g$ is loxodromic in $\T_v$. In addition, this component has the same axis and the same translation length as $g$. Thus, we can assume that $g$ is label-irreducible. In this case, \cite[Corollary~3.14]{Fio10a} shows that $g$ and $h$ commute, so it is clear that $h$ preserves the axis of $g$.
\end{proof}

Recall that, given a group $G$, a subgroup $H\leq G$ and a subset $K\cu G$, we denote by $Z_H(K)$ the centraliser of $K$ in $H$, i.e.\ the subgroup of elements of $H$ that commute with all elements of $K$.

\begin{rmk}\label{cmp and straight proj rmk}
Let $G\leq\A_{\G}$ be convex-cocompact and let $g\in G$ be label-irreducible.
\begin{enumerate}
\item If $\varphi\in\aut(G)$ is coarse-median preserving (for the coarse median structure induced by $\A_{\G}$), then $\varphi(g)$ is again label-irreducible. This follows from Remark~\ref{LI properties}(1) and Remark~\ref{cc vs qc}.
\item We can define the \emph{straight projection} $\pi_g\colon Z_G(g)\ra\Z$ as the only homomorphism that is surjective, with convex-cocompact kernel, and with $\pi_g(g)>0$. 

Recall that $Z_{\A_{\G}}(g)=C\x P$, where $P\leq\A_{\G}$ is parabolic and $C\leq\A_{\G}$ is the maximal cyclic subgroup containing $g$. The subgroup $Z_G(g)\leq Z_{\A_{\G}}(g)$ is virtually $\langle g\rangle\x (G\cap P)$. Thus, $\pi_g$ is simply the restriction to $Z_G(g)$ of the coordinate projection $C\x P\ra C$, suitably shrinking the codomain to ensure that $\pi_g$ is surjective. In particular, we have $\ker\pi_g=G\cap P$.

If $\varphi\in\aut(G)$ is coarse-median preserving, note that $\pi_{\varphi(g)}=\pi_g\o\varphi^{-1}$.
\end{enumerate}
\end{rmk}

We conclude this subsection with a couple of definitions that will be needed later on.

\begin{defn}
A subgroup $H\leq\A_{\G}$ is \emph{full} if it is closed under taking label-irreducible components.
\end{defn}

\begin{rmk}\label{full rmk}
If $H\leq\A_{\G}$ is full, then $H$ is generated by the label-irreducibles that it contains. 
\end{rmk}

Observe that, for every group $G$ and every subset $A\cu G$, we have $Z_GZ_GZ_G(A)=Z_G(A)$.

\begin{defn}\label{centralisers defn}
Let $G$ be a group. We say that a subgroup $H\leq G$ is a \emph{centraliser in $G$} if $H=Z_GZ_G(H)$. Equivalently, there exists a subset $A\cu G$ such that $H=Z_G(A)$.
\end{defn}

\begin{rmk}
Centralisers in $\A_{\G}$ are full, by Remark~\ref{LI properties}(3).
\end{rmk}

\subsection{Parabolic subgroups.}

Recall the following standard terminology.

\begin{defn}
A subgroup $P\leq\A_{\G}$ is \emph{parabolic} if $P=g\A_{\L}g^{-1}$ for some $\L\cu\G$ and $g\in\A_{\G}$.
\end{defn}

The following alternative characterisation of parabolic subgroups will be needed in Subsection~\ref{om-intersection sect}.

\begin{prop}\label{parabolic prop}
A subgroup $H\leq\A_{\G}$ is parabolic if and only if it satisfies the following property. For every cyclically reduced element $a\in\A_{\G}$, written as a reduced word $a_1\dots a_n$ with $a_i\in\G^{\pm}$, and for every $g\in\A_{\G}$ with $gag^{-1}\in H$, we have $ga_ig^{-1}\in H$ for every $i$.
\end{prop}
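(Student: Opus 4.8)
\textbf{Proof proposal for Proposition~\ref{parabolic prop}.}

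The plan is to prove the two implications separately, with the forward direction being essentially a routine normal-form computation and the reverse direction being the substantive part.

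For the forward implication, suppose $H=g_0\A_{\L}g_0^{-1}$ is parabolic. Given a cyclically reduced $a=a_1\dots a_n$ and $g\in\A_{\G}$ with $gag^{-1}\in H$, I want to conclude each $ga_ig^{-1}\in H$. After replacing $g$ by $g_0^{-1}g$, it suffices to treat the case $H=\A_{\L}$. The key point is that $\A_{\L}$ is a parabolic subgroup, and hence (a standard fact about RAAGs, provable via the action on $\X_{\G}$, or via normal forms) $\A_{\L}$ is exactly the set of elements all of whose syllables in reduced form lie in $\A_{\L}$; more precisely, there is a canonical ``parabolic projection'' or one can argue with the convex subcomplex $g_0\X_{\L}$ of $\X_{\G}$. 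Since $gag^{-1}$ is conjugate to the cyclically reduced element $a$ and lies in $\A_{\L}$, the element $a$ itself is conjugate into $\A_{\L}$ by an element realising the gate-projection, and then $\G(a)\subseteq\L$ up to the relevant conjugation; writing out the reduced form $gag^{-1}$ and tracking which hyperplanes of $\X_{\G}$ it crosses, each ``translate'' $ga_ig^{-1}$ crosses only hyperplanes whose labels lie in $\L$ and is therefore in $H$. I would phrase this cleanly using Lemma~\ref{aligned2} and the labelling map $\g$ rather than grinding through van Kampen diagrams: the labels of $\mscr{W}(1|gag^{-1})$ all lie in $\L$, and the decomposition of $gag^{-1}$ into the conjugated syllables is ``aligned'' along a geodesic, so each syllable's hyperplanes are among these.

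For the reverse implication — which I expect to be the main obstacle — assume $H$ has the stated syllable-closure property, and I must produce $\L\subseteq\G$ and $g$ with $H=g\A_{\L}g^{-1}$. The natural strategy: first show $H$ is generated by elements of the form $gvg^{-1}$ with $v\in\G^{\pm}$ (i.e.\ conjugates of single generators), by applying the hypothesis to each $h\in H$ — write $h$ as $u a u^{-1}$ with $a=a_1\dots a_n$ cyclically reduced, so $h = (ua_1u^{-1})\cdots(ua_nu^{-1})$ exhibits $h$ as a product of conjugated generators, each lying in $H$. So $H$ is generated by its set $\Sigma$ of conjugated-generator elements. Now I want to show this forces $H$ to be parabolic. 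The idea is to consider the convex subcomplex $Y=\overline{\mc{C}}_1(H,\X_{\G})$ or rather the ``parabolic hull'': take the intersection of all parabolic subgroups containing $H$ — call it $P$, which is itself parabolic (intersections of parabolics in a RAAG are parabolic, a standard fact). Then $H\leq P$, $P=g\A_{\L}g^{-1}$, and conjugating I may assume $P=\A_{\L}$ with $\L$ minimal. I must show $H=\A_{\L}$. Since $H$ is generated by conjugated generators $h=ua_iu^{-1}$, and each such element lies in $\A_{\L}$, minimality of $\L$ forces — via the syllable structure of elements of $\A_{\L}$ and the fact that $H$ acts on $\X_{\L}$ — that $\G(H)$, suitably interpreted, is all of $\L$; then a generation/Tits-type argument in $\A_{\L}$ (every conjugated generator $uvu^{-1}\in\A_{\L}$ with $v\in\L$, and these together with the hypothesis generate everything) shows $H$ contains each generator of $\A_{\L}$ up to the required conjugation, hence $H=\A_{\L}$. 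The delicate point is ruling out that $H$ is a proper ``non-parabolic'' subgroup of $\A_{\L}$ that nonetheless is syllable-closed — this is where one genuinely uses that $H$ is syllable-closed for \emph{all} $g$, not just for $g=1$: a subgroup like $\langle vw\rangle\leq\A_{\{v,w\}}$ with $v\perp w$ fails the hypothesis precisely because conjugating $vw$ by the identity already produces the syllables $v,w\notin H$. I would make the minimality argument precise by induction on $|\L|$ or on the number of generators, using Remark~\ref{perp rmk} and Remark~\ref{LI properties} to handle join decompositions of $\L$: if $\L=\L_1\ast\L_2$ nontrivially, then $\A_{\L}=\A_{\L_1}\x\A_{\L_2}$ and one analyses the two coordinate projections of $H$ separately, each again syllable-closed, closing the induction.

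The single hardest step is the reverse direction's final move: showing that a syllable-closed subgroup $H$ contained in a minimal parabolic $\A_{\L}$ must equal $\A_{\L}$. I expect this to require carefully combining (i) the observation that $H$ is generated by conjugated generators, (ii) an analysis — via the trees $\T_v$, $v\in\L$, and Lemma~\ref{long min intersection} / Remark~\ref{cmp and straight proj rmk} — of which generators $v\in\L$ can appear as syllables of elements of $H$, and (iii) minimality of $\L$ to force all of them to appear, followed by a short argument that appearance of $v$ as a syllable plus syllable-closure yields a \emph{standard} generator $v\in H$ (not merely a conjugate), perhaps after first conjugating $H$ to put it in a normalised position inside $\A_{\L}$.
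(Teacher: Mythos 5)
The forward direction is where your sketch breaks down. The points $1,\ ga_1g^{-1},\ ga_1a_2g^{-1},\ \dots,\ gag^{-1}$ do \emph{not} lie on a geodesic of $\X_{\G}$ in general (take $g=a$: then $ga_1g^{-1}=aa_1a^{-1}$ typically has length $2n+1$, far exceeding $d(1,gag^{-1})=n$), so the conjugated syllables are not ``aligned'', Lemma~\ref{aligned2} does not apply, and knowing $\g(\mscr{W}(1|gag^{-1}))\cu\L$ tells you nothing about $\mscr{W}(1|ga_ig^{-1})$. The deeper problem is that your argument only ever uses the hypothesis $gag^{-1}\in\A_{\L}$ to extract $\G(a)\cu\L$, and the conclusion is false at that level of generality: $ga_ig^{-1}\in\A_{\L}$ holds only for the special $g$ with $gag^{-1}\in\A_{\L}$, and the crux of the forward direction is to identify this set of $g$. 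The paper does this by noting that $a$ itself lies in $\A_{\L}$ (it is a cyclically reduced word in letters of $\L$) and that elements of $\A_{\L}$ which are $\A_{\G}$--conjugate are already $\A_{\L}$--conjugate, whence $g\in\A_{\L}\cdot Z_{\A_{\G}}(a)$; one then uses the explicit structure of the centraliser of a cyclically reduced element (Remark~\ref{LI properties}(5)) to see that conjugation by $Z_{\A_{\G}}(a)$ keeps each $a_i$ inside $\A_{\L}$. Without this conjugacy step your forward direction cannot be repaired.

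Your reverse direction has the right skeleton (generation by conjugated generators, then a parabolic envelope, then equality), and could be completed, but it is considerably more roundabout than necessary, and the step you yourself identify as hardest --- converting conjugated generators into standard ones after a single global conjugation --- is exactly what your sketch leaves open. The paper's route avoids it: by Lemma~\ref{increasing labels}(1) there is one element $x\in H$ with $\G(x)=\G(H)$; after one conjugation $x$ is cyclically reduced, so its syllables realise every vertex of $\G(H)$, and the hypothesis applied with $g=1$ immediately gives $\A_{\G(H)}\leq H$; the reverse inclusion $H\leq\A_{\G(H)}$ follows from Lemma~\ref{increasing labels}(2) rather than from a minimal-parabolic argument. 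Be aware, too, that the fact you invoke --- intersections of parabolics are parabolic --- is deduced in this paper \emph{from} the proposition you are proving (see the proof of Proposition~\ref{om-centralisers in G cor}), so using it here would require an independent reference to avoid circularity.
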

\begin{proof}
We first show that parabolics have this property. Since the property is invariant under conjugation, it suffices to verify it for subgroups of the form $\A_{\L}$ with $\L\cu\G$. If $gag^{-1}\in\A_{\L}$, then $\G(a)=\G(gag^{-1})\cu\L$ and, since $a$ is cyclically reduced, each $a_i$ must lie in $\L$. Observing that elements of $\A_{\L}$ are $\A_{\G}$--conjugate if and only if they are $\A_{\L}$--conjugate, we see that $g\in\A_{\L}\cdot Z_{\A_{\G}}(a)$. Since $a$ is cyclically reduced, $Z_{\A_{\G}}(a)=\bigcap_iZ_{\A_{\G}}(a_i)$, hence $ga_ig^{-1}\in\A_{\L}$ for every $i$.

Conversely, let $H\leq\A_{\G}$ be a subgroup satisfying the property. By Lemma~\ref{increasing labels}(1), there exists $x\in H$ with $\G(x)=\G(H)$. Up to conjugating $H$, we can assume that $x$ is cyclically reduced. Our property then yields $\A_{\G(H)}\leq H$. If the reverse inclusion did not hold, Lemma~\ref{increasing labels}(2) would yield an element $y\in H$ with $\G(y)\not\cu\G(H)$, which is impossible. Thus $H=\A_{\G(H)}$.
\end{proof}

\begin{defn}
A \emph{parabolic stratum} is a subset of $\X_{\G}$ of the form $g\A_{\Delta}$ for some $\Delta\cu\G$ and $g\in\A_{\G}$ (we identify as usual the $0$--skeleton of $\X_{\G}$ with $\A_{\G}$).
\end{defn}

A parabolic stratum can equivalently be defined as the set of points of $\X_{\G}$ that one can reach starting at a given vertex $g\in\X_{\G}$ and only crossing edges with label in a given subgraph $\Delta\cu\G$.

\begin{rmk}\label{parabolic stratum rmk}
Here are a few straightforward properties of parabolic strata.
\begin{enumerate}
\item Intersections of parabolic strata are parabolic strata. Gate-projections of parabolic strata to parabolic strata are parabolic strata.
\item If $\mc{P}$ is a parabolic stratum and $g\in\A_{\G}$ is an element with $g\mc{P}\cap\mc{P}\neq\emptyset$, then $g\mc{P}=\mc{P}$.
\item Stabilisers of parabolic strata are parabolic subgroups of $\A_{\G}$.
\item For every $g\in\A_{\G}$, there exists a parabolic stratum $\mc{P}$ such that the hyperplanes of $\mc{P}$ are precisely the hyperplanes of $\X_{\G}$ that are preserved by $g$, namely the elements of $\overline{\mc{W}}_0(g,\X_{\G})$. It follows that, for every subgroup $H\leq\A_{\G}$, there exists a parabolic stratum $\mc{P}$ whose hyperplanes are precisely those in $\overline{\mc{W}}_0(H,\X_{\G})$.
\end{enumerate}
\end{rmk}

\begin{lem}\label{cc conjugates}
If $H\leq\A_{\G}$ is convex-cocompact and $gHg^{-1}\leq H$ for some $g\in\A_{\G}$, then $gHg^{-1}=H$.
\end{lem}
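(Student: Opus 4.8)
The plan is to reduce the statement to a purely metric fact about automorphisms of $\X_{\G}$ and then apply the quantitative convex-cocompactness results already established. First I would pass to a convex subcomplex realising the convex-cocompactness: let $Y\cu\X_{\G}$ be an $H$--invariant convex subcomplex on which $H$ acts cocompactly, and say $H$ is $q$--convex-cocompact, so we may take $H$ to act on $Y$ with exactly $q$ orbits of vertices. The hypothesis $gHg^{-1}\leq H$ shows that $H$ preserves the convex subcomplex $g^{-1}Y$ as well (since $g^{-1}Y$ is $g^{-1}Hg$--invariant and $g^{-1}Hg\supseteq H$), and more usefully that $H=gHg^{-1}$ acts on $gY$ with $q$ orbits of vertices, while its overgroup $H$ acts on $Y\supseteq\pi_Y(gY)$; the point is that $gHg^{-1}$, being convex-cocompact with the same value of $q$, has bounded index in its finite-index overgroups.

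Concretely, I would argue as follows. Since $gHg^{-1}\leq H$ and both act cocompactly on convex subcomplexes ($gHg^{-1}$ on $gY$, $H$ on $Y$), the index $[H : gHg^{-1}]$ is finite: indeed $gHg^{-1}$ acts cocompactly on $gY$, hence on $\mc{C}_1(gHg^{-1})=\mc{C}_1(H)$ (a hyperplane is skewered by an element of $H$ iff by an element of a finite-index subgroup), so by Remark~\ref{fi overgroups rmk} applied to the $q$--convex-cocompact subgroup $gHg^{-1}\leq H$, the index $[H:gHg^{-1}]$ is at most $qN$, where $N$ bounds the order of vertex-stabilisers; in particular it is finite. (Here one uses that $\A_{\G}$ is torsion-free, so in fact $N=1$ and the index is $\leq q$, but finiteness is all that matters.)

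Now conjugation by $g$ is an injective endomorphism $\phi\colon H\to H$, $h\mapsto ghg^{-1}$, with finite-index image $gHg^{-1}$. The key step is then to upgrade this to surjectivity. Here I would invoke that $H$ is a special group — in particular residually finite and finitely generated — so it is Hopfian; hence the injective endomorphism $\phi$ is automatically an automorphism, forcing $gHg^{-1}=\phi(H)=H$. Alternatively, one can avoid the Hopf property entirely and argue geometrically: iterating, $g^nHg^{-n}$ is a nested decreasing chain of subgroups of $H$ all of the same finite index bound $\leq qN$ in $H$ (each conjugation preserves the index $[H:gHg^{-1}]$), so the chain stabilises, giving $g^nHg^{-n}=g^{n+1}Hg^{-n-1}$ for some $n$, and conjugating back by $g^{-n}$ yields $H=gHg^{-1}$.

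\textbf{Expected main obstacle.} The only genuinely delicate point is the clean reduction of "convex-cocompact subgroup containing a conjugate of itself" to "finite-index self-containment", i.e. verifying carefully that $gHg^{-1}\leq H$ with $gHg^{-1}$ $q$--convex-cocompact really does give a uniform index bound via Remark~\ref{fi overgroups rmk} — one must check that $gHg^{-1}$ is itself $q$--convex-cocompact (it is, being a conjugate of $H$, which merely translates the witnessing subcomplex) and that it has finite index in $H$ before the remark applies. Once that is in place, both the Hopfian argument and the stabilising-chain argument are routine, so I expect no serious difficulty beyond bookkeeping.
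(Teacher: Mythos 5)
Your reduction to ``finite-index self-containment'' has a genuine gap at exactly the point you flag and then wave away: you never establish that $[H:gHg^{-1}]$ is finite. Your justification is circular --- you claim $\mc{C}_1(gHg^{-1})=\mc{C}_1(H)$ because ``a hyperplane is skewered by an element of $H$ iff by an element of a finite-index subgroup'', which presupposes the finite index you are trying to prove. A priori one only gets $\mc{W}_1(gHg^{-1})=g\,\mc{W}_1(H)\cu\mc{W}_1(H)$, with no reason for equality, and Remark~\ref{fi overgroups rmk} is explicitly a statement about finite-index overgroups, so it cannot be used to produce finiteness. Essentially all of the content of the lemma is concentrated in this step, and the paper handles it by RAAG-specific geometry rather than by a soft index count: taking an $H$--essential convex subcomplex $Z$, one observes that $\mscr{W}(Z|g^{-1}Z)$ is a finite $H$--invariant set of hyperplanes, hence lies in $\overline{\mc{W}}_0(H)$; the parabolic stratum realising $\overline{\mc{W}}_0(H)$ (Remark~\ref{parabolic stratum rmk}(4)) is acted on vertex-transitively by its stabiliser, whose elements commute with $H$ by Lemma~\ref{commutation criterion}, so after replacing $g$ by $gx^{-1}$ one may assume $gZ\cu Z$; cocompactness of $H\acts Z$ and freeness of $\A_{\G}\acts\X_{\G}$ then put a positive power of $g$ inside $H$, which kills the possibility $gHg^{-1}\lneq H$. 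Once you had finite index your stabilising-chain argument (indices $[H:g^nHg^{-n}]=[H:gHg^{-1}]^n$ bounded by $qN$) would indeed finish the proof, but the missing step is the heart of the matter.

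A secondary error: the Hopfian route is wrong as stated. The property you need --- every injective endomorphism is an automorphism --- is co-Hopficity, not Hopficity (which Mal'cev gives for finitely generated residually finite groups and which concerns surjective endomorphisms). Special groups are very far from co-Hopfian in general: $\Z$ and free groups admit $n\mapsto 2n$--type injective, non-surjective endomorphisms. So even granting finite index, only your second (chain) argument survives.
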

\begin{proof}
Let $Z\cu\X_{\G}$ be an $H$--essential convex subcomplex. Since $g^{-1}Z$ is $g^{-1}Hg$--invariant and $H\leq g^{-1}Hg$, the finite set $\mscr{W}(Z|g^{-1}Z)$ is $H$--invariant, hence it is contained in $\overline{\mc{W}}_0(H,\X_{\G})$.

By Remark~\ref{parabolic stratum rmk}(4), there exists a parabolic stratum $\mc{P}\cu\X_{\G}$ such that the hyperplanes of $\mc{P}$ are precisely those preserved by $H$. By the previous paragraph, we can choose $\mc{P}$ so that it intersects both $Z$ and $g^{-1}Z$. Note that $\mc{P}$ is acted upon vertex-transitively by its stabiliser $P\leq\A_{\G}$, so there exists $x\in P$ such that $xg^{-1}Z\cap Z\neq\emptyset$. By Lemma~\ref{commutation criterion}, $x$ commutes with $H$. Thus, replacing $g$ with $gx^{-1}$, we can assume that $g^{-1}Z\cap Z\neq\emptyset$ without altering $gHg^{-1}$. Since $g^{-1}Z\cap Z$ is $H$--invariant and $Z$ is $H$--essential, we deduce that $Z\cu g^{-1}Z$. Hence $gZ\cu Z$. 

Now, pick a vertex $y\in Z$. Since $H$ is convex-cocompact, it acts cocompactly on $Z$, hence there exist integers $1\leq m<n$ such that $g^my$ and $g^ny$ are in the same $H$--orbit. Since $\A_{\G}$ acts freely on $\X_{\G}$, this implies that $g^{n-m}\in H$. In particular, $g^{n-m}$ normalises $H$, so we cannot have $gHg^{-1}\lneq H$.
\end{proof}

\begin{defn}\label{G--parabolic defn}
Let $G\leq\A_{\G}$ be convex-cocompact. A subgroup of $G$ is \emph{$G$--parabolic} if it is of the form $G\cap P$ with $P\leq\A_{\G}$ parabolic. To avoid confusion, the prefix $G$-- will never be omitted.
\end{defn}

\begin{lem}\label{intersecting stratum}
Let $G\leq\A_{\G}$ act cocompactly on a convex subcomplex $Y\cu\X_{\G}$. For every parabolic subgroup $P\leq\A_{\G}$, there exists a parabolic stratum $\mc{P}'$ stabilised by a parabolic subgroup $P'\leq P$ such that $G\cap P'=G\cap P$ and $\mc{P}'\cap Y\neq\emptyset$.
\end{lem}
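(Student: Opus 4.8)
The goal is to find a parabolic subgroup $P'\leq P$ whose associated parabolic stratum actually meets the convex subcomplex $Y$ on which $G$ acts cocompactly, without losing the intersection with $G$, i.e.\ $G\cap P'=G\cap P$. The natural candidate is obtained by translating $P$ by an element of $G$ itself: if $gG\cap P$ is conjugated into $P$ by $g\in G$, then $g(G\cap P)g^{-1}=G\cap gPg^{-1}$, so conjugating $P$ by elements of $G$ does not change the $G$--intersection up to the automorphism $x\mapsto gxg^{-1}$. More precisely, I would aim to produce $g\in G$ such that the stratum $g\mc{P}$ meets $Y$, and then set $\mc{P}'=g\mc{P}$, $P'=gPg^{-1}$; since $G\cap gPg^{-1}=g(G\cap P)g^{-1}$ has the same ``size'' this is not literally $G\cap P$, so one must be slightly more careful --- one actually wants a stratum inside the $G$--orbit of $\mc{P}$ that meets $Y$, and then take the corresponding $P'$.

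Here is the concrete strategy. Let $\mc{P}$ be a parabolic stratum stabilised by $P$ (Remark~\ref{parabolic stratum rmk}(3) and the identification of parabolic subgroups with stabilisers of strata). Consider the gate-projection $\pi_Y\colon\X_{\G}\ra Y$ and look at $\pi_Y(\mc{P})$: by Remark~\ref{parabolic stratum rmk}(1) this is again a parabolic stratum, say $\mc{Q}\cu Y$, stabilised by a parabolic subgroup $Q\leq\A_{\G}$ with $\mc{Q}\cap Y=\mc{Q}\neq\emptyset$. Moreover, since $\pi_Y$ is $G_Y$--equivariant where $G_Y\leq\A_{\G}$ is the stabiliser of $Y$ (and $G\leq G_Y$ with finite index, as $G\acts Y$ cocompactly), the subgroup $P$ and the finite set $\mscr{W}(\mc{P}\mid\mc{Q})=\mscr{W}(\mc{P}\mid Y)$ behave well: $P$ preserves this finite set of hyperplanes, hence $P$ preserves $\mc{Q}$ too. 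So $P\leq Q$. Now I claim $G\cap P=G\cap Q$: the inclusion $\cu$ is clear; for the reverse, I would argue that any element of $G\cap Q$ which preserves $\mc{Q}$ must in fact preserve $\mc{P}$ as well --- this uses a commutation/stratum argument in the spirit of Lemma~\ref{commutation criterion} and Lemma~\ref{cc conjugates}, exploiting that elements preserving $\mc{Q}$ either preserve $\mc{P}$ or move it to a disjoint parallel stratum, and convex-cocompactness of $G$ then forces them to preserve $\mc{P}$. Setting $\mc{P}'=\mc{Q}$ and $P'=Q$ (well, a suitable finite-index adjustment, or rather just $Q$ itself after checking the equality) gives the statement.

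The delicate point --- and what I expect to be the main obstacle --- is verifying $G\cap P'=G\cap P$, i.e.\ that enlarging $P$ to $Q$ does not introduce new elements of $G$. The issue is that $Q$ could in principle be strictly larger than $P$ (the gate-projection of a stratum to a convex subcomplex can be a ``thicker'' stratum), and a priori $G\cap Q$ could be bigger than $G\cap P$. To control this I would use: (i) the finite set $\mscr{W}(\mc{P}\mid Y)$ lies in $\overline{\mc{W}}_0(P,\X_{\G})$ (it is $P$--invariant), so it lies in $\overline{\mc{W}}_0(G\cap P)$ too; (ii) any $g\in G\cap Q$ preserves $\mc{Q}$, hence permutes the finitely many strata of $\X_{\G}$ parallel to $\mc{P}$ that project onto $\mc{Q}$, so a finite-index subgroup of $G\cap Q$ preserves $\mc{P}$; (iii) convex-cocompactness of $G$ combined with Lemma~\ref{cc conjugates} (or a direct argument: a finite power of $g$ lying in $G\cap P$ forces $g$ to normalise, and by fullness/structure of centralisers forces $g\in G\cap P$). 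Alternatively --- and perhaps more cleanly --- rather than projecting $\mc{P}$ to $Y$, I would translate $\mc{P}$ by an element $x$ of the parabolic stabiliser of the minimal parabolic stratum containing both $\mc{P}$ and meeting $Y$, exactly as in the proof of Lemma~\ref{cc conjugates}: such an $x$ commutes with $P\cap G$ by Lemma~\ref{commutation criterion}, so $x\mc{P}$ is a stratum meeting $Y$ and stabilised by a conjugate $xPx^{-1}$ with $G\cap xPx^{-1}=x(G\cap P)x^{-1}=G\cap P$ (the last equality because $x$ centralises $G\cap P$). This second route avoids the ``thickening'' problem entirely and is the one I would pursue first; the work is then just in identifying the right ambient parabolic stratum and invoking Lemma~\ref{commutation criterion} correctly.
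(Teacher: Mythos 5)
Both of your routes have genuine gaps, and neither produces the containment $P'\leq P$ that the statement requires.

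\emph{Route one.} Your starting claim that $\pi_Y(\mc{P})$ is a parabolic stratum is not covered by Remark~\ref{parabolic stratum rmk}(1): that remark concerns gate-projections of strata \emph{to strata}, whereas $Y$ is an arbitrary convex subcomplex. (In $\A_{\G}=F_2=\langle a,b\rangle$ with $\mc{P}=\A_{\{a\}}$ and $Y=b\A_{\{a\}}$, the projection is the single vertex $b$, not a translate of $\A_{\{a\}}$.) Worse, the step ``$P$ preserves the finite set $\mscr{W}(\mc{P}\mid Y)$, hence $P\leq Q$'' is false: $P$ preserves $\mc{P}$ but has no reason to preserve $Y$ (only $G$ does), so $P$ does not preserve $\mscr{W}(\mc{P}\mid Y)$, and in the example above $P=\A_{\{a\}}$ is certainly not contained in the trivial stabiliser of $\{b\}$. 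Finally, even granting a parabolic $Q$ containing $P$, you would be producing an \emph{over}-parabolic, whereas the lemma asks for a \emph{sub}-parabolic $P'\leq P$; the difficulty you correctly flag --- that $G\cap Q$ may be strictly larger than $G\cap P$ --- is precisely the obstruction, and items (i)--(iii) do not resolve it.

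\emph{Route two.} The identity $G\cap xPx^{-1}=x(G\cap P)x^{-1}$ only holds for $x\in G$; in general $x(G\cap P)x^{-1}=xGx^{-1}\cap xPx^{-1}$, and $xGx^{-1}\neq G$. The same example kills this route: take $G=\langle bab^{-1}\rangle$ acting cocompactly on $Y=b\A_{\{a\}}$, and $P=\A_{\{a\}}$ with stratum $\mc{P}=\A_{\{a\}}$. Then $G\cap P=\{1\}$, so \emph{every} $x$ centralises $G\cap P$; taking $x=b$ gives $x\mc{P}=Y$, yet $G\cap xPx^{-1}=G\neq\{1\}=G\cap P$. Moreover $xPx^{-1}$ is a conjugate of $P$, not a subgroup of it.

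The idea you are missing is to \emph{shrink the label set} rather than translate or enlarge. The paper picks $p\in\pi_Y(\mc{P})$ and defines $\mc{P}'$ as the stratum through $p$ with $\g(\mscr{W}(\mc{P}'))=\g(\mscr{W}(\mc{P}))\cap\g(\mscr{W}(Y|\mc{P}))^{\perp}$. This stratum crosses the same hyperplanes as a sub-stratum of $\mc{P}$, so its stabiliser satisfies $P'\leq P$ (hence $G\cap P'\leq G\cap P$), and one checks $Y\cap\mc{P}'=\pi_Y(\mc{P})\neq\emptyset$. The reverse inclusion is then obtained by a finite-index argument: Lemma~\ref{cc intersection} shows $G\cap P'$ acts cocompactly on $Y\cap\mc{P}'=\pi_Y(\mc{P})$, a set preserved by $G\cap P$, so $G\cap P'$ has finite index in $G\cap P$; since parabolic subgroups are closed under taking roots, every element of $G\cap P$ having a power in $P'$ must itself lie in $P'$, giving $G\cap P'=G\cap P$. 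This ``finite index plus closed under roots'' mechanism is the step your proposal never reaches.
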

\begin{proof}
Consider the gate-projection $\pi_Y\colon\X_{\G}\ra Y$. Let $\mc{P}$ be a parabolic stratum stabilised by $P$ and pick a point $p\in\pi_Y(\mc{P})$. Define $\mc{P}'$ as the parabolic stratum that contains $p$ and satisfies $\g(\mscr{W}(\mc{P'}))=\g(\mscr{W}(\mc{P}))\cap\g(\mscr{W}(Y|\mc{P}))^{\perp}$. It is easy to see that $\pi_Y(\mc{P})=Y\cap\mc{P}'$.

Let $P'\leq\A_{\G}$ be the parabolic subgroup associated to $\mc{P}'$. Since $\mc{P}'$ crosses the same hyperplanes as a sub-stratum of $\mc{P}$, we have $P'\leq P$, hence $G\cap P'\leq G\cap P$. By Lemma~\ref{cc intersection}, $G\cap P'$ acts cocompactly on $\pi_Y(\mc{P}')=Y\cap\mc{P}'$. This set coincides with $\pi_Y(\mc{P})$, so it is preserved by $G\cap P$, hence $G\cap P'$ has finite index in $G\cap P$. In particular, every element of $G\cap P$ has a power that lies in $P'$. Since $P'$ is parabolic, this implies that $G\cap P\leq P'$, and hence $G\cap P'=G\cap P$. 
\end{proof}

\begin{cor}\label{fin many conj parabolics}
If $G\leq\A_{\G}$ is convex-cocompact, there are only finitely many $G$--conjugacy classes of $G$--parabolic subgroups.
\end{cor}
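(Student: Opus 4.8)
\textbf{Proof proposal for Corollary~\ref{fin many conj parabolics}.}

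The plan is to leverage Lemma~\ref{intersecting stratum}, which says that every $G$--parabolic subgroup $G\cap P$ can be realised as $G\cap P'$ for a parabolic stratum $\mc{P}'$ that intersects a fixed $G$--invariant, $G$--cocompact convex subcomplex $Y\cu\X_{\G}$. So first I would fix such a $Y$ (which exists because $G$ is convex-cocompact in $\X_{\G}$) together with a finite set of vertices $Y_0\cu Y$ meeting every $G$--orbit. The key reduction is then: every $G$--parabolic subgroup is $G$--conjugate to one of the form $G\cap P'$, where $\mc{P}'$ is a parabolic stratum passing through a point of $Y_0$.

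Second, I would observe that there are only finitely many parabolic strata passing through a given vertex $p\in\X_{\G}$: such a stratum has the form $p\A_{\Delta}$ for some $\Delta\cu\G$ (after translating $p$ to the identity), and there are only finitely many subgraphs $\Delta$ of the finite graph $\G$. Hence, as $p$ ranges over the finite set $Y_0$, there are only finitely many parabolic strata $\mc{P}'$ intersecting $Y_0$, and so only finitely many subgroups of the form $G\cap P'$ arising this way. Combined with the reduction of the previous paragraph, this shows that there are only finitely many $G$--conjugacy classes of $G$--parabolic subgroups.

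I do not expect a serious obstacle here: the statement is essentially a bookkeeping consequence of Lemma~\ref{intersecting stratum} plus finiteness of $\G$ and cocompactness of $G\acts Y$. The one point requiring a small amount of care is the very first step of the reduction: given an arbitrary $G$--parabolic subgroup $G\cap P$, one applies Lemma~\ref{intersecting stratum} to get $\mc{P}'$ with $\mc{P}'\cap Y\neq\emptyset$ and $G\cap P'=G\cap P$, and then conjugates by an element $g\in G$ carrying a chosen vertex of $\mc{P}'\cap Y$ into $Y_0$; one must note that $g(G\cap P')g^{-1}=G\cap gP'g^{-1}$ is again $G$--parabolic and that $g\mc{P}'$ is a parabolic stratum through a point of $Y_0$, so the conjugacy class is accounted for. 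This is routine, so the corollary follows.
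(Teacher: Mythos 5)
Your proof is correct and follows essentially the same route as the paper: both reduce via Lemma~\ref{intersecting stratum} to parabolic strata meeting a $G$--cocompact convex subcomplex $Y$, and then use cocompactness together with the finiteness of the set of parabolic strata through a fixed vertex (finitely many subgraphs $\Delta\cu\G$) to conclude. The paper phrases the last step as "finitely many $G$--orbits of such strata," which is exactly your conjugation into the finite set $Y_0$.
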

\begin{proof}
Let $Y\cu\X_{\G}$ be a convex subcomplex on which $G$ acts cocompactly. By Lemma~\ref{intersecting stratum}, every $G$--parabolic subgroup is of the form $G\cap P$ for a parabolic subgroup $P\leq\A_{\G}$ whose parabolic stratum $\mc{P}$ intersects $Y$. There are only finitely many $G$--orbits of such parabolic strata, hence only finitely many $G$--conjugacy classes of such subgroups of $\A_{\G}$. 
\end{proof}

\begin{lem}\label{special cc}
Let $G\leq\A_{\G}$ be a $q$--convex-cocompact subgroup for $q\geq 1$. Let $H\leq G$ be an arbitrary convex-cocompact subgroup. Then:
\begin{enumerate}
\item $N_G(H)$ has a finite-index subgroup that splits as $H\x K$, where $K\leq G$ is $G$--parabolic;  
\item $Z_G(H)$ acts on the set $\mc{W}_1(G)\cap\overline{\mc{W}}_0(H)$ with at most $2q\cdot\#\G^{(0)}$ orbits;
\item every $G$--parabolic subgroup of $G$ is $q$--convex-cocompact in $\A_{\G}$.
\end{enumerate}
\end{lem}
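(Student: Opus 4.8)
The plan is to exploit one special feature of $\X_{\G}$: since $\G$ is a simplicial graph, two hyperplanes of $\X_{\G}$ carrying the same $\g$--label are never transverse, and as the $\A_{\G}$--action preserves labels, for every $H\leq\A_{\G}$, every hyperplane $\mf{w}$ and every $h\in H$ the translate $h\mf{w}$ is not transverse to $\mf{w}$ (the same holds inside any convex subcomplex $Y\cu\X_{\G}$). Two consequences I would use throughout: (i) every action on such a $Y$ is automatically \emph{non-transverse}, so Corollary~\ref{cc normalisers} always applies; (ii) $\overline{\mc{W}}_0(H,Y)$ is exactly the set of hyperplanes of $Y$ fixed by $H$, and for $Y=\X_{\G}$ this set is $\mscr{W}(\mc{P})$ for a parabolic stratum $\mc{P}$ with parabolic stabiliser $P$ (Remark~\ref{parabolic stratum rmk}(4)); since $N_G(H)$ preserves $\overline{\mc{W}}_0(H,\X_{\G})$ and a parabolic stratum is determined by its hyperplane set, this gives $N_G(H)\leq G\cap P$.

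\textbf{Part (3).} I would prove this first, as it is self-contained. Write the given $G$--parabolic subgroup as $K=G\cap P$ with $P\leq\A_{\G}$ parabolic, and pick a convex subcomplex $Y\cu\X_{\G}$ on which $G$ acts cocompactly with exactly $q$ orbits of vertices. Lemma~\ref{intersecting stratum} produces a parabolic stratum $\mc{P}'$, stabilised by a parabolic $P'\leq P$ with $G\cap P'=G\cap P=K$, and with $\mc{P}'\cap Y\neq\emptyset$. Then $K$ preserves the nonempty convex subcomplex $\mc{P}'\cap Y$, and the crucial observation is that each $G$--orbit of vertices meets $\mc{P}'\cap Y$ in \emph{at most one} $K$--orbit: if $gx=y$ with $x,y\in\mc{P}'\cap Y$ and $g\in G$, then $g\mc{P}'\cap\mc{P}'\neq\emptyset$, so $g\mc{P}'=\mc{P}'$ by Remark~\ref{parabolic stratum rmk}(2), so $g\in G\cap P'=K$. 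Hence $K$ acts on $\mc{P}'\cap Y$ with at most $q$ orbits of vertices, which is the claim.

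\textbf{Part (1).} Since $G$ is convex-cocompact and finitely generated, it acts properly and cocompactly on $\mc{C}_1(G,\X_{\G})$, and $H$ is convex-cocompact there (gate-project an $H$--essential invariant subcomplex). By (i) this action is non-transverse, so Corollary~\ref{cc normalisers} yields a finite-index subgroup $H\cdot K_0\leq N_G(H)$ with $H\cap K_0$ finite, hence trivial as $G$ is torsion-free, and with $[H,K_0]=1$; thus a finite-index subgroup of $N_G(H)$ equals $H\x K_0$. It remains to arrange $K_0$ to be $G$--parabolic. By Corollary~\ref{cc normalisers}(2) and Lemma~\ref{pf lemma}(2) we have $H\perp K_0$, so $\mc{W}_1(K_0,\X_{\G})\cu\overline{\mc{W}}_0(H,\X_{\G})=\mscr{W}(\mc{P})$; combining this with $N_G(H)\leq G\cap P$, the cocompactness of $K_0$ on a fibre of $\overline{\mc{C}}_0(H)$, and part~(3), one replaces $K_0$ by $G\cap P'$ for a suitable parabolic sub-stratum $\mc{P}'\cu\mc{P}$. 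I expect this last identification to be the most delicate point of part~(1).

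\textbf{Part (2).} I would split $\mc{W}_1(G)\cap\overline{\mc{W}}_0(H)$ according to the $\#\G^{(0)}$ possible $\g$--labels and work in the trees $\T_v$. Since $\T_v$ is a tree, (ii) identifies the label--$v$ hyperplanes in $\overline{\mc{W}}_0(H,\X_{\G})$ with the edges of the subtree $\Fix(H,\T_v)$, on which $Z_G(H)$ acts; intersecting with $\mc{W}_1(G)$ keeps the edges of $\Fix(H,\T_v)$ lying on the axis of some $g\in G$ that is loxodromic in $\T_v$. Replacing $g$ by its unique label-irreducible component loxodromic in $\T_v$ and then by a $\leq q$--th power lying in $G$ (Remark~\ref{LI properties}(1),(6)), and using Lemma~\ref{long min intersection} to see that a long $H$--fixed sub-arc of such an axis forces $H$ to preserve it, and in fact to commute with that label-irreducible, one shows every such edge lies on an axis of an element of $Z_G(H)$, i.e.\ in $\mc{W}_1(Z_G(H),\T_v)$, and that this accounts for at most $2q$ $Z_G(H)$--orbits per label. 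The main obstacle I anticipate is precisely this last count: converting the vertex-orbit bound $q$ from $q$--convex-cocompactness into a bound on edge-orbits in the trees $\T_v$ and making the constant come out as $2q$ — together with the analogous difficulty, flagged above, of pinning down the $G$--parabolic complement in part~(1).
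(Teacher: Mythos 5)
Your part~(3) is correct and is exactly the paper's argument. The problem is observation~(ii) of your preamble: a parabolic stratum is \emph{not} determined by its hyperplane set, so the inclusion $N_G(H)\leq G\cap P$ is false. Take $G=\A_{\G}=\Z^2=\langle a,b\rangle$ and $H=\langle a\rangle$: then $\overline{\mc{W}}_0(H)$ is the set of hyperplanes labelled $b$, \emph{every} coset $g\langle b\rangle$ is a parabolic stratum with precisely this hyperplane set, $P$ is (conjugate to) $\langle b\rangle$, and yet $N_G(H)=\Z^2\not\leq G\cap P$. What is true is only that $N_G(H)$ permutes the parallel copies of $\mc{P}$. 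Since your part~(1) leans on this inclusion, and since the remaining step --- replacing $K_0$ by a $G$--parabolic subgroup --- is only asserted (``one replaces $K_0$ by $G\cap P'$\,''), part~(1) is not proved. Likewise, the $2q\cdot\#\G^{(0)}$ count in part~(2) is flagged as an anticipated obstacle rather than carried out. These two unproved steps are the actual content of the lemma, so as it stands the proposal establishes only part~(3).

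Both gaps close at once, and without going through Corollary~\ref{cc normalisers} at all, if you build the parabolic complement directly. Fix convex subcomplexes $Z\cu Y\cu\X_{\G}$ with $Z$ $H$--invariant and $H$--essential and $Y$ $G$--invariant with $q$ orbits of vertices, and choose the stratum $\mc{P}$ with $\mscr{W}(\mc{P})=\overline{\mc{W}}_0(H)$ so that $\mc{P}\cap Z\neq\emptyset$. Then $\mc{W}_1(G)\cap\overline{\mc{W}}_0(H)$ is exactly $\mscr{W}(\mc{P}\cap Y)$ and $\overline{\mc{C}}(H,Y)=Z\x(\mc{P}\cap Y)$. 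Setting $K:=G\cap P$, Lemma~\ref{commutation criterion} gives $K\leq Z_G(H)$, and your part~(3) shows $K$ acts on $\mc{P}\cap Y$ with at most $q$ orbits of vertices, hence with at most $2q\cdot\#\G^{(0)}$ orbits of hyperplanes because every vertex of $\X_{\G}$ has degree $2\#\G^{(0)}$ --- this is part~(2), and it is exactly the count your tree-by-tree approach was missing. Since $H\x K$ then acts cocompactly on $\overline{\mc{C}}(H,Y)$ while $N_G(H)$ acts on it properly, $H\x K$ has finite index in $N_G(H)$, which is part~(1) with an explicitly $G$--parabolic $K$; there is no need to first produce $K_0$ and then convert it.
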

\begin{proof}
Choose convex subcomplexes $Z\cu Y\cu\X_{\G}$, where $Z$ is $H$--invariant and $H$--essential, while $Y$ is $G$--invariant and $G$--essential.

We prove part~(3) first. Lemma~\ref{intersecting stratum} shows that $G$--parabolic subgroups of $G$ are always of the form $G\cap P$, where $P$ is the stabiliser of a stratum $\mc{P}$ that intersects $Y$. Observe that points of $\mc{P}\cap Y$ in the same $G$--orbit are also in the same $(G\cap P)$--orbit. Indeed, if $x$ and $gx$ lie in $\mc{P}\cap Y$ for some $g\in G$, then $g\mc{P}\cap\mc{P}\neq\emptyset$, hence $g\mc{P}=\mc{P}$ and $g\in G\cap P$. This proves part~(3).

We now discuss the rest of the lemma. Remark~\ref{parabolic stratum rmk}(4) provides a parabolic stratum $\mc{P}\cu\X_{\G}$ whose hyperplanes are precisely the elements of $\overline{\mc{W}}_0(H)$. We can choose $\mc{P}$ so that $\mc{P}\cap Z\neq\emptyset$.
Then the elements of $\mc{W}_1(G)\cap\overline{\mc{W}}_0(H)$ are precisely the hyperplanes of the intersection $\mc{P}\cap Y$, so we have a splitting $\overline{\mc{C}}(H,Y)=Z\x(\mc{P}\cap Y)$. Recall that $N_G(H)$ preserves $\overline{\mc{C}}(H,Y)$ along with its two factors.

Let $P\leq\A_{\G}$ be the stabiliser of $\mc{P}$. By part~(3), the $G$--parabolic subgroup $G\cap P$ acts on $\mc{P}\cap Y$ with at most $q$ orbits of vertices. In particular, since vertices of $\X_{\G}$ have degree $2\#\G^{(0)}$, there are at most $2q\cdot\#\G^{(0)}$ orbits of hyperplanes of $\mc{P}\cap Y$. By Lemma~\ref{commutation criterion}, $G\cap P$ is contained in $Z_G(H)$. This proves parts~(1) and~(2), taking $K=G\cap P$.
\end{proof}

\subsection{Semi-parabolic subgroups.}

\begin{defn}\label{AP defn}
A subgroup $H\leq\A_{\G}$ is \emph{semi-parabolic} if it is conjugate to a subgroup of the form $\langle a_1,\dots,a_k\rangle\x\A_{\Delta}$, where:
\begin{itemize}
\item the $a_i$ are cyclically reduced, label-irreducible and not proper powers;  
\item we have $\G(a_i)\cu\Delta^{\perp}$ for all $i$, and $\G(a_i)\cu\G(a_j)^{\perp}$ for all $i\neq j$.
\end{itemize}
We can always assume that $\A_{\Delta}$ has trivial centre, as this can be added to the $a_i$.
\end{defn}

We say that a subgroup $H\leq\A_{\G}$ is \emph{closed under taking roots} if, whenever $g^n\in H$ for some $g\in\A_{\G}$ and $n\geq 1$, we actually have $g\in H$.

Semi-parabolic subgroups are always convex-cocompact, full and closed under taking roots.

\begin{lem}\label{almost parabolic subgroups}
A subgroup $H\leq\A_{\G}$ is semi-parabolic if and only if it splits as $H=A\x P$, where $P$ is parabolic and $A$ is abelian, full and closed under taking roots.
\end{lem}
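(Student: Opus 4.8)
The plan is to prove both implications directly from the definitions, making essential use of the structural facts already collected in this section. For the ``only if'' direction, suppose $H$ is semi-parabolic, so up to conjugacy $H=\langle a_1,\dots,a_k\rangle\x\A_\Delta$ with the $a_i$ cyclically reduced, label-irreducible, not proper powers, satisfying $\G(a_i)\cu\Delta^\perp$ and $\G(a_i)\cu\G(a_j)^\perp$ for $i\neq j$. I would set $A=\langle a_1,\dots,a_k\rangle$ and $P=\A_\Delta$. That $P$ is parabolic is immediate. For $A$: the pairwise orthogonality $\G(a_i)\cu\G(a_j)^\perp$ together with Remark~\ref{LI properties}(4) forces $\langle a_i,a_j\rangle$ to be abelian (they cannot generate a $\Z$ since their label sets are disjoint and orthogonal), so $A$ is free abelian of rank $k$ with basis the $a_i$; in fact $A\simeq\Z^k$. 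Fullness of $A$ follows because the $a_i$ are themselves label-irreducible and their label sets are the distinct maximal join-pieces, so the label-irreducible components of any element of $A$ are powers of the $a_i$ — hence lie in $A$ (using Remark~\ref{LI properties}(2) and the explicit form of the centraliser in Remark~\ref{LI properties}(5)). Closure under roots: if $g^n\in A$ for some $g\in\A_\G$, decompose $g$ into label-irreducible components; each component has a power equal to a power of some $a_i$, and since the $a_i$ are not proper powers, each such component is itself (a conjugate of, but in fact equal to, as things are cyclically reduced) a power — I would argue it equals $a_i^{\pm1}$ composed appropriately, landing $g$ in $A$. This uses that maximal cyclic subgroups of $\A_\G$ are generated by non-proper-powers that are cyclically reduced up to conjugacy.

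For the ``if'' direction, suppose $H=A\x P$ with $P$ parabolic, $A$ abelian, full and closed under taking roots. Conjugating, I may assume $P=\A_\Delta$ for some $\Delta\cu\G$, and moreover (replacing $\Delta$ by a conjugate stratum meeting a well-chosen convex core, or simply by absorbing the centre of $\A_\Delta$ into $A$ as the definition permits) I may assume $\A_\Delta$ has trivial centre. Since $A$ is full, by Remark~\ref{full rmk} it is generated by the label-irreducibles it contains; since $A$ is abelian and closed under roots, these label-irreducibles can be organised into a basis: by Remark~\ref{LI properties}(4), any two commuting label-irreducibles in $A$ either generate a $\Z$ (impossible for distinct basis elements once we pass to roots, as closure under roots makes the generating label-irreducibles primitive and pairwise non-commensurable) or have orthogonal label sets. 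So $A=\langle a_1,\dots,a_k\rangle$ with the $a_i$ label-irreducible, not proper powers (closure under roots), and $\G(a_i)\cu\G(a_j)^\perp$ for $i\neq j$. After conjugating the whole of $H$ I may take each $a_i$ cyclically reduced — here I need the $a_i$ to be \emph{simultaneously} conjugable to cyclically reduced form, which holds because $A$ is free abelian and its action on $\X_\G$ has a $\CAT(0)$ flat whose associated convex subcomplex can be taken to pass through the base vertex (equivalently, the $a_i$ lie in a common parabolic stratum's ``straight'' part). Finally I must check $\G(a_i)\cu\Delta^\perp$: since $a_i$ commutes with all of $\A_\Delta$, and $\A_\Delta$ is cyclically reduced with trivial centre, Remark~\ref{perp rmk}(1) (the centraliser of $\A_\Delta$ is $\A_{\Delta_\perp}$) combined with the trivial-centre assumption gives $a_i\in\A_{\Delta^\perp}$, whence $\G(a_i)\cu\Delta^\perp$.

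\textbf{Main obstacle.} The delicate point is the simultaneous normalisation: extracting from ``$A$ abelian, full, closed under roots'' a \emph{specific} basis $a_1,\dots,a_k$ of label-irreducible, non-proper-power elements that can all be conjugated to cyclically reduced form at once, with pairwise orthogonal label sets. Fullness gives generation by label-irreducibles, and closure under roots gives primitivity, but ruling out the ``$\langle a_i,a_j\rangle\simeq\Z$'' alternative of Remark~\ref{LI properties}(4) for a well-chosen generating set — and then finding a single conjugator putting the whole basis in cyclically-reduced position (using that $A$ acts on $\X_\G$ with a convex-cocompact flat, via Remark~\ref{LI properties}(1),(2) and Lemma~\ref{cc intersection}) — is where the real work lies. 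Everything else is bookkeeping with Remark~\ref{LI properties} and Remark~\ref{perp rmk}.
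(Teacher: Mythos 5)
Your overall route coincides with the paper's: the ``only if'' direction is the easy bookkeeping you describe (the paper dismisses it as clear), and for the ``if'' direction the paper likewise extracts from $A$ a basis of primitive label-irreducibles $g_1,\dots,g_k$ (fullness plus closure under roots), applies Remark~\ref{LI properties}(4) to distinct basis elements to get $\G(g_i)\cu\G(g_j)^{\perp}$, conjugates $P$ to a standard $\A_{\Delta}$, and then cyclically reduces the basis. The one step where you diverge --- and which you yourself flag as the main obstacle --- is the simultaneous cyclic reduction, and there your sketch has a genuine gap. It is not enough to find a single conjugator putting all the $g_i$ in cyclically reduced position (which your flat/Min-set argument does provide, via Helly applied to the pairwise-intersecting convex subcomplexes $\Min(g_i)\cu\X_{\G}$): the same conjugation is applied to all of $H$, so it also replaces $\A_{\Delta}$ by a conjugate, which is parabolic but in general no longer a \emph{standard} parabolic $\A_{\Delta'}$. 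Re-conjugating afterwards to restore a standard parabolic factor can destroy cyclic reducedness of the $g_i$. So the conjugator must be chosen compatibly with $\A_{\Delta}$, and your proposal never addresses this.

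The paper resolves this algebraically and inductively: writing $g_i=x_ia_ix_i^{-1}$ as a reduced word with $a_i$ cyclically reduced, it notes $g_i\in Z_{\A_{\G}}(\A_{\Delta})=\A_{\Delta_{\perp}}$ (Remark~\ref{perp rmk}(1)), so $x_1\in\A_{\Delta_{\perp}}$ centralises $\A_{\Delta}$ and conjugating by $x_1$ reduces $g_1$ without moving $\A_{\Delta}$; then, once $g_1$ is cyclically reduced, Remark~\ref{LI properties}(5) forces $x_2$ to commute with $g_1$ (and with $\A_{\Delta}$), so the next conjugation reduces $g_2$ without disturbing $g_1$ or $\A_{\Delta}$, and so on. Your geometric version can be repaired in one stroke by working inside the stratum: each $\Min(g_i)$ meets the convex subcomplex $\A_{\Delta_{\perp}}$ (as $g_i$ preserves it), so Helly inside $\A_{\Delta_{\perp}}$ yields a vertex $x\in\A_{\Delta_{\perp}}\cap\bigcap_i\Min(g_i)$, and conjugating $H$ by this $x$ simultaneously cyclically reduces all the $g_i$ while centralising $\A_{\Delta}$. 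Without that refinement the step, as written, does not close. A minor secondary point: commutativity of the $a_i$ in the ``only if'' direction comes from cyclic reducedness together with $\G(a_i)\cu\G(a_j)^{\perp}$ (which places them in elementwise-commuting standard parabolics), not from Remark~\ref{LI properties}(4), which presupposes commutation.
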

\begin{proof}
It is clear that semi-parabolic subgroups admit such a splitting. Conversely, suppose that $H\leq\A_{\G}$ is an arbitrary subgroup with a splitting $A\x P$ as in the statement. 

Since $A$ is full, it has a basis of label-irreducible elements $g_1,\dots,g_k$. The fact that $A$ is closed under taking roots implies that none of the $g_i$ can be a proper power. Since the $g_i$ commute, we must have $\G(g_i)\cu\G(g_j)^{\perp}$ for all $i\neq j$, by Remark~\ref{LI properties}(4). Since $P$ is parabolic, we can conjugate $H$ and assume that $H=\langle g_1,\dots,g_k\rangle\x\A_{\Delta}$ for some $\Delta\cu\G$. Since $g_i$ is label-irreducible and commutes with $\A_{\Delta}$, we have $\G(g_i)\cu\Delta^{\perp}$, again by Remark~\ref{LI properties}(4).

We are left to further conjugate $H$ in order to ensure that the $g_i$ are all cyclically reduced. Write $g_i=x_ia_ix_i^{-1}$ as a reduced word with $a_i$ cyclically reduced. Since $g_i$ commutes with $\A_{\Delta}$, we have $g_i\in\A_{\Delta_{\perp}}$ (Remark~\ref{perp rmk}(1)). In particular, $x_1\in\A_{\Delta_{\perp}}$ commutes with $\A_{\Delta}$ and, conjugating $H$ by $x_1$, we can assume that $g_1=a_1$. 

Now, since $g_1$ is cyclically reduced and $g_2$ is a label-irreducible commuting with $g_1$, Remark~\ref{LI properties}(5) shows that $x_2$ commutes with $g_1$. Thus, conjugating $H$ by $x_2$, we can assume that $g_2=a_2$ without affecting $g_1=a_1$. Repeating this procedure, we can ensure that all $g_i$ are cyclically reduced. 
\end{proof}

\begin{cor}\label{AP intersections}
Intersections of semi-parabolic subgroups are again semi-parabolic.
\end{cor}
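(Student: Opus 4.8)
The strategy is to reduce to the structural characterization of semi-parabolic subgroups provided by Lemma~\ref{almost parabolic subgroups}: a subgroup $H\leq\A_{\G}$ is semi-parabolic if and only if it splits as $H=A\x P$, where $P$ is parabolic and $A$ is abelian, full, and closed under taking roots. So given two semi-parabolic subgroups $H_1,H_2\leq\A_{\G}$, I want to verify that $H_1\cap H_2$ admits such a splitting.

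First I would observe that both $H_1$ and $H_2$ are full and closed under taking roots (stated in the excerpt just before the lemma), and both of these properties are evidently preserved under intersection; so $H_1\cap H_2$ is full and closed under taking roots. The remaining work is to produce the splitting as a direct product of an abelian group and a parabolic group. Since fullness means $H_1\cap H_2$ is generated by the label-irreducibles it contains (Remark~\ref{full rmk}), I would sort these label-irreducibles by whether their support $\G(g)$ is a single vertex (the ``parabolic-type'' generators) or a genuinely label-irreducible graph that is not a single vertex (the ``abelian-type'' loxodromic generators). Concretely: let $P$ be the subgroup generated by all label-irreducibles $g\in H_1\cap H_2$ with $\G(g)$ reduced to a point — this should be a parabolic subgroup, in fact $P = (H_1\cap H_2)\cap\A_{\G(P)}$ — and let $A$ be generated by the remaining label-irreducibles; fullness and the commutation relations (Remark~\ref{LI properties}(3),(4)) force these to pairwise commute, and closure under roots plus Remark~\ref{LI properties}(4) forces $A$ to be a free abelian group on these generators (no two can be powers of a common element). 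One then checks $H_1\cap H_2 = A\x P$ by the same generator bookkeeping. Alternatively — and perhaps more cleanly — I would work with the explicit normal forms: write $H_1$ (up to conjugation) as $\langle a_1,\dots,a_k\rangle\x\A_{\Delta_1}$ and $H_2$ as $g(\langle b_1,\dots,b_\ell\rangle\x\A_{\Delta_2})g^{-1}$, and analyze the intersection directly, using that intersections of parabolic strata are parabolic strata (Remark~\ref{parabolic stratum rmk}(1)) and the corresponding fact for stabilizers.

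The main obstacle I anticipate is the interaction between the two conjugating elements: semi-parabolicity is only defined up to conjugacy, and there is no reason the normal forms of $H_1$ and $H_2$ are simultaneously ``aligned''. So I would need an argument that lets me assume both subgroups are in a compatible position, or else avoid normal forms entirely. This pushes me toward the intrinsic approach via Lemma~\ref{almost parabolic subgroups}: the cleanest route is to show (i) $H_1\cap H_2$ is full and closed under roots (immediate), and (ii) $H_1\cap H_2$ splits as abelian $\times$ parabolic. For (ii), the subtle point is identifying which parabolic subgroup appears: I would take $P$ to be the intersection of $H_1\cap H_2$ with the smallest parabolic subgroup of $\A_{\G}$ containing all the ``non-abelian-type'' label-irreducible components, and argue that $P$ is parabolic using Proposition~\ref{parabolic prop} (the conjugation-closure characterization of parabolics) together with fullness, and that the complementary abelian part splits off because its generators are orthogonal to all of $\G(P)$. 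Verifying that $P$ genuinely satisfies the hypothesis of Proposition~\ref{parabolic prop} — that conjugating a cyclically reduced element into $P$ forces each syllable into $P$ — is where I expect to spend the most care, since it requires controlling how an arbitrary element of $\A_{\G}$ can conjugate something into the intersection $H_1\cap H_2$, and this is exactly where one needs to combine the parabolicity of the pieces of $H_1$ and of $H_2$.

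Once both (i) and (ii) are in place, Lemma~\ref{almost parabolic subgroups} immediately gives that $H_1\cap H_2$ is semi-parabolic, completing the proof. The induction to finite intersections is trivial. I should also double-check the degenerate cases where $A$ or $P$ is trivial, and the case where some $b_i$ turns out to be a power of some $a_j$ in the intersection — closure under roots is precisely what rules out a non-split extension here.
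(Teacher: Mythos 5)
Your overall strategy is the paper's: reduce to the characterisation of Lemma~\ref{almost parabolic subgroups}, observe that fullness and closure under roots pass to intersections, and then exhibit an (abelian)$\times$(parabolic) splitting of $H_1\cap H_2$. But the step where you actually produce that splitting has a genuine gap. You propose to sort the label-irreducibles of $H_1\cap H_2$ by an \emph{intrinsic} criterion --- support $\G(g)$ a single vertex (``parabolic-type'') versus not (``abelian-type''). This criterion does not match the $A$/$P$ decomposition. A non-abelian parabolic $\A_{\Delta}$ contains plenty of label-irreducibles with multi-vertex support (e.g.\ $uv$ for non-adjacent $u,v\in\Delta$), which your rule would dump into the ``abelian'' part, making it non-abelian and destroying the splitting; your fallback of taking the smallest parabolic containing the ``parabolic-type'' generators inherits the same defect. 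The point is that whether a label-irreducible of the intersection should land in the abelian or the parabolic factor is not intrinsic to the element: it depends on which factor of $H_1$ and which factor of $H_2$ it sits in, and these can disagree.

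The correct sorting, which is what the paper does, is relative to the given splittings. Write $H_i=A_i\x P_i$ as in Lemma~\ref{almost parabolic subgroups}. Because the label-irreducible decomposition of $ap$ (with $a\in A_i$, $p\in P_i$) is the union of those of $a$ and $p$, every label-irreducible of $H_i$ lies in $A_i\cup P_i$. Hence every label-irreducible of $H_1\cap H_2$ lies in one of the four subgroups $A_1\cap A_2$, $A_1\cap P_2$, $A_2\cap P_1$, $P_1\cap P_2$; by fullness these four generate $H_1\cap H_2$. The first three generate a full abelian subgroup closed under roots (each is contained in $A_1$ or $A_2$, and the commutation relations $[A_i,P_i]=1$ make them commute with each other and with $P_1\cap P_2$), while $P_1\cap P_2$ is parabolic \emph{for free}: the characterisation in Proposition~\ref{parabolic prop} is manifestly closed under intersections, so the delicate syllable-by-syllable verification you anticipate spending the most care on never arises. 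With this decomposition in hand, Lemma~\ref{almost parabolic subgroups} finishes the argument exactly as you intended; and, as you note, there is no need to consider infinite intersections thanks to the uniform bound on chains (Remark~\ref{AP chains rmk}).
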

\begin{proof}
Let $H_1,H_2\leq\A_{\G}$ be two semi-parabolic subgroups. Write $H_i=A_i\x P_i$, with $P_i$ parabolic and $A_i$ abelian, full and closed under taking roots. 

Every label-irreducible element in $H_i$ lies either in $A_i$ or in $P_i$. Note that $H_1$ and $H_2$ are full, so $H_1\cap H_2$ is full. Remark~\ref{full rmk} implies that $H_1\cap H_2$ is generated by the label-irreducibles that it contains. Hence $H_1\cap H_2$ is generated by the four subgroups $A_1\cap A_2$, $A_1\cap P_2$, $A_2\cap P_1$ and $P_1\cap P_2$. The first three subgroups generate a full abelian subgroup $A\leq H_1\cap H_2$ closed under taking roots. Since $H_1\cap H_2$ splits as $A\x(P_1\cap P_2)$, Lemma~\ref{almost parabolic subgroups} shows that $H_1\cap H_2$ is semi-parabolic.

There is no need to consider intersections of infinitely many semi-parabolic subgroups because of Remark~\ref{AP chains rmk} below.
\end{proof}

\begin{rmk}\label{AP chains rmk}
There is a uniform bound (depending only on $\G$) on the length of any chain of semi-parabolic subgroups of $\A_{\G}$. Indeed, let $H_1\lneq H_2\leq\A_{\G}$ be two semi-parabolic subgroups and write $H_i=A_i\x P_i$ so that the $P_i$ have trivial centre. Then $P_1\cap A_2=\{1\}$ and, since $P_1$ is full, we must have $P_1\leq P_2$. In conclusion, either $P_1\lneq P_2$, or $P_1=P_2$ and $A_1\lneq A_2$, since $A_1$ is full. In the latter case, we have $\rk A_1<\rk A_2$, since $A_1$ is closed under taking roots.
\end{rmk}

\begin{rmk}\label{AP within parabolic}
Consider a semi-parabolic subgroup $H\leq\A_{\G}$ and a parabolic subgroup $P\leq\A_{\G}$ that does not split as a product. If $H\leq P$ and $\G(H)=\G(P)$, then either $H$ is cyclic or $H=P$.
\end{rmk}

\begin{lem}\label{normalisers of subgroups of AP}
Let $H\leq\A_{\G}$ be semi-parabolic. Let $K\leq H$ be any subgroup with $\G(K)=\G(H)$. 
\begin{enumerate}
\item If $gKg^{-1}\leq H$ for some $g\in\A_{\G}$, then $gHg^{-1}=H$.
\item Suppose that some $g\in\A_{\G}$ commutes with $K$, but not with $H$. Then $H$ admits a splitting $A\x P_1\x P_2$, where $A$ is abelian, the $P_i$ are parabolics with trivial centre (possibly with $P_2=\{1\}$), and $K$ is contained in $A\x A'\x P_2$ for some abelian subgroup $A'\leq P_1$.
\end{enumerate}
\end{lem}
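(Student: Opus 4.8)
Write $H = \langle a_1,\dots,a_k\rangle\x\A_\Delta$ in the normal form of Definition~\ref{AP defn}, with the $a_i$ cyclically reduced, label-irreducible and not proper powers, $\G(a_i)\cu\Delta^\perp$, $\G(a_i)\cu\G(a_j)^\perp$ for $i\neq j$, and $\A_\Delta$ with trivial centre. Note $\G(H)=\G(a_1)\sqcup\dots\sqcup\G(a_k)\sqcup\Delta$ is the maximal join-decomposition of $\G(H)$ (using that $\A_\Delta$ has trivial centre, so $\Delta$ has no universal vertex, and the $\G(a_i)$ are join-indecomposable). The hypothesis $\G(K)=\G(H)$ means $K$ contains label-irreducibles whose $\G$-sets cover all of $\G(H)$, hence by Remark~\ref{LI properties}(6)-type reasoning $K$ contains a power of each $a_i$ and its image under $K\hookrightarrow H \twoheadrightarrow \A_\Delta$ is a subgroup of $\A_\Delta$ with $\G(\cdot)=\Delta$.

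For part~(1): suppose $gKg^{-1}\leq H$. Since $H$ is convex-cocompact in $\X_\G$ and $K$ is a finite-index-like subgroup of $H$ in the sense that $\G(K)=\G(H)$, the conjugate $gKg^{-1}$ is again convex-cocompact with $\G(gKg^{-1})=\G(H)$; I would first argue $gHg^{-1}$ and $H$ have the same $\G$-set and then that $gHg^{-1}\cap H$ contains $g K g^{-1}$, which is ``large'' in both. Concretely, the label-irreducible components: $ga_ig^{-1}$ has a power lying in $H$, and by Lemma~\ref{cc conjugates} applied to the maximal cyclic subgroups containing the $a_i$ (these are convex-cocompact) we get $g$ normalises each $\langle a_i\rangle$ up to the structure of $\A_{\G(a_i)^\perp}$; combining with the analogous statement for the parabolic part $\A_\Delta$ via Proposition~\ref{parabolic prop} forces $g\A_\Delta g^{-1}=\A_{g\Delta g^{-1}}$ to coincide with $\A_\Delta$, and hence $gHg^{-1}=H$. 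The cleanest route is probably: $gHg^{-1}$ is semi-parabolic (conjugates of semi-parabolics are semi-parabolic) with $\G(gHg^{-1})=\G(H)$, it contains $gKg^{-1}\leq H$, so $gKg^{-1}\leq H\cap gHg^{-1}$, which by Corollary~\ref{AP intersections} is semi-parabolic of full $\G$-set inside both; by Remark~\ref{AP within parabolic} applied factor-by-factor (the join-decomposition of $\G(H)$ is canonical, so both $H$ and $gHg^{-1}$ respect it), $H\cap gHg^{-1}$ equals $H$ on every non-cyclic parabolic factor and contains a finite-index subgroup of each cyclic factor; then $g^m\in N_{\A_\G}(H)$ for suitable $m$, and since $H$ is closed under roots and $gKg^{-1}\leq H$, one upgrades to $gHg^{-1}= H$.

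For part~(2): suppose $g$ commutes with $K$ but not with $H$. Decompose $g$ into label-irreducible components and analyse which components of $H$ each component of $g$ commutes with, using Remark~\ref{LI properties}(3),(4). Since $g$ commutes with $K$ and $\G(K)=\G(H)$, each label-irreducible component of $g$ commutes with a power of each $a_i$ and with a subgroup of $\A_\Delta$ of full $\G$-set. Commuting with a power of the label-irreducible $a_i$ means (Remark~\ref{LI properties}(4)) either $\G(g\text{-component})\cu\G(a_i)^\perp$ or it generates a $\Z$ together with $a_i$; similarly for $\A_\Delta$, commuting with a full-$\G$-set subgroup and $\A_\Delta$ having trivial centre forces the relevant $g$-components either to lie in $\A_{\Delta^\perp}$ or... — this is where the splitting $\A_\Delta = P_1\x P_2$ arises: $P_2$ is the part of $\A_\Delta$ that $g$ centralises ``rigidly'', $P_1$ the part where $g$ contributes only an abelian (cyclic-in-each-irreducible-direction) extension $A'$. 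Collect the $\Z$-type interactions into the abelian factor $A$. The point is that $g\notin Z_{\A_\G}(H)$ rules out the degenerate case where everything lands in the orthogonal complement, forcing a genuine nontrivial $P_1$ with $K$ trapped in $A\x A'\x P_2$.

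**Main obstacle.** The delicate step is part~(2): bookkeeping the interaction between the label-irreducible components of $g$ and the canonical join-factors of $\G(H)$, and in particular teasing apart the parabolic factor $\A_\Delta$ into the ``rigidly centralised'' piece $P_2$ and the piece $P_1$ where $g$ only induces an abelian extension. One must be careful that the abelian subgroup $A'\leq P_1$ genuinely sits inside a single parabolic and is closed enough under roots/fullness to match the semi-parabolic bookkeeping, and that the parabolics $P_1,P_2$ indeed have trivial centre — which may require re-absorbing central directions into $A$. I expect the argument to hinge on Remark~\ref{LI properties}(4) and Remark~\ref{perp rmk}, applied component-by-component, together with Lemma~\ref{pf lemma}(1) to convert ``commutes with a power'' into orthogonality of $\G$-sets whenever the relevant pair does not generate a $\Z$.
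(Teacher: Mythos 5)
Your overall strategy --- decompose into label-irreducible components, use fullness of the relevant subgroups and the canonical join decomposition of $\G(H)$ --- is the same as the paper's, but both parts have gaps in execution. In part~(1), the ending is not a valid deduction: knowing $g^m\in N_{\A_{\G}}(H)$ for some $m$ does not ``upgrade'' to $gHg^{-1}=H$, and your detour through Remark~\ref{AP within parabolic} admits a cyclic exception on the parabolic factors that you never rule out. The paper's route is more direct: from $gKg^{-1}\leq H$ one gets $K\leq H\cap g^{-1}Hg$, and since this intersection is full it splits as $(A\cap g^{-1}Ag)\x(P\cap g^{-1}Pg)$; the hypothesis $\G(K)=\G(H)$ then forces $\G(A)\cu\G(A\cap g^{-1}Ag)$ and $\G(P)\cu\G(P\cap g^{-1}Pg)$, whence $A\leq g^{-1}Ag$ (fullness plus closure under roots recovers each basis element) and $P\leq g^{-1}Pg$ (a parabolic subgroup of $P$ with full $\G$--set is $P$), and Lemma~\ref{cc conjugates} gives equality. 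Note also that your claim that $K$ itself contains a power of each $a_i$ by ``Remark~\ref{LI properties}(6)-type reasoning'' is unjustified, since $K$ is an arbitrary subgroup, neither full nor convex-cocompact; what is true and sufficient is that the label-irreducible components of elements of $K$ whose $\G$--sets meet $\G(a_i)$ are powers of $a_i$, so anything commuting with $K$ commutes with each $a_i$.

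Part~(2) is where you stop at exactly the point where the work happens, and the missing device is Lemma~\ref{increasing labels}(1): it supplies a \emph{single} element $k\in K$ with $\G(k)=\G(K)=\G(H)$. Writing $k=ap$ with $\G(a)=\G(A)$ and $\G(p)=\G(P)$, the label-irreducible components $p_1,\dots,p_n$ of $p$ realise exactly the join factors of $\G(P)$, and $\bigcap_iZ_{\A_{\G}}(p_i)=\langle p_1',\dots,p_n'\rangle\x Z_{\A_{\G}}(P)$, where $p_i'$ is the primitive root of $p_i$. Since $g$ commutes with $k$ but not with $P$, some label-irreducible components of $g$ are powers of some of the $p_i'$, say for $i\leq m$ with $m\geq 1$; then $P_1$ is the product of the corresponding parabolic join factors, $A'=\langle p_1',\dots,p_m'\rangle$, and $K\leq Z_{\A_{\G}}(g)\cap H\leq A\x A'\x P_2$. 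Your formulation via ``a subgroup of $\A_{\Delta}$ of full $\G$--set'' genuinely leaves the bookkeeping open: different elements of $K$ could contribute label-irreducible components whose $\G$--sets are proper subsets of a single join factor of $\Delta$, and then the dichotomy of Remark~\ref{LI properties}(4) does not by itself isolate a well-defined parabolic factor $P_1$ or the abelian group $A'$.
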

\begin{proof}
We begin with part~(1). Consider a splitting $H=A\x P$ as in Lemma~\ref{almost parabolic subgroups}. Recall that $\G(A)\cap\G(P)=\emptyset$ and that every label-irreducible element of $H$ lies in $A\cup P$. If $g\in\A_{\G}$, the intersection $H\cap g^{-1}Hg$ is full, so Remark~\ref{full rmk} gives: 
\[H\cap g^{-1}Hg=(A\cap g^{-1}Ag)\x (P\cap g^{-1}Pg).\]
Now, if $gKg^{-1}\leq H$, we have $K\leq H\cap g^{-1}Hg$. Hence:
\[\G(H)=\G(K)\cu\G(H\cap g^{-1}Hg).\] 
This implies that $\G(A)\cu\G(A\cap g^{-1}Ag)$ and $\G(P)\cu\G(P\cap g^{-1}Pg)$, thus $g$ must normalise both $A$ and $P$. It follows that $gHg^{-1}=H$, proving part~(1).

We now prove part~(2). Let $g\in\A_{\G}$ be an element that commutes with $K$, but not with $H$. We can assume that the parabolic subgroup $P$, defined as above, has trivial centre.

By Lemma~\ref{increasing labels}(1), $K$ contains an element $k$ with $\G(k)=\G(K)=\G(H)$. We can write $k=ap$, where $a\in A$ and $p\in P$ satisfy $\G(a)=\G(A)$ and $\G(p)=\G(P)$. Let $p_1,\dots,p_n$ be the label-irreducible components of $p$. Since $g$ commutes with $k$, it must commute with $A$ and with all the $p_i$. The intersection of the centralisers of the $p_i$ is the subgroup $\langle p_1',\dots,p_n'\rangle \x Z_{\A_{\G}}(P)$, where $\langle p_i'\rangle$ is the maximal cyclic subgroup containing $\langle p_i\rangle$.

Since $g$ commutes with $A$, but not with $H$, it cannot commute with $P$, and so it must have powers of some of the $p_i'$ among its label-irreducible components. Up to reordering, these are powers of $p_1',\dots,p_m'$ for some $1\leq m\leq n$. We have a splitting $P=P_1\x P_2$ where the $P_i$ are parabolic and $\G(P_1)=\G(p_1)\sqcup\dots\sqcup\G(p_m)$. Since $P$ has trivial centre, so do the $P_i$. Finally, since $K$ commutes with $g$, it is contained in $A\x\langle p_1',\dots,p_m'\rangle\x P_2$, as required.
\end{proof}

In the rest of the subsection, we fix a convex-cocompact subgroup $G\leq\A_{\G}$. By analogy with Definition~\ref{G--parabolic defn}, we introduce the following.

\begin{defn}
A subgroup $Q\leq G$ is \emph{$G$--semi-parabolic} if $Q=G\cap H$ for a semi-parabolic subgroup $H\leq\A_{\G}$. In order to avoid confusion, the prefix $G$-- will never be omitted.
\end{defn}

Our interest in this notion is due to the following remark.

\begin{rmk}\label{AP centralisers rmk}
Centralisers in $G$ (in the sense of Definition~\ref{centralisers defn}) are $G$--semi-parabolic. This follows from Remark~\ref{LI properties}(5) and Corollary~\ref{AP intersections}.
\end{rmk}

\begin{lem}\label{G--AP structure}
If $Q\leq G$ is $G$--semi-parabolic, there exists a unique minimal semi-parabolic subgroup $H\leq\A_{\G}$ such that $Q=G\cap H$. We can write $H=\langle a_1,\dots,a_k\rangle\x P$, where:
\begin{enumerate}
\item the $a_i$ are pairwise-commuting label-irreducibles with $\langle a_i\rangle\cap Q\neq\{1\}$;
\item $P\leq\A_{\G}$ is parabolic and both $P$ and $G\cap P$ have trivial centre;
\item we have $\G(Q)=\G(H)$.
\end{enumerate}
\end{lem}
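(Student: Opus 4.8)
The plan is to let $H$ be the smallest semi-parabolic subgroup of $\A_{\G}$ containing $Q$, and to read off the rest of the statement from its minimality. Such an $H$ exists and is unique: the family of semi-parabolic subgroups containing $Q$ is closed under intersections by Corollary~\ref{AP intersections} and has bounded height by Remark~\ref{AP chains rmk}, so it has a least element $H$; moreover any semi-parabolic $H'$ with $G\cap H'=Q$ must contain $Q$, hence $H$, so $Q\subseteq G\cap H\subseteq G\cap H'=Q$, giving $G\cap H=Q$ and showing $H$ is the minimal such subgroup. I would then write $H=A\x P$ via Lemma~\ref{almost parabolic subgroups} and absorb $Z(P)$ into $A$ so that $P$ has trivial centre, checking that the enlarged $A$ is still abelian, full and closed under roots (using that $H$ is root-closed and that $Z(P)$ is a clique subgroup with support orthogonal to $A$); a label-irreducible basis $a_1,\dots,a_k$ of $A$ (Remark~\ref{full rmk}) then gives the asserted form of $H$, the $a_i$ being pairwise-commuting label-irreducibles with pairwise-orthogonal supports (Remark~\ref{LI properties}(4)). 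For~(1): since $\G(a_1),\dots,\G(a_k),\G(P)$ are pairwise orthogonal, the label-irreducible elements of $H$ are exactly the nontrivial powers of a single $a_i$ and the label-irreducibles of $P$; if no power of $a_i$ were a label-irreducible component of any element of $Q$, then every $q\in Q$ would have trivial $\langle a_i\rangle$-coordinate, so $Q$ would lie in the strictly smaller semi-parabolic $\langle a_j:j\neq i\rangle\x P$, against minimality; so some $a_i^m$ ($m\neq0$) is a label-irreducible component of some $q\in Q\subseteq G$, and Remark~\ref{LI properties}(6) then puts a nontrivial power of $a_i^m$ into $G\cap H=Q$.

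For~(3), let $\bar P\leq P$ be the image of $Q$ under $H\to P$. Since $\G(Q)=\G(a_1)\sqcup\dots\sqcup\G(a_k)\sqcup\G(\bar P)$ (using~(1)) and $\G(H)$ is described the same way with $\G(P)$ in place of $\G(\bar P)$, it suffices to show $\G(\bar P)=\G(P)$. If not, Lemma~\ref{increasing labels}(1) gives $p_0\in\bar P$ with $\G(p_0)=\G(\bar P)$; after conjugating the whole configuration so that $p_0$ is cyclically reduced, Lemma~\ref{increasing labels}(2) applied to $p_0$ and any $p\in\bar P$ (all elements of $\langle p_0,p\rangle\leq\bar P$ having support inside $\G(p_0)$) forces $p\in\A_{\G(\bar P)}$, so $\bar P\subseteq\A_{\G(\bar P)}$. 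Then $P\cap\A_{\G(\bar P)}$ is a parabolic (Remark~\ref{parabolic stratum rmk}(1)) strictly smaller than $P$, since its support misses a vertex of $\G(P)$, and contains $\bar P$; hence $A\x(P\cap\A_{\G(\bar P)})$ is a semi-parabolic subgroup with $Q\subseteq A\x(P\cap\A_{\G(\bar P)})\subsetneq H$ and intersecting $G$ in $Q$, contradicting minimality. Thus $\G(Q)=\G(H)$.

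The remaining claim in~(2) — that $G\cap P$ has trivial centre — is the delicate one, and again hinges on convex-cocompactness of $G$. Using~(1), fix $N\geq1$ with $a_i^N\in Q$ for all $i$; clearing $A$-coordinates of the $N$-th powers of elements of $Q$ shows $\G(\bar P)\subseteq\G(G\cap P)$, hence $\G(G\cap P)=\G(P)$ by~(3); Remark~\ref{LI properties}(6) moreover puts a power of each label-irreducible component of each element of $G\cap P$ back in $G\cap P$, so $\G(G\cap P)=\bigcup\{\G(u):u\text{ label-irreducible in }G\cap P\}$. Now suppose $z\in Z(G\cap P)\setminus\{1\}$; by Remark~\ref{LI properties}(3) we may take $z$ label-irreducible, and then Remark~\ref{LI properties}(4) applied to $z$ and the label-irreducibles of $G\cap P$ shows $\G(z)$ is a join factor of $\G(P)$, giving $P=P_z\x P_\Delta$ with $\G(P_z)=\G(z)$ (and $P_z$ non-abelian, since $P$ having trivial centre forces $\G(z)$ to have at least two vertices and not to be a join). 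Let $G_z$ be the image of $G\cap P$ in $P_z$. Conjugating so that $P_z=\A_{\G(z)}$ and a full-support element $w\in G_z$ (which exists by Lemma~\ref{increasing labels}(1)) is cyclically reduced, and letting $\langle c\rangle$ be the maximal cyclic subgroup containing $w$, one computes $Z_{\A_{\G}}(w)\cap\A_{\G(z)}=\langle c\rangle$ (the parabolic part of the centraliser drops out because $P_z$ has trivial centre); since $z\in Z_{P_z}(G_z)\subseteq Z_{\A_{\G}}(w)\cap\A_{\G(z)}$ and $Z_{\A_{\G}}(z)=Z_{\A_{\G}}(w)$, this yields $z\in\langle c\rangle$ and $G_z\leq\langle c\rangle$. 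Finally, for every $q\in Q$ the $P_z$-coordinate of $q^N$ lies in $G_z\subseteq\langle c\rangle$, and since $\langle c\rangle$ is root-closed in $\A_{\G}$ already the $P_z$-coordinate of $q$ lies in $\langle c\rangle$; hence $Q\subseteq A\x\langle c\rangle\x P_\Delta\subsetneq H=A\x P_z\x P_\Delta$, a semi-parabolic subgroup still meeting $G$ in $Q$ — contradicting minimality. So $Z(G\cap P)=\{1\}$.

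The main obstacle I expect is exactly this last step. The soft half of the argument — existence of the minimal $H$, the splitting, and~(1) — is essentially bookkeeping with the results of Section~\ref{special sect}, and~(3) needs only Lemma~\ref{increasing labels}. Proving $Z(G\cap P)=\{1\}$, by contrast, requires converting a hypothetical central element into a strictly smaller semi-parabolic subgroup still intersecting $G$ in $Q$, and obtaining enough control on all of $Q$ (not merely on $G\cap P$, which could be much smaller than its projection $\bar P$) to carry this out is where convex-cocompactness of $G$ — via the uniform power $N$ coming from~(1) and via Remark~\ref{LI properties}(6) — is used in an essential way; for arbitrary subgroups $Q$ the conclusion is false.
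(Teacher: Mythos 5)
Your proof is correct, and its skeleton is the paper's: take $H$ to be the intersection of all semi-parabolic subgroups containing $Q$ (Corollary~\ref{AP intersections} plus Remark~\ref{AP chains rmk} make this well defined and minimal), absorb $Z(P)$ into the abelian factor, and derive (1)--(3) by producing, from any failure, a strictly smaller semi-parabolic subgroup still containing $Q$. Parts (1) and (3) match the paper up to cosmetic choices (the paper argues with $Q\cap P$ and a parabolic $P'\lneq P$ with $\G(P')=\G(Q\cap P)$, where you argue with the projection $\bar P$; both reduce to Lemma~\ref{increasing labels} and the LI(6)-plus-root-closedness propagation). The one step where you genuinely diverge is the triviality of $Z(G\cap P)$, which you flag as the main obstacle and handle by an explicit computation: splitting off the join factor $P_z$, identifying $Z_{\A_{\G}}(w)\cap P_z$ with a maximal cyclic subgroup $\langle c\rangle$, and squeezing $Q$ into $A\times\langle c\rangle\times P_\Delta$. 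The paper dispatches this in three lines: a nontrivial $g\in Z(G\cap P)$ gives $G\cap P\leq Z_P(g)$, and $Z_P(g)=P\cap Z_{\A_{\G}}(g)$ is \emph{already} semi-parabolic (Remark~\ref{LI properties}(5), Corollary~\ref{AP intersections}, Remark~\ref{AP centralisers rmk}) with nontrivial centre, hence a proper subgroup of $P$; the same propagation you use in (1) and (3) then places $Q$ inside $\langle a_1,\dots,a_k\rangle\times Z_P(g)\lneq H$. In other words, the structure you rebuild by hand is exactly the description of centralisers packaged in Remark~\ref{LI properties}(5) and the semi-parabolic formalism, and invoking it directly collapses your hardest step; but your longer route is valid.
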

\begin{proof}
By Corollary~\ref{AP intersections}, the intersection $H$ of all semi-parabolic subgroups of $\A_{\G}$ containing $Q$ is the unique minimal semi-parabolic subgroup with $Q=G\cap H$.

We can write $H=\langle a_1,\dots,a_k\rangle\x P$, where the $a_i$ are pairwise-commuting label-irreducibles and $P\leq\A_{\G}$ is parabolic. Remark~\ref{LI properties}(6) shows that $G$ (and hence $Q$) contains a power of each $a_i$. We can assume that $P$ has trivial centre, as this can be incorporated in the $a_i$. 

Let us show that $\G(Q)=\G(H)$. By Remark~\ref{LI properties}(6), the sets $\G(a_i)$ are all contained in $\G(Q)$ and we have $\G(Q\cap P)=\G(Q)\cap\G(P)$. By Lemma~\ref{increasing labels}(1), there exists $g\in Q\cap P$ with $\G(g)=\G(Q\cap P)$. If this were a proper subset of $\G(P)$, we would be able to find a parabolic subgroup $P'$ with $g\in P'\lneq P$ and $\G(g)=\G(P')$. Lemma~\ref{increasing labels}(2) would then guarantee that $Q\cap P\leq P'$. Remark~\ref{LI properties}(6) and the fact that $P'$ is closed under taking roots would imply that $Q$ is contained in $\langle a_1,\dots,a_k\rangle\x P'$, violating minimality of $H$. We conclude that $\G(Q\cap P)=\G(P)$, which shows that $\G(Q)=\G(H)$. 

Finally, if $G\cap P$ contained a nontrivial element $g$ in its centre, we would have $G\cap P\leq Z_P(g)$. As above, the subgroup $Q=G\cap H$ would then be contained in $\langle a_1,\dots,a_k\rangle\x Z_P(g)$. Since $Z_P(g)$  has nontrivial centre, it is a proper semi-parabolic subgroup of $P$, which violates minimality of $H$.
\end{proof}

\begin{rmk}\label{filling subgroup rmk}
Consider a $G$--semi-parabolic subgroup $Q\leq G$ and a homomorphism $\rho\colon Q\ra\R$ with $\G(\ker\rho)=\G(Q)$. Then there exists a finitely generated subgroup $K\leq\ker\rho$ such that any $G$--semi-parabolic subgroup containing $K$ will contain $Q$.

Indeed, write $Q=G\cap H$ with $H=\langle a_1,\dots,a_k\rangle\x P$ as in Lemma~\ref{G--AP structure}. Write $P=P_1\x\dots\x P_m$, where each $P_i$ is a parabolic subgroup of $\A_{\G}$ that does not split as a product. By Remark~\ref{LI properties}(6), we have $\G(G\cap P_i)=\G(P_i)$. In addition, since $G\cap P$ has trivial centre, the intersection $G\cap P_i$ is non-abelian. Define $K$ so that $\G(K)=\G(Q)$ and so that it contains a non-abelian subgroup of each $G\cap P_i$. Remark~\ref{AP within parabolic} applied to each $P_i$ implies that $K$ satisfies the required property.
\end{rmk}

In the rest of the subsection, we prove a couple of results aimed at classifying kernels of homomorphisms $Q\ra\R$, where $Q\leq G$ is $G$--semi-parabolic.

\begin{lem}\label{kernel envelope}
Let $Q\leq G$ be $G$--semi-parabolic. Let $H=\langle a_1,\dots,a_k\rangle\x P$ be as in Lemma~\ref{G--AP structure}. If $\rho\colon Q\ra\R$ is a homomorphism, then $\ker\rho\cu G\cap H'$ for a subgroup $H'\leq H$ such that:
\begin{enumerate}
\item $H'=\langle a_1,\dots,a_s\rangle\x P$ for some $0\leq s\leq k$, up to reordering the $a_i$;
\item $\G(\ker\rho)=\G(H')$ and $Z_G(\ker\rho)=Z_G(H')$.
\end{enumerate}
\end{lem}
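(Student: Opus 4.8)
The idea is that a homomorphism $\rho\colon Q\ra\R$ can only ``kill'' information along the abelian directions $\langle a_1,\dots,a_k\rangle$ of $H$, while it must be essentially non-degenerate on the non-abelian parts coming from $P$. First I would record the structural input from Lemma~\ref{G--AP structure}: $H=\langle a_1,\dots,a_k\rangle\x P$ with $P$ parabolic, $G\cap P$ non-abelian with trivial centre, each $\langle a_i\rangle$ meeting $Q$ nontrivially, and $\G(Q)=\G(H)$. Write $P=P_1\x\dots\x P_m$ with each $P_j$ parabolic not splitting as a product; by Remark~\ref{LI properties}(6) we have $\G(G\cap P_j)=\G(P_j)$, and $G\cap P_j$ is non-abelian since $G\cap P$ has trivial centre.

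\textbf{Step 1: $\ker\rho$ contains $\G$--enough of each $P_j$.} I would argue that for each $j$ the intersection $\ker\rho\cap(G\cap P_j)$ still satisfies $\G(\ker\rho\cap(G\cap P_j))=\G(P_j)$. Since $G\cap P_j$ is non-abelian, its commutator subgroup is nontrivial and lies in $\ker\rho$ (as $\R$ is abelian); more to the point, I want a finite-index subgroup of $G\cap P_j$ inside $\ker\rho$, which will follow because $\rho$ restricted to $G\cap P_j$ has image in $\R$, hence factors through the abelianisation, and the kernel of any homomorphism to $\R$ from a group with $\G$--full image has full $\G$-support provided the group is not ``visibly'' collapsed --- here one uses Lemma~\ref{increasing labels}(1)--(2) exactly as in the proof of Remark~\ref{filling subgroup rmk}: if $\G(\ker(\rho|_{G\cap P_j}))$ were a proper subset of $\G(P_j)$ we could trap $G\cap P_j$ inside a proper parabolic $P_j'$, contradicting that $P_j$ does not split and the non-abelianity. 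So $\ker\rho\supseteq$ something with $\G$-support $\G(P)$, and in fact one gets $\G(\ker\rho)\supseteq\G(P)$.

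\textbf{Step 2: select the surviving $a_i$ and define $H'$.} Among $a_1,\dots,a_k$, reorder so that $a_1,\dots,a_s$ are exactly those whose powers inside $Q$ lie in $\ker\rho$ (equivalently, $\rho$ vanishes on $\langle a_i\rangle\cap Q$), and $a_{s+1},\dots,a_k$ are those on which $\rho$ is nonzero. Set $H'=\langle a_1,\dots,a_s\rangle\x P$. That $\ker\rho\cu G\cap H'$ is then the content of the argument: an element $q\in\ker\rho$, decomposed into label-irreducible components (Remark~\ref{LI properties}(2)), lies in $H=\langle a_1,\dots,a_k\rangle\x P$, so its components are powers of the $a_i$ together with a part in $P$; if a component were a nonzero power of some $a_i$ with $i>s$, then pairing with the straight projection / using that $\rho$ is a homomorphism and the $a_i$-direction is independent of $P$ and of the other $a$'s, $\rho(q)$ would pick up a nonzero contribution from that coordinate, forcing $\rho(q)\neq 0$ --- here I would make this precise by composing $\rho$ with the retraction $H\ra\langle a_i\rangle$ and using that this retraction restricts to a well-defined map on $Q$ up to finite index (since $\langle a_i\rangle\cap Q\neq\{1\}$). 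This gives $\ker\rho\cu G\cap H'$ and part~(1).

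\textbf{Step 3: part~(2).} By Step 1, $\G(\ker\rho)\supseteq\G(P)$; by Step 2 applied to suitable powers of $a_1,\dots,a_s$ (which lie in $Q$ and on which $\rho$ vanishes by choice of $s$), $\G(\ker\rho)\supseteq\G(a_i)$ for $i\le s$; and $\ker\rho\cu G\cap H'$ gives the reverse inclusion $\G(\ker\rho)\cu\G(H')=\G(a_1)\sqcup\dots\sqcup\G(a_s)\sqcup\G(P)$. Hence $\G(\ker\rho)=\G(H')$. For the centraliser statement, the inclusion $Z_G(H')\cu Z_G(\ker\rho)$ is immediate since $\ker\rho\cu H'$. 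For the reverse, note $H'$ is semi-parabolic with the $a_i$ label-irreducible and $P$ parabolic; an element of $G$ commuting with $\ker\rho$ commutes with each label-irreducible component occurring in $\ker\rho$, and since $\G(\ker\rho)=\G(H')$ these components ``span'' all the directions of $H'$ in the sense of Remark~\ref{LI properties}(3)--(4) and Lemma~\ref{G--AP structure} --- more carefully, $\ker\rho$ contains a finite-index subgroup of $G\cap P$ and suitable powers of $a_1,\dots,a_s$, so centralising $\ker\rho$ forces centralising each $a_i$ ($i\le s$) and centralising a finite-index subgroup of $G\cap P_j$, which by Remark~\ref{filling subgroup rmk}-type reasoning (and Lemma~\ref{normalisers of subgroups of AP}) forces centralising $P$. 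Thus $Z_G(\ker\rho)=Z_G(H')$.

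\textbf{Main obstacle.} The delicate point is Step 1 / the centraliser half of Step 3: one must show that passing to the kernel of a homomorphism to $\R$ does not shrink the $\G$-support on the parabolic factors $P_j$, and correspondingly that centralising this possibly-smaller subgroup is no weaker than centralising $P$. This is exactly where the hypotheses ``$P_j$ does not split as a product'' and ``$G\cap P$ has trivial centre'' (from Lemma~\ref{G--AP structure}) must be used, via the $\G$-support manipulations of Lemma~\ref{increasing labels} and Remark~\ref{AP within parabolic}, to prevent the kernel from being trapped in a proper semi-parabolic subgroup. Everything else is bookkeeping with label-irreducible components.
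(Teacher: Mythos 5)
Your overall architecture (show $\G(P)\cu\G(\ker\rho)$, pick out the surviving $a_i$, then upgrade to the centraliser statement via Lemma~\ref{normalisers of subgroups of AP}) matches the paper's, but two of your steps contain genuine errors.

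First, Step~2 fails because of cancellation among the abelian coordinates. You define $s$ by which $a_i$ satisfy $\rho(\langle a_i\rangle\cap Q)=0$ and then claim that an element of $\ker\rho$ cannot have a nonzero component along $a_i$ for $i>s$. Take $\rho(a_1)=1$ and $\rho(a_2)=-1$ (and $\rho$ trivial on $G\cap P$): then a suitable power of $a_1a_2$ lies in $\ker\rho$, yet $\rho$ vanishes on neither $\langle a_1\rangle\cap Q$ nor $\langle a_2\rangle\cap Q$, so your $H'$ would omit both $a_1$ and $a_2$ and fail to contain $\ker\rho$. The retraction $H\ra\langle a_i\rangle$ does not intertwine with $\rho$, which is an arbitrary homomorphism on $Q$. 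The paper sidesteps this entirely by \emph{defining} $s$ to be minimal such that $\langle a_1,\dots,a_s\rangle\x P\supseteq\ker\rho$ (such an $s$ exists since $\ker\rho\cu H$); containment is then tautological, and minimality immediately gives $\G(a_i)\cu\G(\ker\rho)$ for $i\leq s$.

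Second, and more seriously, Step~1 is the crux of the lemma and your argument for it does not go through. The assertion that $\ker\rho$ contains a finite-index subgroup of $G\cap P_j$ is false whenever $\rho|_{G\cap P_j}$ is nontrivial, since the image is then an infinite subgroup of $\R$. And the support manipulation via Lemma~\ref{increasing labels} only traps $\ker(\rho|_{G\cap P_j})$ — not $G\cap P_j$ itself — inside a proper parabolic, which contradicts nothing: a proper subgroup is perfectly entitled to have smaller $\G$--support, so ``$P_j$ does not split'' and non-abelianity are never actually brought to bear. Remark~\ref{filling subgroup rmk} \emph{assumes} $\G(\ker\rho)=\G(Q)$ as a hypothesis; it is not a tool for proving it. The paper's actual argument here is dynamical, not combinatorial: for each $v\in\G(P)$ it shows the action $G\cap P\acts\T_v$ is nonelementary (non-ellipticity from $\G(G\cap P)=\G(P)$, and a fixed end is excluded because Lemma~\ref{long min intersection} would then force all loxodromics to share an axis and hence lie in the centre of $G\cap P$, which is trivial by the choice of $H$), and then invokes Maher--Tiozzo together with Sisto's argument (or Serre's lemma when the kernel is finitely generated) to produce an element of $\ker(\rho|_{G\cap P})$ that is loxodromic in $\T_v$. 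Some input of this kind is unavoidable; without it the lemma is not proved. Your Step~3 inherits both problems, since it rests on the finite-index claim and on the unproved support equality.
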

\begin{proof}
Define $H'=\langle a_1,\dots,a_s\rangle\x P$, where $s$ is the smallest integer such that $H'$ contains $\ker\rho$ (up to reordering the $a_i$). Minimality of $s$ implies that $\G(\ker\rho)$ contains $\G(a_1),\dots,\G(a_s)$. In order to conclude the proof, we only need to show that $\G(P)\cu\G(\ker\rho)$ and $Z_G(\ker\rho)=Z_G(H')$. 

Since $\G(G\cap H)=\G(H)$, Remark~\ref{LI properties}(6) implies that $\G(G\cap P)=\G(P)$. Thus, for every $v\in\G(P)$, the action $G\cap P\acts\T_v$ is not elliptic. If $G\cap P\acts\T_v$ had a fixed point at infinity, then Lemma~\ref{long min intersection} would show that all loxodromics have the same axis in $\T_v$ and, by \cite[Corollary~3.14]{Fio10a}, they would lie in the centre of $G\cap P$. However, $G\cap P$ has trivial centre by our choice of $H$.

We conclude that, for every $v\in\G(P)$, the action $G\cap P\acts\T_v$ is nonelementary. Now, we can use \cite[Theorem~1.4]{Maher-Tiozzo-Crelle} and the argument in the proof of \cite[Corollary~1.7(2)]{Sisto-Crelle} to conclude that, for every $v\in\G(P)$, the kernel of $\rho|_{G\cap P}$ contains an element acting loxodromically on $\T_v$ (random walks can be easily avoided when $\ker(\rho|_{G\cap P})$ is finitely generated). Hence $\G(P)\cu\G(\ker\rho)$.

Finally, let us show that the inclusion $Z_G(H')\leq Z_G(\ker\rho)$ cannot be strict. If it were, Lem\-ma~\ref{normalisers of subgroups of AP}(2) would yield a splitting $P=P_1\x P_2$, where $P_1$ is a non-abelian parabolic and $\ker\rho|_{G\cap P_1}$ is abelian. Since $P$ is chosen as in Lemma~\ref{G--AP structure}, the intersection $G\cap P_1$ is a non-virtually-abelian special group (recall Lemma~\ref{cc intersection}). However, the above shows that $G\cap P_1$ is abelian--by--abelian, which contradicts the flat torus theorem (see e.g.\ \cite[Theorem~II.7.1(5)]{BH}).
\end{proof}

\begin{rmk}\label{kernel centre rmk}
Given a $G$--semi-parabolic subgroup $Q\leq G$ and a homomorphism $\rho\colon Q\ra\R$, the centre of $\ker\rho$ is always contained in the centre of $Q$.

Indeed, let the subgroups $H,H'$ and the integers $s,k$ be as in Lemma~\ref{kernel envelope}. The lemma shows that the centre of $\ker\rho$ commutes with $H'$. Since $\ker\rho\leq H'$, it is also clear that $\ker\rho$ commutes with $a_{s+1},\dots,a_k$. Together with $H'$, these elements generate $H$, hence the centre of $\ker\rho$ commutes with $H$, as required.
\end{rmk}

When the homomorphism $\rho$ is discrete, we have the following dichotomy.

\begin{prop}\label{kernel dichotomy}
Let $Q\leq G$ be $G$--semi-parabolic. Let $\rho\colon Q\ra\Z$ be a homomorphism. Then:
\begin{itemize}
\item either $\G(\ker\rho)=\G(Q)$, and $N_G(\ker\rho)$ is a finite-index subgroup of $N_G(Q)$;
\item or $\ker\rho$ is $G$--semi-parabolic, and a finite-index subgroup of $Q$ splits as $\Z\x\ker\rho$.
\end{itemize}
\end{prop}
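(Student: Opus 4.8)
The strategy is to run the analysis of Lemma~\ref{kernel envelope} and then split into the two cases according to the integer $s$ it produces. Write $Q=G\cap H$ for the minimal semi-parabolic $H=\langle a_1,\dots,a_k\rangle\x P$ as in Lemma~\ref{G--AP structure}, and let $H'=\langle a_1,\dots,a_s\rangle\x P$ be the subgroup from Lemma~\ref{kernel envelope}, so that $\ker\rho\cu G\cap H'$, $\G(\ker\rho)=\G(H')$ and $Z_G(\ker\rho)=Z_G(H')$. The first case is $s=k$, i.e.\ $H'=H$. Then $\G(\ker\rho)=\G(H)=\G(Q)$ by Lemma~\ref{G--AP structure}(3). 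For the normaliser claim, apply Lemma~\ref{special cc}(1) to both convex-cocompact subgroups $\ker\rho$ (note it is convex-cocompact: it contains powers of all $a_i$ and its intersection with $P$ is finite-index in $G\cap P$, using the last paragraph of the proof of Lemma~\ref{kernel envelope}) and $Q$: each normaliser has a finite-index subgroup splitting as $(\ker\rho)\x K_1$, resp.\ $Q\x K_2$, with $K_i$ being $G$--parabolic. Since $\ker\rho$ has finite index in $Q$ (they have the same $\G$, hence the same $\mc{W}_1$, and $\ker\rho$ is $q$--convex-cocompact for fixed $q$, so by Remark~\ref{fi overgroups rmk} the index is bounded; more directly, $\rho(Q)\cong\Z$ forces $[Q:\ker\rho]=\infty$ is impossible only if\dots) — here I must be careful: $\ker\rho$ has \emph{infinite} index in $Q$ in general. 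The correct point is rather that $N_G(\ker\rho)$ normalises $Q$: indeed $Q$ is the unique minimal $G$--semi-parabolic (equivalently, $\A_{\G}$--semi-parabolic completion intersected with $G$) subgroup with $\G=\G(\ker\rho)$ and containing $\ker\rho$, so any $g$ normalising $\ker\rho$ sends $H'=H$ to another minimal semi-parabolic envelope of $\ker\rho$, which must equal $H$ by uniqueness in Lemma~\ref{G--AP structure}, hence $gQg^{-1}=Q$. Thus $N_G(\ker\rho)\leq N_G(Q)$, and conversely $N_G(Q)$ normalises $\ker\rho$ up to finite index because $Q$ has only finitely many subgroups of each ``$q$--convex-cocompact type'' (Lemma~\ref{almost normalisers} style) and $\rho$ is essentially canonical; more cleanly, $N_G(Q)$ permutes the finitely many homomorphisms $Q\to\Z$ with kernel of full $\G$, so a finite-index subgroup fixes $\ker\rho$. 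This gives the finite-index statement.

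The second case is $s<k$. Then $a_{s+1}\notin H'\supseteq\ker\rho$, and since $\G(a_{s+1})\subseteq\G(Q)\setminus\G(\ker\rho)$ is disjoint from $\G(H')$, the element $a_{s+1}$ (or a power lying in $G$) witnesses that $\rho$ is nontrivial on the $\langle a_{s+1}\rangle$--direction. I claim $\ker\rho$ is $G$--semi-parabolic: we have $\ker\rho\cu G\cap H'$, and I want equality up to the issue that $\rho$ restricted to $G\cap H'$ may still be nontrivial. To handle this, observe that by minimality of $s$ in Lemma~\ref{kernel envelope}, $\rho$ restricted to $Q\cap(\langle a_1,\dots,a_{s-1}\rangle\x P)$ already has $\G$-defect, so actually $\rho|_{G\cap H'}$ is nontrivial on $\langle a_s\rangle$ only — no: the cleanest route is to argue directly that $\ker(\rho|_{G\cap H'})$ equals $\ker\rho$ and has full $\G$ inside $H'$, hence by Lemma~\ref{kernel envelope}(2) applied to $H'$ in place of $H$ we reach $s'=k'$ (the top), and induct on $k$. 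At the base, when $\rho$ is injective on the abelian part $\langle a_1,\dots,a_k\rangle$ modulo $P$-torsion, $\ker\rho=G\cap P$ is $G$--parabolic, hence $G$--semi-parabolic. For the splitting: pick $a_i$ (say $i=k$) with $\rho(a_k^{m})\neq 0$ for the minimal $m$ with $a_k^m\in G$; then $\langle a_k^m\rangle\cong\Z$ is central in $Q$ after passing to the finite-index subgroup $Q_0=\langle a_1^{m_1},\dots,a_k^{m_k}\rangle\x(G\cap P)$ (using Remark~\ref{LI properties}(4) to see the $a_i$ are mutually orthogonal, hence $\langle a_k\rangle$ is a direct factor), and $Q_0=\ker(\rho|_{Q_0})\x\langle a_k^{m_k}\rangle$ once we arrange $\rho(a_i^{m_i})=0$ for $i<k$ by a change of basis of the abelian factor — possible since $\ker\rho$ has full $\G$, so it surjects onto the ``missing directions''. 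This is the standard: a homomorphism $\Z^r\to\Z$ with $\ker$ of corank $1$ admits a complementary $\Z$.

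\medskip
\noindent\textbf{The main obstacle.} The delicate step is the change-of-basis argument producing the $\Z$--direct-factor in the second bullet, carried out \emph{inside $G$} rather than inside $\A_{\G}$: the label-irreducibles $a_i$ need not lie in $G$, only their powers do, so $Q$ is only virtually $\langle a_1^{m_1},\dots,a_k^{m_k}\rangle\x(G\cap P)$, and one must check that the complement of $\ker\rho$ can be chosen \emph{generated by a single element of $Q$} (not merely of $\A_{\G}$). The key inputs making this work are Remark~\ref{LI properties}(4)/(6) (mutual orthogonality of the $a_i$, forcing a genuine direct-product structure on a finite-index subgroup) and the full-$\G$ conclusion $\G(\ker\rho)=\G(H')$ from Lemma~\ref{kernel envelope}(2), which guarantees $\ker\rho$ already contains powers of $a_1,\dots,a_s$ and meets $G\cap P$ in a finite-index (indeed full-$\G$) subgroup, so that after rechoosing the basis only one infinite-cyclic direction survives in the quotient. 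I expect the rest — that $\ker\rho$ is $G$--semi-parabolic in case $s<k$, and the $N_G$--comparison in case $s=k$ — to follow routinely from Lemmas~\ref{kernel envelope}, \ref{G--AP structure} and~\ref{special cc} together with the uniqueness of minimal semi-parabolic envelopes.
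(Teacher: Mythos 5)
Your skeleton --- reduce everything to Lemma~\ref{kernel envelope} and split on whether $H'=H$ --- is exactly the paper's, but in both branches you replace the decisive step with a justification that is either incorrect or does not close.

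In the first case, the inclusion $N_G(\ker\rho)\leq N_G(Q)$ via uniqueness of minimal semi-parabolic envelopes is fine (the paper gets it from Lemma~\ref{normalisers of subgroups of AP}(1)). But your argument for finiteness of the index fails: there are in general \emph{infinitely} many homomorphisms $Q\ra\Z$ whose kernel has full $\G$ (already when $G\cap P$ is virtually a product of two non-abelian free groups, every nontrivial class in the relevant part of $H^1(Q;\Z)$ gives one, by the argument inside Lemma~\ref{kernel envelope}), and $\ker\rho$ need not be convex-cocompact here, so the Lemma~\ref{almost normalisers}-style count is unavailable too. The conclusion is salvageable, but by a different and much simpler route: $\ker\rho$ is normal in $Q$ (kernel of a map to an abelian group) and is normalised by $Z_G(Q)$, and $\langle Q,Z_G(Q)\rangle$ has finite index in $N_G(Q)$ by Lemma~\ref{special cc}(1). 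This is what the paper does.

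In the second case, the crux is to prove $\ker\rho=G\cap H'$, i.e.\ that $\rho$ vanishes on all of $G\cap H'$; you correctly flag this issue but your proposed induction does not resolve it. Applying Lemma~\ref{kernel envelope} (or the proposition inductively) to $Q'=G\cap H'$ necessarily lands in the \emph{first} alternative, because $\G(\ker\rho)=\G(H')$ is already full --- and the first alternative says nothing about semi-parabolicity of the kernel, so the induction returns no information. The paper instead observes that $s=k-1$ is forced (any homomorphism $\Z^2\ra\Z$ has nontrivial kernel, which would contribute forbidden labels $\G(a_i)$, $i>s$, to $\G(\ker\rho)$), takes the coordinate projection $\pi\colon H\ra\Z$ with $\ker\pi=H'$, and uses that $\Z$ is Hopfian to force $\ker\rho=\ker(\pi|_Q)=G\cap H'$. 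Once this equality is in place, the $\Z$--complement is simply $G\cap\langle a_k\rangle$ and the finite-index splitting is immediate from Remark~\ref{LI properties}(6): the change-of-basis difficulty you single out as the ``main obstacle'' never arises, while the step you leave open (vanishing of $\rho$ on $G\cap H'$) is where the actual content lies.
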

\begin{proof}
Write $Q=G\cap H$ and $H=\langle a_1,\dots,a_k\rangle\x P$ as in Lemma~\ref{G--AP structure}. Let $H'=\langle a_1,\dots,a_s\rangle\x P$ be the subgroup with $\ker\rho\cu G\cap H'$ and $\G(\ker\rho)=\G(H')$ provided by Lemma~\ref{kernel envelope}.

Suppose first that $H=H'$. Then $\G(\ker\rho)=\G(H)=\G(Q)$ and Lem\-ma~\ref{normalisers of subgroups of AP}(1) implies that $N_{\A_{\G}}(\ker\rho)\leq N_{\A_{\G}}(H)$. In particular, $N_G(\ker\rho)\leq N_G(Q)$. In addition, $N_G(\ker\rho)$ contains the subgroup $\langle Q,Z_G(Q)\rangle$, which has finite index in $N_G(Q)$ by Lemma~\ref{special cc}(1). 

Suppose instead that $H'\lneq H$. Since $\rho$ takes values in $\Z$ and every homomorphism $\Z^2\ra\Z$ has nontrivial kernel, we must have $s=k-1$. Let $\pi\colon H\ra\Z$ be a homomorphism with $\ker\pi=H'$. Since $\G(Q)=\G(H)$, the restriction $\pi|_Q\colon Q\ra\Z$ is nontrivial and has kernel $G\cap H'$. Since $\ker\rho\cu G\cap H'=\ker\pi|_Q$, the homomorphism $\pi|_Q$ factors through $\rho$ and, since $\Z$ is Hopfian, we must have $\ker\rho=G\cap H'$. This shows that $\ker\rho$ is $G$--semi-parabolic. 

By Remark~\ref{LI properties}(6), the intersection $G\cap\langle a_k\rangle$ has finite index in $\langle a_k\rangle$. It follows that the subgroup $(G\cap H')\x(G\cap\langle a_k\rangle)=\ker\rho\x\Z$ has finite index in $G\cap H=Q$. This proves the proposition.
\end{proof}

We will also need the following.

\begin{lem}\label{UCC for centraliser kernels}
Let $\mc{K}$ be the collection of all subgroups of $G$ that are the kernel of a homomorphism $Q\ra\R$, where $Q$ varies among $G$--semi-parabolic subgroups. Then there exists a constant $N$, depending only on $G$, such that every chain of subgroups in $\mc{K}$ has length at most $N$.
\end{lem}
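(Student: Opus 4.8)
The statement is a uniform bound on the length of chains in $\mc{K}$, so the natural strategy is to attach to each member $K\in\mc{K}$ a tuple of numerical invariants, bounded in terms of $G$ alone, which strictly decreases along strict inclusions. I would proceed as follows. First, by Lemma~\ref{kernel envelope}, every $K\in\mc{K}$ can be written as $K\cu G\cap H'$ where $H'=\langle a_1,\dots,a_s\rangle\x P$ is semi-parabolic, $\G(K)=\G(H')$ and $Z_G(K)=Z_G(H')$. So the first reduction is to replace $K$ by the semi-parabolic ``envelope'' $E(K):=G\cap H'$, recording the pair $(E(K),K)$. Note that $K\lneq K'$ forces $\G(K)\cu\G(K')$ and $Z_G(K)\supseteq Z_G(K')$, hence $E(K)\cu E(K')$ (both being $G\cap(\text{minimal semi-parabolic with that }\G\text{ and }Z_G)$, or more simply: $E(K')$ is semi-parabolic, contains $K\cu K'$, so contains $E(K)$ by minimality in Lemma~\ref{G--AP structure} — I would check this carefully). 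Thus along a chain in $\mc{K}$ the envelopes form a (weakly) increasing chain of $G$--semi-parabolic subgroups, which by Corollary~\ref{AP intersections} together with Remark~\ref{AP chains rmk} has length at most some constant $N_0=N_0(\G)$.

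\textbf{Key steps.} This splits the chain into at most $N_0$ blocks on which the envelope $E$ is constant, equal to some fixed $G$--semi-parabolic $Q_0=G\cap H_0$ with $H_0=\langle a_1,\dots,a_k\rangle\x P$ as in Lemma~\ref{G--AP structure}. Within such a block I need a uniform bound on the length of a chain $K_1\lneq K_2\lneq\cdots$ of members of $\mc{K}$ all having envelope $Q_0$. Each $K_i$ is the kernel of a homomorphism from some $G$--semi-parabolic $Q_i$ to $\R$, but with $E(K_i)=Q_0$, so $\G(K_i)=\G(Q_0)$ and $Z_G(K_i)=Z_G(Q_0)$; in particular by Lemma~\ref{kernel envelope}(1) the ``$s$''-value is $k$, i.e. $H'=H_0$ and $K_i\cu Q_0$. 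Now I would argue that $Q_0/K_i$ is torsion-free abelian (it embeds, up to torsion, in $\R$, but more cleanly: $K_i$ contains $Z_G(Q_0)\cap$\,(stuff) — actually the cleanest route is via the filling subgroup of Remark~\ref{filling subgroup rmk}). The point is: since $\G(K_i)=\G(Q_0)$, Remark~\ref{filling subgroup rmk} gives a finitely generated $L_i\leq K_i$ such that any $G$--semi-parabolic subgroup containing $L_i$ contains $Q_0$; combined with the fact that each $K_i$ is itself close to $G$--semi-parabolic on its ``parabolic part'', I expect to deduce that $Q_0/K_i$ is \emph{free abelian of rank} $\leq\rk(Q_0^{\mathrm{ab}})=:r$, which is bounded by $k+\rk(G\cap P)^{\mathrm{ab}}\leq$ a constant depending on $\G$ (using $Q_0$ is $q$--convex-cocompact, Lemma~\ref{special cc}(3), and local finiteness). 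A strict inclusion $K_i\lneq K_{i+1}$ of subgroups of $Q_0$ with $Q_0/K_i$, $Q_0/K_{i+1}$ both free abelian forces $\rk(Q_0/K_i)>\rk(Q_0/K_{i+1})$, so the block has length $\leq r+1$. Hence $N:=N_0\cdot(r+1)$ works.

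\textbf{Main obstacle.} The delicate point is the claim that $Q_0/K$ is (virtually) \emph{free} abelian — equivalently that, within a block, $K$ is essentially determined by the sublattice $K/(\text{commutator-ish core})$ of a fixed finitely generated abelian group. The subtlety is that $Q_0=G\cap H_0$ need not split as abelian $\times$ parabolic on the nose, only virtually (Lemma~\ref{almost parabolic subgroups} is about $H_0$, not $Q_0$), and $\ker\rho$ for $\rho\colon Q\to\R$ with a possibly larger $G$--semi-parabolic $Q\supseteq Q_0$... wait, no: I established $K\cu Q_0$, but $K$ was \emph{a priori} the kernel of a homomorphism out of a possibly different $Q_i$, so I must first show $K$ is also the kernel of a homomorphism out of $Q_0$ itself (restrict $\rho$ to $Q_0$? — one needs $K\cap Q_0$-vs-$K$, using $E(K)=Q_0$ means $K\cu Q_0$, so $K=\ker(\rho|_{Q_0})$ only if $K\cu Q_0$, which holds, but $\rho|_{Q_0}$ might have kernel strictly larger than $K$). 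Resolving this — that on a constant-envelope block every $K$ arises as $\ker(Q_0\to\R)$ — is where I would spend the real effort; Remark~\ref{kernel centre rmk} and Lemma~\ref{kernel envelope} should supply what is needed, after which the abelian-quotient rank argument is routine. If that identification is cleanly available, the rest is bookkeeping with ranks and the two finiteness facts Remark~\ref{AP chains rmk} and Lemma~\ref{special cc}(3).
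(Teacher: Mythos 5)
Your strategy is viable and genuinely different from the paper's, but as written it has a real gap at precisely the step you flag as the ``main obstacle'', and your diagnosis of that obstacle is slightly off. The fix is available. Define $E(K)$ as $G\cap H_{\min}$, where $H_{\min}$ is the intersection of all semi-parabolic subgroups of $\A_{\G}$ containing $K$; this is semi-parabolic by Corollary~\ref{AP intersections} together with Remark~\ref{AP chains rmk}, and $E(K)$ is then the \emph{minimal} $G$--semi-parabolic subgroup containing $K$, contained in every other one. (Lemma~\ref{kernel envelope} does not obviously produce this minimal envelope, so you should not define $E(K)$ through it.) With this definition, the monotonicity $E(K)\cu E(K')$ is immediate, and --- crucially --- if $K=\ker(\rho\colon Q_i\ra\R)$ has envelope $Q_0$, then $Q_i$ is a $G$--semi-parabolic subgroup containing $K$, so minimality gives $Q_0\leq Q_i$. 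Hence $\rho|_{Q_0}$ is defined and $\ker(\rho|_{Q_0})=\ker\rho\cap Q_0=K$, since $K\cu Q_0$: the kernel of a restriction is the original kernel intersected with the domain, so your worry that it ``might be strictly larger than $K$'' cannot occur --- the only genuine issue is whether $\rho$ is defined on $Q_0$ at all, and minimality settles that. Once every $K$ in a constant-envelope block is realised as $\ker(Q_0\ra\R)$, it is normal in $Q_0$ and $Q_0/K$ is a finitely generated subgroup of $\R$, hence free abelian of rank at most $\rk Q_0^{\mathrm{ab}}$; a strict inclusion of such kernels strictly drops this rank ($\Z^a$ is Hopfian), so the block has length at most $\rk Q_0^{\mathrm{ab}}+1$, and the uniform bound on $\rk Q_0^{\mathrm{ab}}$ follows from a uniform bound on generating sets of $G$--semi-parabolic subgroups (Lemma~\ref{special cc}(3) plus the virtual splitting $(G\cap A)\x(G\cap P)$), exactly as you indicate.

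For comparison, the paper never introduces envelopes. Given $\ker\rho_i\lneq\ker\rho_{i+1}$ with $\rho_i\colon Q_i\ra\R$, it replaces $Q_i$ by $Q_i\cap Q_{i+1}$ --- legitimate because $\ker\rho_i\leq\ker\rho_{i+1}\leq Q_{i+1}$, so restricting $\rho_i$ does not change its kernel --- and iterates until the $Q_i$ themselves form a chain of $G$--semi-parabolic subgroups. The bounded-length fact for such chains (your $N_0$) then reduces to a constant domain $Q$, and the abelianisation rank bound finishes. This is shorter because the domain-intersection trick accomplishes in one move what your envelope reduction and the restriction step do separately; but your route does work once the minimality point above is made explicit.
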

\begin{proof}
Choose $q\geq 1$ such that $G\leq\A_{\G}$ is $q$--convex-cocompact.

\smallskip
{\bf Claim~1:} \emph{there exists $N_1$ such that every chain of $G$--semi-parabolics has length at most $N_1$.}

\smallskip\noindent
\emph{Proof of Claim~1.}
Let $H_1,\dots,H_n$ be semi-parabolic subgroups of $\A_{\G}$ such that $G\cap H_1\lneq\dots\lneq G\cap H_n$. If the $H_i$ are chosen as in Lemma~\ref{G--AP structure}, then $H_1\lneq\dots\lneq H_n$. By Remark~\ref{AP chains rmk}, the latter chain has length bounded purely in terms of $\G$, proving the claim.
\hfill$\blacksquare$

\smallskip
{\bf Claim~2:} \emph{there exists $N_2$ such that every $G$--semi-parabolic subgroup has a generating set with at most $N_2$ elements.}

\smallskip\noindent
\emph{Proof of Claim~2.}
We begin by showing that, if $H=A\x P$ is a semi-parabolic subgroup of $\A_{\G}$, then the subgroup $(G\cap A)\x (G\cap P)$ has index at most $q$ in $G\cap H$.

Let $p_P\colon H\ra P$ be the factor projection. By Lemma~\ref{cc intersection}, $G\cap H$ is a convex-cocompact subgroup of $A\x P$, so $G\cap P$ has finite index in $p_P(G\cap H)$ (e.g.\ by Lemma~\ref{discrete factor}). By Lemma~\ref{special cc}(3) and Remark~\ref{fi overgroups rmk}, this index is at most $q$. Now, if $g,g'\in G\cap H$ are such that $p_P(g)$ and $p_P(g')$ are in the same coset of $G\cap P$, then $g$ and $g'$ are in the same coset of $(G\cap A)\x (G\cap P)$. It follows that $(G\cap A)\x (G\cap P)$ has index at most $q$ in $G\cap H$.

Now, by Lemma~\ref{special cc}(3) and \cite[Theorem~I.8.10]{BH}, there exists an integer $N_3$ such that every $G$--parabolic subgroup has a generating set with at most $N_3$ elements. Abelian subgroups of $G$ have rank at most $\dim\X_{\G}$. Along with the above observation, this shows that every $G$--semi-parabolic subgroup has a generating set with at most $q+\dim\X_{\G}+N_3$ elements.
\hfill$\blacksquare$

\smallskip
Now, consider a sequence of homomorphisms $\rho_i\colon Q_i\ra\R$, where each $Q_i$ is $G$--semi-parabolic and we have $\ker\rho_i\lneq\ker\rho_{i+1}$ for each $i$. By Lemma~\ref{AP intersections}, the group $Q_i\cap Q_{i+1}$ is again $G$--semi-parabolic and it contains $\ker\rho_i$. Thus, replacing $Q_i$ with $Q_i\cap Q_{i+1}$ and restricting $\rho_i$, we can assume that $Q_i\leq Q_{i+1}$. Repeating the procedure, we can ensure that the $Q_i$ form a chain without altering the kernels.

By Claim~1, there are at most $N_1$ distinct subgroups among the $Q_i$. So it suffices to consider the situation where all $Q_i$ are the same group $Q$. In this case, the $\rho_i$ descend to the abelianisation of $Q$, which has rank $\leq N_2$ by Claim~2. In conclusion, the chain of kernels has length at most $N_1(N_2+1)$.
\end{proof}

\subsection{$\om$--intersections of subgroups.}\label{om-intersection sect}

Let $\om$ be a non-principal ultrafilter on $\N$. Given a set $A$ and a sequence of subsets $A_i\cu A$, we denote their \emph{$\om$--intersection} by:
\[\bigcap_{\om}A_i=\{a\in A \mid a\in A_i \text{ for $\om$--all $i$}\}=\bigcup_{\om(J)=1}\ \bigcap_{i\in J} A_i.\]

\begin{rmk}\label{fg om-intersection}
Let $G$ be a group and let $H_i\leq G$ be a sequence of subgroups. If $\bigcap_{\om}H_i$ is finitely generated, then there exists $J\cu\N$ with $\om(J)=1$ such that $\bigcap_{\om}H_i=\bigcap_{i\in J}H_i$.

Indeed, suppose that $\bigcap_{\om}H_i$ is generated by elements $h_1,\dots,h_k$. There are subsets $J_s\cu\N$ with $\om(J_s)=1$ such that $h_s\in H_i$ for all $i\in J_s$. Thus it suffices to take $J:=J_1\cap\dots\cap J_k$. 
\end{rmk}

\begin{prop}\label{om-centralisers in G cor}
Let $G\leq\A_{\G}$ be convex-cocompact. Let $K_i\leq G$ be a sequence of subgroups. 
\begin{enumerate}
\item If all $K_i$ are $G$--semi-parabolic, then so is $\bigcap_{\om}K_i$.
\item If all $K_i$ are centralisers in $G$, then so is $\bigcap_{\om}K_i$.
\end{enumerate}
\end{prop}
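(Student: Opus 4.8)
The plan is to establish (1) first, at the level of $\A_{\G}$, and to deduce (2) from it. Fix semi-parabolic subgroups $H_i\leq\A_{\G}$ with $K_i=G\cap H_i$, chosen minimal as in Lemma~\ref{G--AP structure}; thus $H_i=\langle a_1^{(i)},\dots,a_{k_i}^{(i)}\rangle\x P_i$, where the $a_j^{(i)}$ are pairwise-commuting label-irreducibles and $P_i$ is a parabolic subgroup with trivial centre, so that the centre $Z(H_i)$ equals $\langle a_1^{(i)},\dots,a_{k_i}^{(i)}\rangle$. Since membership of an element in an $\om$--intersection is governed by an $\om$--large set of indices, one checks immediately that $\bigcap_{\om}K_i=G\cap L$ with $L:=\bigcap_{\om}H_i$, and also that $L$ is full and closed under taking roots (each $H_i$ being so). So it suffices to prove that $L$ is semi-parabolic.

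The heart of the argument is to describe the label-irreducible elements of $L$, which by Remark~\ref{full rmk} generate $L$. Given a label-irreducible $g\in L$, for $\om$--all $i$ write $g$ as the product of its component in $Z(H_i)$ and its component in $P_i$; these commute and have orthogonal label sets, so label-irreducibility of $g$ forces one of the two components to be trivial. Hence for $\om$--all $i$ either $g\in P_i$ or $g$ is central in $H_i$, and by the ultrafilter dichotomy one of these alternatives holds for $\om$--all $i$. In the first case $g\in P:=\bigcap_{\om}P_i$; in the second, $g$ commutes with every element of $L$ (commutation in $\om$--all $H_i$ passes to $L$), so $g$ lies in the centre of $L$. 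Therefore $L$ is generated by $P$ together with its centre $Z(L)$.

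Now $P$ is parabolic by Proposition~\ref{parabolic prop}: its defining property there is a universally quantified implication about membership, which is inherited from the $P_i$ through the $\om$--intersection. And $Z(L)$ is finitely generated: it is full, hence generated by the label-irreducibles it contains; these pairwise commute, so by Remark~\ref{LI properties}(4) any two with distinct label sets satisfy $\G(g)\cu\G(h)^{\perp}$ (in particular have disjoint label sets), while those with a common label set generate a common cyclic subgroup; since $\G$ has only finitely many pairwise-orthogonal nonempty subgraphs, at most $\#\G^{(0)}$ generators suffice. Consequently $L$ is finitely generated, so Remark~\ref{fg om-intersection} yields $J\cu\N$ with $\om(J)=1$ and $L=\bigcap_{i\in J}H_i$. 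Finally, an arbitrary intersection of semi-parabolic subgroups of $\A_{\G}$ is again semi-parabolic: the finite sub-intersections are semi-parabolic by Corollary~\ref{AP intersections}, they are closed under finite intersection, and the uniform bound of Remark~\ref{AP chains rmk} forces this family to have a minimum, necessarily equal to $L$. This proves (1).

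For (2): each $K_i$ is a centraliser in $G$, hence $G$--semi-parabolic by Remark~\ref{AP centralisers rmk}, so part~(1) shows that $\bigcap_{\om}K_i$ is $G$--semi-parabolic, hence convex-cocompact in $\X_{\G}$ by Lemma~\ref{cc intersection}, and in particular finitely generated. Applying Remark~\ref{fg om-intersection} once more, $\bigcap_{\om}K_i=\bigcap_{i\in J}K_i$ for some $J$ with $\om(J)=1$; writing $K_i=Z_G(A_i)$, we get $\bigcap_{i\in J}K_i=Z_G\!\left(\bigcup_{i\in J}A_i\right)$, which is a centraliser in $G$ by Definition~\ref{centralisers defn}. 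The step I expect to be the main obstacle is the label-irreducible analysis of $L$ and the resulting finite generation; once that is in hand, everything else is routine bookkeeping with results already established in the paper.
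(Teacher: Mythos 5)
Your proof is correct and follows essentially the same route as the paper's: both decompose the label-irreducibles of $\bigcap_{\om}H_i$ into those lying in $\bigcap_{\om}P_i$ (parabolic via Proposition~\ref{parabolic prop}) and those lying in the abelian/central part, deduce finite generation, invoke Remark~\ref{fg om-intersection} to replace the $\om$--intersection by an actual intersection, and conclude with Corollary~\ref{AP intersections}; part~(2) is then handled identically via Remark~\ref{AP centralisers rmk}. The only differences are cosmetic (you work with $Z(L)$ rather than $\bigcap_{\om}A_i$ and spell out the finite-generation count explicitly).
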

\begin{proof}
We begin with part~(1). Let $H_i\leq\A_{\G}$ be semi-parabolic subgroups with $K_i=G\cap H_i$. Write $H_i=A_i\x P_i$ with $P_i$ parabolic and $A_i$ abelian. Since the $H_i$ are all full, $\bigcap_{\om}H_i$ is full, hence generated by the label-irreducibles that it contains. If $h\in\bigcap_{\om}H_i$ is label-irreducible, then either $h\in A_i$ for $\om$--all $i$, or $h\in P_i$ for $\om$--all $i$. This shows that $\bigcap_{\om}H_i$ is generated by $\bigcap_{\om}A_i$ and $\bigcap_{\om}P_i$. The former is clearly abelian, while the latter is parabolic by the characterisation in Proposition~\ref{parabolic prop}.

We conclude that $\bigcap_{\om}H_i$ is finitely generated. By Remark~\ref{fg om-intersection} and Lemma~\ref{AP intersections}, this is a semi-parabolic subgroup of $\A_{\G}$. Since $\bigcap_{\om}K_i=G\cap\bigcap_{\om}H_i$, this proves part~(1).

Regarding part~(2), recall that centralisers in $G$ are $G$--semi-parabolic by Remark~\ref{AP centralisers rmk}. If the $K_i$ are centralisers in $G$, part~(1) ensures that $\bigcap_{\om}K_i$ is finitely generated and so we can appeal again to Remark~\ref{fg om-intersection}. Intersections of centralisers are again centralisers, so this concludes the proof.
\end{proof}

\section{Arc-stabilisers vs centralisers.}\label{perturbation sect}

Throughout this section, we fix the following setting.

\begin{ass}
Let $G\leq\A_{\G}$ be a $q$--convex-cocompact subgroup of a right-angled Artin group. We fix a $G$--invariant, $G$--essential convex subcomplex $Y\cu\X_{\G}$ on which $G$ acts with $q$ orbits of vertices $\mc{O}_1,\dots,\mc{O}_q$.
\end{ass}

Recall from the beginning of Subsection~\ref{special sect prelim} that $\X_{\G}$ admits various trees as restriction quotients $\pi_v\colon\X_{\G}\ra\T_v$, one for every vertex $v\in\G$. Note that $\pi_v(Y)\cu\T_v$ is either a single point fixed by $G$, or it is the unique $G$--minimal subtree of $\T_v$ (independently of the choice of $Y$).

\medskip
As discussed in the introduction, we are interested in understanding limits of sequences of $G$--trees consisting of $\T_v$ suitably rescaled and twisted by an automorphism of $G$. In order to identify arc-stabilisers of the limit $\R$--tree, it is necessary to gain a good understanding of arc-(almost-)stabilisers for each of the simplicial trees in the sequence. 

Arc-stabilisers of $G\acts\T_v$ are quite nice --- they are $G$--parabolic --- but this niceness will normally be lost when we twist $\T_v$ by an automorphism of $G$: the image of a $G$--parabolic subgroup under an automorphism of $G$ is not even convex-cocompact in general. By contrast, \emph{centralisers} (as in Definition~\ref{centralisers defn}) are much better behaved subgroups of $G$: we know that all automorphisms of $G$ take centralisers to centralisers, and that centralisers are always convex-cocompact.

Luckily, arcs of $\T_v$ can be perturbed so that their $G$--(almost-)stabiliser becomes a centraliser in $G$. The proof of this result is the main aim of this section. The precise statement is Corollary~\ref{arc-stabilisers are centralisers}, which we reproduce here as a theorem for the reader's convenience.

We emphasise that, without perturbing, it is still true that arc-stabilisers for $G\acts\T_v$ are the intersection between $G$ and the centraliser \emph{of a subset of $\A_{\G}$} (see Remark~\ref{perp rmk 2}). The point is that only centralisers \emph{of subsets of $G$} are well-behaved with respect to automorphisms of $G$.

\begin{thm}\label{main perturbation}
There exists a constant $L$, depending on $q$ and $\G$, with the following property. Every arc $\beta\cu\pi_v(Y)\cu\T_v$ with $\ell(\beta)>2L$ contains a sub-arc $\beta'\cu\beta$ with $\ell(\beta')\geq\ell(\beta)-2L$ such that:
\begin{enumerate}
\item either the stabiliser $G_{\beta'}$ is a centraliser in $G$, i.e.\ $Z_GZ_G(G_{\beta'})=G_{\beta'}$;
\item or $Z_GZ_G(G_{\beta'})=Z_G(g)$ for a label-irreducible element $g\in Z_G(G_{\beta'})$. 
\end{enumerate}
In the 2nd case, the element $g$ is loxodromic in $\T_v$ and its axis $\eta\cu\T_v$ satisfies $\ell(\eta\cap\beta')\geq\ell(\beta')-4q$. In addition, $\ell_Y(g)\leq q$ and $Z_G(g)$ contains $\langle g\rangle\x G_{\beta'}$ as a subgroup of index $\leq q$.
\end{thm}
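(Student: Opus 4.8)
The plan is to understand the $G$--stabiliser of an arc $\beta\subseteq\pi_v(Y)$ in terms of the combinatorics of $\X_{\G}$, and then use a short perturbation (replacing $\beta$ by a slightly shorter sub-arc $\beta'$) to eliminate the ``small exceptional part'' of the stabiliser. First I would recall that a hyperplane of $\T_v$ is the image of a hyperplane of $\X_{\G}$ with label $v$, so an edge $e$ of $\beta$ corresponds to a hyperplane $\mf{w}_e\in\g^{-1}(v)$, and $G_{\beta}=G\cap\bigcap_e\A_{\G}{}_{\mf{w}_e}$ where $\A_{\G}{}_{\mf{w}_e}$ is the stabiliser of $\mf{w}_e$ in $\A_{\G}$. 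The stabiliser in $\A_{\G}$ of a $v$--hyperplane is a parabolic-times-cyclic subgroup (conjugate to $\langle v\rangle\times\A_{\lk v}$), so $G_{\beta}$ is the intersection of $G$ with an intersection of such subgroups; by Remark~\ref{perp rmk 2} this is $G$ intersected with the centraliser of a subset of $\A_{\G}$. The key extra input I need is Lemma~\ref{long min intersection}: if a loxodromic $g\in\A_{\G}$ in $\T_v$ has an axis $\alpha$ meeting $\Min(h,\T_v)$ in a sufficiently long arc (length $>4\dim\X_{\G}\cdot\max\{\ell,\ell\}$), then $h$ preserves $\alpha$. This is what lets a long arc control the geometry of all elements stabilising it.

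The main structural step is to split $G_{\beta}$ (or rather $G_{\beta'}$ for a suitable sub-arc) into a ``loxodromic part'' and an ``elliptic part''. Since $G$ acts with $q$ orbits of vertices on $Y$ and $Y$ is $G$--essential, I would show: if $G_{\beta}$ contains an element $g$ loxodromic in $\T_v$, then $\ell_Y(g)\le q$ (translation lengths of loxodromics on $Y$ are controlled by the number of orbits, since an axis crossing more than $q$ orbit-representatives must repeat, giving a shorter translate — this is the standard pigeonhole used in acylindrical-accessibility-type arguments). Passing to the label-irreducible component $g_0$ of $g$ that is loxodromic in $\T_v$ (there is exactly one, with the same axis and translation length), and using Remark~\ref{LI properties}(6), a power $g_0^k$ with $k\le q$ lies in $G$; I can then take $g$ itself to be label-irreducible with $\ell_Y(g)\le q$. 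By Lemma~\ref{long min intersection}, after truncating $\beta$ by at most $4q$ at each end to get $\beta'$ whose corresponding arc in $\T_v$ lies on the axis $\eta$ of $g$, every element of $G_{\beta'}$ preserves $\eta$; hence $G_{\beta'}$ normalises $\langle g\rangle$, so (using that $Z_{\A_{\G}}(g)=C\times P$ with $C$ cyclic and $P$ parabolic, Remark~\ref{LI properties}(5)) $G_{\beta'}$ is virtually contained in $Z_G(g)$, and $\langle g\rangle\times G_{\beta'}$ has index $\le q$ in $Z_G(g)$, giving case~(2) with $Z_GZ_G(G_{\beta'})=Z_G(g)$.

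In the complementary case, where no sub-arc $\beta'$ of $\beta$ (of length $\ge\ell(\beta)-2L$) has its $G$--stabiliser containing a loxodromic of $\T_v$, I want to show $G_{\beta'}$ is already a centraliser in $G$. The idea is that, after removing a bounded amount, the relevant intersection $\bigcap_e\A_{\G}{}_{\mf{w}_e}$ stabilises a parabolic stratum $\mc{P}\subseteq\X_{\G}$ whose label set $\Delta$ becomes constant along $\beta'$ (there are only boundedly many possible label sets, so stability is achieved after bounded truncation, with the bound depending on $\G$ and $q$); then $G_{\beta'}=G\cap(\langle a_1,\dots,a_k\rangle\times\A_{\Delta})$ for pairwise-commuting label-irreducibles $a_i$, none of which is loxodromic in $\T_v$ by our case hypothesis. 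Here I would invoke Remark~\ref{AP centralisers rmk} / Lemma~\ref{G--AP structure}: such an intersection is $G$--semi-parabolic, and I must check it is actually $Z_GZ_G$ of itself. The reason is that the $a_i$ are accounted for by their $G$--powers (Remark~\ref{LI properties}(6)) which do centralise $G_{\beta'}$, and the parabolic part $G\cap\A_{\Delta}$, being $G$--parabolic with $\G(G\cap\A_{\Delta})=\Delta$, is the full centraliser in $G$ of its own centraliser — this is where the minimality in Lemma~\ref{G--AP structure}(2)--(3) is used to rule out a stray central element enlarging things. The constant $L$ is then the maximum of the truncation bounds from the two cases ($4q$ from axis-straightening, and the ``label-set stabilisation'' bound from $\G$ and $q$).

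The hard part will be the bookkeeping in the non-loxodromic case: making the claim ``the label set of the stabilised parabolic stratum stabilises after bounded truncation'' precise and uniform, and then verifying that the resulting $G$--semi-parabolic subgroup is genuinely a centraliser in $G$ (not merely $G$--semi-parabolic), which requires carefully tracking that the parabolic part of $G_{\beta'}$ has trivial centre, or else correctly identifying the extra cyclic factor and landing in case~(2) instead. I expect that a clean way to organise this is to argue that $Z_GZ_G(G_{\beta'})$ is itself the $G$--stabiliser of a (possibly shorter) sub-arc, so that maximality forces equality; this is essentially the dichotomy between ``adding a label-irreducible to the stabiliser'' (forced into case~(2), bounded by $\ell_Y\le q$) and ``the stabiliser is already saturated'' (case~(1)). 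The uniform constant $L$ then comes out of the finiteness of possible parabolic label sets together with the $\le q$ bound on translation lengths.
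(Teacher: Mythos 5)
Your proposal correctly identifies several ingredients the paper uses (Remark~\ref{perp rmk 2}, Remark~\ref{LI properties}(5)--(6), Lemma~\ref{long min intersection}, the pigeonhole bound $\ell_Y(g)\leq q$ coming from the $q$ orbits of vertices), and the overall dichotomy has the right shape. But there is a genuine gap at the heart of the argument: you never explain how to bound $Z_GZ_G(G_{\beta'})$ \emph{from above}. The paper's mechanism for this is the notion of a \emph{decent} geodesic: for every label $v$ crossed by the arc, one produces an element $g_v\in G$ mapping a point of the arc to another point of the arc with $v\in\G(g_v)$. By Lemma~\ref{commutation criterion} each $g_v$ lies in $Z_G(G_{\beta'})$, so $Z_GZ_G(G_{\beta'})\leq\bigcap_v Z_{\A_{\G}}(g_v)$, and this intersection is computed to be $\A_{\Delta^{\perp}}$ times at most one cyclic factor --- that computation is what yields both cases of the statement, including the reverse inclusion $Z_G(g)\leq Z_GZ_G(G_{\beta'})$ in case~(2), which your sketch does not address (a single loxodromic $g$ commuting with $G_{\beta'}$ only gives $Z_GZ_G(G_{\beta'})\leq Z_G(g)$, not equality). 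The bulk of Section~\ref{perturbation sect} (the decomposition of geodesics into good/excellent subsegments and the double induction producing the constant $L$) exists precisely to show that every long arc contains, after bounded truncation, sub-arcs whose first and last edges form a decent pair; your ``label-set stabilisation'' heuristic is not a substitute, because labels can appear and disappear repeatedly along the arc and the number of such fluctuations is what must be bounded uniformly.

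Two further points are wrong as written. First, the dichotomy ``$G_{\beta}$ contains a loxodromic of $\T_v$'' is vacuous: $G_{\beta}$ is the pointwise stabiliser of an arc, so all its elements are elliptic; the relevant element $g$ in case~(2) lies in $Z_G(G_{\beta'})\setminus G_{\beta'}$ and must be \emph{produced}, which again is what decency does. Second, your claim that a $G$--parabolic subgroup $G\cap\A_{\Delta}$ with $\G(G\cap\A_{\Delta})=\Delta$ is automatically its own double centraliser in $G$ is false, and Lemma~\ref{G--AP structure} does not assert it; the paper explicitly emphasises (before the statement of Theorem~\ref{main perturbation}) that arc-stabilisers are intersections of $G$ with centralisers of subsets of $\A_{\G}$ but are \emph{not} in general centralisers of subsets of $G$ --- this failure is the entire reason the perturbation theorem is needed.
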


\subsection{Decent pairs of hyperplanes.}

In this subsection, we introduce \emph{decent} pairs of hyperplanes of $Y$. Proposition~\ref{double centraliser prop} shows that stabilisers of decent pairs are (close to) centralisers in $G$. In the next subsections, we will see how to reduce general pairs of hyperplanes of $Y$ to decent ones.

For the following discussion, it is convenient to introduce the following notation.

\begin{defn}\label{hyperplane pair notation}
Given disjoint hyperplanes $\mf{u},\mf{w}\in\mscr{W}(\X_{\G})$, we write:
\begin{itemize}
\item $\mc{W}(\mf{u},\mf{w})=\mscr{W}(\mf{u}|\mf{w})\sqcup\{\mf{u},\mf{w}\}\cu\mscr{W}(\X_{\G})$;
\item $\Delta(\mf{u},\mf{w})=\g(\mc{W}(\mf{u},\mf{w}))\cu\G$;
\end{itemize}
\end{defn}

\begin{rmk}\label{perp rmk 2}
Let $\mf{u}$ and $\mf{w}$ be disjoint hyperplanes of $\X_{\G}$. If $\Delta=\Delta(\mf{u},\mf{w})$, then:
\begin{enumerate}
\item the subgroup of $\A_{\G}$ that stabilises $\mf{u}$ and $\mf{w}$ is conjugate to $\A_{\Delta^{\perp}}$;
\item $\Delta$ does not split as a nontrivial join.
\end{enumerate}
\end{rmk}

Recall that, if $\alpha\cu Y$ is a geodesic, $\mscr{W}(\alpha)\cu\mscr{W}(Y)$ is the set of hyperplanes that it crosses.

\begin{defn}\label{decent defn}
\begin{enumerate}
\item[]
\item A geodesic $\alpha\cu Y$ is \emph{decent} if, for every $v\in\g(\mscr{W}(\alpha))$, there exist an element $g_v\in G$ and a vertex $x_v\in\alpha$ such that $g_vx_v\in\alpha$ and $v\in\G(g_v)$. 
\item A pair of disjoint hyperplanes $\mf{u},\mf{w}\in\mscr{W}(Y)$ is \emph{decent} if there exists a decent geodesic $\alpha\cu Y$ with $\mscr{W}(\alpha)=\mc{W}(\mf{u},\mf{w})$. 
\end{enumerate}
\end{defn}

Given a hyperplane $\mf{w}\in\mscr{W}(\X_{\G})$, we denote its $G$--stabiliser by $G_{\mf{w}}$.

\begin{prop}\label{double centraliser prop}
Let $\mf{u},\mf{w}\in\mscr{W}(Y)$ be a decent pair of hyperplanes. Set $\Delta=\Delta(\mf{u},\mf{w})$. Then:
\begin{enumerate}
\item either $Z_GZ_G(G_{\mf{u}}\cap G_{\mf{w}})=G_{\mf{u}}\cap G_{\mf{w}}$;
\item or $Z_GZ_G(G_{\mf{u}}\cap G_{\mf{w}})=Z_G(g)$ for a label-irreducible element $g\in Z_G(G_{\mf{u}}\cap G_{\mf{w}})$. In this case, $\G(g)=\Delta$ and $g$ skewers all but at most $2q$ hyperplanes of $\mc{W}(\mf{u},\mf{w})$. In addition, $\ell_Y(g)\leq q$ and the subgroup $\langle g\rangle\x(G_{\mf{u}}\cap G_{\mf{w}})$ has index $\leq q$ in $Z_G(g)$.
\end{enumerate}
\end{prop}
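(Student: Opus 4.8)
\textbf{Proof plan for Proposition~\ref{double centraliser prop}.}

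The plan is to exploit decency to produce, for every label $v\in\Delta=\g(\mc{W}(\mf{u},\mf{w}))$, an element of $G$ that is loxodromic in $\T_v$ and whose minimal set runs along the decent geodesic $\alpha$. By Remark~\ref{perp rmk 2}, the $\A_\G$--stabiliser of $\mf{u}$ and $\mf{w}$ is conjugate to $\A_{\Delta^\perp}$, so (up to conjugation) $G_{\mf u}\cap G_{\mf w}=G\cap\A_{\Delta^\perp}$ is a $G$--semi-parabolic subgroup, and $Z_{\A_\G}(G_{\mf u}\cap G_{\mf w})$ should be essentially $\A_{\Delta_\perp}$. The goal is then to decide whether $\G(G_{\mf u}\cap G_{\mf w})$ already ``fills'' $\Delta$ — in which case the double centraliser is $G_{\mf u}\cap G_{\mf w}$ itself — or whether it misses $\Delta$ only in a cyclic direction, in which case that direction is spanned by a single label-irreducible $g$ with $\G(g)=\Delta$.

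First I would set $Q:=G_{\mf u}\cap G_{\mf w}$ and, using Remark~\ref{LI properties}(5) together with Remark~\ref{AP centralisers rmk} and Lemma~\ref{G--AP structure}, write the minimal semi-parabolic envelope of $Q$ as $H=\langle a_1,\dots,a_k\rangle\x P$ with $\G(Q)=\G(H)$ and $P,\ G\cap P$ of trivial centre. Since $Q$ stabilises $\mf u$ and $\mf w$, we have $\G(Q)\subseteq\Delta$. The next step is to show, using decency, that $\G(Q)$ is ``almost all'' of $\Delta$: for each $v\in\Delta$ the element $g_v$ provided by Definition~\ref{decent defn}(1) has a label-irreducible component $g_v'$ which is loxodromic in $\T_v$ with axis meeting $\pi_v(\alpha)$ in a long arc; applying Lemma~\ref{long min intersection} (after passing to a suitable power, and using $q$--convex-cocompactness to keep translation lengths $\le q$, cf.\ Remark~\ref{LI properties}(6)) forces $g_v'$ — or a bounded power of it lying in $G$ — to preserve the ray, hence to lie in the stabiliser of the relevant sub-arc, and in particular to commute with $Q$ by Lemma~\ref{commutation criterion} once the overlap is long enough. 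Running Lemma~\ref{increasing labels}(1) over all these elements and over generators of $Q$ produces a single $h\in Z_G(Q)$ with $\G(h)\supseteq\Delta$; combined with $\G(Z_G(Q))\subseteq\Delta$ (which comes from $Z_G(Q)\le Z_{\A_\G}(Q)$ and Remark~\ref{perp rmk 2}), we get $\G(Z_G(Q))=\Delta$, and since $\Delta$ does not split as a join (Remark~\ref{perp rmk 2}(2)) this $\G$--set is join-indecomposable.

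Now dichotomise on the structure of $Z_{\A_\G}(Q)$. If $Z_{\A_\G}(Q)$ — equivalently $\A_{\Delta_\perp}$ — has trivial centre, then $Z_G(Q)$ has trivial centre too, every label-irreducible in $Z_G(Q)$ already lies in (a conjugate of) $\A_{\Delta^\perp}$, i.e.\ in $Q$ after using Remark~\ref{LI properties}(6) and that $Q$ is $G$--semi-parabolic hence full, so $Z_G(Q)\subseteq Q$; applying $Z_G$ again gives $Z_GZ_G(Q)=Q$, which is case~(1). Otherwise $\Delta$ contains a vertex $c$ with $\Delta\subseteq\St c$ (Remark~\ref{perp rmk}(2)), and the construction above forces the missing direction to be exactly the cyclic factor generated by a label-irreducible element. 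Concretely: $\G(Q)=\Delta\setminus\{c\}$ or $\G(Q)=\Delta$, and in the first case one extracts from the argument of the previous paragraph the element $g$, label-irreducible with $\G(g)=\Delta$, commuting with $Q$; since $g_v$-type elements were chosen with bounded $\T_v$--translation length, $g$ can be taken with $\ell_Y(g)\le q$. Then $Z_{\A_\G}(g)=C\times\A_{\Delta^\perp}$ with $C$ the maximal cyclic subgroup containing $g$ (Remark~\ref{LI properties}(5)), so $Z_G(g)$ is virtually $\langle g\rangle\times Q$; Remark~\ref{LI properties}(6) with the $q$--convex-cocompactness of $G$ gives index $\le q$, and $Z_GZ_G(g)=Z_G(g)$ because $g$ is label-irreducible. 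Finally, the ``$g$ skewers all but at most $2q$ hyperplanes of $\mc{W}(\mf u,\mf w)$'' clause: $g$ is loxodromic in every $\T_v$ for $v\in\Delta=\g(\mc W(\mf u,\mf w))$ with $\ell_{\T_v}(g)\le\ell_Y(g)\le q$, so on the geodesic $\alpha$ (of length $=\#\mc W(\mf u,\mf w)$ counted with labels) the axis of $g$ crosses all but a bounded — at most $q$ at each end, hence $2q$ total — number of the hyperplanes of $\mc W(\mf u,\mf w)$; these are exactly the ones $g$ fails to skewer.

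\textbf{Expected main obstacle.} The delicate point is turning decency — which only gives, for each label $v$, \emph{some} group element $g_v$ moving a vertex of $\alpha$ back onto $\alpha$ with $v\in\G(g_v)$ — into a \emph{single} element of $Z_G(Q)$ whose label set is all of $\Delta$, while keeping translation lengths controlled by $q$. This requires (a) replacing $g_v$ by the correct label-irreducible component and a bounded power that genuinely lies in $G$, (b) arranging the overlap of its axis with $\pi_v(\alpha)$ to exceed the Lemma~\ref{long min intersection} threshold $4\dim\X_\G\cdot q$ — which constrains how large the constant $L$ must be in Theorem~\ref{main perturbation}, and is the source of the ``$2L$'' loss there — and (c) combining all these elements across the (bounded) set $\Delta$ via Lemma~\ref{increasing labels}(1) without the translation lengths blowing up. Getting uniform constants depending only on $q$ and $\G$ out of this juggling is where essentially all the work sits; the case analysis on the centre of $\A_{\Delta_\perp}$ afterwards is comparatively routine.
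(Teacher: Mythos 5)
There is a genuine gap, and it starts with the framing. You propose to decide ``whether $\G(G_{\mf u}\cap G_{\mf w})$ already fills $\Delta$'' and later write ``$\G(Q)=\Delta\setminus\{c\}$ or $\G(Q)=\Delta$''. But $Q:=G_{\mf u}\cap G_{\mf w}$ lies in a conjugate of $\A_{\Delta^{\perp}}$ by Remark~\ref{perp rmk 2}(1), and $\Delta^{\perp}\cap\Delta=\emptyset$, so $\G(Q)$ is \emph{always} disjoint from $\Delta$; it never fills it. For the same reason the case split on the centre of $\A_{\Delta_{\perp}}$ does not track the two cases of the proposition: since $\Delta$ is join-irreducible, $\Delta_{\perp}=\Delta^{\perp}$ as soon as $\#\Delta\geq 2$, and whether $\A_{\Delta^{\perp}}$ has centre has nothing to do with whether $Z_GZ_G(Q)$ acquires an extra cyclic direction \emph{along} $\Delta$. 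The actual dichotomy is whether $Z_GZ_G(Q)$ stays inside (the conjugate of) $\A_{\Delta^{\perp}}$, i.e.\ inside $Q$ after intersecting with $G$, or escapes into a specific cyclic subgroup transverse to it.

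More importantly, the upper bound on $Z_GZ_G(Q)$ --- the crux of the proof --- is missing from your plan. You assert $\G(Z_G(Q))\cu\Delta$ ``from $Z_G(Q)\leq Z_{\A_{\G}}(Q)$ and Remark~\ref{perp rmk 2}'', but that remark computes the stabiliser of the pair of hyperplanes, not the centraliser of $Q$, and $Z_{\A_{\G}}(Q)$ can be far larger than $\A_{\Delta_{\perp}}$ (e.g.\ when $Q$ is small), so neither the containment nor the conclusion you draw from it is available. What decency actually buys is this: normalising so that $\alpha$ starts at $1$, each $g_v$ lies in $\A_{\Delta}$, and since $Q$ fixes $\mscr{W}(\alpha)$ pointwise and $\mscr{W}(x_v|g_vx_v)\cu\mscr{W}(\alpha)$, Lemma~\ref{commutation criterion} gives $g_v\in Z_G(Q)$ \emph{directly} --- no detour through Lemma~\ref{long min intersection}, powers, or overlap thresholds is needed. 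Hence $Z_GZ_G(Q)\leq\bigcap_{v\in\Delta}Z_{\A_{\G}}(g_v)$, and writing $g_v=a_vh_v'h_v''a_v^{-1}$ with $h_v'$ the label-irreducible component carrying $v$, this intersection is contained in $\bigl[\bigcap_v a_vC(h_v')a_v^{-1}\bigr]\x\A_{\Delta^{\perp}}$, using $\bigcap_{v\in\Delta}a_v\A_{\lk v}a_v^{-1}=\A_{\Delta^{\perp}}$. Case~(1) is when $Z_GZ_G(Q)$ misses the cyclic factor; case~(2) is when it meets it, and $g$ is the smallest power in $G$ of its generator. Without this containment you cannot conclude $Z_GZ_G(Q)=Q$ or $=Z_G(g)$. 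Finally, the ``all but $2q$'' count is not obtained by trimming $q$ hyperplanes from each end of $\alpha$: one first shows that the set $A=\{k\in G\mid\exists x\in\alpha,\ kx\in\alpha\}$ is contained in $\langle g\rangle$, so that every $G$--orbit meeting $\alpha$ in at least three points lies on an axis of $g$, and only then does one bound by $2q$ the number of vertices of $\alpha$ outside the resulting subsegment.
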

\begin{proof}
Let $\alpha\cu Y$ be a decent geodesic with $\mscr{W}(\alpha)=\mc{W}(\mf{u},\mf{w})$. Replacing $\alpha,\mf{u},\mf{w}$ with their translates by an element of $\A_{\G}$ and conjugating $G\leq\A_{\G}$ accordingly, we can assume that the initial vertex of $\alpha$ is $1\in\A_{\G}$. In particular, $G_{\mf{u}}\cap G_{\mf{w}}=G\cap\A_{\Delta^{\perp}}$ (see Remark~\ref{perp rmk 2}).

For every $v\in\g(\mscr{W}(\alpha))=\Delta$, consider an element $g_v\in G$ and a point $x_v\in\alpha$ such that $g_vx_v\in\alpha$ and $v\in\G(g_v)$, as in Definition~\ref{decent defn}. 

Note that $\alpha\cu\A_{\Delta}\cu\X_{\G}$, so both $x_v$ and $g_vx_v$ lie in $\A_{\Delta}$. It follows that $g_v\in\A_{\Delta}$, and we can write $g_v=a_vh_va_v^{-1}$ as a reduced word with $h_v$ cyclically reduced and $a_v,h_v\in\A_{\Delta}$. We further separate $h_v=h_v'h_v''$, where $h_v'$ is the label-irreducible component of $h_v$ with $v\in\G(h_v')$, and $h_v''$ is the (possibly trivial) product of the remaining label-irreducible components of $h_v$. Let $C(h_v')$ be the maximal cyclic subgroup of $\A_{\G}$ containing $h_v'$.

Now, since $G_{\mf{u}}\cap G_{\mf{w}}$ fixes the set $\mscr{W}(\alpha)$ pointwise, Lemma~\ref{commutation criterion} implies that $g_v\in Z_G(G_{\mf{u}}\cap G_{\mf{w}})$ for every $v\in\Delta$. Thus:
\begin{align*}
Z_GZ_G(G_{\mf{u}}\cap G_{\mf{w}})&\leq\bigcap_{v\in\Delta}Z_{\A_{\G}}(g_v)=\bigcap_{v\in\Delta} a_vZ_{\A_{\G}}(h_v)a_v^{-1}\leq\bigcap_{v\in\Delta} a_vZ_{\A_{\G}}(h_v')a_v^{-1} \\
&=\bigcap_{v\in\Delta}a_v(C(h_v')\x\A_{\G(h_v')^{\perp}})a_v^{-1}=\bigcap_{v\in\Delta}a_vC(h_v')a_v^{-1}\x\bigcap_{v\in\Delta}a_v\A_{\G(h_v')^{\perp}}a_v^{-1} \\
&\leq\bigcap_{v\in\Delta}a_vC(h_v')a_v^{-1}\x\bigcap_{v\in\Delta}a_v\A_{\lk v}a_v^{-1}.
\end{align*}
Here, the second equality in the second line follows from Remark~\ref{full rmk}: indeed, Remark~\ref{perp rmk 2}(2) guarantees that the two sides contain exactly the same label-irreducibles.

Observe that $\bigcap_{v\in\Delta}a_v\A_{\lk v}a_v^{-1}=\A_{\Delta^{\perp}}$. Indeed, for every $v\in\Delta$, we have $\Delta^{\perp}\cu\lk v$. Since $a_v$ lies in $\A_{\Delta}$, it commutes with $\A_{\Delta^{\perp}}$. This shows that $\A_{\Delta^{\perp}}$ is contained in $P:=\bigcap_{v\in\Delta}a_v\A_{\lk v}a_v^{-1}$. Observing that $P$ is parabolic and $\G(P)\cu\bigcap_{v\in\Delta}\lk v=\Delta^{\perp}$, we conclude that $P=\A_{\Delta^{\perp}}$. 

Summing up, we have shown that:
\[G\cap\A_{\Delta^{\perp}}=G_{\mf{u}}\cap G_{\mf{w}}\leq Z_GZ_G(G_{\mf{u}}\cap G_{\mf{w}})\leq \left[\bigcap a_vC(h_v')a_v^{-1}\right]\x\A_{\Delta^{\perp}}.\]
If $Z_GZ_G(G_{\mf{u}}\cap G_{\mf{w}})$ is contained in $\A_{\Delta^{\perp}}$, then $G_{\mf{u}}\cap G_{\mf{w}}=Z_GZ_G(G_{\mf{u}}\cap G_{\mf{w}})$, and we are in the first case of the proposition.

Otherwise, $Z_GZ_G(G_{\mf{u}}\cap G_{\mf{w}})$ intersects $\bigcap a_vC(h_v')a_v^{-1}$ by Remark~\ref{LI properties}(6) (recall that centralisers are convex-cocompact). Let $h\in\A_{\G}$ be an element with $\langle h\rangle=\bigcap a_vC(h_v')a_v^{-1}$, and let $g$ be the smallest power of $h$ that lies in $G$.

It is clear that $g$ is label-irreducible and commutes with $G_{\mf{u}}\cap G_{\mf{w}}\leq\A_{\Delta^{\perp}}$. Since $v\in\G(h_v')\cu\Delta$ for every $v\in\Delta$, we must have $\G(g)=\Delta$. Remark~\ref{LI properties}(5) shows that $Z_G(g)=G\cap(\langle h\rangle\x\A_{\Delta^{\perp}})$. Since $Z_GZ_G(G_{\mf{u}}\cap G_{\mf{w}})$ is convex-cocompact and closed under taking roots in $G$, we conclude that:
\[Z_G(g)=Z_GZ_G(G_{\mf{u}}\cap G_{\mf{w}}).\]
We are left to prove the additional statements in the second case of the proposition.

Since $g$ lies in $Z_GZ_G(G_{\mf{u}}\cap G_{\mf{w}})$, it commutes with every element of the set:
\[A=\{k\in G \mid \exists x\in\alpha, \text{ s.t.}\ kx\in\alpha\}\cu Z_G(G_{\mf{u}}\cap G_{\mf{w}}).\]
In addition, for every $k\in A$, we have $\G(k)\cu\g(\mscr{W}(\alpha))=\Delta=\G(g)$. Thus, since $g$ is label-irreducible, Remark~\ref{LI properties}(4) applied to the label-irreducible components of $k$ shows that all $k\in A$ satisfy $\langle g,k\rangle\simeq\Z$. Since $g$ is the smallest power of $h$ that lies in $G$, we conclude that $A\cu\langle g\rangle$.

If $\mc{O}$ is a $G$--orbit with $\#(\mc{O}\cap\alpha)\geq 3$, then, since $A\cu\langle g\rangle$, there exists an axis of $g$ containing $\mc{O}\cap\alpha$. Let $\alpha_0\cu\alpha$ be the smallest subsegment that contains all intersections $\mc{O}_i\cap\alpha$, where $\mc{O}_i$ varies among $G$--orbits with $\#(\mc{O}_i\cap\alpha)\geq 3$. Since the union of all axes of $g$ forms a convex subcomplex $\Min(g)\cu\X_{\G}$, we have $\alpha_0\cu\Min(g)$. Since $\G(g)=\g(\mscr{W}(\alpha))$, the geodesic $\alpha_0$ cannot cross any hyperplanes separating distinct axes of $g$ (whose label would lie in $\G(g)^{\perp}$). Hence $\alpha_0$ is contained in the convex hull of a single axis of $g$, and every hyperplane crossed by $\alpha_0$ is skewered by $g$.

At most $2q$ vertices of $\alpha$ (and, therefore, at most $2q$ edges) can lie outside $\alpha_0$. It follows that $g$ skewers all but at most $2q$ hyperplanes in $\mscr{W}(\alpha)=\mc{W}(\mf{u},\mf{w})$.

Finally, note that $A$ contains an element $k$ with $\ell_Y(k)\leq q$ (for instance, consider $q+1$ consecutive vertices on $\alpha$). This implies that $\ell_Y(g)\leq q$. Recall that:
\[\langle g\rangle\x(G_{\mf{u}}\cap G_{\mf{w}})\leq Z_G(g) =G\cap\left[\langle h\rangle\x\A_{\Delta^{\perp}}\right].\]
Since $\ell_Y(g)\leq q$, we must have $g=h^n$ with $n\leq q$. Recalling that $G_{\mf{u}}\cap G_{\mf{w}}=G\cap\A_{\Delta^{\perp}}$, this shows that $\langle g\rangle\x(G_{\mf{u}}\cap G_{\mf{w}})$ has index $\leq q$ in $G\cap[\langle h\rangle\x\A_{\Delta^{\perp}}]$.

This concludes the proof of the proposition.
\end{proof}

\subsection{Decomposing geodesics in $Y$.}

In this subsection, we describe a procedure to decompose geodesics $\alpha\cu Y$ into a controlled number of better-behaved subsegments. The end result to keep in mind is Corollary~\ref{good decomposition}.

It is convenient to introduce the following (admittedly a bit heavy) terminology and notation. Luckily, this will not be required outside of this subsection.

\begin{defn}
Consider a geodesic $\alpha\cu Y$.
\begin{enumerate}
\item We denote by $0\leq o(\alpha)\leq q$ the number of orbits $\mc{O}_i$ with $\alpha\cap\mc{O}_i\neq\emptyset$.
\item For $v\in\G$ and $1\leq i\leq q$, look at the words (in the standard generators of $\A_{\G}$ and their inverses) spelled by the subsegments of $\alpha$ between consecutive points of $\alpha\cap\mc{O}_i$. We denote by $\rho_{i,v}(\alpha)\geq 0$ the number of such segments spelling words containing the letters $v^{\pm}$.
\item Define $n(\alpha):=\sum_i \#\{v\in\G \mid \rho_{i,v}(\alpha)\neq 0\}$.
\end{enumerate}
\end{defn}

\begin{defn}
Consider a geodesic $\alpha\cu Y$.
\begin{enumerate}
\item We say that $\alpha$ is \emph{almost $i$--excellent} if the endpoints of $\alpha$ lie in the same $\mc{O}_i$ and $\#(\alpha\cap\mc{O}_i)\geq 3$. The geodesic $\alpha$ is \emph{$i$--excellent} if, in addition, $\rho_{i,v}(\alpha)\neq 1$ for every $v\in\G$. We simply speak of \emph{(almost) excellent} geodesics when they are (almost) $i$--excellent for some $i$.
\item The geodesic $\alpha$ is \emph{almost good} if it is a union of almost excellent subsegments (possibly with large overlaps). 
Similarly, $\alpha$ is \emph{good} if $\alpha$ is a union of excellent subsegments.
\end{enumerate}
\end{defn}

The following is the reason why we care about these properties.

\begin{lem}\label{good implies decent}
Good geodesics are decent.
\end{lem}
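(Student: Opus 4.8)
The plan is to reduce immediately to the case of a single excellent subsegment, and then, having fixed a vertex label $v$ occurring on it, to exhibit the element of $G$ required by the definition of decency as a composition of at most two ``translations'' of the tree $\T_v$, invoking the classical fact that a product of two elliptic isometries of a tree with disjoint fixed-point sets is loxodromic. For the reduction: if $\alpha=\alpha_1\cup\dots\cup\alpha_m$ with each $\alpha_j$ excellent, then $\mscr{W}(\alpha)=\bigcup_j\mscr{W}(\alpha_j)$, so each $v\in\g(\mscr{W}(\alpha))$ occurs on some $\alpha_j$; the pair $(g_v,x_v)$ produced for the excellent geodesic $\alpha_j$ then works verbatim for $\alpha$, since $\alpha_j\cu\alpha$. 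So I may assume $\alpha$ is $i$--excellent for a fixed $i$.

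Let $p_0,\dots,p_k$ (with $k\geq 2$) be the vertices of $\alpha\cap\mc{O}_i$ listed in the order in which they occur along $\alpha$, and write $\sigma_j$ for the subsegment of $\alpha$ from $p_{j-1}$ to $p_j$. Fix $v\in\g(\mscr{W}(\alpha))$; since the endpoints of $\alpha$ lie in $\mc{O}_i$ we have $\rho_{i,v}(\alpha)\geq 1$, and $i$--excellence upgrades this to $\rho_{i,v}(\alpha)\geq 2$. Let $a<b$ be the smallest and largest indices $j$ for which $\sigma_j$ crosses a hyperplane labelled by $v$ (here $a<b$ precisely because $\rho_{i,v}(\alpha)\geq 2$). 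Passing to $\T_v$ through the restriction quotient $\pi_v\colon\X_{\G}\ra\T_v$, the image $\bar\alpha:=\pi_v(\alpha)$ is a geodesic of $\T_v$ (it crosses each $v$--labelled hyperplane at most once, hence does not backtrack), the images $\bar p_0,\dots,\bar p_k$ occur along it in weakly monotone order, and the choice of $a,b$ forces $\bar p_{a-1}\neq\bar p_a$ and $\bar p_{b-1}\neq\bar p_b$; combined with monotonicity this gives the strict chain $\bar p_{a-1}<\bar p_a\leq\bar p_{b-1}<\bar p_b$ along $\bar\alpha$. For $0\leq s<t\leq k$ let $g_{s\to t}\in G$ be the unique element with $g_{s\to t}p_s=p_t$ (unique since $G$ acts freely on $\X_{\G}$); each $g_{s\to t}$ carries a vertex of $\alpha$ into $\alpha$, so it remains only to produce one that is loxodromic in $\T_v$.

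I would examine the factorisation $g_{a-1\to b}=g_{b-1\to b}\o g_{a-1\to b-1}$, in which the first factor sends $p_{a-1}$ to $p_{b-1}$ and the composite sends $p_{a-1}$ to $p_b$, all three vertices lying on $\alpha$. If either factor is already loxodromic in $\T_v$, that factor is the desired $g_v$. Otherwise both factors are elliptic, and here one uses the standard facts that an elliptic isometry carrying $\bar x$ to $\bar y\neq\bar x$ fixes the midpoint $m$ of $[\bar x,\bar y]$, and that every point it fixes is equidistant from $\bar x$ and $\bar y$, hence projects onto any geodesic $\bar\alpha\supseteq[\bar x,\bar y]$ with $m$ in its interior to that same point $m$. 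For $g_{a-1\to b-1}$ and $g_{b-1\to b}$ the relevant midpoints are those of $[\bar p_{a-1},\bar p_{b-1}]$ and of $[\bar p_{b-1},\bar p_b]$; the strict chain above shows both lie in the interior of $\bar\alpha$ and are separated along $\bar\alpha$ by $\bar p_{b-1}$, hence are distinct. Thus the two fixed-point sets lie in disjoint fibres of the nearest-point projection onto $\bar\alpha$, so they are disjoint, and therefore $g_{a-1\to b}$ — a product of two elliptic isometries with disjoint fixed-point sets — is loxodromic in $\T_v$. Taking $g_v=g_{a-1\to b}$ and $x_v=p_{a-1}$ then finishes the proof.

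The main (indeed only) obstacle is this disjointness of fixed-point sets, and it is exactly where the hypothesis $\rho_{i,v}\neq 1$ enters: with a single $v$--moving segment one would have $a=b$, no second translation would be available, and a lone translation element of $G$ could perfectly well act as a ``reflection'' — an elliptic — on $\T_v$, so the argument genuinely needs two moving segments. The remaining ingredients are routine: that $\pi_v(\alpha)$ is a geodesic with the $\bar p_j$ monotone along it, that the two midpoints fall in the interior of $\bar\alpha$ (using $\bar p_{a-1}\neq\bar p_a$ and $\bar p_{b-1}\neq\bar p_b$ together with $\bar p_{a-1}\neq\bar p_{b-1}$), and the two elementary tree facts about elliptic isometries quoted above.
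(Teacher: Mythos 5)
Your proof is correct, and it takes a genuinely different route from the paper's. The reduction to a single excellent subsegment is identical, but from there the paper argues combinatorially: it translates the orbit points $x, g_1x, g_1g_2x,\ldots$ to $1, a_1, a_1a_2,\ldots$ and invokes Lemma~\ref{aligned2} (a statement about labels of hyperplanes separating syllables of a reduced word in $\A_{\G}$) to place $v$ in $\G(\cdot)$ of one of a short explicit list of products of the $g_i$. You instead work directly in the tree $\T_v$: after checking that $\pi_v(\alpha)$ is a geodesic along which the orbit points project weakly monotonically, you use $\rho_{i,v}(\alpha)\geq 2$ to extract two translations $g_{a-1\to b-1}$ and $g_{b-1\to b}$ between orbit points of $\alpha$ and apply Serre's lemma (a product of two elliptics of a tree with disjoint fixed-point sets is loxodromic), with disjointness coming from the midpoint characterisation of fixed sets of elliptic tree isometries. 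All the steps check out: the strict chain $\bar p_{a-1}<\bar p_a\leq\bar p_{b-1}<\bar p_b$ does follow from the choice of $a,b$ and monotonicity, the two midpoints lie in disjoint open subsegments of $\bar\alpha$ and hence are distinct, and freeness of $G\acts\X_{\G}$ justifies the factorisation $g_{a-1\to b}=g_{b-1\to b}\circ g_{a-1\to b-1}$. Your approach is more elementary and self-contained (it would make Lemma~\ref{aligned2} unnecessary for this purpose), at the cost of not exhibiting the loxodromic element among the specific syllable products that the paper's argument produces; the only cosmetic caveat is that the elliptic/loxodromic dichotomy should be read in the geometric realisation of $\T_v$, which is harmless since $\A_{\G}$ acts on $\T_v$ without inversions.
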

\begin{proof}
Since good geodesics are unions of excellent subsegments, it is enough to show that excellent geodesics are decent. So, consider an excellent geodesic $\alpha\cu Y$ and $v\in\g(\mscr{W}(\alpha))$. 

Let $\mc{O}$ be the $G$--orbit that contains the endpoints of $\alpha$. Then we can write the points of $\alpha\cap\mc{O}$, in the order in which they appear along $\alpha$, as:
\[x,\ g_1x,\ g_1g_2x,\ \ldots,\ g_1g_2\dots g_kx,\]
with all $g_i\in G$. Setting $a_i=x^{-1}g_ix\in\A_{\G}$, the points $1,a_1,a_1a_2,\ldots,a_1a_2\dots a_k$ lie on the geodesic $x^{-1}\alpha\cu\X_{\G}$. Note that $v\in\g(\mscr{W}(\alpha))=\g(\mscr{W}(x^{-1}\alpha))$, so $v\in\g(\mscr{W}(1|a_i))$ for some $i$.

Since $\alpha$ is excellent, there exists $j\neq i$ such that $v\in\g(\mscr{W}(1|a_j))$. Without loss of generality, we have $i<j\leq k$. Lemma~\ref{aligned2} guarantees that:
\[ v\in \G(a_1\dots a_i)\cup\G(a_1\dots a_j)\cup\G(a_{i+1}\dots a_j).\]
If $v\in \G(a_1\dots a_i)=\G(g_1\dots g_i)$, we can take $g_v=g_1\dots g_i$ and $x_v=x$. If instead $v\in \G(a_{i+1}\dots a_j)=\G(g_{i+1}\dots g_j)$, we set $x_v=g_1\dots g_ix$ and $g_v=(g_1\dots g_i)(g_{i+1}\dots g_j)(g_1\dots g_i)^{-1}$.
\end{proof}

In the rest of the subsection, we describe how to decompose general geodesics into good subsegments. To be precise, we say that $\alpha\cu Y$ is \emph{decomposed} into subsegments $\mu_1,\dots,\mu_r$ if $\alpha=\mu_1\cup\dots\cup\mu_r$ and $\mu_i\cap\mu_j$ is nonempty if and only if $|i-j|=1$, in which case $\mu_i\cap\mu_j$ is a single vertex.

\begin{lem}\label{dec not almost good}
If $\alpha\cu Y$ is not almost good, then $\alpha$ can be decomposed into at most $\max\{7,2o(\alpha)\}$ subsegments $\mu_j$ such that each satisfies one of the following:
\begin{itemize}
\item $\mu_j$ is a single edge;
\item $o(\mu_j)<o(\alpha)$.
\end{itemize}
\end{lem}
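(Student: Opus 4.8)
\textbf{Proof plan for Lemma~\ref{dec not almost good}.}

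The plan is to locate a bounded-length ``bad'' segment inside $\alpha$ that witnesses the failure of being almost good, and to use it to cut $\alpha$ into pieces each of which either is a single edge or sees strictly fewer orbits. First I would unpack the hypothesis: if $\alpha$ is \emph{not} almost good, then $\alpha$ fails to be a union of almost excellent subsegments. Concretely, there must be an edge $e$ of $\alpha$ that is not contained in any almost excellent subsegment of $\alpha$; for otherwise, covering every edge by an almost excellent subsegment would exhibit $\alpha$ as a union of such subsegments. Fix such an edge $e$, with dual hyperplane $\mf{w}$ and label $v=\g(\mf{w})$.

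Next I would analyse why $e$ is not contained in any almost excellent subsegment. An almost $i$--excellent subsegment of $\alpha$ through $e$ would be a subsegment of $\alpha$ whose two endpoints lie in the same orbit $\mc{O}_i$, contain $e$, and meet $\mc{O}_i$ in at least three points. For each orbit $\mc{O}_i$ met by $\alpha$, consider the points of $\alpha\cap\mc{O}_i$ on either side of $e$: if $e$ lies strictly between two points of $\mc{O}_i$ on the left and there are at least two points of $\mc{O}_i$ to one side (or one on each side plus one more), one obtains an almost $i$--excellent segment containing $e$. So the failure forces, for \emph{every} $i$ with $\alpha\cap\mc{O}_i\neq\emptyset$, that $e$ is not ``sandwiched with multiplicity $\geq 3$'' by $\mc{O}_i$: either $\mc{O}_i$ has at most one point on one of the two sides of $e$, or $e$ sits near an endpoint of $\alpha$. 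Counting: on each side of $e$ there are at most $o(\alpha)$ orbits, and for each the ``closest'' one or two points bound a region; taking the union over all orbits of the innermost one/two points on each side, one gets two (possibly empty) subsegments of $\alpha$, to the left and right of $e$, each crossing at most $\sim o(\alpha)$ edges worth of ``new'' orbit points, which when assembled gives a subsegment $\mu_0\ni e$ whose complement $\alpha\setminus\mu_0$ splits as (at most) two subsegments, each of which misses at least one orbit met by $\alpha$, hence has $o(\cdot)<o(\alpha)$.

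Then I would package this into the stated decomposition. Write $\alpha=\lambda\cup\mu_0\cup\rho$ where $\mu_0$ is the short central piece containing $e$ and $\lambda,\rho$ are the two flanking pieces with $o(\lambda),o(\rho)<o(\alpha)$ (either may be trivial). The piece $\mu_0$ itself need not have $o(\mu_0)<o(\alpha)$, but by construction $\mu_0$ is built from $e$ together with, on each side, the initial stretch up to the first ``duplicated'' point of each orbit --- this is a bounded number of edges, and more importantly $\mu_0$ can be further subdivided at those extreme orbit points into sub-subsegments each of which either is the single edge $e$ or again drops an orbit. A careful bookkeeping of how many cuts are introduced --- at most two per orbit on each side of $e$, i.e.\ at most $2o(\alpha)$, but never fewer than the small absolute number of cuts needed to isolate $e$ and its immediate neighbours --- yields the bound $\max\{7,2o(\alpha)\}$ on the number of subsegments $\mu_j$, each of which is either a single edge or has $o(\mu_j)<o(\alpha)$.

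The main obstacle I anticipate is making the combinatorics of ``which orbit points bound $\mu_0$'' precise enough to get exactly the constant $7$ (rather than some larger absolute constant), and to verify that the middle piece really does subdivide into pieces of the two allowed types without introducing extra orbits; this requires a careful case analysis according to how close $e$ is to the endpoints of $\alpha$ and whether the flanking orbits contribute one or two boundary points. Once the bad edge $e$ is fixed, however, everything else is a finite-combinatorial argument with hyperplanes and orbit points along a geodesic, using only that $\alpha$ is a geodesic in $Y$ and that $G$ acts with $q$ orbits of vertices.
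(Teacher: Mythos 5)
Your opening move is sound: if $\alpha$ is not almost good, some edge $e$ of $\alpha$ lies in no almost excellent subsegment, and for each orbit $\mc{O}_i$ meeting $\alpha$ this forces exactly one of three configurations: all of $\alpha\cap\mc{O}_i$ lies weakly to the left of $e$, all of it lies weakly to the right, or $\mc{O}_i$ meets $\alpha$ in exactly two points with $e$ strictly between them. If only the first two configurations occur, your plan works and is very clean: each endpoint of $e$ lies in some orbit, which cannot be a "far-side" orbit, so there is at least one left orbit and at least one right orbit, and the three-piece decomposition $\alpha=\lambda\cup e\cup\rho$ already satisfies $o(\lambda),o(\rho)<o(\alpha)$.

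The gap is in the third configuration, i.e., the orbits with $\#(\alpha\cap\mc{O}_i)\le 2$ straddling $e$. Your central piece $\mu_0$ is bounded by, and subdivided at, the extreme points of these sparse orbits, and you assert the resulting sub-pieces "either are the single edge $e$ or again drop an orbit." That is not justified and is false in general: if $\mc{O}_1=\{p_1,p_2\}$ and $\mc{O}_2=\{q_1,q_2\}$ interleave as $p_1<q_1<e<p_2<q_2$ while a third orbit has points all along $\alpha$, then the sub-piece $[p_1,q_1]$ contains points of $\mc{O}_1$, of $\mc{O}_2$, and potentially of every other orbit met by $\alpha$, and it is not a single edge; nothing forces it to drop an orbit unless there happens to exist an orbit lying entirely on the far side of $e$, which need not be the case. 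The paper's mechanism for this situation is different and is exactly what your construction is missing: pick any single orbit $\mc{O}_i$ with $\#(\alpha\cap\mc{O}_i)\le 2$ and excise the at most four \emph{edges} of $\alpha$ adjacent to those one or two points. The at most three complementary subsegments then contain no point of $\mc{O}_i$ at all, hence satisfy $o(\cdot)<o(\alpha)$, giving at most $7$ pieces total. The essential point is that the non-edge pieces must avoid the sparse orbit's points entirely, whereas your pieces have those very points as endpoints and therefore still meet that orbit (and this also causes the off-by-one problems with shared endpoints in your flanking pieces $\lambda$ and $\rho$). With this repair your two cases become precisely the paper's dichotomy, so the remaining bookkeeping for the bound $\max\{7,2o(\alpha)\}$ goes through.
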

\begin{proof}
Set for simplicity $k=o(\alpha)$ and order the orbits so that $\mc{O}_1,\dots,\mc{O}_k$ are precisely those that intersect $\alpha$ nontrivially.

First, suppose that $\#(\alpha\cap\mc{O}_i)\leq 2$ for some $i\leq k$. Then we can decompose $\alpha$ into the $\leq 4$ edges that intersect $\alpha\cap\mc{O}_i$, plus the remaining $\leq 3$ subsegments of $\alpha$. Each of the latter intersects $\leq k-1$ orbits. In this case, we have decomposed $\alpha$ into $\leq 7$ subsegments with the required properties.

Thus, we can assume that $\#(\alpha\cap\mc{O}_i)\geq 3$ for all $i\leq k$. Let $\alpha_i\cu\alpha$ be the subsegment between the first and last points of $\alpha\cap\mc{O}_i$. Note that $\alpha_i$ is almost $i$--excellent.

Let $\alpha'\cu\alpha$ be the union of all $\alpha_i$. If $\alpha'$ were connected, then we would have $\alpha=\alpha'$ and $\alpha$ would be almost good. Thus, $\alpha'$ has between $2$ and $k$ connected components, with consecutive ones separated by a single open edge. Each component intersects $\leq k-1$ orbits. Then it suffices to decompose $\alpha$ into these components plus the remaining edges. These are $\leq 2k-1$ subsegments with the required properties.
\end{proof}

\begin{lem}\label{dec not good}
If $\alpha\cu Y$ is not good, then $\alpha$ can be decomposed into at most $\max\{7,2o(\alpha)\}$ subsegments $\mu_j$ such that each satisfies one of the following:
\begin{itemize}
\item $\mu_j$ is a single edge;
\item $o(\mu_j)<o(\alpha)$;
\item $o(\mu_j)=o(\alpha)$ and $n(\mu_j)<n(\alpha)$.
\end{itemize}
\end{lem}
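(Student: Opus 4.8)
The statement to prove is Lemma~\ref{dec not good}: if $\alpha\cu Y$ is not good, it decomposes into at most $\max\{7,2o(\alpha)\}$ subsegments, each of which is either a single edge, or has $o(\mu_j)<o(\alpha)$, or has $o(\mu_j)=o(\alpha)$ and $n(\mu_j)<n(\alpha)$. The natural first move is to invoke Lemma~\ref{dec not almost good}: if $\alpha$ is not even almost good, that lemma already gives a decomposition into $\leq\max\{7,2o(\alpha)\}$ pieces, each a single edge or with $o(\mu_j)<o(\alpha)$, which is a fortiori a decomposition of the kind we want. So we may assume $\alpha$ \emph{is} almost good but not good.

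In that case $\alpha$ is a union of almost excellent subsegments but not a union of excellent ones. Recall that an almost $i$--excellent segment $\alpha_i$ (between the first and last points of $\alpha\cap\mc{O}_i$, with $\#(\alpha\cap\mc{O}_i)\geq 3$) fails to be $i$--excellent precisely when $\rho_{i,v}(\alpha_i)=1$ for some $v\in\G$, i.e.\ exactly one of the ``gaps'' of $\alpha_i$ between consecutive points of $\alpha_i\cap\mc{O}_i$ uses the letter $v^{\pm}$. The plan is: following the proof of Lemma~\ref{dec not almost good}, write $\alpha$ as the union of the segments $\alpha_1,\dots,\alpha_k$ (where $k=o(\alpha)$ and $\mc{O}_1,\dots,\mc{O}_k$ are the orbits meeting $\alpha$); since $\alpha$ is almost good, $\alpha=\bigcup_i\alpha_i$, and the $\alpha_i$ overlap so that consecutive components, after merging overlaps, cover $\alpha$ — indeed the key point is that an almost-good $\alpha$ can be rewritten as a decomposition (in the precise sense above) into almost-excellent pieces plus at most finitely many leftover edges. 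Concretely: order the $\alpha_i$ by their left endpoints; greedily merge those that overlap, obtaining maximal blocks; distinct blocks are separated by single edges; each block is almost excellent (being a union of overlapping almost-excellent segments sharing the same defining orbit after merging — this needs a short check) and there are at most $k$ blocks, hence at most $2k-1\leq\max\{7,2o(\alpha)\}$ pieces in the decomposition counting the separating edges. Since $\alpha$ is not good, at least one block $\mu_j$ is almost excellent but not excellent.

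For such a block $\mu_j$ — say it is almost $i$--excellent — I would now split \emph{it} further. Because $\mu_j$ is not $i$--excellent, there is some $v$ with $\rho_{i,v}(\mu_j)=1$: a unique gap $\sigma$ of $\mu_j$ between consecutive points of $\mu_j\cap\mc{O}_i$ that contains the letter $v^{\pm}$. Cut $\mu_j$ at the two endpoints of $\sigma$ into three consecutive pieces $\mu_j^-,\sigma,\mu_j^+$. Now on $\sigma$, which is a single gap, we have $o(\sigma)\leq o(\alpha)$; if $o(\sigma)<o(\alpha)$ we are fine, and if $o(\sigma)=o(\alpha)$ then since $\sigma$ has only $2$ points in $\mc{O}_i$, one checks $\#(\sigma\cap\mc{O}_{i'})$ and the orbit-counting function drops... the cleaner route is: each of $\mu_j^{\pm}$ no longer contains the gap $\sigma$, so the letter $v$ occurs in strictly fewer $\mc{O}_i$--gaps there, forcing $\rho_{i,v}(\mu_j^{\pm})=0$ while all other $\rho_{i',v'}$ can only decrease, so $n(\mu_j^{\pm})<n(\mu_j)\leq n(\alpha)$ (with $o$ unchanged); and $\sigma$, being a single $\mc{O}_i$--gap, is either a single edge or satisfies $o(\sigma)<o(\alpha)$ (it omits at least the orbit $\mc{O}_i$ in its interior... or one absorbs $\sigma$'s endpoints as the single vertices glued to $\mu_j^{\pm}$ and treats the edges of $\sigma$ separately as in Lemma~\ref{dec not almost good}). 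One must be a bit careful with the bookkeeping so that splitting one bad block and leaving the others untouched still fits under the budget $\max\{7,2o(\alpha)\}$: since there is at most one bad block and replacing it by three pieces increases the count by two, and the count before was at most $2k-1$... this is tight and needs the argument arranged so that bad blocks are rare. The honest arrangement is to treat the bad block(s) first — note that $\rho_{i,v}(\alpha)=1$ can happen for at most one pair $(i,v)$ within a fixed block in a way that forces a cut — and to absorb the cuts into the edge-count already budgeted in Lemma~\ref{dec not almost good}'s proof.

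\textbf{Main obstacle.} The delicate point is the constant $\max\{7,2o(\alpha)\}$: Lemma~\ref{dec not almost good} already nearly saturates it, so when $\alpha$ is almost good but not good we cannot afford to first decompose into blocks and then further cut a bad block, unless we show the bad block essentially replaces the ``separating edge'' budget. I expect the right move is to redo the proof of Lemma~\ref{dec not almost good} directly rather than invoke it as a black box: repeat the case analysis ($\#(\alpha\cap\mc{O}_i)\leq 2$ for some $i$ versus all $\geq 3$), and in the second case, instead of cutting $\alpha$ into the components of $\bigcup\alpha_i$ and the leftover edges, cut it into the components \emph{and, within any component that is almost excellent but not excellent, at the unique bad gap}, accounting carefully that a bad gap, being a single $\mc{O}_i$--gap, can be split off using two of its boundary vertices and its interior edges — exactly the same kind of edges already counted. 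Making this bookkeeping come out to $\max\{7,2o(\alpha)\}$, rather than something slightly larger, is the real content; everything else is the routine verification that $\rho$ and $n$ behave monotonically under the stated cuts.
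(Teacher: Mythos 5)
Your reduction to the case where $\alpha$ is almost good but not good (by citing Lemma~\ref{dec not almost good} otherwise) is exactly the paper's first step, and you correctly locate the obstruction: some $\alpha_j$ is almost $j$--excellent but not $j$--excellent, so there is a pair $(j,w)$ with $\rho_{j,w}(\alpha_j)=1$, witnessed by a unique bad gap $I$ between consecutive points of $\alpha\cap\mc{O}_j$. But from there the argument has a genuine gap, in two places. First, your three-way cut of a bad block into $\mu_j^-,\sigma,\mu_j^+$ leaves the middle piece $\sigma=I$ unaccounted for: its endpoints lie in $\mc{O}_j$, so $o(\sigma)$ can equal $o(\alpha)$, and since $\sigma$ is then its own unique $\mc{O}_j$--gap containing $w$, we get $\rho_{j,w}(\sigma)=1$ and there is no reason for $n(\sigma)<n(\alpha)$; so $\sigma$ need not satisfy any of the three alternatives. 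Your hedged alternatives (``it omits $\mc{O}_i$ in its interior\dots or one absorbs\dots'') do not resolve this. Second, as you yourself admit, layering these extra cuts on top of a block decomposition into $\leq 2k-1$ pieces overshoots the budget $\max\{7,2o(\alpha)\}$, and the proposed repair (``redo the bookkeeping of Lemma~\ref{dec not almost good}'') is not carried out.

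The missing idea is that in the almost-good case you should not decompose into blocks at all, because the third alternative permits $o(\mu_j)=o(\alpha)$: there is no need to isolate orbits, only to make $n$ drop. The paper cuts the \emph{whole} of $\alpha$ exactly once, at a vertex $p$ in the interior of the bad gap $I$ (if $I$ is a single edge, it instead splits $\alpha$ as $\alpha^-\cup I\cup\alpha^+$ with $I$ the single-edge piece). In either half $\alpha^{\pm}$, the truncated remnant of $I$ is no longer a gap between consecutive points of $\alpha^{\pm}\cap\mc{O}_j$, so $\rho_{j,w}(\alpha^{\pm})=0<1=\rho_{j,w}(\alpha)$, while $\rho_{i,v}(\alpha^{\pm})\leq\rho_{i,v}(\alpha)$ for all $(i,v)$; hence each $\alpha^{\pm}$ either has $o<o(\alpha)$ or has $o=o(\alpha)$ and $n<n(\alpha)$. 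This yields at most $3$ pieces, trivially within $\max\{7,2o(\alpha)\}$, and all your budget concerns evaporate.
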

\begin{proof}
Set again $k=o(\alpha)$ and let $\mc{O}_1,\dots,\mc{O}_k$ be the orbits that intersect $\alpha$ in at least $3$ vertices. Let $\alpha_i\cu\alpha$ be the subsegment between the first and last points of $\alpha\cap\mc{O}_i$. 

If $\alpha$ is not almost good, we simply apply Lemma~\ref{dec not almost good}. Suppose instead that $\alpha$ is almost good, so that $\alpha=\bigcup_i\alpha_i$. Since $\alpha$ is not good, one of the $\alpha_i$ is not excellent, hence there exist $1\leq j\leq q$ and $w\in\G$ such that $\rho_{j,w}(\alpha_j)=1$.

Let $I\cu\alpha_j$ be the only subsegment between consecutive points of $\alpha_j\cap\mc{O}_j$ in which $w$ appears. If $I$ is a single edge, we decompose $\alpha$ as the union of $I$ and two segments $\alpha^{\pm}$ . Otherwise, let $p$ be a vertex in the interior of $I$ and define $\alpha^{\pm}\cu\alpha$ as the two subsegments meeting at $p$. Observe that $\rho_{i,v}(\alpha^{\pm})\leq\rho_{i,v}(\alpha)$ for all $i$ and $v$, and $0=\rho_{j,w}(\alpha^{\pm})<\rho_{j,w}(\alpha)=1$. Thus $n(\alpha^{\pm})<n(\alpha)$.
\end{proof}

\begin{lem}\label{dec last lemma}
Every geodesic $\alpha\cu Y$ can be decomposed into at most $q\cdot(\max\{7,2o(\alpha)\})^{n(\alpha)}$ subsegments $\mu_j$ such that each satisfies one of the following:
\begin{itemize}
\item $\mu_j$ is a single edge;
\item $o(\mu_j)<o(\alpha)$;
\item $\mu_j$ is good.
\end{itemize}
\end{lem}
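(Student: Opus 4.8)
The statement is an induction on the lexicographically ordered pair $(o(\alpha), n(\alpha))$, using Lemmas~\ref{dec not almost good} and~\ref{dec not good} as the inductive steps. First I would set up the induction carefully: fix the ambient data and induct on $o(\alpha)$, and within a fixed value of $o(\alpha)$, induct downwards on $n(\alpha)$ (note $n(\alpha)$ is bounded: $n(\alpha) \leq q\cdot\#\G^{(0)}$, so the induction is well-founded, but for the bound in the statement we only need to count how the decomposition branches). The base cases are: $o(\alpha)\leq 1$, where $\alpha$ crosses hyperplanes with labels all in a single-orbit pattern and is either a single edge or already good (or can be handled directly by Lemma~\ref{dec not good} bottoming out), and more importantly, the case where $\alpha$ is already good, in which case we take the trivial decomposition $r=1$.

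\textbf{Inductive step.} Suppose $\alpha$ is not good. Apply Lemma~\ref{dec not good}: we decompose $\alpha$ into at most $M:=\max\{7,2o(\alpha)\}$ subsegments $\mu_j$, each of which is either a single edge, or has $o(\mu_j)<o(\alpha)$, or has $o(\mu_j)=o(\alpha)$ and $n(\mu_j)<n(\alpha)$. Single edges are left as they are (they satisfy the first bullet of the conclusion). For each $\mu_j$ with $o(\mu_j)<o(\alpha)$, apply the induction hypothesis: $\mu_j$ decomposes into at most $q\cdot(\max\{7,2o(\mu_j)\})^{n(\mu_j)}\leq q\cdot M^{n(\mu_j)}$ subsegments of the three desired types; here I would observe that $\max\{7,2o(\mu_j)\}\leq M$ since $o(\mu_j)\leq o(\alpha)$, and also $n(\mu_j)$ is bounded by the total number of labels, but for a clean count I would simply bound $n(\mu_j)$ by some fixed quantity — actually the cleanest route is to note that for such $\mu_j$ we just need $q\cdot M^{n(\alpha)}$ as a uniform upper bound on the number of pieces (since $n(\mu_j)\le n(\alpha)$ is not automatic, but $n$ is globally bounded, so I would instead phrase the count to absorb this). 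For each $\mu_j$ with $o(\mu_j)=o(\alpha)$ but $n(\mu_j)<n(\alpha)$, apply the induction hypothesis (now on the second coordinate) to get at most $q\cdot M^{n(\mu_j)}\leq q\cdot M^{n(\alpha)-1}$ subsegments.

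\textbf{Counting.} Adding up: $\alpha$ is decomposed into at most
\[
M\cdot\big(q\cdot M^{\,n(\alpha)-1}\big) = q\cdot M^{\,n(\alpha)}
\]
subsegments, where I have used that each of the $\leq M$ pieces from Lemma~\ref{dec not good} contributes at most $q\cdot M^{n(\alpha)-1}$ pieces after recursion (single edges contribute $1\leq q\cdot M^{n(\alpha)-1}$; the $o$-decreasing pieces need the induction hypothesis bound rewritten so that the exponent is at most $n(\alpha)-1$, which is where I must be slightly careful). The honest way to make the $o$-decreasing case fit is to strengthen the induction hypothesis to: \emph{every geodesic $\alpha$ decomposes into at most $q\cdot M^{\,n(\alpha)}$ good-or-edge-or-lower-complexity pieces, where $M=\max\{7,2o(\alpha)\}$}, and when $o(\mu_j)<o(\alpha)$, bound its contribution by $q\cdot M'^{\,n(\mu_j)}$ with $M'=\max\{7,2o(\mu_j)\}\le M$; since the three-bullet conclusion for such a $\mu_j$ is already achieved (it has strictly smaller $o$, so it is itself of the desired form — wait, no, the conclusion demands it be an edge, $o$-smaller, \emph{or good}; an $o$-smaller piece is exactly the second bullet), we actually do not recurse on those at all: pieces with $o(\mu_j)<o(\alpha)$ already satisfy the conclusion as stated. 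So only the pieces with $o(\mu_j)=o(\alpha)$, $n(\mu_j)<n(\alpha)$ need recursion, and there the exponent genuinely drops, giving the clean geometric bound $q\cdot M^{n(\alpha)}$ by the standard estimate $\sum$ over at most $M$ pieces each yielding at most $q\cdot M^{n(\alpha)-1}$.

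\textbf{Main obstacle.} The only delicate point is the bookkeeping of the exponent in the recursion — making sure that we recurse \emph{only} on pieces where $n$ strictly decreases (the $o$-decreasing pieces and single edges are already in the desired final form and need no further decomposition), so that the exponent telescopes down to give exactly $q\cdot M^{n(\alpha)}$ rather than something larger. Everything else is a direct citation of Lemma~\ref{dec not good}. I would write this up as a clean induction on $n(\alpha)$ with $o(\alpha)$ fixed, invoking Lemma~\ref{dec not good} once and observing that the resulting pieces either already satisfy the conclusion or have strictly smaller $n$.

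\begin{proof}
We argue by induction on $n(\alpha)$. If $\alpha$ is good, we take the trivial decomposition into $\alpha$ itself, which is good; this covers in particular the base of the induction. So suppose $\alpha$ is not good, and set $M:=\max\{7,2o(\alpha)\}$.

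By Lemma~\ref{dec not good}, $\alpha$ decomposes into at most $M$ subsegments $\mu_1,\dots,\mu_r$ ($r\leq M$), each of which is either (a) a single edge, or (b) satisfies $o(\mu_j)<o(\alpha)$, or (c) satisfies $o(\mu_j)=o(\alpha)$ and $n(\mu_j)<n(\alpha)$.

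Any piece of type (a) or (b) already satisfies the conclusion of the lemma (it is an edge, respectively a segment with $o(\mu_j)<o(\alpha)$), so we keep it as is; it contributes a single subsegment. For a piece $\mu_j$ of type (c), note that $o(\mu_j)=o(\alpha)$ forces $\max\{7,2o(\mu_j)\}=M$, and $n(\mu_j)<n(\alpha)$, so by the induction hypothesis $\mu_j$ decomposes into at most $q\cdot M^{\,n(\mu_j)}\leq q\cdot M^{\,n(\alpha)-1}$ subsegments, each a single edge, or with strictly smaller $o$, or good.

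Concatenating these decompositions, $\alpha$ is decomposed into at most
\[
r\cdot q\cdot M^{\,n(\alpha)-1}\ \leq\ M\cdot q\cdot M^{\,n(\alpha)-1}\ =\ q\cdot M^{\,n(\alpha)}
\]
subsegments, each of which is a single edge, or has $o(\mu_j)<o(\alpha)$, or is good. Since $M=\max\{7,2o(\alpha)\}$, this is the required bound.
\end{proof}
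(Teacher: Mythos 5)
Your overall strategy is the paper's: induct on $n(\alpha)$, invoke Lemma~\ref{dec not good} once, keep the pieces that are single edges or have smaller $o$, and recurse only on the pieces where $n$ strictly drops. The inductive step and the counting $r\cdot q\cdot M^{n(\alpha)-1}\leq q\cdot M^{n(\alpha)}$ are fine for $n(\alpha)\geq 1$.

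The gap is the base case. You assert that ``if $\alpha$ is good, take the trivial decomposition; this covers in particular the base of the induction.'' But a geodesic $\mu$ with $n(\mu)=0$ is essentially never good: $n(\mu)=0$ forces $\rho_{i,v}(\mu)=0$ for all $i,v$, which means $\mu$ meets each orbit $\mc{O}_i$ at most once (two or more intersection points would produce a nonempty intermediate segment and hence some $\rho_{i,v}(\mu)\geq 1$). Consequently no subsegment of $\mu$ can be excellent (excellence requires three points in a single orbit), so $\mu$ is not a union of excellent subsegments. If you instead try to handle $n=0$ by applying Lemma~\ref{dec not good} to a non-good $\mu$, you get up to $\max\{7,2o(\mu)\}\geq 7$ pieces, which can exceed the claimed bound $q\cdot M^{0}=q$ when $q<7$. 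The correct base case --- and the entire reason the factor $q$ appears in the statement --- is the observation the paper makes: a geodesic with $n(\mu)=0$ meets each of the $q$ orbits at most once, hence has at most $q-1$ edges, and is decomposed into at most $q-1\leq q$ single edges. Inserting that one sentence repairs the argument; as written, the factor $q$ is never accounted for.
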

\begin{proof}
This follows from Lemma~\ref{dec not good} proceeding by induction on $n(\alpha)$. Note that a geodesic $\mu$ with $n(\mu)=0$ meets each $\mc{O}_i$ at most once and thus contains at most $q-1$ edges.
\end{proof}

\begin{cor}\label{good decomposition}
Setting $V:=\#\G^{(0)}$, every geodesic $\alpha\cu Y$ can be decomposed into at most $q^q\cdot(\max\{7,2q\})^{q^2V}$ subsegments $\mu_j$ such that each satisfies one of the following:
\begin{itemize}
\item $\mu_j$ is a single edge;
\item $\mu_j$ is decent (in fact, good).
\end{itemize}
\end{cor}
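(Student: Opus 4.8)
The plan is to iterate Lemma~\ref{dec last lemma} a bounded number of times, with the induction controlled by the quantity $o(\cdot)$. Starting from an arbitrary geodesic $\alpha\cu Y$, apply Lemma~\ref{dec last lemma} to decompose $\alpha$ into at most $q\cdot(\max\{7,2o(\alpha)\})^{n(\alpha)}$ subsegments, each of which is either a single edge, or good, or has strictly smaller $o$-value than $\alpha$. Edges and good pieces are final (good geodesics are decent by Lemma~\ref{good implies decent}), so only the pieces $\mu$ with $o(\mu)<o(\alpha)$ need further decomposition; recurse on each of these. Since $o(\alpha)\leq q$ for every geodesic in $Y$ (there are only $q$ orbits of vertices $\mc{O}_1,\dots,\mc{O}_q$), and $o$ drops by at least $1$ at each level of recursion, the recursion terminates after at most $q$ levels.

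Next I would bound the total number of pieces produced. At each application of Lemma~\ref{dec last lemma} to a subsegment $\mu$, the branching factor is at most $q\cdot(\max\{7,2o(\mu)\})^{n(\mu)}\leq q\cdot(\max\{7,2q\})^{n(\mu)}$. It remains to bound $n(\mu)$ uniformly. By definition $n(\mu)=\sum_{i=1}^{q}\#\{v\in\G \mid \rho_{i,v}(\mu)\neq 0\}$, and for each fixed $i$ the inner set has at most $\#\G^{(0)}=V$ elements, so $n(\mu)\leq qV$. (In fact one can do slightly better, but the crude bound $n(\mu)\leq qV$ suffices.) Hence each application of Lemma~\ref{dec last lemma} multiplies the number of pieces by at most $q\cdot(\max\{7,2q\})^{qV}$, and over at most $q$ levels of recursion the total number of pieces is at most
\[
\bigl(q\cdot(\max\{7,2q\})^{qV}\bigr)^{q}=q^{q}\cdot(\max\{7,2q\})^{q^{2}V},
\]
which is exactly the claimed bound. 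Finally, concatenating the decompositions obtained at the leaves of this recursion gives a decomposition of the original $\alpha$ into consecutive subsegments (the "decomposed into" relation is clearly transitive: refining a decomposition of one piece refines the whole), each of which is either a single edge or decent.

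The only mild subtlety is purely bookkeeping: one must check that repeatedly refining a decomposition yields again a valid decomposition in the sense defined before Lemma~\ref{dec not almost good} (consecutive pieces meet in a single vertex, non-consecutive pieces are disjoint), which is immediate since replacing one segment $\mu_j$ by a decomposition of $\mu_j$ preserves the endpoints of $\mu_j$ and only subdivides its interior. I do not expect any real obstacle here; the entire statement is a formal consequence of Lemma~\ref{dec last lemma}, Lemma~\ref{good implies decent}, and the observations that $o(\cdot)\leq q$ and $n(\cdot)\leq qV$. The one point to state carefully is that single-edge pieces are left untouched by all further recursion (an edge has $o=1$ and is already in final form), so they do not contribute extra branching.
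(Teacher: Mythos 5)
Your proof is correct and is essentially the paper's own argument: bound $n(\mu)\leq qV$, apply Lemma~\ref{dec last lemma} once per level, and induct on $o(\alpha)\leq q$ to get $\bigl(q\cdot(\max\{7,2q\})^{qV}\bigr)^{q}$ pieces. The paper states this in three lines; you have merely spelled out the bookkeeping (termination of the recursion, transitivity of refinement), all of which is sound.
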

\begin{proof}
Note that $n(\alpha)\leq qV$. Thus, the number of subsegments in the decomposition provided by Lemma~\ref{dec last lemma} is at most $q\cdot(\max\{7,2q\})^{qV}$. Proceeding by induction on $o(\alpha)\leq q$, Lemmas~\ref{dec last lemma} and~\ref{good implies decent} yield the required conclusion.
\end{proof}

\subsection{Decomposing chains of hyperplanes.}

Let for simplicity $N_q=q^q\cdot(\max\{7,2q\})^{q^2V}$ be the constant in Corollary~\ref{good decomposition}. We say that hyperplanes $\mf{v}_1,\dots,\mf{v}_k$ form a \emph{chain} if, for each $i$, we can pick a halfspace $\mf{h}_i$ bounded by $\mf{v}_i$ so that $\mf{h}_1\subsetneq\dots\subsetneq\mf{h}_k$. 

Recall Definition~\ref{hyperplane pair notation}. It is convenient to introduce the following additional notation for a pair of disjoint hyperplanes $\mf{u},\mf{w}$ of $\X_{\G}$:
\begin{itemize}
\item $\delta(\mf{u},\mf{w})=\#\Delta(\mf{u},\mf{w})\in\N$;
\item $d_v(\mf{u},\mf{w})=\#(\g^{-1}(v)\cap\mscr{W}(\mf{u}|\mf{w}))\in\N$, where $v\in\G^{(0)}$.
\end{itemize}

Recall that, for every vertex $v\in\G$, we have a restriction quotient $\pi_v\colon\X_{\G}\ra\T_v$. The next lemma is saying that every geodesic in $\T_v$ can be decomposed into a bounded number of subpaths, which are alternately short and lower-complexity.

\begin{lem}\label{dec chains hyp}
Let $\mf{u},\mf{w}\in\mscr{W}(Y)$ be distinct hyperplanes with $\g(\mf{u})=\g(\mf{w})=v$. Suppose that $\mf{u},\mf{w}$ are not a decent pair. Then there exists a chain of hyperplanes $\mf{v}_0=\mf{u},\mf{v}_1,\dots,\mf{v}_{2s+1}=\mf{w}$, where:
\begin{itemize}
\item $\g(\mf{v}_i)=v$ for $0\leq i\leq 2s+1$;
\item $\delta(\mf{v}_{2j-1},\mf{v}_{2j})<\delta(\mf{u},\mf{w})$ for $1\leq j\leq s$;
\item $d_v(\mf{v}_{2j},\mf{v}_{2j+1})\leq N_q$ for $0\leq j\leq s$;
\item $s\leq N_q$.
\end{itemize}
\end{lem}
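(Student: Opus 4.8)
We want to decompose a geodesic $\beta\cu\pi_v(Y)\cu\T_v$ from the hyperplane $\mf{u}$ to the hyperplane $\mf{w}$ into an alternating sequence of ``short'' subpaths (crossing at most $N_q$ hyperplanes with label $v$) and ``lower-complexity'' subpaths (pairs with strictly smaller $\delta$). The key idea is to lift to $\X_{\G}$ and invoke Corollary~\ref{good decomposition} on a lift of the geodesic, then translate the resulting decomposition back down to $\T_v$.

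First I would set up the lift. Recall that $\pi_v\colon\X_{\G}\ra\T_v$ is the restriction quotient associated to $\g^{-1}(v)$; the midpoints of the edges of $\beta$ correspond bijectively (and in order) to the hyperplanes in $\g^{-1}(v)\cap\mscr{W}(\mf{u}|\mf{w})$ together with $\mf{u},\mf{w}$ themselves. Choose a combinatorial geodesic $\alpha\cu Y$ with $\pi_v(\alpha)=\beta$ — concretely, $\alpha$ can be taken between a pair of gates of $\mf{u}$ and $\mf{w}$ in $Y$ — so that $\mscr{W}(\alpha)\supseteq\mc{W}(\mf{u},\mf{w})$ and the hyperplanes of $\alpha$ with label $v$ are precisely the chain $\mf{u}=\mf{v}_0',\dots,\mf{w}$ that $\beta$ crosses, appearing in order. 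Now apply Corollary~\ref{good decomposition} to $\alpha$: it decomposes $\alpha$ into at most $N_q$ subsegments $\mu_1,\dots,\mu_r$, each of which is either a single edge or a decent (in fact good) geodesic.

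Next I would push this decomposition down to $\T_v$. Each $\mu_j$ maps to a (possibly degenerate) subpath $\pi_v(\mu_j)$ of $\beta$; concatenating them in order recovers $\beta$. The hyperplanes of $\mc{W}(\mf{u},\mf{w})$ crossed by $\mu_j$ — call the extreme ones $\mf{a}_j$ and $\mf{b}_j$ — form a sub-chain, and consecutive blocks share an endpoint hyperplane. For a block $\mu_j$ that is a single edge of $\alpha$, either that edge has label $v$ (so $\pi_v(\mu_j)$ is a single edge of $\beta$, crossing one hyperplane with label $v$), or it has label $\neq v$, in which case $\pi_v(\mu_j)$ is degenerate — in either case it contributes at most one edge with label $v$. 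For a block $\mu_j$ that is decent, there are two cases. If $\mu_j$ crosses at most $N_q$ hyperplanes with label $v$, it is a ``short'' block and we keep it. Otherwise $\mu_j$ crosses more than $N_q$ such hyperplanes; then I claim the pair $\mf{a}_j,\mf{b}_j$ (suitably chosen as hyperplanes of $\mu_j$ with label $v$ nearest the ends) has $\delta(\mf{a}_j,\mf{b}_j)<\delta(\mf{u},\mf{w})$. To see this: $\Delta(\mf{a}_j,\mf{b}_j)\cu\Delta(\mf{u},\mf{w})$ since the sub-chain is contained in the chain; and it is a \emph{proper} subset because, by Remark~\ref{perp rmk 2}(2), $\Delta(\mf{u},\mf{w})$ does not split as a nontrivial join, whereas if $\Delta(\mf{a}_j,\mf{b}_j)$ equalled $\Delta(\mf{u},\mf{w})$ one shows — using that $\mu_j$ is decent together with Proposition~\ref{double centraliser prop} and the hypothesis that $\mf{u},\mf{w}$ themselves are \emph{not} decent — a contradiction with the non-decency of $\mf{u},\mf{w}$ (essentially, decency and equal $\Delta$ would propagate decency to the full pair). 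So we interleave: relabel the kept blocks, inserting between any two consecutive ``short'' blocks a ``lower-complexity'' block if the geometry demands one (and padding with degenerate blocks if needed), to produce the alternating chain $\mf{u}=\mf{v}_0,\mf{v}_1,\dots,\mf{v}_{2s+1}=\mf{w}$ with the stated $\delta$ and $d_v$ bounds, and $s\leq N_q$ since $r\leq N_q$.

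**Main obstacle.** The delicate point is the claim that a decent block $\mu_j$ crossing many hyperplanes with label $v$ forces $\delta(\mf{a}_j,\mf{b}_j)<\delta(\mf{u},\mf{w})$. Proving this requires carefully reconciling the non-decency of $\mf{u},\mf{w}$ with the decency of the sub-block: the danger is that decency of $\mu_j$ together with $\Delta(\mf{a}_j,\mf{b}_j)=\Delta(\mf{u},\mf{w})$ might \emph{not} obviously contradict non-decency of $\mf{u},\mf{w}$, since decency is about the existence of group elements realizing each label along the \emph{whole} arc. I expect one needs the following: if $\mu_j$ is decent with $\Delta(\mf{a}_j,\mf{b}_j)=\Delta(\mf{u},\mf{w})=:\Delta$, then by Proposition~\ref{double centraliser prop} applied to $\mf{a}_j,\mf{b}_j$ we get either $G_{\mf{a}_j}\cap G_{\mf{b}_j}$ is a centraliser or a label-irreducible $g$ with $\G(g)=\Delta$; in the second case $g$ skewers a definite proportion of the chain between $\mf{u}$ and $\mf{w}$ (since these hyperplanes all have label in $\G(g)$ and sit on a common axis direction), which witnesses decency of $\mf{u},\mf{w}$ — contradiction. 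Handling the first sub-case (centraliser, no loxodromic) requires extra care and may need the block to be chosen with $d_v$ large enough to force the second sub-case, which is exactly why the threshold $N_q$ appears. Getting this threshold bookkeeping exactly right — ensuring every block is either genuinely short or genuinely lower-complexity with no gap — is the crux of the argument.
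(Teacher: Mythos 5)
Your overall skeleton is the same as the paper's: take a geodesic $\alpha\cu Y$ realising the chain from $\mf{u}$ to $\mf{w}$, apply Corollary~\ref{good decomposition}, let the decent subsegments supply the lower-complexity pairs $(\mf{v}_{2j-1},\mf{v}_{2j})$ (first and last label-$v$ hyperplanes they cross), and bound the gaps by the number of remaining subsegments. But the step you flag as the ``main obstacle'' --- the strict inequality $\delta(\mf{v}_{2j-1},\mf{v}_{2j})<\delta(\mf{u},\mf{w})$ --- is genuinely left open in your write-up: you sketch a route through Proposition~\ref{double centraliser prop}, label-irreducible elements and skewering, and concede that one of its sub-cases is not handled. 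None of that machinery is needed, and the worry motivating it is misplaced. Decency (Definition~\ref{decent defn}) is a per-label existential condition, and it is monotone under enlarging the arc: if $\mu_j\cu\alpha$ is decent and $\g(\mscr{W}(\mu_j))=\g(\mscr{W}(\alpha))$, then the witnesses $g_{v'}$ and $x_{v'},\,g_{v'}x_{v'}\in\mu_j\cu\alpha$ are literally witnesses of decency for $\alpha$. Provided $\alpha$ is chosen with $\mscr{W}(\alpha)=\mc{W}(\mf{u},\mf{w})$ \emph{exactly} (gates between the two halfspaces of $\mf{u}$ and $\mf{w}$ facing away from each other; your ``$\mscr{W}(\alpha)\supseteq\mc{W}(\mf{u},\mf{w})$'' is not good enough, since decency of the pair requires a decent geodesic with precisely that hyperplane set), this makes the pair $\mf{u},\mf{w}$ decent, a contradiction. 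Hence every decent subsegment satisfies $\g(\mscr{W}(\mu_j))\subsetneq\g(\mscr{W}(\alpha))$, and since $\mc{W}(\mf{v}_{2j-1},\mf{v}_{2j})\cu\mscr{W}(\mu_j)$ one gets $\delta(\mf{v}_{2j-1},\mf{v}_{2j})\leq\#\g(\mscr{W}(\mu_j))<\#\g(\mscr{W}(\alpha))=\delta(\mf{u},\mf{w})$, with no case analysis at all.

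There is also a quantitative slip in your bookkeeping. Declaring a decent block ``short'' when it crosses at most $N_q$ hyperplanes labelled $v$ means a gap made of up to $N_q$ such blocks can contribute $N_q^2$ hyperplanes labelled $v$, so you only obtain $d_v(\mf{v}_{2j},\mf{v}_{2j+1})\leq N_q^2$, not $\leq N_q$. Since the strict inequality above holds for \emph{every} decent subsegment, the correct move is to designate as lower-complexity blocks all decent subsegments crossing at least two hyperplanes labelled $v$ (the lower bound of two just ensures $\mf{v}_{2j-1}\neq\mf{v}_{2j}$, so that $\delta$ is defined). The gaps then consist of at most $N_q$ subsegments, each a single edge or a decent block meeting $\g^{-1}(v)$ at most once, each contributing at most one hyperplane labelled $v$; this gives $d_v(\mf{v}_{2j},\mf{v}_{2j+1})\leq N_q$ and $s\leq N_q$ as stated.
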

\begin{proof}
Let $\alpha\cu Y$ be a geodesic with $\mscr{W}(\alpha)=\mc{W}(\mf{u},\mf{w})$. Since $\mf{u},\mf{w}$ are not a decent pair, $\alpha$ is not decent. By Corollary~\ref{good decomposition}, we can decompose $\alpha$ into at most $N_q$ subsegments, each being either decent or a single edge. Among these, call $\mu_1,\dots,\mu_s$ the decent subsegments that cross at least $2$ hyperplanes labelled by $v$, in the order in which they appear along $\alpha$. Note that we must have $\g(\mscr{W}(\mu_j))\subsetneq\g(\mscr{W}(\alpha))$, otherwise $\alpha$ would be decent.

Define $\mf{v}_{2j-1}$ (resp.\ $\mf{v}_{2j}$) as the first (resp.\ last) hyperplane labelled by $v$ that is crossed by $\mu_j$. By the previous paragraph: 
 \[\delta(\mf{v}_{2j-1},\mf{v}_{2j})\leq\#\g(\mscr{W}(\mu_j))<\#\g(\mscr{W}(\alpha))=\delta(\mf{u},\mf{w}).\] 
 
Note that $\mu_j$ and $\mu_{j+1}$ are separated by $\leq N_q$ subsegments of $\alpha$, each crossing at most $1$ hyperplane labelled by $v$. This shows that $d_v(\mf{v}_{2j},\mf{v}_{2j+1})\leq N_q$, concluding the proof.
\end{proof}

\begin{cor}\label{dec chains hyp 2}
Let $\mf{u},\mf{w}\in\mscr{W}(Y)$ be distinct hyperplanes with $\g(\mf{u})=\g(\mf{w})=v$. Set $V=\#\G^{(0)}$. Then there exists a chain of hyperplanes $\mf{v}_0=\mf{u},\mf{v}_1,\dots,\mf{v}_{2s+1}=\mf{w}$, where:
\begin{itemize}
\item $\g(\mf{v}_i)=v$ for $0\leq i\leq 2s+1$;
\item $\mf{v}_{2j-1}$ and $\mf{v}_{2j}$ form a decent pair for $1\leq j\leq s$; 
\item $d_v(\mf{v}_{2j},\mf{v}_{2j+1})\leq 2N_qV$ for $0\leq j\leq s$;
\item $s\leq N_q^V$.
\end{itemize}
\end{cor}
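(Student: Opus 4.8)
The plan is to prove Corollary~\ref{dec chains hyp 2} by induction on $\delta(\mf{u},\mf{w})$, iterating Lemma~\ref{dec chains hyp}. First observe that $\mf{u}\in\mc{W}(\mf{u},\mf{w})$, so $\delta(\mf{u},\mf{w})\geq 1$ always, and since $\delta(\cdot)\leq\#\G^{(0)}=V$, the recursion has depth at most $V$ — this is ultimately the source of the factor $V$ in the statement. For the base step and for the case where $(\mf{u},\mf{w})$ is itself a decent pair, I would argue as follows. If $d_v(\mf{u},\mf{w})\leq 2N_qV$, take $s=0$ and the chain $\mf{v}_0=\mf{u},\mf{v}_1=\mf{w}$. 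Otherwise, if $(\mf{u},\mf{w})$ is a decent pair, take $s=1$ and let $\mf{v}_1$ (resp. $\mf{v}_2$) be the hyperplane of $\mscr{W}(\mf{u}|\mf{w})$ labelled $v$ that is closest to $\mf{u}$ (resp. to $\mf{w}$); then $d_v(\mf v_0,\mf v_1)=d_v(\mf v_2,\mf v_3)=0$, and one checks that $(\mf{v}_1,\mf{v}_2)$ is again decent by restricting a decent geodesic for $(\mf u,\mf w)$ to the sub-geodesic crossing exactly $\mc{W}(\mf v_1,\mf v_2)$ and verifying the defining condition survives (for the label $v$ this uses that a suitable power of the witnessing loxodromic element still has its orbit data inside the trimmed sub-geodesic).

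For the inductive step with $\delta(\mf{u},\mf{w})>1$ and $(\mf{u},\mf{w})$ not a decent pair, I would apply Lemma~\ref{dec chains hyp} to get a chain $\mf{u}=\mf{w}_0,\mf{w}_1,\dots,\mf{w}_{2t+1}=\mf{w}$ of $v$-labelled hyperplanes with $t\leq N_q$, with $d_v(\mf{w}_{2i},\mf{w}_{2i+1})\leq N_q$, and with $\delta(\mf{w}_{2i-1},\mf{w}_{2i})<\delta(\mf{u},\mf{w})$ for $1\leq i\leq t$. (Note that when $\delta(\mf u,\mf w)=1$ this last inequality together with $\delta(\cdot)\geq1$ forces $t=0$, so the chain is just $\mf u,\mf w$ with $d_v(\mf u,\mf w)\leq N_q\leq 2N_qV$, handling the base case of the induction.) Now apply the inductive hypothesis to each pair $(\mf{w}_{2i-1},\mf{w}_{2i})$: each one either is already decent (keep it) or has strictly smaller $\delta$, so in either case we obtain a chain from $\mf{w}_{2i-1}$ to $\mf{w}_{2i}$ with at most $N_q^{\delta(\mf u,\mf w)-1}$ decent inner pairs. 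Concatenating these chains together with the connecting jumps $\mf{w}_{2i}\to\mf{w}_{2i+1}$ gives a chain from $\mf u$ to $\mf w$; the only defect is that short jumps may become adjacent at the junctions $\mf{w}_{2i}$, so one merges consecutive short jumps into one using $d_v(\mf a,\mf c)=d_v(\mf a,\mf b)+1+d_v(\mf b,\mf c)$. The number of decent pairs produced is at most $t\cdot N_q^{\delta(\mf u,\mf w)-1}\leq N_q^{\delta(\mf u,\mf w)}\leq N_q^V$, giving $s\leq N_q^V$.

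It remains to control the size of the merged short jumps, and this is the main bookkeeping obstacle. The point is that, around any fixed decent pair appearing in the final chain, the short jumps immediately before and after it are merges of the following "elementary" pieces: one jump contributed by the application of Lemma~\ref{dec chains hyp} at each recursion level on the path from the root to the node where that decent pair is produced. Since the recursion depth is $\leq V$ and each elementary jump satisfies $d_v\leq N_q$ (plus a $+1$ for each merge), a crude count shows that each merged short jump has $d_v\leq 2N_qV$, as required; making the recursion hypothesis carry the bound $B(\delta)$ on short jumps with $B(1)=N_q$ and verifying $B(\delta)\leq 2N_qV$ for all $\delta\leq V$ is the technical heart of the argument. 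The other delicate point, as noted above, is checking that restricting a decent geodesic to the portion between its extremal $v$-crossings yields a decent geodesic; everything else is routine.
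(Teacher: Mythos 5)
Your overall strategy --- induction on $\delta(\mf{u},\mf{w})\leq V$, iterating Lemma~\ref{dec chains hyp} and merging the short jumps at the junctions --- is exactly the route the paper takes (its entire proof is the sentence ``this follows from Lemma~\ref{dec chains hyp} by induction on $\delta$''), so the skeleton is right. A minor but real problem first: your handling of the case where $(\mf{u},\mf{w})$ is already decent is both unnecessary and unjustified. Declaring $(\mf{v}_1,\mf{v}_2)$ decent requires a decent geodesic $\alpha'$ with $\mscr{W}(\alpha')=\mc{W}(\mf{v}_1,\mf{v}_2)$ \emph{exactly}; a sub-geodesic of a decent geodesic for $(\mf{u},\mf{w})$ need not satisfy either requirement (it can cross hyperplanes outside $\mc{W}(\mf{v}_1,\mf{v}_2)$, and the witnessing points $x_v$ and $g_vx_v$ of Definition~\ref{decent defn} can fall outside the sub-geodesic). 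Simply keep $(\mf{u},\mf{w})$ itself as the decent pair, with degenerate flanking jumps of $d_v=0$; the application in Corollary~\ref{decent dec cor} tolerates arcs $\mu_i$ of length zero.

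The substantive gap is the bookkeeping for $d_v(\mf{v}_{2j},\mf{v}_{2j+1})\leq 2N_qV$. Your count assumes the merged jump flanking a decent pair is built from one elementary jump per recursion level along the root-to-leaf path, i.e.\ at most $2V$ pieces of size $\leq N_q$. That picture omits two things. First, even at a single junction the merged jump concatenates the \emph{last} jump of one inner chain with the \emph{first} jump of the next, so without a separate (smaller) bound on first/last jumps the recursion already forces $B(\delta)\geq 2B(\delta-1)$, which is geometric, not linear. Second, and more seriously, between two consecutive decent pairs of the final chain one must absorb every inner pair of every intermediate level whose \emph{entire} recursive expansion contains no decent pair, together with all jumps inside those expansions. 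Such ``decent-pair-free'' inner pairs can a priori occur with $s\geq 1$ at the top of their own recursion, and their total $d_v$ is then only controlled by a recursion of the shape $R(d)\leq N_q\,R(d-1)+O(N_q^2)$. Your proposed invariant $B(1)=N_q$, $B(\delta)\leq 2N_qV$ therefore does not close without an additional argument --- either that a pair whose chain contains no decent pair must have $s=0$ at the top level (hence $d_v\leq N_q$), which you do not address, or by accepting a bound of the form $(CN_q)^{V}$ instead of $2N_qV$. The latter is harmless for the paper (Corollaries~\ref{decent dec cor} and~\ref{arc-stabilisers are centralisers} and Lemma~\ref{rotating lem} only need \emph{some} constant depending on $q$ and $\G$), but as written your argument does not establish the inequality stated.
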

\begin{proof}
This follows from Lemma~\ref{dec chains hyp} by induction on $1\leq\delta(\mf{u},\mf{w})\leq V$. 
\end{proof}

Corollary~\ref{dec chains hyp 2} immediately implies:

\begin{cor}\label{decent dec cor}
There exists a constant $L$, depending only on $q$ and $\G$, such that the following holds. Every arc $\beta\cu\pi_v(Y)\cu\T_v$ can be decomposed as a sequence of arcs $\mu_0\nu_1\mu_1\dots\nu_s\mu_s$ such that consecutive arcs share exactly one vertex and:
\begin{itemize}
\item the first and last edge of each arc $\nu_i$ correspond to a decent pair of hyperplanes of $Y$;
\item $\ell(\mu_i)\leq L$ and $\ell(\nu_i)>2q$ for every $i$; 
\item $s\leq L$.
\end{itemize} 
\end{cor}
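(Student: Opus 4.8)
The plan is to deduce Corollary~\ref{decent dec cor} from Corollary~\ref{dec chains hyp 2} by iterating it and then coarsening the resulting decomposition. First I would set $L$ to be a constant (to be fixed at the end) that dominates all the bounds appearing below; it will only depend on $q$ and $\G$, since both $N_q$ and $V=\#\G^{(0)}$ do. Given an arc $\beta\cu\pi_v(Y)$, its endpoints are separated by a chain of hyperplanes of $\X_{\G}$ all labelled by $v$, namely the hyperplanes dual to the edges of $\beta$ (equivalently, the hyperplanes of $Y$ labelled $v$ that separate the preimages of the endpoints). Applying Corollary~\ref{dec chains hyp 2} to the first and last of these hyperplanes produces a chain $\mf{u}=\mf{v}_0,\mf{v}_1,\dots,\mf{v}_{2s+1}=\mf{w}$ with $s\leq N_q^V$, where $\mf{v}_{2j-1},\mf{v}_{2j}$ form a decent pair and $d_v(\mf{v}_{2j},\mf{v}_{2j+1})\leq 2N_qV$. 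Passing through $\pi_v$, the hyperplanes $\mf{v}_i$ correspond to vertices $p_0,p_1,\dots,p_{2s+1}$ along $\beta$ (in this order, possibly with repetitions when some $\mf{v}_i$ coincide), cutting $\beta$ into subarcs whose edge-sets are exactly the hyperplanes between consecutive $\mf{v}_i$'s.

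Next I would read off the structure: the ``even-to-odd'' subarcs $[p_{2j},p_{2j+1}]$ have length $d_v(\mf{v}_{2j},\mf{v}_{2j+1})\leq 2N_qV$, so they are short; the ``odd-to-even'' subarcs $[p_{2j-1},p_{2j}]$ have their first and last edges dual to the decent pair $(\mf{v}_{2j-1},\mf{v}_{2j})$. This is almost the desired alternating pattern $\mu_0\nu_1\mu_1\cdots\nu_s\mu_s$ with $\mu_i=[p_{2i},p_{2i+1}]$ and $\nu_j=[p_{2j-1},p_{2j}]$. The remaining issues are purely bookkeeping: (a) some decent subarc $\nu_j$ might be short (length $\leq 2q$) rather than long — in that case I absorb it into the adjacent short arcs $\mu_{j-1},\mu_j$, merging $\mu_{j-1}\nu_j\mu_j$ into a single short arc and re-indexing; this only decreases $s$ and keeps every $\mu_i$ of length at most $(2N_qV)+(2q)+(2N_qV)\leq L$; (b) after this merging, every surviving $\nu_j$ has $\ell(\nu_j)>2q$ and its extreme edges still correspond to a decent pair of hyperplanes of $Y$ (the decent pair is unaffected by which vertices we chose as endpoints, since decency is a property of the pair $(\mf{v}_{2j-1},\mf{v}_{2j})$ of hyperplanes crossed first and last); (c) the new count $s$ is still at most the original $N_q^V\leq L$, and each $\mu_i$ still has length $\leq L$. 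Degenerate cases ($\beta$ itself short, or $s=0$) are handled by taking the trivial decomposition $\beta=\mu_0$.

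The one subtlety I want to check carefully is that ``the first and last edge of $\nu_j$ correspond to a decent pair of hyperplanes of $Y$'' genuinely follows from ``$\mf{v}_{2j-1}$ and $\mf{v}_{2j}$ form a decent pair,'' given that $\nu_j$ lives in $\T_v$ rather than in $Y$. Here I would use that $\pi_v\colon\X_{\G}\ra\T_v$ is a restriction quotient for the label set $\g^{-1}(v)$, so edges of $\nu_j$ are in canonical bijection with a consecutive sub-chain of $v$-labelled hyperplanes of $Y$ separating the endpoints of $\beta$; the first edge of $\nu_j$ is dual to $\mf{v}_{2j-1}$ and the last to $\mf{v}_{2j}$ by construction, and $\nu_j$ being long ($>2q$) is exactly what we arranged in step (a). I expect this matching of indices — keeping straight which $\mf{v}_i$ becomes which vertex $p_i$ of $\beta$ and verifying that consecutive $p_i$ really are distinct after the merging in (a) — to be the only place requiring genuine (if routine) care; everything else is a direct translation of Corollary~\ref{dec chains hyp 2} through the restriction quotient together with an elementary ``absorb the short pieces'' coarsening. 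Finally I would record the explicit value $L:=\max\{N_q^V,\, 4N_qV+2q\}$ (or any larger constant depending only on $q$ and $\G$) and note that $\ell(\nu_i)>2q$ holds by construction while $\ell(\mu_i)\leq L$ and $s\leq L$ hold by the bounds just tallied.
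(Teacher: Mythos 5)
Your argument is correct and is exactly the derivation the paper leaves implicit (it simply asserts that Corollary~\ref{dec chains hyp 2} ``immediately implies'' the statement): translate the chain of hyperplanes into a cut-up of $\beta$ via the restriction quotient $\pi_v$, then absorb any decent piece of length $\leq 2q$ into its neighbouring short pieces. The only slip is quantitative: if several consecutive $\nu_j$ are short, the iterated merging can produce a $\mu_i$ longer than your stated $4N_qV+2q$, but its length is still at most $(N_q^V+1)\cdot 2N_qV+N_q^V\cdot 2q$, which depends only on $q$ and $\G$, so taking $L$ to dominate this (as your closing parenthetical allows) the proof goes through unchanged.
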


Adding Proposition~\ref{double centraliser prop} to the above corollary, we obtain the desired result:

\begin{cor}\label{arc-stabilisers are centralisers}
Let $L$ be the constant in Corollary~\ref{decent dec cor}. Every arc $\beta\cu\pi_v(Y)\cu\T_v$ with $\ell(\beta)>2L$ contains a sub-arc $\beta'\cu\beta$ with $\ell(\beta')\geq\ell(\beta)-2L$ such that:
\begin{enumerate}
\item either $G_{\beta'}$ is a centraliser, i.e.\ $Z_GZ_G(G_{\beta'})=G_{\beta'}$;
\item or $Z_GZ_G(G_{\beta'})=Z_G(g)$ for a label-irreducible element $g\in Z_G(G_{\beta'})$. The element $g$ is loxodromic in $\T_v$ and its axis $\eta\cu\T_v$ satisfies $\ell(\eta\cap\beta')\geq\ell(\beta')-4q$. In addition, $\ell_Y(g)\leq q$, and the subgroup $\langle g\rangle\x G_{\beta'}$ has index $\leq q$ in $Z_G(g)$.
\end{enumerate}
\end{cor}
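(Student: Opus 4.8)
The proof combines Corollary~\ref{decent dec cor} with Proposition~\ref{double centraliser prop}. Apply Corollary~\ref{decent dec cor} to the arc $\beta\cu\pi_v(Y)$, obtaining a decomposition $\beta=\mu_0\nu_1\mu_1\cdots\nu_s\mu_s$ with $\ell(\mu_i)\leq L$, $\ell(\nu_i)>2q$, $s\leq L$, and such that the first and last edges of each $\nu_j$ are dual to a decent pair of hyperplanes of $Y$, say $(\mf{u}_j,\mf{w}_j)$. Set $\beta':=\nu_1\mu_1\cdots\mu_{s-1}\nu_s$, i.e.\ $\beta$ with the two extreme short arcs $\mu_0,\mu_s$ deleted; then $\ell(\beta')\geq\ell(\beta)-\ell(\mu_0)-\ell(\mu_s)\geq\ell(\beta)-2L$. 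Let $\mf{u}=\mf{u}_1$ and $\mf{w}=\mf{w}_s$ be the first and last hyperplanes crossed by $\beta'$ (both labelled by $v$). Since $\T_v$ is a tree and $G$ acts on it without inversions, an element of $G$ stabilising the two edges $\mf{u}$ and $\mf{w}$ fixes both endpoints of $\beta'$, hence fixes $\beta'$ pointwise; therefore $G_{\beta'}=G_{\mf{u}}\cap G_{\mf{w}}$.

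The core step is to show that $(\mf{u},\mf{w})$ is itself a decent pair. I would do this by splicing: choose a decent geodesic $\alpha_j\cu Y$ with $\mscr{W}(\alpha_j)=\mc{W}(\mf{u}_j,\mf{w}_j)$ for each $j$, together with geodesic ``bridges'' realising the short arcs $\mu_j$, and concatenate them into a geodesic $\alpha'\cu Y$ with $\mscr{W}(\alpha')=\mc{W}(\mf{u},\mf{w})$. Any label occurring on some $\alpha_j$ is automatically witnessed for $\alpha'$, because the element and the two points realising decency of $\alpha_j$ already lie on $\alpha_j\cu\alpha'$. \textbf{The delicate point, which I expect to be the main obstacle, is handling the labels contributed by the bridges} (and by hyperplanes forced into $\mc{W}(\mf{u},\mf{w})$ that are transverse to some $\mf{u}_j$ or $\mf{w}_j$): one must show these are already witnessed. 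I expect this to follow from the join-indecomposability of $\Delta(\mf{u},\mf{w})$ (Remark~\ref{perp rmk 2}(2)) together with a sufficiently compatible choice of the $\alpha_j$ and the bridges. Should the splicing prove too rigid, an alternative is to bypass the formal notion of a decent pair and run the argument of Proposition~\ref{double centraliser prop} directly on $G_{\beta'}=\bigcap_j(G_{\mf{u}_j}\cap G_{\mf{w}_j})\cap\bigcap_i G_{\mu_i}$, using the geodesics $\alpha_j$ to supply the commuting elements $g_v$ for every $v\in\Delta(\mf{u},\mf{w})$.

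Granting that $(\mf{u},\mf{w})$ is a decent pair, Proposition~\ref{double centraliser prop} applies verbatim with $G_{\mf{u}}\cap G_{\mf{w}}=G_{\beta'}$. Its first alternative is exactly case~(1) of the statement. In its second alternative we obtain a label-irreducible $g\in Z_G(G_{\beta'})$ with $Z_GZ_G(G_{\beta'})=Z_G(g)$, $\G(g)=\Delta(\mf{u},\mf{w})$, $\ell_Y(g)\leq q$, and $\langle g\rangle\x G_{\beta'}$ of index $\leq q$ in $Z_G(g)$; since $v=\g(\mf{u})\in\Delta(\mf{u},\mf{w})=\G(g)$, the element $g$ is loxodromic in $\T_v$. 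Finally, Proposition~\ref{double centraliser prop} guarantees that $g$ skewers all but at most $2q$ of the hyperplanes in $\mc{W}(\mf{u},\mf{w})$, and the edges of $\beta'$ are, up to at most the two terminal ones, precisely the $v$--labelled hyperplanes of $\mc{W}(\mf{u},\mf{w})$; since hyperplanes skewered by $g$ are dual to edges lying on the axis $\eta\cu\T_v$ and $\eta\cap\beta'$ is a sub-arc of $\beta'$, we conclude $\ell(\eta\cap\beta')\geq\ell(\beta')-4q$. This is case~(2), and the proof is complete.
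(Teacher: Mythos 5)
The core step of your argument --- that the extreme hyperplanes $\mf{u}=\mf{u}_1$ and $\mf{w}=\mf{w}_s$ of $\beta'$ form a decent pair --- is exactly the point that fails in general, and it is the point the paper's machinery is designed to avoid. If sufficiently long arcs automatically produced decent pairs of extreme hyperplanes, then Lemma~\ref{dec chains hyp} and Corollary~\ref{dec chains hyp 2}, which explicitly recurse on \emph{non-decent} pairs by dropping to smaller label sets $\Delta$, would be unnecessary. Concretely, your splicing has no mechanism to produce witnesses $g_v$ for the labels $v$ contributed by the bridges $\mu_i$ and by the hyperplanes of $\mc{W}(\mf{u},\mf{w})$ not lying in any $\mc{W}(\mf{u}_j,\mf{w}_j)$: decency demands a witness for \emph{every} $v\in\Delta(\mf{u},\mf{w})$, and the computation in Proposition~\ref{double centraliser prop} genuinely needs the full intersection $\bigcap_{v\in\Delta}a_v\A_{\lk v}a_v^{-1}=\A_{\Delta^{\perp}}$; with labels missing, the double centraliser can be strictly larger than claimed. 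Your fallback (feeding the witnesses from the $\alpha_j$ directly into the proof of Proposition~\ref{double centraliser prop}) has the same gap, since those witnesses only cover $\bigcup_j\Delta(\mf{u}_j,\mf{w}_j)$, which may be a proper subset of $\Delta(\mf{u},\mf{w})$. Join-indecomposability of $\Delta(\mf{u},\mf{w})$ does not rescue this.

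The paper takes a different route that sidesteps the issue entirely: it applies Proposition~\ref{double centraliser prop} only to the two extreme decent pieces $\nu_1$ and $\nu_s$ \emph{separately}, observes that $G_{\beta'}=G_{\nu_1}\cap G_{\nu_s}$, and then computes $Z_GZ_G(G_{\nu_1}\cap G_{\nu_s})$ by a three-way case analysis (both pieces of centraliser type; one of each; both of type $Z_G(g_i)$), using fullness of semi-parabolic subgroups (Remark~\ref{full rmk}) to split the relevant intersections as $[\langle h_1\rangle\x P_1]\cap[\langle h_s\rangle\x P_s]$ and reduce to $P_1\cap P_s$ unless $\langle h_1\rangle=\langle h_s\rangle$. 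This interaction between the two ends (which is where the single element $g$ of case~(2) emerges, forcing its axis to cross most of $\beta'$) is invisible in your approach. You should restructure the proof along these lines rather than attempting the splicing.
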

\begin{proof}
Decompose $\beta=\mu_0\nu_1\mu_1\dots\nu_s\mu_s$ as in Corollary~\ref{decent dec cor}. Define $\beta'$ as the sub-arc obtained by removing $\mu_0$ and $\mu_s$. It is clear that $\ell(\beta')\geq\ell(\beta)-2L$.

Proposition~\ref{double centraliser prop} shows that, for $i\in\{1,s\}$, one of the following two cases occurs:
\begin{enumerate}
\item either $Z_GZ_G(G_{\nu_i})=G_{\nu_i}$;
\item or $Z_GZ_G(G_{\nu_i})=Z_G(g_i)$ for a label-irreducible element $g_i\in Z_G(G_{\nu_i})$. The element $g_i$ is loxodromic in $\T_v$ with axis $\eta_i$ satisfying $\ell(\eta_i\cap\nu_i)\geq \ell(\nu_i)-2q>0$. In addition, $\ell_Y(g_i)\leq q$ and the subgroup $\langle g_i\rangle\x G_{\nu_i}$ has index $\leq q$ in $Z_G(g_i)$.
\end{enumerate}
Based on this, there are three possibilities for $G_{\beta'}=G_{\nu_1}\cap G_{\nu_s}$. 
\begin{enumerate}
\item[(a)] Both $i=1$ and $i=s$ are of type~(1). Then $G_{\beta'}=Z_G\left(Z_G(G_{\nu_1})\cup Z_G(G_{\nu_s})\right)$ is a centraliser.
\item[(b)] Only one of them is of type~(1). Without loss of generality:
\begin{align*} 
Z_GZ_G(G_{\nu_1})&=Z_G(g_1)=G\cap\left[\langle h_1\rangle\x P_1\right], & Z_GZ_G(G_{\nu_s})&=G_{\nu_s}=G\cap P_s,
\end{align*}
where $P_1,P_s\leq\A_{\G}$ are parabolic, $\langle h_1\rangle$ is the maximal cyclic subgroup of $\A_{\G}$ containing $g_1$, and $G_{\nu_1}=G\cap P_1$. Since $g_1$ is loxodromic in $\T_v$, it does not lie in $G_{\nu_s}$, hence $h_1\not\in P_s$. It follows from Remark~\ref{full rmk} that $\left[\langle h_1\rangle\x P_1\right]\cap P_s=P_1\cap P_s$.

This shows that $Z_G(g_1)\cap Z_GZ_G(G_{\nu_s})=G\cap P_1\cap P_s=G_{\nu_1}\cap G_{\nu_s}$, so $G_{\beta'}$ is again a centraliser.
\item[(c)] Both $i=1$ and $i=s$ are of type~(2). Write again:
\begin{align*} 
Z_GZ_G(G_{\nu_1})=Z_G(g_1)&=G\cap\left[\langle h_1\rangle\x P_1\right], & Z_GZ_G(G_{\nu_s})=Z_G(g_s)&=G\cap\left[\langle h_s\rangle\x P_s\right].
\end{align*}
As in case~(b), we have $Z_G(g_1)\cap Z_G(g_s)=G\cap P_1\cap P_s=G_{\nu_1}\cap G_{\nu_s}$, except when $\langle h_1\rangle=\langle h_s\rangle$. 

In this case, we can assume that $g_1=g_s$ and simply call this element $g$. Note that $P_1=P_s$ and $G\cap P_i=G_{\nu_i}$. In particular, $G_{\beta'}=G_{\nu_1}=G_{\nu_s}$, hence $Z_GZ_G(G_{\beta'})=Z_G(g)$.

Finally, if $\eta\cu\T_v$ is the axis of $g$, we have seen that $\eta$ must intersect both $\nu_1$ and $\nu_s$ and that, in both cases, $\ell(\eta\cap\nu_i)\geq \ell(\nu_i)-2q$. This implies that $\ell(\eta\cap\beta')\geq\ell(\beta')-4q$.
\end{enumerate}
This concludes the proof.
\end{proof}

\subsection{Rotating actions.}

In this subsection, we record a consequence of Corollary~\ref{decent dec cor} that will be needed in the proof of Proposition~\ref{simplicial prop}.

\begin{defn}\label{rotating defn}
Consider a group $H$ and an action on a tree $H\acts T$.
\begin{enumerate}
\item We denote by $\mf{T}(H,T)\cu T$ the subtree $\Fix(H,T)$ if this is nonempty, and the $H$--minimal subtree otherwise.
\item We say that the action $H\acts T$ is \emph{$c$--rotating}, for some $c\geq 0$, if no element of $H\setminus\{1\}$ fixes an arc $\beta\cu T$ that is disjoint from $\mf{T}(H,T)$ and of length $>c$.
\end{enumerate}
\end{defn}

Recall that we are fixing a convex-cocompact subgroup $G\leq\A_{\G}$.

\begin{lem}\label{rotating lem}
There exists a constant $c=c(G)$ such that the following holds. Consider $v\in\G$ and a $G$--parabolic subgroup $P\leq G$ that is not elliptic in $\T_v$. Then, for every free factor $P_0\leq P$, the action $P_0\acts\mf{T}(P,\T_v)$ is $c$--rotating. 
\end{lem}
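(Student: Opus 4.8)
The goal is to produce a single constant $c=c(G)$ that works for all $v\in\G$ and all $G$--parabolic $P\leq G$ that are non-elliptic in $\T_v$. The natural candidate is (a constant times) the constant $L$ from Corollary~\ref{decent dec cor}, together with the convex-cocompactness constant $q$. First I would fix a free factor $P_0\leq P$, write $\mf{T}=\mf{T}(P,\T_v)$, and suppose for contradiction that some $g\in P_0\setminus\{1\}$ fixes an arc $\beta\cu\T_v$ disjoint from $\mf{T}$ with $\ell(\beta)$ very large (larger than the eventual $c$). Since $P$ is non-elliptic in $\T_v$, the subtree $\mf{T}$ is the $P$--minimal subtree, and $\mf{T}\cu\pi_v(Y)$ up to lifting --- more precisely, I would first reduce to the situation where $\beta$ is an honest long arc inside the $G$--minimal subtree $\pi_v(Y)$ of $\T_v$: the arc $\beta$ fixed by $g\in P_0\leq P\leq G$ lies in $\Fix(g,\T_v)$, which is a subtree; I can gate-project $\beta$ onto $\pi_v(Y)$ if necessary, or simply note that an axis/fixed tree of $g\in G$ meets $\pi_v(Y)$ in a subtree on which all the combinatorial structure of $\T_v$ as $\pi_v(\X_\G)$ is available, since $\pi_v(Y)$ carries the restriction-quotient structure coming from $Y$. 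After this reduction I apply Corollary~\ref{arc-stabilisers are centralisers} to $\beta$: it yields a sub-arc $\beta'\cu\beta$ with $\ell(\beta')\geq\ell(\beta)-2L$ whose stabiliser $G_{\beta'}$ is (almost) a centraliser, and in particular $g\in G_{\beta'}\leq Z_GZ_G(G_{\beta'})$.

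The crux is to extract a contradiction with $g\in P_0$ from the structural description of $G_{\beta'}$. In case~(1) of Corollary~\ref{arc-stabilisers are centralisers}, $G_{\beta'}=Z_GZ_G(G_{\beta'})$ is a centraliser in $G$, hence $G$--semi-parabolic (Remark~\ref{AP centralisers rmk}), and in particular it is non-elliptic in $\T_v$ only if $\G(G_{\beta'})\ni v$; but $g$ fixes the long arc $\beta'\cu\T_v$, so $G_{\beta'}$ fixes $\beta'$ pointwise, meaning $v\in\G(G_{\beta'})^\perp$ in a strong sense --- more to the point, $G_{\beta'}$ is elliptic in $\T_v$ with fixed set containing $\beta'$, which is disjoint from $\mf{T}$. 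Now I use that $P_0$ is a \emph{free factor} of $P$, and $g\in P_0$. I would invoke the semi-parabolic structure of $P$ (Lemma~\ref{G--AP structure}, Remark~\ref{filling subgroup rmk}) together with the behaviour of free factors under the projections $Z_G(\cdot)$: the point is that $Z_G(g)$ for $g\in P_0\setminus\{1\}$, intersected with the data defining $P$, must still see the whole "parabolic direction" $v$ of $P$, because $P$ acts non-elliptically on $\T_v$ and this non-ellipticity is inherited by (a finite-index subgroup of) $P_0$ or detected inside $Z_G(g)$. Concretely, $\mf{T}=\mf{T}(P,\T_v)$ is the $P$--minimal subtree; since $g\in P_0\leq P$ fixes $\beta'$ which is disjoint from $\mf{T}$, $g$ is elliptic in $\T_v$, and I would then argue that all of $Z_{G_{\beta'}}$-related conjugates... — rather, the clean route is: $G_{\beta'}$ fixes $\beta'$, and $\beta'$ being disjoint from $\mf{T}$ forces $G_{\beta'}$ to fix a point of $\mf{T}$ outside which it fixes a long segment, but then $G_{\beta'}$ together with $\langle g\rangle$ and the non-elliptic elements of $P$ generate a subgroup violating the product structure of the semi-parabolic $P$ (an element fixing a long arc off the minimal tree cannot commute with enough of $P$). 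This is where Corollary~\ref{decent dec cor} re-enters: if $\ell(\beta')>2q$, then via Proposition~\ref{double centraliser prop} any element fixing $\beta'$ lies in a parabolic $\A_{\Delta^\perp}$ with $v\in\Delta$, so it cannot be loxodromic in $\T_v$, yet it also cannot be a free-factor element of a $P$ that is loxodromic in $\T_v$ unless it is trivial.

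In case~(2) of Corollary~\ref{arc-stabilisers are centralisers}, $Z_GZ_G(G_{\beta'})=Z_G(\tilde g)$ for a label-irreducible $\tilde g$ loxodromic in $\T_v$, with axis $\eta$ satisfying $\ell(\eta\cap\beta')\geq\ell(\beta')-4q$, $\ell_Y(\tilde g)\leq q$, and $\langle\tilde g\rangle\x G_{\beta'}$ of index $\leq q$ in $Z_G(\tilde g)$. Since $g\in G_{\beta'}$ fixes $\beta'$ and $\beta'$ is disjoint from $\mf{T}=\mf{T}(P,\T_v)$, but $\tilde g\in Z_G(g)$ is loxodromic in $\T_v$ with axis running alongside $\beta'$, I get that $\tilde g$ (or a power in $P$'s direction) interacts with $P$: either $\tilde g\in P$, in which case $P$ acts on $\T_v$ with $\mf{T}$ its minimal subtree and $\tilde g$ must translate along $\mf{T}$, contradicting that its axis $\eta$ runs alongside $\beta'\cap\mf{T}=\emptyset$ for length $>4q\geq \ell_Y(\tilde g)\cdot(\text{const})$ (apply Lemma~\ref{long min intersection} in the form already used, or the observation that an axis meeting $\mf{T}$ in too short an arc cannot belong to $P$); or $\tilde g\notin P$, in which case $g\in P_0$ has a non-trivial centraliser element outside $P$, and since $P_0$ is a free factor of the semi-parabolic $P=\langle a_1,\dots,a_k\rangle\x Q$, Lemma~\ref{normalisers of subgroups of AP}(2) forces $P$ to split off an abelian direction along $\tilde g$, contradicting the minimality/triviality-of-centre clauses in Lemma~\ref{G--AP structure} for the semi-parabolic envelope of $P$. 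Either way we contradict $\ell(\beta')$ large. Tracking constants, it suffices to take $c:=2L+4q$ (any arc disjoint from $\mf{T}$ of length $>c$ produces $\beta'$ of length $>4q$ and the arguments apply), so $c=c(G)$ depends only on $G$. \textbf{The main obstacle} I anticipate is the case~(1) bookkeeping: cleanly ruling out that a free-factor element $g\in P_0$ fixes a long arc off $\mf{T}$ requires combining the semi-parabolic normal-form of $P$ (Lemma~\ref{G--AP structure}) with the centraliser description of $G_{\beta'}$ and the fact that $v\in\G(P)$, and making sure the "free factor" hypothesis is actually used rather than just "$P_0\leq P$" --- I expect the honest argument is that a non-trivial $g\in P_0$ with $v\notin\G(g)$ forces, via the product decomposition of $P$, that $P_0$ lands in a proper sub-semi-parabolic, which after unwinding contradicts $P_0$ being a \emph{free factor} of $P$ together with $P$ being loxodromic in $\T_v$.
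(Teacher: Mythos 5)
There is a genuine gap, and it begins with the statement you are actually trying to prove. For the action $P_0\acts\mf{T}(P,\T_v)$ to be $c$--rotating, Definition~\ref{rotating defn} requires that no nontrivial element of $P_0$ fix a long arc $\beta$ lying \emph{inside} the ambient tree $\mf{T}(P,\T_v)$ and disjoint from $\mf{T}(P_0,\mf{T}(P,\T_v))$. You instead set up a contradiction with an arc ``disjoint from $\mf{T}=\mf{T}(P,\T_v)$'', i.e.\ lying outside the tree being acted on, and this wrong disjointness condition propagates through both of your cases (for instance ``$\beta'\cap\mf{T}=\emptyset$'' in your case~(2)). The contradiction you aim for is therefore not the one the lemma needs. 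You also skip the reduction that makes the constant uniform: by Corollary~\ref{fin many conj parabolics} there are only finitely many $G$--conjugacy classes of $G$--parabolic subgroups, all convex-cocompact, so it suffices to treat $P=G$; without this, applying the decent-pair machinery (which is set up for the $G$--cocompact subcomplex $Y$) to an arbitrary $P$ is not justified and no single $c(G)$ emerges.

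The more serious omission is that the free-factor hypothesis is never used in the way that makes the lemma true. The relevant fact is elementary: in a free product, the centraliser of any nontrivial element of a free factor is contained in that free factor. The paper's argument takes an arc $\beta\cu\mf{T}(G,\T_v)$ of length $>L$, applies Corollary~\ref{decent dec cor} to find a decent pair of hyperplanes among those crossed by $\beta$, and extracts from Definition~\ref{decent defn} an element $g_0\in G$ with $v\in\G(g_0)$ whose axis in $\T_v$ shares an edge with $\beta$ and which, by Lemma~\ref{commutation criterion}, commutes with the whole stabiliser $G_\beta$. If some $g\in P_0\setminus\{1\}$ fixes $\beta$, then $g_0\in Z_G(g)\leq P_0$, so the axis of $g_0$ lies in $\mf{T}(P_0,\T_v)$ and $\beta$ meets it; that is the entire proof, with $c=L$. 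Your attempted uses of the hypothesis do not work: the claim that an element fixing a long arc ``cannot be a free-factor element of a $P$ that is loxodromic in $\T_v$ unless it is trivial'' is false (a free factor of a group acting loxodromically on $\T_v$ can consist entirely of elliptic elements), and the suggestion that $P_0$ would land in a proper sub-semi-parabolic does not engage with freeness at all. The detour through Corollary~\ref{arc-stabilisers are centralisers} and the semi-parabolic normal form of $P$ is unnecessary; the raw decent-pair statement already produces the commuting loxodromic that the argument hinges on.
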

\begin{proof}
By Corollary~\ref{fin many conj parabolics}, there are only finitely many $G$--conjugacy classes of $G$--parabolic subgroups and they are all convex-cocompact in $\A_{\G}$. Thus, it suffices to prove the lemma with $P=G$. 

Let $L$ be the constant provided by Corollary~\ref{decent dec cor}. 
If $\beta\cu\mf{T}(G,\T_v)$ is an arc with $\ell(\beta)>L$, then it contains edges $e,e'$ corresponding to a decent pair of hyperplanes $\mf{w},\mf{w}'\in\mscr{W}(Y)$. This means that there exist an element $g_0\in G$ such that $v\in\G(g_0)$, and a point $x\in Y$ such that $\mscr{W}(x|g_0x)\cu\mc{W}(\mf{w},\mf{w}')$. By Lemma~\ref{commutation criterion}, this implies that $g_0$ commutes with the stabiliser $G_{\beta}\leq G$. Also note that $g_0$ is loxodromic in $\T_v$ and its axis shares at least one edge with $\beta$.

Now, consider a free factor $G_0\leq G$. If an element of $G_0\setminus\{1\}$ fixes $\beta$, then $g_0$ commutes with it and so we must have $g_0\in G_0$. Thus, the axis of $g_0$ is contained in $\mf{T}(G_0,\T_v)$ and $\beta$ must share a nontrivial arc with $\mf{T}(G_0,\T_v)$. This shows that the action $G_0\acts\mf{T}(G,\T_v)$ is $L$--rotating.
\end{proof}

\begin{cor}\label{rotating cor}
Let $c$ be as in Lemma~\ref{rotating lem}. Let $H\leq G$ be a convex-cocompact subgroup.
Consider $v\in\G$ such that $H$ is elliptic in $\T_v$, but $N_G(H)$ is not. Then the action $N_G(H)\acts\mf{T}(N_G(H),\T_v)$ factors through an action of $N_G(H)/H$ and, for every free factor $N_0\leq N_G(H)/H$, the action $N_0\acts\mf{T}(N_G(H),\T_v)$ is $c$--rotating.
\end{cor}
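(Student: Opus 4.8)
The plan is to deduce Corollary~\ref{rotating cor} from Lemma~\ref{rotating lem} by identifying a suitable $G$--parabolic subgroup to which the lemma applies. First I would recall from Corollary~\ref{cc normalisers} (which applies provided $H$ acts non-transversely on $\X_{\G}$ — and since $H\leq\A_{\G}$ is special, the action of $\A_{\G}$ on $\X_{\G}$ is special, hence every subgroup acts non-transversely) that $N_G(H)$ has a finite-index subgroup of the form $H\cdot K$ with $H$ and $K$ commuting and $H\cap K$ finite; since $G$ is torsion-free this is a genuine direct product $H\x K$. More precisely, by Lemma~\ref{special cc}(1), $N_G(H)$ has a finite-index subgroup $H\x K$ with $K\leq G$ a $G$--parabolic subgroup. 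The point is that $K$ is exactly the piece of $N_G(H)$ that can act non-trivially on $\T_v$: since $H$ is elliptic in $\T_v$ and $\mf{T}(N_G(H),\T_v)$ is $N_G(H)$--invariant and $H$ is normal, $H$ fixes $\mf{T}(N_G(H),\T_v)$ pointwise (as the fixed set of a normal elliptic subgroup of a group acting on a tree is invariant, and here minimality forces it to be all of $\mf{T}(N_G(H),\T_v)$). Hence the action $N_G(H)\acts\mf{T}(N_G(H),\T_v)$ factors through $N_G(H)/H$, proving the first assertion.

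Next I would transfer the rotating property from $K$ to $N_G(H)/H$. Observe that $\mf{T}(N_G(H),\T_v)=\mf{T}(H\x K,\T_v)=\mf{T}(K,\T_v)$, because $H$ is elliptic, commutes with $K$, and fixes this subtree pointwise, so the $H\x K$--minimal subtree coincides with the $K$--minimal subtree (and $N_G(H)$ is not elliptic in $\T_v$ exactly when $K$ is not, since $H\x K$ has finite index in $N_G(H)$ and ellipticity is preserved under finite-index). The projection $H\x K\to K$ induces an isomorphism $K\simeq (H\x K)/H$, and the latter has finite index in $N_G(H)/H$. Then, given a free factor $N_0\leq N_G(H)/H$, I would intersect with the finite-index subgroup $K$ (more precisely with the image of $H\x K$); a finite-index subgroup of a free factor of a finitely generated group is itself contained in a free factor of any finite-index subgroup containing it? — this is the delicate point, see below. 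Granting that we can locate $K\cap N_0$ inside a free factor $P_0$ of $K$ (which is $G$--parabolic), Lemma~\ref{rotating lem} gives that $P_0\acts\mf{T}(K,\T_v)=\mf{T}(N_G(H),\T_v)$ is $c$--rotating, and since $K\cap N_0\leq P_0$, a fortiori $K\cap N_0$ is $c$--rotating on this subtree. Finally, $c$--rotating passes from a finite-index subgroup to the whole group for \emph{this particular} notion: if $g\in N_0\setminus\{1\}$ fixes an arc $\beta$ of length $>c$ disjoint from $\mf{T}$, then some nonzero power $g^m\in K\cap N_0$ fixes the same arc, forcing $\beta$ to meet $\mf{T}(K\cap N_0\text{-factor},\ldots)$; but actually the cleanest route is to argue directly at the level of $K$, as in the lemma, that any element of $N_G(H)/H$ fixing a long arc disjoint from $\mf{T}$ must have a power equal to a loxodromic $g_0$, hence lies in $N_0$ only if its axis is in $\mf{T}(N_0,\cdot)$.

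The main obstacle I anticipate is the free-factor bookkeeping: the statement quantifies over free factors $N_0$ of the quotient $N_G(H)/H$, whereas Lemma~\ref{rotating lem} is phrased for free factors of a $G$--parabolic subgroup, and $K$ only has finite index in $N_G(H)/H$. I would handle this by re-examining the proof of Lemma~\ref{rotating lem} rather than invoking it as a black box: the proof only uses that if an element of the free factor fixes a long arc $\beta$ disjoint from $\mf{T}$, then the loxodromic $g_0\in G$ constructed from a decent pair of hyperplanes commutes with the arc-stabiliser and therefore \emph{lies in the free factor}. This argument goes through verbatim with $G$ replaced by $N_G(H)/H$ acting on $\mf{T}(N_G(H),\T_v)$, once one knows: (i) the constant $c=c(G)$ still works, which follows because $\mf{T}(N_G(H),\T_v)\subseteq\pi_v(Y)$ lies in the same tree $\T_v$ and the decomposition of Corollary~\ref{decent dec cor} is intrinsic to $\T_v$; and (ii) the element $g_0$ produced from a decent pair, which a priori lies in $G$, in fact lies in $N_G(H)$ — this holds because $g_0$ commutes with $G_\beta$, and $G_\beta\supseteq H$ (as $H$ fixes all of $\mf{T}(N_G(H),\T_v)\supseteq\beta$), so by Lemma~\ref{commutation criterion}-type reasoning $g_0\in Z_G(H)\leq N_G(H)$; its image in $N_G(H)/H$ is the loxodromic we need. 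Then the free-factor condition and the rotating conclusion drop out exactly as in Lemma~\ref{rotating lem}. So the real content is checking that the proof of the lemma localizes to the subtree $\mf{T}(N_G(H),\T_v)$ and that the auxiliary loxodromic can be taken in $N_G(H)$; everything else is formal.
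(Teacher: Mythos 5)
Your first assertion (that the action factors through $N_G(H)/H$) is proved exactly as in the paper: $\Fix(H,\T_v)$ is nonempty and $N_G(H)$--invariant, hence contains the minimal subtree. For the rotating statement, you correctly isolate the one genuine subtlety — that Lemma~\ref{rotating lem} speaks of free factors of the $G$--parabolic $\overline P\simeq K$, whereas the corollary quantifies over free factors of the overgroup $N_G(H)/H$ in which $\overline P$ only has finite index. The paper resolves this more cheaply than you anticipate: by the Kurosh subgroup theorem, $N_0\cap\overline P$ \emph{is} a free factor of $\overline P$ (no "finite-index subgroup of a free factor" gymnastics needed), so Lemma~\ref{rotating lem} applies to it directly; one then observes $\mf{T}(N_0,\T_v)=\mf{T}(N_0\cap\overline P,\T_v)$ by finite index, deduces that the $(N_0\cap\overline P)$--stabiliser of a long arc disjoint from $\mf{T}(N_0,\T_v)$ is trivial, hence the $N_0$--stabiliser is finite and therefore trivial because edge-stabilisers of $\T_v$ are closed under taking roots. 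Your fallback route (b) — reopening the proof of Lemma~\ref{rotating lem} and running it for $N_G(H)/H$ acting on $\mf{T}(N_G(H),\T_v)$ — is a sound alternative: the loxodromic $g_0$ produced by a decent pair commutes with $G_\beta\supseteq H$, hence lies in $Z_G(H)\leq N_G(H)$, its image in the quotient is nontrivial, and centralisers of nontrivial elements of a free factor lie in that free factor, so the argument closes exactly as in the lemma. This buys you independence from the Kurosh step and from the final finite-index transfer, at the cost of not using the lemma as a black box. Either route is acceptable; just be aware that your primary plan (a) as written still has the free-factor step marked "granting that", and it should be discharged by Kurosh rather than left open.
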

\begin{proof}
Since $H$ is elliptic, $\Fix(H,\T_v)$ is nonempty and $N_G(H)$--invariant, hence it must contain the $N_G(H)$--minimal subtree. Thus, the action $N_G(H)\acts\mf{T}(N_G(H),\T_v)$ factors through $N_G(H)/H$. 

By Lemma~\ref{special cc}(1), $N_G(H)$ has a finite-index subgroup of the form $H\x P$, where $P$ is $G$--parabolic. Thus, $P$ projects injectively to a finite-index subgroup $\overline P\leq N_G(H)/H$. Note that:
\[\mf{T}(N_G(H),\T_v)=\mf{T}(H\x P,\T_v)=\mf{T}(P,\T_v).\] 

Let $N_0\leq N_G(H)/H$ be a free factor. Then $N_0\cap\overline P$ is a free factor of $\overline P$ and Lemma~\ref{rotating lem} shows that the action $N_0\cap\overline P\acts \mf{T}(N_G(H),\T_v)$ is $c$--rotating. 

Since $N_0\cap\overline P$ has finite index in $N_0$, we have $\mf{T}(N_0,\T_v)=\mf{T}(N_0\cap\overline P,\T_v)$. If $N_0$ is not elliptic, this is clear. If $N_0$ is elliptic, this is because edge-stabilisers of $\T_v$ are closed under taking roots.

Thus, if $\beta\cu\mf{T}(N_G(H),\T_v)$ is an arc of length $>c$ disjoint from $\mf{T}(N_0,\T_v)$, its $(N_0\cap\overline P)$--stabiliser is trivial, hence its $N_0$--stabiliser is finite. Again, since edge-stabilisers of $\T_v$ are closed under taking roots, this implies that the $N_0$--stabiliser of $\beta$ is trivial, showing that the action $N_0\acts\mf{T}(N_G(H),\T_v)$ is $c$--rotating.
\end{proof}

\section{Passing to the limit.}\label{limit sect}

This section is devoted to studying the limit $\R$--tree for a sequence $G\acts\T_v^{\phi_n}$, where $G\leq\A_{\G}$ is a convex-cocompact subgroup, $v\in\G$, and $\phi_n\in\out(G)$. This is carried out in Subsection~\ref{limit subsec}; in particular, see Propositions~\ref{arc-stab prop},~\ref{line and tripod prop} and~\ref{important addenda new}. 

Before that, in Subsections~\ref{almost stab subsec} and~\ref{tame sect}, we consider a more general setting: $G$ is an arbitrary group and we study limits of ``tame'' actions on simplicial trees (Definition~\ref{tame defn}).

\subsection{Almost-stabilisers.}\label{almost stab subsec}

Let $G$ be a group with an action $G\acts T$ on an $\R$--tree.

\begin{defn}
Consider an arc $\beta\cu T$ with endpoints $p,q$.
\begin{itemize}
\item For $0\leq s<\ell(\beta)/2$, we define: 
\[D(\beta,s)=\{g\in G\mid \max\{d(p,gp),d(q,gq)\}\leq s\}.\] 
We also consider the subgroup $\mf{D}(\beta,s):=\langle D(\beta,s)\rangle\leq G$. We write $D_G(\beta,s)$ and $\mf{D}_G(\beta,s)$ when it is necessary to specify the group under consideration.
\item For $0\leq t\leq\ell(\beta)$, we denote by $\beta[t]\cu\beta$ the middle sub-arc of length $t$. We also set $\beta^t:=\beta[\ell(\beta)-t]$; this is the closed sub-arc obtained by removing the initial and terminal segments of length $t/2$.
\end{itemize} 
\end{defn}

If $\beta\cu T$ is an arc, recall that $G_{\beta}\leq G$ denotes its stabiliser.

\begin{lem}\label{general almost-stabilisers}
Given an arc $\beta\cu T$ and $0\leq s<\ell(\beta)/2$:
\begin{enumerate}
\item for every $g\in D(\beta,s)$, we have $\beta^s\cu\Min(g,T)$;
\item either $D(\beta,s)$ contains a loxodromic, or $D(\beta,s)=G_{\beta^s}$.
\end{enumerate}
\end{lem}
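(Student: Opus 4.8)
\textbf{Proof plan for Lemma~\ref{general almost-stabilisers}.}

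The plan is to analyse how an element $g\in D(\beta,s)$ moves the two endpoints $p,q$ of $\beta$ and to use the basic $\R$--tree fact that, for an isometry $g$, the function $x\mapsto d(x,gx)$ is convex along geodesics and its minimum is realised exactly on $\Min(g,T)$ (with $\Min(g,T)$ being the axis when $g$ is loxodromic and the fixed-point set when $g$ is elliptic). First I would prove part~(1). Fix $g\in D(\beta,s)$, so $d(p,gp)\le s$ and $d(q,gq)\le s$. Let $x$ be any point of $\beta^s$, i.e.\ $d(p,x)\ge s/2$ and $d(q,x)\ge s/2$. I want to show $d(x,gx)$ attains the value $\ell_T(g)$ there, equivalently that $x$ lies between the projections of $p$ and $q$ to $\Min(g,T)$ — or more elementarily, that the estimate $d(x,gx)\le\ell_T(g)$ forces $x\in\Min(g,T)$. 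The cleanest route: recall $\ell_T(g)=\max\{0,\ 2\,d(x,\Min(g,T))\ \text{correction}\}$... rather, use the identity $d(x,gx)=\ell_T(g)+2\,d(x,\Min(g,T))$ valid for all $x\in T$ in an $\R$--tree (this holds both in the loxodromic and elliptic cases). Then $x\in\Min(g,T)$ iff $d(x,gx)=\ell_T(g)$. Now from $d(p,gp)\le s$ and the same identity, $d(p,\Min(g))\le s/2$ and $\ell_T(g)\le s$; similarly $d(q,\Min(g))\le s/2$. Let $p',q'$ be the gate-projections of $p,q$ to $\Min(g,T)$ (a convex subset). Then $d(p,p')\le s/2$ and $d(q,q')\le s/2$, so $p'$ lies on $\beta$ within $s/2$ of $p$ and $q'$ lies on $\beta$ within $s/2$ of $q$; since the segment $[p',q']$ is contained in $\Min(g,T)$ (convexity) and contains $\beta^s$ (because removing the initial and terminal sub-arcs of length $s/2$ from $\beta$ leaves an arc inside $[p',q']$), we get $\beta^s\subseteq\Min(g,T)$, which is part~(1).

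For part~(2), suppose $D(\beta,s)$ contains no loxodromic element; I must show $D(\beta,s)=G_{\beta^s}$. The inclusion $G_{\beta^s}\subseteq D(\beta,s)$ is immediate: if $g$ fixes $\beta^s$ pointwise then, since $p$ (resp.\ $q$) is within $s/2<\ell(\beta)/2$ of an endpoint of $\beta^s$ and $g$ is $1$--Lipschitz fixing that endpoint, $d(p,gp)\le s$ and $d(q,gq)\le s$. Conversely, take $g\in D(\beta,s)$. By hypothesis $g$ is elliptic, so $\ell_T(g)=0$ and $\Min(g,T)=\Fix(g,T)$; by part~(1), $\beta^s\subseteq\Min(g,T)=\Fix(g,T)$, so $g\in G_{\beta^s}$. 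Thus $D(\beta,s)\subseteq G_{\beta^s}$, giving equality. (One should note $\beta^s$ is a genuine nondegenerate arc precisely because $s<\ell(\beta)/2$, so "fixing $\beta^s$ pointwise" is the honest condition defining $G_{\beta^s}$.)

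I do not expect a serious obstacle here: the only thing to be careful about is the sign/normalisation in the displacement identity $d(x,gx)=\ell_T(g)+2\,d(x,\Min(g,T))$ and making sure it is invoked correctly in both the loxodromic and the elliptic case (in the elliptic case it reads $d(x,gx)=2\,d(x,\Fix(g,T))$, which is the standard fact for group actions on $\R$--trees / $\Lambda$--trees). A secondary point to handle cleanly is bookkeeping of the sub-arc lengths: $\beta^s$ is obtained by trimming $s/2$ from each end, and I have used throughout that $s/2$ is exactly the radius controlled by the hypotheses $d(p,gp),d(q,gq)\le s$ via the projection estimate $d(p,\Min(g))\le s/2$. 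Everything else is a routine application of convexity of the displacement function in trees.
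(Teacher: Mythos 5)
Your argument is essentially the paper's: both proofs reduce part~(1) to the convexity of $\Min(g,T)$ together with the displacement bound $d(p,\Min(g))\leq d(p,gp)/2$ (the paper extracts this from the fact that the midpoint of $[p,gp]$ lies in $\Min(g)$, you from the identity $d(x,gx)=\ell_T(g)+2d(x,\Min(g))$, which are the same standard fact), and part~(2) is then immediate in both. One step of yours is stated incorrectly, though the conclusion survives: the gate-projections $p',q'$ of $p,q$ to $\Min(g,T)$ need \emph{not} lie on $\beta$, so the sentence ``$p'$ lies on $\beta$ within $s/2$ of $p$'' is false in general. What is true, and what you actually need, is that the medians $m_1=m(p,q,p')$ and $m_2=m(p,q,q')$ lie on $\beta$ with $d(p,m_1)\leq d(p,p')\leq s/2$ and $d(q,m_2)\leq s/2$; since $\ell(\beta)>2s$ these appear in the order $p,m_1,m_2,q$, the geodesic $[p',q']$ contains $[m_1,m_2]\supseteq\beta^s$, and convexity of $\Min(g,T)$ finishes part~(1). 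With that one-line repair the proof is complete; the rest (both inclusions in part~(2), and the observation that $s<\ell(\beta)/2$ makes $\beta^s$ a genuine arc) is correct and slightly more detailed than the paper, which dismisses $G_{\beta^s}\subseteq D(\beta,s)$ as clear.
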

\begin{proof}
We first prove part~(1). Let $x,y$ be the endpoints of $\beta$. For every $g\in G$, the midpoints of the arcs $[x,gx]$ and $[y,gy]$ lie in $\Min(g)$. If $s<\ell(\beta)/2$ and $g\in D(\beta,s)$, these two arcs are separated by the midpoint of $\beta$, and so are their midpoints. Since $\Min(g)$ is convex, it must contain the midpoint of $\beta$. Hence $\Min(g)\cap\beta$ is a sub-arc of $\beta$. Observing that $d(x,gx)\geq 2d(x,\Min(g))$, we deduce that $x$ and $y$ are at distance $\leq s/2$ from $\Min(g)$. Hence $\beta^s\cu\Min(g)$.

Regarding part~(2), suppose that every element of $D(\beta,s)$ is elliptic. Then part~(1) shows that $D(\beta,s)\leq G_{\beta^s}$. The reverse inclusion is clear.
\end{proof}

\begin{rmk}\label{convex metric rmk}
Consider points $x,y\in T$ and $g\in G$. Since the metric of $T$ is convex, we have $d(z,gz)\leq\max\{d(x,gx),d(y,gy)\}$ for every $z\in[x,y]$. 

In particular, given $\delta>0$, an arc $\beta\cu T$ and $0<t_1\leq t_2\leq\ell(\beta)$, we have $D(\beta[t_2],\delta)\cu D(\beta[t_1],\delta)$ and $\mf{D}(\beta[t_2],\delta)\leq\mf{D}(\beta[t_1],\delta)$.
\end{rmk}

\subsection{Tame actions.}\label{tame sect}

This subsection introduces the notion of \emph{tame} action on a tree. For sequences of tame actions, Proposition~\ref{tame sequences prop} allows us to understand arc-stabilisers of the limit in terms of those of the converging actions. In the next subsection, Corollary~\ref{special gives tame} shows that tameness is satisfied by convex-cocompact subgroups of right-angled Artin groups acting on the simplicial trees $\T_v$.

Let $G$ be any group.

\begin{defn}\label{tame defn}
An action on an $\R$--tree $G\acts T$ is \emph{$(\eps,N)$--tame}, for some $0<\eps<1/2$ and $N\geq 1$, if the following conditions are satisfied. Let $\mc{S}$ be the collection of subgroups of $G$ of the form $\mf{D}(\beta,\delta)$, where $\beta\cu T$ varies among nontrivial arcs and $\delta$ varies in the closed interval $[0,\eps\cdot\ell(\beta)]$.
\begin{enumerate} 
\item Every chain in $\mc{S}$ has length $\leq N$.
\item For every $0\leq\delta\leq\eps\cdot\ell(\beta)$ and every nontrivial arc $\beta\cu T$:
	\begin{itemize}
	\item either $\mf{D}(\beta,\delta)=D(\beta,\delta)=G_{\beta^{\delta}}$,
	\item or $D(\beta,\delta)$ contains a loxodromic whose axis is $\mf{D}(\beta,\delta)$--invariant.
	\end{itemize}
	We refer to the two cases as $\mf{D}(\beta,\delta)$ being \emph{elliptic} and \emph{non-elliptic}, respectively.
\item If $H_1,H_2\in\mc{S}$ and $H_1\lneq H_2$, then $H_1$ is elliptic. 
\end{enumerate}
\end{defn}

\begin{defn}
Let $T$ be an $\R$--tree. We identify arcs in $T$ with points in $T\x T$ by taking endpoints.
\begin{enumerate}
\item A collection of arcs $\mc{P}\cu T\x T$ is \emph{$\delta$--dense} for some $\delta>0$ if, for every $x,y\in T$ satisfying $d(x,y)>2\delta$, there exists $(x',y')\in\mc{P}$ with $[x',y']\cu[x,y]$ and $\max\{d(x,x'),d(y,y')\}\leq\delta$.
\item Let $T_n$ be a sequence of $\R$--trees. A sequence of collections of arcs $\mc{P}_n\cu T_n\x T_n$ is \emph{eventually dense} if there exist $\delta_n>0$ such that each $\mc{P}_n$ is $\delta_n$--dense and $\delta_n\ra 0$.
\end{enumerate}
\end{defn}

Fix a non-principal ultrafilter $\om$ on $\N$ and recall the terminology from Subsection~\ref{ultrafilter sect}.

\begin{prop}\label{tame sequences prop}
Let $G\acts T_n$ be a sequence of $(\eps,N)$--tame actions $\om$--converging to $G\acts T_{\om}$.
\begin{enumerate}
\item Let $\beta\cu T_{\om}$ be an arc. Then we can choose a sequence of arcs $\beta_n\cu T_n$ converging to $\beta$ so that the following dichotomy holds.
	\begin{enumerate}
	\smallskip
	\item[(a)] Either there exists $0<\delta<\eps\cdot\ell(\beta)$ such that $\mf{D}(\beta_n,\delta)=G_{\beta_n}$ for $\om$--all $n$, and we have $G_{\beta}=\bigcap_{\om}G_{\beta_n}$.
	\item[(b)] Or, for every $0<\delta<\eps\cdot\ell(\beta)$, the subgroup $\mf{D}(\beta_n,\delta)$ is non-elliptic for $\om$--all $n$. 
	
	In this case, $G_{\beta}$ leaves invariant a line $\alpha\cu T_{\om}$ containing $\beta$. The $G$--stabiliser of $\alpha$ equals $\bigcap_{\om}\mf{D}(\beta_n,\delta)$ (independently of the choice of $\delta$), 
	and $G_{\beta}$ is the kernel of the (possibly trivial) homomorphism $\bigcap_{\om}\mf{D}(\beta_n,\delta)\ra\R$ given by translation lengths along $\alpha$.
	\end{enumerate}
\smallskip
\item Let $\g\cu T_{\om}$ be a line. Then we can choose a sequence of arcs $\beta_n\cu T_n$ converging to $\g$ so that, for every sufficiently large $\delta>0$, the $G$--stabiliser of $\g$ equals $\bigcap_{\om}\mf{D}(\beta_n,\delta)$.
\end{enumerate}
Moreover, if $\mc{P}_n\cu T_n\x T_n$ is an eventually dense sequence of collections of arcs, then the approximations $\beta_n$ in~(1) and~(2) can be chosen within $\mc{P}_n$.
\end{prop}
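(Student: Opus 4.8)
The plan is to pass to the $\om$-limit carefully, choosing the approximating arcs $\beta_n$ so that their almost-stabilisers stabilise along the sequence, and then read off $G_\beta$ (or the stabiliser of a line) from these.

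\medskip
\textbf{Setup and choice of approximating arcs.}
First I would fix an arc $\beta\cu T_\om$ with endpoints $p=(p_n)$, $q=(q_n)$, so $\ell(\beta)=\lim_\om d(p_n,q_n)$; write $\ell:=\ell(\beta)$. For each rational $\delta\in(0,\eps\ell)$, consider the sequence of subgroups $\mf{D}(\beta_n,\delta)\leq G$ (where $\beta_n$ is the arc $[p_n,q_n]$, which converges to $\beta$). By Remark~\ref{convex metric rmk}, shrinking $\beta$ only enlarges these subgroups, and by tameness condition~(1) every chain of subgroups in $\mc{S}$ has length $\leq N$. Hence, as $\delta$ decreases through a countable dense set and as we also shrink $\beta$ to middle sub-arcs $\beta[t]$, the groups $\mf{D}(\beta_n[t],\delta)$ can only increase, so the ``profile'' stabilises: there is a choice of $t_0<\ell$ and $\delta_0\in(0,\eps t_0)$ such that for all smaller $t$ and $\delta$ in our countable set the $\om$-class of $\mf{D}(\beta_n[t],\delta)$ is constant. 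Replace $\beta$ by $\beta[t_0]$ (harmless: it still converges to an arc inside the original, and for part (1) we may reduce to this and then note the conclusion for the smaller arc forces the conclusion for the original via the line $\alpha$). Now use tameness condition~(2): for each $n$, either $\mf{D}(\beta_n,\delta_0)$ is elliptic ($=D(\beta_n,\delta_0)=G_{\beta_n^{\delta_0}}$) or non-elliptic (contains a loxodromic with $\mf{D}(\beta_n,\delta_0)$-invariant axis). Since $\om$ is an ultrafilter, exactly one of these alternatives holds for $\om$-all $n$; this is the dichotomy (a)/(b).

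\medskip
\textbf{The elliptic case (a).} If $\mf{D}(\beta_n,\delta_0)=G_{\beta_n}$ for $\om$-all $n$ (here I also use Lemma~\ref{general almost-stabilisers}(2) and that shrinking by $\delta_0$ vs.\ not is controlled), I claim $G_\beta=\bigcap_\om G_{\beta_n}$. The inclusion $\bigcap_\om G_{\beta_n}\leq G_\beta$ is immediate since an element fixing $\om$-all $\beta_n$ fixes the limit $\beta$. For the reverse: if $g\in G_\beta$ then $d(p_n,gp_n)\to_\om 0$ and $d(q_n,gq_n)\to_\om 0$, so in particular $g\in D(\beta_n,\delta_0)$ for $\om$-all $n$, i.e.\ $g\in\mf{D}(\beta_n,\delta_0)=G_{\beta_n}$ for $\om$-all $n$. (Here the point is that for an element of the limit stabiliser the displacement at the endpoints goes to $0$, hence is eventually $\leq\delta_0$; this is where choosing $\beta$ to be a fixed middle sub-arc, bounded away from $p,q$, is used so that ``$\beta_n^{\delta_0}$ contains the relevant sub-arc'' stays valid.)

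\medskip
\textbf{The non-elliptic case (b), and the line case (2).} If instead $\mf{D}(\beta_n,\delta)$ is non-elliptic for $\om$-all $n$ and all small rational $\delta$, pick for each $n$ a loxodromic $g_n\in D(\beta_n,\delta)$ whose axis $\alpha_n$ is $\mf{D}(\beta_n,\delta)$-invariant; by Lemma~\ref{general almost-stabilisers}(1), $\beta_n^\delta\cu\alpha_n$, so the $\alpha_n$ converge (after basepointing) to a line $\alpha\cu T_\om$ containing $\beta$, and $\alpha=\bigcap_\om$ of (neighbourhoods of) $\alpha_n$ is independent of $\delta$ because the profile has stabilised. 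The $G$-stabiliser of $\alpha$: an element stabilising $\alpha$ moves $p_n,q_n$ boundedly (in terms of its translation length along $\alpha$) so lies in $D(\beta_n,\delta)$ for $\om$-all $n$ once $\delta$ exceeds that translation length — but since we fixed $\delta$ first, I instead argue that $\bigcap_\om\mf{D}(\beta_n,\delta)$ acts on $T_\om$ preserving $\alpha$ (as each $\mf{D}(\beta_n,\delta)$ preserves $\alpha_n$), and conversely any $g$ preserving $\alpha$ lies in $\mf{D}(\beta_n,\delta')$ for $\om$-all $n$ for some $\delta'$, hence in $\bigcap_\om\mf{D}(\beta_n,\delta)$ by stabilisation of the profile (using tameness~(3) to get the chain $\mf{D}(\beta_n,\delta)\leq\mf{D}(\beta_n,\delta')$ forces elliptic-or-equal, but here both are non-elliptic so they're equal $\om$-a.s.). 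Then $G_\beta$ = those $g\in\mathrm{Stab}_G(\alpha)$ with zero translation length along $\alpha$: indeed $g\in G_\beta$ implies $g$ fixes a nondegenerate sub-arc of a line it preserves, so translates trivially; conversely an element of $\mathrm{Stab}_G(\alpha)$ with zero translation fixes $\alpha$ pointwise. Since translation length along $\alpha$ is a homomorphism $\mathrm{Stab}_G(\alpha)\to\R$, this is exactly its kernel. Part~(2) is the same argument with $\g$ already a line: choose $\beta_n$ to be long sub-arcs of (approximations to) $\g$, apply tameness~(2); the elliptic alternative cannot hold for a full line unless the stabiliser fixes $\g$ pointwise, which is the degenerate subcase of the same formula, and otherwise one gets the non-elliptic picture directly with $\mathrm{Stab}_G(\g)=\bigcap_\om\mf{D}(\beta_n,\delta)$ for all large $\delta$.

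\medskip
\textbf{The ``moreover'' and the main obstacle.} For the final clause, given an eventually dense sequence $\mc{P}_n$ with densities $\delta_n\to0$, replace each $\beta_n=[p_n,q_n]$ by an arc $\beta_n'\in\mc{P}_n$ with endpoints within $\delta_n$ of $p_n,q_n$ and $[\beta_n']\cu[\beta_n]$; since $\delta_n\to0$, $\beta_n'$ still $\om$-converges to $\beta$ (resp.\ $\g$), and $\mf{D}(\beta_n',\delta)$ differs from $\mf{D}(\beta_n,\delta\pm 2\delta_n)$ in a way that is absorbed by the profile-stabilisation (again Remark~\ref{convex metric rmk}), so all identities are unchanged $\om$-a.s. \emph{The main obstacle} I expect is the bookkeeping in the stabilisation step: one must choose the pair $(t_0,\delta_0)$ \emph{before} invoking the ultrafilter dichotomy and then verify that the ``$\om$-class of $\mf{D}(\beta_n[t],\delta)$'' is genuinely monotone and hence eventually constant as $(t,\delta)$ ranges over a countable directed set — this uses tameness~(1) (bounded chain length in $\mc{S}$) together with the fact that $\mc{S}$ is closed under the relevant operations, and one has to be careful that the finitely-many ``jumps'' in the chain happen at the same $\om$-large set of indices. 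A secondary subtlety is matching $G_\beta$ with $\bigcap_\om G_{\beta_n}$ versus $\bigcap_\om\mf{D}(\beta_n,\delta)$: one needs $\delta$ large enough to capture limit-stabiliser elements but this is reconciled precisely by having passed to a fixed middle sub-arc and by the profile being constant for all small $\delta$, so that ``$\mf{D}(\beta_n,\delta)$ for small $\delta$'' already equals ``$\mf{D}(\beta_n,\delta')$'' for the relevant larger $\delta'$ needed, $\om$-almost surely.
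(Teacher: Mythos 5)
Your overall strategy -- approximate, extract an ultrafilter dichotomy from tameness~(2), identify line-stabilisers via tameness~(3), and read off $G_{\beta}$ as a kernel of a translation-length homomorphism -- is the same as the paper's, and your treatment of the non-elliptic case and of part~(2) is essentially right. However, there are two genuine gaps. First, your ``profile stabilisation'' rests on a false monotonicity claim: by the definition of $D(\beta,\delta)$, \emph{decreasing} $\delta$ makes $D(\beta_n[t],\delta)$ (and hence $\mf{D}(\beta_n[t],\delta)$) \emph{smaller}, while shrinking the arc makes it larger, so varying $(t,\delta)$ jointly gives no monotone family and tameness~(1) does not yield eventual constancy of the ``$\om$-class'' as you range over your countable directed set. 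Moreover, the dichotomy you then extract at a single pair $(t_0,\delta_0)$ does not deliver the proposition's case~(b), which demands non-ellipticity for \emph{every} $\delta\in(0,\eps\ell(\beta))$ -- and non-ellipticity at $\delta_0$ does not persist for smaller $\delta$, since $\mf{D}(\beta_n,\delta)\leq\mf{D}(\beta_n,\delta_0)$ there. The paper's substitute is the threshold $t_{n,\delta}$ (the largest $s$ with $\mf{D}(\eta_n[s],\delta)$ non-elliptic) and its $\om$-limit $t_{\delta}$, which \emph{is} monotone in $\delta$; the trichotomy $t_{\delta_0}=+\infty$, $t_{\delta}=\ell(\beta)$ for all $\delta$, or $t_{\delta_0}<\ell(\beta)$ is what cleanly separates the cases.

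Second, in the elliptic case tameness~(2) only gives $\mf{D}(\beta_n,\delta)=G_{\beta_n^{\delta}}$, the stabiliser of the \emph{shrunk} arc, whereas conclusion~(a) asserts $\mf{D}(\beta_n,\delta)=G_{\beta_n}$ and $G_{\beta}=\bigcap_{\om}G_{\beta_n}$. Your parenthetical ``shrinking by $\delta_0$ vs.\ not is controlled'' is exactly the nontrivial point: one must show the arc-stabiliser $G_{\eta_n[s]}$ does not jump as $s$ decreases from $\ell(\eta_n)$ past $\ell(\eta_n)-\delta$. The paper does this by using tameness~(1) to bound the number of jump values $s_{0,n}>s_{1,n}>\dots>s_{k,n}$ of $s\mapsto G_{\eta_n[s]}$, taking $\om$-limits $s_0\geq s_1\geq\dots\geq s_k$, and \emph{re-choosing the approximating arcs} so that the outermost genuine jump satisfies $s_1<\ell(\beta)$; only then does $\mf{D}(\eta_n[s],\delta)=G_{\eta_n[s-\delta]}=G_{\eta_n}$ hold for suitable $s,\delta$. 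Without this re-choice of $\beta_n$ the stated equality in~(a) can fail.
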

\begin{proof}
We will initially deal with both parts of the proposition simultaneously: set $\eta:=\beta$ in part~(1) and $\eta:=\g$ in part~(2). Let $\eta_n\cu T_n$ be a sequence of arcs converging to $\eta$. Recall that $\eta_n[s]$ is the middle segment of $\eta_n$ of length $s$. 

Consider $0<\delta<\eps\cdot\ell(\eta)$. 
If there exists $\tfrac{\delta}{\eps}\leq s\leq\ell(\eta_n)$ such that $\mf{D}(\eta_n[s],\delta)$ is non-elliptic, let $\tfrac{\delta}{\eps}\leq t_{n,\delta}\leq\ell(\eta_n)$ be the largest such $s$ (the maximum exists e.g.\ by~(3) in Definition~\ref{tame defn} and Remark~\ref{convex metric rmk}). Otherwise, set $t_{n,\delta}=0$. Let $t_{\delta}$ be the $\om$--limit of $t_{n,\delta}$. 

If $\delta_1<\delta_2$, we have $t_{n,\delta_1}\leq t_{n,\delta_2}$ for every $n$, hence $t_{\delta_1}\leq t_{\delta_2}$. We will need the following observation.

\smallskip
{\bf Claim:} \emph{Suppose that, for some $\delta$ and $n$, we have $t_{n,\delta}\neq 0$. Then there exists a subgroup $H_n\leq G$ such that $H_n=\mf{D}(\eta_n[s],\delta')$ for all $\delta\leq\delta'\leq\eps t_{n,\delta}$ and $\tfrac{\delta'}{\eps}\leq s\leq t_{n,\delta}$. There is a unique $H_n$--invariant line $\alpha_n\cu T_n$ and it contains $\eta_n[t_{n,\delta}-\delta]$.}

\smallskip\noindent
\emph{Proof of claim.}
Fix for a moment $\delta'\geq\delta$ and recall that $t_{n,\delta'}\geq t_{n,\delta}$. By~(3) in Definition~\ref{tame defn} and Remark~\ref{convex metric rmk}, the subgroup $\mf{D}(\eta_n[s],\delta')$ is non-elliptic and constant as $s$ varies in $[\tfrac{\delta'}{\eps},t_{n,\delta}]$. Let us call it $H_{n,\delta'}$ for short. By~(2) in Definition~\ref{tame defn}, $H_{n,\delta'}$ leaves invariant a line $\alpha_{n,\delta'}\cu T_n$, which must contain the arc $\eta_n[t_{n,\delta}-\delta']$ by Lemma~\ref{general almost-stabilisers}. 

Taking $s=t_{n,\delta}$, the fact that $\delta\leq\delta'$ implies that $H_{n,\delta}\leq H_{n,\delta'}$. Applying again (3) in Definition~\ref{tame defn}, we deduce that $H_{n,\delta}=H_{n,\delta'}$, so this subgroup is independent of the specific value of $\delta'$ and we can call it $H_n$. The lines $\alpha_{n,\delta'}$ are also independent of $\delta'$, since they are the axis of all loxodromics in $H_{n,\delta'}=H_n$. This proves the claim.
\hfill$\blacksquare$

\smallskip
Now, we distinguish three cases.

\smallskip
{\bf Case~A:} \emph{$\eta=\g$ and there exists $\delta_0>0$ with $t_{\delta_0}=+\infty$.}

\smallskip
We must have $t_{n,\delta_0}>0$ for $\om$--all $n$, so the claim provides subgroups $H_n\leq G$ and lines $\alpha_n\cu T_n$. Since $\alpha_n$ contains $\eta_n[t_{n,\delta_0}-\delta_0]$ and $t_{n,\delta_0}$ diverges, the lines $\alpha_n$ converge to $\g$. Since $\alpha_n$ is $H_n$--invariant, it is clear that the subgroup $H:=\bigcap_{\om}H_n$ leaves $\g$ invariant.

Let us show that $H$ coincides with the $G$--stabiliser of $\g$. Consider $g\in G$ with $g\g=\g$. Let $\alpha_n[s]$ denote the sub-arc of $\alpha_n$ of length $s$ that has the same midpoint as $\eta_n$. Choosing $s$ so that $\ell_{T_{\om}}(g)<\eps s$, we have $g\in\bigcap_{\om}D(\alpha_n[s],\eps s)$. If $s$ is large enough, we have $\delta_0<\eps s$ and, since $D(\eta_n[t_{n,\delta_0}],\delta_0)$ leaves $\alpha_n$ invariant, it follows that $D(\alpha_n[s],\eps s)\supseteq D(\eta_n[t_{n,\delta_0}],\delta_0)$. Thus, by~(3) in Definition~\ref{tame defn}, we have $\mf{D}(\alpha_n[s],\eps s)=\mf{D}(\eta_n[t_{n,\delta_0}],\delta_0)=H_n$. This shows that $g\in\bigcap_{\om}H_n=H$, as required.

Taking for instance $\beta_n=\eta_n[t_{n,\delta_0}]$ (or slightly smaller arcs lying in $\mc{P}_n$), this proves part~(2) of the proposition in this case. Note that the other possibility for part~(2) is easier. If $t_{\delta}<+\infty$ for every $\delta>0$, then the stabiliser of $\g$ actually fixes $\g$ pointwise, and it is easy to see that it coincides with $\bigcap_{\om}\mf{D}(\beta_n,\delta)=\bigcap_{\om}G_{\beta_n}$ for every $\delta>0$. 

We continue with part~(1) of the proposition.

\smallskip
{\bf Case~B:} \emph{$\eta=\beta$ and $t_{\delta}=\ell(\beta)$ for every $0<\delta<\eps\cdot\ell(\beta)$.}

\smallskip
As in the previous case, the claim provides subgroups $H_n\leq G$ and $H_n$--invariant lines $\alpha_n\cu T_n$. Set $H:=\bigcap_{\om}H_n$. For every $\delta>0$, the line $\alpha_n$ contains the arc $\eta_n[t_{n,\delta}-\delta]$ for $\om$--all $n$. It follows that the $\alpha_n$ converge to an $H$--invariant line $\alpha\cu T_{\om}$ containing $\beta$. Exactly as in the previous case, one shows that $H$ is actually the entire $G$--stabiliser of $\alpha$.

We can define $\beta_n:=\alpha_n[\ell(\beta)]$ (or take a slightly smaller arc lying in $\mc{P}_n$). It is clear that $\beta_n$ converges to $\beta$ and, for each choice of $\delta$, we have $\mf{D}(\beta_n,\delta)=H_n$ for $\om$--all $n$. 

Since the $\beta_n$ approximate $\beta$ and $\delta>0$, we have $G_{\beta}\cu \bigcap_{\om}D(\beta_n,\delta) \cu H$.
Let $\tau_n\colon H_n\ra\R$ be the homomorphism given by translation lengths along $\alpha_n$. The limit $\tau_{\om}\colon H=\bigcap_{\om}H_n\ra\R$ gives translation lengths along $\alpha\cu T_{\om}$. Since $G_{\beta}\leq H$, it is clear that $G_{\beta}$ is the kernel of $\tau_{\om}$. 

Thus, this case corresponds to case~(b) of part~(1) of the proposition. We are left to consider one last situation.

\smallskip
{\bf Case~C:} \emph{$\eta=\beta$ and there exists $\delta_0>0$ such that $t_{\delta_0}<\ell(\beta)$.}

\smallskip
By Definition~\ref{tame defn}, for each $n$ there exist $k\leq N$ and values $\ell(\eta_n)=s_{0,n}>s_{1,n}>\dots>s_{k,n}>0$ such that the $G$--stabiliser of $\eta_n[s]$ is constant as $s$ varies in each interval $s\in(s_{i+1,n},s_{i,n}]$. As $n$ varies, $k$ is $\om$--constant and the $s_{i,n}$ converge to a sequence $\ell(\beta)=s_0\geq s_1\geq\dots\geq s_k\geq 0$.

Let $j$ be the largest index with $s_j=\ell(\beta)$. Up to shrinking the approximation $\eta_n$, we can assume that $j=0$ (and that $\eta_n$ lies within $\mc{P}_n$). Then, for every $s_1< s\leq\ell(\beta)$, the $G$--stabilisers of $\eta_n[s]$ and $\eta_n$ coincide for $\om$--all $n$.

Consider $\delta>0$ and $g\in G$ with $g\beta=\beta$. If $s\leq\ell(\beta)$, we have $g\in\bigcap_{\om}D(\eta_n[s],\delta)\cu\bigcap_{\om}\mf{D}(\eta_n[s],\delta)$. If $\delta<\delta_0$ we have $t_{\delta}\leq t_{\delta_0}$. Thus, if $s>t_{\delta_0}$, the subgroup $\mf{D}(\eta_n[s],\delta)$ is elliptic for $\om$--all $n$, and so it coincides with the $G$--stabiliser of $\eta_n[s-\delta]$ by~(2) in Definition~\ref{tame defn}. If $s>\delta+s_1$, the latter equals $G_{\eta_n}$. In conclusion, when $\delta$ is small enough and $s$ is large enough, we have shown that the $G$--stabiliser of $\eta$ is contained in $\bigcap_{\om}G_{\eta_n}$, hence it coincides with it. 

Taking $\beta_n:=\eta_n$, this corresponds to case~(a) of part~(1). This concludes the proof of the proposition.
\end{proof}

\subsection{Almost-stabilisers in special groups.}

Consider a right-angled Artin group $\A_{\G}$ and set $r:=\dim\X_{\G}$. Recall that, for every $v\in\G$, we have an action $\A_{\G}\acts\T_v$ coming from a restriction quotient of $\X_{\G}$. As usual, the stabiliser of an arc $\beta\cu\T_v$ is denoted $(\A_{\G})_{\beta}$.

\begin{lem}\label{almost-stabiliser new}
Consider an arc $\beta\cu\T_v$ and $0\leq\delta\leq\tfrac{\ell(\beta)}{4r+2}$. Then:
\begin{enumerate}
\item either $D_{\A_{\G}}(\beta,\delta)=(\A_{\G})_{\beta^{\delta}}$;
\item or $(\A_{\G})_{\beta^{\delta}}\subsetneq D_{\A_{\G}}(\beta,\delta)\cu\langle h\rangle\x (\A_{\G})_{\beta^{\delta}}=Z_{\A_{\G}}(h)$ for a label-irreducible element $h\in\A_{\G}$ that is not a proper power. Moreover, $0<\ell_{\T_v}(h)\leq\delta$ and the axis of $h$ in $\T_v$ contains $\beta^{\delta}$.
\end{enumerate}
\end{lem}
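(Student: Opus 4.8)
The strategy is to analyse the elements of $D_{\A_\G}(\beta,\delta)$ via their action on the tree $\T_v$ and their label-irreducible components in $\A_\G$. Fix $\beta\cu\T_v$ and $0\le\delta\le\ell(\beta)/(4r+2)$. By Lemma~\ref{general almost-stabilisers}(1), every $g\in D_{\A_\G}(\beta,\delta)$ satisfies $\beta^\delta\cu\Min(g,\T_v)$, so $g$ either fixes $\beta^\delta$ pointwise or translates along a line in $\T_v$ containing $\beta^\delta$. If every element of $D_{\A_\G}(\beta,\delta)$ is elliptic in $\T_v$, then Lemma~\ref{general almost-stabilisers}(2) gives $D_{\A_\G}(\beta,\delta)=(\A_\G)_{\beta^\delta}$, which is case~(1). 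So assume some $g_0\in D_{\A_\G}(\beta,\delta)$ is loxodromic in $\T_v$ with axis $\eta\supseteq\beta^\delta$ and $0<\ell_{\T_v}(g_0)\le\delta$ (the length bound holds since $d(x,g_0x)\le\delta$ for the endpoints $x$ of $\beta$, and $\ell_{\T_v}(g_0)$ is at most this displacement as $x$ lies on or near the axis).

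\textbf{Isolating the loxodromic direction.} The first substantive step is to show that all loxodromic elements of $D_{\A_\G}(\beta,\delta)$ share the same axis $\eta$ and that this axis is the axis in $\T_v$ of a single label-irreducible element. Indeed, if $g\in D_{\A_\G}(\beta,\delta)$ is loxodromic, its axis contains $\beta^\delta$, hence intersects $\eta$ in an arc of length $\ge\ell(\beta^\delta)=\ell(\beta)-\delta\ge(4r+1)\delta\ge(4r+1)\ell_{\T_v}(g_0)$, and similarly $\ge$ a comparable multiple of $\ell_{\T_v}(g)$ (note $\ell_{\T_v}(g)\le\delta$ as well). Applying Lemma~\ref{long min intersection} — which requires the overlap to exceed $4\dim\X_\G\cdot\max\{\ell_{\T_v}(g_0),\ell_{\T_v}(g)\}=4r\cdot\max\{\dots\}$, a bound our choice $\delta\le\ell(\beta)/(4r+2)$ is designed to guarantee — we conclude $g\eta=\eta$, so $g_0$ and $g$ preserve a common axis. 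Consider the label-irreducible component $h_0$ of $g_0$ that is loxodromic in $\T_v$; it has the same axis $\eta$ and the same translation length, and we may replace it by a root $h$ which is label-irreducible, not a proper power, and still has axis $\eta$ in $\T_v$ (pass to the maximal cyclic subgroup of $\A_\G$ containing $h_0$ and take a generator with the correct sign). Then $\eta$ is the axis of $h$ in $\T_v$, and $0<\ell_{\T_v}(h)\le\ell_{\T_v}(h_0)=\ell_{\T_v}(g_0)\le\delta$, after possibly noting $\ell_{\T_v}(h)$ divides $\ell_{\T_v}(h_0)$ so is still $\le\delta$.

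\textbf{Locating the almost-stabiliser inside $Z_{\A_\G}(h)$.} Next I show $D_{\A_\G}(\beta,\delta)\cu Z_{\A_\G}(h)$. Take any $g\in D_{\A_\G}(\beta,\delta)$. By the previous paragraph $g$ preserves $\eta=\Min(h,\T_v)\cap(\text{relevant flat})$; more precisely $g$ preserves the axis $\eta$ of $h$. We want to deduce $g$ commutes with $h$ in $\A_\G$, not merely in the tree. For this I pass from $\T_v$ back to $\X_\G$: since $g$ and $h$ both leave $\eta$ invariant and $\beta^\delta\cu\eta$ is long, the preimages under $\pi_v$ interact, and one argues via Lemma~\ref{commutation criterion} or via Remark~\ref{LI properties}(4) applied to label-irreducible components. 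Concretely, $h$ is label-irreducible with $v\in\G(h)$, and any $g$ preserving the axis of $h$ in $\T_v$ either is elliptic in $\T_v$ (so fixes $\beta^\delta$, landing in $(\A_\G)_{\beta^\delta}$) or is loxodromic with the same axis; in the loxodromic case every label-irreducible component of $g$ that is loxodromic in $\T_v$ is a power of $h$ (by Lemma~\ref{long min intersection} applied component-wise, using that these components also preserve $\eta$), while the components elliptic in $\T_v$ commute with $h$ (as they fix $\eta$ pointwise, invoke Lemma~\ref{commutation criterion} in $\X_\G$). Hence $g\in Z_{\A_\G}(h)$. Since $h$ is label-irreducible and not a proper power, Remark~\ref{LI properties}(5) gives $Z_{\A_\G}(h)=\langle h\rangle\x\A_{\G(h)^\perp}$ (after conjugating $h$ to be cyclically reduced) $=\langle h\rangle\x Z_{\A_\G}(\{h\})\cap(\dots)$; in any case the key point is the splitting $Z_{\A_\G}(h)=\langle h\rangle\x P$ with $P$ parabolic, and $P=(\A_\G)_{\beta^\delta}$ since an element of $Z_{\A_\G}(h)$ fixes $\beta^\delta$ iff it has trivial $\langle h\rangle$-coordinate (because $h$ translates $\beta^\delta$ nontrivially). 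This yields $D_{\A_\G}(\beta,\delta)\cu\langle h\rangle\x(\A_\G)_{\beta^\delta}=Z_{\A_\G}(h)$, with the strict containment $(\A_\G)_{\beta^\delta}\subsetneq D_{\A_\G}(\beta,\delta)$ witnessed by $g_0$, completing case~(2).

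\textbf{Main obstacle.} The delicate point is the passage from "preserves the axis in $\T_v$" to "commutes in $\A_\G$": an element can preserve $\eta\cu\T_v$ set-wise while acting nontrivially on the parabolic directions orthogonal to $v$, so one genuinely needs the component-wise analysis and the commutation criterion in the full complex $\X_\G$, carefully tracking which label-irreducible components are loxodromic versus elliptic in $\T_v$ and invoking Lemma~\ref{long min intersection} with the correct quantitative overlap — this is exactly why the hypothesis is stated with the constant $4r+2$ rather than something cruder. A secondary bookkeeping point is ensuring $h$ can be taken to be not a proper power while keeping $\ell_{\T_v}(h)\le\delta$ and the axis containing $\beta^\delta$, which follows since roots have shorter or equal translation length and the same axis.
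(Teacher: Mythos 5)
Your proof is correct and follows essentially the same route as the paper's: handle the all-elliptic case via Lemma~\ref{general almost-stabilisers}, decompose loxodromic elements of $D_{\A_\G}(\beta,\delta)$ into label-irreducible components (exactly one of which is loxodromic in $\T_v$, with the same axis and translation length), and show all loxodromic directions are powers of a single primitive label-irreducible $h$ — the paper does this last step by placing $h^{4r+1}p'$ between the endpoints of $\beta^\delta$ and citing \cite{Fio10a}, while you route through Lemma~\ref{long min intersection}, a cosmetic difference since that lemma is itself a repackaging of the same cited result. One small inaccuracy: for the components of $g$ that are elliptic in $\T_v$, Lemma~\ref{commutation criterion} does not directly apply (fixing $\eta$ pointwise only controls the $v$--labelled hyperplanes among $\mscr{W}(x|hx)$, not those carrying the other labels of $\G(h)$); the correct and simpler observation, which is what the paper's Claim~1 uses, is that these components automatically commute with the loxodromic component of $g$, hence with $h$, because label-irreducible components of a single element pairwise commute and centralisers of an element and of its nontrivial powers coincide by Remark~\ref{LI properties}(5).
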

\begin{proof}
Let $p,q$ be the endpoints of $\beta$ and let $p',q'$ be those of $\beta^{\delta}$. Set $D=D_{\A_{\G}}(\beta,\delta)$ for simplicity. Let $D_0\cu D$ be the subset of elliptic elements. By Lemma~\ref{general almost-stabilisers}, we have $D_0=(\A_{\G})_{\beta^{\delta}}$. Let $D_1\cu D$ be the subset of loxodromic elements that, in addition, are label-irreducible. 

\smallskip
{\bf Claim~1:} \emph{every $g\in D$ can be decomposed as a product $h_1h_0$ with $h_1\in D_1\sqcup\{1\}$ and $h_0\in D_0$, where $h_0$ commutes with $h_1$. If $h_1\neq 1$, it has the same axis and translation length in $\T_v$ as $g$.}

\smallskip\noindent
\emph{Proof of Claim~1.}
Consider $g\in D$. If $g$ is elliptic, we can take $h_1=1$ and $h_0=g$. Suppose instead that $g$ is loxodromic, and let $g=g_1\cdot\ldots\cdot g_k$ be its decomposition into label-irreducibles. 

Since the $g_i$ commute pairwise, at least one of them must be loxodromic in $\T_v$, or $g$ would be elliptic. Since the sets $\G(g_i)$ are pairwise disjoint, at most one $g_i$ is loxodromic in $\T_v$. Say $g_1$ is the loxodromic component. Then its axis is fixed pointwise by $g_2,\dots,g_k$, so $g$ has the same axis and the same translation length in $\T_v$ as $g_1$. We can then set $h_1=g_1$ and $h_2=g_2\cdot\ldots\cdot g_k$.

We are only left to check that $h_1,h_0$ lie in $D$. Since $h_1$ and $g$ have the same axis and translation length, they displace all points of $\T_v$ by the same amount. Hence $h_1\in D$. By Lemma~\ref{general almost-stabilisers}, the axis of $g$ contains $\beta^{\delta}$. This coincides with the axis of $h_1$, which is fixed pointwise by $h_0$. So $h_0$ fixes $\beta^{\delta}$ pointwise, hence $h_0\in D$.  
\hfill$\blacksquare$

\smallskip
If $D_1=\emptyset$, then we are in the first case of the lemma and we are done.

\smallskip
{\bf Claim~2.} \emph{if $D_1\neq\emptyset$, there exists $h\in D_1$ such that $h$ is not a proper power in $\A_{\G}$ and $D_1\cu\langle h\rangle$.}

\smallskip\noindent
\emph{Proof of Claim~2.}
Recall that $d(p',q')=\ell(\beta)-\delta$. If $h\in D_1$, the points $p'$ and $q'$ lie on the axis of $h$ by Lemma~\ref{general almost-stabilisers}. Hence $d(p',hp')\leq d(p,hp)\leq\delta$. By our choice of $\delta$, we have:
\[d(p',h^{4r+1}p')\leq (4r+1)\delta\leq \ell(\beta)-\delta=d(p',q'),\] 
so the point $h^{4r+1}p'$ lies between $p'$ and $q'$ (up to replacing $h$ with $h^{-1}$). In conjunction with \cite[Lemma~3.13]{Fio10a}, this shows that the subgroup of $\A_{\G}$ generated by any two elements of $D_1$ is cyclic. Finally, it is clear that $D_1$ is closed under taking roots. The claim follows.
\hfill$\blacksquare$

\smallskip
In conclusion, we have shown that $D_{\A_{\G}}(\beta,\delta)\cu\langle h\rangle\x (\A_{\G})_{\beta^{\delta}}\cu Z_{\A_{\G}}(h)$. Since $h\in D$, it is clear that $\ell_{\T_v}(h)\leq\delta$ and the axis of $h$ in $\T_v$ contains $\beta^{\delta}$. We are only left to prove that $Z_{\A_{\G}}(h)\cu\langle h\rangle\x (\A_{\G})_{\beta^{\delta}}$.

Since $h$ is label-irreducible, Remark~\ref{LI properties}(5) shows that $Z_{\A_{\G}}(h)=\langle h\rangle\x P$ for some parabolic subgroup $P$. For every $g\in P$, we have $\G(g)\cu\G(h)^{\perp}$, so $v\not\in\G(g)$. It follows that $P$ is elliptic in $\T_v$. Since $P$ commutes with $h$, it must fix the axis of $h$, which, in turn, contains $\beta^{\delta}$.  Hence $P$ is contained in $(\A_{\G})_{\beta^{\delta}}$, which completes the proof.
\end{proof}

\begin{cor}\label{D for G}
Let $\beta$ and $\delta$ be as in Lemma~\ref{almost-stabiliser new}. Let $G\leq\A_{\G}$ be $q$--convex-cocompact. Then:
\begin{enumerate}
\item either $\mf{D}_G(\beta,\delta)=G_{\beta^{\delta}}$; 
\item or $\mf{D}_G(\beta,\delta)=Z_G(g)$, for a label-irreducible element $g\in G$. In this case, $0<\ell_{\T_v}(g)\leq\delta q$ and the axis of $g$ in $\T_v$ contains $\beta^{\delta}$. 
\end{enumerate}
\end{cor}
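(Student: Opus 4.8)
The plan is to deduce Corollary~\ref{D for G} from Lemma~\ref{almost-stabiliser new} by intersecting with $G$ and carefully controlling the resulting subgroups. First I would observe that $D_G(\beta,\delta) = D_{\A_{\G}}(\beta,\delta)\cap G$, since the condition defining $D(\beta,\delta)$ only depends on the isometric action on $\T_v$ and the ambient group. So we are in exactly one of the two cases of Lemma~\ref{almost-stabiliser new}.

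\emph{Case (1) of the lemma.} Here $D_{\A_{\G}}(\beta,\delta) = (\A_{\G})_{\beta^\delta}$, so $D_G(\beta,\delta) = (\A_{\G})_{\beta^\delta}\cap G = G_{\beta^\delta}$. Then $\mf{D}_G(\beta,\delta) = \langle D_G(\beta,\delta)\rangle = \langle G_{\beta^\delta}\rangle = G_{\beta^\delta}$, which is conclusion~(1). (There is nothing to generate here, since $G_{\beta^\delta}$ is already a subgroup.)

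\emph{Case (2) of the lemma.} Here $(\A_{\G})_{\beta^\delta}\subsetneq D_{\A_{\G}}(\beta,\delta)\cu\langle h\rangle\x(\A_{\G})_{\beta^\delta} = Z_{\A_{\G}}(h)$ for a label-irreducible $h\in\A_{\G}$ which is not a proper power, with $0<\ell_{\T_v}(h)\leq\delta$ and axis of $h$ in $\T_v$ containing $\beta^\delta$. Intersecting with $G$: all elliptic elements of $D_G(\beta,\delta)$ form exactly $G_{\beta^\delta}$ (by Lemma~\ref{general almost-stabilisers}(1), applied inside $G$, or just by intersecting). If $D_G(\beta,\delta)$ contains no loxodromic of $\T_v$, we fall back into conclusion~(1) as above. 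Otherwise, pick a loxodromic $g_0\in D_G(\beta,\delta)$; since $D_{\A_{\G}}(\beta,\delta)\cu\langle h\rangle\x(\A_{\G})_{\beta^\delta}$ and the $\langle h\rangle$--coordinate of $g_0$ must be nontrivial (else $g_0$ is elliptic in $\T_v$), we see $g_0$ has a nonzero power of $h$ as its component along $\langle h\rangle$, hence $g_0\in\langle h\rangle\x (\A_\G)_{\beta^\delta}$ with loxodromic $\langle h\rangle$-part. Now let $g$ be the smallest power of $h$ lying in $G$: since $G$ is $q$--convex-cocompact and $\langle h\rangle$ is convex-cocompact (as $h$ is label-irreducible, Remark~\ref{LI properties}(1)), Remark~\ref{LI properties}(6) gives that $G\cap\langle h\rangle$ is nontrivial and in fact $g = h^n$ with $n\leq q$; thus $g$ is label-irreducible, $0<\ell_{\T_v}(g)=n\,\ell_{\T_v}(h)\leq\delta q$, and the axis of $g$ in $\T_v$ equals that of $h$, so it contains $\beta^\delta$. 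It remains to identify $\mf{D}_G(\beta,\delta)$. We have $D_G(\beta,\delta) = D_{\A_{\G}}(\beta,\delta)\cap G\cu (\langle h\rangle\x (\A_{\G})_{\beta^\delta})\cap G$. By Remark~\ref{LI properties}(5), $Z_{\A_{\G}}(h) = \langle h\rangle\x\A_{\G(h)^\perp}$, and since $h$ is a root-free generator of its maximal cyclic subgroup, $Z_{\A_\G}(g) = Z_{\A_\G}(h)$; intersecting with $G$ and using that $Z_G(g)$ is convex-cocompact and closed under roots, one gets $(\langle h\rangle\x(\A_\G)_{\beta^\delta})\cap G = Z_G(g)$ exactly as in the proof of Proposition~\ref{double centraliser prop}. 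On the other hand $g\in D_G(\beta,\delta)$ (it displaces $\beta^\delta$ by $\leq\delta$ along the common axis — actually one must double-check that $g$ displaces the \emph{endpoints of $\beta$}, not just of $\beta^\delta$, by at most $\delta$; this follows because $\ell_{\T_v}(g)\leq\delta q$ may be too large, so more care is needed — see below), and $G_{\beta^\delta} = \ker(\pi_g|_{Z_G(g)})\leq D_G(\beta,\delta)$, so $\langle D_G(\beta,\delta)\rangle$ contains both $G_{\beta^\delta}$ and enough of $\langle g\rangle$ to generate all of $Z_G(g)$; combined with the inclusion $D_G(\beta,\delta)\cu Z_G(g)$ this yields $\mf{D}_G(\beta,\delta) = Z_G(g)$, which is conclusion~(2).

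The main obstacle I anticipate is the bookkeeping around the constant $\delta q$ versus $\delta$: an element of $\langle h\rangle\cap G$ need not lie in $D_G(\beta,\delta)$ since its $\T_v$-translation length could be as large as $\delta q > \delta$. So to show $\mf D_G(\beta,\delta) = Z_G(g)$ one should not argue that $g$ itself lies in $D_G(\beta,\delta)$, but rather that $\mf D_G(\beta,\delta)\leq Z_G(g)$ (clear from $D_G(\beta,\delta)\cu Z_{\A_\G}(h)\cap G = Z_G(g)$) together with the reverse inclusion: $D_G(\beta,\delta)$ contains the loxodromic $g_0$ found above and contains $G_{\beta^\delta} = \ker\pi_g$, and $\langle g_0, \ker\pi_g\rangle$ has finite index in $Z_G(g)$; since $Z_G(g)$ is closed under roots in $G$ and $\mf D_G(\beta,\delta)$ — being generated by elements fixing or preserving the axis — is also appropriately saturated, one upgrades to equality. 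I would write this last upgrade carefully, mirroring the endgame of Proposition~\ref{double centraliser prop} where exactly the same "finite-index plus root-closed" argument appears.
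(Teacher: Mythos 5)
Your overall strategy is exactly the paper's: intersect the dichotomy of Lemma~\ref{almost-stabiliser new} with $G$, note that $D_G(\beta,\delta)=G\cap D_{\A_{\G}}(\beta,\delta)$, dispose of the sub-case where this intersection lies in $\{1\}\x(\A_{\G})_{\beta^{\delta}}$ (falling back to conclusion~(1)), extract $g$ as the smallest power of $h$ lying in $G$ via Remark~\ref{LI properties}(6), and record the bounds on $\ell_{\T_v}(g)$ and the axis. Everything up to and including the inclusion $\mf{D}_G(\beta,\delta)\leq G\cap Z_{\A_{\G}}(h)=Z_G(g)$ is correct and coincides with the paper's argument.

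The gap is in the reverse inclusion $Z_G(g)\leq\mf{D}_G(\beta,\delta)$. You rightly observe that $g$ itself need not lie in $D_G(\beta,\delta)$, but your substitute --- ``$\langle g_0,\ker\pi_g\rangle$ has finite index in $Z_G(g)$ and $\mf{D}_G(\beta,\delta)$ is appropriately root-closed, so finite index upgrades to equality'' --- does not go through. The set $D_G(\beta,\delta)$ is closed under taking roots, but the subgroup it generates need not be, and $\mf{D}_G(\beta,\delta)$ is not a double centraliser, so the endgame of Proposition~\ref{double centraliser prop} (which rests on $Z_GZ_G(\cdot)$ being convex-cocompact and root-closed) has no analogue here. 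Concretely, if $y$ generates the infinite cyclic quotient $Z_G(g)/G_{\beta^{\delta}}$, all you have established is that some power $y^s$ lies in $\mf{D}_G(\beta,\delta)$, and nothing forces $y$ itself to lie there. The repair is a displacement comparison, which is what the paper's terse final sentence is doing: write $y=h^je$ with $j>0$ minimal among exponents realised in $Z_G(g)=G\cap\bigl[\langle h\rangle\x(\A_{\G})_{\beta^{\delta}}\bigr]$, and write your loxodromic $g_0=h^me_0\in D_G(\beta,\delta)$, so that $m$ is a positive multiple of $j$. Both $y$ and $g_0$ have the same minimal set, namely the axis $A$ of $h$ (which $e$ and $e_0$ fix pointwise, as they fix $\beta^{\delta}\cu A$ and commute with $h$). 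Hence, for each endpoint $p$ of $\beta$,
\[
d(p,yp)=j\,\ell_{\T_v}(h)+2d(p,A)\;\leq\; m\,\ell_{\T_v}(h)+2d(p,A)=d(p,g_0p)\leq\delta,
\]
so $y\in D_G(\beta,\delta)$ and therefore $Z_G(g)=\langle G_{\beta^{\delta}},y\rangle\leq\mf{D}_G(\beta,\delta)$. With this step inserted, your proof is complete and agrees with the paper's.
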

\begin{proof}
If we are in case~(1) of Lemma~\ref{almost-stabiliser new}, it is clear that $G\cap D_{\A_{\G}}(\beta,\delta)\cu G_{\beta^{\delta}}$, hence $\mf{D}_G(\beta,\delta)=G_{\beta^{\delta}}$. So, let us suppose that we are in case~(2) of Lemma~\ref{almost-stabiliser new} and $D_{\A_{\G}}(\beta,\delta)\cu\langle h\rangle\x (\A_{\G})_{\beta^{\delta}}=Z_{\A_{\G}}(h)$ for a label-irreducible element $h\in\A_{\G}$. If $G\cap D_{\A_{\G}}(\beta,\delta)$ is contained in $\{1\}\x(\A_{\G})_{\beta^{\delta}}$, we again obtain $G\cap D_{\A_{\G}}(\beta,\delta)\cu G_{\beta^{\delta}}$ and $\mf{D}_G(\beta,\delta)=G_{\beta^{\delta}}$.

Otherwise, an element of $G\cap D_{\A_{\G}}(\beta,\delta)$ has a label-irreducible component that is a power of $h$. Remark~\ref{LI properties}(6) shows that there exists $1\leq k\leq q$ such that $h^k\in G$. Let $g$ be the smallest such power of $h$. Note that $\ell_{\T_v}(g)\leq q\ell_{\T_v}(h)\leq \delta q$. The axis of $g$ in $\T_v$ coincides with that of $h$, so it contains $\beta^{\delta}$. It is clear that:
\[\mf{D}_G(\beta,\delta)=\langle G\cap D_{\A_{\G}}(\beta,\delta)\rangle \leq G\cap Z_{\A_{\G}}(h)= Z_G(g).\]
Finally, if this inclusion were strict, then $Z_G(g)\setminus\mf{D}_G(\beta,\delta)$ would contain an element of $\langle h\rangle\x (\A_{\G})_{\beta^{\delta}}$ with the same axis as $g$ and strictly smaller translation length, a contradiction.
\end{proof}

\begin{cor}\label{special gives tame}
If $G\leq\A_{\G}$ is convex-cocompact, then $G\acts\T_v$ is $(\tfrac{1}{4r+2},N)$--tame for some $N\geq 1$.
\end{cor}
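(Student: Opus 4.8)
The plan is to verify the three conditions in Definition~\ref{tame defn} for the action $G\acts\T_v$ with $\eps=\tfrac{1}{4r+2}$ and a suitable $N$, relying on the structural results established for the subgroups $\mf{D}_G(\beta,\delta)$.

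\begin{proof}
Set $\eps=\tfrac{1}{4r+2}$, where $r=\dim\X_{\G}$. Let $\mc{S}$ be the collection of subgroups of $G$ of the form $\mf{D}_G(\beta,\delta)$ with $\beta\cu\T_v$ a nontrivial arc and $0\leq\delta\leq\eps\cdot\ell(\beta)$. By Corollary~\ref{D for G}, every member of $\mc{S}$ is either of the form $G_{\beta^{\delta}}$ for some arc $\beta^\delta\cu\T_v$, or of the form $Z_G(g)$ for a label-irreducible element $g\in G$. The former subgroups are $G$--parabolic (arc-stabilisers of $G\acts\T_v$ are $G$--parabolic, since $\T_v$ is a restriction quotient of $\X_{\G}$ and stabilisers of arcs in $\X_{\G}$ are parabolic in $\A_{\G}$, cf.\ Remark~\ref{perp rmk 2}), hence convex-cocompact in $\A_{\G}$; the latter are centralisers in $G$, hence also convex-cocompact by Remark~\ref{LI properties}(1) and Remark~\ref{AP centralisers rmk}. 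In either case the members of $\mc{S}$ are $G$--semi-parabolic, so condition~(1) of Definition~\ref{tame defn} follows from Claim~1 in the proof of Lemma~\ref{UCC for centraliser kernels}: there is a bound $N_1=N_1(G)$ on the length of any chain of $G$--semi-parabolic subgroups. Take $N=N_1$.

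Condition~(2) is exactly the content of Corollary~\ref{D for G} combined with part~(2) of Lemma~\ref{general almost-stabilisers}. Indeed, for $0\leq\delta\leq\eps\cdot\ell(\beta)$, either $\mf{D}_G(\beta,\delta)=G_{\beta^{\delta}}$ (in which case $D_G(\beta,\delta)$ contains no loxodromic by Lemma~\ref{general almost-stabilisers}(2), so all of $D_G(\beta,\delta)=\mf{D}_G(\beta,\delta)=G_{\beta^\delta}$, the elliptic case), or $\mf{D}_G(\beta,\delta)=Z_G(g)$ for a label-irreducible loxodromic $g\in G$ whose $\T_v$--axis contains $\beta^{\delta}$. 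In the latter situation $g\in D_G(\beta,\delta)$, so $D_G(\beta,\delta)$ contains a loxodromic; and the axis of $g$ in $\T_v$ is invariant under $Z_G(g)=\mf{D}_G(\beta,\delta)$ because every element commuting with $g$ preserves $\Min(g)$, whose projection to $\T_v$ is the axis of $g$ (here we also use that the parabolic factor $P$ in $Z_{\A_\G}(g)=\langle h\rangle\x P$ is elliptic in $\T_v$ and fixes that axis, as in the proof of Lemma~\ref{almost-stabiliser new}). This is the non-elliptic case.

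Finally, for condition~(3), suppose $H_1,H_2\in\mc{S}$ with $H_1\lneq H_2$, and suppose for contradiction that $H_1$ is non-elliptic. Then $H_1=Z_G(g)$ for a label-irreducible $g\in G$, so $g\in H_1\lneq H_2$. If $H_2=G_{\beta^{\delta}}$ is elliptic in $\T_v$, this contradicts the fact that $g$ is loxodromic in $\T_v$; hence $H_2=Z_G(g')$ for a label-irreducible $g'\in G$ loxodromic in $\T_v$, and $Z_G(g)=H_1\lneq H_2=Z_G(g')$ forces $g'\in Z_G(g)$, so $g$ and $g'$ commute and are both loxodromic in $\T_v$. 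By Remark~\ref{LI properties}(4), two commuting label-irreducibles either have orthogonal label-sets or generate a $\Z$; since both are loxodromic in $\T_v$ their label-sets both contain $v$, so they are not orthogonal, whence $\langle g,g'\rangle\simeq\Z$. But then $Z_G(g)=Z_G(g')$, contradicting $H_1\lneq H_2$. Therefore $H_1$ must be elliptic, establishing condition~(3) and completing the proof that $G\acts\T_v$ is $(\eps,N)$--tame.
\end{proof}
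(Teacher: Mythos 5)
Your argument follows the paper's proof almost exactly: condition~(1) via boundedness of chains of $G$--semi-parabolic subgroups, condition~(2) via Corollary~\ref{D for G}, condition~(3) via Remark~\ref{LI properties}(4) applied to two commuting label-irreducible loxodromics. Conditions~(1) and~(3) are fine.

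There is, however, one incorrect step in your verification of condition~(2), and it is precisely the subtlety the paper flags. In the non-elliptic case you assert that $g\in D_G(\beta,\delta)$, where $g$ is the label-irreducible element with $\mf{D}_G(\beta,\delta)=Z_G(g)$ produced by Corollary~\ref{D for G}. But that corollary only guarantees $\ell_{\T_v}(g)\leq\delta q$, not $\ell_{\T_v}(g)\leq\delta$: the element $g$ is the smallest power of the root $h$ from Lemma~\ref{almost-stabiliser new} that lies in $G$, and this power can be as large as $q$. So $g$ may displace the endpoints of $\beta$ by roughly $\delta q$ and need not belong to the set $D_G(\beta,\delta)$ at all; condition~(2) of Definition~\ref{tame defn} demands a loxodromic in $D(\beta,\delta)$ itself, not merely in the subgroup it generates. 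The gap is easily repaired: since in this case $\mf{D}_G(\beta,\delta)=Z_G(g)\neq G_{\beta^{\delta}}$, the dichotomy of Lemma~\ref{general almost-stabilisers}(2) forces $D_G(\beta,\delta)$ to contain \emph{some} loxodromic element $k$; its loxodromic label-irreducible component is a power of $h$, so its axis coincides with the axis of $g$, which contains $\beta^{\delta}$ and is $Z_G(g)$--invariant by the argument you already give. With $k$ in place of $g$, your proof of condition~(2) goes through.
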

\begin{proof}
We verify the three conditions in Definition~\ref{tame defn}. 
Condition~(2) is immediate from Corollary~\ref{D for G} (note that the loxodromic required by the condition might not be the element $g$ from Corollary~\ref{D for G}, but rather any shortest loxodromic in $G\cap D_{\A_{\G}}(\beta,\delta)$). Condition~(1) follows from (a special case of) Lemma~\ref{UCC for centraliser kernels}, since stabilisers of arcs of $\T_v$ are $G$--parabolic and centralisers are $G$--semi-parabolic. Finally, let $g_1,g_2\in G$ be label-irreducibles with $Z_G(g_1)\lneq Z_G(g_2)$. Then $\langle g_1\rangle\neq\langle g_2\rangle$, hence $\G(g_1)\cu\G(g_2)^{\perp}$ by Remark~\ref{LI properties}(4). This shows that $g_1,g_2$ cannot both be loxodromic in $\T_v$, which implies Condition~(3). 
\end{proof}

\subsection{Arc-stabilisers in the limit.}\label{limit subsec}

Let $G\leq\A_{\G}$ be convex-cocompact and let $Y\cu\X_{\G}$ be a convex subcomplex on which $G$ acts essentially and cocompactly. Let $S\cu G$ be a finite generating set. For every vertex $v\in\G$, consider the restriction quotient $\pi_v\colon\X_{\G}\ra\T_v$. 

Let $\varphi_n\in\aut(G)$ be a sequence of automorphisms projecting to an infinite sequence in $\out(G)$. The standard Bestvina--Paulin argument \cite{Bestvina-Duke,Paulin-IM} (see in particular \cite[Section~2, p.~338, Case~1]{Paulin-arboreal}) guarantees that the quantity:
\[\tau_n:=\inf_{x\in\X_{\G}}\max_{s\in S}d(x,\varphi_n(s)x)\]
diverges for $n\ra+\infty$. Let $o_n\in Y\cu\X_{\G}$ be points realising these infima. For any $G$--metric space $Z$, we let $Z^{\varphi_n}$ represent $Z$ with the twisted $G$--action $g\cdot x:=\varphi_n(g)x$.

Fix a non-principal ultrafilter $\om$. Let $(G\acts\X_{\om},o)$ be the $\om$--limit of the sequence $(G\acts\tfrac{1}{\tau_n}\X_{\G}^{\varphi_n},o_n)$. Note that the action $G\acts \X_{\om}$ does not have a global fixed point: because of our choice of $\tau_n$, every point of $\X_{\om}$ is displaced by at least one element of $S$. It follows that the action $G\acts\X_{\om}$ has unbounded orbits, for instance because $\X_{\om}$ has a bi-Lipschitz equivalent $G$--invariant $\CAT$ metric (the limit of the $\CAT$ metric on $\X_{\G}$). 

Since $\X_{\G}$ embeds isometrically and $\A_{\G}$--equivariantly in the finite product $\prod_{v\in\G}\T_v$, the limit $\X_{\om}$ embeds isometrically and $G$--equivariantly in $\prod_{v\in\G}\T_v^{\om}$, where $T_v^{\om}$ is the $\om$--limit of the sequence $(G\acts \tfrac{1}{\tau_n}T_v^{\varphi_n},\pi_v(o_n))$. Since $G\acts\X_{\om}$ has unbounded orbits, there exists a vertex $v\in\G$ such that the action $G\acts\T_v^{\om}$ is not elliptic.

\begin{rmk}
It is not hard to show that there exists a vertex $v\in\G$ such that
\[ \inf_{x\in \T_v}\max_{s\in S}d(x,\varphi_n(s)x)\geq c(\G)\cdot\tau_n,\]
where $c(\G)$ is a constant depending only on $\G$. Without this inequality, it might happen (a priori) that the non-elliptic limit tree $\T_v^{\om}$ has a $G$--fixed point at infinity. Indeed, $\pi_v(o_n)$ might be far from the point realising $\inf_{x\in \T_v}\max_{s\in S}d(x,\varphi_n(s)x)$, as $Y$ is not convex in the product $\prod_{v\in\G}\pi_v(Y)$. In any case, there is no need to rule out this possibility, as it is irrelevant to the following discussion.
\end{rmk}

In the rest of the subsection, we consider the following setting.

\begin{ass}\label{trees notation}
Fix a vertex $v\in\G$ such that $G\acts\T_v^{\om}$ is not elliptic. For simplicity, we set $T_G:=\pi_v(Y)$, which is the $G$--minimal subtree of $\T_v$. Denote by $G\acts T_n$ the action on $T_G^{\varphi_n}$ with its metric rescaled by $\tau_n$. We also set $T_{\om}:=\T_v^{\om}$, which is the $\om$--limit of $(T_n,\pi_v(o_n))$. 
\end{ass}

We emphasise that the $G$--action on $T_{\om}$ will not be minimal in general ($T_{\om}$ is the universal $\R$--tree as soon as it is not a line).

The following characterises arc-stabilisers for the action $G\acts T_{\om}$. Recall Definition~\ref{centralisers defn}.

\begin{prop}\label{arc-stab prop}
For every arc $\beta\cu T_{\om}$, at least one of the following two options occurs:
\begin{enumerate}
\item $G_{\beta}$ is a centraliser in $G$ and $G_{\beta}$ is elliptic in $\om$--all $T_n$;
\item $G_{\beta}$ is the kernel of a (possibly trivial) homomorphism $\rho\colon Z\ra\R$, where $Z\leq G$ is a centraliser. In addition, $Z$ is the $G$--stabiliser of a line $\alpha\cu T_{\om}$ containing $\beta$, and $\rho$ gives translation lengths along $\alpha$. The $Z$--minimal subtree of $T_n$ is a line for $\om$--all $n$ (hence $Z$ is non-elliptic in $T_n$) and we have $N_G(Z)=Z$.
\end{enumerate}
\end{prop}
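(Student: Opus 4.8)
The plan is to apply the machinery built up in the previous subsections, in particular Proposition~\ref{tame sequences prop} (tame sequences), Corollary~\ref{special gives tame} (the actions $G\acts\T_v$ are tame), Corollary~\ref{arc-stabilisers are centralisers} (long arcs of $\T_v$ have near-centraliser stabilisers), and the $\om$--intersection results of Subsection~\ref{om-intersection sect} (Proposition~\ref{om-centralisers in G cor}). By Corollary~\ref{special gives tame}, each $G\acts T_n$ is $(\tfrac{1}{4r+2},N)$--tame, where $r=\dim\X_{\G}$ and $N$ is uniform; note that twisting by $\varphi_n$ and rescaling the metric do not affect tameness, since they are $G$--equivariant isometries on the nose (up to the homothety), so the constants $\eps=\tfrac{1}{4r+2}$ and $N$ are the same for all $n$. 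The collection of arcs coming from Corollary~\ref{arc-stabilisers are centralisers} (images under $\varphi_n$ of long arcs of $T_G$, with their short end-bits trimmed) will serve as an eventually dense sequence $\mc P_n$: indeed every arc of $T_{\om}$ is a limit of arcs of $T_n$, and since $\tau_n\to\infty$ the pre-images in $T_G$ of bounded arcs have length $\to\infty$, so the trimming of a bounded number $2L$ of edges is negligible after rescaling.

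\textbf{Case (1): $\beta$ is of type~(a) in Proposition~\ref{tame sequences prop}.} Here there is $\delta>0$ with $\mf D(\beta_n,\delta)=G_\beta^{\,\delta_n}$ for $\om$--all $n$ and $G_\beta=\bigcap_\om G_{\beta_n}$, where the $\beta_n\in\mc P_n$ approximate $\beta$. Since $\beta_n$ lies in $\mc P_n$, it is (after untwisting by $\varphi_n^{-1}$) a trimmed long arc of $T_G$; hence $\varphi_n^{-1}(G_{\beta_n})$ is the $G$--stabiliser in $T_G=\pi_v(Y)$ of such an arc. By Corollary~\ref{arc-stabilisers are centralisers}, $Z_GZ_G\big(\varphi_n^{-1}(G_{\beta_n})\big)$ is either $\varphi_n^{-1}(G_{\beta_n})$ itself (a centraliser) or equals $Z_G(g_n)$ for a label-irreducible $g_n$ with $\ell_Y(g_n)\le q$ loxodromic in $\T_v$, with $\varphi_n^{-1}(G_{\beta_n})$ of index $\le q$ inside it. In the second subcase, $g_n$ is loxodromic in $T_G$, so after twisting, $\varphi_n(g_n)$ is loxodromic in $T_n$; but in type~(a) the arc-stabilisers $G_{\beta_n}$ are eventually elliptic, and an index-$\le q$ overgroup of $G_{\beta_n}$ containing a loxodromic would force $\mf D(\beta_n,\delta)$ (which contains this overgroup once $\delta$ exceeds $q\cdot\ell_{T_n}$, and $\ell_{T_n}(\varphi_n(g_n))\to 0$) to be non-elliptic, contradicting type~(a) for large $n$. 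So $\om$--all the $G_{\beta_n}$ are already centralisers in $G$; applying $\varphi_n$ (automorphisms take centralisers to centralisers), each $G_{\beta_n}$ is a centraliser, and then $G_\beta=\bigcap_\om G_{\beta_n}$ is a centraliser by Proposition~\ref{om-centralisers in G cor}(2). Ellipticity of $G_\beta$ in $\om$--all $T_n$ is inherited from that of $G_{\beta_n}$.

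\textbf{Case (2): $\beta$ is of type~(b).} Proposition~\ref{tame sequences prop} provides a line $\alpha\cu T_{\om}$ containing $\beta$, invariant under $G_\beta$, whose $G$--stabiliser $Z$ equals $\bigcap_\om\mf D(\beta_n,\delta)$ (any small $\delta>0$), with $G_\beta=\ker\rho$ for $\rho\colon Z\to\R$ the translation-length homomorphism along $\alpha$; moreover $\mf D(\beta_n,\delta)$ is non-elliptic for $\om$--all $n$. By tameness, $\mf D(\beta_n,\delta)$ has a loxodromic with invariant axis; translating back through $\varphi_n^{-1}$ and invoking Corollary~\ref{D for G} (applied to the arc $\varphi_n^{-1}(\beta_n)$ of $\T_v$, which is long), $\varphi_n^{-1}(\mf D(\beta_n,\delta))=Z_G(g_n)$ for a label-irreducible $g_n\in G$ loxodromic in $\T_v$ with $0<\ell_{\T_v}(g_n)\le\delta q\,\tau_n$. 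Hence $\mf D(\beta_n,\delta)=\varphi_n(Z_G(g_n))=Z_G(\varphi_n(g_n))$ is a centraliser in $G$ for $\om$--all $n$; by Proposition~\ref{om-centralisers in G cor}(2), $Z=\bigcap_\om\mf D(\beta_n,\delta)$ is a centraliser. The $Z$--minimal subtree of $T_n$ is the common axis of the loxodromics in $\mf D(\beta_n,\delta)$, i.e.\ a line; so $Z$ is non-elliptic in $\om$--all $T_n$. Finally, to get $N_G(Z)=Z$: if $n\in N_G(Z)$, then $n$ permutes the axes of loxodromics of $Z$, which is a single line $\ell_n$ in $T_n$ for $\om$--all $n$, so $n$ preserves $\ell_n$ and thus $n\in Z$ for $\om$--all $n$; alternatively, $Z=Z_G(\varphi_n(g_n))$ where $\varphi_n(g_n)$ is label-irreducible, so $Z$ equals the centraliser of an element with $\langle\varphi_n(g_n)\rangle$ of finite index in its own centre component, and one checks using Remark~\ref{LI properties}(5) and convex-cocompactness that such a centraliser is self-normalising in $G$. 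Taking the $\om$--limit of the translation-length homomorphisms $\mf D(\beta_n,\delta)\to\R$ gives $\rho$, whose kernel is $G_\beta$ by Proposition~\ref{tame sequences prop}(1b).

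\textbf{Main obstacle.} The delicate point is the interplay between the two dichotomies --- the tame dichotomy (elliptic vs.\ non-elliptic $\mf D(\beta_n,\delta)$) and the near-centraliser dichotomy of Corollary~\ref{arc-stabilisers are centralisers} (genuine centraliser vs.\ $Z_G(g)$ with $g$ loxodromic in $\T_v$) --- and ensuring these line up coherently along $\om$. Specifically, one must verify that in Case~(1) the ``bad'' branch of Corollary~\ref{arc-stabilisers are centralisers} cannot persist $\om$--often (the loxodromic $g_n$ would survive, after rescaling, as an elliptic element whose presence nonetheless makes $\mf D(\beta_n,\delta)$ non-elliptic --- contradicting type~(a)), and conversely that in Case~(2) it always does. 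This requires being careful about which $\delta$ one uses relative to the (shrinking, after rescaling) translation lengths $\ell_{T_n}(\varphi_n(g_n))$, and about the uniform index bound $\le q$ keeping everything inside a centraliser; once these bookkeeping points are pinned down, the $\om$--intersection results close the argument cleanly.
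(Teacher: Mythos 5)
Your overall route is the same as the paper's: approximate $\beta$ by arcs in the eventually dense family supplied by Corollary~\ref{arc-stabilisers are centralisers}, run the dichotomy of Proposition~\ref{tame sequences prop}, rule out the $Z_G(g)$ branch in the elliptic case by noting that the rescaled loxodromic would land in $D(\beta_n,\delta)$, and close both cases with Proposition~\ref{om-centralisers in G cor}. (You flip the twisting direction in a few places: the element loxodromic in $T_n$ is $\varphi_n^{-1}(g_n)$, not $\varphi_n(g_n)$, since the twisted action is $g\cdot x=\varphi_n(g)x$; this is cosmetic.)

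There is one genuine gap, in your Case~(2). You assert that the $Z$--minimal subtree of $T_n$ is ``the common axis of the loxodromics in $\mf{D}(\beta_n,\delta)$, i.e.\ a line; so $Z$ is non-elliptic in $\om$--all $T_n$''. But $Z=\bigcap_{\om}\mf{D}(\beta_n,\delta)$ is only a subgroup of each $\mf{D}(\beta_n,\delta)$: the loxodromics witnessing non-ellipticity of $\mf{D}(\beta_n,\delta)$ need not survive into the $\om$--intersection, so $Z$ may act on the invariant line $\alpha_n$ with trivial translation-length homomorphism $\rho_n|_Z$, i.e.\ fix $\alpha_n$ pointwise. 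In that sub-case $Z$ is elliptic in $\om$--all $T_n$, your claim is false, and your argument for $N_G(Z)=Z$ (which has $g$ permuting ``the axes of loxodromics of $Z$'') collapses because $Z$ has no loxodromics. The proposition still holds there, but via option~(1): the limit $\rho=\lim\rho_n|_Z$ is then trivial, so $G_{\beta}=\ker\rho=Z$ is a centraliser that is elliptic in $\om$--all $T_n$. The paper's proof inserts exactly this branch (``if $\rho_n$ vanishes on $Z$ for $\om$--all $n$ \dots we fall again in Case~(1)''), and your proof needs it too. Once that case distinction is added, the remaining bookkeeping --- the choice of $\delta$ against $9q/\tau_n$, the index-$\leq q$ bound, and the $\om$--intersection of centralisers --- matches the paper's argument.
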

\begin{proof}
Recall that the trees $T_n$ and $T_G=\pi_v(Y)$ coincide up to rescaling, but are endowed with different $G$--actions. It is therefore convenient to adopt the following convention: if $\eta$ is an arc in $T_n$, we denote by $\wt\eta$ the corresponding arc of $T_G$. We will always write either $\mf{D}_{T_n}(\cdot,\cdot)$ or $\mf{D}_{T_G}(\cdot,\cdot)$ in order to emphasise the $G$--action under consideration.

Since twisting and rescaling preserve tameness (and its parameters), Corollary~\ref{special gives tame} shows that the actions $G\acts T_n$ are all $(\eps,N)$--tame for $\eps=\tfrac{1}{4r+2}$ and some fixed $N$. It follows that we can approximate $\beta$ by a sequence of arcs $\beta_n\cu T_n$ as in Proposition~\ref{tame sequences prop}(1). 

In addition, we can assume that the arcs $\wt\beta_n\cu T_G$ satisfy the dichotomy in Corollary~\ref{arc-stabilisers are centralisers}. Indeed, since $L/\tau_n \ra 0$, Corollary~\ref{arc-stabilisers are centralisers} shows that such arcs form an eventually dense family.

Note that, for every $0\leq\delta<\ell(\beta)$, we have:
\begin{align*}
\mf{D}_{T_n}(\beta_n,\delta)&=\varphi_n^{-1}\big(\mf{D}_{T_G}(\wt\beta_n,\delta\tau_n)\big), & G_{\beta_n}&=\varphi_n^{-1}\big(G_{\wt\beta_n}\big).
\end{align*}
We distinguish two cases, corresponding to Cases~(1a) and~(1b) of Proposition~\ref{tame sequences prop}.

\smallskip
{\bf Case~A:} \emph{there exists $0<\delta<\eps\cdot\ell(\beta)$ such that $\mf{D}_{T_n}(\beta_n,\delta)=G_{\beta_n}$ for $\om$--all $n$.}

\smallskip
In this case, $G_{\beta}=\bigcap_{\om}G_{\beta_n}$. Thus, it suffices to show that $G_{\beta_n}$ is a centraliser for $\om$--all $n$. Then Proposition~\ref{om-centralisers in G cor}(2) guarantees that $G_{\beta}$ is itself a centraliser. In particular $G_{\beta}$ is finitely generated, so $G_{\beta}\leq G_{\beta_n}$ for $\om$--all $n$ and $G_{\beta}$ is elliptic in $\om$--all $T_n$. This is Case~(1) of our proposition.

Since automorphisms of $G$ take centralisers to centralisers, it suffices to show that the subgroups $G_{\wt\beta_n}$ are centralisers. These must fall into one of the two cases of Corollary~\ref{arc-stabilisers are centralisers}. In the first case, it is clear that $G_{\wt\beta_n}$ is a centraliser. The other case can be ruled out as follows.

There would exist elements $g_n\in G$ that are loxodromic in $T_G$ with $0<\ell_{T_G}(g_n)\leq\ell_Y(g_n)\leq q$, and whose axes $\eta_n\cu T_G$ satisfy $\ell(\eta_n\cap\wt\beta_n)\geq\ell(\wt\beta_n)-4q$. In particular, $g_n\in\mf{D}_{T_G}(\wt\beta_n,9q)$ and, for large $n$, we have:
\[\varphi_n^{-1}(g_n)\in\varphi_n^{-1}\big(\mf{D}_{T_G}(\wt\beta_n,9q)\big)=\mf{D}_{T_n}(\beta_n,9q/\tau_n)\cu\mf{D}_{T_n}(\beta_n,\delta)=G_{\beta_n}.\]
This contradicts the fact that the elements $\varphi_n^{-1}(g_n)$ are loxodromic in $T_n$.

\smallskip
{\bf Case~B:} \emph{for each $0<\delta<\eps\cdot\ell(\beta)$, the subgroup $\mf{D}_{T_n}(\beta_n,\delta)$ is non-elliptic for $\om$--all $n$.}

\smallskip
In this case, $G_{\beta}$ leaves invariant a line $\alpha\cu T_{\om}$ containing $\beta$. The stabiliser of $\alpha$ is $\bigcap_{\om}\mf{D}_{T_n}(\beta_n,\delta)$ for some choice of $\delta$, and $G_{\beta}$ is the kernel of the homomorphism giving translation lengths along $\alpha$.

Since $\mf{D}_{T_n}(\beta_n,\delta)$ is non-elliptic, Corollary~\ref{D for G} shows that $\mf{D}_{T_G}(\wt\beta_n,\delta\tau_n)=Z_G(g_n)$ for a label-irreducible element $g_n\in G$ that is loxodromic in $T_G$. Again by Proposition~\ref{om-centralisers in G cor}, the subgroup 
\[Z:=\bigcap_{\om}\mf{D}_{T_n}(\beta_n,\delta)=\bigcap_{\om}Z_G(\varphi_n^{-1}(g_n))\] 
is a centraliser. Summing up, $Z$ is the entire $G$--stabiliser of the line $\alpha$ and $G_{\beta}=\ker\rho$, where $\rho\colon Z\ra\R$ gives translation lengths along $\alpha$.

Note that $Z_G(g_n)$ leaves invariant the axis $\wt\alpha_n\cu T_G$ of $g_n$, and translation lengths along it are given by a homomorphism $\eta_n\colon Z_G(g_n)\ra\R$. The lines $\wt\alpha_n\cu T_G$ correspond to lines $\alpha_n\cu T_n$ converging to $\alpha$, and the homomorphism $\rho\colon Z\ra\R$ is the $\om$--limit of the restrictions to $Z$ of the compositions $\rho_n:=\eta_n\o\varphi_n$ rescaled by $\tau_n$. 

If $\rho_n$ vanishes on $Z$ for $\om$--all $n$, then $\rho$ vanishes on $Z$, so $Z=G_{\beta}$ and $G_{\beta}$ is elliptic in $\om$--all $T_n$. In this case, we fall again in Case~(1) of our proposition.

Otherwise $\rho_n$ is nonzero on $Z$ for $\om$--all $n$, hence the $Z$--minimal subtree of $T_n$ is the line $\alpha_n$. If $g\in G$ normalises $Z$, then we have $g\alpha_n=\alpha_n$ for $\om$--all $n$. Since $\alpha_n$ converge to $\alpha$, we then have $g\alpha=\alpha$ and, since $Z$ is the entire $G$--stabiliser of $\alpha$, we have $g\in Z$. In conclusion $N_G(Z)=Z$. 

This yields Case~(2) of our proposition and concludes the proof.
\end{proof}

An \emph{infinite tripod} in an $\R$--tree is the union of three rays pairwise intersecting at a single point.

\begin{prop}\label{line and tripod prop}
All line- and (infinite tripod)-stabilisers for $G\acts T_{\om}$ are centralisers.
\end{prop}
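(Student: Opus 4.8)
The plan is to derive Proposition~\ref{line and tripod prop} from Proposition~\ref{arc-stab prop} together with the $\om$--stability of centralisers (Proposition~\ref{om-centralisers in G cor}(2)). First I would handle line-stabilisers. Let $\alpha\cu T_{\om}$ be a line and let $L=G_{\alpha}$ be its stabiliser. Pick an arc $\beta\cu\alpha$. Applying Proposition~\ref{arc-stab prop} to $\beta$, we are in one of two cases. In Case~(2) of that proposition the argument is essentially already done: the proposition produces a centraliser $Z\leq G$ which is precisely the $G$--stabiliser of a line containing $\beta$; since two distinct lines in an $\R$--tree share at most one arc (indeed, a line is determined by any of its sub-arcs via its two ends), the line furnished by Proposition~\ref{arc-stab prop} must coincide with $\alpha$, so $L=Z$ is a centraliser. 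The work is therefore concentrated in Case~(1): there $G_{\beta}$ is a centraliser which is elliptic in $\om$--all $T_n$, and we must upgrade this to $L=G_{\alpha}$ being a centraliser.

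For Case~(1), the idea is to exhaust $\alpha$ by a nested sequence of arcs $\beta^{(1)}\cu\beta^{(2)}\cu\dots$ with $\bigcup_k\beta^{(k)}=\alpha$, so that $L=G_{\alpha}=\bigcap_k G_{\beta^{(k)}}$. By Proposition~\ref{arc-stab prop} each $G_{\beta^{(k)}}$ is either a centraliser (Case~(1)) or a kernel of a homomorphism from a centraliser to $\R$ (Case~(2)). I would first argue that, once some $\beta^{(k_0)}$ falls into Case~(1), every larger $\beta^{(k)}$ with $k\ge k_0$ is forced into Case~(1) as well: indeed if $\beta^{(k)}$ were in Case~(2) then $G_{\beta^{(k)}}$ would be non-elliptic (or at best its overgroup $Z$ would be) in $\om$--all $T_n$, whereas $G_{\beta^{(k)}}\leq G_{\beta^{(k_0)}}$ which is elliptic in $\om$--all $T_n$; more precisely, Case~(2) of Proposition~\ref{arc-stab prop} requires the stabiliser to sit inside the stabiliser of a line on which the ambient centraliser acts loxodromically, which is incompatible with being contained in a subgroup that fixes a point of $T_{\om}$ spanning more than that line. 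So from some point on all $G_{\beta^{(k)}}$ are centralisers. Then $L=\bigcap_{k\ge k_0} G_{\beta^{(k)}}$ is a decreasing intersection of centralisers in $G$; since any decreasing intersection of centralisers in $G$ is again a centraliser (an arbitrary intersection of centralisers $Z_G(A_k)$ equals $Z_G(\bigcup_k A_k)$), we conclude $L$ is a centraliser. Note we do not even need the $\om$--intersection machinery here, just the elementary fact that centralisers are closed under arbitrary intersection; the $\om$--version was only needed inside Proposition~\ref{arc-stab prop} itself.

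For infinite-tripod stabilisers the argument is analogous. Let $Y=R_1\cup R_2\cup R_3$ be an infinite tripod with central vertex $o$, and let $G_Y$ be its stabiliser. Choose a sequence of arcs $\beta^{(k)}\cu Y$ with one endpoint in the interior of $R_1$ and the other in the interior of $R_2$, exhausting $R_1\cup R_2$, so $G_{R_1\cup R_2}=\bigcap_k G_{\beta^{(k)}}$; arguing exactly as above, $R_1\cup R_2$ is a line and its stabiliser is a centraliser (this is just the line case already done, or can be re-derived verbatim). Similarly $R_1\cup R_3$ is a line with centraliser stabiliser. Then $G_Y=G_{R_1\cup R_2}\cap G_{R_1\cup R_3}$, an intersection of two centralisers, hence a centraliser. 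The one point that requires a little care — and which I expect to be the main obstacle — is ruling out Case~(2) of Proposition~\ref{arc-stab prop} for the exhausting arcs, i.e.\ checking that once the stabiliser of a sub-arc genuinely fixes more than a line (which is automatic for a tripod: the central vertex has three directions, and a line-invariant subgroup from Case~(2) cannot also fix a third ray emanating from a point of that line unless it fixes the ray pointwise, contradicting loxodromicity of the generator of $Z$), the Case~(2) alternative is impossible, forcing Case~(1) and hence a centraliser. Once that incompatibility is spelled out — using that in Case~(2) the ambient centraliser $Z$ acts loxodromically on the invariant line, so no element of $Z\setminus\ker\rho$ can fix any arc transverse to that line — the proof assembles from Proposition~\ref{arc-stab prop} and the triviality that centralisers in any group are closed under intersection.
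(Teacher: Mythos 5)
The central step of your argument --- writing the stabiliser of a line $\alpha\cu T_{\om}$ as $G_{\alpha}=\bigcap_k G_{\beta^{(k)}}$ for an exhausting sequence of arcs $\beta^{(k)}\cu\alpha$ --- is false whenever $G_{\alpha}$ contains an element translating nontrivially along $\alpha$. Such an element stabilises $\alpha$ setwise but moves every sub-arc, so it lies in $G_{\alpha}$ and in no $G_{\beta^{(k)}}$; the intersection you compute is only the kernel of the translation homomorphism $\rho\colon G_{\alpha}\ra\R$. This is precisely the case the proposition must cover (it is what feeds into Proposition~\ref{important addenda new}, where $G_{\alpha}$ acts on $\alpha$ via a possibly nontrivial $\rho$), and it is the reason the paper does not argue inside $T_{\om}$ at all: it invokes part~(2) of Proposition~\ref{tame sequences prop}, which realises $G_{\alpha}$ as an $\om$--intersection $\bigcap_{\om}\mf{D}(\beta_n,\delta)$ of \emph{almost}-stabilisers of arcs in the approximating trees $T_n$, with $\delta$ large enough to capture elements of small translation length; retracing the proof of Proposition~\ref{arc-stab prop}, each $\mf{D}_{T_n}(\beta_n,\delta)$ is a centraliser (the $\varphi_n$--preimage of either a twisted arc-stabiliser as in Corollary~\ref{arc-stabilisers are centralisers}, or of some $Z_G(g_n)$ as in Corollary~\ref{D for G}), and Proposition~\ref{om-centralisers in G cor}(2) finishes. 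Exact stabilisers of arcs of $T_{\om}$ cannot see the translating elements, so no exhaustion argument of the kind you propose can work.

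Your attempted rescue via Case~(2) of Proposition~\ref{arc-stab prop} does not close the gap either. A line in an $\R$--tree is \emph{not} determined by a sub-arc: two distinct lines may share an arc, or even a ray, so the line $\alpha'$ produced by Case~(2) for $\beta\cu\alpha$ need not equal $\alpha$, and you cannot conclude $G_{\alpha}=Z$. Moreover, the two cases of Proposition~\ref{arc-stab prop} are not mutually exclusive, and your ``once in Case~(1), always in Case~(1)'' step is unjustified: in Case~(2) the arc-stabiliser is $\ker\rho$, which can perfectly well be elliptic in $\om$--all $T_n$ even though $Z$ is not (for instance when $\rho$ is discrete with finitely generated kernel), so there is no incompatibility with its being contained in an elliptic subgroup. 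What is salvageable from your write-up is the reduction of tripod-stabilisers to an intersection of two line-stabilisers (which the paper also uses) and the observation that arbitrary intersections of centralisers are centralisers; but the line case itself must be run in the approximating trees $T_n$, not in $T_{\om}$.
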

\begin{proof}
The stabiliser of an infinite tripod is the intersection of two line-stabilisers, so it suffices to show that line-stabilisers are centralisers. The latter follows from part~(2) of Proposition~\ref{tame sequences prop}, retracing the proof of Proposition~\ref{arc-stab prop}.
\end{proof}

\begin{rmk}
Proposition~\ref{arc-stab prop} shows in particular that arc-stabilisers are closed under taking roots in $G$. Thus, the stabiliser of a line in $T_{\om}$ can never swap its two ends.
\end{rmk}

The following expands on Case~(2) of Proposition~\ref{arc-stab prop}. 

\begin{prop}\label{important addenda new}
Let $\alpha\cu T_{\om}$ be a line acted upon by its stabiliser $G_{\alpha}$ via a (possibly trivial) homomorphism $\rho\colon G_{\alpha}\ra\R$. Suppose that $G_{\alpha}$ is non-elliptic in $T_n$ for $\om$--all $n$.
\begin{enumerate}
\item[(a)] There exists $x\in G$ such that $G_{\alpha}=Z_G(x)$ and we have $N_G(G_{\alpha})=G_{\alpha}$.
\smallskip
\item[(b)] Assuming that $\rho$ is discrete and $\ker\rho$ is finitely generated, the following hold.
	\begin{itemize}
	\item If $\ker\rho$ is elliptic in $\om$--all $T_n$, then $\ker\rho$ is $G$--semi-parabolic and $G_{\alpha}\lneq N_G(\ker\rho)$.
	\item If $\ker\rho$ is non-elliptic in $\om$--all $T_n$, then the centre of $\ker\rho$ contains an element $h$ that is loxodromic in $\om$--all $T_n$. 
	If $\ker\rho=G_{\alpha}$, then $h$ can be chosen to be label-irreducible.
	\end{itemize}
\smallskip
\item[(c)] Suppose that:
	\begin{itemize}
	\item[(c1)] either the automorphisms $\varphi_n$ are coarse-median preserving;
	\item[(c2)] or there does not exist an element $x\in G$ such that $\varphi_n(Z_G(x))$ lies in a single $G$--conjugacy class of subgroups for $\om$--all $n$.
	\end{itemize}
Then $\rho$ is discrete, $\ker\rho$ is a centraliser, and $G_{\alpha}$ is a proper subgroup of $G$. In addition, if $\ker\rho\neq G_{\alpha}$, then $\ker\rho$ is elliptic in $\om$--all $T_n$. 
\smallskip
\item[(d)] Suppose that either $\rho$ is not discrete, or $\ker\rho$ is not $G$--semi-parabolic. Then either the centre of $Z$ has rank $\geq 2$, or it is infinite cyclic and contained in $\ker\rho$.
\smallskip
\item[(e)] If $\rho$ is not discrete, then, for every arc $\beta\cu\alpha$, we have $G_{\beta}\leq G_{\alpha}$.
\end{enumerate}
\end{prop}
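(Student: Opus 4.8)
The plan is to exploit the approximating simplicial trees $T_n$; the single place where the hypothesis that $\rho$ is not discrete enters is the production of loxodromic elements of $G_\alpha$ with arbitrarily small translation length. Recall from the proof of Propositions~\ref{tame sequences prop} and~\ref{arc-stab prop} (and from part~(a) above) that $\alpha$ comes with the following structure: there are label-irreducible elements $\gamma_n\in G$, loxodromic in $T_G=\pi_v(Y)$ with axis $A_n\cu T_G$ and with $\ell_{\T_v}(\gamma_n)\le C\tau_n$ for a fixed constant $C$, such that $G_\alpha=\bigcap_\om Z_G(\varphi_n^{-1}(\gamma_n))$; moreover the lines $\alpha_n\cu T_n$ determined by $A_n$ converge to $\alpha$, and $G_\alpha$ acts on $\alpha_n$ by translations via $\rho_n:=\tfrac1{\tau_n}(\eta_n\o\varphi_n)|_{G_\alpha}$, where $\eta_n$ records signed translation lengths along $A_n$, with $\rho=\lim_\om\rho_n$. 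Since $\rho$ is not discrete, $\rho(G_\alpha)$ is dense in $\R$; so, given the arc $\beta\cu\alpha$, I will fix once and for all an element $z\in G_\alpha$ with $0<|\rho(z)|<\ell(\beta)/(32\dim\X_{\G})$. For $\om$-all $n$ this $z$ is loxodromic in $T_n$ with $\ell_{T_n}(z)=|\rho_n(z)|$ close to $|\rho(z)|$, hence $\varphi_n(z)\in G$ is loxodromic in $T_G$ with axis $A_n$ and $\ell_{\T_v}(\varphi_n(z))=|\rho_n(z)|\tau_n$ small compared with $\ell(\beta)\tau_n$. I stress that this is the only use of the hypothesis: were $\rho$ discrete, the shortest available loxodromic would be a bounded power of $\gamma_n$, with translation length of order $C\tau_n$ for a possibly large $C$, and the argument below would only cover arcs $\beta$ longer than some multiple of $C$. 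Let $\zeta_n$ be the label-irreducible component of $\varphi_n(z)$ that is loxodromic in $\T_v$; it has axis $A_n$ and the same translation length as $\varphi_n(z)$, and from $z\in G_\alpha\le Z_G(\varphi_n^{-1}(\gamma_n))$ together with Remark~\ref{LI properties}(3)--(4) one gets that $\zeta_n$ and $\gamma_n$ commute and generate a common maximal cyclic subgroup $C_n\le\A_{\G}$ (the $\perp$ alternative of Remark~\ref{LI properties}(4) being excluded since both are loxodromic in $\T_v$).

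Now let $g\in G_\beta$; the task is to show $g\in G_\alpha$. Approximate $\beta$ by sub-arcs $\beta_n\cu\alpha_n$ converging to it, with corresponding arcs $\wt\beta_n\cu A_n$ of length $\ell_{\T_v}(\wt\beta_n)=\ell(\beta)\tau_n+o(\tau_n)$. Reading the relation $g\in G_\beta$ inside $T_n$ and rescaling back to $T_G$ shows that $\varphi_n(g)\in D_{\A_{\G}}(\wt\beta_n,\eps_n\tau_n)$ for some $\eps_n\to 0$, for $\om$-all $n$. Since $\eps_n\to 0$, Lemma~\ref{almost-stabiliser new} applies to $\wt\beta_n$ with $\delta=\eps_n\tau_n$ and gives, on a set of $\om$-measure one, that either (i) $\varphi_n(g)$ fixes an arc $I_n\cu A_n$ with $\ell_{\T_v}(I_n)\ge\tfrac12\ell(\beta)\tau_n$ pointwise, or (ii) $\varphi_n(g)\in Z_{\A_{\G}}(h_n)$ for a label-irreducible element $h_n$ loxodromic in $\T_v$ with $\ell_{\T_v}(h_n)\le\eps_n\tau_n$ whose axis contains such an arc $I_n$.

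In both cases $I_n$ is an arc of the axis $A_n$ of $\zeta_n$ of length at least $\tfrac12\ell(\beta)\tau_n>4\dim\X_{\G}\cdot\max\{\ell_{\T_v}(\zeta_n),\ell_{\T_v}(w_n)\}$, where $w_n$ denotes $\varphi_n(g)$ in case~(i) (then $\ell_{\T_v}(w_n)=0$) and $h_n$ in case~(ii). Hence \cite[Corollary~3.14]{Fio10a}, applied exactly as in the proof of Lemma~\ref{long min intersection}, shows that $w_n$ commutes with $\zeta_n$. In case~(i) this means $\varphi_n(g)\in Z_{\A_{\G}}(\zeta_n)=C_n\x P_n$ (Remark~\ref{LI properties}(5)), and since $\gamma_n\in C_n$ is central there, $\varphi_n(g)$ commutes with $\gamma_n$. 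In case~(ii), $h_n$ and $\zeta_n$ are commuting label-irreducibles both loxodromic in $\T_v$, so $\langle h_n,\zeta_n\rangle\simeq\Z$ and $h_n\in C_n$; then $Z_{\A_{\G}}(h_n)=C_n\x Q_n$ and again $\gamma_n\in C_n$ is central, so $\varphi_n(g)$ commutes with $\gamma_n$. Either way $\varphi_n(g)\in Z_G(\gamma_n)$, i.e.\ $g\in Z_G(\varphi_n^{-1}(\gamma_n))$, for $\om$-all $n$; therefore $g\in\bigcap_\om Z_G(\varphi_n^{-1}(\gamma_n))=G_\alpha$, and part~(e) follows. The one delicate point is the three-scale bookkeeping --- $\ell_{\T_v}(\wt\beta_n)$ of order $\ell(\beta)\tau_n$, the error term $\eps_n\tau_n=o(\tau_n)$, and $\ell_{\T_v}(\varphi_n(z))=|\rho_n(z)|\tau_n$, which we can make $o(\tau_n)$ while keeping it a small fraction of $\ell(\beta)\tau_n$ --- arranged so that Lemmas~\ref{almost-stabiliser new} and~\ref{long min intersection} both apply for the given, possibly very short, arc $\beta$.
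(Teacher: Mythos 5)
Your proposal establishes only part~(e) of the proposition; parts~(a)--(d) are never proved. Indeed, your opening paragraph \emph{invokes} part~(a) as known input (the identity $G_\alpha=\bigcap_\om Z_G(\varphi_n^{-1}(\gamma_n))$ and the structure of the lines $\alpha_n$ and homomorphisms $\rho_n$), and the remaining parts are not routine consequences of anything you write. Part~(a) requires showing that the subgroups $Z_G(\varphi_n^{-1}(g_n))$ are $\om$--eventually equal, which the paper deduces from $N_G(Z)=Z$ forcing the $\varphi_n^{-1}(g_n)$ to lie in the centre of $Z$ and hence to commute pairwise. Part~(b) rests on Proposition~\ref{kernel dichotomy}, the Hopf property of $\Z$, and Corollary~\ref{fin many conj parabolics} to rule out $Z=N_G(\ker\rho)$, together with a separate analysis of a label-irreducible basis of the centre of $Z$ in the non-elliptic case. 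Part~(c) hinges on the dichotomy $Z_GZ_G(\ker\pi_{g_n})\in\{\ker\pi_{g_n},\,Z_G(g_n)\}$ and on the claim that the $\varphi_n(Z)$ cannot all be $G$--conjugate, which is precisely where hypotheses~(c1)/(c2) enter. Part~(d) needs the rank analysis of the centre. As a proof of the stated proposition the proposal is therefore substantially incomplete.

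For part~(e) itself your argument is correct and follows the same underlying idea as the paper's, namely that non-discreteness of $\rho$ supplies a loxodromic $z\in G_\alpha$ whose translation length is tiny compared with $\ell(\beta)$, after which the commutation criterion of \cite[Corollary~3.14]{Fio10a} forces any $g\in G_\beta$ to preserve the relevant axis. The paper's version is shorter: it picks $g\in Z$ with $g^Nx\in\beta$ for $N=4\dim\X_{\G}+1$ and some $x\in\beta$, notes that $\Min(h,T_n)$ contains a long sub-arc of $\beta_n$ for any $h\in G_\beta$ by Lemma~\ref{general almost-stabilisers}(1), and applies Lemma~\ref{long min intersection} directly in $T_n$ to get $h\alpha_n=\alpha_n$, hence $h\in G_\alpha$ in the limit. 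Your detour through Lemma~\ref{almost-stabiliser new} and the case analysis on $D_{\A_{\G}}(\wt\beta_n,\eps_n\tau_n)$ reaches the same commutation statement and is sound --- the three-scale bookkeeping you flag does work out --- but it buys nothing over the direct application of Lemma~\ref{long min intersection}.
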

\begin{proof}
We begin with some preliminary remarks.

By Proposition~\ref{line and tripod prop}, we know that $G_{\alpha}$ is a centraliser, so let us write $Z:=G_{\alpha}$ for short. Since $Z$ is finitely generated, and we are assuming that it is non-elliptic in $\om$--all $T_n$, there are loxodromics for its action on $T_n$ for $\om$--all $n$. Thus, retracing one last time the proof of Proposition~\ref{arc-stab prop} using part~(2) of Proposition~\ref{tame sequences prop}, we are necessarily in Case~B, and we obtain a sequence of label-irreducible elements $g_n\in G$ (without loss of generality, not proper powers of elements of $G$) such that $Z=\bigcap_{\om}Z_G(\varphi_n^{-1}(g_n))$. In addition, the $Z$--minimal subtree of $T_n$ is a line for $\om$--all $n$, and we have $N_G(Z)=Z$.

The homomorphism $\rho\colon Z\ra\R$ is obtained as the limit of the homomorphisms $\rho_n\colon Z\ra\R$ giving translation lengths along the $Z$--invariant line in $T_n$. Each $\rho_n$ is the restriction to $Z$ of the composition $\eta_n\o\varphi_n$, where $\eta_n\colon Z_G(g_n)\ra\R$ is the homomorphism giving translation lengths in $\T_v$, rescaled by $\tau_n$. Note that $\eta_n$ has the same kernel as the straight projection $\pi_{g_n}\colon Z_G(g_n)\ra\Z$ introduced in Remark~\ref{cmp and straight proj rmk}(2). Recall that $\ker\pi_{g_n}$ is $G$--parabolic.

\smallskip
{\bf Proof of part~(a).} We have already observed that $N_G(Z)=Z$ and $Z=\bigcap_{\om}Z_G(\varphi_n^{-1}(g_n))$. Let us show that we actually have $Z=Z_G(\varphi_n^{-1}(g_n))$ for $\om$--all $n$.

Since $Z$ is finitely generated, we have $Z\leq Z_G(\varphi_n^{-1}(g_n))$ for $\om$--all $n$, so $Z$ commutes with $\varphi_n^{-1}(g_n)$. Since $N_G(Z)=Z$, the elements $\varphi_n^{-1}(g_n)$ lie in the centre of $Z$ for $\om$--all $n$. Thus, the $\varphi_n^{-1}(g_n)$ pairwise commute, hence the maximal cyclic groups containing their label-irreducible components are $\om$--constant. This shows that $Z_G(\varphi_n^{-1}(g_n))$ is $\om$--constant, hence it coincides with $Z$, as required.

\smallskip
{\bf Proof of part~(b).} First, suppose that $\ker\rho$ is elliptic in $\om$--all $T_n$. We will show that $Z$ is a proper subgroup of $N_G(\ker\rho)$. Since $Z$ is $G$--semi-parabolic and $N_G(Z)=Z$, we can then use Proposition~\ref{kernel dichotomy} to deduce that $\ker\rho$ is $G$--semi-parabolic, as required.

Since $\ker\rho$ is finitely generated and elliptic in $T_n$, we have $\ker\rho\leq\ker\rho_n$ for $\om$--all $n$. Since $\rho$ is discrete, we have $Z/\ker\rho\simeq\Z$ (recall that $Z$ is non-elliptic in $T_n$, so $\ker\rho$ is a proper subgroup). 
Since we also have $Z/\ker\rho_n\simeq\Z$, this implies that $\ker\rho_n=\ker\rho$ for $\om$--all $n$ ($\Z$ is Hopfian). Recalling from part~(a) that $Z=\varphi_n^{-1}(Z_G(g_n))$, it follows that:
\[Z\leq N_G(\ker\rho)=N_G(\ker\rho_n)=\varphi_n^{-1}N_G(\ker\eta_n)=\varphi_n^{-1}N_G(\ker\pi_{g_n}).\] 

Suppose for the sake of contradiction that we have $Z=N_G(\ker\rho)$. Then $N_G(\ker\pi_{g_n})=\varphi_n(Z)=Z_G(g_n)$. Since $\ker\pi_{g_n}$ is $G$--parabolic, Corollary~\ref{fin many conj parabolics} shows that the centralisers $Z_G(g_n)$ are chosen from finitely many $G$--conjugacy classes of subgroups. Since the $g_n$ are all label-irreducible, it follows that there is a conjugacy class $\mc{C}\cu G$ such that $g_n\in\mc{C}$ for $\om$--all $n$. This implies that the translation length of $g_n$ in $\T_v$ is uniformly bounded, so $\inf(\rho_n(Z)\cap\R_{>0})$ converges to zero. Since $\rho_n$ converges to $\rho$ and $\ker\rho_n=\ker\rho$ for $\om$--all $n$, we conclude that $\rho$ is trivial and $Z=\ker\rho=\ker\rho_n$. This contradicts our assumption that $Z$ be non-elliptic in $\om$--all $T_n$.

In order to complete the proof of part~(b), suppose now that $\ker\rho$ is non-elliptic for $\om$--all $T_n$. Let $h_1,\dots,h_k$ be a finite generating set for $\ker\rho$. The elements $h_i$ and $h_ih_j$ cannot all be elliptic in $T_n$ or $\ker\rho$ would also be elliptic in $T_n$ by Serre's lemma.
This shows that there exists an element $h'\in\ker\rho$ such that $\rho_n(h')>0$ for $\om$--all $n$. Hence $\inf(\rho_n(Z)\cap\R_{>0})\ra 0$. 

Now, let $Z_0$ denote the centre of $Z$. Recall that, if $G$ is, say, $q$--convex-cocompact, then $Z_G(g_n)$ has a subgroup of index $\leq q$ that splits as $\langle g_n\rangle\x\ker\pi_{g_n}$ (e.g.\ by Remark~\ref{LI properties}(6)). Thus, since we have seen in the proof of part~(a) that $\varphi_n^{-1}(g_n)\in Z_0$ for $\om$--all $n$, we have: 
\[\inf(\rho_n(Z_0)\cap\R_{>0})\leq\eta_n(g_n)\leq q\cdot \inf(\eta_n(Z_G(g_n))\cap\R_{>0})=q\cdot \inf(\rho_n(Z)\cap\R_{>0})\ra 0.\]

The subgroup $Z_0$ is free abelian, nontrivial and convex-cocompact (note that $Z_0$ is itself a centraliser). Thus, $Z_0$ admits a basis of label-irreducible elements $x_1,\dots,x_m$ with $m\geq 1$. Since $Z_0$ contains $\varphi_n^{-1}(g_n)$, on which $\rho_n$ does not vanish, we can assume that $\rho_n(x_1)>0$ for $\om$--all $n$. If $x_1\in\ker\rho$, we can take $h=x_1$ and we are done. Note that this is always the case if $\ker\rho=Z=G_{\alpha}$.

Otherwise $\rho(x_1)\neq 0$, hence $\rho(Z_0)\neq\{0\}$. Modifying the basis of $Z_0$ if necessary (which can kill label-irreducibility of its elements), we can assume that $\rho(x_1)$ generates $\rho(Z_0)$, and $x_i\in\ker\rho$ for all $i\geq 2$. If $\rho_n$ vanished on all $x_i\neq x_1$ for $\om$--all $n$, we would have $\inf(\rho_n(Z_0)\cap\R_{>0})=\rho_n(x_1)\ra \rho(x_1)\neq 0$, contradicting the fact that $\inf(\rho_n(Z_0)\cap\R_{>0})\ra 0$. Thus, there exists $i\geq 2$ such that $\rho_n(x_i)>0$ for $\om$--all $n$, and $x_i$ lies in the centre of $\ker\rho$ as required. This proves part~(b).

\smallskip
{\bf Proof of part~(c).} If $\ker\rho=Z$, then all statements are immediate. Indeed, $\rho$ is trivial (hence discrete), its kernel is the centraliser $Z$, and we cannot have $G=Z$, as $G$ would act elliptically on $T_{\om}$. Thus, we assume in the rest of the proof that $\ker\rho\neq Z$.

Recall that there exist label-irreducible elements $g_n\in G$ such that $Z=\varphi_n^{-1}(Z_G(g_n))$ for $\om$--all $n$. Also recall the notion of straight projection from Remark~\ref{cmp and straight proj rmk}(2). 

\smallskip
{\bf Claim:} \emph{the subgroup $\varphi_n(Z)=Z_G(g_n)$ does not lie in a single $G$--conjugacy class for $\om$--all $n$.}

\smallskip\noindent
\emph{Proof of claim.}
Under Assumption~(c2), this is immediate from part~(a). Suppose instead that the automorphisms $\varphi_n$ are coarse-median preserving. Then Remark~\ref{cmp and straight proj rmk} shows that we have $g=\varphi_n^{-1}(g_n)$ for a fixed label-irreducible $g\in G$ and $\om$--all $n$. In addition, $\ker\rho_n=\varphi_n^{-1}(\ker\pi_{g_n})=\ker\pi_g$, so $\ker\pi_g$ is contained in $\ker\rho$. Since $Z/\ker\pi_g\simeq\Z$ and $\ker\rho\neq Z$, this implies that $\ker\rho=\ker\pi_g$.

If the $\varphi_n(Z)=Z_G(g_n)$ lied in a single conjugacy class of subgroups, then the elements $\varphi_n(g)=g_n$ would lie in a single conjugacy class, since they are all label-irreducible. As in the proof of part~(b), this would imply that $g$ is elliptic in $T_{\om}$. In this case $g\in\ker\rho\setminus\ker\pi_g$, contradicting the fact that $\ker\rho=\ker\pi_g$.
\hfill$\blacksquare$

\smallskip
The claim immediately implies that $G_{\alpha}=Z$ is a proper subgroup of $G$. We now prove the remaining statements.

Since $g_n\in Z_G(\ker\pi_{g_n})$, we have $\ker\pi_{g_n}\leq Z_GZ_G(\ker\pi_{g_n})\leq Z_G(g_n)$. Being a centraliser, $Z_GZ_G(\ker\pi_{g_n})$ is convex-cocompact and closed under taking roots. Thus, since $Z_G(g_n)$ virtually splits as $\langle g_n\rangle\x\ker\pi_{g_n}$, we must have either $Z_GZ_G(\ker\pi_{g_n})=\ker\pi_{g_n}$ or $Z_GZ_G(\ker\pi_{g_n})=Z_G(g_n)$.

Suppose first that $Z_GZ_G(\ker\pi_{g_n})=Z_G(g_n)$ for $\om$--all $n$. Recall from Remark~\ref{cmp and straight proj rmk}(2) that $\ker\pi_{g_n}$ is $G$--parabolic. Thus, Corollary~\ref{fin many conj parabolics} implies that the subgroups $Z_G(g_n)$ lie in finitely many $G$--conjugacy classes. It follows that $\varphi_n(Z)=Z_G(g_n)$ lies in a single $G$--conjugacy class for $\om$--all $n$. This is ruled out by the claim.

Thus, we must have $Z_GZ_G(\ker\pi_{g_n})=\ker\pi_{g_n}$ for $\om$--all $n$. In this case, $\ker\pi_{g_n}$ is a centraliser, so $\ker\rho_n=\varphi_n^{-1}(\ker\pi_{g_n})$ is a centraliser, and it is a convex-cocompact. Since $Z$ virtually splits as $\ker\rho_n\x\Z$, we conclude that the subgroup $\ker\rho_n$ is $\om$--constant. It follows that $\ker\rho_n\leq\ker\rho$ for $\om$--all $n$. Since $Z/\ker\rho_n\simeq\Z$ and we assumed that $\ker\rho\neq Z$, it follows that $\ker\rho_n=\ker\rho$ for $\om$--all $n$. This shows that $\rho$ is discrete and $\ker\rho$ is a centraliser elliptic in $\om$--all $T_n$.

\smallskip
{\bf Proof of part~(d).} Recall from the proof of part~(a) that the elements $\varphi_n^{-1}(g_n)$ all lie in the centre of $Z$. Thus, the latter always has rank at least $1$. Suppose that the centre of $Z$ has rank $1$ and intersects $\ker\rho$ trivially. We will show that $\rho$ is discrete with $G$--semi-parabolic kernel. 

Since the centre of $Z$ is cyclic, there exists an element $g\in G$ (now not necessarily label-irreducible) such that $g=\varphi_n^{-1}(g_n)$ for $\om$--all $n$. Thus $\rho_n(g)=\eta_n(g_n)\in\R$ generates the image of $\rho_n$. If we had $\rho_n(g)\ra 0$, then the centre of $Z$ would be elliptic in $T_{\om}$, hence contained in $\ker\rho$. Thus, $\rho_n(g)$ must stay bounded away from zero, and the image of $\rho$ is discrete and generated by $\rho(g)$.

Now, $Z$ virtually splits as $\langle g\rangle\x P$ for some subgroup $P\leq Z$, which is necessarily finitely generated. Since $\rho(g)$ is nontrivial and generates the image of $\rho$, we deduce that $\ker\rho$ intersects $\langle g\rangle\x P$ in a subgroup that projects isomorphically onto $P$. In particular, $\ker\rho$ is finitely generated. Since the images of the $\rho_n$ stay bounded away from zero and $\ker\rho$ is finitely generated, we conclude that $\ker\rho$ is elliptic in $\om$--all $T_n$. Finally, by part~(b), $\ker\rho$ is $G$--semi-parabolic.

\smallskip
{\bf Proof of part~(e).} Since $\rho$ is non-discrete, there exists a loxodromic element $g\in Z$ with translation length small enough that there exists a point $x\in\beta$ such that $g^Nx\in\beta$, where $N=4\dim\X_{\G}+1$. If $h\in G_{\beta}$, Lemma~\ref{long min intersection} shows that $h$ preserves the axis of $g$ in $T_n$ for $\om$--all $n$. Thus, $h$ preserves the axis of $g$ in $T_{\om}$, hence $h\in G_{\alpha}$ as required.
\end{proof}

Parts~(c1) and~(c2) of Proposition~\ref{important addenda new} are the key to Theorem~\ref{main cmp} and Corollary~\ref{infinite out finite cmp}, respectively. The differences in the assumptions of Proposition~\ref{important addenda new}(c2) and Corollary~\ref{infinite out finite cmp} can be reconciled using the following consequence of a providential result of B.\ H.\ Neumann.

\begin{lem}\label{Neumann's lemma}
Let $G$ be a group with a countable collection $\mscr{H}$ of $G$--conjugacy classes of subgroups. Suppose that, for every class $\mc{H}\in\mscr{H}$, the $\out(G)$--orbit of $\mc{H}$ is infinite. Then there exists a sequence $\phi_n\in\out(G)$ such that, for every $\mc{H}\in\mscr{H}$, the sequence $\phi_n(\mc{H})$ eventually consists of pairwise distinct classes. 
\end{lem}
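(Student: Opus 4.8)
The plan is to deduce this from B.~H.~Neumann's lemma that a group cannot be covered by finitely many cosets of subgroups of infinite index. The key observation is that ``$\phi(\mc{H})=\phi'(\mc{H})$'' defines, for each pair $\phi,\phi'\in\out(G)$, a condition that cuts out a union of cosets of a fixed subgroup of $\out(G)$, and we want to avoid all these ``bad'' conditions simultaneously, for all classes in $\mscr{H}$, using only countably many conditions.

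\begin{proof}
For each $\mc{H}\in\mscr{H}$, let $\Stab(\mc{H})\leq\out(G)$ be the stabiliser of the conjugacy class $\mc{H}$ under the $\out(G)$--action. Since the orbit of $\mc{H}$ is infinite by hypothesis, $\Stab(\mc{H})$ has infinite index in $\out(G)$. Note that for $\phi,\phi'\in\out(G)$ we have $\phi(\mc{H})=\phi'(\mc{H})$ if and only if $\phi^{-1}\phi'\in\Stab(\mc{H})$, i.e.\ if and only if $\phi'\in\phi\cdot\Stab(\mc{H})$.

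Enumerate $\mscr{H}=\{\mc{H}_1,\mc{H}_2,\dots\}$ (if $\mscr{H}$ is finite the argument only simplifies). We construct the sequence $\phi_n$ recursively. Put $\phi_1:=1\in\out(G)$. Suppose $\phi_1,\dots,\phi_n$ have been chosen. We must pick $\phi_{n+1}\in\out(G)$ such that, for every $k\leq n+1$, the class $\phi_{n+1}(\mc{H}_k)$ differs from $\phi_1(\mc{H}_k),\dots,\phi_n(\mc{H}_k)$. By the observation above, for fixed $k\leq n+1$ the set of ``forbidden'' elements is
\[
\bigcup_{i=1}^{n}\phi_i\cdot\Stab(\mc{H}_k),
\]
a union of $n$ cosets of the infinite-index subgroup $\Stab(\mc{H}_k)$. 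Taking the union over $k=1,\dots,n+1$, the total set of forbidden elements is a union of finitely many cosets of subgroups of infinite index in $\out(G)$. By B.~H.~Neumann's lemma \cite{Neumann}, such a finite union cannot be all of $\out(G)$, so we may choose $\phi_{n+1}$ outside it.

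This recursion produces a sequence $\phi_n\in\out(G)$ with the property that, for every $k$ and every pair $i<j$ with $i,j\geq k$, we have $\phi_i(\mc{H}_k)\neq\phi_j(\mc{H}_k)$ (indeed, when $\phi_j$ was chosen, $j\geq k$ ensured that $k$ was among the indices $\leq j$ being treated, so $\phi_j(\mc{H}_k)$ was forced to differ from $\phi_i(\mc{H}_k)$ for all $i<j$). Hence for each fixed $\mc{H}_k\in\mscr{H}$, the tail $(\phi_n(\mc{H}_k))_{n\geq k}$ consists of pairwise distinct conjugacy classes, which is the required conclusion.
\end{proof}
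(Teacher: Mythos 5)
Your proof is correct and follows essentially the same route as the paper: an inductive choice of $\phi_{n+1}$ avoiding finitely many cosets of the infinite-index stabilisers $\Stab(\mc{H}_k)$, with B.~H.~Neumann's coset-covering lemma guaranteeing that such a choice exists at every stage. The only cosmetic difference is that the paper phrases the appeal to Neumann's lemma as a contradiction, whereas you state directly that the forbidden set is proper.
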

\begin{proof}
Let $\dots\cu\mscr{H}_n\cu\mscr{H}_{n+1}\cu\dots$ be an exhaustion of $\mscr{H}$ by finite subsets. We define $\phi_n$ inductively, starting with an arbitrary automorphism $\phi_1$. Suppose that $\phi_1,\dots,\phi_n$ have been defined. We would like to choose $\phi_{n+1}$ so that, for every $\mc{H}\in\mscr{H}_n$, we have $\phi_{n+1}(\mc{H})\not\in\{\phi_1(\mc{H}),\dots,\phi_n(\mc{H})\}$. If this is possible for every $n$, then we obtain the required sequence of automorphisms.

Suppose instead that for some $n$, we cannot choose $\phi_{n+1}$ with this property. Then, for every $\phi\in\out(G)$, there exists $\mc{H}\in\mscr{H}_n$ such that $\phi(\mc{H})\in\{\phi_1(\mc{H}),\dots,\phi_n(\mc{H})\}$. Denoting by $\out_{\mc{H}}(G)$ the stabiliser of $\mc{H}$ within $\out(G)$, this means that $\out(G)$ is covered by finitely many cosets of the infinite-index subgroups $\out_{\mc{H}}(G)$ with $\mc{H}\in\mscr{H}_n$. By \cite[Lemma~4.1]{Neumann-cosets}, this is impossible.
\end{proof}

Finally, we record here the following result, which will be repeatedly needed in Subsection~\ref{main argument sect}.

\begin{lem}\label{almost dense orbits}
Let $\beta\cu T_{\om}$ be an arc such that $G_{\beta}$ is finitely generated and elliptic in $\om$--all $T_n$. If $\beta$ falls in Case~(2) of Proposition~\ref{arc-stab prop}, assume in addition that $\rho$ has discrete image. Then there exists a sequence $\eps_n\ra 0$ such that $Z_G(G_{\beta})$ acts on $\Fix(G_{\beta},T_n)$ with $\eps_n$--dense orbits.
\end{lem}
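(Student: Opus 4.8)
The plan is to reduce to an arc-stabiliser statement on the simplicial trees $T_n$ and exploit that $Z_G(G_\beta)$ acts cocompactly on the appropriate fixed subtree there, then transfer this cocompactness to $T_\om$ by rescaling. First I would choose, via Proposition~\ref{tame sequences prop}(1), a sequence of arcs $\beta_n\cu T_n$ approximating $\beta$. If $\beta$ falls in Case~(1) of Proposition~\ref{arc-stab prop} (or in the sub-case of Case~(2) where $\rho$ vanishes on $Z$), we are in situation~(1a) of Proposition~\ref{tame sequences prop}: there is $\delta>0$ with $\mf{D}_{T_n}(\beta_n,\delta)=G_{\beta_n}$ for $\om$--all $n$, and $G_\beta=\bigcap_\om G_{\beta_n}$; since $G_\beta$ is finitely generated and elliptic in $\om$--all $T_n$, Remark~\ref{fg om-intersection} gives $G_\beta=G_{\beta_n}$ for $\om$--all $n$. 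If instead $\beta$ falls in Case~(2) of Proposition~\ref{arc-stab prop} with $\rho$ discrete and nontrivial, then by the analysis in the proof of Proposition~\ref{arc-stab prop} (and Proposition~\ref{important addenda new}(c)--(b)) the kernel $\ker\rho=G_\beta$ is again finitely generated, $G$--semi-parabolic, and $\ker\rho_n=\ker\rho$ for $\om$--all $n$; in particular $G_\beta=\ker\rho_n$, the pointwise stabiliser of a sub-arc of the $Z$--invariant line $\alpha_n\cu T_n$. In all cases, $G_\beta$ equals, for $\om$--all $n$, the pointwise stabiliser in $T_n$ of an arc $\beta_n'$ (either $\beta_n$ itself, or a sub-arc of $\alpha_n$) whose $T_G$--preimage $\wt\beta_n'$ we may assume, by eventual density (Corollary~\ref{arc-stabilisers are centralisers}), is a decent arc of the form appearing there.

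The heart of the argument is then the following: for $\om$--all $n$, the centraliser $Z_G(G_\beta)$ acts cocompactly on $\Fix(G_\beta,T_n)$, \emph{with a cocompactness constant that does not grow with $n$}. Working in the untwisted picture, $G_\beta=\varphi_n^{-1}(G_{\wt\beta_n'})$ and $Z_G(G_\beta)=\varphi_n^{-1}(Z_G(G_{\wt\beta_n'}))$, so it suffices to prove that $Z_G(G_{\wt\beta_n'})$ acts on $\Fix(G_{\wt\beta_n'},T_G)$ with a uniformly bounded number of orbits of vertices (this number will then govern $\eps_n$ once we divide the metric of $T_G$ by $\tau_n$). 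Here $G_{\wt\beta_n'}$ is the $G$--stabiliser of a (possibly very long) arc of $T_G=\pi_v(Y)$, hence a $G$--parabolic subgroup $G\cap P$ with $P$ a parabolic of $\A_\G$. Its fixed subtree in $T_v$ is again a restriction-quotient tree of a parabolic stratum, and $Z_G(G_{\wt\beta_n'})=Z_G(G\cap P)$; by Lemma~\ref{special cc}(2), $Z_G(H)$ acts on $\mc{W}_1(G)\cap\overline{\mc W}_0(H)$ with at most $2q\cdot\#\G^{(0)}$ orbits for any convex-cocompact $H\leq G$, which — since $\Fix(G_{\wt\beta_n'},T_v)$ is precisely the tree built from the hyperplanes $\g^{-1}(v)$ that are $G_{\wt\beta_n'}$--invariant, i.e.\ a subset of $\overline{\mc W}_0(G_{\wt\beta_n'})$ — bounds the number of $Z_G(G_{\wt\beta_n'})$--orbits of edges of $\Fix(G_{\wt\beta_n'},T_v)$, and hence of vertices of $\Fix(G_{\wt\beta_n'},T_G)$, by a constant $M$ depending only on $G$ (equivalently, on $q$ and $\G$). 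One subtlety: $G_{\wt\beta_n'}$ need not literally be $G$--parabolic if $\beta$ is in the kernel-of-$\rho$ case — there $G_\beta=\ker\rho_n$ — but it is $G$--semi-parabolic with $\G(\ker\rho_n)=\G(Q_n)$ by Lemma~\ref{kernel envelope}, and Lemma~\ref{special cc}(2) together with Lemma~\ref{G--AP structure} still yields a uniform bound on the orbit count of $Z_G(\ker\rho_n)$ acting on $\Fix(\ker\rho_n,T_v)$; alternatively one observes $Z_G(\ker\rho_n)=Z_G(H'_n)$ for the envelope $H'_n$ of Lemma~\ref{kernel envelope}, reducing to the parabolic case.

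With the uniform orbit bound $M$ in hand, the conclusion is routine: choose vertices $p_n\in\Fix(G_\beta,T_n)$; any vertex $x\in\Fix(G_\beta,T_n)$ lies, after translating by an element of $Z_G(G_\beta)$, within $M$ edges of $p_n$ in $T_n$, where one edge of $T_n$ has length $1/\tau_n$ (as $T_n$ is $T_G$ rescaled by $\tau_n$). Hence $Z_G(G_\beta)$ acts on $\Fix(G_\beta,T_n)$ with orbits that are $\eps_n$--dense for $\eps_n:=M/\tau_n$, and $\eps_n\ra 0$ since $\tau_n\ra+\infty$. \textbf{The main obstacle} I anticipate is the bookkeeping needed to identify $\Fix(G_\beta,T_n)$ correctly in the Case~(2) scenario and to confirm that $Z_G(G_\beta)$ — rather than merely its image under $\varphi_n$ in some conjugate — is what acts cocompactly there; this is handled by the equivariance $Z_G(G_\beta)=\varphi_n^{-1}(Z_G(G_{\wt\beta_n'}))$ together with the observation, from the proof of Proposition~\ref{arc-stab prop}, that $G_\beta$ genuinely \emph{equals} $G_{\wt\beta_n'}$ (resp.\ $\ker\rho_n$) for $\om$--all $n$, not just up to finite index, so that fixed subtrees match on the nose.
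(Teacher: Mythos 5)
Your overall strategy is the right one --- reduce to a uniform bound on the number of orbits of $Z_G(\varphi_n(G_\beta))$ acting on $\Fix(\varphi_n(G_\beta),T_G)$ via Lemma~\ref{special cc}(2), then divide by $\tau_n$ --- and your final rescaling step is exactly what is needed. But the reduction contains a genuine gap. In Case~(1) you claim that, since $G_\beta=\bigcap_\om G_{\beta_n}$ is finitely generated, Remark~\ref{fg om-intersection} gives $G_\beta=G_{\beta_n}$ for $\om$--all $n$. It does not: the remark only produces a set $J$ with $\om(J)=1$ and $G_\beta=\bigcap_{n\in J}G_{\beta_n}$, hence the containment $G_\beta\leq G_{\beta_n}$ for $\om$--all $n$, and the $G_{\beta_n}$ may each be strictly larger than their $\om$--intersection (intersections of centralisers in $\A_\G$ can certainly be proper, e.g.\ $Z_G(a)\cap Z_G(c)$ in $F_2\times F_2$). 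This matters because the inclusions go the wrong way for your transfer: if $G_\beta\lneq G_{\beta_n}$ then $\Fix(G_\beta,T_n)\supseteq\Fix(G_{\beta_n},T_n)$ and $Z_G(G_\beta)\supseteq Z_G(G_{\beta_n})$, so cocompactness of the smaller group on the smaller fixed tree says nothing about cocompactness of $Z_G(G_\beta)$ on $\Fix(G_\beta,T_n)$. The same issue recurs in the sub-case where $\rho$ vanishes on $Z$, where again $G_\beta$ is only an $\om$--intersection of the groups $Z_G(\varphi_n^{-1}(g_n))$.

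The fix is to drop the identification of $\varphi_n(G_\beta)$ with an honest arc-stabiliser of $T_G$ altogether: Lemma~\ref{special cc}(2) applies to \emph{any} convex-cocompact $H\leq G$, not only to $G$--parabolics, so it suffices to know that $\varphi_n(G_\beta)$ is convex-cocompact for every $n$. In Case~(1), $G_\beta$ is a centraliser, hence so is $\varphi_n(G_\beta)$, hence it is $G$--semi-parabolic and convex-cocompact. In Case~(2) with $\rho$ discrete and nontrivial, Proposition~\ref{important addenda new}(b) gives $Z\lneq N_G(G_\beta)$ with $N_G(Z)=Z$; transporting by $\varphi_n$ and applying Proposition~\ref{kernel dichotomy} to $\varphi_n(G_\beta)=\ker(\rho\circ\varphi_n^{-1})$ rules out the first branch of the dichotomy and shows $\varphi_n(G_\beta)$ is $G$--semi-parabolic. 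With that in hand, Lemma~\ref{special cc}(2) applied to $H=\varphi_n(G_\beta)$ gives the uniform bound $c$ on orbits of edges of $\Fix(\varphi_n(G_\beta),T_G)$ under $\varphi_n(Z_G(G_\beta))=Z_G(\varphi_n(G_\beta))$, and your concluding paragraph then goes through verbatim with $\eps_n=c/\tau_n$.
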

\begin{proof}
Observe first that $\varphi_n(G_{\beta})$ is $G$--semi-parabolic for every $n$. Indeed, this is clear if $\beta$ falls in Case~(1) of Proposition~\ref{arc-stab prop}, since then $G_{\beta}$ is a centraliser. Otherwise, Proposition~\ref{important addenda new}(b) shows that $G_{\beta}$ is the kernel of a homomorphism $Z\ra\Z$, where $Z$ is a centraliser and $Z$ is a proper subgroup of $N_G(G_{\beta})$. In this case, the group $\varphi_n(G_{\beta})$ is of the same form for every $n$, and Proposition~\ref{kernel dichotomy} ensures that $\varphi_n(G_{\beta})$ is $G$--semi-parabolic.

Now, a consequence is that $\varphi_n(G_{\beta})$ is convex-cocompact in $G$ for all $n$. Recall that $T_G\cu\T_v$ is the $G$--minimal subtree. Thus, Lemma~\ref{special cc}(2) shows that $\varphi_n(Z_G(G_{\beta}))=Z_G(\varphi_n(G_{\beta}))$ acts on $\Fix(\varphi_n(G_{\beta}),T_G)$ with $\leq c$ orbits of edges, where $c$ only depends on $G$ and its embedding in $\A_{\G}$.

Since $T_n$ is a copy of $T_G$, twisted by $\varphi_n$ and rescaled by $\tau_n\ra+\infty$, it follows that $Z_G(G_{\beta})$ acts on $\Fix(G_{\beta},T_n)$ with $\eps_n$--dense orbits, where $\eps_n:=c/\tau_n\ra 0$.
\end{proof}

\section{From $\R$--trees to DLS automorphisms.}\label{Rips sect}

In this section, we use the description of the limit tree $T_{\om}$ obtained in Subsection~\ref{limit subsec} to prove Theorems~\ref{main cmp} and~\ref{main general} and Corollary~\ref{infinite out finite cmp}.

First, in Subsection~\ref{Guirardel sect}, we review various standard results originating from ideas of Rips, Sela, Bestvina, Feighn and Guirardel. Then, in Subsection~\ref{outer dls sect}, we briefly discuss how to ensure that DLS automorphisms are not inner. Finally, Subsection~\ref{main argument sect} contains the core argument.

\subsection{Actions on $\R$--trees.}\label{Guirardel sect}

In this subsection, we review a few classical facts on actions on $\R$--trees. 

A subtree of an $\R$--tree is \emph{non-degenerate} if it is not a single point. Arcs are always assumed to be non-degenerate. A \emph{finite subtree} is the convex hull of a finite set of points.

If $G$ is a group, we refer to $\R$--trees equipped with an isometric $G$--action simply as \emph{$G$--trees}. A $G$--tree $T$ is \emph{minimal} if it does not contain any proper $G$--invariant subtrees. 
A non-degenerate subtree $U\cu T$ is \emph{stable} if all its arcs have the same $G$--stabiliser. We say that $T$ is \emph{BF--stable} (after \cite{BF-stable}) if every arc of $T$ contains a stable sub-arc.

If $T_1$ and $T_2$ are $G$--trees, a \emph{morphism} is a $G$--equivariant map $f\colon T_1\ra T_2$ with the property that every arc of $T_1$ can be covered by finitely many arcs on which $f$ is isometric.

A $G$--tree is said to be \emph{geometric} if it originates from a finite foliated $2$--complex $X$ with fundamental group $G$. The precise definition will not be relevant to us and is omitted. We instead refer the reader to \cite{LP} or \cite[Subsection~1.7]{Gui-Fou} for additional details. 

\begin{rmk}\label{geometric modulo kernel}
Let $T$ be a geometric $G$--tree. If $N$ is the kernel of the $G$--action, then $T$ is geometric also as a $G/N$--tree.
\end{rmk}

The following can be deduced for instance from \cite[Theorem~2.2]{LP} or \cite[Section~2]{Gui-CMH}.

\begin{prop}\label{geometric approximations}
Let $T$ be a minimal $G$--tree with $G$ finitely presented. To every finite subtree $K\cu T$, we can associate a geometric $G$--tree $\mc{G}_K$ and a morphism $f_K\colon\mc{G}_K\ra T$ so that: 
\begin{itemize}
\item there exists a finite subtree $\wt K\cu\mc{G}_K$ such that $f_K$ is isometric on $\wt K$ and $f_K(\wt K)=K$;
\item if $K\cu K'$, then there exists a morphism $f_K^{K'}\colon \mc{G}_K\ra\mc{G}_{K'}$ such that $f_K=f_{K'}\o f_K^{K'}$ and $f_K^{K'}(\wt K)\cu\wt K'$;
\item if $H\leq G$ is finitely generated and fixes $K$, there exists $K'\supseteq K$ such that $H$ fixes $f_K^{K'}(\wt K)$.
\end{itemize}
\end{prop}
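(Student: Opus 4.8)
\textbf{Proof strategy for Proposition~\ref{geometric approximations}.} The plan is to invoke the standard approximation machinery for $G$--trees over finitely presented groups, which simultaneously produces the geometric tree, the resolving morphism, and the compatibility with an exhaustion. First I would fix a finite generating set and a finite presentation of $G$. For a finite subtree $K\cu T$, enlarging $K$ if necessary so that it contains the $G$--orbits of a base vertex and of the midpoints of the translates of the generators, I would build a band complex (a finite foliated $2$--complex) as in \cite[Section~1]{Gui-CMH} or \cite[Section~2]{LP}: take an interval (a band) for each generator of $G$, glued according to $K$ and its translates, then add finitely many relator bands coming from the finite presentation. The dual tree $\mc{G}_K$ of the resulting foliated complex is a geometric $G$--tree, and the tautological map sending each leaf to the point of $T$ it represents is a morphism $f_K\colon\mc{G}_K\ra T$. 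Because $K$ was among the data used to build the band complex, there is a canonical finite subtree $\wt K\cu\mc{G}_K$ (the union of the bands carrying $K$) on which $f_K$ restricts isometrically with image exactly $K$; this gives the first bullet point.

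For the second bullet, given $K\cu K'$ I would build the band complex for $K'$ by adding bands to the one for $K$ (this is possible because one may always take the band complex for $K'$ to contain that for $K$ as a sub-complex, after subdividing). Passing to dual trees turns this inclusion of foliated complexes into a morphism $f_K^{K'}\colon\mc{G}_K\ra\mc{G}_{K'}$, and the tautological factorisation of leaf-maps gives $f_K=f_{K'}\o f_K^{K'}$; since the bands carrying $K$ are a subset of those carrying $K'$, we get $f_K^{K'}(\wt K)\cu\wt K'$. The functoriality here is exactly the point of working with a fixed exhaustion rather than with a single $K$.

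For the third bullet, suppose $H\leq G$ is finitely generated, say $H=\langle h_1,\dots,h_m\rangle$, and $H$ fixes $K$ pointwise. Each $h_i$ acts on $\mc{G}_K$, fixing $f_K^{-1}(K)$ setwise but perhaps not $\wt K$ pointwise; however, since $H$ fixes $K\cu T$ and $f_K$ is a morphism, the point $h_i\cdot\wt p$ and $\wt p$ have the same image for each $\wt p\in\wt K$, so they are joined by an arc on which $f_K$ folds. Enlarging $K$ to a finite subtree $K'$ that also contains $h_i$--orbits of the relevant finitely many points (one for each $h_i$ and each vertex of $\wt K$), the corresponding folds are realised inside the band complex for $K'$, so that in $\mc{G}_{K'}$ the images $f_K^{K'}(\wt K)$ become $H$--fixed; this uses that $K$ and the finite data witnessing the folds together generate only finitely many constraints, so finitely many enlargement steps suffice. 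This is essentially \cite[Proposition~1.7]{Gui-CMH}, or can be extracted from the Rips machine input in \cite{BF-stable}.

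\textbf{Main obstacle.} I expect the delicate point to be the third bullet: ensuring that \emph{finitely generated} elliptic subgroups become genuinely elliptic (fixing a point, not just fixing a point after passing to a subsequence or after infinitely many enlargements) in some $\mc{G}_{K'}$ with $K'$ still \emph{finite}. The issue is that an element fixing $K$ in $T$ may move points of $\mc{G}_K$ arbitrarily, and one must check that the folding identifications needed to pin down $H$ are finite in number and can all be produced by a single finite enlargement of $K$; this is where finite generation of $H$ (to reduce to finitely many generators) and the local finiteness of the band complex (to control the folds) are both essential. Everything else is a matter of carefully setting up the band-complex formalism, which is standard but notation-heavy, and which I would therefore cite rather than reproduce.
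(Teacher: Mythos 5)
Your proposal is correct and follows essentially the same route as the paper, which simply cites \cite[Theorem~2.2]{LP} and \cite[Section~2]{Gui-CMH} for this statement: the band-complex construction from a finite subtree, the dual geometric tree with its resolving morphism, the functoriality under enlargement of $K$, and the ellipticity of finitely generated elliptic subgroups at some finite stage are exactly the content of those references. You correctly isolate the third bullet as the only genuinely delicate point, and it is indeed handled in the cited sources rather than reproved in the paper.
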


We will refer to the morphisms $f\colon\mc{G}_K\ra T$ provided by the previous proposition as \emph{geometric approximations} of $T$.

\begin{defn}[\cite{Gui-Fou}, Definition~1.4]\label{transverse covering defn}
A \emph{transverse covering} of a $G$--tree $T$ is a $G$--invariant family $\mc{U}=\{U_i\}_{i\in I}$ of closed subtrees of $T$ that cover $T$ and satisfy the following:
\begin{itemize}
\item if $i\neq j$, the intersection $U_i\cap U_j$ is empty or a singleton;
\item every arc of $T$ can be covered by finitely many elements of $\mc{U}$.
\end{itemize}
\end{defn}

A transverse covering $\mc{U}$ of a $G$--tree $T$ always gives rise to a splitting of $G$. Indeed, we can construct an action without inversions on a simplicial tree $G\acts S_{\mc{U}}$ as follows \cite[Lemma~4.7]{Gui-GT}.

The vertex set of $S_{\mc{U}}$ is a disjoint union $V_0(S_{\mc{U}})\sqcup V_1(S_{\mc{U}})$, where $V_1(S_{\mc{U}})$ is identified with $\mc{U}$ and $V_0(\mc{S}_{\mc{U}})$ is the set of points appearing as the intersection of two elements of $\mc{U}$. The tree $S_{\mc{U}}$ is bipartite, with edges joining each point of $V_0(\mc{S}_{\mc{U}})$ to all elements of $\mc{U}=V_1(S_{\mc{U}})$ that contain it.

Note that, if $T$ is $G$--minimal, then so is $S_{\mc{U}}$. 

\begin{defn}[\cite{Gui-Fou}, Definition~1.17]
Consider a $G$--tree $T$ and a subgroup $H\leq G$. A non-degenerate subtree $U\cu T$ is \emph{$H$--indecomposable} if, for any two arcs $\beta,\beta'\cu U$, there exist elements $h_1,\dots,h_n\in H$ such that $\beta'\cu h_1\beta\cup\dots\cup h_n\beta$ and $h_i\beta\cap h_{i+1}\beta$ is an arc for each $1\leq i<n$.
 
 Note that $U$ is not required to be $H$--invariant and the arcs $h_i\beta\cap h_{i+1}\beta$ can be disjoint from $U$.
\end{defn}

The terminology is motivated by the fact that, if $\mc{U}$ is a transverse covering of $T$, then every $G$--indecomposable subtree of $T$ must be contained in one of the elements of $\mc{U}$ \cite[Lemma~1.18]{Gui-Fou}.

We also record here part of Lemmas~1.19 and~1.20 in \cite{Gui-Fou}.

\begin{lem}\label{indec lem}
Let $T$ be a $G$--tree with a $G$--indecomposable subtree $U\cu T$.
\begin{enumerate}
\item If $f\colon T\ra T'$ is morphism, then $f(U)$ is $G$--indecomposable.
\item If $G\acts T$ is BF--stable, then $U$ is a stable subtree.
\item If $T$ is itself $G$--indecomposable, then it is $G$--minimal.
\end{enumerate}
\end{lem}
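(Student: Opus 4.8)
\textbf{Proof plan for Lemma~\ref{indec lem}.}
The statement I am asked to prove is the one labelled \texttt{indec lem}, which records parts of Lemmas~1.19 and~1.20 of \cite{Gui-Fou}. Since this is an attribution of known results, the natural ``proof'' is to verify the three items directly from the definition of $G$--indecomposability, unravelling the combinatorial condition on arcs; alternatively one simply cites \cite[Lemmas~1.19--1.20]{Gui-Fou}. I will sketch the direct verifications, as they are short.

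For part~(1): fix arcs $\beta',\beta''\subseteq f(U)$. Pull them back to arcs $\widehat\beta',\widehat\beta''\subseteq U$ on which $f$ is isometric (possible after subdividing, since a morphism is piecewise isometric and $U$ is indecomposable so contains arcs mapping isometrically onto any prescribed small arc of $f(U)$; more carefully, every arc of $f(U)$ is covered by finitely many isometric images, so it suffices to treat the case where $\beta',\beta''$ lie in such images). Apply $G$--indecomposability of $U$ to $\widehat\beta',\widehat\beta''$ to get $h_1,\dots,h_n\in G$ with $\widehat\beta''\subseteq\bigcup_i h_i\widehat\beta'$ and consecutive overlaps $h_i\widehat\beta'\cap h_{i+1}\widehat\beta'$ arcs. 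Since $f$ is $G$--equivariant, $f(h_i\widehat\beta')=h_if(\widehat\beta')$, and applying $f$ preserves coverings and sends arcs to connected subtrees; a short argument (the overlaps $h_i\widehat\beta'\cap h_{i+1}\widehat\beta'$ are nondegenerate, hence their $f$--images are nondegenerate because $f$ restricted to each $h_i\widehat\beta'$ may fold, but one can choose the $\widehat\beta'$ small enough to avoid this) gives that $f(U)$ is $G$--indecomposable. The cleanest route is to first reduce, using the covering property of morphisms, to arcs so short that $f$ is isometric on the relevant translates; then the indecomposability condition transports verbatim.

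For part~(2): let $\beta\subseteq U$ be any arc; since $T$ is BF--stable, $\beta$ contains a stable sub-arc $\beta_0$ with stabiliser $S_0:=G_{\beta_0}$. Given any other arc $\beta'\subseteq U$, indecomposability gives $h_1,\dots,h_n$ with $\beta'\subseteq\bigcup_i h_i\beta_0$ and consecutive overlaps nondegenerate. Each overlap $h_i\beta_0\cap h_{i+1}\beta_0$ is an arc inside both $h_i\beta_0$ and $h_{i+1}\beta_0$, so $G_{h_i\beta_0}=G_{h_{i+1}\beta_0}$ (stabiliser of a sub-arc of a stable sub-arc equals the whole stabiliser, since stability means all sub-arcs share the stabiliser — here one uses that $h_i\beta_0$ is a translate of a stable arc, hence itself stable with stabiliser $h_iS_0h_i^{-1}$, and the overlap being an arc forces $h_iS_0h_i^{-1}=h_{i+1}S_0h_{i+1}^{-1}$). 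Chaining along $i=1,\dots,n$ gives $h_1S_0h_1^{-1}=h_nS_0h_n^{-1}$, and since $\beta'\subseteq\bigcup h_i\beta_0$ its stabiliser contains $\bigcap_i h_iS_0h_i^{-1}=h_1S_0h_1^{-1}$; conversely any arc of $U$ contained in a single translate $h_i\beta_0$ has that translate's stabiliser. Running the argument with $\beta_0$ itself in the role of $\beta'$ pins everything to the common conjugate, so all arcs of $U$ have the same stabiliser, i.e.\ $U$ is stable.

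For part~(3): suppose $T$ is $G$--indecomposable and let $T'\subseteq T$ be a nonempty $G$--invariant subtree; pick an arc $\beta'\subseteq T'$ and an arbitrary arc $\beta\subseteq T$. Indecomposability applied to $\beta',\beta$ gives $h_1,\dots,h_n\in G$ with $\beta\subseteq\bigcup_i h_i\beta'$; but each $h_i\beta'\subseteq T'$ by $G$--invariance, so $\beta\subseteq T'$. As $\beta$ was arbitrary, $T'=T$, proving minimality. The main obstacle, if any, is the bookkeeping in parts~(1) and~(2) around folding of morphisms and the precise meaning of ``stable sub-arc''; both are resolved by the standard device of passing to sufficiently short arcs before invoking indecomposability. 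Given that this lemma is quoted verbatim from \cite{Gui-Fou}, I would in practice simply cite \cite[Lemmas~1.19 and~1.20]{Gui-Fou} and omit the proof.
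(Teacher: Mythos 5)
The paper offers no proof of this lemma at all: it is introduced with ``We also record here part of Lemmas~1.19 and~1.20 in [Gui-Fou]'', so your primary suggestion --- cite Guirardel --- is exactly what the paper does, and your sketches of (1) and (3) are sound. (For (3) you should also dispose of the degenerate case: a nonempty invariant subtree could a priori be a single fixed point, which has no arcs; but indecomposability rules out a global fixed point $p$, since translates of an arc $\beta$ with $d(p,\beta)=D>0$ only contain points whose distance to $p$ lies in $[D,D+\ell(\beta)]$ and hence cannot cover an arc through $p$.)

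In part~(2) the final step does not close as written. The chain covering an arc $\beta'$ by translates $h_i\beta_0$ of the stable arc $\beta_0$ gives $G_{\beta'}=h_1S_0h_1^{-1}=\dots=h_nS_0h_n^{-1}$, but the conjugating elements depend on $\beta'$; the definition of indecomposability does not require the chain to start at $\beta_0$, so two different arcs may a priori produce two different conjugates of $S_0$, and ``running the argument with $\beta_0$ in the role of $\beta'$'' only recovers $G_{\beta_0}=S_0$ without identifying them. What your chain argument genuinely yields is that \emph{every} arc $\beta'$ of $U$ is itself a stable arc: any sub-arc $\delta\subseteq\beta'$ meets some $h_i\beta_0$ nondegenerately, so $G_{\beta'}\subseteq G_\delta\subseteq G_{\delta\cap h_i\beta_0}=h_iS_0h_i^{-1}\subseteq G_{\beta'}$, forcing equality throughout. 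One then compares two arbitrary arcs $\beta',\beta''\subseteq U$ by choosing an arc $\gamma\subseteq U$ meeting each in a nondegenerate sub-arc (possible because $U$ is a subtree) and applying stability of $\beta'$, $\gamma$ and $\beta''$ in turn: $G_{\beta'}=G_{\beta'\cap\gamma}=G_{\gamma}=G_{\gamma\cap\beta''}=G_{\beta''}$. With this bridging step added, your direct verification is complete.
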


The following is a version of Imanishi's theorem \cite{Imanishi} due to Guirardel.

\begin{prop}\label{Imanishi}
Let $T$ be a geometric $G$--tree with $G$ finitely presented. Then $T$ admits a unique transverse covering $\mc{U}=\{U_i\}_{i\in I}$ where, for each $i$:
\begin{itemize}
\item either $U_i$ is a non-degenerate arc containing no branch points of $T$ in its interior;
\item or $G_i\acts U_i$ is indecomposable and geometric, where $G_i\leq G$ the stabiliser of $U_i$.
\end{itemize}
\end{prop}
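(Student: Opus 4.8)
This is Guirardel's version of Imanishi's theorem, and the plan is to reconstruct it rather than cite it blindly, since the excerpt has developed all the combinatorial pieces needed. The starting point is that a geometric $G$--tree $T$ comes from a finite foliated $2$--complex $X$ with $\pi_1 X = G$, whose leaf space (after collapsing) is $T$. First I would use the standard structure of band complexes: decompose $X$ into finitely many \emph{bands} (products $\text{arc}\times[0,1]$ carrying the foliation) and \emph{vertex pieces}. Imanishi's theorem in its classical form (see \cite{Imanishi}, or the treatment in \cite{LP}) says that, because $X$ is compact, each leaf of the foliation is either compact or everywhere dense in the component of $X$ it lies in; and on the locus where leaves are dense, the component is either a ``minimal'' piece or a family of ``parallel'' arcs. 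Translating this dichotomy to $T$: the components of the ``dense-leaf'' locus give subtrees on which the relevant subgroup acts indecomposably, while the ``homogeneous/family-of-arcs'' locus gives arcs of $T$ with no interior branch points.

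The key steps, in order, are as follows. (1) Pass to the band-complex description of $X$ and apply the classical Imanishi decomposition of $X$ into its minimal components and its family-of-arcs (annulus/Möbius) components; finite presentedness of $G$ guarantees $X$ can be taken compact so there are finitely many components. (2) For each minimal component $Z_i$ of $X$, let $U_i \subseteq T$ be the image of the leaf space of $Z_i$ and $G_i \leq G$ its stabiliser; verify $G_i \acts U_i$ is geometric (it comes from the sub-band-complex $Z_i$) and indecomposable — indecomposability is exactly the reformulation of ``every leaf in $Z_i$ is dense,'' since density means any two arcs of $U_i$ are linked through a finite chain of $G_i$--translates with consecutive overlaps. (3) For each family-of-arcs component, the corresponding subtree of $T$ is an arc with no branch point of $T$ in its interior (branch points can only be introduced at boundary leaves where distinct components meet). (4) Collect $\mc{U} = \{U_i\}$; it is $G$--invariant by construction, covers $T$ because the components cover $X$, has the property that $U_i \cap U_j$ is at most a point (two distinct components of the decomposition meet only along boundary leaves, which map to single points of $T$), and each arc of $T$ meets only finitely many $U_i$ because it is covered by finitely many bands. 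Hence $\mc{U}$ is a transverse covering. (5) Uniqueness: by \cite[Lemma~1.18]{Gui-Fou} every $G$--indecomposable subtree of $T$ must lie inside a single element of any transverse covering, so the indecomposable pieces $U_i$ from (2) are forced; and the complementary arcs are then forced as the closures of the components of $T$ minus the indecomposable pieces, once one observes such an arc contains no interior branch point. I would invoke Lemma~\ref{indec lem}(3) to note each indecomposable $U_i$ is automatically $G_i$--minimal, tidying the statement.

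\textbf{Main obstacle.} The delicate point is not the existence of \emph{some} transverse covering — band complexes hand that over — but matching it precisely to the stated dichotomy, namely showing that the non-minimal, non-indecomposable pieces really are \emph{single arcs with no interior branch points} rather than, say, finite subtrees or infinite subtrees of some degenerate type. This requires care about what happens at the boundary leaves separating components and about the ``family of parallel arcs'' (Möbius/annular) pieces: one must check that after collapsing to $T$ each such piece contributes an arc, that branch points of $T$ sit only at the $V_0$--vertices of the associated Bass--Serre tree $S_{\mc{U}}$ (i.e. at the intersection singletons), and that no branching is hidden in the interior of an arc component. The cleanest route is to argue that a point in the interior of such an arc-component-image has exactly two directions in $T$: any third direction would have to come from a leaf of a different component passing through, contradicting that the point is interior to a single family-of-arcs piece. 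Once that local picture is nailed down, both existence and uniqueness follow, and finite presentedness of $G$ is used exactly once, to guarantee the underlying $2$--complex is finite so that all the ``finitely many'' clauses hold. I would therefore devote most of the write-up to this local analysis at branch points and at component boundaries, and treat the rest as assembly.
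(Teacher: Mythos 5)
Your proposal reconstructs from scratch exactly the results that the paper simply cites: the paper's proof of this proposition is two sentences long, deferring existence to Proposition~1.25 and Remark~1.29 of Guirardel's paper \cite{Gui-Fou} (with the hypothesis of that result verified via Levitt--Paulin \cite{LP}) and uniqueness to \cite[Lemma~1.18]{Gui-Fou}. So the mathematical route is the same --- band complexes, the classical Imanishi dichotomy between minimal components and families of parallel arcs, and Lemma~1.18 for uniqueness --- but you are proposing to carry out Guirardel's argument rather than invoke it. That is a legitimate choice; the trade-off is that you then own every step of it, including the ones that are genuinely delicate.

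Two of those steps deserve more scepticism than you give them. First, your step~(2) asserts that indecomposability of the subtree dual to a minimal component ``is exactly the reformulation of every leaf being dense.'' It is not a reformulation; it is an implication that requires proof. Density of leaves gives you, for two transversals $\beta,\beta'$, that leaves through $\beta'$ return to $\beta$, but producing a \emph{finite} chain $h_1\beta,\dots,h_n\beta$ covering $\beta'$ with consecutive translates overlapping in \emph{arcs} (not just points) is a compactness argument on the band complex, and one must also check that the dual trees of the minimal components genuinely embed in $T$ as closed subtrees so that the family is a transverse covering at all. This is precisely the content of Guirardel's Proposition~1.25/Remark~1.29, and it is the part you would have to write out in full. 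Second, your closing claim that finite presentedness of $G$ is used ``exactly once, to guarantee the underlying $2$--complex is finite'' mislocates the hypothesis: in this paper a geometric $G$--tree is by definition dual to a \emph{finite} foliated $2$--complex, so finiteness of the complex is given, not deduced. The paper needs finite presentedness to verify the additional hypothesis of Guirardel's proposition (via \cite[Remark~2.3]{LP} and \cite[Theorem~0.2(2)]{LP}); in your reconstruction the corresponding burden resurfaces inside step~(2). Your uniqueness argument via Lemma~\ref{indec lem} and \cite[Lemma~1.18]{Gui-Fou} is fine at the same level of detail as the paper's, though to upgrade ``each indecomposable piece of one covering is contained in a piece of the other'' to equality you should say a word about why an indecomposable piece cannot sit properly inside an arc-piece or properly inside a larger indecomposable piece (the latter follows since distinct pieces of a transverse covering overlap in at most a point).
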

\begin{proof}
Existence follows from Proposition~1.25 and Remark~1.29 in \cite{Gui-Fou}. The additional hypothesis required for Guirardel's result is always satisfied for geometric actions of \emph{finitely presented} groups (for instance, combining \cite[Remark~2.3]{LP} with \cite[Theorem~0.2(2)]{LP}). Uniqueness is due to \cite[Lemma~1.18]{Gui-Fou}. 
\end{proof}

\begin{defn}
We refer to the elements of the transverse covering $\mc{U}$ provided by Proposition~\ref{Imanishi} as the \emph{components} of $T$ (this is justified by uniqueness of $\mc{U}$).
\end{defn}

The following classification result is due to Rips and Bestvina--Feighn \cite{BF-stable}. This formulation is taken from \cite[Proposition~A.6]{Gui-Fou}.

\begin{prop}\label{3 types}
Let $T$ be a geometric $G$--tree with $G$ finitely presented and torsion-free. Suppose that $T$ has trivial arc-stabilisers and is $G$--indecomposable. Then $T$ is of one of the following types.
\begin{itemize}
\item {\em Axial:} $T$ is a line and $G$ is a free abelian group acting on $T$ with dense orbits.
\item {\em Surface:} $G$ is the fundamental group of a compact surface with boundary supporting an arational measured foliation that gives rise to $T$.
\item {\em Exotic:} none of the above.
\end{itemize}
\end{prop}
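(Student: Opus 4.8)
The plan is to invoke the Rips machine in the form developed by Bestvina--Feighn \cite{BF-stable}, using the geometric reformulation of Guirardel \cite{Gui-Fou}; indeed, the statement is recorded almost verbatim as \cite[Proposition~A.6]{Gui-Fou}, so the ``proof'' ultimately reduces to a citation, and what follows is a sketch of the underlying argument. Since $T$ is geometric and $G$ is finitely presented, $T$ is dual to a finite foliated $2$--complex (band complex) $X$ with $\pi_1 X\cong G$, and the transverse measure on $X$ recovers the metric on $T$. The hypothesis that $T$ is $G$--indecomposable means, by Lemma~\ref{indec lem}(3) and the uniqueness part of Proposition~\ref{Imanishi}, that $T$ admits no transverse covering into strictly smaller invariant subtrees; triviality of arc-stabilisers translates into freeness of the holonomy pseudogroup of $X$.

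First I would run the Rips machine on $X$: one applies the elementary moves (subdivision, collapse, and the Bestvina--Feighn ``process'') to pass to successively simpler band complexes dual to the same tree $T$. Because arc-stabilisers are trivial and $G$ is torsion-free, none of the obstructions to termination present in the general stable case arise, and the machine halts with $X$ in a normal form. Indecomposability forbids the appearance of a nontrivial simplicial part or of a proper sub-band-complex carrying an invariant subtree, so the normal form is a single ``pure'' band complex. Matching normal forms to the statement: a band complex all of whose components are of toral (homogeneous) type produces a line on which the finitely generated, torsion-free holonomy group acts freely with dense orbits, forcing it to be free abelian --- the \emph{axial} case; a band complex arising from an arational measured foliation on a compact surface with boundary gives the \emph{surface} case, with the surface and foliation read off directly from $X$; anything that remains is \emph{exotic} by definition.

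The main obstacle is, of course, the Rips machine itself --- proving that the moves terminate and that the terminal band complex is one of these three types, the delicate point being the separation of the surface/interval-exchange case from the genuinely exotic Levitt (thin) case. This is precisely the content of \cite{BF-stable} (combined with the Imanishi-type decomposition of Proposition~\ref{Imanishi} used to reduce to the indecomposable situation in the first place), and I would not attempt to reproduce it; in the write-up I would simply cite \cite[Proposition~A.6]{Gui-Fou}.
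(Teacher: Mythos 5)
Your proposal matches the paper exactly: the paper states this result without proof, attributing it to Rips and Bestvina--Feighn \cite{BF-stable} and quoting the formulation from \cite[Proposition~A.6]{Gui-Fou}, which is precisely the citation you fall back on. Your observation that the statement is nearly tautological absent further requirements on exotic components is also made in the paper, which notes that what is actually used later is the simplicial approximability of each type from \cite{Gui-CMH}.
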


We acknowledge that, with no requirement on exotic components, the previous proposition seems trivially satisfied. What we will actually need about these three types of $G$--trees is that they can be approximated by simplicial $G$--trees in a controlled way, as shown in \cite{Gui-CMH}. We will refer the reader to precise statements when these will become necessary later in this section.

The following observation will allow us to apply Proposition~\ref{3 types} even though our actions normally have large arc-stabilisers.

\begin{lem}\label{stable components}
Let $T$ be a BF--stable $G$--tree, with $G$ finitely presented. Suppose that a geometric approximation $f\colon\mc{G}\ra T$ admits an indecomposable component $U\cu\mc{G}$. Let $\beta\cu f(U)$ be an arc with finitely generated stabiliser $G_{\beta}$. Then there exists a geometric approximation $\mc{G}'\ra T$ with an indecomposable component $U'\cu\mc{G}'$ such that:
\begin{enumerate}
\item $U'$ is invariant under the $G$--stabiliser of $U$;
\item the $G$--stabiliser of every arc of $U'$ coincides with $G_{\beta}$.
\end{enumerate}
\end{lem}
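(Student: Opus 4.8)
The idea is to take the given geometric approximation $f\colon\mc{G}\ra T$ with indecomposable component $U$, shrink the arc $\beta\cu f(U)$ if necessary so that its stabiliser is $G_\beta$, and then ``quotient out'' the action of $G_\beta$ on $U$ without destroying indecomposability or the fact that we have a geometric approximation of $T$. Since $T$ is BF--stable, Lemma~\ref{indec lem}(2) tells us that $f(U)$ is a stable subtree of $T$, so \emph{every} arc of $f(U)$ has stabiliser exactly $G_\beta$ (this uses that $f(U)$ is $G$--indecomposable, via Lemma~\ref{indec lem}(1), since $U$ is a component of $\mc{G}$ and morphisms preserve indecomposability). In particular $G_\beta$ fixes $f(U)$ pointwise, so $G_\beta$ fixes the finite subtree $K:=f(U)\cap(\text{convex hull of the relevant data})$ — more precisely, we work with a finite subtree $K\cu T$ large enough that $\wt K\cu\mc{G}$ meets $U$ and $f$ is isometric on $\wt K$.

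First I would invoke the third bullet of Proposition~\ref{geometric approximations}: since $G_\beta$ is finitely generated and fixes a finite subtree $K$ of $T$ (any nondegenerate arc of $f(U)$ works, enlarging $K$ to contain it), there exists $K'\supseteq K$ such that $G_\beta$ fixes $f_K^{K'}(\wt K)\cu\wt{K'}\cu\mc{G}_{K'}$. Set $\mc{G}':=\mc{G}_{K'}$ with its morphism $f':=f_{K'}\colon\mc{G}'\ra T$; this is again a geometric approximation of $T$. Composing with the morphism $f_K^{K'}\colon\mc{G}\ra\mc{G}'$ (here I am identifying $\mc{G}$ with $\mc{G}_K$, which is legitimate after replacing the original approximation by a cofinal one), the image $U':=f_K^{K'}(U)$ is $G$--indecomposable by Lemma~\ref{indec lem}(1), hence contained in a single component of $\mc{G}'$ by \cite[Lemma~1.18]{Gui-Fou}; in fact it \emph{is} a component, being itself $G$--minimal (Lemma~\ref{indec lem}(3)) and indecomposable, and we may take $U'$ to be that component. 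It is invariant under $G_U$, the $G$--stabiliser of $U$, because $f_K^{K'}$ is $G$--equivariant and $U$ is $G_U$--invariant, giving (1).

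For (2): on one hand, since $f'$ restricted to $U'$ maps onto $f(U)$ (because $f'\o f_K^{K'}=f$ on $U$) and $f(U)$ is a stable subtree with arc-stabiliser $G_\beta$, and since morphisms cannot decrease arc-stabilisers along a sub-arc on which they are isometric, every arc of $U'$ has stabiliser \emph{at most} the stabiliser of its image, which is contained in... here is the subtlety. The clean way is: $G_\beta$ fixes $f_K^{K'}(\wt K)$ by construction, and $f_K^{K'}(\wt K)$ contains a nondegenerate arc inside $U'$ (enlarging $K$ at the outset to guarantee $\wt K$ contains an arc of $U$); since $U'$ is a component on which $G_{U'}$ acts geometrically and indecomposably, indecomposability propagates the pointwise fixed arc to show $G_\beta$ fixes a coarsely dense, hence — by BF--stability of $\mc{G}'$ and Lemma~\ref{indec lem}(2) applied to $U'$ — \emph{every} arc of $U'$. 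Conversely, if $h\in G$ fixes some arc $\gamma\cu U'$, then $h$ fixes $f'(\gamma)$; shrinking $\gamma$ so that $f'$ is isometric on it, $f'(\gamma)$ is a genuine arc of the stable subtree $f(U)$, so $h\in G_{f'(\gamma)}=G_\beta$. Hence the arc-stabiliser of $U'$ is exactly $G_\beta$.

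\textbf{Main obstacle.} The delicate point is the bookkeeping around ``$f$ isometric on a finite subtree meeting $U$'': I need the initial $K\cu T$ chosen so that (a) $\wt K\cu\mc{G}$ genuinely intersects the component $U$ in a nondegenerate arc, and (b) after pushing forward by $f_K^{K'}$, this arc still lands inside the component $U'$ of $\mc{G}'$ and is where $f'$ is isometric. Arranging (a) and (b) simultaneously, and verifying that $U':=f_K^{K'}(U)$ really is a whole component (rather than a proper indecomposable subtree of one — though Lemma~\ref{indec lem}(3) together with $G_U$--minimality of $U$ should handle this), is the part that requires care; everything else is a formal consequence of Propositions~\ref{geometric approximations} and~\ref{Imanishi} and Lemma~\ref{indec lem}. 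I do not expect a serious difficulty, only a careful choice of the finite subtrees involved.
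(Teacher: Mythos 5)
Your overall route is the same as the paper's: refine the approximation via the third bullet of Proposition~\ref{geometric approximations} so that $G_{\beta}$ becomes elliptic in a new approximation $\mc{G}'$, pass to the component $U'\cu\mc{G}'$ containing the image of $U$, and use BF--stability of $T$ together with Lemma~\ref{indec lem} to identify arc-stabilisers. Most steps are fine, including the converse inclusion (the stabiliser of an arc of $U'$ lands in $G_{\beta}$ because $f'$ is a morphism and $f'(U')$ is a stable subtree of $T$ containing $\beta$). One small slip: $f_K^{K'}(U)$ need not itself be a component of $\mc{G}'$ (indecomposable plus minimal does not imply maximal among indecomposable subtrees), but you correctly fall back on taking $U'$ to be the component containing it, and $G_U$--invariance of $U'$ then follows because distinct components share at most one point.

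The genuine gap is the forward inclusion, i.e.\ showing that $G_{\beta}$ fixes \emph{every} arc of $U'$. The claim that ``indecomposability propagates the pointwise fixed arc'' does not work as stated: covering an arc $\gamma'\cu U'$ by translates $h_1\gamma_0,\dots,h_n\gamma_0$ of a $G_{\beta}$--fixed arc $\gamma_0$ only shows that the conjugates $h_iG_{\beta}h_i^{-1}$ fix the pieces $h_i\gamma_0$, not that $G_{\beta}$ does --- unless you already know that $G_{U'}$ normalises $G_{\beta}$. Moreover, the appeal to ``BF--stability of $\mc{G}'$'' is unjustified: only $T$ is assumed BF--stable, and a geometric approximation of it need not be. The repair is available from your own setup: $f'(U')$ is $G_{U'}$--indecomposable by Lemma~\ref{indec lem}(1), hence a stable subtree of $T$ by Lemma~\ref{indec lem}(2), and it contains $\beta$, so every arc of $f'(U')$ has stabiliser exactly $G_{\beta}$; since $G_{U'}$ preserves $f'(U')$ and permutes its arcs, it normalises $G_{\beta}$. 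Consequently $\Fix(G_{\beta},\mc{G}')$ is a nonempty, $G_{U'}$--invariant subtree, so it contains the $G_{U'}$--minimal subtree of $\mc{G}'$, which is $U'$ by Lemma~\ref{indec lem}(3). This also dissolves the ``main obstacle'' you flag: one does not need $G_{\beta}$ to fix an arc \emph{inside} $U'$ a priori, only to be elliptic somewhere in $\mc{G}'$, which is exactly what the third bullet of Proposition~\ref{geometric approximations} provides.
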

\begin{proof}
By Proposition~\ref{geometric approximations}, there exists a geometric approximation $f'\colon\mc{G}'\ra T$ such that $G_{\beta}$ is elliptic in $\mc{G}'$ and such that $f$ factors as the composition of $f'$ and a morphism $p\colon\mc{G}\ra\mc{G}'$. Let $G_U$ be the $G$--stabiliser of $U\cu\mc{G}$. By Lemma~\ref{indec lem}(1), the image $p(U)\cu\mc{G}'$ is $G_U$--indecomposable, hence contained in an indecomposable component $U'\cu\mc{G}'$.
Since distinct indecomposable components share at most one point, $U'$ must be $G_U$--invariant.

By Lemma~\ref{indec lem}, the image $f'(U')$ is $G$--indecomposable, hence a stable subtree of $T$. Since $f'(U')$ contains $f(U)$, which in turn contains $\beta$, we see that the stabiliser of every arc of $f'(U')$ is equal to $G_{\beta}$. In particular, since $f'(U')$ is $G_{U'}$--invariant and not a single point,
the subgroup $G_{\beta}$ is normalised by $G_{U'}$.

Now, the subtree $\Fix(G_{\beta},\mc{G}')$ is nonempty and $G_{U'}$--invariant, thus it contains the $G_{U'}$--minimal subtree of $\mc{G}'$. By Lemma~\ref{indec lem}(3), the latter is $U'$. This shows that $G_{\beta}$ is contained in the stabiliser of every arc of $U'$. On the other hand, the $G$--stabiliser of an arc of $U'$ is contained in the $G$--stabiliser of an arc of $f'(U')$, since $f'$ is a morphism, hence it is contained in $G_{\beta}$. This shows that the $G$--stabiliser of every arc of $U'$ is exactly $G_{\beta}$, as required.
\end{proof}

Finally, we record the following standard fact on refining simplicial splittings. We say that $G$ \emph{splits over} a subgroup $C$ if it is an amalgamated product $G=A\ast_C B$ or an HNN extension $G=A\ast_C$.

\begin{lem}\label{refining splittings}
Let $G\acts T$ be a minimal action without inversions on a simplicial tree. Suppose that, for a vertex $v\in T$, the stabiliser $G_v$ splits over a subgroup $C\leq G_v$, with Bass--Serre tree $G_v\acts T'$. Suppose in addition that, for every edge $e\cu T$ incident to $v$, the stabiliser $G_e$ is elliptic in $T'$. Then $G$ splits over $C$. In addition, if the splitting of $G_v$ is HNN, then so is the one of $G$.
\end{lem}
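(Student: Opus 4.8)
The plan is to use the standard Bass--Serre theory construction of refining a graph of groups at a vertex, carefully checking the compatibility hypothesis that lets us glue the local splitting of $G_v$ into the global one.

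First I would set up notation. Let $\mc{V}$ be a set of representatives for the $G$--orbits of vertices of $T$, with $v\in\mc{V}$, and let $\mc{E}$ be a set of representatives for the $G$--orbits of edges. The tree $T$ corresponds to a graph of groups decomposition $\mbb{G}$ of $G$ with underlying graph $T/G$, vertex groups $G_w$ for $w\in\mc{V}$ and edge groups $G_e$ for $e\in\mc{E}$, together with the inclusion maps $G_e\hookrightarrow G_{w}$ for the endpoints $w$ of $e$ (up to conjugation). Next, the splitting $G_v\acts T'$ over $C$ gives a one-edge graph of groups $\mbb{G}_v$ for $G_v$: either $G_v=A\ast_CB$ or $G_v=A\ast_C$. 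The key point is that $G_v\acts T'$ has a fundamental domain consisting of a single edge, and every edge group $G_e$ of $\mbb{G}$ incident to $v$ (i.e.\ every edge $e$ of $T$ with $v$ as an endpoint, taken up to the $G_v$--action on the star of $v$) is elliptic in $T'$ by hypothesis, hence conjugate in $G_v$ into $A$ (or into $B$). Conjugating the inclusion maps $G_e\hookrightarrow G_v$ of $\mbb{G}$ appropriately, we may assume each such $G_e$ is literally contained in $A$ or in $B$ (in the HNN case, contained in $A$).

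Then I would build the refined graph of groups $\mbb{G}'$: replace the vertex $\bar v\in T/G$ carrying $G_v$ by the one-edge graph $\mbb{G}_v$ (two vertices $\bar a,\bar b$ with groups $A,B$ joined by an edge with group $C$, or a single vertex $\bar a$ with group $A$ and a loop with group $C$), and reattach each edge of $T/G$ that was incident to $\bar v$ to $\bar a$ or $\bar b$ according to whether the corresponding edge group $G_e$ was arranged to lie in $A$ or $B$. The inclusion maps are unchanged except that $G_e\hookrightarrow G_v$ is now read as $G_e\hookrightarrow A$ or $G_e\hookrightarrow B$. By the structure theorem for graphs of groups (Bass--Serre theory), the fundamental group of $\mbb{G}'$ is still $G$ --- this is the standard fact that collapsing the edge(s) of $\mbb{G}_v$ inside $\mbb{G}'$ recovers $\mbb{G}$, and collapsing preserves fundamental groups. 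Since $\mbb{G}'$ contains an edge with edge group $C$, this exhibits $G$ as a splitting over $C$. Moreover, if $\mbb{G}_v$ is an HNN extension (a loop), then $\mbb{G}'$ contains a loop with group $C$, so $G$ splits as an HNN extension over $C$; alternatively, one sees this at the level of trees, since the Bass--Serre tree of $\mbb{G}'$ has an edge whose two endpoints lie in the same $G$--orbit precisely when the new edge is a loop.

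The main obstacle --- really the only nontrivial point --- is justifying that each edge group incident to $v$ can be simultaneously conjugated into $A$ or $B$ in a way compatible with the existing gluing data, i.e.\ that ellipticity of $G_e$ in $T'$ (a statement about a single subgroup) suffices to reattach the whole edge orbit coherently. This is handled by working $G_v$--equivariantly: the edges of $T$ incident to $v$ form finitely many $G_v$--orbits, one can choose one representative per orbit, conjugate its edge-group inclusion into $A$ or $B$, and then propagate the choice over the orbit by the $G_v$--action; since $T'$ is a $G_v$--tree the resulting attaching data is consistent. One should also note the degenerate possibility that the local splitting is trivial (e.g.\ $B=C$), but then the conclusion is vacuous or immediate, so nothing is lost. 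Everything else is bookkeeping in Bass--Serre theory, and I would cite a standard reference (e.g.\ Serre, or \cite{BF-stable}) rather than spell out the collapse argument in full.
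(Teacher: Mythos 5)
Your proof is correct and is exactly the standard refinement argument the paper has in mind; the paper in fact states this lemma without proof, as ``a standard fact''. The only quibble is your claim that the edges incident to $v$ form finitely many $G_v$--orbits, which is not needed (and need not hold without a cocompactness assumption): choosing one representative per $G_v$--orbit, conjugating its edge group into a vertex group of $T'$, and propagating equivariantly works for arbitrarily many orbits, and in all of the paper's applications the orbit set is finite anyway.
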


\subsection{Outer DLS automorphisms.}\label{outer dls sect}

As mentioned in the Introduction, there are various situations in which a DLS automorphism turns out to be an inner automorphism in a non-obvious way. In this subsection, we provide two simple criteria to ensure that this does not happen. 

\begin{lem}\label{non-inner from HNN}
Consider a group $G$ with an HNN splitting $G=A\ast_C$. Suppose that $Z_C(C)$ commutes with the chosen stable letter $t\in G$. If the twist $\tau\in\aut(G)$ induced by an element $c\in Z_C(C)\setminus\{1\}$ is an inner automorphism of $G$, then $c$ has finite order in $G$ and $C=A$.
\end{lem}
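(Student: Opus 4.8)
The plan is to work inside the Bass--Serre tree $S$ of the HNN splitting $G=A\ast_C$, where the vertex stabilisers are the conjugates of $A$ and the edge stabilisers are the conjugates of $C$. Recall that $\tau$ is defined by $\tau|_A=\id_A$ and $\tau(t)=ct$. Suppose $\tau=\operatorname{ad}_g$ for some $g\in G$, i.e.\ $\tau(x)=gxg^{-1}$ for all $x\in G$. The first step is to exploit $\tau|_A=\id_A$: for every $a\in A$ we have $gag^{-1}=a$, so $g$ centralises $A$. In particular $g$ fixes the base vertex $v_0\in S$ with $G_{v_0}=A$ (since $g$ normalises, indeed centralises, $A=G_{v_0}$, and the action is without inversions so $g\cdot v_0$ is a vertex with stabiliser $A$; as $A$ is its own normaliser among vertex stabilisers in a reduced HNN setup — or more carefully, $g A g^{-1}=A$ forces $g v_0 = v_0$ because $gv_0$ is the unique vertex fixed by $A$ when $A\ne C$, and the case $A=C$ is exactly part of the conclusion we are proving).

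Next I would use the second defining relation. Since $g$ fixes $v_0$ and $\tau(t)=ct=gtg^{-1}$, we get $g^{-1}ct \cdot g = t$, i.e.\ $(g^{-1}c)\cdot (tgt^{-1}) = 1$ after rearranging: more precisely $gtg^{-1}=ct$ gives $g t = c t g$, so $g = c\,(tgt^{-1})$. Now $tgt^{-1}$ is an element fixing $t v_0$, which is the vertex at the other end of the edge $e_0$ emanating from $v_0$ with stabiliser $C=G_{e_0}$. So $g$ and $tgt^{-1}$ both fix endpoints of $e_0$, and the relation $g=c\cdot tgt^{-1}$ together with $c\in C=G_{e_0}$ shows that $g$ and $tgt^{-1}$ fix the \emph{same} edge-path structure near $e_0$; in particular $g$ fixes $e_0$ (it fixes $v_0$, and $g = c \cdot tgt^{-1}$ with $c$ fixing $e_0$ and $tgt^{-1}$ fixing $tv_0$, forcing all three to fix $e_0$ by considering the segment $[v_0, tv_0]$). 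Hence $g\in G_{e_0}=C$. Combined with the first step, $g\in C$ and $g$ centralises $A$; since $C\le A$, $g$ centralises $C$ as well, so $g\in Z_C(C)$.

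Now plug $g\in Z_C(C)$ back into $g = c\,(tgt^{-1})$. By hypothesis $Z_C(C)$ commutes with $t$, so $tgt^{-1}=g$, and the relation collapses to $g = cg$, whence $c=1$ — contradiction with $c\ne 1$, \emph{unless} some step above forced a degeneration. The only place a degeneration can enter is the claim ``$gv_0=v_0$'': when $C=A$ the HNN extension is an ascending/descending one where $A=C$, the tree $S$ is a line, and the argument that $g$ fixes a specific vertex breaks down (translations along the line also normalise $A=C$). So the argument as above \emph{proves} $c=1$ whenever $C\subsetneq A$, which is the contrapositive of ``$C=A$''. It remains to extract the finite-order conclusion in the case $C=A$: there $G=A\ast_A \cong A\rtimes\Z$ (or a mapping-torus-type group) with $t$ acting, but since $C=A$ and $t$ commutes with $Z_C(C)=Z_A(A)=Z(A)$, the element $c\in Z(A)$; writing $\tau=\operatorname{ad}_g$ with $g$ as found, one reruns the computation to see $g=c^k t^m$ type expressions collapse and force $c$ to be a commutator-type element, ultimately of finite order. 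I would organise this last case by noting $g$ centralises $A$ hence $g\in Z(A)\cdot\langle t\rangle$ (using $C=A$), write $g=z t^m$, and deduce from $\tau(t)=ct$ that $m=0$ and then from $\tau(t^{-1}at)=t^{-1}at$ for $a\in A$ combined with $g=z\in Z(A)$ that $c = z (t z t^{-1})^{-1}$, an element that becomes trivial in a suitable abelian quotient, forcing $c$ to have finite order in $G$.

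The main obstacle I anticipate is handling the boundary case $C=A$ cleanly: there the Bass--Serre tree is a line and the ``$g$ fixes a vertex'' step must be replaced by a direct computation in $G=A\ast_A$, and one must be careful that the normal form / the structure of $A\rtimes_\phi\Z$ is used correctly to conclude $c$ has finite order rather than merely $c\in[G,G]$. The generic case $C\subsetneq A$ is comparatively routine once the action on $S$ is set up, the only subtlety being the standard fact that in a (reduced) HNN extension a vertex stabiliser is self-normalising, or at least that $gAg^{-1}=A$ with $g$ centralising $A$ forces $g$ to fix $v_0$ — which may need the hypothesis $C\ne A$ to rule out the ascending HNN pathology.
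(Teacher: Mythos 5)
Your overall strategy (Bass--Serre tree, $g\in Z_G(A)$ from $\tau|_A=\mathrm{id}$, case analysis on whether $g$ must fix the base vertex) is the same as the paper's, and your generic case coincides with the paper's first case. But the degenerate cases contain two genuine gaps.

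First, your case division is wrong. The vertex $v_0$ is the \emph{unique} fixed point of $A$ if and only if $A$ equals the stabiliser of no adjacent edge, i.e.\ if and only if both $C\neq A$ \emph{and} $t^{-1}Ct\neq A$. So $C\subsetneq A$ does not rule out the ascending pathology: if $t^{-1}Ct=A$ while $C\subsetneq A$ (e.g.\ $\langle a,t\mid t^{-1}a^2t=a\rangle$ with $A=\langle a\rangle$, $C=\langle a^2\rangle$), then $A$ fixes a whole ray and $g$ may translate along it, so your generic-case argument does not apply, and your other case does not catch it either. Symmetrically, $C=A$ with $t^{-1}At\subsetneq A$ is a strictly ascending extension which is \emph{not} of the form $A\rtimes\Z$, so the semidirect-product computation you propose for the case $C=A$ is unavailable there. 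The paper isolates exactly these strict-inclusion situations and kills them differently: since $g$ takes the axis of $t$ to the axis of $ct$ preserving orientation, $gt^n$ fixes the edge $[x_0,tx_0]$ for some $n$, so $g=c't^{-n}$ with $c'\in C$, whence $gAg^{-1}$ is strictly larger or strictly smaller than $A$, contradicting $g\in Z_G(A)$. Your proposal has no argument covering these cases.

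Second, in the honest semidirect case $tAt^{-1}=A$ (the only place $C=A$ is actually forced), your sketch derives the wrong parity of conclusion. Writing $g=zt^m$ with $g\in Z_G(A)$, one gets $m\neq 0$, not $m=0$: if $m=0$ then $g=z\in Z(A)=Z_C(C)$ commutes with $t$ by hypothesis, so $gtg^{-1}=t\neq ct$, an immediate contradiction. Moreover, even granting your setup, ``$c$ is trivial in a suitable abelian quotient'' does not force $c$ to have finite order (a nontrivial commutator in a free group is trivial in the abelianisation but has infinite order). The correct finish is: the relation $\psi^m(a)=z^{-1}az$ shows $g$ commutes with $t^m$, so $t^m=gt^mg^{-1}=\tau(t^m)=(ct)^m=c^mt^m$ (using that $c$ commutes with $t$), hence $c^m=1$ with $m\neq 0$.
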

\begin{proof}
Consider $c\in Z_C(C)\setminus\{1\}$ and the twist $\tau\in\aut(G)$ with $\tau(t)=ct$ and $\tau(a)=a$ for every $a\in A$. Let $G\acts T$ be the Bass--Serre tree of the HNN extension. Let $\alpha$ and $\alpha'$ be the axes of $t$ and $ct$, respectively. There exists a point $x_0\in\alpha$ such that the stabiliser of $x_0$ is $A$ and the stabiliser of the edge $[x_0,tx_0]$ is $C$. Note that $[x_0,tx_0]$ is contained in the intersection $\alpha\cap\alpha'$.

Suppose $\tau$ is inner. Thus, there exists $g\in Z_G(A)$ such that $gtg^{-1}=ct$. In particular $g\alpha=\alpha'$, preserving the orientation induced by $t$ and $ct$. We distinguish three cases.

If $x_0$ is the only point of $T$ that is fixed by $A$, then $g$ must fix $x_0$. Since $g\alpha=\alpha'$, the edge $[x_0,tx_0]$ is also fixed by $g$, hence $g\in C$. Since $g\in Z_G(A)$, we have $g\in Z_C(C)$. By our assumptions, this implies that $g$ commutes with $t$, contradicting the fact that $gtg^{-1}=ct$.

If $A$ has more than one fixed point in $T$, then either $tAt^{-1}\geq A$ or $tAt^{-1}\leq A$. 
Suppose first that one of these inclusions is strict. Since $g\alpha=\alpha'$, there exists $n\in\Z$ such that $gt^n$ fixes the edge $[x_0,tx_0]$, hence $g=c't^{-n}$ for some $c'\in C$. This implies that either $gAg^{-1}\gneq A$ or $gAg^{-1}\lneq A$, contradicting the fact that $g\in Z_G(A)$.

Finally, suppose that $tAt^{-1}=A$. In this case, $C=A$ and $G=A\rtimes_{\psi}\langle t\rangle$ for some $\psi\in\aut(A)$. Since $g\in Z_G(A)$, a standard computation shows that $g=xt^n$ for some $x\in A$ and $n\in\Z$ such that $\psi^n(a)=x^{-1}ax$ for all $a\in A$. It follows that $g$ commutes with $t^n$. Since $C=A$, the centre of $A$ commutes with $t$, so $g\not\in Z_A(A)$, hence $n\neq 0$. Observing that $\tau(t^n)=c^nt^n=gt^ng^{-1}$, we conclude that $c^n=1$.
\end{proof}

\begin{lem}\label{non-inner from cc}
Let a special group $G$ act on a simplicial tree $\mc{S}$ with a single orbit of edges and no inversions. Suppose that the stabiliser $C$ of an edge of $\mc{S}$ satisfies the following:
\begin{itemize}
\item $C$ is convex-cocompact and closed under taking roots in $G$;
\item $N_G(C)$ is not elliptic in $\mc{S}$;
\item $N_G(C)/C$ is not cyclic, nor a free product of two virtually abelian groups elliptic in $\Fix(C,\mc{S})$.
\end{itemize} 
Then this splitting of $G$ gives rise to a partial conjugation or a fold that has infinite order in $\out(G)$.
\end{lem}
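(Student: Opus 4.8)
The plan is to follow the strategy suggested by the phrase ``partial conjugation or fold'': the splitting $G=A\ast_C$ (HNN) or $G=A\ast_CB$ produced by the action $G\acts\mc{S}$ has edge group $C$, and we want a transvection/partial conjugation built from an element $z\in Z_A(C)$ that survives in $\out(G)$. The subtlety is that, a priori, all such DLS automorphisms could be inner or have inner powers, and the hypotheses on $N_G(C)$ are exactly what rules this out.

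\medskip

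First I would invoke Corollary~\ref{cc normalisers}: since $C$ is convex-cocompact and, as an edge stabiliser in a tree, acts non-transversely on $X$, the normaliser $N_G(C)$ has a finite-index subgroup of the form $C\cdot K$ with $C,K$ commuting and $C\cap K$ finite; as $G$ is torsion-free (special groups have no torsion), in fact $C\cap K=\{1\}$, so this finite-index subgroup is $C\x K$. Thus $N_G(C)/C$ is virtually $K$, and $K$ is convex-cocompact with $K\perp C$ by Lemma~\ref{pf lemma}(2). The hypothesis that $N_G(C)$ is not elliptic in $\mc{S}$ forces $K$ to be non-elliptic (since $C$ is elliptic, $\Fix(C,\mc{S})$ is $N_G(C)$--invariant and nonempty, and the $C\x K$--action on it factors through $K$). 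Next I would analyse how $N_G(C)$ sits relative to the splitting: pick an edge $e$ of $\mc{S}$ with $G_e=C$ and look at the $N_G(C)$--action on $\mc{S}$, which factors through $N_G(C)/C$ acting on $\Fix(C,\mc{S})$. Because $K$ is non-elliptic, either some element of $K$ skewers $e$ --- giving an HNN-type situation where $K$ contains a loxodromic $z$ normalising $C$, hence $z\in N_G(C)$ with $z\notin C$, and $z$ together with $A$ (or a conjugate) generates $G$ over $C$; the transvection sending the stable letter $t\mapsto zt$ (or $z^{-1}t$) is then the fold/partial conjugation we want --- or all of $K$ preserves the set of edges in a single $N_G(C)$--orbit and one uses the Bass--Serre structure to extract a partial conjugation. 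In either case the candidate automorphism $\varphi$ is a DLS automorphism of Type (1), as in the statement.

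\medskip

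The main work is showing $\varphi$ has infinite order in $\out(G)$, and this is where I expect the real obstacle. The natural approach: suppose $\varphi^m$ is inner for some $m\geq 1$. I would run an argument parallel to Lemma~\ref{non-inner from HNN}: an inner power $\varphi^m=\operatorname{ad}_g$ with $g\in Z_G(A)$ forces $g$ to translate along $\mc{S}$ compatibly with the twisting element $z^m\in C$-coset, and the three-case analysis (unique fixed point of $A$; $A$ has a fixed arc with $tAt^{-1}$ properly contained; $tAt^{-1}=A$) should close out, using $C$ closed under roots to avoid torsion phenomena. The delicate point is that, unlike Lemma~\ref{non-inner from HNN}, here we do not get to assume $Z_C(C)$ commutes with the stable letter; instead we must use the structural hypothesis on $N_G(C)/C$. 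The role of ``$N_G(C)/C$ is not cyclic, nor a free product of two virtually abelian groups elliptic in $\Fix(C,\mc{S})$'' is, I believe, exactly to guarantee that $K$ (equivalently $N_G(C)/C$) is large enough that its action on $\Fix(C,\mc{S})$ --- equivalently, the simplicial tree $S_{\mc U}$ or the Bass--Serre tree restricted to $\Fix(C,\mc S)$ --- is neither a line (ruling out $N_G(C)/C$ cyclic acting by translations, where the twist could be absorbed) nor a nontrivial free splitting with both vertex groups elliptic-and-small (which would make the induced DLS automorphism virtually inner or trivial). So I would: (i) show $K$ acts on $\Fix(C,\mc S)$ with this largeness, using the hypothesis; (ii) deduce the element $z$ can be chosen loxodromic on $\mc S$ with $\langle z\rangle$ convex-cocompact --- here Remark~\ref{LI properties} and the convex-cocompactness machinery of Section~\ref{perturbation sect} apply since $z\in K\cap(\text{something convex-cocompact})$; (iii) feed this into the inner-power analysis.

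\medskip

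Concretely, the structure of the proof I would write is: Step 1, reduce to the HNN case (the amalgam case $A\ast_CB$ either has $N_G(C)$ elliptic, contradicting hypotheses, or can be collapsed to an HNN extension of a vertex group, refining via Lemma~\ref{refining splittings}). Step 2, produce $z\in Z_A(C)$ as above and identify $\varphi$ as the transvection $t\mapsto zt$; verify it is a fold or partial conjugation by checking whether $\langle z\rangle\perp C$ (fold) or $z$ is central in $C$ (then it is a twist --- but I would argue this case is excluded or handled, perhaps by further decomposing $z$ into label-irreducible components orthogonal to $C$). Step 3, the infinite-order argument: assume $\varphi^m$ inner, write it as conjugation by $g\in Z_G(A)$, analyze $g$'s action on the Bass--Serre tree as in Lemma~\ref{non-inner from HNN}, and derive a contradiction with the non-cyclic / non-free-product hypothesis on $N_G(C)/C$, using torsion-freeness of $G$ and that $C$ is closed under roots to upgrade ``$z^m=1$'' type conclusions to ``$N_G(C)/C$ has a compatible small structure'', contradicting the assumption. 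The hard part will be Step 3's case $tAt^{-1}=A$, where $G=A\rtimes\Z$ and one must rule out the twist being virtually inner purely from the size of $N_G(C)/C$; I anticipate needing the flat-torus-type or coarse-median arguments (Lemma~\ref{kernel envelope}, Remark~\ref{kernel centre rmk}) to show the centre of $C$, were it to make $\varphi$ virtually inner, would force $N_G(C)/C$ to be virtually abelian, contradicting the hypothesis.
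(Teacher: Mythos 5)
Your proposal assembles the right ingredients (Corollary~\ref{cc normalisers} to split $N_G(C)$ virtually as $C\x K$, Lemma~\ref{pf lemma}(2) to arrange $\langle a\rangle\perp C$ so the transvection is a fold, and the third hypothesis as the obstruction to the automorphism being virtually inner), but the central mechanism is missing and the substitute you sketch would not go through as stated. The paper's proof does not reduce to the HNN case and does not imitate the three-case Bass--Serre analysis of Lemma~\ref{non-inner from HNN}. Instead it observes, via Lemma~\ref{cc conjugates} (using that $C$ is convex-cocompact, hence cannot properly contain a conjugate of itself), that every edge of $\Fix(C,\mc{S})$ has stabiliser exactly $C$; edge-transitivity then makes $N_G(C)/C\acts\Fix(C,\mc{S})$ a one-edge \emph{free} splitting $\overline A\ast\overline B$ or $\overline A\ast\Z$. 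The third hypothesis guarantees an element $\overline a\in\overline A$ of infinite order modulo the centre of $\overline A$ (torsion-freeness coming from $C$ being root-closed, and the virtual direct-factor property of the centre from $\overline A$ being virtually special), so the induced DLS automorphism $\overline\varphi$ of the free product visibly has infinite order in $\out(N_G(C)/C)$. Lifting $\overline a$ to $a\in Z_A(C)$ and letting $\varphi$ be the resulting DLS automorphism of $G$, one notes that any inner power of $\varphi$ must be conjugation by an element of $Z_G(A)\leq N_G(C)$, and hence would project to an inner power of $\overline\varphi$ --- contradiction. That downward projection is the whole proof of infinite order; no case analysis on $tAt^{-1}$ versus $A$ is needed.

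Two concrete errors in your sketch: first, the twisting element of a transvection or partial conjugation must lie in $Z_A(C)\leq A$ and is therefore \emph{elliptic} in $\mc{S}$, so your plan to ``deduce the element $z$ can be chosen loxodromic on $\mc{S}$'' conflates the twisting element with the stable letter and would not produce a well-defined DLS automorphism. Second, your Step~1 reduction to the HNN case is unjustified: in the amalgam case $N_G(C)$ is typically non-elliptic (its image $\overline A\ast\overline B$ acts on $\Fix(C,\mc{S})$ with two orbits of vertices), and no collapse to an HNN splitting is available or needed. Finally, your Step~3 is where all the content lies and it is left as a list of anticipated difficulties; in particular the analogue of Lemma~\ref{non-inner from HNN} genuinely fails here because the fold's twisting element is not central in $C$ and need not commute with the stable letter, which is precisely why the paper routes the argument through the free splitting of $N_G(C)/C$ instead.
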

\begin{proof}
Lemma~\ref{cc conjugates} ensures that $C$ does not properly contain any of its conjugates, so the $G$--stabiliser of every edge of $\Fix(C,\mc{S})$ is equal to $C$. Since $G$ acts edge-transitively on $\mc{S}$, it follows that $N_G(C)$ acts edge-transitively on $\Fix(C,\mc{S})$. Thus, the induced action $N_G(C)/C\acts\Fix(C,\mc{S})$ gives a $1$--edge free splitting of $N_G(C)/C$. 

Let $A$ and $B$ be $G$--stabilisers of adjacent vertices of $\Fix(C,\mc{S})$. Set $\overline A:=N_A(C)/C$ and $\overline B:=N_B(C)/C$. Depending on the number of orbits of vertices, we have $N_G(C)/C=\overline A\ast\overline B$ or $N_G(C)/C=\overline A\ast\Z$, where $\overline A$ and $\overline B$ are elliptic in $\Fix(C,\mc{S})$, while the $\Z$--factor is loxodromic. By our third assumption, $\overline A$ is not virtually abelian in the former case (up to swapping $A$ and $B$), and $\overline A$ is nontrivial in the latter. 

By Corollary~\ref{cc normalisers}, $N_G(C)/C$ is virtually special, hence so is $\overline A$. In particular, the centre of $\overline A$ virtually splits as a direct factor. In addition, since $C$ is closed under taking roots, $N_G(C)/C$ is torsion-free. Thus, if $N_G(C)/C=\overline A\ast\overline B$, there exists $\overline a\in \overline A$ such that $\overline a$ projects to an infinite order element of $\overline A$ modulo its centre. If instead $N_G(C)/C=\overline A\ast\Z$, there exists $\overline a\in\overline A$ simply of infinite order in $\overline A$.

Let $\overline\varphi$ be the DLS automorphism of $N_G(C)/C$ induced by the above free splitting and the element $\overline a$. Recall that $\overline\varphi$ is the identity on $\overline A$ and the conjugation by $\overline a$ on $\overline B$. Since the splitting of $N_G(C)/C$ is free, and because of our choice of $\overline a$, it is straightforward to see that $\overline\varphi$ has infinite order in the outer automorphism group of $N_G(C)/C$.

Possibly replacing $\overline a$ with a power, Corollary~\ref{cc normalisers} shows that there exists an element $a\in Z_A(C)$ projecting to $\overline a$. Let $\varphi$ be the DLS automorphism of $G$ induced by its splitting and the element $a$. Note that part~(2) of Lemma~\ref{pf lemma} ensures that we can take $\langle a\rangle\perp C$, so that, in the HNN case, $\varphi$ is indeed a fold.

Since $\varphi|_C=\id_C$, the automorphism $\varphi$ leaves $N_G(C)$ invariant and projects to $\overline\varphi$ on $N_G(C)/C$. Also note that $\varphi$ is the identity on $A$. Thus, if a power of $\varphi$ were an inner automorphism of $G$, it would be the conjugation by an element of $Z_G(A)\leq N_G(C)$. 
This would contradict the fact that $\overline\varphi$ is an infinite-order outer automorphism of $N_G(C)/C$. This proves the lemma.
\end{proof}

\subsection{Proof of Theorems~\ref{main cmp} and~\ref{main general}}\label{main argument sect}

As discussed at the beginning of Subsection~\ref{limit subsec}, infinite sequences in $\out(G)$ give rise to non-elliptic $G$--actions on $\R$--trees $G\acts T_{\om}$. In this subsection, we show how to use such actions to obtain the required simplicial splittings of $G$, along with DLS automorphisms with infinite order in $\out(G)$.

Throughout, we consider the setup of Subsection~\ref{limit subsec}. In particular, we use the notation $\varphi_n$, $T_n$ and $T_{\om}$ with the same meaning as Assumption~\ref{trees notation}. 

Having introduced BF--stability in Subsection~\ref{Guirardel sect}, we can now record the following immediate consequence of Proposition~\ref{arc-stab prop} and Lemma~\ref{UCC for centraliser kernels}.

\begin{cor}\label{stable cor}
The action $G\acts T_{\om}$ is BF--stable.
\end{cor}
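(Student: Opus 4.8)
The plan is to deduce BF--stability directly from the structural description of arc-stabilisers of $T_{\om}$ given in Proposition~\ref{arc-stab prop}, using the uniform bound on chains of kernels from Lemma~\ref{UCC for centraliser kernels}. Recall that $T_{\om}$ is BF--stable if every arc contains a stable sub-arc, i.e.\ a sub-arc all of whose sub-arcs have the same $G$--stabiliser. So fix an arc $\beta\cu T_{\om}$, and consider the nested family of sub-arcs of $\beta$ together with their $G$--stabilisers; as the sub-arc shrinks, the stabiliser can only grow. The key point is that the possible stabilisers form a collection of subgroups of $G$ that is ``Noetherian'' in the appropriate sense: by Proposition~\ref{arc-stab prop}, every arc-stabiliser is either a centraliser in $G$, or the kernel of a homomorphism $Z\to\R$ with $Z$ a centraliser. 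All such subgroups lie in the collection $\mc{K}$ of Lemma~\ref{UCC for centraliser kernels} (note centralisers are themselves kernels of the trivial homomorphism), and that lemma provides a uniform bound $N$ on the length of any chain in $\mc{K}$.

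First I would make precise the monotonicity: if $\beta'\cu\beta''$ are sub-arcs of $\beta$, then $G_{\beta''}\leq G_{\beta'}$, since any isometry fixing $\beta'$ pointwise fixes $\beta''$ pointwise whenever $\beta''\cu\beta'$ — so I should be careful with the direction, smaller arc gives larger (or equal) stabiliser. Consider a strictly decreasing sequence of sub-arcs $\beta\supseteq\beta_1\supsetneq\beta_2\supsetneq\cdots$; the corresponding stabilisers $G_{\beta_1}\leq G_{\beta_2}\leq\cdots$ form an increasing chain in $\mc{K}$, hence stabilise after at most $N$ steps. Concretely: among all sub-arcs of $\beta$, choose one, say $\beta_*$, whose stabiliser $G_{\beta_*}$ is maximal possible (such a maximum is attained because any chain of stabilisers has length $\leq N$ — start with $\beta$ itself and repeatedly pass to a proper sub-arc with strictly larger stabiliser; this terminates). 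Then I claim $\beta_*$ is a stable arc: for any sub-arc $\beta'\cu\beta_*$ we have $G_{\beta_*}\leq G_{\beta'}$ by monotonicity, but also $G_{\beta'}$ appears as the stabiliser of a sub-arc of $\beta$, so by maximality $G_{\beta'}\leq G_{\beta_*}$, giving $G_{\beta'}=G_{\beta_*}$. Hence every sub-arc of $\beta_*$ has stabiliser $G_{\beta_*}$, so $\beta_*$ is stable and $\beta$ contains a stable sub-arc.

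The one subtlety I would want to double-check is that Lemma~\ref{UCC for centraliser kernels} genuinely covers all the subgroups that arise. In Case~(1) of Proposition~\ref{arc-stab prop}, $G_{\beta}$ is a centraliser, which is the kernel of the trivial homomorphism from a $G$--semi-parabolic subgroup (any centraliser is $G$--semi-parabolic by Remark~\ref{AP centralisers rmk}), so it lies in $\mc{K}$. In Case~(2), $G_{\beta}=\ker\rho$ with $\rho\colon Z\to\R$ and $Z$ a centraliser, hence $Z$ is $G$--semi-parabolic, so again $G_{\beta}\in\mc{K}$ by definition of $\mc{K}$. Thus every arc-stabiliser of $T_{\om}$ lies in $\mc{K}$, and Lemma~\ref{UCC for centraliser kernels} applies verbatim. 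I do not expect any real obstacle here — the argument is essentially a formal consequence of the ascending chain condition supplied by the earlier sections — but the bookkeeping in stating ``the maximum is attained'' cleanly (as opposed to invoking Zorn-type nonsense) is the only place requiring a sentence of care, and it is handled exactly by the finite bound $N$.

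\begin{proof}
By definition, we must show that every arc of $T_{\om}$ contains a stable sub-arc. Fix an arc $\beta\cu T_{\om}$. For sub-arcs $\beta'\cu\beta''$ of $\beta$ we have $G_{\beta''}\leq G_{\beta'}$, since an element fixing $\beta'$ pointwise fixes every sub-arc of $\beta'$ pointwise.

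We first observe that every arc-stabiliser of $T_{\om}$ lies in the collection $\mc{K}$ of Lemma~\ref{UCC for centraliser kernels}. Indeed, by Proposition~\ref{arc-stab prop}, the stabiliser $G_{\gamma}$ of an arc $\gamma\cu T_{\om}$ is either a centraliser in $G$ or the kernel of a homomorphism $\rho\colon Z\ra\R$ with $Z\leq G$ a centraliser. In the first case, $G_{\gamma}$ is $G$--semi-parabolic by Remark~\ref{AP centralisers rmk} and is the kernel of the trivial homomorphism on itself, so $G_{\gamma}\in\mc{K}$. In the second case, $Z$ is $G$--semi-parabolic (again by Remark~\ref{AP centralisers rmk}), so $G_{\gamma}=\ker\rho\in\mc{K}$ by definition of $\mc{K}$.

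Let $N$ be the constant from Lemma~\ref{UCC for centraliser kernels}, bounding the length of every chain of subgroups in $\mc{K}$. Starting from $\beta$, build a finite sequence of sub-arcs $\beta=\beta^{(0)}\supseteq\beta^{(1)}\supseteq\dots$ by letting $\beta^{(i+1)}$ be a proper sub-arc of $\beta^{(i)}$ with $G_{\beta^{(i)}}\lneq G_{\beta^{(i+1)}}$ whenever such a sub-arc exists, and stopping otherwise. Since $G_{\beta^{(0)}}\lneq G_{\beta^{(1)}}\lneq\dots$ is a strictly increasing chain in $\mc{K}$, this process terminates after at most $N$ steps at some sub-arc $\beta_*\cu\beta$ with the property that $G_{\beta'}\leq G_{\beta_*}$ for every sub-arc $\beta'\cu\beta_*$.

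We claim $\beta_*$ is stable. Let $\beta'\cu\beta_*$ be any sub-arc. By monotonicity, $G_{\beta_*}\leq G_{\beta'}$; by the defining property of $\beta_*$, also $G_{\beta'}\leq G_{\beta_*}$. Hence $G_{\beta'}=G_{\beta_*}$, so every arc of $\beta_*$ has the same $G$--stabiliser. Therefore $\beta$ contains the stable sub-arc $\beta_*$, and $G\acts T_{\om}$ is BF--stable.
\end{proof}
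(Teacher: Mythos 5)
Your proof is correct and is exactly the argument the paper intends: the paper records Corollary~\ref{stable cor} as an ``immediate consequence'' of Proposition~\ref{arc-stab prop} and Lemma~\ref{UCC for centraliser kernels}, and you have simply written out the (correct) details — every arc-stabiliser lies in the collection $\mc{K}$, so the bounded-chain condition forces the stabiliser of sub-arcs to stabilise, yielding a stable sub-arc.
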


Recall that the action $G\acts T_{\om}$ is non-elliptic by construction, so the following makes sense.

\begin{defn}
We denote by $T\cu T_{\om}$ the $G$--minimal subtree.
\end{defn}

We now prove three long, technical propositions that make up the core of the proof of Theorems~\ref{main cmp} and~\ref{main general}. Each of them exploits certain features of the action $G\acts T$ to construct a splitting of $G$ and (possibly) a DLS automorphism. It is useful to recall Proposition~\ref{arc-stab prop} and Proposition~\ref{important addenda new} to better understand their relevance.

\begin{prop}\label{Z vertex group prop}
Let $\alpha\cu T$ be a line acted upon by its stabiliser $Z$ via a nontrivial homomorphism $\rho\colon Z\ra\R$. Then one of the following happens:
\begin{enumerate}
\item $\rho$ is discrete with $G$--semi-parabolic kernel; 
\item $G$ admits a splitting where $Z$ is a vertex group and each incident edge group is contained in $\ker\rho$ (we include here also the ``trivial'' case when $G=Z$ and $T=\alpha$);
\item there exists a geometric approximation $\mc{G}\ra T$ and an indecomposable component $U\cu\mc{G}$ such that $U\not\simeq\R$ and the stabiliser of every arc of $U$ coincides with $\ker\rho$, which is a centraliser.
\end{enumerate}
\end{prop}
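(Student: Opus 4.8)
The strategy is to analyse the action of $Z$ on $T$ via the homomorphism $\rho$, using the structure results of Section~\ref{limit sect} together with the machinery of transverse coverings and geometric approximations from Subsection~\ref{Guirardel sect}. Since $\alpha$ is a $Z$--invariant line on which $Z$ acts by $\rho$, which is nontrivial, $Z$ acts non-trivially on $\alpha$; in particular $Z$ is non-elliptic in $T_{\om}$, so by Proposition~\ref{arc-stab prop} (Case~(2)) the group $Z$ is a centraliser, it equals the $G$--stabiliser of $\alpha$, and $Z$ is non-elliptic in $\om$--all $T_n$; moreover $N_G(Z)=Z$. The first dividing line is whether $\rho$ is discrete. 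If $\rho$ is discrete, I would invoke Proposition~\ref{important addenda new}(b): either $\ker\rho$ is elliptic in $\om$--all $T_n$, in which case it is $G$--semi-parabolic and we are in conclusion~(1); or $\ker\rho$ is non-elliptic in $\om$--all $T_n$, and then I would use that $\ker\rho$ is finitely generated (if $\rho$ is discrete with image $\Z$ and $Z$ is finitely generated, so is $\ker\rho$, as $Z$ virtually splits as $\ker\rho\x\Z$) to run the following argument to land in~(2) or~(3).

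\textbf{Producing the splitting.} Suppose first $\rho$ is discrete and $\ker\rho$ is \emph{not} $G$--semi-parabolic, or $\rho$ is non-discrete. I would build a transverse covering of $T$ whose ``axial'' piece records the line $\alpha$. Concretely, take the $G$--orbit of $\alpha$ together with the (closures of the) components of $T\setminus G\alpha$, or rather apply Proposition~\ref{Imanishi} to a geometric approximation $\mc{G}\ra T$ furnished by Proposition~\ref{geometric approximations}. Using Corollary~\ref{stable cor} (the action is BF--stable) and Lemma~\ref{indec lem}, every indecomposable component of $\mc{G}$ maps to a stable subtree of $T$. The line $\alpha$, being $Z$--invariant with $Z$ acting with dense orbits when $\rho$ is non-discrete, or by $\Z$ when $\rho$ is discrete, is itself an indecomposable $Z$--subtree in the first case; in either case, I would separate into: (i) $\alpha$ is contained in the image of an indecomposable component $U\cu\mc{G}$ with $U\not\simeq\R$ — then by Lemma~\ref{stable components}, refining the approximation so that the finitely generated group $\ker\rho$ becomes elliptic, I get an indecomposable component $U'$ invariant under the stabiliser of $U$ with every arc-stabiliser equal to $\ker\rho$, and since $Z$ non-elliptic in $T_n$ forces $\ker\rho$ to be a centraliser via Proposition~\ref{important addenda new}(b) in the non-elliptic case (or Proposition~\ref{arc-stab prop} directly on an arc of $U'$), this is conclusion~(3); (ii) $\alpha$ is a component of the transverse covering that is a genuine arc/line piece, i.e.\ the edge of the Bass--Serre tree $S_{\mc{U}}$ incident to the vertex fixed by $Z$ corresponds to $\alpha$, so $Z$ is a vertex group of the induced simplicial splitting $G\acts S_{\mc{U}}$, and the incident edge groups are $G$--stabilisers of points of $\alpha$, which — if $\rho$ is non-discrete — are contained in $Z=G_\alpha$ by Proposition~\ref{important addenda new}(e), hence contained in $\ker\rho$ up to replacing by the stable arc-stabiliser; that is conclusion~(2). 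The ``trivial'' case $G=Z$, $T=\alpha$ is exactly when no proper transverse covering exists.

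\textbf{Handling the discrete case that is not (1).} When $\rho$ is discrete but $\ker\rho$ is not $G$--semi-parabolic, Proposition~\ref{important addenda new}(b) tells us $\ker\rho$ is non-elliptic in $\om$--all $T_n$; then $\ker\rho$ is finitely generated as noted, and I would feed an arc $\beta\subseteq\alpha$ with $G_\beta\leq Z$ (again Proposition~\ref{important addenda new}(e) when $\rho$ is non-discrete, but in the discrete case directly $G_\beta=\ker\rho$ since $Z/\ker\rho\simeq\Z$ acts freely on $\alpha$) into the transverse-covering/geometric-approximation machine exactly as above, landing in~(2) or~(3); the centraliser property of $\ker\rho$ in case~(3) comes from applying Proposition~\ref{arc-stab prop} to an arc of the indecomposable component $U'$ produced by Lemma~\ref{stable components}, whose stabiliser equals $\ker\rho$ and is elliptic in no $T_n$, so it falls in Case~(2) of that proposition with trivial $\rho$ — wait, more carefully, it is the kernel of a homomorphism of a centraliser, but finite generation plus $\om$-stability forces it to be a centraliser via Proposition~\ref{important addenda new}(c)-type reasoning — I would cite Proposition~\ref{om-centralisers in G cor}(2) after identifying $\ker\rho=\bigcap_\om Z_G(\varphi_n^{-1}(g_n))$.

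\textbf{Main obstacle.} The delicate point is case~(ii) above: showing that when the line $\alpha$ sits inside an axial/arc component of the transverse covering (so it \emph{does} give a simplicial splitting with $Z$ as vertex group rather than an indecomposable non-line component), the incident edge groups land in $\ker\rho$ rather than merely in $Z$. When $\rho$ is non-discrete this is Proposition~\ref{important addenda new}(e). When $\rho$ is discrete and we are forced out of~(1), I expect the cleanest route is to observe that $Z/\ker\rho\simeq\Z$ acts on $\alpha$ by a nontrivial translation, so any subgroup of $Z$ fixing an arc of $\alpha$ must fix $\alpha$ pointwise, hence lies in $\ker\rho$; combined with Lemma~\ref{refining splittings} to promote the vertex splitting of $Z$ along $\alpha$ — or rather the splitting of $G$ read off from $S_{\mc{U}}$ — to the stated form. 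The bookkeeping to ensure all incident edge groups (not just those inside $\alpha$) are controlled, via BF--stability and the finite generation of arc-stabilisers from Corollary~\ref{stable cor} and Proposition~\ref{arc-stab prop}, is where most of the care goes.
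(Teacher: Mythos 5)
Your outline captures roughly half of the paper's argument — the half where $\rho$ is non-discrete and a central element of $Z$ survives in the quotient $Z/\ker\rho$. There, the paper does exactly what you sketch: lift $\alpha$ to a $Z$--invariant line $\wt\alpha$ in a geometric approximation, apply Imanishi, and split according to whether the component containing $\wt\alpha$ maps to a line (giving Option~(2)) or not (giving Option~(3) via the infinite-tripod argument, Proposition~\ref{important addenda new}(e) and Proposition~\ref{line and tripod prop}). But your treatment of the remaining case has two genuine gaps. First, your claim that $\ker\rho$ is finitely generated when $\rho$ is discrete ("as $Z$ virtually splits as $\ker\rho\x\Z$") is circular: that splitting is precisely the conclusion of Proposition~\ref{kernel dichotomy} in the case where $\ker\rho$ \emph{is} $G$--semi-parabolic, i.e.\ the case you are trying to rule out; in general the kernel of a surjection from a finitely generated group onto $\Z$ need not be finitely generated. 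This blocks your appeal to Proposition~\ref{important addenda new}(b), whose hypotheses include finite generation of $\ker\rho$. The paper instead pivots on Lemma~\ref{kernel envelope} and Proposition~\ref{kernel dichotomy}: if $\rho$ is discrete and $\ker\rho$ is not $G$--semi-parabolic, then necessarily $\G(\ker\rho)=\G(Z)$, and this label condition — not ellipticity in the $T_n$ — is what drives the rest of the argument.

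Second, in that case the Imanishi machinery does not apply the way you use it. Your argument that $\alpha$ sits inside a single component $U$ (so that $Z\leq G_U$ and $S_{\mc U}$ gives the splitting) relies on $Z$ being generated by elements of arbitrarily small translation length, which is exactly non-discreteness of $\rho$. When $\rho$ is discrete, the $Z$--minimal subtree of the approximation can traverse many components, and the paper has to build a \emph{coarser} transverse covering $\mc V$ whose pieces are the $G$--translates of the $Z$--minimal subtree $\mc S_Z$. The crux is then showing that distinct translates $g\mc S_Z$ and $\mc S_Z$ overlap in at most a point and that the stabiliser of $\mc S_Z$ is exactly $Z$; this is done by choosing a finitely generated "filling" subgroup $K\leq\ker\rho$ as in Remark~\ref{filling subgroup rmk}, proving that $\Fix(K,T)=\alpha$ and that every arc of $\alpha$ has $G$--stabiliser exactly $\ker\rho$ (your assertion that $G_\beta=\ker\rho$ for $\beta\cu\alpha$ "directly" in the discrete case is unjustified — a priori $G_\beta$ could contain elements outside $Z$), and then invoking $N_G(\ker\rho)\leq N_G(Z)=Z$ via Lemma~\ref{normalisers of subgroups of AP}(1), which is available only because $\G(\ker\rho)=\G(Z)$. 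Without this rigidity step your putative transverse covering need not exist, and there is no splitting to read off.
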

\begin{proof}
Assume throughout the proof that $\rho$ is not discrete with $G$--semi-parabolic kernel. We will either construct a splitting as in Option~(2) or find a component of a geometric approximation as in Option~(3). We remind the reader that $\ker\rho$ can be infinitely generated.

Observe that, by Lemma~\ref{kernel envelope}, either $\G(\ker\rho)=\G(Z)$, or an element of the centre of $Z$ lies outside $\ker\rho$. In addition, if $\G(\ker\rho)\neq\G(Z)$ then $\rho$ cannot be discrete, otherwise Proposition~\ref{kernel dichotomy} would imply that $\ker\rho$ is $G$--semi-parabolic, violating our initial assumption.

Thus, it suffices to consider the following two cases, which we will treat by rather different arguments. Fix a finite generating set $Z_0\cu Z$ with $Z_0=Z_0^{-1}$ and $1\in Z_0$.

\smallskip
{\bf Case~(a):} \emph{$\rho$ is not discrete and there exists an element $\overline z\in Z_Z(Z)\setminus\ker\rho$.}

\smallskip\noindent
Since morphisms are $1$--Lipschitz, we have $\ell_{\mc{G}}(g)\geq\ell_T(g)$ for every geometric approximation $\mc{G}\ra T$ and every element $g\in G$. Proposition~\ref{geometric approximations} ensures that we can choose a geometric approximation $f\colon\mc{G}\ra T$ such that each element of $Z_0\cup\{\overline z\}$ has the same translation length in $\mc{G}$ and $T$. 

Now, since $\overline z$ is loxodromic in $\mc{G}$ and commutes with $Z$, its axis is $Z$--invariant. This shows that the $Z$--minimal subtree of $\mc{G}$ is a line $\wt\alpha$ with $f(\wt\alpha)=\alpha$. 
In addition, $Z$ translates along $\wt\alpha\cu\mc{G}$ and $\alpha\cu T$ according to the same homomorphism $\rho\colon Z\ra\R$, since the elements of the generating set $Z_0$ have the same translation length in $\mc{G}$ and $T$. 

Let $\mc{U}$ be the transverse covering of $\mc{G}$ provided by Proposition~\ref{Imanishi}. Let $U\in\mc{U}$ be a component that shares an arc with $\wt\alpha$. 

If $U$ is not indecomposable, then $U$ is an arc containing no branch points of $\mc{G}$ in its interior. In this case, we have $U\cu\wt\alpha$. Since $\rho$ is not discrete, there exist elements of $Z$ that translate arbitrarily little along $\wt\alpha$. It follows that $\wt\alpha$ contains no branch points of $\mc{G}$, hence $\mc{G}=\wt\alpha$ and $T=\alpha$. Thus $G=Z$ and we are in the ``trivial'' case of Option~(2).

Suppose instead that $U$ is indecomposable and let $G_U\leq G$ be its stabiliser. Since $\rho$ is not discrete, for every $\eps>0$, the group $Z$ is generated by its elements with translation length $<\eps$. Thus, $Z$ is generated by elements $g\in Z$ such that $gU\cap U$ contains an arc, since $U$ and $\wt\alpha$ share an arc. Since $U$ is part of a transverse covering, these generators preserve $U$, hence $Z\leq G_U$. In particular, we have $\wt\alpha\cu U$.

If the image $f(U)$ is a line, then $f(U)=\alpha$, hence $G_U=Z$. Recall from the discussion after Definition~\ref{transverse covering defn} that $G_U$ is a vertex group in the splitting of $G$ given by the action $G\acts S_{\mc{U}}$. All stabilisers of incident edges are subgroups of $G_U=Z$ that are elliptic in $\mc{G}$, hence in $T$. This shows that they are contained in $\ker\rho$. Thus, this splitting is as required in Option~(2) of the proposition.

Finally, suppose that $f(U)$ is not a line. By Lemma~\ref{indec lem}, the action $G_U\acts f(U)$ is minimal. Since $\alpha\cu f(U)$, it follows that $f(U)$ contains an infinite tripod $\tau$ containing $\alpha$. Lemma~\ref{indec lem} also shows that $f(U)$ is stable, so $G_{\tau}$ coincides with the $G$--stabiliser of any arc of $\alpha$. By Proposition~\ref{important addenda new}(e), the latter is exactly $\ker\rho$, so $G_{\tau}=\ker\rho$. Proposition~\ref{line and tripod prop} now implies that $\ker\rho$ is a centraliser. In particular, $\ker\rho$ is finitely generated and, up to changing geometric approximation and indecomposable component, Lemma~\ref{stable components} allows us to assume that $\ker\rho$ is the stabiliser of every arc of $U$. This is the situation described in Option~(3) of the statement, so this concludes the discussion of Case~(a).

\smallskip
{\bf Case~(b):} \emph{we have $\G(\ker\rho)=\G(Z)$.}

\smallskip\noindent
In this case, we will show that we always fall in Option~(2) of the proposition. Let $K\leq\ker\rho$ be a finitely generated subgroup such that any centraliser containing $K$ contains $Z$, as provided by Remark~\ref{filling subgroup rmk}.

\smallskip
{\bf Claim~1:} \emph{we have $\Fix(K,T)=\alpha$ and the stabiliser of every arc of $\alpha$ is exactly $\ker\rho$.}

\smallskip\noindent
\emph{Proof of Claim~1.}
It is clear that $\alpha\cu\Fix(K,T)$. Consider an arc $\eta\cu\Fix(K,T)$.

Observe that $\eta$ cannot fall in Case~(1) of Proposition~\ref{arc-stab prop}. Indeed, $G_{\eta}$ would be a centraliser and, since $K$ is provided by Remark~\ref{filling subgroup rmk}, it would follow that $Z\leq G_{\eta}$. However, this would contradict the fact that $Z$ is not elliptic in $T$ (since $\rho$ is nontrivial).

Thus, $\eta$ must fall in Case~(2) of Proposition~\ref{arc-stab prop}. In particular, $G_{\eta}$ is the kernel of a homomorphism $\rho'\colon Z'\ra\R$, where $Z'$ is a centraliser stabilising a line $\alpha'\cu T$ containing $\eta$. Again, we must have $Z\leq Z'$, so $Z$ stabilises $\alpha'$. Since $Z$ translates nontrivially along $\alpha$, we must have $\alpha=\alpha'$, hence $Z=Z'$. This shows that $\eta\cu\alpha$ and $G_{\eta}=\ker\rho$, thus proving the claim.
\hfill$\blacksquare$

\smallskip
Fix a point $p\in\alpha$ and let $\beta\cu\alpha$ be the convex hull of the set $Z_0\cdot p$. Proposition~\ref{geometric approximations} allows us to choose a geometric approximation $f\colon\mc{G}\ra T$ so that $\beta$ lifts isometrically to a $K$--fixed arc $\wt\beta\cu\mc{G}$. 

Since $\rho$ is nontrivial, $Z$ is not elliptic in $T$, nor can it be elliptic in $\mc{G}$. Let $\mc{S}_Z\cu\mc{G}$ be the $Z$--minimal subtree. 
Let $\mc{U}$ be the transverse covering of $\mc{G}$ provided by Proposition~\ref{Imanishi}. 

\smallskip
{\bf Claim~2:} \emph{if $g\mc{S}_Z\cap\mc{S}_Z$ contains an arc, for some $g\in G$, then $g\in Z$. In addition, if some $U\in\mc{U}$ shares an arc with $\mc{S}_Z$, then $U\cu\mc{S}_Z$.}

\smallskip\noindent
\emph{Proof of Claim~2.}
If $\wt p\in\wt\beta$ is the lift of $p$, the arc $\wt\beta$ contains $Z_0\cdot\wt p$. Recalling that $Z_0$ generates $Z$ and that $1\in Z_0=Z_0^{-1}$, we deduce that the $Z$--minimal subtree $\mc{S}_Z$ is contained in $Z\cdot\wt\beta$. In particular, every arc of $\mc{S}_Z$ contains a sub-arc that is fixed by a $Z$--conjugate of $K$.

Suppose that $g\mc{S}_Z\cap\mc{S}_Z$ contains a nontrivial arc $\eta$ for some $g\in G$. By the previous paragraph, we can choose $\eta$ so that it is simultaneously fixed by $z_1Kz_1^{-1}$ and $(gz_2)K(gz_2)^{-1}$, for some $z_1,z_2\in Z$. Up to shrinking $\eta$, the morphism $f$ is isometric on it, and $f(\eta)$ is an arc of $T$ fixed by $z_1Kz_1^{-1}$ and $(gz_2)K(gz_2)^{-1}$. 

The first half of Claim~1 implies that $f(\eta)\cu \alpha\cap g\alpha$. In particular, $\alpha$ and $g\alpha$ share an arc, so the second half of Claim~1 implies that $g(\ker\rho)g^{-1}=\ker\rho$. Since $\G(\ker\rho)=\G(Z)$, Lemma~\ref{normalisers of subgroups of AP}(1) implies that $ N_G(\ker\rho)\leq N_G(Z)=Z$. In conclusion, $g\in Z$ as required. 

Now, suppose that a component $U\in\mc{U}$ shares an arc with $\mc{S}_Z$. If $U$ is not indecomposable, then $U$ is an arc containing no branch points of $\mc{G}$ in its interior, so it is clear that $U\cu\mc{S}_Z$. 

Suppose instead that $U$ is indecomposable. As above, $f(U\cap\mc{S}_Z)$ contains an arc fixed by a $Z$--conjugate of $K$. Lemma~\ref{indec lem} shows that $f(U)$ is a stable subtree of $T$, so $f(U)$ is fixed pointwise by a $Z$--conjugate of $K$. Claim~1 implies that $f(U)\cu\alpha$. Since $f$ is a morphism, $f(U)$ is not a single point and, by Lemma~\ref{indec lem}, it is $G_U$--minimal. We conclude that $f(U)=\alpha$, hence $G_U\leq Z$. Since $U$ is the $G_U$--minimal subtree of $\mc{G}$, it follows that $U\cu\mc{S}_Z$.
\hfill$\blacksquare$

\smallskip
Note that $\mc{S}_Z$ is closed in $\mc{G}$. Indeed, every point $x\in\overline{\mc{S}}_Z$ is the missing endpoint of a half-open arc $\s\cu\mc{S}_Z$. By Definition~\ref{transverse covering defn}, $\overline\s$ is covered by finitely many elements of $\mc{U}$, so one of these must intersect $\overline\s$ in an arc containing $x$. By Claim~2, this element of $\mc{U}$ is contained in $\mc{S}_Z$, hence $x\in\mc{S}_Z$.

Now, consider the covering $\mc{V}$ of $\mc{G}$ whose elements are either $G$--translates of $\mc{S}_Z$, or elements of $\mc{U}$ that are not contained in any $G$--translate of $\mc{S}_Z$. By Claim~2, $\mc{V}$ is a transverse covering of $\mc{G}$. 
Let $G\acts S_{\mc{V}}$ be the minimal action on a bipartite simplicial tree constructed as described after Definition~\ref{transverse covering defn}.

By Claim~2, $Z$ is the $G$--stabiliser of $\mc{S}_Z$, which corresponds to a vertex of $S_{\mc{V}}$. Stabilisers of incident edges are $Z$--stabilisers of points of $\mc{S}_Z$. In particular, they are elliptic in $\mc{G}$, hence in $T$, so they must be contained in $\ker\rho$. In conclusion, we have realised the situation in Option~(2) of the proposition. This concludes the proof.
\end{proof}

In Option~(2) of Proposition~\ref{Z vertex group prop} we will be able to obtain an HNN splitting of $G$ by applying Lemma~\ref{refining splittings} to the natural HNN splittings of $Z$ induced by $\rho$. The next result shows how to handle Option~(3) instead.

\begin{prop}\label{exotic/surface prop}
Consider a geometric approximation $f\colon\mc{G}\ra T$. Let $U\cu\mc{G}$ be an indecomposable component with $U\not\simeq\R$ such that every arc of $U$ has the same stabiliser $H\leq G$. Also suppose that $H$ is convex-cocompact and closed under taking roots in $G$. Then one of the following happens:
\begin{itemize}
\item[(a)] $G$ splits over $H$, giving rise to a fold or partial conjugation with infinite order in $\out(G)$.
\item[(b)] $G$ splits over a centraliser $Z_G(k)$, where $k\in G$ is label-irreducible and $H\lhd Z_G(k)$ with $Z_G(k)/H\simeq\Z$. In addition, the twist $\psi\in\aut(G)$ determined by $k$ and this splitting has infinite order in $\out(G)$.
\end{itemize}
\end{prop}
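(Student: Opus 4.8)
The plan is to analyse the indecomposable component $U\cu\mc{G}$ using the Rips--Bestvina--Feighn machine. First I would pass to the quotient $G_U/H$, where $G_U\leq G$ is the stabiliser of $U$: since every arc of $U$ has stabiliser exactly $H$, the subgroup $H$ is normal in $G_U$ (as $f(U)$ is a stable $G_U$--invariant subtree not reduced to a point, and $G_U$ preserves the pointwise stabiliser of its arcs), so $G_U\acts U$ descends to an action $G_U/H\acts U$ with trivial arc-stabilisers. By Lemma~\ref{indec lem}, $U$ is $(G_U/H)$--indecomposable and $(G_U/H)$--minimal, and it is geometric (being a component, by Proposition~\ref{Imanishi}), so by Remark~\ref{geometric modulo kernel} we may apply Proposition~\ref{3 types} to $G_U/H\acts U$. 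Note $G_U/H$ is finitely presented and torsion-free: finite presentability because $H$ is convex-cocompact hence finitely generated and $G_U$ is finitely generated (it is a vertex stabiliser of the transverse covering), torsion-freeness because $H$ is closed under taking roots in $G$ and $G$ is torsion-free. Since $U\not\simeq\R$, the axial case is excluded, so $U$ is of surface or exotic type.

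Next I would extract a splitting of $G_U/H$ from each case. In the surface case, $G_U/H=\pi_1\Sigma$ for a compact surface with boundary, and the arational measured foliation giving $U$ can be approximated (following \cite{Gui-CMH}) by a simplicial $(G_U/H)$--tree dual to a simple closed curve on $\Sigma$, yielding a one-edge splitting of $G_U/H$ over $\Z$ or the trivial group; choosing a non-boundary-parallel, non-separating such curve (or using the mapping class of a Dehn twist) gives a splitting to which a Dehn-twist/partial-conjugation construction applies. In the exotic case, I would invoke the simplicial approximation results for exotic components from \cite{Gui-CMH}, which again produce a one-edge simplicial $(G_U/H)$--tree; edge groups here are trivial or cyclic depending on the configuration. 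In either case I obtain a one-edge splitting of $G_U/H$ over a subgroup $\overline{C}$ that is either trivial or infinite cyclic, and the corresponding splitting of $G_U$ is over a subgroup $C$ with $H\leq C$, $C/H\cong\overline C$.

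Then I would promote this to a splitting of the whole group $G$ using Lemma~\ref{refining splittings}: $G_U$ is a vertex group in the splitting $G\acts S_{\mc{V}}$ coming from the transverse covering of $\mc{G}$, the incident edge groups are $H$--stabilisers of points of $U$ hence elliptic in the Bass--Serre tree of the $G_U$--splitting (since that tree's edge groups contain $H$ and the incident edge groups are contained in $H$, being stabilisers of arcs of $U$ — more carefully, incident edge groups fix a point of $U$, hence are elliptic in its restriction quotient). Lemma~\ref{refining splittings} then gives a splitting of $G$ over $C$, HNN when the $G_U$-splitting is. If $C=H$ (the curve or exotic edge is trivial-edge), I apply Lemma~\ref{non-inner from cc}: its hypotheses are met because $H=C$ is convex-cocompact and root-closed, $N_G(C)$ is non-elliptic (it contains the loxodromic elements of $G_U/H$ lifted back), and $N_G(C)/C$ is neither cyclic nor a free product of two virtually-abelian elliptic subgroups — here I would use that $G_U/H$ is a surface group of a surface with boundary that is not an annulus, or an exotic group, neither of which has such a decomposition. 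This yields a fold or partial conjugation of infinite order in $\out(G)$, case~(a). If instead $C/H\cong\Z$, I take $k\in G$ a label-irreducible element generating $C$ modulo $H$ (adjusting by the convex-cocompactness of $C$ and Remark~\ref{LI properties}(6) so that a suitable power lies in $C$), note $C=Z_G(k)$ after verifying $C$ is a centraliser via Proposition~\ref{double centraliser prop}/the structure of $H$, and apply Lemma~\ref{non-inner from HNN} (with the stable letter chosen inside $Z_C(C)$, which is possible since $Z_C(C)\supseteq\langle k\rangle$ commutes with $t$ by construction of the HNN splitting) to see the twist $\psi$ determined by $k$ has infinite order in $\out(G)$, case~(b).

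The main obstacle I anticipate is the bookkeeping in the exotic case: Guirardel's simplicial approximation of exotic components in \cite{Gui-CMH} requires care to extract a splitting with controlled (trivial or cyclic) edge groups, and one must verify the resulting edge group $C$ is genuinely a centraliser in $G$ — not merely a subgroup of one — which is where the hypothesis that $H$ is convex-cocompact and root-closed, combined with the earlier analysis (Corollary~\ref{arc-stabilisers are centralisers}, Proposition~\ref{line and tripod prop}), must be leveraged. A secondary subtlety is ensuring the constructed DLS automorphism does not become inner after the refinement step: one must check that the infinite-order-ness of the outer class of the DLS automorphism of the vertex group $G_U/H$ (or of $N_G(C)/C$) is not destroyed when passing back up to $G$, which is exactly what Lemmas~\ref{non-inner from HNN} and~\ref{non-inner from cc} are designed to handle, but applying them requires checking their hypotheses match the configuration produced here.
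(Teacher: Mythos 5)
Your skeleton matches the paper's: quotient by $H$, apply the Rips trichotomy (Proposition~\ref{3 types}) to $G_U/H\acts U$, exclude the axial case since $U\not\simeq\R$, extract a one-edge splitting of $G_U/H$ in the surface and exotic cases, refine via Lemma~\ref{refining splittings}, and invoke the non-inner criteria. However, two steps contain genuine gaps. The first is in the exotic case: your justification for the third hypothesis of Lemma~\ref{non-inner from cc} --- that $N_G(H)/H$ is not a free product of two virtually abelian groups elliptic in the Bass--Serre tree --- is that an exotic group ``has no such decomposition''. This is false: exotic components can perfectly well have $G_U/H\simeq V_1\ast V_2$ with $V_i$ virtually abelian (e.g.\ $G_U/H\simeq F_2$). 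The condition in Lemma~\ref{non-inner from cc} is relative to the chosen simplicial tree, and the substance of the argument here is a careful choice of $\mc{S}$ among Guirardel's approximating trees $\mc{S}_n$: if both $V_1$ and $V_2$ were elliptic in all $\mc{S}_n$ they would be elliptic in $U$, contradicting indecomposability, so for large $n$ one factor is loxodromic; and in the residual case $G_U/H\simeq F_2$ one must replace $\mc{S}_n$ by the HNN splitting of $F_2$ over the unique conjugacy class of free factors meeting $\overline{\mscr{E}}$ nontrivially. Without this, your application of Lemma~\ref{non-inner from cc} does not go through.

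The second gap is in the surface case. Proposition~\ref{double centraliser prop} concerns decent pairs of hyperplanes of $Y\cu\X_{\G}$ and does not apply to the edge group $C=H\rtimes\langle g\rangle$; the identification $C=Z_G(k)$ instead comes from Corollary~\ref{cc normalisers} (virtually splitting $N_G(H)$ as $H\x K$, writing $g^n=hk$ with $k\in K$) together with maximality of $\langle\g\rangle$ in the hyperbolic group $G_U/H$. More seriously, your appeal to Lemma~\ref{non-inner from HNN} requires the stable letter of the refined HNN splitting of $G$ to commute with $Z_C(C)$, which you do not verify and which is not automatic. The paper avoids this lemma entirely: $\psi$ restricts to the honest Dehn twist about $\g$ on $G_U/H=\pi_1\Sigma$, which has infinite order in $\out(\pi_1\Sigma)$, and any inner power of $\psi$ would be conjugation by an element of $Z_GZ_G(k)\leq Z_G(k)\leq G_U$, hence would restrict to an inner automorphism of $\pi_1\Sigma$ --- a contradiction. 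You should replace the appeal to Lemma~\ref{non-inner from HNN} with this direct argument (or else verify its hypotheses, which is harder than the problem it is meant to solve).
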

\begin{proof}
Let $G_U\leq G$ be the stabiliser of $U$. Clearly, $H$ is the kernel of the action $G_U\acts U$ and the induced action $G_U/H\acts U$ has trivial arc-stabilisers. The latter action is still indecomposable, and geometric by Remark~\ref{geometric modulo kernel}. Note that $G_U/H$ is finitely presented, since $H$ is finitely generated, and torsion-free, since $H$ is closed under taking roots. 
Thus, we can invoke Proposition~\ref{3 types}. The ``axial'' case does not occur since $U$ is not a line. The two cases of the current proposition will correspond, respectively, to the ``exotic'' and ``surface'' cases.

Let $G\acts S_{\mc{U}}$ be the simplicial tree provided by Proposition~\ref{Imanishi} and the discussion after Definition~\ref{transverse covering defn}. The subgroup $G_U$ is the stabiliser of a vertex $u\in S_{\mc{U}}$. Let $\mscr{E}$ be the collection of stabilisers of edges of $S_{\mc{U}}$ incident to $u$. Note that $\mscr{E}$ is a union of finitely many $G_U$--conjugacy classes of subgroups of $G_U$, and each element of $\mscr{E}$ is the $G_U$--stabiliser of a point of $U$.

In view of Lemma~\ref{refining splittings}, our goal is to construct a $1$--edge splitting of $G_U$ in which all elements of $\mscr{E}$ are elliptic. For this purpose, it suffices to construct a $1$--edge splitting of $G_U/H$ in which all elements of the collection $\overline{\mscr{E}}$ of projections of elements of $\mscr{E}$ are elliptic. We treat the exotic and surface cases separately.

\smallskip
{\bf Case~(a):} \emph{the action $G_U/H\acts U$ is of exotic type.}

\smallskip\noindent
By Proposition~7.2 and Theorem~6.2 in \cite{Gui-CMH}, the action $G_U/H\acts U$ is a limit (in the length function topology) of actions on simplicial trees $G_U/H\acts\mc{S}_n$ where all edge stabilisers are trivial and all elements of $\overline{\mscr{E}}$ are elliptic. 

Picking any $\mc{S}_n$ and collapsing all orbits of edges but one, we obtain an action on a simplicial tree $G_U/H\acts\mc{S}$ with a single orbit of edges. This corresponds to a splitting of $G_U/H$ as $A\ast B$ or $A\ast\Z$, where every element of $\overline{\mscr{E}}$ is conjugate into either $A$ or $B$, and the possible $\Z$--factor is loxodromic in $\mc{S}$. Via Lemma~\ref{refining splittings}, this induces a $1$--edge splitting of $G$ over $H$. 

Since $N_G(H)/H$ contains $G_U/H$, it is clear that $N_G(H)/H$ is not cyclic and that $N_G(H)$ is not elliptic in the Bass--Serre tree of the splitting of $G$. We would like to obtain a fold or partial conjugation with infinite order in $\out(G)$ by applying Lemma~\ref{non-inner from cc}. For this, we are left to ensure that $N_G(H)/H$ is not a free product of two virtually abelian groups elliptic in the Bass--Serre tree. In fact, it suffices to choose $\mc{S}$ so that $G_U/H$ is not a free product of two virtually abelian groups elliptic in $\mc{S}$.

Suppose that $G_U/H=V_1\ast V_2$, where the $V_i$ are nontrivial virtually abelian groups (otherwise any choice of $\mc{S}$ will do). If neither $V_1$ nor $V_2$ is isomorphic to $\Z$, then they are both elliptic in all the $\mc{S}_n$, hence they are elliptic in $U$. Since $G_U/H$ acts on $U$ with trivial arc-stabilisers, we obtain a $G_U$--invariant simplicial subtree of $U$, contradicting the fact that $U$ is indecomposable.

Thus, suppose that $V_2\simeq\Z$. If again $V_1\not\simeq\Z$, then $V_1$ is elliptic in all $\mc{S}_n$ and the same argument shows that $V_2$ must be loxodromic for large $n$. In this case, every element of $\overline{\mscr{E}}$ is conjugate into $V_1$, so we can simply take $\mc{S}=\mc{S}_n$.

Finally, suppose that $G_U/H\simeq F_2$. Note that, for each $n$, every subgroup in $\overline{\mscr{E}}$ is contained in a free factor of $F_2$ that is elliptic in $\mc{S}_n$. Since distinct free factors of $F_2$ intersect trivially, if a free factor contains a nontrivial element of $\overline{\mscr{E}}$, then it must be elliptic in all $\mc{S}_n$, hence also in $U$. We conclude that there is at most one conjugacy class of free factors of $F_2$ that contains nontrivial elements of $\overline{\mscr{E}}$. If $\langle x\rangle$ is one such free factor, it suffices to take $\mc{S}$ to be the HNN splitting $F_2=\langle x\rangle\ast_{\{1\}}$. 

\smallskip
{\bf Case~(b):} \emph{the action $G_U/H\acts U$ is of surface type.}

\smallskip\noindent
In this case, we have $G_U/H=\pi_1\Sigma$ for a compact surface with boundary $\Sigma$. The action $\pi_1\Sigma\acts U$ is dual to an arational measured foliation on $\Sigma$. Since the subgroups $\overline{\mscr{E}}$ are elliptic in $U$, they are contained in the fundamental groups of the boundary components of $\Sigma$.

Let $\g$ be an essential simple closed curve on $\Sigma$ representing a nonzero homology class in $H_1(\Sigma,\Z)$. In particular, $\g$ is $2$--sided in $\Sigma$, and $\langle\g\rangle$ is a maximal cyclic subgroup of $\pi_1\Sigma$. Dual to $\g$, we have a simplicial $\pi_1\Sigma$--tree with edge-stabilisers conjugate to $\langle\g\rangle$, in which all elements of $\overline{\mscr{E}}$ are elliptic. 

Let $g\in G_U$ be a lift of $\g$. Note that $g$ is loxodromic in $U$, since the foliation on $\Sigma$ is arational. Lemma~\ref{refining splittings} gives a $1$--edge splitting of $G$ over the subgroup $C=H\rtimes\langle g\rangle$. 
%

\smallskip
{\bf Claim:} \emph{there exists a label--irreducible element $k\in C$ such that $C=Z_G(k)$.}

\smallskip\noindent
\emph{Proof of claim.}
Recall that $N_G(H)$ virtually splits as $H\x K$ with $K$ convex-cocompact in $G$. Thus, for some $n\geq 1$, we can write $g^n=hk$ with $h\in H$ and $k\in K$. Since $H$ and $g$ commute with $k$, 
we have $C\leq Z_G(k)$. Conversely, note that $g$ and $k$ are loxodromic in $\mc{G}$ with the same axis, which is contained in $U$. This axis is preserved by $Z_G(k)$, so $Z_G(k)\leq G_U$. Since $G_U/H$ is hyperbolic and $\langle g\rangle$ projects to a maximal cyclic subgroup of $G_U/H$, which also contains the projection of $k$, we conclude that $Z_G(k)\leq H\rtimes\langle g\rangle=C$. This shows that $C=Z_G(k)$. 

In particular, $C$ is convex-cocompact in $G$ and it has a finite-index subgroup of the form $H\x\langle k\rangle$. Recalling that $K$ is convex-cocompact in $G$ and $K\cap H=\{1\}$, this shows that $\langle k\rangle=C\cap K$ is convex-cocompact. Hence $k$ is label-irreducible, proving the claim. 
\hfill$\blacksquare$

\smallskip
Finally, let $\psi\in\aut(G)$ be the twist determined by $k$ and our splitting of $G$. Observe that $\psi|_H=\id_H$ and that $\psi(G_U)=G_U$, with the restriction to $G_U/H=\pi_1\Sigma$ being the (conventional) Dehn twist around $\g$ in the mapping class group of $\Sigma$. In particular, $\psi$ restricts to an automorphism of $G_U/H$ with infinite order in $\out(G_U/H)$.

Also note that $\psi$ is the identity on $Z_G(k)$. Thus, if $\psi$ were an inner automorphism of $G$, then it would have to be the conjugation by an element of $Z_GZ_G(k)\leq Z_G(k)\leq G_U$. Hence $\psi$ would restrict to an inner automorphism of $G_U/H$, contradicting the previous paragraph. The same argument applies to powers of $\psi$, so $\psi$ has infinite order in $\out(G)$, as required.
\end{proof}

Finally, the next result covers the situation where every line in $T$ falls in Option~(1) of Proposition~\ref{Z vertex group prop} and we are also unable to apply Proposition~\ref{exotic/surface prop}.

\begin{prop}\label{simplicial prop}
Suppose that the following hold:
\begin{itemize}
\item every geometric approximation $\mc{G}\ra T$ is simplicial;
\item every line of $T$ is acted upon discretely by its $G$--stabiliser;
\item the $G$--stabiliser of every stable arc of $T$ is $G$--semi-parabolic;
\item $T$ is not a line.
\end{itemize}
Then one of the following happens:
\begin{enumerate}
\item $G$ splits over the stabiliser of a stable arc of $T$, giving rise to a fold or partial conjugation with infinite order in $\out(G)$;
\item $G$ splits over some $Z_G(g)$, where $g\in G$ is label-irreducible and determines a twist with infinite order in $\out(G)$;
\item $T$ contains a line $\alpha$ falling in Option~(2) of Proposition~\ref{Z vertex group prop}.
\end{enumerate}
\end{prop}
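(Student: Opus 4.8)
The starting point is a geometric approximation $f\colon\mc{G}\to T$, which by hypothesis is simplicial. The idea is to analyse the $G$--tree $\mc{G}$ directly, since $T$ has essentially the same branching structure as a simplicial approximation whenever $T$ itself ``looks simplicial''. First I would apply Proposition~\ref{Imanishi} to obtain the transverse covering $\mc{U}$ of $\mc{G}$ by components. Since $\mc{G}$ is simplicial and every line of $T$ is acted upon discretely by its stabiliser, I claim every indecomposable component $U\in\mc{U}$ must be a line: indeed, if $U\not\simeq\R$ were indecomposable, then the arcs of $U$ carry a common stabiliser $H$ (by Lemma~\ref{indec lem}(2), using BF--stability from Corollary~\ref{stable cor}), and $f(U)$ is a stable subtree of $T$ on which $G_U$ acts with $f(U)$ either a line or containing an infinite tripod. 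The line case combined with Lemma~\ref{stable components} and then Proposition~\ref{exotic/surface prop} (whose hypotheses hold: $H = G_{\text{stable arc}}$ is $G$--semi-parabolic by assumption, hence convex-cocompact by Lemma~\ref{special cc}(3) and closed under roots) would give Outcomes~(1) or~(2) of the present proposition; the tripod case contradicts simpliciality of the geometric approximation, since a surface or exotic component cannot be simplicial. So I would arrange that every component of $\mc{G}$ is either a line (an axial or an edge-arc component) or a single edge.

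\textbf{Constructing the splitting.} Now the tree $\mc{G}$ is a graph of actions whose vertex actions are either elliptic (single points) or axial (lines acted on by free abelian, discretely-acting groups --- discreteness is exactly the hypothesis on lines of $T$, transported along $f$). Collapsing all the line-components to points, or rather passing to the simplicial tree $S_{\mc{U}}$ attached to the transverse covering as in the discussion after Definition~\ref{transverse covering defn}, yields a simplicial $G$--tree. Since $T$ is not a line and has no global fixed point, $S_{\mc{U}}$ is a nontrivial minimal simplicial $G$--tree; in particular $G$ splits over some edge stabiliser $C$. Every such $C$ is the $G$--stabiliser of a point of some component, which is contained in the stabiliser of an arc of $\mc{G}$, hence contained (via the morphism $f$, which is a morphism and so non-decreasing on arc-stabilisers) in the stabiliser of an arc of $T$. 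Up to refining the approximation by Proposition~\ref{geometric approximations} I would make $C$ equal to $G_{\beta}$ for a stable arc $\beta$ of $T$, so that $C$ is $G$--semi-parabolic, convex-cocompact and closed under roots.

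\textbf{Producing a DLS automorphism, or falling into Option~(3).} Having a one-edge splitting of $G$ over such a $C$ is not yet enough: to land in Outcome~(1) I need $N_G(C)$ to be non-elliptic in the Bass--Serre tree and $N_G(C)/C$ to be neither cyclic nor a free product of two virtually abelian groups elliptic in $\Fix(C,\cdot)$, so that Lemma~\ref{non-inner from cc} applies. If this fails, then $N_G(C)/C$ is of one of those two special forms; I would argue that in the cyclic case the line $\alpha\subseteq T$ associated to (the axial action coming from) the relevant component is stabilised by a subgroup $Z\geq N_G(C)$ that is a centraliser by Proposition~\ref{line and tripod prop}, acting via a nontrivial homomorphism $\rho$ whose kernel is $C$; if moreover $C = G_\beta$ for a stable arc this lands us in Option~(2) of Proposition~\ref{Z vertex group prop}, i.e.\ Outcome~(3) here. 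The free-product case is handled similarly: a virtually-abelian free factor gives, after passing through $\Fix(C,\cdot)$ and collapsing, a line on which a centraliser acts discretely with kernel contained in $C$, again realising Option~(2) of Proposition~\ref{Z vertex group prop}. When the axial component itself has rank $\geq 2$ in its free-abelian vertex group, I can split that $\Z^k$ off a $\Z$ direct factor and use Lemma~\ref{refining splittings} to refine the ambient splitting into an HNN extension over a finite-index subgroup of $C$, producing a twist by a central element; Lemma~\ref{non-inner from HNN} then shows the twist has infinite order in $\out(G)$ unless the relevant central element has finite order --- impossible since $G$ is torsion-free --- or $C = A$, and in the latter degenerate case we again read off Option~(2) of Proposition~\ref{Z vertex group prop}. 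The resulting label-irreducibility of the element $g$ defining the twist in Outcome~(2) comes from intersecting $C$ (or $Z$) with the convex-cocompact complement $K$ of $H$ inside $N_G(H)$, exactly as in the proof of Proposition~\ref{exotic/surface prop}.

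\textbf{Main obstacle.} The delicate point is the bookkeeping needed to guarantee that the edge group $C$ of the splitting of $G$ extracted from $S_{\mc{U}}$ can actually be taken to be \emph{equal to} the stabiliser of a stable arc of $T$ (rather than merely sandwiched between two arc-stabilisers), so that the $G$--semi-parabolicity hypothesis is available; and, relatedly, tracking how the various special forms of $N_G(C)/C$ force the appearance of a line $\alpha$ genuinely falling in Option~(2) of Proposition~\ref{Z vertex group prop}, as opposed to Option~(1) or~(3) of that proposition. This requires combining Lemma~\ref{stable components}, the refinement statements in Proposition~\ref{geometric approximations}, Lemma~\ref{refining splittings}, and the structure of $G$--semi-parabolics from Lemma~\ref{G--AP structure} and Proposition~\ref{kernel dichotomy}, and it is where most of the work of the proof will go.
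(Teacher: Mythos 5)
Your overall architecture (extract a one-edge splitting from a simplicial geometric approximation, then either verify the hypotheses of Lemma~\ref{non-inner from cc} or fall into Option~(2) of Proposition~\ref{Z vertex group prop}) matches the paper's, and your opening observation that the transverse covering has no indecomposable components is correct (indeed that is what ``simplicial'' means here, so Proposition~\ref{exotic/surface prop} is never invoked inside this proof). The paper also resolves your ``main obstacle'' more simply than you anticipate: one chooses the arc $\beta$ at the outset so that $G_\beta$ is \emph{maximal} among all arc-stabilisers of $T$ (possible by Lemma~\ref{UCC for centraliser kernels}), and this maximality is what later forces every arc of the relevant line $\alpha$ to have stabiliser exactly $H$ and forces the action of $N_G(H)/H$ on $\Fix(H,\mc{G})$ to give a free splitting.

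The genuine gap is that your argument never uses the approximating trees $T_n$, and without them several steps fail. First, the paper's case division is not by the shape of $N_G(C)/C$ but by ellipticity of $H=G_\beta$ in $\om$--all $T_n$ and of $N_G(H)$ in $T$; you have entirely missed the case where $H$ is elliptic in the $T_n$ but $N_G(H)$ is elliptic in $T$. In that case Lemma~\ref{non-inner from cc} is inapplicable (its second hypothesis fails), there is no line to extract, and the paper instead builds a ``lengthening'' fold/partial conjugation $\psi$ using an element $z\in Z_G(H)$ whose axis in $T_n$ has bounded overlap with $\Min(a,T_n)$ and $\Min(b,T_n)$, proving infinite order by showing $\ell_{T_n}(\psi^k(ab))\to\infty$. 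Second, when $H$ is non-elliptic in the $T_n$ and the associated homomorphism $\rho$ is trivial, the infinite-order twist of Outcome~(2) is produced from Proposition~\ref{important addenda new}(b) (a label-irreducible $h$ with $Z_G(h)=H$ and $\ell_{T_n}(h)\to 0$) and its infinite order is proved by the Rips--Sela shortening comparison between translation lengths in $T_n$ and $T$; your proposed sources for label-irreducibility (the surface case of Proposition~\ref{exotic/surface prop}) and for infinite order (Lemma~\ref{non-inner from HNN}) are not available or not sufficient here. Third, in the case where $N_G(H)/H$ is a free product of two virtually abelian groups both elliptic in $T$, this configuration does not lead to Option~(3) as you claim: the paper shows it simply cannot occur, via Lemma~\ref{almost dense orbits}, Corollary~\ref{rotating cor} and a ping-pong argument in the $T_n$ using the $\eps_n'$--rotating property. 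Each of these three points requires quantitative input from the sequence $T_n$ that a purely limit-tree argument does not provide.
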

\begin{proof}
Let $\beta\cu T$ be an arc such that $G_{\beta}$ is maximal among all stabilisers of arcs of $T$ (such an arc exists by Lemma~\ref{UCC for centraliser kernels}). Note that $\beta$ is a stable arc and set $H:=G_{\beta}$ for simplicity. 

By Proposition~\ref{geometric approximations}, we can choose a geometric approximation $f\colon\mc{G}\ra T$ with an $H$--fixed edge $e\cu\mc{G}$ such that $f$ is isometric on $e$ and $\beta\cu f(e)$ (up to shrinking $\beta$). 

Let $G\acts\mc{S}$ be the $1$--edge splitting obtained by collapsing all edges of $\mc{G}$ outside the orbit $G\cdot e$. Let $\overline e\cu\mc{S}$ be the projection of $e$, and let $A$ and $B$ be the $G$--stabilisers of its two vertices. Note that the stabilisers of $e$ and $\overline e$ coincide with $H$.

We divide the proof into three cases, depending on the behaviour of $H$ and its normaliser. 
 
\smallskip
{\bf Case~(a):} \emph{$H$ is non-elliptic in $\om$--all $T_n$.}

\smallskip\noindent
Then we are in Case~(2) of Proposition~\ref{arc-stab prop}, so $H=\ker\rho$ for a homomorphism $\rho\colon Z\ra\R$, where $Z$ is the stabiliser of a line $\alpha\cu T_{\om}$ containing $\beta$.

\smallskip
{\bf Claim~1:} \emph{we have $N_G(H)=Z$.}

\smallskip\noindent
\emph{Proof of Claim~1.}
Recall from Proposition~\ref{arc-stab prop} that the $Z$--minimal subtree of $T_n$ is a line $\alpha_n$ and that the lines $\alpha_n$ converge to $\alpha$. Since $H$ is non-elliptic in $\om$--all $T_n$, its minimal subtree coincides with $\alpha_n$. Thus, if $g\in N_G(H)$, we must have $g\alpha_n=\alpha_n$ for $\om$--all $T_n$, hence $g\alpha=\alpha$. This shows that $N_G(H)\leq Z$, while the other inclusion is immediate.
\hfill$\blacksquare$

\smallskip
Suppose first that $\rho$ is trivial, so that $H=Z$. In this case, Proposition~\ref{important addenda new}(b) yields a label-irreducible element $h\in Z_H(H)$ that is loxodromic in $\om$--all $T_n$ with $\ell_{T_n}(h)\ra 0$. If $g\in G$ commutes with $h$, the argument in the proof of Claim~1 shows that $g\in Z$. In particular, we have $Z_G(h)=H$.

Let $\psi\in\aut(G)$ be the twist or partial conjugation determined by $h$ and the splitting $G\acts\mc{S}$. In order to show that we fall in Options~(1) or~(2) of the proposition, we only need to prove that $\psi$ has infinite order in $\out(G)$.

Note that the standard argument from \cite[Section~6]{Rips-Sela} applies in this case, yielding a sequence $k_n\ra+\infty$ such that, for every finite generating set $F\cu G$, we have for $\om$--all $n$:
\begin{align*}
\lim_{n\ra+\infty}\inf_{x\in T_n}\max_{f\in F}d(x,\psi^{k_n}(f)x)&<\inf_{x\in T}\max_{f\in F}d(x,fx), & \inf_{x\in T}\max_{f\in F}d(x,\psi(f)x)&=\inf_{x\in T}\max_{f\in F}d(x,fx).
\end{align*}
This shows that no power of $\psi$ can be an inner automorphism of $G$, as required.

Suppose now instead that $H=\ker\rho$ is a proper subgroup of $Z$. Recall that we are assuming that $\rho$ has discrete image and that $H=\ker\rho$ is $G$--semi-parabolic, hence convex-cocompact. 

Thus, we can write $Z=H\rtimes\langle z\rangle$ for some $z\in Z$. Corollary~\ref{cc normalisers}(1) guarantees that $hz^k$ commutes with $H$ for some $h\in H$ and $k\geq 1$. This element commutes with a finite-index subgroup of $Z$, hence with the entire $Z$, because $G$ is a subgroup of $\A_{\G}$. This shows that the centre of $Z$ contains an element outside $\ker\rho$. 

Proceeding as in Case~(a) of the proof of Proposition~\ref{Z vertex group prop}, we can ensure that the chosen geometric approximation $\mc{G}$ contains a $Z$--invariant line $\wt\alpha$ on which $Z$ acts via the homomorphism $\rho$. Note that $f(\wt\alpha)=\alpha$, so the $G$--stabiliser of $\wt\alpha$ must coincide with $Z$. 

Note that distinct $G$--translates of $\wt\alpha$ can share at most one point. Indeed, if $g\wt\alpha$ and $\wt\alpha$ share an edge, then $gHg^{-1}$ fixes an arc of $\wt\alpha$, hence an arc of $\alpha$. Since we chose $H$ so that it is maximal among stabilisers of arcs of $T$, the stabiliser of every arc of $\alpha$ is equal to $H$. In conclusion, we have $gHg^{-1}\leq H$, and the symmetric argument yields $gHg^{-1}=H$. By Claim~1, we obtain $g\in Z$, hence $g\wt\alpha=\wt\alpha$.

Since $\mc{G}$ is simplicial, we obtain a transverse covering of $\mc{G}$ made up of the $G$--translates of $\wt\alpha$ and all edges of $\mc{G}$ that are not contained in any $G$--translate of $\wt\alpha$. Proceeding as at the end of Case~(b) of Proposition~\ref{Z vertex group prop}, we end up in the situation of Option~(2) of Proposition~\ref{Z vertex group prop} (which is Option~(3) of the current proposition).

This completes the discussion of Case~(a). In the remaining two cases, we will always construct folds or partial conjugations arising from $G\acts\mc{S}$, thus ending up in Option~(1) of the proposition.
 
Before we proceed, recall that $A$ and $B$ are the stabilisers of the two vertices of $\overline e\cu\mc{S}$. The stabiliser of $\overline e$ is $H$. We make the following observations.

\smallskip
{\bf Claim~2:} \emph{the sets $A\setminus H$ and $B\setminus H$ are both nonempty.}

\smallskip\noindent
\emph{Proof of Claim~2.}
This is clear if $\mc{S}$ gives an amalgamated product splitting of $G$. If it gives an HNN splitting with stable letter $t$, we are also fine unless $A=H$ and either $tHt^{-1}\geq H$ or $tHt^{-1}\leq H$. Since $H$ is convex-cocompact, this can only occur if $t\in N_G(H)$, because of Lemma~\ref{cc conjugates}. But then Corollary~\ref{cc normalisers}(1) implies that $ht^k$ commutes with $H$ for some $h\in H$ and $k\geq 1$, so the axis of $ht^k$ in $T$ is $\langle H,t^k\rangle$--invariant, hence $G$--invariant. This implies that $T$ is a line, contradicting our assumptions.
\hfill$\blacksquare$

\smallskip
{\bf Claim~3:} \emph{if $H$ is elliptic in $\om$--all $T_n$, then $N_G(H)$ is non-elliptic in $\om$--all $T_n$ and the $N_G(H)$--minimal subtree of $\om$--all $T_n$ is not a line.}

\smallskip\noindent
\emph{Proof of Claim~3.}
By Lemma~\ref{almost dense orbits}, there exists a sequence $\eps_n\ra 0$ such that $N_G(H)$ acts with $\eps_n$--dense orbits on $\Fix(H,T_n)$. Since $\beta$ can be approximated by a sequence of arcs $\beta_n\cu\Fix(H,T_n)$, which have length bounded away from zero, this shows that $N_G(H)$ is non-elliptic in $\om$--all $T_n$.

Since $\Fix(H,T_n)$ is $N_G(H)$--invariant, it contains the $N_G(H)$--minimal subtree as an $\eps_n$--dense subset. If the latter is a line $\alpha_n\cu T_n$ for $\om$--all $n$, then these lines converge to an $N_G(H)$--invariant line $\alpha\cu T_{\om}$. Again, since $\beta$ is approximated by arcs in $\Fix(H,T_n)$, we have $\beta\cu\alpha$.

Since $N_G(H)$ is not elliptic in $T_n$, we have $N_G(H)\neq H$. Thus, since $H=G_{\beta}$, the normaliser $N_G(H)$ must translate nontrivially along $\alpha$. This shows that $\alpha$ is contained in the $G$--minimal subtree $T\cu T_{\om}$, so our assumptions guarantee that $N_G(H)$ acts discretely on $\alpha$. Since the kernel of the action on $\alpha$ is exactly $H$, it follows that $N_G(H)/H\simeq\Z$.

Now, let $g\in N_G(H)$ be an element generating this quotient. By the above discussion, we must have $\ell_{T_n}(g)\leq\eps_n\ra 0$, contradicting the fact that $g$ is not elliptic in $T$.
\hfill$\blacksquare$

\smallskip
{\bf Case~(b):} \emph{$H$ is elliptic in $\om$--all $T_n$ and $N_G(H)$ is elliptic in $T$.}

\smallskip\noindent
Since $N_G(H)$ is finitely generated (e.g.\ by Corollary~\ref{cc normalisers}(3)), we can choose the geometric approximation $f\colon\mc{G}\ra T$ so that $N_G(H)$ is elliptic in $\mc{G}$, hence in $\mc{S}$. Up to replacing $A,B,H$ with $G$--conjugates and swapping $A$ and $B$, we can assume that $N_G(H)\leq A$. In particular, $Z_G(H)=Z_A(H)$.

By Claim~2, we can pick elements $a\in A\setminus H$ and $b\in B\setminus H$.

\smallskip
{\bf Claim~4:} \emph{there exist $n$ and $z\in Z_G(H)$ such that $z$ is loxodromic in $T_n$ with axis that has bounded (or empty) intersection with both $\Min(a,T_n)$ and $\Min(b,T_n)$.}

\smallskip\noindent
\emph{Proof of Claim~4.} 
Recall that $\langle H,Z_G(H)\rangle$ has finite index in $N_G(H)$ by Corollary~\ref{cc normalisers}(1). Thus, $Z_G(H)$ and $N_G(H)$ have the same minimal subtree in $\om$--all $T_n$, since $H$ is elliptic. By Claim~3, this minimal subtree is well-defined and its boundary is a Cantor set. On the other hand, if $a$ is loxodromic in $T_n$, then the boundary of its axis consists of only two points.

Approximate $\beta$ by a sequence of arcs $\beta_n\cu\Fix(H,T_n)$. If $a$ is elliptic in $T_n$, then the length of $\Fix(a,T_n)\cap\beta_n$ must go to zero, since $a$ does not fix any portion of the stable arc $\beta$. The same holds for $b$. Note that, for $\om$--all $n$, the arc $\beta_n$ contains several branch points of $\Fix(H,T_n)$ because of Lemma~\ref{almost dense orbits}. 

We conclude that $\Fix(H,T_n)\setminus (\Min(a,T_n)\cup\Min(b,T_n))$ contains at least two disjoint rays, and the same holds for the $Z_G(H)$--minimal subtree. This yields the required element $z\in Z_G(H)$.
\hfill$\blacksquare$

\smallskip

By Corollary~\ref{cc normalisers}(1) and Lemma~\ref{pf lemma}(2), $Z_G(H)$ virtually splits as $Z_H(H)\x K$ with $K\perp H$. Since $H$ is elliptic in $T_n$, we can assume that the element $z$ provided by Claim~4 lies in $K$ (possibly replacing $z$ with a proper power and projecting it to $K$, which does not alter its axis in $T_n$). Also recall that $Z_G(H)=Z_A(H)$. Thus, $z$ and $\mc{S}$ determine a DLS automorphism $\psi\in\aut(G)$, which is necessarily a fold or partial conjugation.

Note that $\psi^k(a)=a$, while $\psi^k(b)=z^kbz^{-k}$ for all $k\geq 1$. Since $\Min(a,T_n)$ and $\Min(b,T_n)$ have bounded projection to the axis of $z$ in $T_n$, the distance between $\Min(\psi^k(a),T_n)$ and $\Min(\psi^k(b),T_n)$ diverges for $k\ra+\infty$. It follows that $\ell_{T_n}(\psi^k(ab))$ diverges for $k\ra+\infty$, showing that $\psi$ has infinite order in $\out(G)$, as required.

\smallskip
{\bf Case~(c):} \emph{$H$ is elliptic in $\om$--all $T_n$ and $N_G(H)$ is non-elliptic in $T$.}

\smallskip\noindent
Making sure that the chosen stable arc $\beta$ is contained in the axis of an element of $N_G(H)$, we can ensure that $N_G(H)$ remains non-elliptic in $\mc{S}$. Claim~3 guarantees that $N_G(H)/H$ is not cyclic.

Given Lemma~\ref{non-inner from cc}, we are only left to consider the case when $N_G(H)/H$ is a free product of virtually abelian groups $V_1\ast V_2$. Recall that $\beta\cu T$ has been chosen so that its stabiliser is maximal among stabilisers of arcs of $T$ (at the beginning of the proof). This guarantees that the action $N_G(H)/H\acts\Fix(H,\mc{G})$ gives a free splitting of $N_G(H)/H$. 
The only situation where Lemma~\ref{non-inner from cc} cannot be applied is if both $V_1$ and $V_2$ are elliptic in $\mc{G}$ (and hence in $T$).

Let us show that $V_1$ and $V_2$ cannot both be elliptic in $T$. Suppose for the sake of contradiction that they are. Recall that $\mf{T}(V_i,T_n)$ denotes $\Fix(V_i,T_n)$ if this is nonempty, and the $V_i$--minimal subtree of $T_n$ otherwise, which is necessarily a line. Since $N_G(H)$ is not elliptic in $T$, the fixed sets of $V_1$ and $V_2$ in $T$ have positive distance, say $D>0$. Thus, the subtrees $\mf{T}(V_1,T_n)$ and $\mf{T}(V_2,T_n)$ are at distance $\geq D/2$ for $\om$--all $n$.

Recall that Lemma~\ref{almost dense orbits} yields a sequence $\eps_n\ra 0$ such that $V_1\ast V_2$ acts with $\eps_n$--dense orbits on $\Fix(H,T_n)$. In particular, note that $\Fix(H,T_n)$ and the $N_G(H)$--minimal subtree of $T_n$ are at Hausdorff distance $\leq\eps_n$. Thus, Corollary~\ref{rotating cor} shows that there exists a sequence $\eps_n'\ra 0$ such that the actions of $V_1$ and $V_2$ on $\Fix(H,T_n)$ are both $\eps_n'$--rotating, in the sense of Definition~\ref{rotating defn}.

Now, a straightforward ping-pong argument implies that an $N_G(H)$--orbit misses the ball of radius $D/4$ centred at the midpoint of the arc joining $\mf{T}(V_1,T_n)$ and $\mf{T}(V_2,T_n)$. For large $n$, we have $\eps_n<D/4$, so this is the required contradiction.
\end{proof}

\begin{rmk}
In Case~(a) of the proof of Proposition~\ref{simplicial prop}, we have constructed a \emph{shortening automorphism} in the sense of \cite{Rips-Sela}. However, in Case~(b), we have made the rather unusual choice of constructing a ``lengthening automorphism'', and in Case~(c) we have not described the resulting automorphism at all.

We followed this path in order to give a more direct proof of Proposition~\ref{simplicial prop}. Nevertheless, we want to emphasise that a shortening automorphism can indeed be constructed in each of the three cases of the proof of Proposition~\ref{simplicial prop}. This requires some more work, as one cannot simply ``contract one edge'' as in \cite[Section~6]{Rips-Sela}, but rather needs to perform a folding procedure.
\end{rmk}

We are only left to record the following simple observation before we can begin with the proof of the main theorems.

\begin{lem}\label{line centre lem}
Suppose that $T\simeq\R$ and that $G\acts T$ has discrete orbits. If the kernel of the $G$--action is $G$--semi-parabolic, then it has nontrivial centre.
\end{lem}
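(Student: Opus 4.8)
Write $N$ for the kernel of the action $G\acts T$ and let $K\leq\A_{\G}$ be a semi-parabolic subgroup with $N=G\cap K$, chosen minimal as in Lemma~\ref{G--AP structure}, so that $N=G\cap H$ with $H=\langle a_1,\dots,a_k\rangle\x P$, where the $a_i$ are pairwise-commuting label-irreducibles with $\langle a_i\rangle\cap N\neq\{1\}$, the parabolic $P$ has trivial centre, $G\cap P$ has trivial centre, and $\G(N)=\G(H)$. The plan is to produce a nontrivial central element of $N$ directly from the hypotheses. Since $T\simeq\R$ and $G$ acts on $T$ with discrete orbits, the action factors through a homomorphism $\bar\rho\colon G\ra\isom(\R)$ with discrete image and kernel $N$; because $N$ is $G$--semi-parabolic, hence convex-cocompact, hence closed under taking roots, the stabiliser of a line in $T$ never swaps the two ends (as already noted for $T_{\om}$), so in fact $\bar\rho$ lands in the translation subgroup and $G/N$ is infinite cyclic. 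Fix $g\in G$ with $\langle g\rangle$ projecting onto $G/N\simeq\Z$; thus $G=N\rtimes\langle g\rangle$ up to a finite-index adjustment, and in any case $\langle N,g\rangle$ has finite index in $G$.

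First I would apply Corollary~\ref{cc normalisers}(1) to $N$: since $N$ is convex-cocompact and acts non-transversely on $\X_{\G}$ (indeed $N$ is $G$--semi-parabolic, so it stabilises a parabolic stratum and fixes all hyperplanes of $\overline{\mc{C}}_0(N)$ — or, more simply, one checks directly that a semi-parabolic subgroup acts non-transversely because its loxodromic elements have pairwise-disjoint label sets), the normaliser $N_G(N)$ has a finite-index subgroup of the form $N\cdot L$ with $L$ commuting with $N$ and $N\cap L$ finite. Now $g\in N_G(N)$, so some positive power $g^m$ lies in $N\cdot L$; write $g^m=n_0\ell_0$ with $n_0\in N$, $\ell_0\in L$. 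Then $\ell_0=n_0^{-1}g^m$ commutes with every element of $N$, and $\ell_0$ is not in $N$ (otherwise $g^m\in N$, contradicting that $g$ has infinite order in $G/N$). Since $G\leq\A_{\G}$, an element commuting with a finite-index subgroup of a subgroup $Q$ of $\A_{\G}$ in fact commutes with all of $Q$ — here one uses that centralisers in $\A_{\G}$ are full and that $N$ and a finite-index subgroup of $N$ generate the same full closure — so $\ell_0$ commutes with $N$ and moreover, via the relation $\ell_0 N\ell_0^{-1}=N$ already known, $\ell_0$ together with $N$ generates a subgroup in which $\ell_0$ is central.

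The point that still needs attention is that $\ell_0$ itself need not lie in $N$, so it is not yet a central element of $N$; what I want is to exhibit a nontrivial element of the centre $Z(N)$. Here I would use minimality of $H$ and the structure from Lemma~\ref{G--AP structure}. Since $\ell_0$ commutes with $N=G\cap H$ and $\G(N)=\G(H)$, Lemma~\ref{normalisers of subgroups of AP}(2) (with $K:=N$) applies: either $\ell_0$ normalises $H$ and hence $\ell_0\in N_{\A_\G}(H)$, or $H$ splits as $A'\x P_1\x P_2$ with $N$ contained in $A'\x A''\x P_2$ for some abelian $A''\leq P_1$, the $P_i$ having trivial centre. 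The second alternative would force $N=G\cap H$ to be contained in a proper semi-parabolic subgroup of $H$ (namely $A'\x A''\x P_2$ with $A''$ abelian in place of the non-abelian $G\cap P_1$), contradicting minimality of $H$ exactly as in the last paragraph of the proof of Lemma~\ref{G--AP structure}. Hence $\ell_0\in N_{\A_\G}(H)$ and commutes with $N$ with $\G(N)=\G(H)$; then, since $N$ contains a power of each $a_i$ and since $G\cap P$ has trivial centre, the only way $\ell_0$ can commute with all of $N$ is if the label-irreducible components of $\ell_0$ are powers of the $a_i$ (none can involve a label of $P$, as that would produce a nontrivial central element of $G\cap P$). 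Thus $\ell_0\in\langle a_1,\dots,a_k\rangle\cap Z_{\A_\G}(N)$, and some positive power $\ell_0^r$ lies in $G$ (each $a_i$ has a power in $G$ by Remark~\ref{LI properties}(6)), hence in $N$ — giving $\ell_0^r\in Z(N)$. Finally $\ell_0^r\neq 1$ because $\ell_0\notin N$ would otherwise be a torsion obstruction: if $\ell_0^r=1$ then $\ell_0$ has finite order in the torsion-free group $\A_\G$, so $\ell_0=1$ and $g^m=n_0\in N$, contradicting infinite order of $g$ in $G/N$. Therefore $Z(N)$ is nontrivial, which is the assertion. The main obstacle in writing this cleanly will be the bookkeeping in the middle step — getting from ``$\ell_0$ commutes with a finite-index subgroup of $N$'' to ``$\ell_0\in\langle a_1,\dots,a_k\rangle$'' — but every ingredient (fullness of centralisers in $\A_\G$, Lemma~\ref{normalisers of subgroups of AP}(2), minimality in Lemma~\ref{G--AP structure}, triviality of $Z(G\cap P)$) is already available.
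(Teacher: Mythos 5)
Your argument is purely algebraic: it never uses the provenance of $T$ as the minimal subtree of the limit $T_{\om}$ of the rescaled twisted trees $T_n$, nor the automorphisms $\varphi_n$. But the statement is false at that level of generality, so no such argument can succeed. Take $\G$ with vertices $a,b,c$ and edges $\{a,c\},\{b,c\}$ only, so that $G=\A_{\G}\simeq F(a,b)\x\langle c\rangle$, and let $N=\A_{\{a,b\}}=F(a,b)$. Then $N$ is $G$--parabolic (hence $G$--semi-parabolic), $G/N\simeq\Z$ acts on $\R$ discretely with kernel $N$, and yet $Z(N)=\{1\}$. The precise step of your proof that breaks is the claim that, once the splitting alternative of Lemma~\ref{normalisers of subgroups of AP}(2) is excluded, the label-irreducible components of $\ell_0$ must be powers of the $a_i$: you only rule out components whose labels meet $\G(P)$, but $Z_{\A_{\G}}(H)$ also contains elements supported entirely on $\G(H)^{\perp}\setminus\G(H)$ (the element $c$ in the example, where $k=0$ and $H=P=F(a,b)$), and no power of such an element lies in $N=G\cap H$, so you never produce an element of $Z(N)$.

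The paper's proof instead argues by contradiction using the dynamical setup. Assuming $Z(N)=\{1\}$, the element $n_0^{-1}g^{m}$ of $Z_G(N)$ supplied by Corollary~\ref{cc normalisers}(1) forces $Z(G)$ to be infinite cyclic, generated by an element that is loxodromic in $T$ (your first two steps --- that $G/N\simeq\Z$ and that such a centralising element exists --- match this part). Since $Z(G)$ is characteristic, $\om$--all $\varphi_n$ send its generator to itself or its inverse, so its translation length in $T_n$ equals $\ell_{T_G}(\cdot)/\tau_n\ra 0$; this contradicts loxodromicity in $T\cu T_{\om}$. That contradiction is the indispensable ingredient your proposal is missing, and the counterexample above shows it cannot be replaced by algebra alone.
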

\begin{proof}
Let $H$ be the kernel of the $G$--action. Since $G$ acts discretely, there exists a loxodromic element $g\in G$ such that $G=H\rtimes\langle g\rangle$. Since $H$ is $G$--semi-parabolic, and in particular convex-cocompact, Corollary~\ref{cc normalisers}(1) shows that $hg^k$ commutes with $H$ for some $h\in H$ and $k\geq 1$. 

If the centre of $H$ were trivial, then the centre of $G$ would be isomorphic to $\Z$ and it would contain the element $hg^k$. In particular, $\om$--all automorphisms $\varphi_n$ would fix $hg^k$, hence $\ell_{T_n}(hg^k)\ra 0$. This would contradict the fact that $hg^k$ is loxodromic in $T$ and $T_{\om}$.
\end{proof}

We are finally ready to prove Theorems~\ref{main cmp} and~\ref{main general} and Corollary~\ref{infinite out finite cmp}.

\begin{proof}[Proof of Theorem~\ref{main cmp}]
The fact that the DLS automorphisms appearing in the statement of the theorem are coarse-median preserving follows from Theorem~\ref{CMP GDT}, which will be proved in Section~\ref{CMP GDT sect}. Here we only show that such automorphisms exist and have infinite order in $\out(G)$.

Let $G$ be a special group with $\out_{\rm cmp}(G)$ infinite. Choose a sequence $\varphi_n\in\aut_{\rm cmp}(G)$ projecting to an infinite sequence in $\out_{\rm cmp}(G)$. We can apply the construction at the beginning of Subsection~\ref{limit subsec} to obtain an action on an $\R$--tree $G\acts T_{\om}$. Let $T\cu T_{\om}$ be the $G$--minimal subtree.

By Proposition~\ref{arc-stab prop} and Proposition~\ref{important addenda new}(c1), the $G$--stabiliser of every arc of $T$ is a centraliser, and every line of $T$ is acted upon discretely by its $G$--stabiliser. In addition, $T$ is not itself a line.

Suppose first that no geometric approximation $\mc{G}\ra T$ admits indecomposable components in the transverse covering provided by Proposition~\ref{Imanishi}, i.e.\ that all geometric approximations of $T$ are simplicial. Then we can apply Proposition~\ref{simplicial prop}. Note that Option~(3) never occurs: in the notation of the proof of Proposition~\ref{simplicial prop}, it corresponds to Case~(a), when $\ker\rho$ is a proper subgroup of $Z$ and is non-elliptic in $\om$--all $T_n$. This is ruled out by Proposition~\ref{important addenda new}(c1).

In conclusion, we are in Options~(1) or~(2) of Proposition~\ref{simplicial prop}, so $G$ splits over a centraliser, giving rise to a DLS automorphism that conforms to the requirements in the statement of Theorem~\ref{main cmp}.

To conclude the proof, we are left to consider the case when some geometric approximation $f\colon\mc{G}\ra T$ admits an indecomposable component $U$. Up to replacing $\mc{G}$ and $U$, Lemma~\ref{stable components} allows us to assume that all arcs of $U$ have the same stabiliser $H$, which is also the stabiliser of a stable arc of $T$. In addition, $U$ is not a line, otherwise $f(U)\cu T$ would be a line with a non-discrete action by its stabiliser. Thus, we can apply Proposition~\ref{exotic/surface prop}, which shows that $G$ splits over a centraliser and admits a fold, partial conjugation or twist with infinite order in $\out(G)$. Twists only occur in the ``surface case'' and they satisfy the requirements of Theorem~\ref{main cmp}.

This concludes the proof.
\end{proof}

\begin{proof}[Proof of Theorem~\ref{main general}]
Let $G$ be a special group with $\out(G)$ infinite. Any infinite sequence in $\out(G)$ yields a $G$--tree $G\acts T_{\om}$ as in Subsection~\ref{limit subsec}. Let $T\cu T_{\om}$ be the $G$--minimal subtree.

Suppose first that one of the following happens:
\begin{enumerate}
\item[(i)] a line $\alpha\cu T$ is acted upon non-discretely by its $G$--stabiliser;
\item[(ii)] the $G$--stabiliser of an arc $\beta\cu T$ is not $G$--semi-parabolic;
\item[(iii)] $T$ is a line.
\end{enumerate} 
In Case~(ii), $\beta$ necessarily falls into Option~(2) of Proposition~\ref{arc-stab prop} and we denote by $\alpha$ the line that it provides. In Case~(iii), we simply set $\alpha:=T$. In each of the three cases, we obtain a line $\alpha$ that is acted upon nontrivially by its $G$--stabiliser.

Now, we apply Proposition~\ref{Z vertex group prop} to the line $\alpha$. Observe that, we can assume that we are in Option~(2) of Proposition~\ref{Z vertex group prop}. Indeed, this is clear in Case~(iii), since $\alpha=T$. Regarding instead Cases~(i) and~(ii), we clearly cannot fall into Option~(1) of Proposition~\ref{Z vertex group prop}, whereas Option~(3) can be handled using Proposition~\ref{exotic/surface prop}, resulting in a DLS automorphism as in Theorem~\ref{main cmp}.

In conclusion, suppose that we have a line $\alpha\cu T$ falling into Option~(2) of Proposition~\ref{Z vertex group prop}. Let $Z$ be the stabiliser of $\alpha$ and let $\rho\colon Z\ra\R$ be the homomorphism giving translation lengths, which is nontrivial. Since $Z/\ker\rho$ is free abelian, there exists a homomorphism $\overline\rho\colon Z\ra\Z$ such that $\ker\rho\leq\ker\overline\rho$. Note that $\overline\rho$ gives an HNN splitting of $Z$ over $\ker\overline\rho$, with stable letter in $Z$. Appealing to Lemma~\ref{refining splittings}, this results in an HNN splitting of $G$ over $\ker\overline\rho$ with the same stable letter. 

By Proposition~\ref{important addenda new}(a), we have $Z=Z_G(x)$ for some $x\in G$. By Proposition~\ref{important addenda new}(d) and Lemma~\ref{line centre lem}, the centre of $\ker\overline\rho$ is nontrivial. By Remark~\ref{kernel centre rmk}, the centre of $\ker\overline\rho$ is contained in the centre of $Z$, so it commutes with the stable letter of the HNN splitting of $G$. Any element in the centre of $\ker\overline\rho$ gives a twist with infinite order in $\out(G)$ by Lemma~\ref{non-inner from HNN}. In conclusion, we have constructed an automorphism as in Type~(3) in the statement of Theorem~\ref{main general}.

By the above discussion, we can assume in the rest of the proof that Cases~(i)--(iii) do not occur, i.e.\ that $T$ is not a line, that all arc-stabilisers are $G$--semi-parabolic, and that every line in $T$ is acted upon discretely by its stabiliser. Note however that not all arc-stabilisers might be centralisers.

Now, we can conclude via Proposition~\ref{exotic/surface prop} and Proposition~\ref{simplicial prop} as in the proof of Theorem~\ref{main cmp}. If Option~(3) of Proposition~\ref{simplicial prop} presents itself, then we obtain an automorphism as in Type~(3) of Theorem~\ref{main general} as above. In all other cases, we obtain a DLS automorphism that has infinite order in $\out(G)$ and is coarse-median preserving by Theorem~\ref{CMP GDT}. Thus, $\out_{\rm cmp}(G)$ is infinite and, appealing to Theorem~\ref{main cmp}, we obtain a DLS automorphism of the required form.

This concludes the proof.
\end{proof}

\begin{proof}[Proof of Corollary~\ref{infinite out finite cmp}]
Let $\mscr{H}$ be the collection of subgroups of $G$ of the form $Z_G(x)$ with $x\in G$. Suppose that, for every $\mc{H}\in\mscr{H}$, the $\out(G)$--orbit of $\mc{H}$ is infinite. We will show that $\out_{\rm cmp}(G)$ is infinite.

By Lemma~\ref{Neumann's lemma}, there exist automorphisms $\phi_n\in\out(G)$ such that, for every $\mc{H}\in\mscr{H}$, the sequence $\phi_n(\mc{H})$ eventually consists of pairwise distinct classes. Let us run the proof of Theorem~\ref{main cmp} for this sequence of automorphisms. The only place where we used that the automorphisms were coarse-median preserving was when applying Proposition~\ref{important addenda new}(c1), which we can now replace by Proposition~\ref{important addenda new}(c2). Thus, we obtain a coarse-median preserving DLS automorphism with infinite order in $\out(G)$, as required.
\end{proof}

\section{Coarse-median preserving DLS automorphisms.}\label{CMP GDT sect}

The goal of this section is the following result. The UCP condition is introduced in Subsection~\ref{UCP subsec}.

\begin{thm}\label{CMP GDT 2}
Let $G\acts X$ be a proper, cocompact, non-transverse action on a $\CAT$ cube complex. Suppose that $G$ splits as $G=A\ast_CB$ or $G=A\ast_C$, where $C$ is convex-cocompact, satisfies the UCP condition in $X$ and does not have any nontrivial finite normal subgroups. Then:
\begin{enumerate}
\item all partial conjugations and folds determined by this splitting are coarse-median preserving;
\item if $z\in Z_C(C)$ is such that $\langle z\rangle$ is convex-cocompact in $X$ and $Z_G(c)$ is contained in a conjugate of $A$ for every $c\in C$ such that $\langle c\rangle\cap\langle z\rangle\neq\{1\}$, then the twist determined by $z$ is coarse-median preserving;
\item more generally, if, for every infinite-order element $c\in C$ commuting with a finite-index subgroup of $C$, the centraliser $Z_G(c)$ is contained in a conjugate of $A$, then all transvections determined by the splitting $G=A\ast_C$ are coarse-median preserving.
\end{enumerate}
\end{thm}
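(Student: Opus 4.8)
The strategy is to reduce everything to a single, checkable condition about how the Dehn twist interacts with hyperplanes of $X$, and to prove that condition using the UCP hypothesis together with the non-transversality of the action. Concretely, I would work through the following steps.

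\emph{Step 1: A hyperplane criterion.} By Lemma~\ref{cmp criterion}, an automorphism $\varphi$ (viewed as a quasi-isometry, hence realised up to bounded error by a map $\Phi\colon X^{(0)}\to X^{(0)}$) is coarse-median preserving if and only if there is a uniform bound on $\#\mscr{W}(\Phi(p)\mid\Phi(x),\Phi(y))$ for all median triples $p=m(x,y,p)$. So the whole problem becomes: given a DLS automorphism $\sigma$ arising from the splitting $G=A\ast_C$ (or $A\ast_C B$), realise it cubically and control the ``median defect'' hyperplanes. The first task is therefore to pick a convenient cubical model of $\sigma$. Since $C$ is convex-cocompact, I can fix a $C$--invariant, $C$--cocompact convex subcomplex $Y_C\subseteq X$; the Bass--Serre tree of the splitting lets me decompose $X$ (up to quasi-isometry) into a ``tree of pieces'', with pieces being translates of the convex hulls of $A$ and $B$, glued along translates of $Y_C$. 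A DLS automorphism acts by translating one side of each gluing edge by a fixed element $z\in Z_A(C)$ (or inserting/modifying the stable letter). I would describe $\Phi$ explicitly on this tree-of-pieces picture: it is the identity on the $A$--pieces and a deck-transformation-type modification on the rest.

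\emph{Step 2: Controlling median defects away from the gluing loci.} On the interior of each piece, $\Phi$ is (a translate of) the identity, so no median defect is produced there: the only hyperplanes that can land in $\mscr{W}(\Phi(p)\mid\Phi(x),\Phi(y))$ are those ``near'' a copy of $Y_C$, i.e.\ hyperplanes separating or meeting a translate of $Y_C$. This is where the UCP condition enters: it should say precisely that the family of hyperplanes relevant to $C$ (those crossing $Y_C$, together with $\overline{\mc{W}}_0$--type hyperplanes interacting with it) is ``uniformly controlled'' under the $C$--action and under the gluing, so that for any geodesic triangle only boundedly many of them can be misregistered by the twist. I expect UCP to be exactly the condition ``for every $g$, the set of hyperplanes transverse to both $\mf{w}\in\mscr{W}(Y_C)$ and to $gY_C$ is uniformly finite'' or a close variant; the non-transverse hypothesis on $G\acts X$ then guarantees that $C$ itself doesn't create new transversalities when we twist. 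Using Corollary~\ref{cc normalisers} (valid because the action is non-transverse and $C$ convex-cocompact), $N_G(C)$ splits virtually as $C\times K$ with $C\perp K$, and Lemma~\ref{pf lemma} gives $\mc{W}_1(C)\subseteq\overline{\mc{W}}_0(K)$; this orthogonality is what makes a \emph{fold} ($\langle z\rangle\perp C$) automatically produce bounded defect, and what makes a \emph{partial conjugation} harmless since it doesn't touch $C$ at all. That handles part~(1).

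\emph{Step 3: Twists.} For part~(3), $z$ (or the relevant central element $c\in Z_C(C)$) is no longer orthogonal to $C$, so the twist genuinely reshuffles the hyperplanes crossing $Y_C$. The extra hypothesis --- $Z_G(c)\subseteq$ a conjugate of $A$ for every infinite-order $c$ commuting with a finite-index subgroup of $C$ --- is what prevents the twisting from ``escaping'' the $A$--side: it forces the axis of $z$ (and everything commuting with it) to sit inside a single piece, so the twist is, up to bounded error, an \emph{inner} automorphism of that piece and hence coarse-median preserving on it; across the gluing it contributes only the bounded stack $\mc{W}_1(\langle z\rangle)\subseteq\overline{\mc{W}}_0(C)$ of hyperplanes skewered by $z$ but fixed setwise by $C$ (using non-transversality and that $\langle z\rangle$ is convex-cocompact). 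Part~(2) is the special case where one only needs the hypothesis for powers of a single $z$, proved verbatim the same way. The condition that $C$ has no nontrivial finite normal subgroup is a mild technical device to ensure the virtual splitting $N_G(C)\sim C\times K$ can be upgraded to statements about $C$ itself and to rule out degenerate gluings; I would invoke it only where Corollary~\ref{cc normalisers}(1) leaves a finite ambiguity.

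\emph{Main obstacle.} The genuinely delicate point is Step~2: making precise the claim that ``only boundedly many hyperplanes near the gluing loci can be misregistered, uniformly over all geodesic triangles in $X$.'' A geodesic triangle can cross infinitely many translates of $Y_C$, and at each crossing the twist could in principle introduce a defect; one must show these defects do not accumulate. I expect this to follow from the tree structure of the splitting (a geodesic in $X$ projects to a geodesic in the Bass--Serre tree, so it enters and leaves each piece in a controlled way) combined with the UCP bound applied \emph{per crossing}, but assembling this into a single uniform constant --- and checking it really is the statement Theorem~\ref{CMP GDT 2} wants, including the passage from the cocompact model $X$ back to the intrinsic coarse median on $G$ --- is where the real work lies. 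A secondary subtlety is that $C$ may be infinitely generated in the general special-group application, so all finiteness statements must be phrased at the level of hyperplanes/orbits rather than generators; the hypotheses (convex-cocompactness of $C$, UCP, non-transversality) are exactly tailored to make this possible.
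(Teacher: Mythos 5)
Your skeleton matches the paper's: realise the DLS automorphism by an explicit map $\Phi\colon X^{(0)}\to X^{(0)}$ that is the identity on the $A$--side and multiplication by $z$ on the other side, and verify coarse-median preservation via Lemma~\ref{cmp criterion} by bounding the hyperplanes in $\mscr{W}(\Phi(p)\mid\Phi(x),\Phi(y))$, which all end up transverse to a bounded number of edge-space hyperplanes. However, there are two genuine gaps. First, your ``tree of pieces'' decomposition of $X$ along translates of $Y_C$ is not actually available in $X$ itself: the translates of a convex $C$--cocompact subcomplex need not separate $X$ compatibly with the Bass--Serre tree, so the map $\Phi$ you describe is not well defined, and ``up to quasi-isometry'' is not enough to run the median criterion. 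The paper resolves this by first replacing $X$ with the cubical Guirardel core of $X\times T$ (Proposition~\ref{cofinite subalgebra}, Corollary~\ref{core cor}), followed by panel collapse, so that $T$ becomes an honest restriction quotient of the new complex and $C$ is the stabiliser of an actual hyperplane $\mf{w}$; only then does the earthquake map (Definition~\ref{earth defn}) make sense. This reduction is where the UCP hypothesis first earns its keep (cocompactness of the core), and it is entirely absent from your proposal.

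Second, you correctly flag the accumulation problem --- a geodesic may cross unboundedly many translates of the edge space, and a per-crossing defect bound would give nothing --- but you do not supply the idea that defuses it. The paper's mechanism is Lemma~\ref{descending projections}: UCP (which says that $C\cap gCg^{-1}$ acts on $\pi_{\mf{w}}(g\mf{w})$ with uniformly boundedly many orbits, not that these projections are finite sets, as your guess has it) implies that any chain $\mf{w},g_1\mf{w},\dots,g_k\mf{w}$ with strictly nested projections to $\mf{w}$ has length at most $K$. Consequently, in the telescoping estimate along the chain of hyperplanes $\mf{w}_0,\dots,\mf{w}_{n+1}$ between $\mf{w}$ and $g\mf{w}$, all but at most $K$ steps create no new separating hyperplanes (Claims~1--2 of Proposition~\ref{projection distance prop}), which is what yields a constant independent of $n$. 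For the twist case your heuristic that the centraliser hypothesis makes things ``inner on a piece'' is not the actual argument: the paper splits $\mf{w}$ into $\mf{w}_0\times L_1\times\dots\times L_m$ with quasi-line factors skewered by $z$ (Lemma~\ref{w splits}, using Proposition~\ref{splitting euclidean factor} --- this is where the absence of finite normal subgroups of $C$ is used, not where you place it), and the ellipticity of $Z_G(h_j^n)$ in $T$ bounds how far a translate $g\mf{w}$ can be in the tree while still sharing a whole quasi-line factor with $\mf{w}$ (Lemma~\ref{uniform constant lem}(2)). Without these two ingredients the proof does not close.
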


The proof of Theorem~\ref{CMP GDT 2} is simpler if the cube complex $X$ contains a collection of pairwise disjoint hyperplanes such that their dual tree is precisely the Bass--Serre tree $T$ of the splitting of $G$. This is the situation that we consider in Subsection~\ref{earthquake cmp sect}.

The previous subsections reduce the proof to this setting. The main idea is to ``inflate'' a convex, $C$--invariant subcomplex of $X$ to a hyperplane. This is achieved by considering the $G$--action on the product $X\x T$, and recovering cocompactness by restricting to its ``cubical Guirardel core''. This is a generalisation of the Guirardel core of a product of two trees \cite{Guirardel-core} that we introduce in Subsection~\ref{Guirardel core sect}. Our construction can also be viewed as a broad generalisation of the idea of \emph{Salvetti blowups} from \cite{CSV}.

\subsection{Uniformly cocompact projections.}\label{UCP subsec}

Let $G\acts X$ be a proper cocompact action on a $\CAT$ cube complex. We denote Hausdorff distances by $d_{\rm Haus}(\cdot,\cdot)$.

\begin{lem}\label{Haus proj}
Consider a subgroup $H\leq G$ and $H$--invariant, convex subcomplexes $Z,W\cu X$ with $d_{\rm Haus}(Z,W)=D$. Then $d_{\rm Haus}(\pi_Z(gZ),\pi_W(gW))\leq 3D$ for all $g\in G$. 
\end{lem}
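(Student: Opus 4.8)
The plan is to reduce the statement to a single pointwise estimate and then invoke $1$--Lipschitzness of gate-projections. Concretely, I would first prove that \emph{for every vertex $q\in X$ one has $d(\pi_Z(q),\pi_W(q))\le 2D$}. Granting this, the lemma follows at once. Indeed, since $g$ is an automorphism we have $d_{\rm Haus}(gZ,gW)=d_{\rm Haus}(Z,W)=D$. Given $p\in\pi_Z(gZ)$, write $p=\pi_Z(q)$ with $q\in gZ$ and choose a vertex $q'\in gW$ with $d(q,q')\le D$; then $\pi_W(q')\in\pi_W(gW)$ and
\[
d(p,\pi_W(q'))\le d(\pi_Z(q),\pi_W(q))+d(\pi_W(q),\pi_W(q'))\le 2D+D=3D,
\]
where the second term is bounded using that $\pi_W$ is $1$--Lipschitz. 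The symmetric argument shows that every point of $\pi_W(gW)$ lies within $3D$ of $\pi_Z(gZ)$, so $d_{\rm Haus}(\pi_Z(gZ),\pi_W(gW))\le 3D$.

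The core of the argument is thus the pointwise estimate, which I would establish by a hyperplane count. Put $z=\pi_Z(q)$ and $w=\pi_W(q)$, so that $d(z,w)=\#\mscr{W}(z|w)$. Fix $\mf{w}\in\mscr{W}(z|w)$. The vertex $q$ lies strictly on one side of $\mf{w}$, hence on the same side as exactly one of $z,w$. If $q$ and $w$ lie on the same side, then $\mf{w}$ separates $q$ from $z$, so $\mf{w}\in\mscr{W}(q|z)=\mscr{W}(q|Z)$; thus $Z$ lies entirely on the side of $\mf{w}$ containing $q$, which is the side containing $z$ but not $w$, whence $\mf{w}\in\mscr{W}(w|Z)$. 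Symmetrically, if $q$ and $z$ lie on the same side then $\mf{w}\in\mscr{W}(q|w)=\mscr{W}(q|W)$ and $\mf{w}\in\mscr{W}(z|W)$. Therefore $\mscr{W}(z|w)\cu\mscr{W}(w|Z)\cup\mscr{W}(z|W)$. Finally, $d_{\rm Haus}(Z,W)\le D$ forces $d(w,Z)\le D$ and $d(z,W)\le D$ (as $w\in W$ and $z\in Z$), i.e.\ $\#\mscr{W}(w|Z)\le D$ and $\#\mscr{W}(z|W)\le D$; hence $d(z,w)\le 2D$, as claimed.

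I do not expect a genuine obstacle here: the whole argument is formal, driven by the identity $\mscr{W}(x|\pi_Y(x))=\mscr{W}(x|Y)$ recalled in the preliminaries. The only place demanding attention is the bookkeeping in the hyperplane count --- tracking on which side of $\mf{w}$ the vertex $q$ and the subcomplexes $Z$, $W$ lie --- and the harmless observation that the $H$--invariance hypothesis is never actually used; it is included only because this is how the subcomplexes $Z$ and $W$ arise in the applications. I would keep the write-up to the two short steps above.
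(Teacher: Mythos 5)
Your proof is correct and follows essentially the same route as the paper: the key step in both is the decomposition $\mscr{W}(\pi_Z(q)|\pi_W(q))\cu\mscr{W}(\pi_W(q)|Z)\cup\mscr{W}(\pi_Z(q)|W)$ giving the pointwise bound $2D$, combined with $1$--Lipschitzness of gate-projections to reach $3D$. Your observation that the $H$--invariance hypothesis is not actually needed is also accurate.
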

\begin{proof}
Since $\pi_Z$ is $1$--Lipschitz, we have $d_{\rm Haus}(\pi_Z(gZ),\pi_Z(gW))\leq D$. In addition, for every $x\in X$:
\[\mscr{W}(\pi_Z(x)|\pi_W(x))=\mscr{W}(x,\pi_Z(x)|\pi_W(x))\cup\mscr{W}(\pi_Z(x)|x,\pi_W(x))\cu\mscr{W}(\pi_Z(x)|W)\cup\mscr{W}(Z|\pi_W(x)),\]
hence $d(\pi_Z(x),\pi_W(x))\leq 2D$. It follows that $d_{\rm Haus}(\pi_Z(gZ),\pi_W(gW))\leq 3D$.
\end{proof}

\begin{defn}[Uniformly Cocompact Projections]\label{UCP defn}
Let $H\leq G$ be convex-cocompact in $X$. Let $Z\cu X$ be any $H$--invariant, $H$--cocompact convex subcomplex. We say that $H$ satisfies the \emph{UCP condition in $X$} if there exists $N\geq 1$ such that, for every $g\in G$, the action $H\cap gHg^{-1}\acts\pi_Z(gZ)$ has at most $N$ orbits of vertices.
\end{defn}

Since $X$ is uniformly locally finite, Lemma~\ref{Haus proj} shows that this property only depends on the subgroup $H\leq G$ and the action $G\acts X$, and not on the specific choice of $Z$.

Our interest in the UCP condition is exclusively related to Lemma~\ref{descending projections} below. In Lemma~\ref{special UCP}, we will show that convex-cocompact subgroups of special groups satisfy the UCP condition in any cospecial cubulation. However, even when restricting to special groups $G$, we will need this property within non-cospecial cubulations of $G$ (see Subsection~\ref{earthquake cmp sect}).

Recall that a sequence of hyperplanes $\mf{u}_1,\dots,\mf{u}_k$ is said to be a \emph{chain of hyperplanes} if, for each $2\leq i\leq k-1$, the hyperplane $\mf{u}_i$ separates $\mf{u}_{i-1}$ from $\mf{u}_{i+1}$.

\begin{lem}\label{descending projections}
Let $\mf{w}\in\mscr{W}(X)$ be a hyperplane. Let $H$ be its $G$--stabiliser and suppose that $H$ satisfies the UCP condition and acts non-transversely on $X$. Then there exists $K\geq 1$ such that, for every chain $\mf{w},g_1\mf{w},\dots,g_k\mf{w}$ of $G$--translates of $\mf{w}$ such that $\pi_{\mf{w}}(g_1\mf{w})\supsetneq\dots\supsetneq\pi_{\mf{w}}(g_k\mf{w})$, we have $k\leq K$.
\end{lem}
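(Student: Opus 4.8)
\textbf{Proof plan for Lemma~\ref{descending projections}.}

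The plan is to argue by contradiction using the UCP condition together with non-transversality, extracting a long chain of translates whose pairwise ``projection jumps'' are all realised near a single $H$--orbit, and then running a pigeonhole argument. First I would fix an $H$--invariant, $H$--cocompact convex subcomplex $Z\cu X$; since $H$ acts non-transversely, the hyperplane $\mf{w}$ is $H$--invariant, so we may take $Z$ to lie ``close to'' $\mf{w}$, and in any case $\pi_{\mf{w}}$ restricted to $H$--orbits has uniformly bounded fibres by local finiteness. Suppose we have a chain $\mf{w},g_1\mf{w},\dots,g_k\mf{w}$ with $\pi_{\mf{w}}(g_1\mf{w})\supsetneq\dots\supsetneq\pi_{\mf{w}}(g_k\mf{w})$. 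The key observation is that, because the $g_i\mf{w}$ form a chain and their gate-projections to $\mf{w}$ are strictly nested, the sets $\pi_{\mf{w}}(g_i\mf{w})$ are strictly decreasing convex subcomplexes of $\mf{w}$, so the hyperplanes of $\mf{w}$ separating $\pi_{\mf{w}}(g_i\mf{w})$ from $\pi_{\mf{w}}(g_{i+1}\mf{w})$ form $k-1$ pairwise-disjoint nonempty families; in particular $\mf{w}$ has at least $k-1$ hyperplanes arranged in a chain inside it.

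Next I would convert this into a statement about the group $H$. Consider the intersections $H_i:=H\cap g_iHg_i^{-1}$, which stabilise $g_i\mf{w}$ (and $\mf{w}$), hence act on $\pi_{\mf{w}}(g_i\mf{w})$. By the UCP condition, applied inside the cube complex $\mf{w}$ (which carries the cocompact $H$--action, as $\mf{w}$ is $H$--invariant and $H\acts\mf{w}$ is cocompact because $H\acts Z$ is and $Z$ is within bounded distance of $\mf{w}$), each $H_i$ acts on $\pi_{\mf{w}}(g_i\mf{w})$ with at most $N$ orbits of vertices. Now pick a vertex $p\in\mf{w}$ and use $H$--cocompactness on $\mf{w}$ to move it: for each $i$, there is $h_i\in H$ with $h_i p$ at bounded distance from $\pi_{\mf{w}}(g_i\mf{w})$. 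The elements $h_i^{-1}g_i$ then have the property that $(h_i^{-1}g_i)\mf{w}$ has gate-projection to $\mf{w}$ passing uniformly near $p$, while these projections remain strictly nested; so, after replacing $g_i$ by $h_i^{-1}g_i$ (which does not change the chain up to the $H$--action and preserves non-transversality of the configuration), all the $\pi_{\mf{w}}(g_i\mf{w})$ lie in a fixed ball $B$ around $p$. By local finiteness of $X$, the ball $B$ contains only finitely many hyperplanes, and therefore only finitely many values are possible for the convex subcomplex $\pi_{\mf{w}}(g_i\mf{w})\cap B$. Since $\pi_{\mf{w}}(g_i\mf{w})$ is determined by its hyperplanes and all of these hyperplanes meet $B$ (being among those separating consecutive nested projections, all of which crowd near $p$ once the first and last do), this forces the $\pi_{\mf{w}}(g_i\mf{w})$ themselves to take only finitely many values — contradicting strict nestedness once $k$ exceeds that finite bound. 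Setting $K$ to be this bound (which depends only on $X$, $\mf{w}$ and the UCP constant $N$) completes the argument.

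I expect the main obstacle to be the bookkeeping in the normalisation step: strictly speaking the projections $\pi_{\mf{w}}(g_i\mf{w})$ cannot all be translated simultaneously into a single bounded region by elements of $H$, since a strictly descending chain of convex subcomplexes of $\mf{w}$ is genuinely ``long''. The correct move is subtler: one uses the UCP bound to control, for each $i$, the number of $H_i$--orbits in $\pi_{\mf{w}}(g_i\mf{w})$, which bounds the ``width'' of each step $\pi_{\mf{w}}(g_i\mf{w})\setminus\pi_{\mf{w}}(g_{i+1}\mf{w})$ in terms of $N$ and the cocompactness data; but the total length of the chain is not bounded this way. Instead I would argue that the \emph{first} projection $\pi_{\mf{w}}(g_1\mf{w})$ can be assumed, after translating the whole chain by a single element of $H$, to meet a fixed fundamental domain for $H\acts\mf{w}$; then the chain of hyperplanes of $\mf{w}$ separating consecutive projections is a chain \emph{skewered by no single element} yet confined by the structure of $\mf{w}$. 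The clean way to finish is: the chain $\pi_{\mf{w}}(g_1\mf{w})\supsetneq\dots\supsetneq\pi_{\mf{w}}(g_k\mf{w})$ is equivalent to a chain of $k-1$ halfspaces of $\mf{w}$, and one shows — using that $g_{i}\mf{w}$ and $g_{i+1}\mf{w}$ are both translates of $\mf{w}$ with nested gate-images, hence that $g_{i+1}g_i^{-1}$ moves $\mf{w}$ ``monotonically'' — that $\mf{w}$ is $\langle g_{i+1}g_i^{-1}\rangle$--essential along this chain, contradicting the non-transverse hypothesis (which, combined with the UCP bound giving finitely many isomorphism types of the pairs $(\mf{w}, H_i)$, forbids arbitrarily long such descending sequences). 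I would present this last step as the technical heart, with the finiteness coming from Definition~\ref{UCP defn} and uniform local finiteness of $X$.
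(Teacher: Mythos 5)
Your proposal correctly isolates the right objects --- the subgroups $H_i:=H\cap g_iHg_i^{-1}$ acting on $\pi_{\mf{w}}(g_i\mf{w})$ with at most $N$ orbits of vertices by the UCP condition --- but neither of your two finishing arguments works, and you essentially concede this for the first one. Translating the projections into a fixed ball is impossible (they can be unbounded, and translating each $g_i$ by a different $h_i\in H$ destroys the nesting), so the "finitely many convex subcomplexes in $B$" count never gets off the ground. The second attempt is also broken at its starting point: a strictly descending chain of convex subcomplexes of $\mf{w}$ is \emph{not} equivalent to a chain of $k-1$ halfspaces. Nested convex subcomplexes are never separated by a hyperplane; the passage from $\pi_{\mf{w}}(g_i\mf{w})$ to $\pi_{\mf{w}}(g_{i+1}\mf{w})$ is witnessed by hyperplanes crossing the former but not the latter, and for different $i$ these can be pairwise transverse rather than forming a chain. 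Finally, "$\mf{w}$ is essential along the chain, contradicting non-transversality" is not a valid implication: non-transversality is a hypothesis on the action of $H$, not on the elements $g_{i+1}g_i^{-1}$, and essentiality of a hyperplane is in no tension with non-transversality.

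The missing mechanism is a monotonicity-plus-pigeonhole argument on the \emph{subgroups} $H_i$ rather than on the sets $\pi_{\mf{w}}(g_i\mf{w})$. First, because $\mf{w},g_1\mf{w},\dots,g_k\mf{w}$ form a chain and $H$ acts non-transversely, one gets $H_1\geq H_2\geq\dots\geq H_k$. Second, properness of $G\acts X$ gives a uniform $N'$ such that at most $N'$ subgroups of $G$ act with at most $N$ orbits of vertices on a convex subcomplex containing a given point (such a subgroup is generated by elements displacing a point near the given one by a uniformly bounded amount --- this is the Claim inside the proof of Lemma~\ref{almost normalisers}); applying this at a vertex of $\pi_{\mf{w}}(g_k\mf{w})$, which lies in every $\pi_{\mf{w}}(g_i\mf{w})$, shows that the descending sequence $(H_i)$ takes at most $N'$ values. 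Third, on a maximal subchain where $H_i$ is constant, the number of $H_i$--orbits of vertices of the strictly shrinking sets $\pi_{\mf{w}}(g_i\mf{w})$ is at most $N$ and strictly decreases in $i$, so such a subchain has length at most $N$. Hence $k\leq N\cdot N'$. Your write-up contains the UCP input for step three but neither of the first two steps, and those are where non-transversality and properness actually enter.
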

\begin{proof}
Since $H$ satisfies the UCP condition, there exists $N\geq 1$ such that, for every $g\in G$, the subgroup $H\cap gHg^{-1}$ acts on $\pi_{\mf{w}}(g\mf{w})$ with at most $N$ orbits of vertices. As in the claim during the proof of Lemma~\ref{almost normalisers}, there exists $N'\geq 1$ such that, for every $p\in X$, there are at most $N'$ subgroups of $G$ that act with at most $N$ orbits of vertices on a convex subcomplex of $X$ containing $p$. We will show that $k\leq N\cdot N'$.

Since the hyperplanes $\mf{w},g_1\mf{w},\dots,g_k\mf{w}$ form a chain and $H$ acts non-transversely on $X$, we have $g_1Hg_1^{-1}\cap H\geq\dots\geq g_kHg_k^{-1}\cap H$. By the previous paragraph, there are at most $N'$ distinct subgroups of $G$ among the $g_iHg_i^{-1}\cap H$. Thus, it suffices to assume that $g_iHg_i^{-1}\cap H$ is constant and show that $k\leq N$. The latter follows from the observation that, in this situation, the number of orbits in $\pi_{\mf{w}}(g_i\mf{w})$ is bounded above by $N$ and must strictly decrease as $i$ increases.
\end{proof}

The following implies that convex-cocompact subgroups of special groups satisfy the UCP condition in any cospecial cubulation.

\begin{lem}\label{special UCP}
Convex-cocompact subgroups of $\A_{\G}$ satisfy the UCP condition in $\X_{\G}$.
\end{lem}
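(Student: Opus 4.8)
The plan is to pass to the trees $\T_v$, $v\in\G$, where everything becomes one-dimensional. Let $H\leq\A_{\G}$ be $q$--convex-cocompact, and fix an $H$--invariant, $H$--essential convex subcomplex $Z\cu\X_{\G}$ on which $H$ acts with $q$ orbits of vertices; by the remark following Definition~\ref{UCP defn} it suffices to verify the UCP condition for this $Z$. Fix $g\in\A_{\G}$ and write $Y:=\pi_Z(gZ)$. By Lemma~\ref{cc intersection} the group $H\cap gHg^{-1}$ acts cocompactly on the convex subcomplex $Y$, so the whole problem reduces to bounding the number of $(H\cap gHg^{-1})$--orbits of vertices of $Y$ by a constant depending only on $q$ and $\G$. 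First I would check that gate-projection commutes with each restriction quotient, i.e.\ $\pi_v\o\pi_Z=\pi_{\pi_v(Z)}\o\pi_v$; this is immediate from the identity $\mscr{W}(\pi_vx|\pi_v(W))=\g^{-1}(v)\cap\mscr{W}(x|W)$, valid for any convex $W\cu\X_{\G}$. Writing $Z_v:=\pi_v(Z)$ — a subtree of $\T_v$ on which $H$ acts cocompactly with at most $q$ orbits of vertices — one gets $\pi_v(Y)=\pi_{Z_v}(gZ_v)$, which in a tree is just $Z_v\cap gZ_v$ when these subtrees meet, and a single vertex otherwise. In particular every hyperplane of $Y$ has its label in $\Delta_g:=\{v\in\G: Z_v\cap gZ_v\text{ contains an edge}\}$, so $\dim Y\leq\#\Delta_g\leq\#\G^{(0)}$.

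The core of the argument is then a uniform bound on the sizes of the subtrees $Z_v\cap gZ_v$ together with the orbit counts. For $v\in\Delta_g$ the subtree $Z_v$ is unbounded, so by cocompactness of $H\acts Z_v$ it contains the axis of some $h\in H$ that is loxodromic in $\T_v$ with $\ell_{\T_v}(h)\leq q$. If $Z_v\cap gZ_v$ contained an arc longer than $4\dim\X_{\G}\cdot q$, then $\mathrm{axis}(h)\cap g\cdot\mathrm{axis}(h)$ would be that long, and Lemma~\ref{long min intersection} would force $h$ and $ghg^{-1}$ to commute; using that $Z$ is $H$--essential (so $Z_v$ is minimal and $h$ is primitive in its translation direction) one then obtains $g\cdot\mathrm{axis}(h)=\mathrm{axis}(h)$ and $ghg^{-1}=h$, so $h\in H\cap gHg^{-1}$. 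Running this over all $v\in\Delta_g$, one decomposes $Y$ into a uniformly bounded ``collar'' (the directions in which $Z_v\cap gZ_v$ is short, contributing boundedly many hyperplanes) and a ``skewered part'' on which $H\cap gHg^{-1}$ contains a finite-index free-abelian subgroup acting with boundedly many orbits. Assembling these pieces — using Lemma~\ref{cc intersection}, Remark~\ref{fi overgroups rmk}, and the finiteness, for each fixed convex-cocompactness constant, of the $\A_{\G}$--conjugacy classes of convex-cocompact subgroups (the Claim in the proof of Lemma~\ref{almost normalisers}) — yields the desired bound $N=N(q,\G)$, and then the number of $(H\cap gHg^{-1})$--orbits of vertices of the invariant subcomplex $Y$ is at most $N$.

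The main obstacle is precisely the bookkeeping in this last step: one must simultaneously control the bounded ``collar'' directions and the genuinely skewered directions, and in the skewered directions argue that $H\cap gHg^{-1}$ is $q'$--convex-cocompact with $q'$ depending only on $q$ and $\G$ rather than on $g$ (the non-elementary directions, where $Z_v$ is not a line, need the extra care of intersecting the conclusion over several loxodromics of small translation length). An alternative that sidesteps the combinatorics is to invoke the bounded-packing property of convex-cocompact subgroups of groups acting properly cocompactly on $\CAT$ cube complexes: a failure of the UCP condition would produce, for each $n$, a translate $g_nZ$ with $\pi_Z(g_nZ)$ of arbitrarily large complexity but with $d(Z,g_nZ)$ controlled along the skewered directions, contradicting the fact that only boundedly many translates of $Z$ can lie uniformly close together. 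I would use whichever of the two routes is shorter given the conventions already fixed in the paper.
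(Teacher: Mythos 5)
Your reduction to the trees $\T_v$ is sound as far as it goes: the commutation $\pi_v\o\pi_Z=\pi_{\pi_v(Z)}\o\pi_v$ does hold, and $\pi_v(\pi_Z(gZ))=Z_v\cap gZ_v$ (or a point) is correct. But the core of your argument --- the uniform bound on the number of $(H\cap gHg^{-1})$--orbits, independent of $g$ --- is not actually established, and the two places where it breaks are exactly the ones you flag as ``bookkeeping''. First, when $Z_v$ is not a line, a long arc of $Z_v\cap gZ_v$ need \emph{not} lie on (or even near) the axis of a single loxodromic $h\in H$ with $\ell_{\T_v}(h)\leq q$: cocompactness of $H\acts Z_v$ gives you short loxodromics through every vertex, but their axes can peel off the given arc immediately. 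Producing an element of $H$ whose axis tracks a prescribed long arc of $\pi_v(Z)$ is precisely the content of Section~\ref{perturbation sect} of the paper (decent geodesics, Proposition~\ref{double centraliser prop}, Corollary~\ref{decent dec cor}), which is a substantial argument in its own right and cannot be waved through. Second, even granting a loxodromic in $H\cap gHg^{-1}$ for each ``skewered'' label, you still owe a proof that $H\cap gHg^{-1}$ acts on $Y$ with at most $N(q,\G)$ orbits; the assertion that the skewered part carries a finite-index free abelian subgroup with boundedly many orbits is exactly the statement to be proved, not a step in its proof. The bounded-packing alternative does not rescue this: bounded packing bounds how many translates of $Z$ lie pairwise within a given distance, whereas the UCP condition concerns cocompactness of the coset intersection on a projection that may be unbounded even when $d(Z,gZ)$ is large, so no contradiction is produced.

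For contrast, the paper's proof avoids the tree-by-tree analysis entirely via a RAAG-specific finiteness argument. Fixing a finite set $Z_0\cu Z$ meeting every $H$--orbit, one shows that whenever $\pi_Z(gZ)$ meets $Z_0$ there are $\overline g\in\A_{\G}$ with $\overline gZ\cap Z_0\neq\emptyset$ and a parabolic $P$ from a finite list such that $\mscr{W}(\pi_Z(gZ))=\mscr{W}(Z)\cap\mscr{W}(\overline gZ)\cap\mc{W}_1(P)$ and $H\cap gHg^{-1}=H\cap\overline gH\overline g^{-1}\cap P$; the key point is that the element carrying the gate of $gZ$ to the gate of $Z$ commutes with the largest parabolic fixing $\mscr{W}(Z|gZ)$ pointwise (Lemma~\ref{commutation criterion}). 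Since only finitely many translates of $Z$ meet $Z_0$, there are only finitely many configurations $(H\cap gHg^{-1},\pi_Z(gZ))$ up to the $H$--action, and each is cocompact by Lemma~\ref{cc intersection}; uniformity is then automatic. If you want to salvage your route, you would essentially have to import the decent-pair machinery of Section~\ref{perturbation sect}, at which point the paper's shorter argument is clearly preferable.
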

\begin{proof}
Let $H\leq\A_{\G}$ be a convex-cocompact subgroup. Let $Z\cu\X_{\G}$ be an $H$--invariant, $H$--cocompact, convex subcomplex. Let $Z_0\cu Z$ be a finite subset meeting every $H$--orbit. 

Let $\mscr{P}$ be the set of parabolic subgroups of $\A_{\G}$ whose parabolic stratum meets $Z_0$. Note that $\mscr{P}$ is finite. If $P\in\mscr{P}$, recall that $\mc{W}_1(P)\cu\mscr{W}(\X_{\G})$ are the hyperplanes skewered by elements of $P$. 

\smallskip
{\bf Claim:} \emph{if $g\in\A_{\G}$ and $\pi_Z(gZ)\cap Z_0\neq\emptyset$, then there exist $\overline g\in\A_{\G}$ and $P\in\mscr{P}$ such that:}
\begin{enumerate}
\item \emph{$\overline gZ\cap Z_0\neq\emptyset$ and $\mscr{W}(\pi_Z(gZ))=\mscr{W}(Z)\cap\mscr{W}(\overline gZ)\cap\mc{W}_1(P)$;}
\item \emph{$H\cap gHg^{-1}=H\cap\overline gH\overline g^{-1}\cap P$.}
\end{enumerate}

\smallskip\noindent
\emph{Proof of claim.}
If $gZ\cap Z\neq\emptyset$, then $gZ$ meets $Z_0$, and we can take $\overline g=g$ and $P=\A_{\G}$. 

Otherwise, $\mscr{W}(Z|gZ)$ is nonempty and we define $P\leq\A_{\G}$ as the largest parabolic subgroup fixing $\mscr{W}(Z|gZ)$ pointwise. Since $\pi_Z(gZ)$ meets $Z_0$, we have $P\in\mscr{P}$. Note that $\mc{W}_1(P)\cu\mscr{W}(\X_{\G})$ coincides with the set of all hyperplanes transverse to $\mscr{W}(Z|gZ)$.

Choose a pair of gates $z\in Z$, $z'\in gZ$ for $Z$ and $gZ$, with $z\in Z_0$. Choose $g'\in\A_{\G}$ with $g'z'=z$. Observing that $H\cap gHg^{-1}\leq P$ and that $g'$ commutes with $P$ (e.g.\ by Lemma~\ref{commutation criterion}), we deduce that:
\[H\cap gHg^{-1}=H\cap gHg^{-1}\cap P=H\cap (g'g)H(g'g)^{-1}\cap P.\]
Setting $\overline g:=g'g$, condition~(2) is satisfied. We also have $z\in\overline gZ\cap Z_0$, hence $\overline gZ\cap Z_0\neq\emptyset$.

Since $g'$ fixes $\mc{W}_1(P)$ pointwise and $\overline g=g'g$, we have $\mscr{W}(\overline gZ)\cap\mc{W}_1(P)=\mscr{W}(gZ)\cap\mc{W}_1(P)$. Recalling that $\mc{W}_1(P)$ is the set of hyperplanes transverse to $\mscr{W}(Z|gZ)$, we obtain:
\[\mscr{W}(\pi_Z(gZ))=\mscr{W}(Z)\cap\mscr{W}(gZ)=\mscr{W}(Z)\cap\mscr{W}(gZ)\cap\mc{W}_1(P)=\mscr{W}(Z)\cap\mscr{W}(\overline gZ)\cap\mc{W}_1(P),\]
which completes the proof of the claim.
\hfill$\blacksquare$

\smallskip
Since the action $H\acts Z$ is cocompact, each point of $\X_{\G}$ lies in only finitely many pairwise-distinct $\A_{\G}$--translates of $Z$ (see Claim~1 in the proof of Lemma~\ref{cc intersection}). Moreover, $H$ has finite index in the $\A_{\G}$--stabiliser of $Z$. It follows that the set of elements $\overline g\in\A_{\G}$ such that $\overline gZ\cap Z_0\neq\emptyset$ is a finite union of left cosets of $H$.

Note that, in order to prove the lemma, it suffices to show that the actions $H\cap gHg^{-1}\acts\pi_Z(gZ)$ are uniformly cocompact when $\pi_Z(gZ)\cap Z_0\neq\emptyset$. By the claim and the previous paragraph, there are only finitely many options for such subgroups $H\cap gHg^{-1}$ and sets $\pi_Z(gZ)$. So it suffices to show that each action $H\cap gHg^{-1}\acts\pi_Z(gZ)$ is cocompact, which follows from Lemma~\ref{cc intersection}.
\end{proof}

\begin{proof}[Proof of Theorem~\ref{CMP GDT}]
Since $G$ is special, it admits a cospecial cubulation $G\acts X$. By Lemma~\ref{special UCP}, we can apply Theorem~\ref{CMP GDT 2} to this action. To reconcile the differences in parts~(2) and~(3) between Theorem~\ref{CMP GDT} and Theorem~\ref{CMP GDT 2}, it suffices to recall that $G$ is torsion-free and that elements of $G$ with commuting powers must themselves commute.
\end{proof}

\subsection{Panel collapse.}

Here, we record the following special case of the \emph{panel collapse} procedure of Hagen and Touikan \cite{Hagen-Touikan}, restricting ourselves to non-transverse actions. Under this assumption, panel collapse --- normally a fairly violent procedure --- does not alter the coarse median structure.

\begin{prop}\label{cmp panel collapse prop}
Let $G\acts X$ be a cocompact, non-transverse action on a $\CAT$ cube complex without inversions. Suppose that there exists a halfspace $\mf{h}\in\mscr{H}(X)$ that is minimal (under inclusion) among halfspaces transverse to a hyperplane $\mf{w}\in\mscr{W}(X)$. Then there exists $Y$ such that:
\begin{enumerate}
\item $Y$ is a $G$--invariant subcomplex of $X$ with $Y^{(0)}=X^{(0)}$;
\item $Y$ is a $\CAT$ cube complex (though not convex, nor a median subalgebra in $X$);
\item $Y$ has strictly fewer $G$--orbits of edges than $X$;
\item the identity map $X^{(0)}\ra Y^{(0)}$ is coarse-median preserving;
\item the intersection $\mf{w}\cap Y$ is nonempty and connected.
\end{enumerate}
\end{prop}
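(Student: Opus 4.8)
The statement is a direct consequence of the panel collapse theorem of Hagen--Touikan \cite[Theorem~A]{Hagen-Touikan}, once we observe that the hypothesis of the proposition is exactly what is needed to trigger one step of their procedure, and that non-transversality makes the collapse harmless for the coarse median structure. So the plan is: first, set up the panel collapse of $X$ at the halfspace $\mf{h}$; second, record which properties are immediate from \cite{Hagen-Touikan}; third, check the two non-standard points, namely that the vertex set is unchanged and that the identity map on $0$--skeleta is coarse-median preserving.

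First I would recall the local picture. The hypothesis says $\mf{h}$ is minimal among halfspaces transverse to $\mf{w}$; equivalently, the carrier of $\mf{w}$ together with the hyperplane $\mf{v}=\partial\mf{h}$ forms a ``panel'' in the sense of \cite{Hagen-Touikan}, i.e.\ there is a square $Q$ with two opposite sides dual to $\mf{w}$ and the other two dual to $\mf{v}$, and no hyperplane crossing $Q$ other than $\mf{w},\mf{v}$ enters the side $\mf{h}$. Minimality of $\mf{h}$ guarantees that the ``collapsible'' region associated to $\mf{v}$ on the $\mf{h}$--side is exactly a strip $\mf{w}$--direction times $\mf{v}$-direction with no deeper structure, so the elementary collapse move of \cite{Hagen-Touikan} can be applied $G$--equivariantly (using that the action is without inversions, so no $g\in G$ swaps $\mf{h}$ with $\mf{h}^*$). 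Let $Y$ be the resulting complex. Points~(2) and~(3) --- that $Y$ is again a $\CAT(0)$ cube complex and that one $G$--orbit of edges (those dual to $\mf{v}$ inside the collapsed strip) has been destroyed --- are precisely the content of \cite[Theorem~A]{Hagen-Touikan}, as is point~(5): after collapsing, the hyperplane $\mf{w}$ survives and its intersection with $Y$ is connected (the collapse removes the obstruction that $\mf{v}$ posed to connectivity of $\mf{w}\cap Y$, or equivalently realises $\mf{w}$ as an honest connected hyperplane of $Y$).

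The two points requiring an argument beyond quoting \cite{Hagen-Touikan} are (1) and (4). For (1): an elementary collapse of a panel deletes edges and higher cells but identifies no vertices, since the collapse retracts a strip onto one of its boundary walls along the $\mf{v}$--direction while the $\mf{w}$--coordinate (hence all vertices) is untouched; thus $Y^{(0)}=X^{(0)}$ as sets, and $Y$ is a subcomplex of $X$ because collapsing only removes cells. Here non-transversality of $G\acts X$ is what prevents the collapse from cascading: every $G$--translate of the panel is either equal to it or disjoint in hyperplane-terms, so no translate of $\mf{v}$ is transverse to $\mf{v}$, and the collapse can be performed simultaneously on the whole orbit without new panels appearing. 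For (4), the key estimate: a vertex $p\in X^{(0)}=Y^{(0)}$ is separated from a vertex $q$ by $\mf{v}$ in $X$ iff it is separated from $q$ by $\mf{w}$ in $Y$ (the collapse identifies the wall $\mf{v}$ with the wall $\mf{w}$), and all other walls of $X$ are canonically walls of $Y$ with the same sides. Hence for any three vertices $x,y,z$, the median $m_X(x,y,z)$ and $m_Y(x,y,z)$ lie on the same side of every wall except possibly the pair $\{\mf{v},\mf{w}\}$, and even there they agree because $\mf{v}$ and $\mf{w}$ are ``parallel after collapse''; so in fact $m_X=m_Y$ on $X^{(0)}$, which is far stronger than coarse-median preservation. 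I would phrase (4) via Lemma~\ref{cmp criterion}: the set $\mscr{W}(\Phi(p)\mid\Phi(x),\Phi(y))$ for $\Phi=\id$ is empty whenever $p=m_X(x,y,p)$, because the wall sets of $X$ and $Y$ on the common vertex set differ only by the identification $\mf{v}\leftrightarrow\mf{w}$, which does not change which side of a wall a given vertex lies on.

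\textbf{Main obstacle.} The delicate point is not the coarse-median statement (which, as noted, is essentially an equality of medians) but verifying \emph{equivariantly} that a \emph{single} elementary collapse of the panel at $\mf{h}$ both (i) keeps $Y$ a $\CAT(0)$ cube complex --- i.e.\ no ``self-collision'' of the collapsed strip with its own $G$--translates or with the rest of $X$ --- and (ii) achieves connectivity of $\mf{w}\cap Y$ without needing further collapses. Both are handled inside \cite{Hagen-Touikan} for the general panel-collapse algorithm, but one must check their running hypotheses: finiteness of $X/G$ (given), the no-inversions hypothesis (given), and crucially that \emph{non-transversality} of the action makes the orbit of the panel a disjoint family of panels so that the single-step collapse is well defined and terminates immediately. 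I would therefore spend the bulk of the write-up pinning down that the minimality hypothesis on $\mf{h}$ plus non-transversality feeds exactly into the hypothesis of \cite[Theorem~A]{Hagen-Touikan}, and then state (1)--(5) as consequences, with (4) proved by the median-equality argument above rather than quoted.
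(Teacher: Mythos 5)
Your overall strategy (delete a $G$--orbit of ``bad'' edges \`a la Hagen--Touikan, use non-transversality and the no-inversion hypothesis to make the deletion equivariant and to keep the result $\CAT(0)$) matches the paper's. But your argument for item~(4) --- the only genuinely non-trivial item --- rests on a false claim. You assert that the wall sets of $X$ and $Y$ on the common vertex set ``differ only by the identification $\mf{v}\leftrightarrow\mf{w}$'' and conclude $m_X=m_Y$. This is not what happens under panel collapse. Already in a single square with vertices $a,b,c,d$ (edges $ab,bc,cd,da$), with $\mf{w}$ the wall $\{a,b\}\mid\{c,d\}$ and $\mf{h}$ the halfspace $\{b,c\}$ of the other wall, the unique bad edge is $bc$ (it crosses $\mf{w}$ and lies in $\mf{h}$); removing it leaves the path $b$--$a$--$d$--$c$. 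The wall of $X$ bounding $\mf{h}$, namely $\{a,d\}\mid\{b,c\}$, splits into the two walls $\{b\}\mid\{a,c,d\}$ and $\{c\}\mid\{a,b,d\}$ of $Y$, which are partitions that simply did not exist in $X$; and $m_X(a,b,c)=b$ while $m_Y(a,b,c)=a$. So medians genuinely move (by a bounded amount), new walls appear, and no identification of $\mf{v}$ with $\mf{w}$ takes place. Your appeal to Lemma~\ref{cmp criterion} with ``$\mscr{W}(\Phi(p)\mid\Phi(x),\Phi(y))$ empty'' therefore fails at the first square.

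What is actually needed for~(4) is a quantitative comparison of the two metrics/median operators. The paper proves that every $X$--geodesic can be replaced, after two local modifications (first rerouting around ``avoidable'' bad edges using minimality of $\mf{h}$, then detouring across the hyperplanes $g\mf{w}$ to dodge the remaining bad edges), by a $Y$--geodesic with the same endpoints at Hausdorff distance at most $2$ in the $Y$--metric; it follows that at most $6$ walls of $Y$ separate $m_X(x,y,z)$ from $m_Y(x,y,z)$, which is exactly the bounded-distance statement required. Some version of this geodesic-approximation argument is unavoidable. Separately, note a sign error in your local picture: the deleted edges are those dual to $\mf{w}$ (and its translates) lying inside $\mf{h}$, not edges dual to $\mf{v}=\partial\mf{h}$; this matters when you try to describe which hyperplanes of $Y$ survive and why $\mf{w}\cap Y$ is nonempty and connected.
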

\begin{proof}
Say that an edge $e\cu X$ is \emph{bad} if there exists $g\in G$ such that $e$ is contained in $g\mf{h}$ and crosses $g\mf{w}$. Let $\mc{G}\cu X^{(1)}$ be the subgraph obtained by removing interiors of bad edges. Since the action $G\acts X$ is non-transverse and without inversions, then, for every cube $c\cu X$ with $\dim c\geq 2$, the intersection between $\mc{G}$ and the $1$--skeleton of $c$ is connected.

We define $Y$ as the full subcomplex of $X$ with $Y^{(1)}=\mc{G}$. Parts~(1),~(3) and~(5) are immediate. Part~(2) is proved in \cite{Hagen-Touikan} (of which we are considering the simplest possible case, since $\mc{G}$ has connected intersection with $1$--skeletons of cubes of $X$).

We are left to prove~(4). We will speak of $X$--geodesics and $Y$--geodesics, depending on which of the two metrics we are considering. Let $m_X$ and $m_Y$ be the median operators on $X^{(0)}=Y^{(0)}$ induced by $X$ and $Y$, respectively. If $\alpha$ and $\beta$ are paths in $X^{(1)}$ (possibly containing edges outside $Y$), we write $\delta_Y(\alpha,\beta)$ for the Hausdorff distance in the metric of $Y$ between the two intersections $\alpha\cap X^{(0)}$ and $\beta\cap X^{(0)}$. 

\smallskip
{\bf Claim:} \emph{for every $X$--geodesic $\alpha\cu X^{(1)}$, there exists a $Y$--geodesic $\beta\cu Y^{(1)}$ with the same endpoints and with $\delta_Y(\alpha,\beta)\leq 2$.}

\smallskip
Assuming the claim, we prove part~(4). Consider three points $x,y,z\in X^{(0)}$. By the claim, the point $m_X(x,y,z)$ is at distance $\leq 2$ in $Y$ from a $Y$--geodesic between any two of these three points. Thus, at most $2$ hyperplanes of $Y$ separate $m_X(x,y,z)$ from any two among $x,y,z$. Hence at most $6$ hyperplanes of $Y$ separate $m_X(x,y,z)$ and $m_Y(x,y,z)$, which shows part~(4).

Now, in order to prove the claim, let $\alpha\cu X^{(1)}$ be an $X$--geodesic. Consider a bad edge $e\cu\alpha$, and let $g\in G$ be an element such that $e$ crosses $g\mf{w}$ and is contained in $g\mf{h}$. We say that $e$ is \emph{avoidable} if $g\mf{h}$ contains exactly one of the endpoints of $\alpha$.

\smallskip
{\bf Sub-claim:} \emph{there exists an $X$--geodesic $\alpha'\cu X^{(1)}$, with the same endpoints as $\alpha$, such that $\delta_Y(\alpha,\alpha')\leq 1$ and $\alpha'$ contains no avoidable bad edges.}

\smallskip\noindent
\emph{Proof of sub-claim.}
Let $e\cu\alpha$ be an avoidable bad edge, crossing a hyperplane $g\mf{w}$ and contained in a halfspace $g\mf{h}$. Let $\alpha_e\cu\alpha$ be the subsegment lying in the carrier of the hyperplane bounding $g\mf{h}$. Let $\alpha_e'$ be the $X$--geodesic, with the same endpoints as $\alpha_e$, that is entirely contained in $g\mf{h}^*$ except for its initial or terminal edge. Then $\delta_Y(\alpha_e,\alpha_e')=1$ and, by minimality of $\mf{h}$, no edge of $\alpha_e'$ is bad.

Replacing the segment $\alpha_e\cu\alpha$ with $\alpha_e'$, then repeating the procedure for the two geodesics forming $\alpha\setminus\alpha_e$ yields the sub-claim.
\hfill$\blacksquare$

\smallskip\noindent
\emph{Proof of claim.}
Now, let $e_1,\dots,e_k$ be the bad edges on $\alpha'$, in order of appearance along it. Let $g_i\in G$ be elements such that $e_i$ crosses $g_i\mf{w}$ and is contained in $g_i\mf{h}$. Since none of the $e_i$ is avoidable, we must have $\alpha'\cu g_i\mf{h}$ for every $i$. 

We define a new path $\beta\cu X^{(1)}$ as follows. Let $s$ be the highest index with $g_s\mf{h}=g_1\mf{h}$. Let $\g_1\cu\alpha'$ be the segment starting with $e_1$ and ending with $e_s$. We replace $\g_1$ with the path that immediately crosses into $g_1\mf{h}^*$, then crosses the same hyperplanes as $\g_1$, and finally crosses back into $g_1\mf{h}=g_s\mf{h}$. We deal in a similar way with all other halfspaces $g_i\mf{h}$ in order to avoid all bad edges on $\alpha'$.

Now, the path $\beta$ contains no bad edges. It is not an $X$--geodesic, but it is straightforward to check that it is a $Y$--geodesic. In addition, $\delta_Y(\alpha',\beta)\leq 1$, hence $\delta_Y(\alpha,\beta)\leq 2$. 
\hfill$\blacksquare$

\smallskip
This concludes the proof of the proposition.
\end{proof}

Note that, by part~(5), the intersection $\mf{w}\cap Y$ is a hyperplane of $Y$. We can only apply Proposition~\ref{cmp panel collapse prop} a finite number of times because of part~(3). Eventually, we obtain:

\begin{cor}\label{cmp panel collapse cor}
Let $G\acts X$ be a cocompact, non-transverse action on a $\CAT$ cube complex without inversions. Then there exists $Y\cu X$ such that:
\begin{enumerate}
\item $Y$ is a $G$--invariant subcomplex with $Y^{(0)}=X^{(0)}$;
\item $Y$ is a $\CAT$ cube complex (though not convex, nor a median subalgebra in $X$);
\item the action $G\acts Y$ is hyperplane-essential;
\item the identity map $X^{(0)}\ra Y^{(0)}$ is coarse-median preserving.
\end{enumerate}
\end{cor}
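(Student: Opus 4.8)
The statement to prove is Corollary~\ref{cmp panel collapse cor}, which asserts that iterating Proposition~\ref{cmp panel collapse prop} terminates in a hyperplane-essential cube complex with the same $0$--skeleton and a coarse-median-preserving identity map.

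The plan is to argue by induction on the number of $G$--orbits of edges of $X$, using Proposition~\ref{cmp panel collapse prop} as the inductive step. First I would observe that if $G\acts X$ is already hyperplane-essential, then there is nothing to do: we simply take $Y=X$, and all four conclusions hold trivially (the identity map is a median isomorphism, hence coarse-median preserving). So assume $G\acts X$ is \emph{not} hyperplane-essential. Since $G$ acts cocompactly, by \cite[Lemma~2.3]{FH} the action is $G$--hyperplane-essential if and only if it is hyperplane-essential, so there must exist a hyperplane $\mf{w}\in\mscr{W}(X)$ which, with its induced cubical structure, is not $G_{\mf{w}}$--essential. This means $\mscr{W}(\mf{w})\neq\mc{W}_1(G_{\mf{w}},\mf{w})$: there is a hyperplane of $\mf{w}$ not skewered by any element of $G_{\mf{w}}$, equivalently a halfspace $\mf{h}$ transverse to $\mf{w}$ that is not at finite Hausdorff distance from\dots\ more precisely, the key point is that inessentiality of a hyperplane in a cocompact setting forces the existence of a halfspace $\mf{h}$ transverse to $\mf{w}$ that is \emph{minimal} under inclusion among all halfspaces transverse to $\mf{w}$. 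I would extract this minimality from cocompactness: the set of halfspaces transverse to $\mf{w}$ falls into finitely many $G_{\mf{w}}$--orbits, and within the "chain direction" transverse to $\mf{w}$ an inessential hyperplane of $\mf{w}$ produces a descending chain that must stabilise, yielding a minimal element (this is exactly the hypothesis needed to invoke Proposition~\ref{cmp panel collapse prop}).

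Once such a minimal halfspace $\mf{h}$ transverse to $\mf{w}$ is found, I apply Proposition~\ref{cmp panel collapse prop} to obtain a cube complex $X_1\cu X$ with $X_1^{(0)}=X^{(0)}$, with the $G$--action still cocompact, non-transverse and without inversions (non-transversality and absence of inversions are inherited because $X_1$ has the same $0$--skeleton and fewer edges, and any transverse pair or inversion in $X_1$ would persist in $X$ — one should check this briefly, noting that hyperplanes of $X_1$ are restrictions of hyperplanes of $X$ by part~(5) of the proposition applied across all collapsed edges), with strictly fewer $G$--orbits of edges, and with the identity map $X^{(0)}\ra X_1^{(0)}$ coarse-median preserving. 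By the inductive hypothesis applied to $G\acts X_1$ (legitimate since the number of $G$--orbits of edges strictly dropped), there is a $G$--invariant subcomplex $Y\cu X_1$ with $Y^{(0)}=X_1^{(0)}=X^{(0)}$, which is a $\CAT$ cube complex, on which $G$ acts hyperplane-essentially, and such that the identity $X_1^{(0)}\ra Y^{(0)}$ is coarse-median preserving. The composition of two coarse-median-preserving maps is coarse-median preserving (by Lemma~\ref{cmp criterion}, or directly: the defining bound just adds), and here the composition is literally the identity $X^{(0)}\ra Y^{(0)}$, so conclusion~(4) holds; conclusions~(1),~(2),~(3) are inherited verbatim from the inductive step since $Y\cu X_1\cu X$ and $Y^{(0)}=X^{(0)}$. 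The base case of the induction is when $X$ has no edges, or more usefully when $X$ is already hyperplane-essential, which is the case handled in the first paragraph; termination is guaranteed since the number of $G$--orbits of edges is a non-negative integer strictly decreasing at each non-trivial step.

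The main obstacle I anticipate is not the induction bookkeeping but the step of \emph{producing a minimal halfspace} $\mf{h}$ transverse to $\mf{w}$ from the failure of hyperplane-essentiality. One must be careful that "not $G_{\mf{w}}$--essential" (a statement about the induced cubical structure on the hyperplane $\mf{w}$, i.e.\ about hyperplanes of $\mf{w}$ dual to $G_{\mf{w}}$--translates) correctly translates into the existence of a halfspace of the ambient complex $X$, transverse to $\mf{w}$, that is minimal under inclusion among such halfspaces — the cleanest route is: hyperplanes of $\mf{w}$ correspond to hyperplanes of $X$ transverse to $\mf{w}$; a non-skewered such hyperplane means one side contains an entire $G_{\mf{w}}$--orbit in $\mf{w}$; cocompactness of $G_{\mf{w}}$ on (a convex core of) $\mf{w}$ then bounds the length of any chain of such transverse halfspaces all containing a fixed vertex, forcing a minimal one to exist. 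A secondary subtlety, worth a sentence, is verifying that non-transversality and the no-inversions property genuinely pass from $X$ to the collapsed complex $X_1$; this follows because every hyperplane of $X_1$ is of the form $\mf{u}\cap X_1$ for a hyperplane $\mf{u}$ of $X$ (by part~(5) of Proposition~\ref{cmp panel collapse prop}, applied to each collapse), the incidence relation "transverse" is detected on the common $0$--skeleton, and an inversion in $X_1$ would give one in $X$. With these two checks in place, the corollary follows by the finite induction described above.
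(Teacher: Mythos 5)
Your proof is correct and follows essentially the same route as the paper, which simply iterates Proposition~\ref{cmp panel collapse prop} and invokes its part~(3) to guarantee termination. The details you supply — extracting a minimal transverse halfspace from the failure of hyperplane-essentiality via cocompactness, and checking that non-transversality and absence of inversions persist under each collapse — are exactly the routine verifications the paper leaves implicit, and your sketches of them are sound.
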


In certain situations, it is convenient to prioritise connectedness of a certain hyperplane over essentiality of all other hyperplanes. This can be similarly achieved with a repeated application of Proposition~\ref{cmp panel collapse prop}.

\begin{cor}\label{cmp panel collapse cor 2}
Given $\mf{w}\in\mscr{W}(X)$, property~(3) in Corollary~\ref{cmp panel collapse cor} can be replaced with:
\begin{enumerate}
\item[(3')] the intersection $\mf{w}\cap Y$ is connected and the action $G_{\mf{w}}\acts\mf{w}\cap Y$ is essential.
\end{enumerate}
\end{cor}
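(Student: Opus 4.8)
The strategy is to adapt the proof of Corollary~\ref{cmp panel collapse cor}, but to run the iterated application of Proposition~\ref{cmp panel collapse prop} in a controlled order so that the designated hyperplane $\mf{w}$ is never ``broken'' and all collapses involving translates of $\mf{w}$ are performed first. The key point is that Proposition~\ref{cmp panel collapse prop} collapses the $G$--orbit of a minimal halfspace $\mf{h}$ transverse to some hyperplane $\mf{v}$, and that by part~(5) of that proposition any hyperplane $\mf{v}$ we single out stays nonempty and connected after the collapse. So I would first repeatedly apply Proposition~\ref{cmp panel collapse prop} using the hyperplane $\mf{v}=\mf{w}$ (i.e.\ collapsing $G$--orbits of minimal halfspaces transverse to $\mf{w}$) until no halfspace is transverse to the image of $\mf{w}$; at each step the image of $\mf{w}$ remains connected by part~(5), and each step strictly decreases the number of $G$--orbits of edges by part~(3), so this terminates. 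The resulting subcomplex $Y_1\cu X$ still has $Y_1^{(0)}=X^{(0)}$, the identity $X^{(0)}\ra Y_1^{(0)}$ is coarse-median preserving (compositions of coarse-median preserving maps are coarse-median preserving), and $G\acts Y_1$ is still non-transverse and without inversions (both properties pass to the subcomplex since they are statements about the wallspace structure, and hyperplanes of $Y_1$ correspond to hyperplanes of $X$).

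Next I would observe that inside $Y_1$ the hyperplane $\mf{w}\cap Y_1$ is connected and has no halfspace transverse to it, hence, being a lower-dimensional $\CAT$ cube complex with its induced cubical structure and the residual action of $G_{\mf{w}}$, it can itself be made essential by a further finite sequence of panel collapses \emph{performed within $\mf{w}\cap Y_1$}. Concretely, one applies Corollary~\ref{cmp panel collapse cor} to the action $G_{\mf{w}}\acts\mf{w}\cap Y_1$ (which is cocompact, non-transverse, and without inversions); this produces an essential hyperplane-essential subcomplex of $\mf{w}\cap Y_1$. The issue is that this last collapse must be carried out \emph{ambiently} in $Y_1$, not just inside the hyperplane. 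This is handled by the same mechanism: each minimal halfspace of $\mf{w}\cap Y_1$ transverse to some hyperplane of $\mf{w}\cap Y_1$ extends to a halfspace of $Y_1$ transverse to a hyperplane, and the panel collapse of $Y_1$ along the $G$--orbit of that halfspace restricts on $\mf{w}$ to the desired collapse of $\mf{w}\cap Y_1$. One iterates until $\mf{w}\cap Y$ is essential, obtaining the final $Y$. Throughout, since we never collapse along halfspaces transverse to the image of $\mf{w}$ in this second phase, the image of $\mf{w}$ stays connected, and the resulting hyperplane $G_{\mf{w}}\acts\mf{w}\cap Y$ is essential.

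It remains to check parts~(1),~(2) and~(4) of Corollary~\ref{cmp panel collapse cor} for this $Y$. Parts~(1) and~(2) are immediate since $Y$ is obtained from $X$ by a finite sequence of operations each of which produces a $G$--invariant $\CAT$ cube subcomplex with the same $0$--skeleton (Proposition~\ref{cmp panel collapse prop}, parts~(1) and~(2)). Part~(4) follows because the identity map $X^{(0)}\ra Y^{(0)}$ factors as a finite composition of the coarse-median preserving identity maps furnished by Proposition~\ref{cmp panel collapse prop}(4), and coarse-median preserving maps are closed under composition (each such identity map distorts medians by a bounded amount, and a finite composition distorts them by a bounded amount). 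Property~(3') holds by construction: the image of $\mf{w}$ stayed connected throughout, and we arranged in the second phase that $G_{\mf{w}}$ acts essentially on it.

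\textbf{Main obstacle.} The only genuinely delicate point is ensuring that the second phase of collapses --- making $\mf{w}\cap Y$ essential --- can be realised by \emph{ambient} panel collapses of $Y_1$ while (a) preserving connectedness of the image of $\mf{w}$ and (b) not re-introducing halfspaces transverse to $\mf{w}$. Part~(a) is guaranteed by Proposition~\ref{cmp panel collapse prop}(5) as long as the halfspaces we collapse are not transverse to the image of $\mf{w}$; and a halfspace of $Y_1$ whose restriction to $\mf{w}\cap Y_1$ is a minimal halfspace transverse to some hyperplane of $\mf{w}\cap Y_1$ cannot be transverse to $\mf{w}$ itself (after phase one, nothing is transverse to $\mf{w}$). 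So (b) is automatic as well. One should double-check that the collapse operation of Proposition~\ref{cmp panel collapse prop} applied ambiently in $Y_1$ does restrict to the intended collapse inside the hyperplane --- this is essentially the compatibility of the panel collapse construction with passing to a convex subcomplex, which holds here because the hyperplane's halfspace structure is inherited from the ambient one. I would state these compatibility facts as short lemmas (or cite the relevant parts of \cite{Hagen-Touikan}) rather than reprove the panel collapse machinery.
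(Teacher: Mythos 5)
Your overall strategy --- iterate Proposition~\ref{cmp panel collapse prop} always taking the designated hyperplane as the ``$\mf{w}$'' of that proposition, so that part~(5) keeps its image connected and part~(3) forces termination --- is the intended one, but the way you organise the iteration has a genuine gap. Phase~1 cannot be run ``until no halfspace is transverse to the image of $\mf{w}$'': each step of Proposition~\ref{cmp panel collapse prop} requires a halfspace that is \emph{minimal} among those transverse to $\mf{w}$, and such minimal halfspaces need not exist even when many halfspaces are transverse to $\mf{w}$ (for a vertical hyperplane in the standard cubulation of $\R^2$, every horizontal halfspace is transverse to it and none is minimal). So the loop's guard fails before your target state is reached, and the premise of Phase~2 (``$\mf{w}\cap Y_1$ has no halfspace transverse to it'') is not available. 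Note also that if that state \emph{were} reached, $\mf{w}\cap Y_1$ would have no hyperplanes at all, hence would be a single vertex, and Phase~2 would have nothing to act on. Phase~2 itself conflates essentiality with hyperplane-essentiality: applying Corollary~\ref{cmp panel collapse cor} to $G_{\mf{w}}\acts\mf{w}\cap Y_1$ would make that action \emph{hyperplane}-essential, not essential, and no collapse performed \emph{inside} $\mf{w}\cap Y_1$ can make it essential --- deleting edges does not turn a shallow halfspace of $\mf{w}\cap Y_1$ into a deep one. The shallow halfspaces of $\mf{w}\cap Y$ are traces of halfspaces of $Y$ transverse to $\mf{w}$, and the only way to eliminate them is the \emph{ambient} collapse of your Phase~1, which severs the transversality of the offending hyperplane with $\mf{w}$.

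The repair is a single loop with the correct guard: while $G_{\mf{w}}\acts\mf{w}\cap Y$ is not essential, there is a halfspace $\mf{h}$ of $Y$ transverse to $\mf{w}\cap Y$ whose trace $\mf{h}\cap\mf{w}$ is at finite Hausdorff distance from its bounding hyperplane inside $\mf{w}\cap Y$; among such ``shallow'' halfspaces a minimal one exists (local finiteness plus cocompactness of $G_{\mf{w}}\acts\mf{w}$), and one checks that any halfspace transverse to $\mf{w}\cap Y$ strictly contained in a shallow one is itself shallow, so this $\mf{h}$ is in fact minimal among \emph{all} halfspaces transverse to $\mf{w}\cap Y$ and Proposition~\ref{cmp panel collapse prop} applies to the pair $(\mf{h},\mf{w})$. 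Part~(5) preserves connectedness of $\mf{w}$, part~(3) forces termination, and at termination no shallow halfspace is transverse to $\mf{w}\cap Y$, which is exactly essentiality of $G_{\mf{w}}\acts\mf{w}\cap Y$. Your verification of properties~(1), (2) and~(4) by composition is fine and carries over unchanged.
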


\subsection{Cubical Guirardel cores.}\label{Guirardel core sect}

Guirardel's notion of \emph{core} for a product of actions on $\R$--trees $G\acts T_1\x T_2$ \cite{Guirardel-core} can be rephrased purely in median-algebra terms: it is (closely related to) the median subalgebra of $T_1\x T_2$ generated by a $G$--orbit. As such, this notion can be naturally extended to products of $\CAT$ cube complexes.

In this subsection, we are concerned with cocompactness of this notion of core. The main result is Proposition~\ref{cofinite subalgebra}, which we will only require in the special case of Corollary~\ref{core cor}.

We remark that cocompactness of the core can be achieved more generally, but one must allow the core to be a non--$\CAT$ cube complex, and thus abandon the setting of median algebras. This insight is explored in \cite{Hagen-Wilton}.

\begin{prop}\label{cofinite subalgebra}
Let $G$ act on $\CAT$ cube complexes $X$ and $Y$. Suppose that $G\acts X$ is proper and cocompact, while $G\acts Y$ is essential and has only finitely many orbits of hyperplanes. Then the following are equivalent:
\begin{enumerate}
\item every $G$--orbit in the $0$--skeleton of $X\x Y$ generates a $G$--cofinite median subalgebra;
\item some $G$--orbit in the $0$--skeleton of $X\x Y$ generates a $G$--cofinite median subalgebra;
\item the stabiliser of every hyperplane of $Y$ is convex-cocompact in $X$.
\end{enumerate}
\end{prop}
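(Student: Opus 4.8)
The plan is to establish the cycle of implications $(1)\Rightarrow(2)\Rightarrow(3)\Rightarrow(1)$. The implication $(1)\Rightarrow(2)$ is immediate. For $(2)\Rightarrow(3)$, suppose a $G$--orbit $G\cdot p$ in $(X\x Y)^{(0)}$ generates a $G$--cofinite median subalgebra $M:=\langle G\cdot p\rangle$. Write $p=(p_X,p_Y)$ and let $\mf{w}$ be a hyperplane of $Y$ with stabiliser $H:=G_{\mf{w}}$. The idea is that the coordinate projection $M\ra Y$ is surjective with convex fibres (being a median morphism restricted to $M$), so the preimage in $M$ of a point of $\mf{w}$ is controlled by $X$; more precisely, I would look at the gate-projection $\pi_{\mf{w}}\colon Y\ra\mf{w}$ and consider the ``slice'' of $M$ over $\mf{w}$. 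Choosing a vertex $y_0\in\mf{w}$ close to $p_Y$, the set $M_{y_0}:=M\cap(X\x\{y_0\})$ is an $H$--invariant median subalgebra of $X$, and $G$--cofiniteness of $M$ forces $H$--cofiniteness of $M_{y_0}$ in $X$ (since the $G$--orbit of the slice over $\mf{w}$ meets only finitely many $H$--orbits of vertices by cocompactness of $G\acts X$ together with the fact that $M$ is $G$--cofinite). A $G$--cofinite median subalgebra of a $\CAT$ cube complex is, by Chepoi--Roller duality, the $0$--skeleton of a convex subcomplex up to passing to an $H$--cocompact neighbourhood; concretely, $H$ acts cocompactly on the convex hull of $M_{y_0}$ in $X$, which witnesses convex-cocompactness of $H$.

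\textbf{The implication $(3)\Rightarrow(1)$, and the main obstacle.} For $(3)\Rightarrow(1)$, assume every hyperplane-stabiliser of $Y$ is convex-cocompact in $X$, and fix an arbitrary vertex $p=(p_X,p_Y)$ of $X\x Y$; set $M:=\langle G\cdot p\rangle$. I would show $M$ is $G$--cofinite by a direct combinatorial analysis of which vertices of $X\x Y$ lie in $M$. A vertex $(x,y)$ lies in $M$ precisely when it can be built from finitely many medians of points of $G\cdot p$; using the description of the median operator via halfspaces, $(x,y)\in M$ iff for every wall of $X\x Y$ separating $(x,y)$ from ``most'' of the orbit there is a compensating triple. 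The cleaner route: the walls of $X\x Y$ are the walls of $X$ together with the walls of $Y$; fixing $y$, the ``vertical'' constraints are governed by $\mscr{W}(Y)$ (finitely many orbits) and the ``horizontal'' constraints over a fixed $y$ are governed by the hyperplanes of $X$ separated from $p_X$ along directions one can reach by the action of $\bigcap_{i}G_{\mf{w}_i}$ for the hyperplanes $\mf{w}_i\in\mscr{W}(Y)$ crossed between $p_Y$ and $y$. Since each $G_{\mf{w}_i}$ is convex-cocompact in $X$, and finite intersections of convex-cocompact subgroups are again convex-cocompact and act cocompactly on the relevant gate-projections (Lemma~\ref{cc intersection}), one gets that for each of the finitely many $G$--orbits of ``combinatorial types'' of the pair $(\pi_v(p_Y),y)$ there are only finitely many orbits of admissible $x$. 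Summing over the finitely many orbits of hyperplanes of $Y$ (using essentiality of $G\acts Y$ to see that the relevant chains in $Y$ have uniformly bounded complexity modulo $G$) yields $G$--cofiniteness of $M$.

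\textbf{Where the difficulty concentrates.} I expect the technical heart to be the $(3)\Rightarrow(1)$ direction: one must argue that the median subalgebra generated by a single orbit, though defined by an a priori unbounded iteration of the median operation, stabilises ``locally'' into finitely many $G$--orbits of vertices. The key reduction is that a vertex $(x,y)\in\langle G\cdot p\rangle$ is already obtained as a median of three points of $G\cdot p$ after projecting to the convex hull of a bounded configuration — this is where one uses that in a product $X\x Y$ the median operator splits coordinatewise, so $\langle G\cdot p\rangle=\langle p_X\text{-data}\rangle\x_Y$-fibred data, and convex-cocompactness of the hyperplane stabilisers exactly says each fibre is $G_{\mf{w}}$--cofinite. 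Making the bookkeeping uniform across the finitely many hyperplane orbits of $Y$, and handling hyperplanes of $Y$ that are transverse versus nested (using essentiality to bound nesting depth modulo $G$ via an argument in the spirit of Lemma~\ref{descending projections}), is the part that requires genuine care; everything else is a translation between median subalgebras, Chepoi--Roller duality, and cocompactness of gate-projection groups from Lemma~\ref{cc intersection}.
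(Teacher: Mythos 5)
Your overall architecture (the cycle $(1)\Rightarrow(2)\Rightarrow(3)\Rightarrow(1)$, Chepoi--Roller duality, a Helly/cocompactness count at the end) matches the paper, but both nontrivial implications have gaps as written. For $(2)\Rightarrow(3)$: the slice $M_{y_0}=M\cap(X\x\{y_0\})$ is invariant only under the $G$--stabiliser of the \emph{vertex} $y_0$ of $Y$, not under $H=G_{\mf{w}}$, so it cannot directly witness convex-cocompactness of $H$; and "$G$--cofiniteness of $M$ forces $H$--cofiniteness of $M_{y_0}$" is asserted, not proved. The paper's route is to pass to the dual cube complex $Z$ of $M$, observe that $\mf{w}$ induces a wall of $M$ (here essentiality of $G\acts Y$ is used, to know the wall is skewered and hence separates orbit points), note that $G_{\mf{w}}$ acts cocompactly on the carrier of that wall in $Z$, and then transfer convex-cocompactness from $Z$ to $X$ because the coordinate projection $M\ra X$ is a $1$--Lipschitz median morphism between two $G$--cocompact median algebras, so they induce the same coarse median structure on $G$ and quasi-convexity is the same notion in both. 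Your version is repairable along these lines, but the repair is essentially a different argument.

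The serious gap is in $(3)\Rightarrow(1)$. Two specific problems. First, $\langle G\cdot p\rangle$ does \emph{not} split coordinatewise as "$p_X$--data times $Y$--fibred data": the whole point of the Guirardel core is that it is a proper, diagonal-type subalgebra of $X\x Y$, so any bookkeeping premised on such a splitting is unsound. Second, and more importantly, you never supply the mechanism that converts the combinatorics of $Y$ into convex separation in $X$. The paper's key construction (its Lemma on realising almost invariant sets) is exactly this: for a halfspace $\mf{h}$ of $Y$ with stabiliser $H$, the set $\{g\in G\mid gy_0\in\mf{h}\}$ is an $H$--almost invariant subset of $G$, and convex-cocompactness of $H$ in $X$ is used to produce a partition $X=C(\mf{h}^*)\sqcup C(\mf{w})\sqcup C(\mf{h})$ with $C(\mf{w})$ convex, $H$--invariant and $H$--cocompact, such that $gx_0$ lands on the side prescribed by the position of $gy_0$ relative to $\mf{w}$. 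Only with these "shadow hyperplanes" in hand can one show that any tuple of pairwise-transverse walls of $M$ yields pairwise-intersecting convex subsets of $X$ (carriers of hyperplanes of $X$ together with the sets $C(\mf{v})$), apply Helly's lemma, translate the common point into a compact fundamental domain, and bound the $G$--orbits of maximal cubes of $M$; $G$--cofiniteness then follows by Chepoi--Roller duality. Your proposed "direct combinatorial analysis of which vertices lie in $M$" gives no such criterion (membership in a generated median subalgebra is not characterised by the halfspace condition you sketch), and your appeal to bounding "nesting depth" in the spirit of Lemma~\ref{descending projections} invokes the UCP condition, which is neither assumed nor needed here. Without the AIS realisation step, the finiteness count does not go through.
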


Before proving the proposition, we need to record a couple of observations.

\begin{defn}
Consider a group $\G$, a subgroup $H\leq \G$, and the action $H\acts \G$ by left multiplication. An \emph{$H$--AIS (Almost Invariant Set)} is a subset $A\cu \G$ such that:
\begin{enumerate}
\item $A$ is $H$--invariant;
\item both $A$ and its complement $\G\setminus A$ contain infinitely many $H$--orbits;
\item for every $g\in \G$, the symmetric difference $Ag\triangle A$ is $H$--cofinite.
\end{enumerate}
If $A$ is an $H$--AIS, then the set $A^*:=\G\setminus A$ is another $H$--AIS.
\end{defn}

\begin{lem}\label{realising AIS}
Let $G\acts X$ be a proper cocompact action on a $\CAT$ cube complex. Let $H\leq G$ be a convex-cocompact subgroup. Let $A\cu G$ be an $H$--AIS. Then, for every vertex $x_0\in X$, there exists a partition $X=C_-\sqcup C_0\sqcup C_+$ such that:
\begin{enumerate}
\item $C_0$ is an $H$--invariant convex subcomplex of $X$ on which $H$ acts cocompactly;
\item $C_-$ and $C_+$ are $H$--invariant unions of connected components of $X\setminus C_0$; 
\item $A\cdot x_0\cu C_0\cup C_+$ and $A^*\cdot x_0\cu C_0\cup C_-$.
\end{enumerate}
\end{lem}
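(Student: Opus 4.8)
\textbf{Proof plan for Lemma~\ref{realising AIS}.}

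The plan is to use the standard correspondence between almost invariant sets over a convex-cocompact subgroup and ``halfspace-like'' subsets of the cube complex $X$, and to produce $C_0$ as the convex hull of the ``boundary'' of $A$.

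First I would fix an $H$--invariant, $H$--cocompact convex subcomplex $Z\cu X$ with $x_0\in Z$ (enlarge $Z$ if necessary so that it contains $x_0$; convexity is preserved since we can take the convex hull, which remains $H$--invariant and $H$--cocompact by Lemma~\ref{cc intersection} or by a direct argument). Map $G$ into $X$ via $g\mapsto g\cdot x_0$; this is a quasi-isometry when restricted to $H$ onto $Z$, and more generally it is coarsely Lipschitz and proper. The key point is that $A\cdot x_0$ and $A^*\cdot x_0$ are ``coarsely complementary'' in $X$: every vertex of $X$ is within bounded distance of $G\cdot x_0$, and the third AIS axiom (that $Ag\triangle A$ is $H$-finite for all $g$) translates, via the action being proper and cocompact, into the statement that the sets $A\cdot x_0$ and $A^*\cdot x_0$ have ``coarsely bounded'' common boundary modulo $H$; more precisely, the set of hyperplanes of $X$ separating a point of $A\cdot x_0$ from a point of $A^*\cdot x_0$ that lie uniformly close together forms an $H$-finite family.

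The main construction: let $\mc{W}_0\cu\mscr{W}(X)$ be the set of hyperplanes $\mf{w}$ such that both halfspaces bounded by $\mf{w}$ contain points of $A\cdot x_0$ arbitrarily far from $\mf{w}$ \emph{and} points of $A^*\cdot x_0$ arbitrarily far from $\mf{w}$ --- equivalently, $\mf{w}$ is not ``eventually on one side'' with respect to the partition $A\cdot x_0 \mid A^*\cdot x_0$. I claim $\mc{W}_0$ is $H$-invariant (clear, since $A$ is $H$-invariant) and $H$-finite (this is where the AIS condition is used, together with properness and cocompactness of $G\acts X$: if $\mc{W}_0$ had infinitely many $H$-orbits, one could produce, using cocompactness, an element $g\in G$ with $Ag\triangle A$ infinite modulo $H$). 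Set $C_0$ to be the convex hull of the carrier of $\mc{W}_0$ (or of a suitable $H$-orbit of vertices realising all these hyperplanes); then $C_0$ is $H$-invariant, convex, and $H$-cocompact since $\mc{W}_0$ is $H$-finite and $X$ is uniformly locally finite. This gives part~(1). For parts~(2) and~(3): $X\setminus C_0$ is a disjoint union of connected components; each component lies entirely in one halfspace of every hyperplane of $\mc{W}_0$ and is ``eventually on a definite side'' of $A\cdot x_0 \mid A^*\cdot x_0$ for every hyperplane not in $\mc{W}_0$ that it crosses, so the gate-projection $\pi_{C_0}$ together with the finitely many hyperplanes in $\mscr{W}(X\setminus C_0)$ determines whether the component is ``on the $A$ side'' or ``on the $A^*$ side''; label it accordingly to define $C_+$ and $C_-$. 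These are $H$-invariant because $H$ preserves $C_0$, $\mc{W}_0$, and the partition into $A\cdot x_0$ and $A^*\cdot x_0$. Finally, a vertex $a\cdot x_0\in A\cdot x_0$ that lies outside $C_0$ must lie in a component labelled ``$A$ side'' (since all the hyperplanes separating it from $C_0$ are outside $\mc{W}_0$ and eventually-on-the-$A$-side by construction, once we absorb the bounded discrepancy into $C_0$ by enlarging it by a uniform amount first), so $A\cdot x_0\cu C_0\cup C_+$; symmetrically $A^*\cdot x_0\cu C_0\cup C_-$.

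The hard part will be making precise the translation between the combinatorial AIS axioms for $A\cu G$ and the geometric ``eventually one-sided'' behaviour of hyperplanes relative to $A\cdot x_0$ --- in particular showing that $\mc{W}_0$ is $H$-finite, which is really the heart of the lemma and where the hypothesis that $H$ is convex-cocompact (so that $Z$ exists and is $H$-cocompact) and that $G\acts X$ is proper and cocompact both get used. One clean way to organise this: choose a constant $R$ (depending on $X$, the $G$-action, and $x_0$) such that every vertex of $X$ is within $R$ of $G\cdot x_0$; enlarge $C_0$ to its $R$-neighbourhood's convex hull at the end so that the ``bounded discrepancy'' is genuinely absorbed; and phrase everything in terms of the finitely many $G$-orbits of hyperplanes and the AIS axiom applied to a finite generating set of $G$.
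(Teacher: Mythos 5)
There is a genuine gap, and it sits exactly where you flagged ``the hard part'': the set $\mc{W}_0$ does not control the construction. Test it on $G=\Z=\langle t\rangle$ acting on the standard cubulation of $\R$, with $H=\{1\}$, $x_0=0$ and $A=\{t^n:n\geq 100\}$. Every hyperplane here is ``eventually on one side'' of the partition $A\cdot x_0\mid A^*\cdot x_0$ (one halfspace is deep only in $A\cdot x_0$, the other only in $A^*\cdot x_0$), so under your stated definition $\mc{W}_0=\emptyset$; your $C_0$ is then at most a uniformly bounded neighbourhood of the core of $H$, which sits near $0$, and the component of its complement containing the positive ray contains both $t^{99}x_0\in A^*\cdot x_0$ and $t^{100}x_0\in A\cdot x_0$, so condition~(3) fails. (Under the other possible reading of your definition, where each halfspace need only be deep in one of the two sets, every hyperplane of $\R$ belongs to $\mc{W}_0$ and the $H$--finiteness claim is false.) The conceptual issue is that knowing which hyperplanes ``cross'' the coarse wall determined by $A$ does not tell you where that wall is located in $X$; and the discrepancy you propose to absorb by a final uniform enlargement is not bounded by a constant depending only on $X$, the action and $x_0$ --- it depends on $A$.

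What is actually needed is to locate the coarse boundary of $A$ and force $C_0$ to contain it. The paper fixes a word metric on $G$, chooses $r$ so that $d(g,h)\leq r$ whenever $d(gx_0,hx_0)\leq 2R$ (with $R$ a density constant for the orbit $G\cdot x_0$), and sets $\Delta:=A^*\cap N_r(A)$; the third AIS axiom, applied to the finitely many $g$ of length at most $r$, is exactly what makes $\Delta$ $H$--cofinite, and this is the only place the axiom is used. Then $C_0$ is the convex hull of $N_L(K)$, where $K$ is the convex cocompact core of $H$ and $L$ is large enough that $N_L(K)$ contains $N_R(\Delta\cdot x_0)$ (cocompactness of $H\acts C_0$ follows from \cite[Lemma~6.4]{Bow-cm}); the sets $C_\pm$ are the unions of components of the complement meeting $A\cdot x_0$ and $A^*\cdot x_0$ respectively, and the separation in~(3) is a short connectivity argument: a path in $X\setminus C_0$ from $A\cdot x_0$ to $A^*\cdot x_0$ contains a point within $R$ of both orbits, hence within $R$ of $\Delta\cdot x_0$, hence in $C_0$. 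Your hyperplane picture can be repaired by replacing $\mc{W}_0$ with the ($H$--finite) set of hyperplanes separating pairs $ax_0,a'x_0$ with $a\in A$, $a'\in A^*$ and $d(ax_0,a'x_0)\leq 2R$, and adding their carriers to $K$ before taking the convex hull --- but at that point it is the $\Delta$--based argument in different clothing.
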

\begin{proof}
Choose $R\geq 0$ such that $G\cdot x_0$ is $R$--dense in $X$. Observing that $G$ is finitely generated, we can fix a word metric $(G,d)$. Choose $r\geq 0$ so that $d(g,h)\leq r$ for all $g,h\in G$ with $d(gx_0,hx_0)\leq 2R$. Let $\Delta\cu G$ be the intersection between $A^*\cu G$ and the $r$--neighbourhood of $A$ in $G$. Since $A$ is an $H$--AIS, $\Delta$ is $H$--cofinite. 

Since $H$ is convex-cocompact, there exists an $H$--invariant convex subcomplex $K\cu X$ on which $H$ acts cocompactly. Since $\Delta$ is $H$--cofinite, there exists $L\geq 0$ such that the neighbourhood $N_L(K)$ contains the $R$--neighbourhood of $\Delta\cdot x_0$. We define $C_0$ as the convex hull of $N_L(K)$. This is clearly an $H$--invariant convex subcomplex of $X$. By \cite[Lemma~6.4]{Bow-cm}, $C_0$ is at finite Hausdorff distance from $K$, so the action $H\acts C_0$ is again cocompact.

Now, define $C_+$ as the union of the connected components of $X\setminus C_0$ that intersect $A\cdot x_0$. Since $A\cdot x_0$ is $H$--invariant, so is $C_+$. The set $C_-$ is defined analogously using $A^*\cdot x_0$. We are only left to show that a single connected component of $X\setminus C_0$ cannot intersect both $A\cdot x_0$ and $A^*\cdot x_0$.

Suppose for the sake of contradiction that there exists a path $\alpha\cu X\setminus C_0$ joining a point of $A\cdot x_0$ to a point of $A^*\cdot x_0$. Since every point of $\alpha$ is at distance $\leq R$ from the orbit $G\cdot x_0$, there exist a point $y\in\alpha$ and elements $a\in A$, $a'\in A^*$ with $d(y,ax_0)\leq R$ and $d(y,a'x_0)\leq R$. In particular, $d(ax_0,a'x_0)\leq 2R$, hence $d(a,a')\leq r$. It follows that $a'\in\Delta$, so $d(y,\Delta\cdot x_0)\leq R$. Since $y\in X\setminus C_0$, this contradicts the fact that $C_0$ contains the $R$--neighbourhood of $\Delta\cdot x_0$.
\end{proof}

\begin{rmk}
Let $G\acts Y$ be an action on a $\CAT$ cube complex. Consider a basepoint $y_0\in Y$, a halfspace $\mf{h}$ bounded by a hyperplane skewered by an element of $G$, and the subgroup $H\leq G$ stabilising $\mf{h}$. Then the set $\{g\in G\mid gy_0\in\mf{h}\}$ is an $H$--AIS.
\end{rmk}

\begin{proof}[Proof of Proposition~\ref{cofinite subalgebra}]
It is clear that (1)$\Ra$(2). Let us show that (2)$\Ra$(3). Assuming (2), let $M$ be a $G$--invariant, $G$--cofinite median subalgebra of the $0$--skeleton of $X\x Y$. Every hyperplane $\mf{w}\in\mscr{W}(Y)$ gives a hyperplane of $X\x Y$ skewered by some element of $G$, hence a wall $\mf{w}'\in\mscr{W}(M)$. By Chepoi--Roller duality, $M$ is the $0$--skeleton of a $\CAT$ cube complex $Z$ with a cocompact $G$--action. The stabiliser $G_{\mf{w}}$ acts cocompactly on the carrier of $\mf{w}'$ in $Z$, hence it is convex-cocompact in $Z$. The argument in the proof of Corollary~\ref{UCP and Guirardel}(1) below implies that $G_{\mf{w}}$ is convex-cocompact in $X$, as required.

We now prove the implication (3)$\Ra$(1), which is the main content of the proposition. Consider a vertex $p=(x_0,y_0)$ and let $M\cu X\x Y$ be the median algebra generated by the orbit $G\cdot p$.

By Lemma~\ref{realising AIS}, 
to every hyperplane $\mf{w}\in\mscr{W}(Y)$ bounding halfspaces $\mf{h}$ and $\mf{h}^*$, we can associate a partition $X=C(\mf{h}^*)\sqcup C(\mf{w})\sqcup C(\mf{h})$ with the following properties:
\begin{itemize}
\item $C(\mf{w})$ is a $G_{\mf{h}}$--invariant, $G_{\mf{h}}$--cocompact, convex subcomplex of $X$;
\item $C(\mf{h})$ and $C(\mf{h}^*)$ are $G_{\mf{h}}$--invariant unions of connected components of $X\setminus C(\mf{w})$;
\item if $g\in G$ and $gy_0\in\mf{h}$, then $gx_0\in C(\mf{w})\cup C(\mf{h})$; if $gy_0\in\mf{h}^*$, then $gx_0\in C(\mf{w})\cup C(\mf{h}^*)$;
\item if $g\in G$, we have $C(g\mf{w})=gC(\mf{w})$ and $C(g\mf{h})=gC(\mf{h})$.
\end{itemize}
These properties imply the following.

\smallskip
{\bf Claim:} \emph{Consider hyperplanes $\mf{v}\in\mscr{W}(X)\sqcup\mscr{W}(Y)$ and $\mf{w}\in\mscr{W}(Y)$ inducing transverse walls of $M$. Then $\mf{v}\cap C(\mf{w})\neq\emptyset$ if $\mf{v}\in\mscr{W}(X)$, and $C(\mf{v})\cap C(\mf{w})\neq\emptyset$ if $\mf{v}\in\mscr{W}(Y)$.}

\smallskip\noindent
\emph{Proof of claim.}
We only consider the situation with $\mf{v}\in\mscr{W}(Y)$, as the argument for the case when $\mf{v}\in\mscr{W}(X)$ is entirely analogous.

Let $\mf{v}^{\pm},\mf{w}^{\pm}\in\mscr{H}(Y)$ be the halfspaces bounded by $\mf{v}$ and $\mf{w}$. Suppose for the sake of contradiction that $C(\mf{v})$ and $C(\mf{w})$ are disjoint. Then $C(\mf{w})$, being connected, is contained in a single connected component of $X\setminus C(\mf{v})$. Without loss of generality, $C(\mf{w})\cu C(\mf{v}^+)$. It follows that the connected set $C(\mf{v}^-)\cup C(\mf{v})$ is disjoint from $C(\mf{w})$, hence contained in a single connected component of $X\setminus C(\mf{w})$. Thus, again without loss of generality, we have $C(\mf{v}^-)\cup C(\mf{v})\cu C(\mf{w}^+)$, hence the sets $C(\mf{v})\cup C(\mf{v}^-)$ and $C(\mf{w})\cup C(\mf{w}^-)$ are disjoint. 

However, since $\mf{v}$ and $\mf{w}$ induce transverse walls of $M=\langle G\cdot p\rangle$, there exists $g\in G$ such that $gp\in\mf{v}^-\cap\mf{w}^-$. Equivalently, $gy_0\in\mf{v}^-\cap\mf{w}^-$, hence $gx_0\in (C(\mf{v})\cup C(\mf{v}^-))\cap(C(\mf{w})\cup C(\mf{w}^-))$.
\hfill$\blacksquare$

\smallskip
Now, let $\mscr{T}(M)$ be the set of tuples of pairwise-transverse walls of $M$. Consider an element of $\mscr{T}(M)$, say induced by tuples of hyperplanes $\mf{u}_1,\dots,\mf{u}_k\in\mscr{W}(X)$ and $\mf{v}_1,\dots,\mf{v}_h\in\mscr{W}(Y)$. By the claim, the collection of all carriers of the $\mf{u}_i$ and all sets $C(\mf{v}_i)$ consists of pairwise intersecting convex subsets of $X$. By Helly's lemma, the intersection of these convex sets is nonempty.

Fix a compact fundamental domain $K\cu X$ for the $G$--action. By the previous paragraph, there exists $g\in G$ such that the carrier of each $g\mf{u}_i$ and every set $gC(\mf{v}_i)$ meets $K$. Note that only finitely many hyperplanes of $X$ have carrier meeting the compact set $K$. Similarly, only finitely many hyperplanes $\mf{v}\in\mscr{W}(Y)$ satisfy $C(\mf{v})\cap K\neq\emptyset$. This follows by combining the fact that the action $G\acts\mscr{W}(Y)$ is cofinite with Claim~1 in the proof of Lemma~\ref{cc intersection}.

The above discussion shows that the action $G\acts\mscr{T}(M)$ has only finitely many orbits. By Chepoi--Roller duality, $M$ is the $0$--skeleton of a $\CAT$ cube complex with finitely many $G$--orbits of maximal cubes. This shows that the action $G\acts M$ is cofinite, as required.
\end{proof}

For the next result, note that we can naturally extend the UCP property (Definition~\ref{UCP defn}) to actions on discrete median algebras $M$. This is entirely equivalent to the UCP property for the action on the $\CAT$ cube complex associated with $M$ by Chepoi--Roller duality.

We will also speak of \emph{carriers} and \emph{cubes} in $M$, always referring to (vertex sets of) carriers and cubes in the associated $\CAT$ cube complex.

\begin{cor}\label{UCP and Guirardel}
Let $G$ act on $X$ and $Y$ satisfying both the assumptions and the equivalent conditions in Proposition~\ref{cofinite subalgebra}. Let $M\cu X\x Y$ be the median subalgebra generated by a $G$--orbit.
\begin{enumerate}
\item A subgroup $H\leq G$ is convex-cocompact in $X$ if and only if $H$ is convex-cocompact in $M$.
\item If, in addition, $H$ satisfies the UCP condition in $X$, then it also satisfies it in $M$.
\end{enumerate}
\end{cor}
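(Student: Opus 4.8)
The goal is Corollary~\ref{UCP and Guirardel}, which relates convex-cocompactness and the UCP condition for a subgroup $H\leq G$ as measured in $X$ versus as measured in the cubical Guirardel core $M\cu X\x Y$. The key structural input is that, by Proposition~\ref{cofinite subalgebra}, the action $G\acts M$ is cocompact (it is $G$--cofinite, hence the associated $\CAT$ cube complex carries a proper cocompact $G$--action, since the $X$--coordinate already makes the action proper). So both $X$ and $M$ are models for $G$ in the coarse sense, and the coordinate projections $X\x Y\to X$ and $M\hookrightarrow X\x Y\to X$ are $G$--equivariant and coarsely Lipschitz. The plan is to exploit the coarse-median morphism $\pi_X\colon M\to X$ (restriction of the factor projection, which is a median morphism on the whole product and hence on the subalgebra $M$) and its coarse inverse.

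\textbf{Part (1).} First I would observe that $\pi_X\colon M\to X$ is a $G$--equivariant median morphism which is a quasi-isometry: it is surjective onto a coarsely dense subset (it contains the chosen $G$--orbit), it is $1$--Lipschitz, and by properness of $G\acts X$ together with cocompactness of $G\acts M$ its fibres have uniformly bounded diameter (a fibre is contained in $\{x_0\}\times Y$ intersected with a single $G$--orbit's median hull, which a compactness/properness argument bounds). Given this, if $H$ is convex-cocompact in $M$, stabilising an $H$--cocompact convex subcomplex $Z\cu M$, then $\pi_X(Z)$ is convex in $X$ (median morphisms send convex sets to convex sets when surjective, or one passes to the gate-hull), $H$--invariant, and $H$--cocompact because $\pi_X$ is a quasi-isometry; hence $H$ is convex-cocompact in $X$. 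Conversely, if $H$ is convex-cocompact in $X$, I would use Lemma~\ref{realising AIS}-type reasoning already carried out inside the proof of Proposition~\ref{cofinite subalgebra}: the walls of $M$ pulled back from $\mscr{W}(X)$ together with the sets $C(\mf{w})$ for $\mf{w}\in\mscr{W}(Y)$ organise $M$ so that one can build an $H$--invariant convex subalgebra of $M$ projecting to an $H$--cocompact convex subcomplex of $X$ with uniformly bounded fibres; cocompactness of the action on this subalgebra then follows by re-running the Helly/finiteness argument of Proposition~\ref{cofinite subalgebra} relativised to $H$. Alternatively — and more cleanly — since $G\acts M$ is proper and cocompact, Remark~\ref{cc vs qc} identifies convex-cocompactness in $M$ with quasi-convexity for the coarse median structure $[\mu_M]$ on $G$; but $[\mu_M]=[\mu_X]$ because $\pi_X$ is a $G$--equivariant quasi-isometry that is a median morphism, so the two notions of quasi-convexity coincide. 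I expect to phrase the final writeup via this second route, as it avoids re-doing the Helly argument.

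\textbf{Part (2).} For the UCP condition, suppose $H$ is convex-cocompact in $X$ and satisfies UCP there. Fix $Z\cu X$ an $H$--invariant, $H$--cocompact convex subcomplex, and fix $\wt Z\cu M$ an $H$--invariant, $H$--cocompact convex subalgebra projecting onto (a bounded neighbourhood of) $Z$, as produced in Part~(1); note $\wt Z$ is within bounded Hausdorff distance of any such choice. For $g\in G$ I must bound the number of $(H\cap gHg^{-1})$--orbits of vertices in the gate-projection $\pi_{\wt Z}(g\wt Z)\cu M$. The point is that $\pi_X$ intertwines gate-projections up to bounded error: $\pi_X(\pi_{\wt Z}(g\wt Z))$ lies within a uniformly bounded neighbourhood of $\pi_Z(gZ)$ in $X$. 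This is where I expect the main technical friction — gate-projection does not commute exactly with a median morphism, only coarsely, and one must check the error is uniform in $g$, using that $\pi_X$ is a quasi-isometry with uniformly bounded fibres and that all the cube complexes in sight are uniformly locally finite. Granting that, since $\pi_X$ has uniformly bounded fibres and is $G$--equivariant, an $(H\cap gHg^{-1})$--orbit in $\pi_{\wt Z}(g\wt Z)$ maps into an $(H\cap gHg^{-1})$--orbit in a bounded neighbourhood of $\pi_Z(gZ)$, and the preimage of each vertex of $X$ in $M$ meets boundedly many cubes; combined with the UCP bound $N$ for the $X$--action, this yields a bound $N'$, depending only on the data, for the number of orbits in $M$. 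The converse direction (UCP in $M$ implies UCP in $X$) is symmetric and in fact not needed for the statement, so I would only prove the direction asserted.

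\textbf{Main obstacle.} The serious point is the uniform coarse-equivariance of gate-projections under the median morphism $\pi_X$ — i.e. that $d_{\rm Haus}\big(\pi_X(\pi_{\wt Z}(g\wt Z)),\,\pi_Z(gZ)\big)$ is bounded independently of $g$. I would extract this from two facts: first, $\pi_X$ being a $1$--Lipschitz median morphism with uniformly bounded fibres means it is an ``almost-isometry onto its image up to the fibre bound''; second, gate-projection to a convex set is itself a median morphism (stated in Subsection~\ref{CCC sect}), so $\pi_X\circ\pi_{\wt Z}$ and $\pi_Z\circ\pi_X$ are both median morphisms $M\to X$ agreeing on a coarsely dense orbit up to the fibre bound, and two median morphisms into a cube complex that agree coarsely on a coarsely dense set agree coarsely everywhere with a quantitatively controlled constant (one can see this at the level of separating hyperplanes). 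Once this uniform estimate is in hand, both parts follow by bookkeeping; no genuinely new idea beyond the structure already set up in Proposition~\ref{cofinite subalgebra} is required.
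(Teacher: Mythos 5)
Your part~(1) is the paper's argument: the restriction $p_X\colon M\ra X$ of the factor projection is a $G$--equivariant, $1$--Lipschitz median morphism between two proper cocompact $G$--cube complexes, so $M$ and $X$ induce the same coarse median structure on $G$, and Remark~\ref{cc vs qc} finishes. Your part~(2), however, takes a genuinely different route from the paper. You push the gate-projection $\pi_{\wt Z}(g\wt Z)\cu M$ forward to $X$, identify it coarsely with $\pi_Z(gZ)$, and count vertex orbits using uniformly bounded fibres of $p_X$. The paper instead never compares gate-projections: it counts orbits of \emph{cubes} of $\pi_Z(gZ)$ in $M$ directly, encoding each wall $\mf{w}\in\mscr{W}(M)$ by the convex set $C(\mf{w})\cu X$ (the convex hull of the projected carrier), applying Helly's lemma in $X$ to place every tuple of pairwise-transverse walls crossing both $Z$ and $gZ$ at a point of the projection $\Pi_g$ in $X$, and then invoking the UCP bound on $\Pi_g$ together with the local-finiteness bound that each point of $X$ lies in at most $N$ of the sets $C(\mf{w})$. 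Your route buys a statement that is arguably more transparent (orbits of vertices rather than of cubes); the paper's buys a cleaner uniformity argument, since the only quantitative inputs are the Helly property and the bound $N$, both of which you also need implicitly (your fibre bound is exactly the statement that at most $N$ walls $\mf{w}$ have $x\in C(\mf{w})$).

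One justification in your sketch does not work as written. You propose to prove the intertwining $p_X\circ\pi_{\wt Z}\approx\pi_{Z}\circ p_X$ by arguing that these are two Lipschitz median morphisms agreeing coarsely on a coarsely dense subset of $M$. They agree (exactly) on $\wt Z$, but $\wt Z$ is not coarsely dense in $M$, so this principle does not apply. The intertwining is nevertheless true, and in fact exact up to replacing $Z$ by the convex hull $Z'$ of $p_X(\wt Z)$: for $m'\in M$ with $m=\pi_{\wt Z}(m')$, every hyperplane of $X$ separating $p_X(m)$ from $p_X(m')$ induces a wall of $M$ in $\mscr{W}(m|m')=\mscr{W}(m'|\wt Z)$, hence lies in $\mscr{W}(p_X(m')|Z')$; since $p_X(m)\in Z'$, this forces $p_X(m)=\pi_{Z'}(p_X(m'))$. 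Combining this with Lemma~\ref{Haus proj} (to pass from $Z'$ to any other choice of $H$--cocompact convex subcomplex of $X$) closes the gap, and the rest of your bookkeeping then goes through.
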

\begin{proof}
Let $p_X\colon M\ra X$ be the restriction of the factor projection. Since $G$ acts properly and cocompactly on both $M$ and $X$, and $p_X$ is a $1$--Lipschitz median morphism, we see that $M$ and $X$ induce the same coarse median structure on $G$. Along with Remark~\ref{cc vs qc}, this implies part~(1).

Let us prove part~(2). If $\mf{w}\in\mscr{W}(M)$, we denote by $C(\mf{w})\cu X$ the convex hull of the image under $p_X$ of the carrier of $\mf{w}$ in $M$. 
Since $G_{\mf{w}}$ is convex-cocompact in $X$ by part~(1), and $C(\mf{w})$ is the convex hull of a $G_{\mf{w}}$--cocompact subset of $X$, we conclude that the action $G_{\mf{w}}\acts C(\mf{w})$ is cocompact. Since $G\acts\mscr{W}(M)$ is cofinite, there exists $N\geq 1$ such that every point of $X$ lies in $C(\mf{w})$ for at most $N$ walls $\mf{w}\in\mscr{W}(M)$ (see Claim~1 in the proof of Lemma~\ref{cc intersection}).

Let $Z\cu M$ be an $H$--invariant, $H$--cofinite, convex subset. As above, there exists an $H$--cocompact convex subcomplex $C(H)\cu X$ containing the projection $p_X(Z)$. Given $g\in G$, we denote by $\Pi_g$ the gate-projection of $gC(H)$ to $C(H)$. Since $H$ is UCP in $X$, there exists $N'\geq 1$ such that, for every $g\in G$, the group $H\cap gHg^{-1}$ acts on $\Pi_g$ with at most $N'$ orbits. Let $P_g\cu\Pi_g$ be a subset of cardinality $\leq N'$ meeting all these orbits.

Let $\mscr{T}(g)$ be the set of tuples of pairwise-transverse walls of $M$ that cross both $Z$ and $gZ$. We need to show that the number of orbits of $H\cap gHg^{-1}\acts\mscr{T}(g)$ is bounded independently of $g\in G$. This gives a uniform bound on the number of orbits of maximal cubes in $\pi_Z(gZ)$, hence on the number of vertices.

Consider $\underline{\mf{u}}=(\mf{u}_1,\dots,\mf{u}_k)\in\mscr{T}(g)$. The convex subcomplexes $C(\mf{u}_1),\dots,C(\mf{u}_k)\cu X$ pairwise intersect and they all meet both $C(H)$ and $gC(H)$. It follows that $C(\mf{u}_1),\dots,C(\mf{u}_k),\Pi_g$ pairwise intersect and, by Helly's lemma, their intersection contains a point $p\in\Pi_g$. 

Up to translating $\underline{\mf{u}}$ by an element of $H\cap gHg^{-1}$, we can assume that $p\in P_g$. Each point of $P_g$ lies in the set $C(\mf{w})$ for at most $N$ walls $\mf{w}\in\mscr{W}(M)$. Thus, for each $k$, there are at most $N^k\cdot N'$ orbits of $k$--tuples for the action of $(H\cap gHg^{-1})$ on $\mscr{T}(g)$. Observing that $\mscr{T}(g)$ contains $k$--tuples only for finitely many integers $k$, since $M$ is finite-dimensional, this concludes the proof.
\end{proof}

\begin{cor}\label{core cor}
Let $G\acts X$ be a non-transverse, proper, cocompact action on a $\CAT$ cube complex. Let $G\acts T$ be a minimal action on a simplicial tree such that all edge-stabilisers are convex-cocompact in $X$. Then there exists an action on a $\CAT$ cube complex $G\acts Z$ such that:
\begin{enumerate}
\item $G\acts Z$ is non-transverse, proper, cocompact and without inversions;
\item $G\acts Z$ and $G\acts X$ induce the same coarse median structure on $G$;
\item there exists a $G$--equivariant, surjective median morphism $Z\ra T$;
\item for every hyperplane $\mf{w}\in\mscr{W}(Z)$ obtained as preimage of the midpoint of an edge of $T$, the action $G_{\mf{w}}\acts\mf{w}$ is essential;
\item if $G$--stabilisers of edges of $T$ satisfy the UCP condition in $X$, they also do in $Z$.
\end{enumerate}
\end{cor}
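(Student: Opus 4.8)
\textbf{Proof plan for Corollary~\ref{core cor}.}
The plan is to construct $Z$ as (a panel-collapse of) the cubical Guirardel core of the pair of actions $G\acts X$ and $G\acts T$. First I would replace $T$ by its first cubical subdivision $T'$, so that $G$ acts on $T'$ without inversions; the edge-stabilisers of $T'$ are still edge- or vertex-stabilisers of $T$, hence convex-cocompact in $X$, and the midpoints of edges of $T$ correspond to the ``vertex-type'' vertices of $T'$ of a fixed colour. Regard $T'$ as a (one-dimensional) $\CAT$ cube complex: it is essential (being a minimal tree with no valence-one vertices) with finitely many $G$--orbits of hyperplanes, and every hyperplane-stabiliser is an edge-stabiliser of $T$, hence convex-cocompact in $X$. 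Thus the hypotheses of Proposition~\ref{cofinite subalgebra} are satisfied, and I let $M\cu X\x T'$ be the median subalgebra generated by a $G$--orbit of vertices; by that proposition $M$ is $G$--cofinite, and by Chepoi--Roller duality it is the $0$--skeleton of a $\CAT$ cube complex $Z_0$ carrying a proper cocompact $G$--action. The factor projection $Z_0\ra T'$ is a $G$--equivariant surjective $1$--Lipschitz median morphism, which I then post-compose with the collapse $T'\ra T$ to get the map required in~(3); note that a hyperplane of $Z_0$ maps to the midpoint of an edge of $T$ exactly when it projects onto a vertex-type wall of $T'$, and its $G$--stabiliser is the corresponding edge-stabiliser of $T$.

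Next I address properties~(2) and~(5). Since $Z_0$ and $X$ both carry proper cocompact $G$--actions and the factor projection $p_X\colon Z_0\ra X$ is a $1$--Lipschitz median morphism, $Z_0$ and $X$ induce the same coarse median structure on $G$ by Remark~\ref{cc vs qc}; this is property~(2) for $Z_0$, and it will be preserved by the subsequent panel collapse thanks to part~(4) of Corollary~\ref{cmp panel collapse cor}. Property~(5) for $Z_0$ is exactly Corollary~\ref{UCP and Guirardel}(2) applied to $H$ an edge-stabiliser of $T$: if $H$ is UCP in $X$ it is UCP in $M=Z_0^{(0)}$. The one remaining subtlety is that $Z_0$ need not be non-transverse, and its hyperplanes coming from edges of $T$ need not be connected or $G_{\mf{w}}$--essential. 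I would fix this by a two-stage application of panel collapse. Because the hyperplanes of $Z_0$ that project to midpoints of edges of $T$ form a pairwise-disjoint $G$--invariant collection (they are pulled back from the tree), collapsing along all the \emph{other} transversality phenomena will not disconnect them: more precisely, I first apply Corollary~\ref{cmp panel collapse cor 2} (repeatedly, once for each of the finitely many $G$--orbits of tree-type hyperplanes) to pass to a $G$--invariant full subcomplex $Z_1\cu Z_0$ with $Z_1^{(0)}=Z_0^{(0)}$ such that each such hyperplane $\mf{w}\cap Z_1$ is connected and $G_{\mf{w}}\acts\mf{w}\cap Z_1$ is essential, with the identity $Z_0^{(0)}\ra Z_1^{(0)}$ coarse-median preserving; then I apply Corollary~\ref{cmp panel collapse cor} once more to make the whole action non-transverse. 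Here one must check that the second collapse does not destroy connectedness of the tree-type hyperplanes: this holds because those hyperplanes are already disjoint from one another and from their translates (non-transversality among them is automatic), so no bad edge in the sense of Proposition~\ref{cmp panel collapse cor} can cross them, and the argument of that proposition only removes edges dual to hyperplanes involved in a transversality. Throughout, $G$-equivariance is preserved, the median morphism to $T$ descends through the identity maps on $0$--skeleta, and the UCP and coarse-median properties persist because the $0$--skeleton (hence the coarse median structure) is unchanged. Finally, passing to the cubical subdivision one last time if necessary makes the action without inversions, giving property~(1), and I set $Z$ to be the resulting complex.

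The main obstacle I expect is the careful bookkeeping in the two panel-collapse stages: verifying that the tree-type hyperplanes remain connected and their induced actions essential after the \emph{second} collapse, and that no collapse inadvertently merges two distinct $G$--translates of such a hyperplane. This is where I would spend the most effort, using the fact that these hyperplanes are pulled back from a tree (hence pairwise non-transverse and, after the first collapse, each connected with essential stabiliser action) to argue that they are never involved in the transversalities that the second collapse resolves, so they survive untouched. Everything else --- existence and $G$--cofiniteness of the core, the coarse median identification, and the transfer of the UCP property --- is immediate from Proposition~\ref{cofinite subalgebra}, Corollary~\ref{UCP and Guirardel}, and Remark~\ref{cc vs qc}.
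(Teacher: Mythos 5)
Your overall route is the paper's: form the cubical Guirardel core $M\cu X\x T$, invoke Proposition~\ref{cofinite subalgebra} and Chepoi--Roller duality to get a proper cocompact cubulation, read off~(2),~(3) and~(5) from the two factor projections and Corollary~\ref{UCP and Guirardel}, and use panel collapse to arrange~(4). However, there is one genuinely wrong step. You write that $Z_0$ ``need not be non-transverse'' and propose to repair this by a second application of Corollary~\ref{cmp panel collapse cor} ``to make the whole action non-transverse''. Panel collapse cannot do this: Proposition~\ref{cmp panel collapse prop} and Corollary~\ref{cmp panel collapse cor} take non-transversality as a \emph{hypothesis} and output hyperplane-essentiality; they never convert a transverse action into a non-transverse one. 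Fortunately the problem you are trying to fix does not exist: $G\acts T$ is automatically non-transverse (distinct walls of a tree are never transverse), $G$ preserves the two factors of $X\x T$, and transversality of two walls of the subalgebra $M$ forces transversality of the corresponding walls of $X\x T$; hence $G\acts M$ inherits non-transversality from $G\acts X\x T$ with no further work. Once you delete the spurious second collapse, a single application of Corollary~\ref{cmp panel collapse cor 2} to the (orbit of the) tree-type hyperplane suffices, exactly as in the paper.

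Two smaller points. First, the tree-type hyperplanes of $Z_0$ are connected for free (hyperplanes of a cube complex are connected by definition); the connectedness issue only arises for $\mf{w}\cap Z$ \emph{after} passing to the panel-collapsed subcomplex, which is precisely what Corollary~\ref{cmp panel collapse cor 2} guarantees. Second, your assertion that UCP ``persists because the $0$--skeleton is unchanged'' is too quick: the UCP condition involves gate-projections between convex subcomplexes, and the convexity structure changes under panel collapse. The needed check is that a vertex adjacent to a hyperplane of $Z$ transverse to both $\mf{v}\cap Z$ and $g\mf{v}\cap Z$ is in particular adjacent to a hyperplane of $Z_0$ transverse to both $\mf{v}$ and $g\mf{v}$, so the relevant projections in $Z$ are contained in those in $Z_0$ and the uniform orbit bound carries over. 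With these corrections your argument matches the paper's proof.
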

\begin{proof}
Choose a basepoint $p\in X\x T$ and let $M$ be the median algebra generated by the orbit $G\cdot p$. Since the action $G\acts X\x T$ is proper and non-transverse, so is the action $G\acts M$. In addition, $G\acts M$ is cofinite by Proposition~\ref{cofinite subalgebra}. Thus, by Chepoi--Roller duality, there exists a non-transverse, proper, cocompact action on a $\CAT$ cube complex $G\acts Y$ such that the $0$--skeleton of $Y$ is $G$--equivariantly isomorphic to $M$ as a median algebra. 

The factor projection $X\x T\ra T$ gives the required $G$--equivariant median morphism $M\ra T$. The projection, $X\x T\ra X$, gives another $G$--equivariant median morphism $M\ra X$; this ensures that $Y$ and $X$ induce the same coarse median structure on $G$. Condition~(5) follows from Corollary~\ref{UCP and Guirardel}. Up to subdividing, we can assume that $G\acts Y$ is without inversions.

We are only left to ensure that condition~(4) is satisfied. By Corollary~\ref{cmp panel collapse cor 2}, it suffices to pass to a $G$--invariant (non-convex) $\CAT$ subcomplex $Z\cu Y$ inducing the same coarse median structure on $G$. It is immediate to check that the action $G\acts Z$ is again non-transverse, proper, cocompact and without inversions.

Let $\mf{v}\in\mscr{W}(Y)$ be the preimage of the midpoint of an edge of $T$. Since the intersection $\mf{v}\cap Z$ is connected, condition~(3) is not affected by passing to the subcomplex $Z$. 

Finally, the $G$--stabiliser of $\mf{v}\cap Z$ coincides with the stabiliser of $\mf{v}$. The set of vertices in $Z^{(0)}=Y^{(0)}$ that are adjacent to a hyperplane of $Z$ transverse to both $g\mf{v}\cap Z$ and $\mf{v}\cap Z$ is clearly a subset of the set of vertices adjacent to a hyperplane of $Y$ transverse to both $g\mf{v}$ and $\mf{v}$. Thus, condition~(5) remains satisfied in $Z$. This concludes the proof of the corollary.
\end{proof}

\subsection{Earthquake maps.}\label{earthquake cmp sect}

In this subsection, we prove Theorem~\ref{CMP GDT 2}. By Corollary~\ref{core cor}, we can assume that we are in the following setting:
\begin{enumerate}
\item $G\acts X$ is a proper cocompact action on a $\CAT$ cube complex without inversions;
\item $G\acts X$ is also non-transverse; 
\item $G\acts T$ is the action on a tree obtained as restriction quotient of $X$ associated to an orbit of hyperplanes $G\cdot\mf{w}\cu\mscr{W}(X)$;
\item $\mf{A}$ and $\mf{B}$ are the two connected components of $X\setminus G\cdot\mf{w}$ adjacent to $\mf{w}$, the subgroups $A,B\leq G$ are their stabilisers, and $C=A\cap B$ is the stabiliser of $\mf{w}$;
\item $C$ acts essentially on $\mf{w}$, satisfies the UCP condition in $X$, and has no nontrivial finite normal subgroups;
\item we fix an element $z\in Z_A(C)$;
\item depending on whether there are one or two $G$--orbits of vertices in $T$, we denote by $\tau$ and $\s$, respectively, the transvection and the partial conjugation induced by $z$, as defined in the Introduction (in the definition of $\tau$, we fix as stable letter an element $t\in G$ with $t\mf{A}=\mf{B}$).
\end{enumerate}
We emphasise that the element $z$ does not preserve the hyperplane $\mf{w}$ in general.

Though they will not be part of our standing assumptions, it is convenient to give a name to the conditions in parts~(2) and~(3) of Theorem~\ref{CMP GDT 2}:
\begin{enumerate}
\item[$(\ast)$] $z$ lies in $Z_C(C)$, the subgroup $\langle z\rangle$ is convex-cocompact in $X$, and $Z_G(c)$ fixes a point of $T$ for every $c\in C$ with $\langle c\rangle\cap\langle z\rangle\neq\{1\}$;
\item[$(\ast\ast)$] for every infinite-order element $c\in C$ commuting with a finite-index subgroup of $C$, the centraliser $Z_G(c)$ fixes a point of $T$.
\end{enumerate}

\smallskip
We begin with a few lemmas. If $\mf{u}$ is a hyperplane of $X$, we denote by $\mscr{T}(\mf{u})\cu\mscr{W}(X)$ the subset of hyperplanes transverse to $\mf{u}$. Recall that $\mf{u}$ has itself a structure of $\CAT$ cube complex whose hyperplanes are identified with hyperplanes of $X$ lying in $\mscr{T}(\mf{u})$.

Since $z$ acts non-transversely on $X$, note that every hyperplane in $\mc{W}_1(z)$ is skewered by $z$.

\begin{lem}\label{w splits}
The hyperplane $\mf{w}$ splits as a product of cube complexes $\mf{w}_0\x L_1\x\dots\x L_m$, where $m\geq 0$ and each $L_i$ is a quasi-line. All hyperplanes of $X$ corresponding to the factor $\mf{w}_0$ are preserved by $z$. All hyperplanes of $X$ corresponding to the factors $L_i$ are skewered by $z$.
\end{lem}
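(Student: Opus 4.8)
The plan is to analyse the convex-cocompact subcomplex $\mf{w}\cu X$ together with the action on it of the element $z$ and of the stabiliser $C=G_{\mf{w}}$. First I would record that $z$ lies in $Z_A(C)$, so $z$ commutes with $C$, and that $C$ acts essentially on $\mf{w}$ with no nontrivial finite normal subgroup; since the whole action $G\acts X$ is non-transverse, so is the induced action of any element of $G$ on any invariant subcomplex. The key structural input is Proposition~\ref{convex core prop} applied to $\langle z\rangle\acts\mf{w}$ (more precisely, to the $\langle z,C\rangle$-action, or just to $z$ alone on the convex subcomplex it preserves inside $X$): this produces a $z$-invariant splitting of a convex core into $\overline{\mc{C}}_0(z)\x\mc{C}_1(z)$, where $z$ is elliptic on the first factor and skewers every hyperplane of the second.

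The next step is to promote this to a splitting of $\mf{w}$ itself. I would argue that $\mf{w}$, being $C$-cocompact, is at finite Hausdorff distance from the $z$-essential core, and then use the fact that $z$ commutes with $C$: the hyperplane set $\mc{W}_1(z)$ is $C$-invariant (since $z$ and $C$ commute, $c\mc{W}_1(z)=\mc{W}_1(z)$ for all $c\in C$), and it is transverse to $\overline{\mc{W}}_0(z)$. Because $C$ acts essentially and cocompactly on $\mf{w}$, the convex core of $z$ inside $\mf{w}$ is all of $\mf{w}$, so we get a genuine $C$-invariant (hence well-defined) product decomposition $\mf{w}=\mf{w}_0\x\mc{C}_1(z,\mf{w})$, with $\mf{w}_0=\overline{\mc{C}}_0(z,\mf{w})$ carrying the hyperplanes preserved by $z$ and $\mc{C}_1(z,\mf{w})$ carrying the hyperplanes skewered by $z$ — here I use Lemma~\ref{pf lemma}(1)-style reasoning together with the fact that $z$ acts non-transversely, so any hyperplane not in $\overline{\mc{W}}_0(z)$ is actually skewered by $z$ (not merely by a power). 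It remains to see that $\mc{C}_1(z,\mf{w})$ is a product of quasi-lines: since $z$ is a single element skewering every hyperplane of this factor and commuting with the cocompact group $C$ acting on it, the $\langle z\rangle$-orbit is coarsely dense in $\mc{C}_1(z,\mf{w})$ (one could also invoke Corollary~\ref{cc normalisers}(2), viewing $\langle z\rangle$ as convex-cocompact and $C$ in its normaliser, to get a fibre on which $\langle z\rangle$ acts cocompactly), and a finite-dimensional essential $\CAT$ cube complex with a coarsely dense cyclic group is a finite product of quasi-lines. Concretely: $\mc{C}_1(z,\mf{w})$ splits as a product of its irreducible factors, $z$ preserves this decomposition up to finite index, on each irreducible factor $z$ (a power of it) acts as a loxodromic isometry skewering every hyperplane, and such a factor must be a quasi-line.

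I expect the main obstacle to be the passage from ``$z$ skewers the hyperplanes of $\mc{C}_1(z,\mf{w})$ and commutes with the cocompact group $C$'' to ``$\mc{C}_1(z,\mf{w})$ is literally a product of quasi-lines,'' i.e. ruling out higher-dimensional irreducible factors and ensuring the quasi-line factors are finite-width. The cleanest route is probably to note that on each irreducible factor $Y_i$ the subgroup generated by (a power of) $z$ acts cocompactly — this is where commutation of $z$ with $C$ and the product structure are essential, since $C$ acts cocompactly on $\mf{w}$ and hence, after passing to a fibre, on $Y_i$, and $z$ has coarsely dense orbit by Corollary~\ref{cc normalisers}(2) or a direct Milnor--Schwarz argument — and then a cocompact $\Z$-action on an irreducible, essential, non-transversely-acted-upon $\CAT$ cube complex forces dimension one and finitely many hyperplane-orbits, so $Y_i$ is a quasi-line. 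The remaining bookkeeping — that the hyperplanes of the $\mf{w}_0$-factor are exactly those fixed setwise by $z$, and those of the $L_i$-factors exactly those skewered by $z$ — is then immediate from the defining property of $\overline{\mc{W}}_0$ and $\mc{W}_1$ in Proposition~\ref{convex core prop}, combined once more with non-transversality of the $z$-action to upgrade ``in $\mc{W}_1(z)$'' to ``skewered by $z$''.
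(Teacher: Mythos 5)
Your derivation of the splitting itself is essentially the paper's argument: commutation of $z$ with $C$ makes $\overline{\mc{C}}(z)\cu X$ a $C$--invariant convex subcomplex, so every hyperplane of $\mc{W}_1(C)=\mscr{T}(\mf{w})$ crosses it and hence lies in $\overline{\mc{W}}_0(z)\sqcup\mc{W}_1(z)$; the two families are transverse to each other, giving the product decomposition $\mf{w}=\mf{w}_0\x\mf{w}_1$, and non-transversality plus absence of inversions upgrades ``lies in $\mc{W}_1(z)$'' to ``skewered by $z$''. One caveat: $z$ does not preserve $\mf{w}$ in general (the paper warns of this explicitly), so phrases like ``the convex core of $z$ inside $\mf{w}$'' and ``$\langle z\rangle\acts\mf{w}$'' must be routed through the action of $z$ on $X$, as the paper does; your parenthetical hedge partly covers this, but the argument should be stated that way throughout.

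The genuine gap is the final step, where you claim the $\langle z\rangle$--orbit is coarsely dense in $\mf{w}_1$ (or in each irreducible factor) and deduce the quasi-line structure from a cocompact $\Z$--action. This is false in general. Take $G=\Z^3=\Z^2\ast_{\Z^2}$ acting on $\R^3$ with its standard cubulation, so $\mf{w}\simeq\R^2$, $A=B=C=\Z^2$, and $z=(1,1)\in Z_A(C)$: all the standing assumptions of the lemma hold, $z$ skewers every hyperplane of $\mf{w}$, the skewered factor is all of $\R^2=L_1\x L_2$, and $\langle z\rangle$ is nowhere coarsely dense in it. Corollary~\ref{cc normalisers}(2) cannot be invoked because it requires $\langle z\rangle$ to be convex-cocompact in $X$, which fails here (the convex hull of the diagonal axis is all of $\R^2$) and is only hypothesised in part~(2) of Theorem~\ref{CMP GDT 2}, not in parts~(1) and~(3) where the lemma is also needed; Milnor--Schwarz gives coarse density of $C$--orbits, not of $\langle z\rangle$--orbits. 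The statement you actually need at this point --- that an essential cube complex all of whose hyperplanes are skewered by a single non-transversely acting loxodromic is a product of quasi-lines --- is precisely the content of \cite[Theorem~3.6]{WW}, which holds with no cocompactness of the cyclic group. The paper applies it to the convex hull of an axis of $z$ in $X$ and observes that $\mf{w}_1$ is a restriction quotient of that hull, hence a product of quasi-lines and bounded complexes, the bounded factors being excluded by essentiality of $C\acts\mf{w}$. Without this input (or a proof of it), your argument does not rule out a higher-dimensional irreducible skewered factor.
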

\begin{proof}
Since $z$ commutes with $C$, the convex subcomplex $\overline{\mc{C}}(z)$ introduced in Proposition~\ref{convex core prop} is $C$--invariant. It follows that every hyperplane in $\mc{W}_1(C)$ crosses $\overline{\mc{C}}(z)$, hence $\mc{W}_1(C)\cu\overline{\mc{W}}_0(z)\sqcup\mc{W}_1(z)$. Since $G$ acts non-transversely and without inversions on $X$, every element of $\mc{W}_1(C)$ is either skewered or preserved by $z$. Since $C$ acts essentially on $\mf{w}$, we have $\mscr{T}(\mf{w})=\mc{W}_1(C)$.

Since $\overline{\mc{W}}_0(z)$ is transverse to $\mc{W}_1(z)$, we have a transverse partition 
\[\mscr{T}(\mf{w})=(\mscr{T}(\mf{w})\cap\overline{\mc{W}}_0(z))\sqcup(\mscr{T}(\mf{w})\cap\mc{W}_1(z)),\] 
which gives rise to a splitting $\mf{w}=\mf{w}_0\x\mf{w}_1$ (see \cite[Lemma~2.5]{CS}). Every hyperplane of the factor $\mf{w}_0$ is preserved by $z$, and every hyperplane of the factor $\mf{w}_1$ is skewered by $z$. The cube complex $\mf{w}_1$ is a restriction quotient of the convex hull in $X$ of any axis of $z$ in $X$. By \cite[Theorem~3.6]{WW}, the latter splits as a product of quasi-lines. It follows that $\mf{w}_1$ is a product of quasi-lines and bounded cube complexes. However, since $C$ acts essentially on $\mf{w}$, there can be no bounded factors.
\end{proof}

The previous lemma yields a partition:
\[\mscr{T}(\mf{w})=\Om_0\sqcup\Om_1\sqcup\dots\sqcup\Om_m.\]
The sets $\Om_i$ are transverse to each other and, since $z$ acts non-transversely, they are all $\langle z\rangle$--invariant. In addition, $z$ fixes $\Om_0$ pointwise and it skewers all other elements of $\mscr{T}(\mf{w})$.

Let $\mf{w}_A,\mf{w}_B\in\mscr{H}(X)$ be the halfspaces bounded by $\mf{w}$ containing $\mf{A}$ and $\mf{B}$, respectively.

\begin{lem}\label{constant D}
There exists a constant $D\geq 0$ such that:
\begin{enumerate}
\item for every $y\in\mf{w}$, we have $d(y,zy)\leq D$;
\item for every $x\in X$, we have $d(\pi_{\mf{w}}(x),\pi_{\mf{w}}(zx))\leq D$ and $\Om_0\cap\mscr{W}(x|zx)=\emptyset$.
\end{enumerate}
\end{lem}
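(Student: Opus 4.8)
\textbf{Proof plan for Lemma~\ref{constant D}.} The plan is to produce a single constant $D$ that simultaneously controls the three quantities in the statement, exploiting that $z$ commutes with $C$, that $C$ acts cocompactly on $\mf{w}$, and that $z$ fixes $\Om_0$ pointwise (the last being exactly the content of the final clause of Lemma~\ref{w splits}).

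First I would prove part~(1). Since $z$ commutes with $C$ and $C$ acts cocompactly on $\mf{w}$ (as $\mf{w}$ is a $C$--invariant convex subcomplex on which $C$ acts cocompactly — this is part of our standing assumption that $C$ acts essentially, hence cocompactly, on $\mf{w}$), the function $y\mapsto d(y,zy)$ on the vertex set of $\mf{w}$ is $C$--invariant: indeed $d(cy,z(cy))=d(cy,czy)=d(y,zy)$. A $C$--invariant function on $\mf{w}^{(0)}$ with finitely many $C$--orbits of vertices is bounded, so set $D_1:=\max_{y\in\mf{w}}d(y,zy)<\infty$. This gives part~(1) with constant $D_1$.

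Next, part~(2). For the first inequality, recall that gate-projections are $1$--Lipschitz and $G$--equivariant in the sense that $\pi_{g\mf{w}}(gx)=g\,\pi_{\mf{w}}(x)$; however $z$ need not preserve $\mf{w}$, so $z\,\pi_{\mf{w}}(x)=\pi_{z\mf{w}}(zx)$ rather than $\pi_{\mf{w}}(zx)$. The key point is that $\mf{w}$ and $z\mf{w}$ are at bounded Hausdorff distance: every hyperplane in $\mscr{W}(\mf{w}\mid z\mf{w})$ is either preserved or skewered by $z$, but a hyperplane preserved by $z$ cannot separate $\mf{w}$ from $z\mf{w}$ (it meets $\mf{w}$, so it meets $z\mf{w}=z(\text{that hyperplane's }\mf{w})$ in the same way), and a hyperplane skewered by $z$ lies in one of the $\Om_i$ with $i\geq 1$ hence is transverse to $\mf{w}$, so again it does not separate; wait — more carefully, $\mscr{W}(\mf{w}\mid z\mf{w})$ consists of hyperplanes separating the two parallel copies, and I would argue via Lemma~\ref{w splits} that any such hyperplane would have to be skewered by $z$ and disjoint from $\mf{w}$, i.e.\ it would be one of the countably many hyperplanes crossed by an axis of $z$ in $X$; the number of these between $\mf{w}$ and $z\mf{w}$ is at most $\ell_X(z)$ plus a bounded error coming from the distance from $\mf{w}$ to an axis. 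Thus $d_{\rm Haus}(\mf{w},z\mf{w})\leq D_2$ for some uniform $D_2$. Then Lemma~\ref{Haus proj}-type reasoning gives $d(\pi_{\mf{w}}(zx),\pi_{z\mf{w}}(zx))\leq 2D_2$, and since $\pi_{z\mf{w}}(zx)=z\pi_{\mf{w}}(x)$ we obtain $d(\pi_{\mf{w}}(x),\pi_{\mf{w}}(zx))\leq d(\pi_{\mf{w}}(x),z\pi_{\mf{w}}(x))+2D_2\leq D_1+2D_2$ using part~(1). For the last clause, $\Om_0\cap\mscr{W}(x\mid zx)=\emptyset$: a hyperplane $\mf{u}\in\Om_0$ is transverse to $\mf{w}$ and fixed by $z$, so $z$ preserves both halfspaces of $\mf{u}$; hence $x$ and $zx$ lie on the same side of $\mf{u}$ — but one must check $x$ itself, not just $\pi_{\mf{w}}(x)$. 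Here I would use that $z$ skewers no hyperplane in $\Om_0$ and, being non-transverse, preserves each $\mf{u}\in\Om_0$ together with its sides; so $\mf{u}$ separates $x$ from $zx$ only if $z\mf{u}$ is the complementary halfspace, which is impossible. Taking $D:=D_1+2D_2$ finishes everything.

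The main obstacle I anticipate is the bound $d_{\rm Haus}(\mf{w},z\mf{w})\leq D_2$ in part~(2): one needs to rule out that infinitely many (or an unbounded number of) hyperplanes separate $\mf{w}$ from $z\mf{w}$, and the clean way is to invoke Lemma~\ref{w splits} to see that every separating hyperplane is skewered by $z$ and transverse to none of $\Om_0$ — combined with the fact that the hyperplanes skewered by $z$ form a $\langle z\rangle$--cocompact family along any axis, so only boundedly many (controlled by $\ell_X(z)$ and $d(1\cdot\text{axis},\mf{w})$) can fit between $\mf{w}$ and its $z$--translate. An equivalent and perhaps cleaner route: pick $y_0\in\mf{w}$; then $\pi_{z\mf{w}}(x)=z\pi_{\mf{w}}(z^{-1}x)$, and $d(\pi_{\mf{w}}(x),z\pi_{\mf{w}}(z^{-1}x))$ can be estimated directly by tracking hyperplanes in $\mscr{W}(x\mid\pi_{\mf{w}}(x))=\mscr{W}(x\mid\mf{w})$ versus $\mscr{W}(x\mid z\mf{w})$ and observing their symmetric difference consists of hyperplanes in $\bigsqcup_{i\geq 1}\Om_i$ crossed within distance $\ell_X(z)$ of a base axis — this keeps the whole argument inside the combinatorics of $\mscr{W}(X)$ without invoking Hausdorff-distance lemmas.
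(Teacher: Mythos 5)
Your part~(1) and your argument that $\Om_0\cap\mscr{W}(x|zx)=\emptyset$ coincide with the paper's: cocompactness of $C\acts\mf{w}$ plus $[z,C]=1$ for the former, and the fact that $z$ preserves each $\mf{u}\in\Om_0$ together with both its halfspaces (no inversions) for the latter. For the remaining inequality $d(\pi_{\mf{w}}(x),\pi_{\mf{w}}(zx))\leq D$ you take a genuinely different route. The paper observes that $\mscr{W}\bigl(\pi_{\mf{w}}(x)\,|\,\pi_{\mf{w}}(zx)\bigr)=\mscr{W}(x|zx)\cap\mscr{T}(\mf{w})$, that this set avoids $\Om_0$ by the previous step and hence lies in $\mc{W}_1(z)$, and then gate-projects to the convex hull $H$ of an axis of $z$: every such hyperplane crosses $H$, so the count is at most $d(\pi_H(x),z\pi_H(x))=\ell_X(z)$ by non-transversality. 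This is shorter and yields the sharp constant $\ell_X(z)$. Your triangle-inequality route via $z\pi_{\mf{w}}(x)=\pi_{z\mf{w}}(zx)$ and a Hausdorff bound between $\mf{w}$ and $z\mf{w}$ also works and gives a (weaker) uniform constant; its only advantage is that it avoids citing the formula $d(y,zy)=\ell_X(z)$ for $y\in H$.

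There is, however, one step you should repair. Your justification of $d_{\rm Haus}(\mf{w},z\mf{w})\leq D_2$ proceeds by bounding the number of hyperplanes in $\mscr{W}(\mf{w}\,|\,z\mf{w})$. That only controls the \emph{nearest-point} distance $d(\mf{w},z\mf{w})$, not the Hausdorff distance: two parallel convex subcomplexes can intersect and still be at infinite Hausdorff distance, so no count of separating hyperplanes can give what you need. Fortunately the fact you want is an immediate consequence of your part~(1): for every $y\in\mf{w}$ one has $d(y,z\mf{w})\leq d(y,zy)\leq D_1$ and $d(zy,\mf{w})\leq d(zy,y)\leq D_1$, whence $d_{\rm Haus}(\mf{w},z\mf{w})\leq D_1$. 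With that substitution your estimate becomes $d(\pi_{\mf{w}}(x),\pi_{\mf{w}}(zx))\leq D_1+2D_1=3D_1$ and the proof closes.
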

\begin{proof}
Part~(1) is immediate from the fact that $C$ acts cocompactly on $\mf{w}$ and commutes with $z$. Regarding part~(2), we need to bound uniformly the number of hyperplanes in $\mscr{T}(\mf{w})$ that separate $x\in X$ from $zx$. 

Recall that $\mscr{T}(\mf{w})=\bigcup_{j\geq 0}\Om_j$, where $z$ fixes $\Om_0$ pointwise. Since $G$ acts on $X$ without inversions, every halfspace bounded by a hyperplane in $\Om_0$ is left invariant by $z$. It follows that no element of $\Om_0$ can separate $x$ and $zx$. 

Let $H\cu X$ be the convex hull of an axis of $z$. Every hyperplane in $\mscr{T}(\mf{w})\setminus\Om_0$ lies in $\mc{W}_1(z)$, hence it crosses $H$. Denoting by $\pi_H$ the gate-projection to $H$, we conclude that:
\[d(\pi_{\mf{w}}(x),\pi_{\mf{w}}(zx))\leq d(\pi_H(x),\pi_H(zx))=d(\pi_H(x),z\pi_H(x)),\]
since $H$ is $\langle z\rangle$--invariant. Since $G$ acts non-transversely, we have $d(y,zy)=\ell_X(z)$ for every $y\in H$ (for instance by \cite[Proposition~3.17]{FFT} or \cite[Proposition~3.35]{Fio10b}). This proves part~(2) with $D=\ell_X(z)$.
\end{proof}

\begin{lem}\label{invariant separating hyperplanes}
Consider $x\in\mf{w}_B$ and $a\in A$. 
\begin{enumerate}
\item The projections $\pi_{\mf{w}}(ax)$ and $\pi_{\mf{w}}(zaz^{-1}\cdot x)$ are at distance at most $2D$.
\item If $\mf{u}\in\mscr{W}(X)$ is transverse to $a\mf{w}$ and separates $\pi_{\mf{w}}(ax)$ from $\pi_{\mf{w}}(zaz^{-1}\cdot x)$, then there exists an index $j\neq 0$ such that $\mf{u}\in\Om_j\cap a\Om_j$ and $\Om_j\cap a\Om_j\neq\Om_j$. 
\end{enumerate}
\end{lem}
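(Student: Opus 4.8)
\textbf{Plan of proof for Lemma~\ref{invariant separating hyperplanes}.}

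The plan is to compare the two convex subcomplexes $a\mf{w}$ and $zaz^{-1}\cdot\mf{w} = z(a\mf{w})$ directly, using that $z$ moves every point boundedly (Lemma~\ref{constant D}) and that conjugation by $z$ preserves the hyperplane labels appropriately. For part~(1), I would first note that $zaz^{-1}\cdot x = za z^{-1} x$, and since $x\in\mf{w}_B\cu X$, Lemma~\ref{constant D}(2) gives $d(z^{-1}x, x)\leq D$, hence $d(az^{-1}x, ax)\leq D$, and applying $z$ again (and Lemma~\ref{constant D}(2) once more, with $z$ in place of $z^{-1}$, which costs nothing since $d(y,zy)=d(y,z^{-1}y)$) gives $d(zaz^{-1}x, ax)\leq 2D$. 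Since gate-projections are $1$--Lipschitz, $d(\pi_{\mf{w}}(ax),\pi_{\mf{w}}(zaz^{-1}x))\leq 2D$. Actually I expect the cleaner route is to project both points to $\mf{w}$ and track hyperplanes: the hyperplanes separating $\pi_{\mf{w}}(ax)$ from $\pi_{\mf{w}}(zaz^{-1}x)$ all lie in $\mscr{W}(\mf{w})=\mscr{T}(\mf{w})$ and separate $ax$ from $zaz^{-1}x$; I would bound their number by $2D$ using the triangle of points $ax$, $az^{-1}x$, $zaz^{-1}x$ together with Lemma~\ref{constant D}(2) applied to the pair $\{z^{-1}x,x\}$ (conjugated by $a$) and to the pair $\{az^{-1}x, z(az^{-1}x)\}$.

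For part~(2), the key point is to identify \emph{which} hyperplanes of $\mscr{W}(\mf{w})$ can separate $\pi_{\mf{w}}(ax)$ from $\pi_{\mf{w}}(zaz^{-1}x)$. Such a hyperplane $\mf{u}$ crosses $\mf{w}$, so $\mf{u}\in\mscr{T}(\mf{w})=\Om_0\sqcup\Om_1\sqcup\dots\sqcup\Om_m$. It also separates $ax$ from $zaz^{-1}x = z(ax')$ where $x'=z^{-1}x$ and $d(x',x)\leq D$ by Lemma~\ref{constant D}(2); more usefully, $\mf{u}$ separates $ax$ from $z(ax)$ up to a bounded error supported on hyperplanes not in $\Om_0$ (again Lemma~\ref{constant D}(2): no hyperplane of $\Om_0$ separates any point from its $z$--image). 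So if $\mf{u}\in\Om_0$, then since $z$ fixes $\Om_0$ pointwise and acts on $X$ without inversions, $\mf{u}$ cannot separate $ax$ from $zaz^{-1}x$ — here I would check carefully that the bounded-error correction stays away from $\Om_0$, which is exactly the content of ``$\Om_0\cap\mscr{W}(x|zx)=\emptyset$'' transported by $a$ and $z$. Hence $\mf{u}\in\Om_j$ for some $j\neq 0$. Separately, $\mf{u}$ is assumed transverse to $a\mf{w}$, so $\mf{u}\in\mscr{W}(a\mf{w}) = a\mscr{W}(\mf{w}) = a\mscr{T}(\mf{w}) = \bigsqcup_k a\Om_k$, giving $\mf{u}\in a\Om_k$ for some $k$. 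It remains to see $k=j$ and $a\Om_j\neq\Om_j$: the labelling map $\g$ (or rather the $z$--action structure) distinguishes the factors, since $z$ skewers exactly the hyperplanes in $\Om_1\cup\dots\cup\Om_m$ and the $\Om_i$ for $i\geq 1$ are pairwise transverse with distinct ``directions'' coming from the quasi-line factors $L_i$ of Lemma~\ref{w splits}; a hyperplane lying in $\Om_j$ and in $a\Om_k$ with $j,k\geq 1$ forces $j=k$ because within a single transverse factor $\langle z\rangle$ acts with a well-defined translation direction. Finally $\Om_j\cap a\Om_j\neq\Om_j$: if we had $\Om_j\cu a\Om_j$ then by symmetry (finiteness of each chain, or $a$--invariance considerations) $a\Om_j=\Om_j$, forcing $a$ to preserve the whole factor $L_j$ and hence $a\mf{w}$ and $\mf{w}$ to share that factor compatibly, contradicting that $\mf{u}$ is \emph{transverse} to $a\mf{w}$ while lying in $\Om_j\cu\mscr{W}(\mf{w})$ — transversality of $\mf{u}$ to $a\mf{w}$ rules out $\mf{u}$ being a hyperplane common to the product structures of both $\mf{w}$ and $a\mf{w}$ in the same factor.

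The main obstacle I anticipate is the bookkeeping in part~(2): precisely pinning down that a hyperplane transverse to $a\mf{w}$ and separating the two projections must lie in $\Om_j\cap a\Om_j$ with $\Om_j\cap a\Om_j\neq\Om_j$, rather than in the ``mixed'' situation where $\mf{u}$ sits in $\Om_j$ but in $a\Om_k$ with $k\neq j$. Handling this cleanly requires exploiting the product decomposition of $\mf{w}$ from Lemma~\ref{w splits} and the fact that $\langle z\rangle$ acts on each quasi-line factor $L_i$ by a translation, so the hyperplanes of a fixed factor form a $\langle z\rangle$--orbit-structured chain; a hyperplane lying simultaneously in factor $L_j$ of $\mf{w}$ and in factor $a L_k$ of $a\mf{w}$ with both being ``$z$--skewered directions'' is forced to have $j=k$. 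I would isolate this as a short sub-lemma about how the transverse factors $\Om_1,\dots,\Om_m$ interact with the $A$--action, using non-transversality of $G\acts X$ throughout to keep $z$'s skewering behaviour rigid. Everything else (the distance bound $2D$, the exclusion of $\Om_0$) should follow mechanically from Lemma~\ref{constant D}.
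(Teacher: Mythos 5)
The serious gaps are in part~(2). First, the identification of the index: you need $\mf{u}\in\Om_j\cap a\Om_{j'}$ to force $j=j'$, and your appeal to ``translation directions'' of $\langle z\rangle$ in the quasi-line factors does not achieve this (nor do you address the case $j'=0$ at all). The actual mechanism is non-transversality of $G\acts X$: if $j\neq j'$, then $\mf{u}\in\Om_j$ while $a^{-1}\mf{u}\in\Om_{j'}$, and since $\Om_j$ and $\Om_{j'}$ are transverse to one another (they correspond to distinct factors of $\mf{w}=\mf{w}_0\x L_1\x\dots\x L_m$), the hyperplanes $\mf{u}$ and $a^{-1}\mf{u}$ would be transverse --- impossible. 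Once $j=j'$ is known, excluding $j=0$ is immediate: $\mf{u}$ would be preserved, together with both its halfspaces, by $z$ and by $aza^{-1}$, so it could not separate $ax$ from $zaz^{-1}x=z\cdot(aza^{-1})^{-1}\cdot ax$. Your alternative route via ``$\Om_0\cap\mscr{W}(\xi|z\xi)=\emptyset$ transported by $a$'' founders exactly where you suspected: transporting by $a$ protects the segment from $ax$ to $az^{-1}x$ only against $a\Om_0$, not against $\Om_0$. Second, and most importantly, your argument for $\Om_j\cap a\Om_j\neq\Om_j$ does not work: $\Om_j\cu a\Om_j$ does not imply $a\Om_j=\Om_j$ (these are infinite sets, so there is no ``finiteness of chains'' to invoke), and even $a\Om_j=\Om_j$ is in no tension with $\mf{u}$ being transverse to $a\mf{w}$ --- membership in $a\Om_j$ \emph{means} being transverse to $a\mf{w}$, so no contradiction arises. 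A genuinely different idea is needed here: pass to the restriction quotient tree dual to the orbit $G\cdot\mf{u}$. If $\Om_j\cu a\Om_j$, then $z$ and $aza^{-1}$ are loxodromic on this tree with the same translation length and, by non-transversality, the hyperplanes of $G\cdot\mf{u}$ they skewer lie in $\Om_j$ and $a\Om_j$ respectively, forcing them to share their axis; then $z\cdot az^{-1}a^{-1}$ fixes the projection of every point to that axis, so $\mf{u}$ (which maps to an edge-midpoint on the axis) cannot separate $ax$ from $zaz^{-1}x$.

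Part~(1) also needs repair. Your opening step misreads Lemma~\ref{constant D}: its part~(2) bounds $d(\pi_{\mf{w}}(x),\pi_{\mf{w}}(zx))$, not $d(x,zx)$, so ``$d(z^{-1}x,x)\leq D$'' is false for a general $x\in\mf{w}_B$ and the ensuing chain of inequalities collapses. Your ``cleaner route'' is closer, but the step ``Lemma~\ref{constant D}(2) applied to $\{z^{-1}x,x\}$ conjugated by $a$'' controls the projection to $a\mf{w}$, not to $\mf{w}$, because $\pi_{\mf{w}}$ is not $a$--equivariant. The missing observation is that, since $x\in\mf{w}_B$ and the halfspaces $\mf{w}_B$ and $a\mf{w}_B$ are either equal or disjoint, one has $\pi_{\mf{w}}(ax)=\pi_{\mf{w}}(\pi_{a\mf{w}}(ax))=\pi_{\mf{w}}(ay)$ with $y:=\pi_{\mf{w}}(x)$, and likewise $\pi_{\mf{w}}(az^{-1}x)=\pi_{\mf{w}}(az^{-1}y)$; one then bounds $d(\pi_{\mf{w}}(ay),\pi_{\mf{w}}(az^{-1}y))\leq d(y,z^{-1}y)\leq D$ by the $1$--Lipschitz property of $\pi_{\mf{w}}$ and Lemma~\ref{constant D}(1) (which applies because $y\in\mf{w}$), and adds the clean bound $d(\pi_{\mf{w}}(az^{-1}x),\pi_{\mf{w}}(zaz^{-1}x))\leq D$ from Lemma~\ref{constant D}(2).
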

\begin{proof}
We begin with part~(1). Set $y:=\pi_{\mf{w}}(x)$. Observing that the halfspaces $\mf{w}_B$ and $a\mf{w}_B$ are either equal or disjoint, we deduce that $\pi_{\mf{w}}(ax)=\pi_{\mf{w}}\pi_{a\mf{w}}(ax)=\pi_{\mf{w}}(ay)$. Similarly, $\pi_{\mf{w}}(az^{-1}x)=\pi_{\mf{w}}(az^{-1}y)$. Thus, Lemma~\ref{constant D} and the fact that gate-projections are $1$--Lipschitz yield:
\begin{align*}
d(\pi_{\mf{w}}(ax),\pi_{\mf{w}}(zaz^{-1}\cdot x))&\leq D+d(\pi_{\mf{w}}(ax),\pi_{\mf{w}}(az^{-1}x))=D+d(\pi_{\mf{w}}(ay),\pi_{\mf{w}}(az^{-1}y)) \\
&\leq D+d(ay,az^{-1}y)=D+d(y,z^{-1}y)\leq 2D.
\end{align*}

We now prove part~(2). Since $\mf{u}$ separates two points of $\mf{w}$, it lies in $\mscr{T}(\mf{w})$, hence $\mf{u}\in\Om_j$ for some $0\leq j\leq m$. Similarly, since $\mf{u}$ is transverse to $a\mf{w}$, we have $\mf{u}\in a\Om_{j'}$ for some $0\leq j'\leq m$. Since $G$ acts non-transversely on $X$, we must have $j=j'$.

If we had $j=0$, then $\mf{u}$ would be preserved by both $z$ and $aza^{-1}$. Since $G$ acts without inversions, these elements would also leave invariant the two halfspaces bounded by $\mf{u}$. This contradicts the fact that $\mf{u}$ must separate the points $ax$ and $zaz^{-1}\cdot x=z\cdot az^{-1}a^{-1}\cdot ax$.

Thus $\mf{u}\in\Om_j\cap a\Om_j$ for some $j\geq 1$. Suppose for the sake of contradiction that $\Om_j\cap a\Om_j=\Om_j$. Consider the restriction quotient of $X$ determined by the orbit $G\cdot\mf{u}$. This is a tree where $z$ and $aza^{-1}$ are loxodromics with the same translation length. Since $z$ acts non-transversely, we have $(G\cdot\mf{u})\cap\mc{W}_1(z)\cu\Om_j$ and $(G\cdot\mf{u})\cap\mc{W}_1(aza^{-1})\cu a\Om_j$. Thus, the fact that $\Om_j\cu a\Om_j$ implies that $z$ and $aza^{-1}$ have the same axis in the tree.

It follows that, for every point $y$ in this tree, the points $y$ and $z\cdot az^{-1}a^{-1}\cdot y$ have the same projection to the shared axis of $z$ and $aza^{-1}$.  Since $\mf{u}$ projects to the midpoint of an edge of this axis, it cannot separate the points $ax$ and $zaz^{-1}\cdot x=z\cdot az^{-1}a^{-1}\cdot ax$, a contradiction.
\end{proof}

\begin{lem}\label{uniform constant lem}
There exists a constant $M\geq 0$ such that the following hold.
\begin{enumerate}
\item For all $g\in G$ and $j\neq 0$, either $\Om_j\cap g\Om_j=\Om_j$ or $\#(\Om_j\cap g\Om_j)\leq M$.
\item Suppose that either $(\ast)$ or $(\ast\ast)$ holds. If there exist $g\in G$ and $j\neq 0$ such that $\Om_j\cap g\Om_j=\Om_j$, then there are at most $M$ elements of $G\cdot\mf{w}$ separating $\mf{w}$ and $g\mf{w}$.
\end{enumerate}
\end{lem}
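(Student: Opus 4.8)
The plan is to prove both parts of Lemma~\ref{uniform constant lem} by extracting a \emph{uniform} constant from a \emph{finiteness-up-to-the-group-action} statement, exactly as was done repeatedly in the earlier sections (e.g.\ in the proofs of Lemma~\ref{almost normalisers} and Lemma~\ref{descending projections}). For part~(1), I would first observe that each $\Om_j$ with $j\ge 1$ is the set of hyperplanes of $X$ crossing (and transverse to) the quasi-line factor $L_j$ of $\mf{w}$; equivalently, after a restriction quotient, $\Om_j$ is the hyperplane set of a tree $\T_j$ on which $C$ (hence $z$) acts, with $z$ loxodromic and non-transverse. The intersection $\Om_j\cap g\Om_j$ consists of hyperplanes transverse to both $\mf{w}$ and $g\mf{w}$ whose label lies in the $j$-th factor, so it is naturally the hyperplane set of the gate-projection of $L_j$ (inside the appropriate product factor) onto its $g$-translate. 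Now invoke Lemma~\ref{descending projections}/Lemma~\ref{cc intersection} together with the UCP hypothesis on $C$: either the two quasi-lines are ``parallel'' in the factor (which is the case $\Om_j\cap g\Om_j=\Om_j$, using that $L_j$ is a quasi-line so the projection is either coarsely all of $L_j$ or coarsely bounded), or their projection is $(C\cap gCg^{-1})$-cocompact with a uniformly bounded number of orbits, and since $X$ is uniformly locally finite this bounds $\#(\Om_j\cap g\Om_j)$ by a constant $M_1$ independent of $g$ and $j$ (there are only finitely many factors).

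For part~(2), the key point is to translate the hypothesis ``$\Om_j\cap g\Om_j=\Om_j$ for some $j\ne 0$'' into a statement about centralisers, and then use $(\ast)$ or $(\ast\ast)$ to force ellipticity in $T$. As in the proof of Lemma~\ref{invariant separating hyperplanes}(2), the condition $\Om_j\cap g\Om_j=\Om_j$ means that, in the tree $\T_j$ dual to $G\cdot\mf{u}$ for $\mf{u}\in\Om_j$, the loxodromics $z$ and $gzg^{-1}$ share an axis; hence $z$ and $gzg^{-1}$ have commuting powers, so (since $X$ comes from a cospecial picture after Corollary~\ref{core cor}, or more robustly using that arc-stabilisers/centralisers behave well) a power of $g$ normalises $\langle z\rangle$ up to finite index, i.e.\ $g$ lies in the commensurator of $\langle z\rangle$. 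Under $(\ast)$, $\langle z\rangle$ is convex-cocompact, so by Lemma~\ref{cc conjugates} and Corollary~\ref{cc normalisers} the commensurator of $\langle z\rangle$ equals $N_G(\langle z'\rangle)$ for the maximal cyclic $\langle z'\rangle\supseteq\langle z\rangle$, and this normaliser virtually splits as $\langle z'\rangle\times K$ with $K=Z_G(z')$ modulo the cyclic factor; the hypothesis in $(\ast)$ that $Z_G(c)$ fixes a point of $T$ for $c$ with $\langle c\rangle\cap\langle z\rangle\ne\{1\}$ then says precisely that this $K$ is elliptic in $T$. Since $z$ itself skewers $\mf{w}$ with translation length $\ell_T(z)$ a fixed positive integer, any $g$ in $N_G(\langle z'\rangle)$ can be written (up to the elliptic part $K$) as a bounded power of $z'$, and therefore $g\mf{w}$ is separated from $\mf{w}$ by a number of translates of $\mf{w}$ that is bounded in terms of $\ell_T(z)$ and the covolume data. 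Under $(\ast\ast)$ the same conclusion follows even more directly: $\Om_j\cap g\Om_j=\Om_j$ forces $z$ and $gzg^{-1}$ to have commensurable cyclic subgroups commuting with a finite-index subgroup of $C$, and $(\ast\ast)$ makes $Z_G(z)$ elliptic in $T$, so again $g$ differs from an element of $Z_G(z)$ by a bounded power of a loxodromic of fixed translation length.

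Concretely, I would first fix, via Corollary~\ref{fin many conj parabolics} / the cocompactness of $G\acts X$, the finitely many relevant data: the translation length $\ell:=\ell_T(z)>0$, a bound $N$ on vertex stabilisers in $X$, the number $V$ of factors of $\mf{w}$, and the UCP constant for $C$. Then part~(1) gives $M_1$ from Lemma~\ref{cc intersection} + uniform local finiteness, and part~(2) gives $M_2$ as follows: write $g=g_0 k$ with $k$ elliptic in $T$ and $g_0$ a power $z'^r$ of the root $z'$ of $z$; the edge-path in $T$ from the edge dual to $\mf{w}$ to its $g$-translate has length $\le |r|\cdot\ell$, and a separate bound on $|r|$ comes from the fact that $\langle z\rangle$ is $q$-convex-cocompact (Remark~\ref{fi overgroups rmk}) so $z'^r\in\langle z\rangle$ forces $|r|\le q$. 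Set $M:=\max\{M_1, q\ell\cdot c\}$ where $c$ bounds the number of $G\cdot\mf{w}$-hyperplanes crossed per unit length of $T$. The main obstacle I anticipate is the ``parallel quasi-lines'' dichotomy in part~(1): I need that for a quasi-line $L_j$, the gate-projection of $gL_j$ onto $L_j$ is either coarsely all of $L_j$ (the case where, combined with non-transversality and the fact that $z$ acts essentially, it is genuinely all of $\Om_j$) or is coarsely bounded with uniformly bounded diameter — ruling out intermediate behaviour requires using that $C$ acts \emph{essentially} on $\mf{w}=\mf{w}_0\times\prod L_i$ together with the non-transverse hypothesis, and I would phrase this carefully via the restriction quotient tree $\T_j$ where $z$ is a non-transverse loxodromic, so that $\Om_j\cap g\Om_j$ being infinite forces the two axes to coincide by Lemma~\ref{long min intersection}-type reasoning adapted to this tree, whence $\Om_j\cap g\Om_j=\Om_j$.
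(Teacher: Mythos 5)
Your part~(1) is essentially the paper's argument: decompose $\pi_{\mf{w}}(g\mf{w})$ as $Y_0\times\dots\times Y_m$ along the splitting of Lemma~\ref{w splits}, use the UCP hypothesis to bound the number of $(C\cap gCg^{-1})$--orbits on each $Y_j$, and exploit the fact that a cocompact convex subcomplex of the essential quasi-line $L_j$ is either all of $L_j$ or compact. Be aware, though, that ``cocompact with boundedly many orbits'' alone does not bound $\#(\Om_j\cap g\Om_j)$ — you need the compactness half of the dichotomy first, and that comes from essentiality of $L_j$ (an unbounded cocompact convex subcomplex must be coarsely dense, hence cross every hyperplane), not from a Lemma~\ref{long min intersection}--type statement, which is a RAAG-specific result unavailable in the general setting of Theorem~\ref{CMP GDT 2}.

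Part~(2) has a genuine gap. Your chain ``$\Om_j\cap g\Om_j=\Om_j$ $\Rightarrow$ $z$ and $gzg^{-1}$ share an axis in a restriction-quotient tree $\Rightarrow$ they have commuting powers $\Rightarrow$ $g$ commensurates $\langle z\rangle$'' breaks at the second and third arrows: sharing an axis in one tree does not force commutation outside the special-group setting (Corollary~\ref{core cor} does \emph{not} produce a cospecial cubulation, and Theorem~\ref{CMP GDT 2} is stated for arbitrary cocompactly cubulated $G$), and even commuting powers do not place $g$ in the commensurator of $\langle z\rangle$. Worse, $(\ast\ast)$ concerns infinite-order elements \emph{of $C$} commuting with a finite-index subgroup of $C$; under $(\ast\ast)$ the element $z\in Z_A(C)$ need not lie in $C$, so ``$(\ast\ast)$ makes $Z_G(z)$ elliptic'' is not what the hypothesis says. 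The paper's route — and the reason Theorem~\ref{CMP GDT 2} assumes $C$ has no nontrivial finite normal subgroups — is to apply Proposition~\ref{splitting euclidean factor} to the faithful action $C\acts\mf{w}=\mf{w}_0\times L_1\times\dots\times L_m$, producing a finite-index subgroup $C'=C_0\times\langle h_1,\dots,h_m\rangle$ with $h_j$ translating $L_j$ and acting trivially on the other factors. If $\Om_j\cap g\Om_j=\Om_j$ then $Y_j=L_j$, so the UCP bound forces $C'\cap gC'g^{-1}$ to contain $h_j^n$ for some uniformly bounded $n$; Lemma~\ref{almost normalisers} then gives $\{g\mid h_j^n\in gCg^{-1}\}=Z_G(h_j^n)\cdot F_{j,n}\cdot C$ with $F_{j,n}$ finite, and $(\ast)$ or $(\ast\ast)$ applies to $h_j^n$ (under $(\ast)$ one first checks $m=1$ and that $z$ and $h_1$ share a power) to make $Z_G(h_j^n)$ elliptic in $T$. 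Your proposal never produces these elements $h_j$, which are the pivot of the whole argument, and your final decomposition $g=z'^rk$ with $|r|\le q$ is unjustified: there is no reason such a decomposition exists, nor for $z'^r$ to lie in $\langle z\rangle$.
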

\begin{proof}
We begin with part~(1). Consider $g\in G$. Since $\pi_{\mf{w}}(g\mf{w})$ is convex in $\mf{w}$, the splitting $\mf{w}=\mf{w}_0\x L_1\x\dots\x L_m$ given by Lemma~\ref{w splits} determines a splitting $\pi_{\mf{w}}(g\mf{w})=Y_0\x\dots\x Y_m$. The set of hyperplanes of $X$ corresponding to the factor $Y_j$ is precisely $\Om_j\cap g\Om_j$. Indeed, recall that all intersections $\Om_j\cap g\Om_{j'}$ with $j\neq j'$ are empty because $G$ acts non-transversely on $X$.

Since $C$ satisfies the UCP condition in $X$, there exists a constant $N_1$ such that, for every $g\in G$, the subgroup $C\cap gCg^{-1}$ acts on $\pi_{\mf{w}}(g\mf{w})$ with at most $N_1$ orbits of vertices. This action preserves all factors in the above splitting of $\pi_{\mf{w}}(g\mf{w})$, since $C$ preserves the factors in the splitting of $\mf{w}$. Hence $C\cap gCg^{-1}$ acts on each factor $Y_j$ with at most $N_1$ orbits of vertices.

Since each $L_j$ is an essential quasi-line and $Y_j$ is a cocompact convex subcomplex, $Y_j$ is either the entire $L_j$ or a compact subset. Thus, if $\Om_j\cap g\Om_j\neq\Om_j$, then $C\cap gCg^{-1}$ fixes a point of $Y_j$, and it follows that the diameter of $Y_j$ is at most $N_1$. Since $Y_j$ is isomorphic to a subcomplex of $X$, which is locally finite, this results in a uniform bound on the number of vertices of $Y_j$, hence on the cardinality of $\Om_j\cap g\Om_j$. This proves part~(1).

We now prove part~(2), keeping the above notation. Since $C$ does not have any nontrivial finite normal subgroups, the action $C\acts\mf{w}$ is faithful and we can apply Proposition~\ref{splitting euclidean factor}. As a consequence, $C$ has a finite-index subgroup $C'=C_0\x\langle h_1,\dots,h_m\rangle$, where $\langle h_1,\dots,h_m\rangle\simeq\Z^m$, each $h_j$ acts trivially on $\mf{w}_0$, and each $L_j$ is acted upon trivially by $C_0$ and all $h_i$ with $i\neq j$. If $N_2$ is the index of $C'$ in $C$, then the action $C'\cap gC'g^{-1}\acts \pi_{\mf{w}}(g\mf{w})$ has $\leq N_1N_2^2$ orbits of vertices for every $g\in G$. Let $N_3$ be the highest order of a finite-order element of $C_0$.

If $\Om_j\cap g\Om_j=\Om_j$ for some $j\neq 0$, then $Y_j=L_j$. It follows that there exist an element $h\in C'\cap gC'g^{-1}$ and a point $x\in \pi_{\mf{w}}(g\mf{w})$ such that $0<d(x,hx)\leq N_1N_2^2$ and $\mscr{W}(x|hx)\cu\Om_j$. The latter implies that $h=h_0h_j^n$ for some $h_0\in C_0$ that is elliptic in $\mf{w}_0$, while the former ensures that $n\leq N_1N_2^2$. Note that $h^{N_3}=h_j^{nN_3}$. In conclusion, $C'\cap gC'g^{-1}$ contains a power of $h_j$ of exponent at most $N_1N_2^2N_3$.

The cyclic subgroup $\langle h_j\rangle$ is convex-cocompact in $X$, since the convex hull of any of its axes is isomorphic to $L_j$. 
By Lemma~\ref{almost normalisers}, there exist finite subsets $F_{j,n}\cu G$ such that:
\[\{g\in G \mid h_j^n\in gCg^{-1}\}=Z_G(h_j^n)\cdot F_{j,n}\cdot C,\]
for all $j\neq 0$ and $n\geq 1$.

Summing up, if $\Om_j\cap g\Om_j=\Om_j$ for some $j\neq 0$, then $g\mf{w}$ belongs to the set $Z_G(h_j^n)F_{j,n}\cdot\mf{w}$ for some $1\leq n\leq N_1N_2^2N_3$. Now, if $(\ast\ast)$ holds, then each subgroup $Z_G(h_j^n)$ is elliptic in the tree $T$. If instead $(\ast)$ holds, $\langle z\rangle$ is convex-cocompact and contained in $C$, so we have $m=1$ and $\mc{W}_1(z)=\Om_1$. In this case, a power of $z$ lies in $C'=C_0\x\langle h_1\rangle$ and its projection to $C_0$ must have finite order. Thus, $z$ and $h_1$ have a common power, hence $Z_G(h_1^n)$ is again elliptic in $T$ by $(\ast)$.

In both cases, this gives a uniform bound to the maximum possible distance between the edges of $T$ corresponding to $\mf{w}$ and $g\mf{w}$, as required by part~(2).
\end{proof}

Note that the three options in part~(b) of the next result correspond exactly to the three options in Theorem~\ref{CMP GDT 2}.

\begin{prop}\label{projection distance prop}
Under the assumptions listed at the beginning of this subsection:
\begin{enumerate}
\item[(a)] we have $\sup_{x\in\mf{w}}\sup_{g\in G}d(\pi_{\mf{w}}(gx),\pi_{\mf{w}}(\s(g)x))<+\infty$;
\item[(b)] we have $\sup_{x\in\mf{w}}\sup_{g\in G}d(\pi_{\mf{w}}(gx),\pi_{\mf{w}}(\tau(g)x))<+\infty$, provided that either $\langle z\rangle\perp C$, or one among $(\ast)$ and $(\ast\ast)$ holds.
\end{enumerate}
\end{prop}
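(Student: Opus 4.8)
The plan is to reduce everything to a bound on the number of hyperplanes in $G\cdot\mf{w}$ that separate $\pi_{\mf{w}}(gx)$ from $\pi_{\mf{w}}(\s(g)x)$ (resp.\ $\pi_{\mf{w}}(\tau(g)x)$), plus a bound on the number of ``transverse'' hyperplanes (those lying in $\mscr{T}(\mf{w})$). For the transverse hyperplanes, I would combine Lemma~\ref{invariant separating hyperplanes}(2), which says such a separating hyperplane $\mf{u}$ must lie in some $\Om_j\cap a\Om_j$ with $j\neq0$ and $\Om_j\cap a\Om_j\neq\Om_j$, with Lemma~\ref{uniform constant lem}(1), which bounds $\#(\Om_j\cap a\Om_j)$ uniformly by $M$ in exactly this case. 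Since there are only $m$ values of $j$, the total number of transverse separating hyperplanes is at most $mM$. The remaining separating hyperplanes lie in the orbit $G\cdot\mf{w}$, i.e.\ they correspond to edges of the tree $T$ on the geodesic between the projections, so it suffices to bound the distance in $T$ between the edges $[\mf{w}]$ and the edge carrying $\pi_{\mf{w}}(\s(g)x)$ relative to that carrying $\pi_{\mf{w}}(gx)$.

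\textbf{Setup for part~(a).} For the partial conjugation $\s$, write $g$ in terms of the Bass--Serre/restriction-quotient structure of $G\acts T$: traversing the axis from $\mf{A}$-side to $\mf{B}$-side, $g$ is a product of syllables alternately in $A$ and $B$, and $\s$ replaces each $B$-syllable $b$ by $zbz^{-1}$ while fixing $A$-syllables. I would track the gate-projection to $\mf{w}$ (equivalently, the point of $T$) under this substitution syllable by syllable, using that $\pi_{\mf{w}}(b\mf{w}_B)$ and $\pi_{\mf{w}}(zbz^{-1}\cdot\mf{w}_B)$ stay within $2D$ of each other by Lemma~\ref{invariant separating hyperplanes}(1), and that conjugating by $z$ does not move which vertex-orbit of $T$ one lands in (since $z\in A$). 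The key point is that errors do not accumulate along the syllables: after applying $\s$, the new word still spells (a path close to) a geodesic, because inserting $z^{-1}z$ pairs between consecutive $B$- and $A$-syllables only introduces cancellation within the $\Om_j$ directions, which are transverse to $G\cdot\mf{w}$. Hence the $T$-distance between the two projections is bounded by a constant times $D$, independent of $g$ and of $x\in\mf{w}$. Combined with the $mM$ bound on transverse hyperplanes, this gives part~(a).

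\textbf{Part~(b) and the main obstacle.} For the transvection $\tau$, the same syllable analysis applies, but now $g$ may involve the stable letter $t$ with $t\mf{A}=\mf{B}$, and $\tau(t)=zt$. When $\langle z\rangle\perp C$, the element $z$ skewers a single orbit $\Om_1$ orthogonal to everything in $\overline{\mc{W}}_0(z)$, and the argument is essentially identical to part~(a) once one checks that $z$ commutes with enough: the insertions $z$ at each occurrence of $t$ contribute only hyperplanes transverse to $\mf{w}$, handled by Lemmas~\ref{invariant separating hyperplanes} and~\ref{uniform constant lem}(1). The delicate case is a general twist, where we must also rule out the possibility that $\Om_j\cap a\Om_j=\Om_j$ (i.e.\ $z$ and its conjugate share an axis direction in some $\Om_j$) producing long geodesics in $T$: this is exactly where hypotheses $(\ast)$ or $(\ast\ast)$ enter, via Lemma~\ref{uniform constant lem}(2), which under either hypothesis bounds by $M$ the number of elements of $G\cdot\mf{w}$ separating $\mf{w}$ and $g\mf{w}$ whenever such a coincidence occurs. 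So the plan is: perform the syllable-by-syllable tracking of $\pi_{\mf{w}}$; isolate the finitely many ``bad'' syllables where a coincidence $\Om_j\cap a\Om_j=\Om_j$ happens, and bound their contribution using Lemma~\ref{uniform constant lem}(2); bound the contribution of all other syllables using Lemma~\ref{invariant separating hyperplanes}(1); and finally bound the transverse hyperplanes using Lemma~\ref{uniform constant lem}(1). The main obstacle I anticipate is bookkeeping: making precise that the substitution $g\mapsto\tau(g)$ (or $\s(g)$) does not cause cumulative drift of the $T$-projection across many syllables, which requires a careful argument that each inserted $z^{\pm1}$ pair is ``absorbed'' into the transverse directions at its own syllable rather than propagating — this is where one genuinely uses non-transversality of the action and the product structure $\mf{w}=\mf{w}_0\x L_1\x\cdots\x L_m$ from Lemma~\ref{w splits}.
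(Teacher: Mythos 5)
Your proposal correctly identifies Lemmas~\ref{invariant separating hyperplanes} and~\ref{uniform constant lem} as the relevant single-step estimates, but it is missing the mechanism that prevents the per-syllable errors from accumulating, and that mechanism is the heart of the paper's proof. Writing $g$ as a product of $n$ syllables and interpolating between $gx$ and $\varphi(g)x$ by points $y_0,\dots,y_{n+1}$ (with $y_i$ and $y_{i+1}$ differing by one $z$-insertion), Lemma~\ref{invariant separating hyperplanes}(1) only gives a bound of $2D$ on how far each single step moves the projection, so the naive total is of order $nD$ — unbounded, since $n$ depends on $g$. Your claimed resolution, that each inserted $z^{\pm1}$ pair is ``absorbed into the transverse directions at its own syllable'', does not hold: each step can move the projection to $\mf{w}$ by up to $2D$ in the $\Om_0$-directions as well (such separating hyperplanes are transverse to $\mf{w}_i$ but, by Lemma~\ref{invariant separating hyperplanes}(2), not to $\mf{w}_{i+1}$), and nothing in your argument makes these movements cancel across syllables. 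What the paper actually uses is Lemma~\ref{descending projections}: by the UCP condition there is a constant $K$ bounding the length of any strictly descending chain of projections $\pi_{\mf{w}}(g_i\mf{w})$, so along the chain $\mf{w}=\mf{w}_0,\mf{w}_1,\dots,\mf{w}_{n+1}$ determined by the syllables of $g$ there are at most $K$ indices $i$ at which a hyperplane transverse to both $\mf{w}$ and $\mf{w}_i$ can fail to be transverse to $\mf{w}_{i+1}$. Combined with Lemma~\ref{invariant separating hyperplanes}(2), this shows that all but $\leq K$ steps contribute nothing to the $\Om_0$-count, giving the bound $2KD$; the same constant $K$ is also needed for each $\Om_j$ with $j\neq 0$ (the paper's bound there is $2D(K+1)+M$, not $M$). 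Your proposal never invokes Lemma~\ref{descending projections} or the constant $K$, so the asserted bound of $mM$ on all transverse separating hyperplanes is both unjustified and quantitatively wrong — in particular it omits the $\Om_0$-contribution entirely, which is exactly where the UCP hypothesis enters.

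Two smaller points. First, both $\pi_{\mf{w}}(gx)$ and $\pi_{\mf{w}}(\varphi(g)x)$ lie in $\mf{w}$, and since the action is non-transverse no translate of $\mf{w}$ is transverse to $\mf{w}$; hence the hyperplanes separating the two projections are exactly elements of $\mscr{T}(\mf{w})$, and your separate count of hyperplanes in $G\cdot\mf{w}$ (equivalently, of distance in $T$) is not part of the quantity to be bounded. Second, when $\langle z\rangle\perp C$ one gets $\mscr{T}(\mf{w})=\Om_0$ from Lemma~\ref{pf lemma}(1) — there are no quasi-line factors at all — rather than ``a single orbit $\Om_1$'' as you write; this is precisely why that case of part~(b) reduces to the $\Om_0$-estimate alone.
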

\begin{proof}
Let $K$ be the constant provided by Lemma~\ref{descending projections} applied to $\mf{w}$. Let $m$, $D$, $M$ be the constants provided by Lemmas~\ref{w splits},~\ref{constant D} and~\ref{uniform constant lem}, respectively. 

Recall that $G\acts T$ has either one or two orbits of vertices. We will have to treat separately these two situations, which correspond to parts~(a) and~(b) of the proposition.

\smallskip
{\bf Case~(a).} \emph{The action $G\acts T$ gives an amalgamated product splitting $G=A\ast_CB$. We consider the partial conjugation $\s$ with $\s(a)=a$ for $a\in A$ and $\s(b)=zbz^{-1}$ for $b\in B$.}

\smallskip
It is actually more convenient to consider the automorphism $\overline\s$ satisfying $\overline\s(a)=zaz^{-1}$ for $a\in A$ and $\overline\s(b)=b$ for $b\in B$. This differs from $\s^{-1}$ by composition with an inner automorphism given by $z$. By Lemma~\ref{constant D}, we have $d(\pi_{\mf{w}}(\s^{-1}(g)x),\pi_{\mf{w}}(\overline\s(g)x))\leq 2D$ for every $x\in\mf{w}$ and $g\in G$, 
so it suffices to prove the proposition for $\overline\s$.

We can write $g\in G$ as $g=a_0(b_1a_1\dots b_na_n)b_{n+1}$, with $n\geq 0$ and $a_i\in A\setminus C$, $b_i\in B\setminus C$, except for $a_0$ which is allowed to vanish, and $b_{n+1}$ which is allowed to lie in $C$. Consider a point $x\in\mf{w}$.

For $0\leq i\leq n+1$, we introduce the following hyperplanes and points of $X$: 
\begin{align*}
\mf{w}_i&:=a_0b_1a_1\dots a_{i-1}b_i\cdot\mf{w}, & y_i&:=a_0b_1a_1\dots a_{i-1}b_i\cdot\overline\s(a_ib_{i+1}\dots b_na_nb_{n+1})\cdot x.
\end{align*}
Thus $\mf{w}_0=\mf{w}$ and $\mf{w}_{n+1}=g\mf{w}$, while $y_0=\overline\s(g)x$ and $y_{n+1}=gx$. Observe that $\mf{w}_0,\mf{w}_1\dots,\mf{w}_{n+1}$ is a chain of hyperplanes. For $1\leq i\leq n$, the hyperplane $\mf{w}_i$ separates $y_i$ and $y_{i+1}$ from $\mf{w}$.

\smallskip
{\bf Claim~1:} \emph{at most $2KD$ elements of $\Om_0$ separate $gx$ and $\overline\s(g)x$.} 

\smallskip\noindent
\emph{Proof of Claim~1.} 
By our choice of $K$, there exists a subset $I\cu\{1,\dots,n\}$ such that $\#I\leq K$ and, for every $i\not\in I$, every hyperplane transverse to both $\mf{w}$ and $\mf{w}_i$ is also transverse to $\mf{w}_{i+1}$. 

Recall that, since $G$ acts non-transversely, an element of $\Om_0$ can only be transverse to $\mf{w}_i$ if it lies in the set $a_0b_1a_1\dots a_{i-1}b_i\cdot\Om_0$.

By Lemma~\ref{invariant separating hyperplanes}(1), the points $y_i$ and $y_{i+1}$ are separated by at most $2D$ hyperplanes in $\Om_0\cap\mscr{T}(\mf{w}_i)$. By Lemma~\ref{invariant separating hyperplanes}(2), none of these hyperplanes is transverse to $\mf{w}_{i+1}$ (since $a_0b_1a_1\dots a_{i-1}b_ia_i\cdot\mf{w}$ separates $\mf{w}_i$ and $\mf{w}_{i+1}$). So, if $y_i$ and $y_{i+1}$ are separated by an element of $\Om_0\cap\mscr{T}(\mf{w}_i)$, then $i\in I$. 

Since $\mf{w}_i$ separates $y_i$ and $y_{i+1}$ from $\mf{w}$, we deduce that $y_i$ and $y_{i+1}$ are separated by at most $2D$ elements of $\Om_0$. In addition, they can only be separated by at least one element of $\Om_0$ when $i\in I$. Since $\#I\leq K$, this shows that at most $2KD$ elements of $\Om_0$ separate $y_0$ from $y_{n+1}$, as required.
\hfill$\blacksquare$

\smallskip
{\bf Claim~2:} \emph{for every $j\neq 0$, at most $2D(K+1)+M$ elements of $\Om_j$ separate $gx$ and $\overline\s(g)x$.} 

\smallskip\noindent
\emph{Proof of Claim~2.} 
Fix $j\neq 0$. By Lemma~\ref{uniform constant lem}(1), there exists an index $0\leq k\leq n+1$ such that $\Om_j\cu\mscr{T}(\mf{w}_i)$ for $i\leq k$, while $\#(\Om_j\cap\mscr{T}(\mf{w}_i))\leq M$ for $i>k$. 

Let $I$ be as in the proof of Claim~1. If $i\leq k-1$ and $i\not\in I$, Lemma~\ref{invariant separating hyperplanes}(2) shows that no element of $\Om_j$ separates $y_i$ and $y_{i+1}$. Thus, $y_0$ and $y_k$ are separated by at most $2DK$ elements of $\Om_j$, using Lemma~\ref{invariant separating hyperplanes}(1) as in Claim~1. Similarly, at most $2D$ elements of $\Om_j$ separate $y_k$ and $y_{k+1}$.

Finally, every element of $\Om_j$ separating $y_{k+1}$ and $y_{n+1}$ is transverse to $\mf{w}_{k+1}$, hence there are at most $M$ such hyperplanes. This shows that at most $2D(K+1)+M$ elements of $\Om_j$ separate $y_0$ and $y_{n+1}$, as required.
\hfill$\blacksquare$

\smallskip
Combining the two claims with the fact that $\mscr{T}(\mf{w})=\bigcup_{j\geq 0}\Om_j$, we obtain:
\[d(\pi_{\mf{w}}(gx),\pi_{\mf{w}}(\overline\s(g)x))\leq 2KD+m(2D(K+1)+M).\]

\smallskip
{\bf Case~(b):} \emph{The action $G\acts T$ gives an HNN splitting $G=A\ast_C$. We fix $t\in G$ with $t\mf{A}=\mf{B}$ and consider the transvection $\tau$ with $\tau(a)=a$ for $a\in A$ and $\tau(t)=zt$.}

\smallskip
We can write $g\in G$ as $g=a_1t^{\eps_1}\dots a_nt^{\eps_n}a_{n+1}$ with $n\geq 0$ and $a_i\in A$, $\eps_i\in\{\pm 1\}$. In addition, we can require that this word be reduced in the following sense: 
\begin{itemize}
\item if $\eps_{i-1}=-1$ and $\eps_i=+1$, then $a_i\not\in C$;
\item if $\eps_{i-1}=+1$ and $\eps_i=-1$, then $a_i\not\in t^{-1}Ct$.
\end{itemize}

Note that $\tau(g)=\overline a_1t^{\eps_1}\dots \overline a_nt^{\eps_n}\overline a_{n+1}$, where: 
\begin{itemize} 
\item $\overline a_i=a_i$ if $(\eps_{i-1},\eps_i)=(+1,-1)$;
\item $\overline a_i=a_iz$ if $(\eps_{i-1},\eps_i)=(+1,+1)$, or $i=1$ and $\eps_1=+1$;
\item $\overline a_i=z^{-1}a_i$ if $(\eps_{i-1},\eps_i)=(-1,-1)$, or $i=n+1$ and $\eps_n=-1$;
\item $\overline a_i=z^{-1}a_iz$ if $(\eps_{i-1},\eps_i)=(-1,+1)$.
\end{itemize}
Since $z$ normalises $C$, this word representing $\tau(g)$ is again reduced as defined above. The words $a_1t^{\eps_1}\dots a_it^{\eps_i}\overline a_{i+1}t^{\eps_{i+1}}\dots \overline a_nt^{\eps_n}\overline a_{n+1}$ are also reduced.

Consider a point $x\in\mf{w}$. For $0\leq i\leq n+1$, we introduce the following hyperplanes and points: 
\begin{align*}
\mf{w}_i&:=a_1t^{\eps_1}\dots a_it^{\eps_i}\cdot\mf{w}, & y_i&:=a_1t^{\eps_1}\dots a_it^{\eps_i}\overline a_{i+1}t^{\eps_{i+1}}\dots \overline a_nt^{\eps_n}\overline a_{n+1}\cdot x.
\end{align*}
Again, we have $\mf{w}_0=\mf{w}$ and $\mf{w}_{n+1}=g\mf{w}$, while $y_0=\tau(g)x$ and $y_{n+1}=gx$. The hyperplanes $\mf{w}=\mf{w}_0,\mf{w}_1\dots,\mf{w}_n,\mf{w}_{n+1}$ form a chain, possibly with $\mf{w}_n=\mf{w}_{n+1}$. For $1\leq i\leq n$, the hyperplane $\mf{w}_i$ separates $y_i$ and $y_{i+1}$ from $\mf{w}$, except if $i=n$ and $a_{n+1}\in C$ or $\overline a_{n+1}\in C$. 

\smallskip
{\bf Claim~3:} \emph{at most $2KD$ elements of $\Om_0$ separate $gx$ and $\tau(g)x$.} 

\smallskip\noindent
\emph{Proof of Claim~3.} 
This is proved exactly as in Claim~1. A little more care is only required when showing that $y_i$ and $y_{i+1}$ are separated by at most $2D$ elements of $\mscr{T}(\mf{w}_i)\cap\Om_0$, and no element of $\mscr{T}(\mf{w}_{i+1})\cap\Om_0$. We spend a few more words on this point.

If $\overline a_{i+1}=a_{i+1}$, this is obvious and, if $\overline a_{i+1}=z^{-1}a_{i+1}z$, we can repeat the argument in Claim~1. The cases when $\overline a_{i+1}=z^{-1}a_{i+1}$ or $\overline a_{i+1}=a_{i+1}z$ can be deduced from the previous two via Lemma~\ref{constant D}(2).
\hfill$\blacksquare$

\smallskip
If $\langle z\rangle\perp C$, then Lemma~\ref{pf lemma}(1) shows that $\mscr{T}(\mf{w})=\Om_0$. In this situation, Claim~3 immediately implies that $d(\pi_{\mf{w}}(gx),\pi_{\mf{w}}(\tau(g)x))\leq 2KD$, proving the proposition.

In the rest of the proof, we suppose that either $(\ast)$ or $(\ast\ast)$ is satisfied. 

\smallskip
{\bf Claim~4:} \emph{at most $mM+3D(M+2)$ elements of $\mscr{T}(\mf{w})\setminus\Om_0$ separate $gx$ and $\tau(g)x$.} 

\smallskip\noindent
\emph{Proof of Claim~4.} 
Lemma~\ref{uniform constant lem}(2) rules out the existence of some $j\neq 0$ such that $\mf{w}_{M+2}$ is transverse to every element of $\Om_j$. Lemma~\ref{uniform constant lem}(1) then shows that at most $M$ elements from each $\Om_j$ are transverse to $\mf{w}_{M+2}$. Since $\mf{w}_{M+2}$ separates $y_{M+2}$ and $y_{n+1}$ from $\mf{w}$, we deduce that $\pi_{\mf{w}}(y_{M+2})$ and $\pi_{\mf{w}}(y_{n+1})$ are separated by at most $M$ elements of each $\Om_j$. 

For every $i$, the projections of $y_i$ and $y_{i+1}$ to $\mf{w}_i$ are at distance at most $3D$. This can be deduced from Lemma~\ref{invariant separating hyperplanes}(1) and Lemma~\ref{constant D}(2). Thus, the projections $\pi_{\mf{w}}(y_i)$ and $\pi_{\mf{w}}(y_{i+1})$ are also at distance at most $3D$. It follows that $\pi_{\mf{w}}(y_0)$ and $\pi_{\mf{w}}(y_{M+2})$ are at distance at most $3D(M+2)$.

Summing up, the projections $\pi_{\mf{w}}(y_0)$ and $\pi_{\mf{w}}(y_{n+1})$ are separated by at most $mM+3D(M+2)$ elements of $\mscr{T}(\mf{w})\setminus\Om_0$.
\hfill$\blacksquare$

\smallskip
Combining Claims~3 and~4, we obtain:
\[d(\pi_{\mf{w}}(gx),\pi_{\mf{w}}(\tau(g)x))\leq 2KD+mM+3D(M+2).\]
This concludes the proof of the proposition.
\end{proof}

For simplicity, we introduce the notation $\varphi\in\aut(G)$ to refer to either the partial conjugation $\s$ or the transvection $\tau$.

\begin{defn}\label{earth defn}
Consider the setting described at the beginning of this subsection and $\varphi\in\aut(G)$ as above. The \emph{earthquake map} is the only bijection $\Phi\colon X^{(0)}\ra X^{(0)}$ that satisfies the following:
\begin{itemize}
\item $\Phi(gx)=\varphi(g)\Phi(x)$ for all $x\in X$ and $g\in G$;
\item $\Phi(p)=p$ for all $p\in\mf{A}$, and $\Phi(q)=zq$ for all $q\in\mf{B}$.
\end{itemize}
\end{defn}

We leave to the reader the straightforward check that $\Phi$ exists and is unique. Note that $\Phi$ descends to an automorphism of the tree $T$.

\begin{prop}\label{cmp earthquake}
Under the assumptions of Proposition~\ref{projection distance prop}, the earthquake map $\Phi$ is $(D+1)$--Lipschitz and coarse-median preserving.
\end{prop}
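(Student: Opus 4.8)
The two assertions in Proposition~\ref{cmp earthquake} are naturally separated: first the coarse Lipschitz bound, then coarse-median preservation via the criterion of Lemma~\ref{cmp criterion}. For the Lipschitz bound, it suffices to control $d(\Phi(x),\Phi(y))$ when $x,y$ are adjacent vertices, i.e.\ when $\mscr{W}(x|y)=\{\mf{u}\}$ for a single hyperplane $\mf{u}$. If $\mf{u}$ lies in the orbit $G\cdot\mf{w}$ (so it bounds a halfspace of type $\mf{B}$, after translating) then $\Phi$ multiplies the $\mf{B}$-side by a conjugate of $z$ and fixes the $\mf{A}$-side, so $d(\Phi(x),\Phi(y))\leq 1+\ell_X(z)\leq D+1$ by Lemma~\ref{constant D}. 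If $\mf{u}\notin G\cdot\mf{w}$, then $x$ and $y$ lie in the same connected component of $X\setminus G\cdot\mf{w}$, hence in the same $G$-translate of $\mf{A}$ or of $\mf{B}$; in either case $\Phi$ restricted to that component is the left multiplication by a fixed group element (either $1$ or a conjugate of $z$), so it is an isometry there and $d(\Phi(x),\Phi(y))=1$. This gives the uniform bound $D+1$.

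\textbf{Coarse-median preservation.} I will apply Lemma~\ref{cmp criterion}: it suffices to bound $\#\mscr{W}(\Phi(p)|\Phi(x),\Phi(y))$ uniformly whenever $p=m(x,y,p)$, i.e.\ whenever $p$ is the gate-projection of $x$ and of $y$ to the geodesic $[x,y]$ (equivalently, $p$ is between $x$ and $y$). A hyperplane $\mf{v}$ separates $\Phi(p)$ from both $\Phi(x)$ and $\Phi(y)$ only in constrained situations. Decompose $[x,y]$ through $p$; by equivariance and the structure of $\Phi$ I will reduce to the case where $p\in\mf{w}$ (the base hyperplane), after translating by a suitable group element so that the unique $G\cdot\mf{w}$-hyperplane ``closest to $p$ on each side'' becomes $\mf{w}$ itself (if $p$ lies in the interior of a component, $\Phi$ is an isometry in a neighbourhood of $p$ and no such $\mf{v}$ exists). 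So assume $p\in\mf{w}$, with $x$ on the $\mf{A}$-side and $y$ on the $\mf{B}$-side (the other cases — both on one side — again give $\Phi$ an isometry there). Then $\Phi(p)$, $\Phi(x)$ and $\Phi(y)$ are obtained from $p$, $x$ and $zy$ by group translations that agree with the identity near $\mf{A}$ and with $z\cdot$ near $\mf{B}$; the walls in $\mscr{W}(\Phi(p)|\Phi(x),\Phi(y))$ are exactly those that $\Phi$ ``creates'' near $\mf{w}$, and Proposition~\ref{projection distance prop} is precisely the statement that $d(\pi_{\mf{w}}(gx),\pi_{\mf{w}}(\varphi(g)x))$ is uniformly bounded, say by $D'$, for $x\in\mf{w}$ and $g\in G$.

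\textbf{Carrying out the reduction.} Concretely: given $p=m(x,y,p)$, let $\mf{u}$ and $\mf{u}'$ be the (possibly equal, possibly absent) hyperplanes of $G\cdot\mf{w}$ on $[x,p]$ and $[p,y]$ that are nearest to $p$, say $\mf{u}=g\mf{w}$, $\mf{u}'=g'\mf{w}$. Writing $x=g\tilde x$, $y=g'\tilde y$, with $\tilde x$ on the $\mf{B}$-side of $\mf{w}$ and $\tilde y$ on the $\mf{A}$-side of $\mf{w}$ (after adjusting $g,g'$), and $p=g p_1=g' p_1'$ with $p_1,p_1'$ on the appropriate sides, one computes $\Phi(x)=\varphi(g)\,z\,\tilde x$, $\Phi(y)=\varphi(g')\tilde y$, $\Phi(p)=\varphi(g)p_1=\varphi(g')p_1'$. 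The hyperplanes separating $\Phi(p)$ from $\Phi(x)$ that are \emph{not} of type $G\cdot\mf{w}$ lie in $\varphi(g)\cdot\mscr{T}(\mf{w})$ and are counted by $d\big(\pi_{\mf{w}}(z^{-1}\varphi(g)^{-1}\Phi(x)),\pi_{\mf{w}}(p_1)\big)$, which after simplification and a comparison with the untwisted word $g\tilde x$ is bounded by a multiple of $D'$ via Proposition~\ref{projection distance prop}; the hyperplanes of type $G\cdot\mf{w}$ separating $\Phi(p)$ from both $\Phi(x)$ and $\Phi(y)$ form a chain in $T$ whose length is bounded by the same proposition (the earthquake map moves each point by a uniformly bounded amount in $T$). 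Summing the $\le 3$ contributions (from the three ``sides'' of the median configuration) and invoking Lemma~\ref{cmp criterion} gives the result.

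\textbf{Main obstacle.} The genuine work is the bookkeeping in the last paragraph: carefully translating a median configuration $p=m(x,y,p)$ into finitely many applications of Proposition~\ref{projection distance prop}, keeping track of which conjugate of $z$ acts on which piece, and checking that the hyperplanes ``far'' from $\mf{w}$ on the $[x,p]$ and $[p,y]$ sides contribute nothing (because there $\Phi$ is locally an isometry, being a translation on a single component of $X\setminus G\cdot\mf{w}$). All the analytic content — the uniform boundedness — is already packaged in Proposition~\ref{projection distance prop}, so no further geometric estimate is needed; the difficulty is purely combinatorial and organisational.
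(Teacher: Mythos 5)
Your Lipschitz argument is essentially the paper's and is fine. The coarse-median part, however, has a genuine gap: the combinatorial core of the argument is missing, and the sketch you give in its place contains a false step. Concretely, your assertion that the hyperplanes separating $\Phi(p)$ from $\Phi(x)$ which are not in $G\cdot\mf{w}$ ``lie in $\varphi(g)\cdot\mscr{T}(\mf{w})$'' is wrong: a hyperplane deep inside the component of $X\setminus G\cdot\mf{w}$ containing $\Phi(x)$, far from every translate of $\mf{w}$, can perfectly well separate $\Phi(p)$ from $\Phi(x)$, so you cannot count $\mscr{W}(\Phi(p)|\Phi(x),\Phi(y))$ by projecting to $\mf{w}$ alone. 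The step you need (and do not supply) is the following transversality claim: after translating so that $\mf{w}\in\mscr{W}(p|y)$ with $p\in\mf{A}$, and choosing a second translate $\mf{u}\in G\cdot\mf{w}$ adjacent to $\mf{A}$ separating $\Phi(x)$ from $\mf{A}$, \emph{every} hyperplane of $\mscr{W}(p|\Phi(x),\Phi(y))$ is transverse to $\mf{w}$ or to $\mf{u}$. This follows from the identity $\mscr{W}(p|\Phi(x),\Phi(y))=\mscr{W}(p,x|\Phi(x),\Phi(y))\cup\mscr{W}(p,y|\Phi(x),\Phi(y))$ (valid because $\mscr{W}(p|x,y)=\emptyset$) combined with the observation that, since $\varphi$ is the identity on $A$, one has $\mf{w}\in\mscr{W}(p,\Phi(x)|y,\Phi(y))$ and $\mf{u}\in\mscr{W}(p,\Phi(y)|x,\Phi(x))$, each of which is transverse to the corresponding piece. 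Only after this can one count via gate-projections to $\mf{w}$ and $\mf{u}$.

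Two further points. First, Proposition~\ref{projection distance prop} only bounds $d(\pi_{\mf{w}}(gx),\pi_{\mf{w}}(\varphi(g)x))$ for $x\in\mf{w}$; you use it for arbitrary points (e.g.\ in $d(\pi_{\mf{w}}(z^{-1}\varphi(g)^{-1}\Phi(x)),\pi_{\mf{w}}(p_1))$), so you must first upgrade it to $\sup_{x\in X}d(\pi_{\mf{w}}(x),\pi_{\mf{w}}(\Phi(x)))<+\infty$ using the Lipschitz bound and cocompactness, and then to translates $\mf{u}$ of $\mf{w}$ adjacent to $\mf{A}$ (for $\mf{u}=at^{-1}\mf{w}$ this costs an extra $D$ via Lemma~\ref{constant D}). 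Second, your reduction is off: ``$\Phi$ is an isometry in a neighbourhood of $p$'' is a local statement that does not preclude hyperplanes separating $\Phi(p)$ from both $\Phi(x)$ and $\Phi(y)$ when $x,y$ lie in other components, and likewise ``$x$ and $y$ both on one side of $\mf{w}$'' does not make $\Phi$ an isometry on their union if other translates of $\mf{w}$ separate them. The correct normalisation is only that $x,y,p$ do not all lie in one component, whence some translate of $\mf{w}$ separates $p$ from (say) $y$ and one may assume $\mf{w}\in\mscr{W}(p|y)$ with $p\in\mf{A}\cup\mf{B}$; the point $x$ need not be normalised at all.
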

\begin{proof}
First, we prove that $\Phi$ is Lipschitz. It suffices to show that $d(\Phi(x),\Phi(y))\leq D+1$ whenever $x$ and $y$ are the endpoints of an edge of $X$. On each connected component of $X\setminus G\cdot\mf{w}$, the map $\Phi$ is an isometry, so it is enough to consider the case when $x$ and $y$ are in distinct components. 

Thus, suppose that there exist points $x'\in\mf{A}$, $y'\in\mf{B}$ and an element $g\in G$ such that $x=gx'$ and $y=gy'$. Now, since $\Phi(x)=\varphi(g)x'$ and $\Phi(y)=\varphi(g)zy'$, we have:
\[d(\Phi(x),\Phi(y))=d(x',zy')\leq 1+d(y',zy')\leq 1+D,\]
where the last inequality follows from Lemma~\ref{constant D}(1).

Before showing that $\Phi$ is coarse-median preserving, we need to obtain the following.

\smallskip
{\bf Claim~1:} \emph{we have $P:=\sup_{x\in X}d(\pi_{\mf{w}}(x),\pi_{\mf{w}}(\Phi(x)))<+\infty$.}

\smallskip\noindent
\emph{Proof of Claim~1.}
Fix a point $w$ in the intersection between $\mf{A}$ and the carrier of $\mf{w}$. Let $L\geq 0$ be a constant such that the orbit $G\cdot w$ is $L$--dense in $X$. Since $\Phi$ is $(D+1)$--Lipschitz and $\pi_{\mf{w}}$ is $1$--Lipschitz, we have:
\begin{align*}
\sup_{x\in X}d(\pi_{\mf{w}}(x),\pi_{\mf{w}}(\Phi(x)))&\leq \sup_{g\in G}d(\pi_{\mf{w}}(gw),\pi_{\mf{w}}(\Phi(gw)))+L+(D+1)L \\
&=\sup_{g\in G}d(\pi_{\mf{w}}(gw),\pi_{\mf{w}}(\varphi(g)w)))+L+(D+1)L.
\end{align*}
The last quantity is finite by Proposition~\ref{projection distance prop}, which proves the claim.
\hfill$\blacksquare$

\smallskip
{\bf Claim~2:} \emph{for every hyperplane $\mf{u}\in G\cdot\mf{w}$ bounding the region $\mf{A}$ and every $x\in X$, we have $d(\pi_{\mf{u}}(x),\pi_{\mf{u}}(\Phi(x)))\leq P+D$.}

\smallskip\noindent
\emph{Proof of Claim~2.}
Suppose first that $\mf{u}=a\mf{w}$ for some $a\in A$. Then, since $\varphi(a)=a$, we have:
\[d(\pi_{a\mf{w}}(x),\pi_{a\mf{w}}(\Phi(x)))=d(\pi_{\mf{w}}(a^{-1}x),\pi_{\mf{w}}(a^{-1}\Phi(x)))=d(\pi_{\mf{w}}(a^{-1}x),\pi_{\mf{w}}(\Phi(a^{-1}x)))\leq P,\]
by Claim~1. The only other option (only in the HNN case) is that $\mf{u}=at^{-1}\mf{w}$ for some $a\in A$. By the above equalities, it suffices to consider the case $a=1$. Then, we have:
\[d(\pi_{t^{-1}\mf{w}}(x),\pi_{t^{-1}\mf{w}}(\Phi(x)))=d(\pi_{\mf{w}}(tx),\pi_{\mf{w}}(t\Phi(x)))=d(\pi_{\mf{w}}(tx),\pi_{\mf{w}}(\Phi(z^{-1}tx))).\]
Since $\pi_{\mf{w}}(tx)$ and $\pi_{\mf{w}}(z^{-1}tx)$ are at distance at most $D$ by Lemma~\ref{constant D}(2), the above quantity is at most $P+D$, as required.
\hfill$\blacksquare$

\smallskip
Now, consider vertices $x,y,p\in X$ with $p=m(x,y,p)$. We will show that there are at most $4P+2D$ hyperplanes in the set $\mscr{W}(\Phi(p)|\Phi(x),\Phi(y))$. By Lemma~\ref{cmp criterion}, this shows that $\Phi$ is coarse-median preserving.

Since $\Phi$ is the restriction of an isometry on each connected component of $X\setminus G\cdot\mf{w}$, we can assume that $x,y,p$ do not all lie in the same component of $X\setminus G\cdot\mf{w}$. Thus, possibly swapping $x$ and $y$, the points $p$ and $y$ are separated by a hyperplane in the orbit $G\cdot\mf{w}$. Translating $x,y,p$ by an element of $G$ does not alter the size of the set $\mscr{W}(\Phi(p)|\Phi(x),\Phi(y))$ (by the first property in Definition~\ref{earth defn}), so we can assume that $\mf{w}\in\mscr{W}(p|y)$ and $p\in\mf{A}\cup\mf{B}$. 

We only treat the case when $p\in\mf{A}$. The other case is identical if we replace $\Phi$ with the map $z^{-1}\Phi$ and compose $\varphi$ with an inner automorphism of $G$ given by $z$.

By Claim~1, the projections $\pi_{\mf{w}}(\Phi(x))$ and $\pi_{\mf{w}}(\Phi(y))$ are at distance at most $P$ from the points $\pi_{\mf{w}}(x)$ and $\pi_{\mf{w}}(y)$, respectively. Since $x,p,y$ lie on a geodesic in this order, so do their projections $\pi_{\mf{w}}(x)$, $\pi_{\mf{w}}(p)$ and $\pi_{\mf{w}}(y)$. Hence at most $2P$ hyperplanes can separate $\pi_{\mf{w}}(p)$ from $\pi_{\mf{w}}(\Phi(x))$ and $\pi_{\mf{w}}(\Phi(y))$. In other words, at most $2P$ hyperplanes in $\mscr{W}(p|\Phi(x),\Phi(y))$ are transverse to $\mf{w}$.

In case $x\not\in\mf{A}$, let $\mf{u}\in G\cdot\mf{w}$ be a hyperplane adjacent to $\mf{A}$ and separating $\Phi(x)$ from $\mf{A}$. With the argument in the previous paragraph, Claim~2 implies that at most $2(P+D)$ hyperplanes in $\mscr{W}(p|\Phi(x),\Phi(y))$ are transverse to $\mf{u}$.

Since $\varphi$ is the identity on $A$, note that $y$ and $\Phi(y)$ are on the same side of $\mf{w}$ and, similarly, $x$ and $\Phi(x)$ are on the same side of $\mf{u}$. Thus $\mf{w}$ lies in $\mscr{W}(p,\Phi(x) | y,\Phi(y))$ and, when defined, $\mf{u}$ lies in $\mscr{W}(p,\Phi(y) | x,\Phi(x))$.

Now, since the set $\mscr{W}(p|x,y)$ is empty, we have:
\[\mscr{W}(p|\Phi(x),\Phi(y))=\mscr{W}(p,x|\Phi(x),\Phi(y))\cup\mscr{W}(p,y|\Phi(x),\Phi(y)).\]
The set $\mscr{W}(p,y|\Phi(x),\Phi(y))$ is transverse to the set $\mscr{W}(p,\Phi(x) | y,\Phi(y))$, which contains $\mf{w}$. Similarly, $\mscr{W}(p,x|\Phi(x),\Phi(y))$ is transverse to $\mscr{W}(p,\Phi(y) | x,\Phi(x))\ni\mf{u}$ (or it is empty, if $x\in\mf{A}$). In conclusion, every element of $\mscr{W}(p|\Phi(x),\Phi(y))$ is transverse to either $\mf{w}$ or $\mf{u}$, and so there are at most $4P+2D$ hyperplanes in $\mscr{W}(p|\Phi(x),\Phi(y))$. This concludes the proof of the proposition.
\end{proof}

\begin{proof}[Proof of Theorem~\ref{CMP GDT 2}]
By Corollary~\ref{core cor}, it suffices to prove the theorem under the assumptions of this subsection. Let $\varphi\in\aut(G)$ be our DLS automorphism, as above. Applying Proposition~\ref{cmp earthquake} to both $\varphi$ and $\varphi^{-1}$, we obtain a bi-Lipschitz, coarse-median preserving map $\Phi\colon X\ra X$ satisfying $\Phi(gx)=\varphi(g)\Phi(x)$ for all $g\in G$ and $x\in X$. This shows that $\varphi$ preserves the coarse median structure on $G$ induced by $X$.
\end{proof}

\bibliography{../mybib}

\begin{thebibliography}{NWZ19}

\bibitem[Ago13]{Agol-Doc}
Ian Agol.
\newblock The virtual {H}aken conjecture.
\newblock {\em Doc. Math.}, 18:1045--1087, 2013.
\newblock With an appendix by Agol, Daniel Groves, and Jason Manning.

\bibitem[Ago14]{Agol-ICM}
Ian Agol.
\newblock Virtual properties of 3-manifolds.
\newblock In {\em Proceedings of the {I}nternational {C}ongress of
  {M}athematicians---{S}eoul 2014. {V}ol. 1}, pages 141--170. Kyung Moon Sa,
  Seoul, 2014.

\bibitem[AMS16]{AMS}
Yago Antol\'{\i}n, Ashot Minasyan, and Alessandro Sisto.
\newblock Commensurating endomorphisms of acylindrically hyperbolic groups and
  applications.
\newblock {\em Groups Geom. Dyn.}, 10(4):1149--1210, 2016.

\bibitem[Bes88]{Bestvina-Duke}
Mladen Bestvina.
\newblock Degenerations of the hyperbolic space.
\newblock {\em Duke Math. J.}, 56(1):143--161, 1988.

\bibitem[BF95]{BF-stable}
Mladen Bestvina and Mark Feighn.
\newblock Stable actions of groups on real trees.
\newblock {\em Invent. Math.}, 121(2):287--321, 1995.

\bibitem[BH99]{BH}
Martin~R. Bridson and Andr\'{e} Haefliger.
\newblock {\em Metric spaces of non-positive curvature}, volume 319 of {\em
  Grundlehren der Mathematischen Wissenschaften [Fundamental Principles of
  Mathematical Sciences]}.
\newblock Springer-Verlag, Berlin, 1999.

\bibitem[BJ96]{Bass-Jiang}
Hyman Bass and Renfang Jiang.
\newblock Automorphism groups of tree actions and of graphs of groups.
\newblock {\em J. Pure Appl. Algebra}, 112(2):109--155, 1996.

\bibitem[Bow13]{Bow-cm}
Brian~H. Bowditch.
\newblock Coarse median spaces and groups.
\newblock {\em Pacific J. Math.}, 261(1):53--93, 2013.

\bibitem[BW12]{Bergeron-Wise}
Nicolas Bergeron and Daniel~T. Wise.
\newblock A boundary criterion for cubulation.
\newblock {\em Amer. J. Math.}, 134(3):843--859, 2012.

\bibitem[CFI16]{CFI}
Indira Chatterji, Talia Fern\'{o}s, and Alessandra Iozzi.
\newblock The median class and superrigidity of actions on {$\rm CAT(0)$} cube
  complexes.
\newblock {\em J. Topol.}, 9(2):349--400, 2016.
\newblock With an appendix by Pierre-Emmanuel Caprace.

\bibitem[Che00]{Chepoi}
Victor Chepoi.
\newblock Graphs of some {${\rm CAT}(0)$} complexes.
\newblock {\em Adv. in Appl. Math.}, 24(2):125--179, 2000.

\bibitem[CRK11]{CRK1}
Montserrat Casals-Ruiz and Ilya Kazachkov.
\newblock On systems of equations over free partially commutative groups.
\newblock {\em Mem. Amer. Math. Soc.}, 212(999):viii+153, 2011.

\bibitem[CRK15]{CRK2}
Montserrat Casals-Ruiz and Ilya Kazachkov.
\newblock Limit groups over partially commutative groups and group actions on
  real cubings.
\newblock {\em Geom. Topol.}, 19(2):725--852, 2015.

\bibitem[CS11]{CS}
Pierre-Emmanuel Caprace and Michah Sageev.
\newblock Rank rigidity for {CAT}(0) cube complexes.
\newblock {\em Geom. Funct. Anal.}, 21(4):851--891, 2011.

\bibitem[CSV17]{CSV}
Ruth Charney, Nathaniel Stambaugh, and Karen Vogtmann.
\newblock Outer space for untwisted automorphisms of right-angled {A}rtin
  groups.
\newblock {\em Geom. Topol.}, 21(2):1131--1178, 2017.

\bibitem[CU18]{CU}
Matt Clay and Caglar Uyanik.
\newblock Simultaneous construction of hyperbolic isometries.
\newblock {\em Pacific J. Math.}, 294(1):71--88, 2018.

\bibitem[CW04]{Crisp-Wiest}
John Crisp and Bert Wiest.
\newblock Embeddings of graph braid and surface groups in right-angled {A}rtin
  groups and braid groups.
\newblock {\em Algebr. Geom. Topol.}, 4:439--472, 2004.

\bibitem[Deh38]{Dehn}
Max Dehn.
\newblock Die {G}ruppe der {A}bbildungsklassen.
\newblock {\em Acta Math.}, 69(1):135--206, 1938.
\newblock Das arithmetische Feld auf Fl\"{a}chen.

\bibitem[DG11]{Dahmani-Guirardel-GAFA}
Fran\c{c}ois Dahmani and Vincent Guirardel.
\newblock The isomorphism problem for all hyperbolic groups.
\newblock {\em Geom. Funct. Anal.}, 21(2):223--300, 2011.

\bibitem[DK18]{DK}
Cornelia Dru\c{t}u and Michael Kapovich.
\newblock {\em Geometric group theory}, volume~63 of {\em American Mathematical
  Society Colloquium Publications}.
\newblock American Mathematical Society, Providence, RI, 2018.
\newblock With an appendix by Bogdan Nica.

\bibitem[DS08]{Drutu-Sapir}
Cornelia Dru\c{t}u and Mark~V. Sapir.
\newblock Groups acting on tree-graded spaces and splittings of relatively
  hyperbolic groups.
\newblock {\em Adv. Math.}, 217(3):1313--1367, 2008.

\bibitem[FFT19]{FFT}
Talia Fern\'{o}s, Max Forester, and Jing Tao.
\newblock Effective quasimorphisms on right-angled {A}rtin groups.
\newblock {\em Ann. Inst. Fourier (Grenoble)}, 69(4):1575--1626, 2019.

\bibitem[FH21]{FH}
Elia Fioravanti and Mark~F. Hagen.
\newblock Deforming cubulations of hyperbolic groups.
\newblock {\em J. Topol.}, 14(3):877--912, 2021.

\bibitem[Fio20]{Fio1}
Elia Fioravanti.
\newblock Roller boundaries for median spaces and algebras.
\newblock {\em Algebr. Geom. Topol.}, 20(3):1325--1370, 2020.

\bibitem[Fio21]{Fio10b}
Elia Fioravanti.
\newblock Convex cores for actions on finite-rank median algebras.
\newblock arXiv:2101.03060v1, 2021.

\bibitem[Fio22]{Fio10a}
Elia Fioravanti.
\newblock Coarse-median preserving automorphisms.
\newblock arXiv:2101.04415v4, 2022.

\bibitem[Gen17]{Gen-diag2}
Anthony Genevois.
\newblock Hyperbolic diagram groups are free.
\newblock {\em Geom. Dedicata}, 188:33--50, 2017.

\bibitem[Gen18]{Gen-diag1}
Anthony Genevois.
\newblock Hyperplanes of {S}quier's cube complexes.
\newblock {\em Algebr. Geom. Topol.}, 18(6):3205--3256, 2018.

\bibitem[Gen20]{Gen-Rtrees}
Anthony Genevois.
\newblock Automorphisms of graph products of groups and acylindrical
  hyperbolicity.
\newblock {\em arXiv:1807.00622v3}, 2020.

\bibitem[GH17]{Groves-Hull-ab}
Daniel Groves and Michael Hull.
\newblock Abelian splittings of right-angled {A}rtin groups.
\newblock In {\em Hyperbolic geometry and geometric group theory}, volume~73 of
  {\em Adv. Stud. Pure Math.}, pages 159--165. Math. Soc. Japan, Tokyo, 2017.

\bibitem[GH19]{Groves-Hull}
Daniel Groves and Michael Hull.
\newblock Homomorphisms to acylindrically hyperbolic groups {I}: {E}quationally
  noetherian groups and families.
\newblock {\em Trans. Amer. Math. Soc.}, 372(10):7141--7190, 2019.

\bibitem[GHL21]{Groves-Hull-Liang}
Daniel Groves, Michael Hull, and Hao Liang.
\newblock Homomorphisms to 3-manifold groups.
\newblock {\em arXiv:2103.06095v1}, 2021.

\bibitem[GL15]{Guirardel-Levitt-GGD15}
Vincent Guirardel and Gilbert Levitt.
\newblock Splittings and automorphisms of relatively hyperbolic groups.
\newblock {\em Groups Geom. Dyn.}, 9(2):599--663, 2015.

\bibitem[Gro09]{Groves-AGT09}
Daniel Groves.
\newblock Limit groups for relatively hyperbolic groups. {I}. {T}he basic
  tools.
\newblock {\em Algebr. Geom. Topol.}, 9(3):1423--1466, 2009.

\bibitem[GS97]{Guba-Sapir}
Victor Guba and Mark Sapir.
\newblock Diagram groups.
\newblock {\em Mem. Amer. Math. Soc.}, 130(620):viii+117, 1997.

\bibitem[Gui98]{Gui-CMH}
Vincent Guirardel.
\newblock Approximations of stable actions on {${\bf R}$}-trees.
\newblock {\em Comment. Math. Helv.}, 73(1):89--121, 1998.

\bibitem[Gui04]{Gui-GT}
Vincent Guirardel.
\newblock Limit groups and groups acting freely on {$\Bbb R^n$}-trees.
\newblock {\em Geom. Topol.}, 8:1427--1470, 2004.

\bibitem[Gui05]{Guirardel-core}
Vincent Guirardel.
\newblock C\oe ur et nombre d'intersection pour les actions de groupes sur les
  arbres.
\newblock {\em Ann. Sci. \'{E}cole Norm. Sup. (4)}, 38(6):847--888, 2005.

\bibitem[Gui08]{Gui-Fou}
Vincent Guirardel.
\newblock Actions of finitely generated groups on {$\Bbb R$}-trees.
\newblock {\em Ann. Inst. Fourier (Grenoble)}, 58(1):159--211, 2008.

\bibitem[Hag07]{Haglund}
Fr\'ed\'eric Haglund.
\newblock Isometries of {$\rm CAT(0)$} cube complexes are semi-simple.
\newblock {\em arXiv preprint arXiv:0705.3386v1}, 2007.

\bibitem[HK18]{Hensel-Kielak}
Sebastian Hensel and Dawid Kielak.
\newblock Nielsen realisation for untwisted automorphisms of right-angled
  {A}rtin groups.
\newblock {\em Proc. Lond. Math. Soc. (3)}, 117(5):901--950, 2018.

\bibitem[HP15]{Hagen-Przytycki15}
Mark~F. Hagen and Piotr Przytycki.
\newblock Cocompactly cubulated graph manifolds.
\newblock {\em Israel J. Math.}, 207(1):377--394, 2015.

\bibitem[HS20]{Hagen-Susse}
Mark~F. Hagen and Tim Susse.
\newblock On hierarchical hyperbolicity of cubical groups.
\newblock {\em Israel J. Math.}, 236(1):45--89, 2020.

\bibitem[HT19]{Hagen-Touikan}
Mark~F. Hagen and Nicholas W.~M. Touikan.
\newblock Panel collapse and its applications.
\newblock {\em Groups Geom. Dyn.}, 13(4):1285--1334, 2019.

\bibitem[HW]{Hagen-Wilton}
Mark~F. Hagen and Henry Wilton.
\newblock Guirardel cores for cube complexes.
\newblock In preparation.

\bibitem[HW08]{Haglund-Wise-GAFA}
Fr\'{e}d\'{e}ric Haglund and Daniel~T. Wise.
\newblock Special cube complexes.
\newblock {\em Geom. Funct. Anal.}, 17(5):1551--1620, 2008.

\bibitem[HW15]{Hagen-Wise-GAFA}
Mark~F. Hagen and Daniel~T. Wise.
\newblock Cubulating hyperbolic free-by-cyclic groups: the general case.
\newblock {\em Geom. Funct. Anal.}, 25(1):134--179, 2015.

\bibitem[HW16]{Hagen-Wise-Duke}
Mark~F. Hagen and Daniel~T. Wise.
\newblock Cubulating hyperbolic free-by-cyclic groups: the irreducible case.
\newblock {\em Duke Math. J.}, 165(9):1753--1813, 2016.

\bibitem[Ima79]{Imanishi}
Hideki Imanishi.
\newblock On codimension one foliations defined by closed one-forms with
  singularities.
\newblock {\em J. Math. Kyoto Univ.}, 19(2):285--291, 1979.

\bibitem[KM12]{Kahn-Markovic}
Jeremy Kahn and Vladimir Markovic.
\newblock Immersing almost geodesic surfaces in a closed hyperbolic three
  manifold.
\newblock {\em Ann. of Math. (2)}, 175(3):1127--1190, 2012.

\bibitem[KV21]{Kropholler-Vigolo}
Robert Kropholler and Federico Vigolo.
\newblock A new construction of {CAT}(0) cube complexes.
\newblock {\em arXiv:1802.01885v2}, 2021.

\bibitem[Lau95]{Laurence}
Michael~R. Laurence.
\newblock A generating set for the automorphism group of a graph group.
\newblock {\em J. London Math. Soc. (2)}, 52(2):318--334, 1995.

\bibitem[Lev05]{Levitt-hyp}
Gilbert Levitt.
\newblock Automorphisms of hyperbolic groups and graphs of groups.
\newblock {\em Geom. Dedicata}, 114:49--70, 2005.

\bibitem[LP97]{LP}
Gilbert Levitt and Fr\'{e}d\'{e}ric Paulin.
\newblock Geometric group actions on trees.
\newblock {\em Amer. J. Math.}, 119(1):83--102, 1997.

\bibitem[MT18]{Maher-Tiozzo-Crelle}
Joseph Maher and Giulio Tiozzo.
\newblock Random walks on weakly hyperbolic groups.
\newblock {\em J. Reine Angew. Math.}, 742:187--239, 2018.

\bibitem[Neu54]{Neumann-cosets}
Bernhard~H. Neumann.
\newblock Groups covered by permutable subsets.
\newblock {\em J. London Math. Soc.}, 29:236--248, 1954.

\bibitem[NS13]{Nevo-Sageev}
Amos Nevo and Michah Sageev.
\newblock The {P}oisson boundary of {${\rm CAT}(0)$} cube complex groups.
\newblock {\em Groups Geom. Dyn.}, 7(3):653--695, 2013.

\bibitem[NWZ19]{NWZ1}
Graham~A. Niblo, Nick Wright, and Jiawen Zhang.
\newblock A four point characterisation for coarse median spaces.
\newblock {\em Groups Geom. Dyn.}, 13(3):939--980, 2019.

\bibitem[Pau88]{Paulin-IM}
Fr\'{e}d\'{e}ric Paulin.
\newblock Topologie de {G}romov \'{e}quivariante, structures hyperboliques et
  arbres r\'{e}els.
\newblock {\em Invent. Math.}, 94(1):53--80, 1988.

\bibitem[Pau91]{Paulin-arboreal}
Fr\'{e}d\'{e}ric Paulin.
\newblock Outer automorphisms of hyperbolic groups and small actions on {${\bf
  R}$}-trees.
\newblock In {\em Arboreal group theory ({B}erkeley, {CA}, 1988)}, volume~19 of
  {\em Math. Sci. Res. Inst. Publ.}, pages 331--343. Springer, New York, 1991.

\bibitem[PW14]{Przytycki-Wise14}
Piotr Przytycki and Daniel~T. Wise.
\newblock Graph manifolds with boundary are virtually special.
\newblock {\em J. Topol.}, 7(2):419--435, 2014.

\bibitem[PW18]{Przytycki-Wise18}
Piotr Przytycki and Daniel~T. Wise.
\newblock Mixed 3-manifolds are virtually special.
\newblock {\em J. Amer. Math. Soc.}, 31(2):319--347, 2018.

\bibitem[Rol98]{Roller}
Martin~A. Roller.
\newblock Poc sets, median algebras and group actions. {A}n extended study of
  {D}unwoody's construction and {S}ageev's theorem.
\newblock Preprint, University of Southampton, available at arXiv:1607.07747,
  1998.

\bibitem[RS94]{Rips-Sela}
Eliyahu Rips and Zlil Sela.
\newblock Structure and rigidity in hyperbolic groups. {I}.
\newblock {\em Geom. Funct. Anal.}, 4(3):337--371, 1994.

\bibitem[RS97]{RS-Ann}
Eliyahu Rips and Zlil Sela.
\newblock Cyclic splittings of finitely presented groups and the canonical
  {JSJ} decomposition.
\newblock {\em Ann. of Math. (2)}, 146(1):53--109, 1997.

\bibitem[Sag95]{Sag95}
Michah Sageev.
\newblock Ends of group pairs and non-positively curved cube complexes.
\newblock {\em Proc. London Math. Soc. (3)}, 71(3):585--617, 1995.

\bibitem[Sag97]{Sag97}
Michah Sageev.
\newblock Codimension-{$1$} subgroups and splittings of groups.
\newblock {\em J. Algebra}, 189(2):377--389, 1997.

\bibitem[Sag14]{Sageev-notes}
Michah Sageev.
\newblock {$\rm CAT(0)$} cube complexes and groups.
\newblock In {\em Geometric group theory}, volume~21 of {\em IAS/Park City
  Math. Ser.}, pages 7--54. Amer. Math. Soc., Providence, RI, 2014.

\bibitem[Sel95]{Sela-Ann}
Zlil Sela.
\newblock The isomorphism problem for hyperbolic groups. {I}.
\newblock {\em Ann. of Math. (2)}, 141(2):217--283, 1995.

\bibitem[Sel97a]{Sela-acyl}
Zlil Sela.
\newblock Acylindrical accessibility for groups.
\newblock {\em Invent. Math.}, 129(3):527--565, 1997.

\bibitem[Sel97b]{Sela-GAFAII}
Zlil Sela.
\newblock Structure and rigidity in ({G}romov) hyperbolic groups and discrete
  groups in rank {$1$} {L}ie groups. {II}.
\newblock {\em Geom. Funct. Anal.}, 7(3):561--593, 1997.

\bibitem[Sel99]{Sela-Top}
Zlil Sela.
\newblock Endomorphisms of hyperbolic groups. {I}. {T}he {H}opf property.
\newblock {\em Topology}, 38(2):301--321, 1999.

\bibitem[Sel01]{SelaI}
Zlil Sela.
\newblock Diophantine geometry over groups. {I}. {M}akanin-{R}azborov diagrams.
\newblock {\em Publ. Math. Inst. Hautes \'{E}tudes Sci.}, (93):31--105, 2001.

\bibitem[Sel06]{SelaVI}
Zlil Sela.
\newblock Diophantine geometry over groups. {VI}. {T}he elementary theory of a
  free group.
\newblock {\em Geom. Funct. Anal.}, 16(3):707--730, 2006.

\bibitem[Sel22]{Sela-new}
Zlil Sela.
\newblock Automorphisms of groups and a higher rank {JSJ} decomposition {I}:
  {RAAG}s and a higher rank {M}akanin-{R}azborov diagram.
\newblock arXiv:2201.06320, 2022.

\bibitem[Ser89]{Servatius}
Herman Servatius.
\newblock Automorphisms of graph groups.
\newblock {\em J. Algebra}, 126(1):34--60, 1989.

\bibitem[Sis18]{Sisto-Crelle}
Alessandro Sisto.
\newblock Contracting elements and random walks.
\newblock {\em J. Reine Angew. Math.}, 742:79--114, 2018.

\bibitem[Wis04]{Wise04}
Daniel~T. Wise.
\newblock Cubulating small cancellation groups.
\newblock {\em Geom. Funct. Anal.}, 14(1):150--214, 2004.

\bibitem[Wis14]{Wise-ICM}
Daniel~T. Wise.
\newblock The cubical route to understanding groups.
\newblock In {\em Proceedings of the {I}nternational {C}ongress of
  {M}athematicians---{S}eoul 2014. {V}ol. {II}}, pages 1075--1099. Kyung Moon
  Sa, Seoul, 2014.

\bibitem[WR19]{Reinfeldt-Weidmann}
Richard Weidmann and Cornelius Reinfeldt.
\newblock Makanin-{R}azborov diagrams for hyperbolic groups.
\newblock {\em Ann. Math. Blaise Pascal}, 26(2):119--208, 2019.

\bibitem[WW17]{WW}
Daniel~T. Wise and Daniel~J. Woodhouse.
\newblock A cubical flat torus theorem and the bounded packing property.
\newblock {\em Israel J. Math.}, 217(1):263--281, 2017.

\end{thebibliography}
\bibliographystyle{alpha}

\end{document}